\def\f{{\lowercase{f}}}
\def\fin{{\lowercase{fin}}}
\def\X{\mathbb{X}}
\def\F{\mathbb{F}}
\def\SH{\mathcal{SH}}
\def\P{\mathbb{P}}
\def\B{\mathbb{B}}
\def\BP{ABP}
\def\G{\mathbb{G}}
\def\C{\mathbb{C}}
\def\R{\mathbb{R}}
\def\Q{\mathbb{Q}}
\def\P{\mathbb{P}}
\def\Z{\mathbb{Z}}
\def\N{\mathbb{N}}
\def\A{\mathbb{A}}
\DeclareMathOperator{\ann}{ann}
\DeclareMathOperator{\cof}{cof}
\DeclareMathOperator{\colim}{colim}
\DeclareMathOperator{\Ext}{Ext}
\DeclareMathOperator{\Fin}{Fin}
\DeclareMathOperator{\FIdl}{FIdl}
\DeclareMathOperator{\h}{H}
\DeclareMathOperator{\Ho}{Ho}
\DeclareMathOperator{\Hom}{Hom}
\DeclareMathOperator{\id}{id}
\DeclareMathOperator{\Idl}{Idl}
\DeclareMathOperator{\im}{im}
\DeclareMathOperator{\Map}{Map}
\DeclareMathOperator{\Mod}{Mod}
\DeclareMathOperator{\Nis}{Nis}
\DeclareMathOperator{\Sing}{Sing}
\DeclareMathOperator{\Sm}{{\bf Sm}}
\DeclareMathOperator{\Spc}{Spc}
\DeclareMathOperator{\Spec}{Spec}
\DeclareMathOperator{\sPre}{{\bf sPre}}
\DeclareMathOperator{\sSet}{{\bf sSet}}
\DeclareMathOperator{\sub}{sub}
\DeclareMathOperator{\supp}{supp}
\DeclareMathOperator{\thickid}{thickid}
\DeclareMathOperator{\Top}{top}
\DeclareMathOperator{\Tor}{Tor}
\DeclareMathOperator{\type}{type}
\def\RMod{R\textup{-}\Mod}
\newcommand{\vsupseteq}{\mathrel{\reflectbox{\rotatebox[origin=c]{270}{$\supseteq$}}}}
\newcommand{\veq}{\mathrel{\reflectbox{\rotatebox[origin=c]{270}{$=$}}}}
\numberwithin{equation}{section}
\newtheorem{theorem}[equation]{Theorem}
\newtheorem{lemma}[equation]{Lemma}
\newtheorem{prop}[equation]{Proposition}
\newtheorem{cor}[equation]{Corollary}
\newtheorem{conj}[equation]{Conjecture}
\theoremstyle{definition}
\newtheorem{defn}[equation]{Definition}
\newtheorem{rk}[equation]{Remark}
\newtheorem{notation}[equation]{Notation}
\newtheorem{eg}[equation]{Example}
\begin{document}

\title{Thick ideals in equivariant and motivic \\ stable homotopy categories 
}
\author{Ruth Joachimi}
\maketitle

\pagestyle{fancy}
\lhead[]{}
\rfoot[]{}
\cfoot[\thepage]{\thepage}

\tableofcontents

\theoremstyle{plain}

\chapter{Introduction}\label{introduction}

In a tensor triangulated category, a thick ideal is a full subcategory which is closed under exact triangles and retracts and under tensoring with arbitrary elements of the category. The classification of thick ideals in the stable homotopy category of $p$-local finite spectra, $\SH^{fin}_{(p)}$, is given by a famous theorem of Hopkins and Smith, \cite[Theorem 7]{HS}, see Section \ref{classic}. It states that, in $\SH^{fin}_{(p)}$, the thick ideals are given as a chain 
\[\SH^{fin}_{(p)}= \mathcal C_0\supsetneq \mathcal C_1\supsetneq\cdots \supsetneq \mathcal C_n\supsetneq \cdots\supsetneq \mathcal C_\infty =\{0\},\]
and each thick ideal is characterised by the vanishing of a particular Morava K-theory, that is, $\mathcal C_n=\{X\in\SH^{fin}_{(p)}\; |\; K(p,n-1)_\ast(X)=0\}$ for $0<n<\infty$. This theorem is a consequence of the nilpotence theorem \cite[Theorem 3]{HS}, the existence of type-$n$ spectra for any $n\geq 0$ \cite[Theorem 4.8]{Mi} and the fact that $K(n)_\ast(X)=0$ implies $K(n-1)_\ast(X)=0$ for $X\in\SH^{fin}$ \cite[Theorem 2.11]{RavLoc}.

In equivariant stable homotopy theory for a finite group $G$, unpublished work of Strickland \cite{Sch}, see Chapter \ref{equivariant}, contains a partial classification of thick ideals in the category $\SH(G)_f\subset \SH(G)$, which is the full subcategory of compact objects in the $G$-equivariant stable homotopy category. This is a generalisation of the above result, which concerns the special case $\SH(\{1\})_f =\SH^{fin}$. In $\SH(G)_f$, any thick ideal is characterised by the vanishing of particular equivariant Morava K-theories, which are indexed by a prime and a nonnegative integer (as the ordinary Morava K-theories) and, additionally, by a conjugacy class of subgroups of $G$. The set of thick ideals can be mapped to a lattice of such multi-indices and Strickland proves lower and upper bounds for a sublattice onto which this map is bijective.

In this thesis, we study thick ideals in $\SH(k)$, $k\subseteq\C$, and related motivic categories, like $(\SH(k)_f)_{(p)}$, the $p$-localisation of the full subcategory of all compact objects, and $\SH(k)^{fin}_{(p)}$, the category of $p$-localised finite cell spectra. We use different approaches, all of which are, in some sense, motivated by the results about thick ideals in $\SH^{fin}$.\\

One approach is to use the comparison functors, 
\[\SH\overset{c_k}\longrightarrow \SH(k)\overset{R_k}\longrightarrow \SH\] for $k\subseteq\C$, and also \[\SH(\Z/2)\overset{c'_k}\longrightarrow\SH(k)\overset{R'_k}\longrightarrow \SH(\Z/2)\] for $k\subseteq\R$. We show that, for $k\subseteq\C$, the preimages $R_k^{-1}(\mathcal C_n)\subseteq (\SH(k)_f)_{(p)}$, $n\geq 0$, form a chain of different thick ideals in $(\SH(k)_f)_{(p)}$ (Theorem \ref{lower bound}). For $k\subseteq\R$, we also show that $(R'_k)^{-1}(\mathcal C)\subseteq (\SH(k)_f)_{(p)}$ are different thick ideals, where $\mathcal C$ runs over all thick ideals in $(\SH(\Z/2)_f)_{(p)}$, as studied in \cite{Sch} and in Chapter \ref{equivariant}. \\

The second approach is to use methods of nilpotence theory. The thick subcategory theorem for $\SH$ is highly related to the nilpotence theorem, which states that the cobordism spectrum $MU$ detects certain kinds of nilpotence. In this context, the Morava K-theories recover the information from $MU$, meaning that also the family $\{K(p,n)\;|\;p \textup{ prime}, n\geq 0 \}$ detects nilpotence. Since the Morava K-theories have a particularly easy structure (they are field theories satisfying the K\"unneth formula, see e.g. \cite[page 176]{Rav}), this can be used to show that any thick ideal in $\SH^{fin}$ can be uniquely described in terms of the vanishing and non-vanishing of Morava K-theories. This is how \cite[Theorem 7]{HS} is proven. Similar nilpotence arguments are used in \cite{Sch} to classify thick ideals in $\SH(G)_f$ for finite groups $G$. Strickland shows that the equivariant Morava K-theories detect nilpotence (Theorem \ref{detect nilp}).

The motivic analog to $MU$ is the algebraic cobordism spectrum $MGL$. But here, the situation is different, as $MGL$ does not detect nilpotence. There is a notion of motivic Morava K-theories, which are more complicated than the topological ones, but they, too, describe a certain family of thick ideals. In Chapter \ref{nilpotence}, we show that they do not discover all thick ideals in $(\SH(k)_f)_{(p)}$, and also that not all thick ideals are of the form $R_k^{-1}(\mathcal C_n)$.\\

The third approach is to find different lifts of topological type-$n$ spectra to the motivic world and to ask whether they generate the same thick ideals or different ones. We consider two explicit different such lifts to $(\SH(\C)_f)_{(p)}$ and show that the motivic Morava K-theories do not distinguish the thick ideals generated by them. The question whether the two lifts generate different thick ideals remains open.\\

In $\SH^{fin}_{(p)}$, the thick ideals are ordered linearly by inclusion, due to the fact that $K(n+1)_\ast (X)= 0$ implies $K(n)_\ast (X)=0$ for $X\in\SH^{fin}$. This raises the question whether this implication also holds in $\SH(k)_f$. For $p>2$, we prove that the analog statement holds for motivic Morava K-theories over $\C$ if $X$ is a finite cellular motivic spectrum, as studied in \cite{DI}. That is, it holds for $X\in\SH(\C)^{fin}\subseteq \SH(\C)_f$. On the way, we prove a couple of interesting facts concerning the motivic versions of $BP$, $K(n)$ and related theories. We prove that the analog of the decomposition of Bousfield classes $\langle E(n)\rangle = \underset{i\leq n}\bigvee\langle K(i)\rangle$ holds in $\SH(\C)$ (for $p>2$), as conjectured by Hornbostel in \cite[Question 2.17]{HornLoc} for arbitrary fields.\\

{\bf Outline.}

In {\bf Chapter \ref{classic}}, we introduce basic notation concerning thick ideals and the stable homotopy category $\SH$. We recall the thick subcategory theorem of Hopkins and Smith (Theorem \ref{HS}). In $\SH^{fin}$, there is no difference between thick subcategories and thick ideals (Lemma \ref{thicksubcat thickideal}).\\

{\bf Chapter \ref{equivariant}} is an account on Strickland's work \cite{Sch}. It contains Strickland's main results on thick ideals in $\SH(G)_f$ and their proofs. The chapter begins with the necessary recollection from equivariant stable homotopy theory, such as compact objects in $\SH(G)$ and geometric fixed point functors. Equivariant Morava K-theories 
\[K(n,H)=G/H_+\wedge \tilde E[\not\geq H]\wedge K(n), \textup{ for } H\subseteq G,\] 
are introduced in Section \ref{Equivariant Morava K-theories}. They are related to the classical Morava K-theories via the geometric fixed point functor (Proposition \ref{K(n) and phi}) and satisfy similar properties, such as the K\"unneth formula (Corollary \ref{Kunneth equivariant}). Section \ref{section lattices} introduces the terminology of lattices and contains the result of Strickland which establishes a general relation between thick ideals and the detection of nilpotence by some family of homology theories (Theorem \ref{nilp-idl}). The equivariant analog of the nilpotence theorem \cite[Theorem 3]{HS} is Theorem \ref{detect nilp}. We give a reformulation of the thick subcategory theorem \cite[Theorem 7]{HS} in a non-$p$-localised way (Theorem \ref{thm Q}) and prove a similar equivariant result, Theorem \ref{tau injective}, which describes an injective lattice homomorphism from the set of thick ideals in $\SH(G)_f$ to the lattice 
\[GQ= \underset{\sub(G)}\prod \{u\in\prod_p \mathcal Q_p\;|\; u_p=1\,\forall\, p \textup{ or } u_p\neq 1 \,\forall \,p\},\] 
where $\mathcal Q_p=\{p^{-n}\; |\; 0\leq n\leq\infty\}$, and gives a lower bound for its image: 

\newtheorem*{tau injective}{Theorem \ref{tau injective}}
\begin{tau injective}
\emph{(Strickland)}\\
The composition
\[\tau: \Idl(\SH(G)_f)\overset{\supp}\longrightarrow \mathcal P(GQ')\overset{\max}\longrightarrow GQ\]
is injective. Its image contains all $u\in GQ$ which satisfy: if $H\subseteq H'$, then $u_{H} \geq u_{H'}$.
\end{tau injective}

Here, $\mathcal P(GQ')$ denotes the power set of the set 
\[GQ'=\{p^{-n}\;|\;p\textup{ prime},\, 0\leq n<\infty\}\times \sub(G).\]
An upper bound is given in Proposition \ref{upper bound}. In Section \ref{Application to Z/2}, we apply Strickland's results to $\SH(\Z/2)_f$, which will be most interesting to us in our study of thick ideals in $\SH(k)_f$, $k\subseteq\R$. Any thick ideal in $(\SH(\Z/2)_f)_{(p)}$ is of the form 
\[\mathcal C_{m,n}=\{X\; |\; \phi^{\{1\}}(X)\in\mathcal C_m \textup{ and }\phi^{\Z/2}(X)\in\mathcal C_n\},\] 
where $m,n\in [0,\infty]$ and $\phi^H:\SH(G)\rightarrow \SH$ is the geometric $H$-fixed point functor (Corollary \ref{Cmn}). But not all $\mathcal C_{m,n}$ are different. Corollary \ref{mn} gives partial information on which ones are.\\

In {\bf Chapter \ref{functors}}, we introduce the comparison functors 
$\SH\overset{c_k}\longrightarrow \SH(k)\overset{R_k}\longrightarrow \SH$ for $k\subseteq\C$, and $\SH(\Z/2)\overset{c'_k}\longrightarrow\SH(k)\overset{R'_k}\longrightarrow \SH(\Z/2)$ for $k\subseteq\R$, which are symmetric monoidal and satisfy $R_k\circ c_k\cong\id$ and $R'_k\circ c'_k\cong \id$, respectively. This is mainly a recollection from various other sources. The same results are independently obtained in \cite[Section 4]{HO}.\\

In {\bf Chapter \ref{ideals}}, we apply our knowledge concerning comparison functors to the study of thick ideals, proving the following theorem for any prime $p$.

\newtheorem*{lower bound}{Theorem \ref{lower bound}}
\begin{lower bound}{\bf (Lower bound on the number of motivic thick ideals)}
\begin{compactenum}[(1)]
\item
If $k\subseteq\C$, the category $(\SH(k)_f)_{(p)}$ contains at least an infinite chain of different thick ideals given by $\overline{R}_k^{-1}(\mathcal C_n)$, $0\leq n\leq\infty$, where $\overline{R}_k$ denotes the $p$-localisation of the restriction of $R_k$ to $\SH(k)_f$ and $\mathcal C_n\subseteq\SH^{fin}_{(p)}$ is as defined in Chapter \ref{introduction}.
\item
If $k\subseteq\R$, then $(\SH(k)_f)_{(p)}$ contains at least a two-dimensional lattice of different thick ideals given by $(\overline{R}'_k)^{-1}(\mathcal C_{m,n})$, for all $(m,n)\in\Gamma_p$ as in Definition \ref{equivariant type}. 
\end{compactenum}\end{lower bound}

One ingredient of this theorem is Proposition \ref{compact1}, where we show that the realisation functors $R_k$ and $R'_k$ preserve compactness. In Section \ref{Motivic thick ideals}, we also prove a couple of additional results on the connection between motivic thick ideals and the comparison functors.\\

{\bf Chapter \ref{idealsCohomo}} begins with an account of homology and cohomology theories in the category of finite motivic cell spectra, $\SH(k)^{fin}$, as studied by Dugger and Isaksen in \cite{DI}. We show that, for a cellular ring spectrum $E$ and a finite cellular spectrum $X$, $E_{\ast\ast}X=0$ is equivalent to $E^{\ast\ast}X=0$ (Proposition \ref{co-homo}). For $k\subseteq \R$, we use a notion of cellular spectra which is more general than the notion from \cite{DI}, see Definition \ref{fin}. This yields another version of Proposition \ref{co-homo} (Corollary \ref{co-homo real}). In Section \ref{thick ideals}, we discuss different ways of defining thick ideals associated with a (ring) spectrum. This is applied to motivic Morava K-theories $AK(n)$ in Section \ref{section morava K}. For example, we show that the thick ideal $\mathcal C_{AK(n)}$ associated with the $n$-th motivic Morava K-theory is contained in $R_k^{-1}(\mathcal C_{n+1})$ (Proposition \ref{motivic model}). We recall the definition and some properties of the motivic Morava K-theories in Section \ref{section AK(n)}. The motivic Atiyah Hirzebruch spectral sequence described in \cite[Example 8.13]{Hoy}, implies that the $n$-th motivic Morava K-theory over the field $\C$ has coefficient ring $H\Z/p_{\ast\ast}\otimes K(n)_\ast$ (Lemma \ref{Ah}), as remarked in \cite{Ya} below Corollary 3.9.\\

In {\bf Chapter \ref{nilpotence}}, we study the thick ideal generated by the cofiber of the motivic Hopf map, $C\eta\cong\P_k^2$, and compare it to the thick ideals $R_k^{-1}(\mathcal C_n)$ and $\mathcal C_{AK(n)}$ for $k\subseteq\C$. We calculate the type of $R_k(C\eta_{(p)})\in\SH^{fin}_{(p)}$, which is $1$, and the equivariant type of $R'_k(C\eta_{(p)})\in (\SH(\Z/2)_f)_{(p)}$, which is $(1,2)$ for $p=2$ and $(1,\infty)$ for odd $p$ (Proposition \ref{RCeta}). In Proposition \ref{counter example}, we show that $C\eta_{(p)}$ generates a thick ideal of $(\SH(k)_f)_{(p)}$ which is neither of the form $R_k^{-1}(\mathcal C_{n+1})$ or $\mathcal C_{AK(n)}$ for any $n\geq 0$, nor is it all of $(\SH(k)_f)_{(p)}$ (at least, if $p=2$ or $k\subseteq\R$). 

\newtheorem*{counter example}{Proposition \ref{counter example}}
\begin{counter example}
For $k\subseteq\C$, let $\thickid( C\eta_{(p)})\subseteq(\SH(k)_f)_{(p)}$ denote the thick ideal generated by the p-localised cofiber of the Hopf map. Then the following hold:
\begin{compactenum}[(1)]
\item $\thickid( C\eta_{(p)})\not\subseteq \mathcal C_{AK(n)} $ for any $ n\geq 0$ and any prime $p$,
\item $\thickid( C\eta_{(p)})\not \subseteq R_k^{-1}(\mathcal C_n)$ for any $ n> 0$ and any prime $p$,
\item $\thickid( C\eta_{(p)})\subsetneq \thickid(S^0_{(p)})=(\SH(k)_f)_{(p)}$ if $k\subseteq \R$ and $p$ is any prime or $k\subseteq \C$ and $p=2$.
\item For any prime $p$, the thick ideals $\thickid( C\eta_{(p)})\cap R_k^{-1}(\mathcal C_n)$ are distinct for different $n\geq 0$ and in particular nonzero if $n<\infty$.
\end{compactenum}
\end{counter example}

This proves that $\SH(k)_f$, $k\subseteq\C$, really has ``more'' thick ideals than its topological counterpart. In Section \ref{prime ideals}, we compare our results to Balmer's work on prime ideals \cite{Balmer}. For the categories $\SH^{fin}$, $\SH(\Z/2)_f$, $\SH(\C)_f$ and $\SH(\R)_f$, we recover the information on prime ideals given in \cite[Section 10]{Balmer} from a different point of view.\\

In {\bf Chapter \ref{motivic type n}}, we study two preimages under $R_\C$ of a type-$n$ spectrum $X_n\in\SH^{fin}_{(p)}$. One of them is $c_\C(X_n)$ and the other one, $\X_n$, is constructed by a motivic version of the construction of $X_n$, as given in \cite[Appendix C]{Rav}. In analogy to Mitchell's result \cite[Theorem 4.8]{Mi}, we prove the following vanishing theorem for motivic Morava K-theory.

\newtheorem*{Mitchell}{Theorem \ref{Mitchell}}
\begin{Mitchell}
{\bf (Vanishing criterion)}\\
Let $s>0$ and $X\in\SH(\C)^{fin}$ be a finite motivic cell spectrum such that $\h^{\ast\ast}(X,\Z/p)$ is free over the exterior algebra $\Lambda_{H\Z/p^{\ast\ast}}(Q_s)$ as a module over the motivic Steenrod algebra. Then $AK(s)_{\ast\ast}X=0$.
\end{Mitchell}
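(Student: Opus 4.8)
The plan is to pass to $AK(s)$-cohomology via Proposition \ref{co-homo} and to run the motivic Atiyah--Hirzebruch spectral sequence of \cite[Example 8.13]{Hoy} converging to $AK(s)^{\ast\ast}X$, just as Mitchell argues topologically in \cite{Mi}: the hypothesis will force an early page of this spectral sequence --- namely the $Q_s$-Margolis homology of $\h^{\ast\ast}(X,\Z/p)$ made $v_s$-periodic --- to vanish.

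First I would build the spectral sequence from a finite cell presentation of $X$; since the resulting filtration is finite it converges strongly, and by Lemma \ref{Ah} (which gives $AK(s)_{\ast\ast}\cong H\Z/p_{\ast\ast}\otimes_{\F_p}K(s)_\ast=H\Z/p_{\ast\ast}[v_s^{\pm1}]$ over $\C$, with $v_s$ of bidegree $(2(p^s-1),p^s-1)$) together with $\F_p$-freeness of $\F_p[v_s^{\pm1}]$, its $E_2$-page is
\[ E_2^{\ast\ast}\;\cong\;\h^{\ast\ast}(X,\Z/p)\otimes_{\F_p}\F_p[v_s^{\pm1}]\;\Longrightarrow\;AK(s)^{\ast\ast}X . \]
Because every bidegree occurring in $AK(s)_{\ast\ast}$ has first coordinate divisible by $2(p^s-1)$ (the weight variable $\tau$ leaves the first coordinate unchanged, and $s>0$), the same degree count as topologically forces $d_r=0$ for $2\leq r<2p^s-1$; hence $E_{2p^s-1}=E_2$, and $d_{2p^s-1}$ is $v_s$ times a stable motivic cohomology operation of bidegree $(2p^s-1,p^s-1)$.

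The heart of the proof --- and the step I expect to be the main obstacle --- is identifying this operation with (a unit multiple of) the motivic Milnor primitive $Q_s$. Topologically this is the classical statement that the first differential in the Atiyah--Hirzebruch spectral sequence for Morava $K$-theory is $Q_s$; see \cite{Mi} and its references. Over $\C$ I would deduce it from the cofiber sequence
\[ \Sigma^{2(p^s-1),p^s-1}Ak(s)\;\xrightarrow{\,v_s\,}\;Ak(s)\;\longrightarrow\;H\Z/p \]
for the connective motivic Morava $K$-theory $Ak(s)$, whose connecting operation $H\Z/p\to\Sigma^{2p^s-1,p^s-1}H\Z/p$ governs the first differential. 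That this operation equals $Q_s$ can be checked by complex Betti realisation: it takes the cofiber sequence to its topological counterpart, so the realised operation is the classical $Q_s$; and since over $\C$ the motivic Steenrod algebra is, as a bigraded module, $H\Z/p^{\ast\ast}$ tensored over $\F_p$ with the topological Steenrod algebra, with Betti realisation injective in each fixed bidegree, the bidegree $(2p^s-1,p^s-1)$ contains no operation besides $Q_s$ realising to the topological $Q_s$. This is in essence the computation of \cite{Ya}; constructing $Ak(s)$ and assembling these compatibilities carefully is the technical core.

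Granting $d_{2p^s-1}=v_sQ_s$ up to a unit, the conclusion is immediate. Freeness of $\h^{\ast\ast}(X,\Z/p)$ over $\Lambda_{H\Z/p^{\ast\ast}}(Q_s)=H\Z/p^{\ast\ast}[Q_s]/(Q_s^2)$ gives $\ker Q_s=\im Q_s$ on $\h^{\ast\ast}(X,\Z/p)$, so its $Q_s$-Margolis homology vanishes and
\[ E_{2p^s}^{\ast\ast}\;\cong\;\bigl(\ker Q_s/\im Q_s\bigr)\otimes_{\F_p}\F_p[v_s^{\pm1}]\;=\;0 ; \]
the spectral sequence collapses, $AK(s)^{\ast\ast}X=0$, and Proposition \ref{co-homo} (applicable since $AK(s)$ is a cellular ring spectrum and $X$ a finite cell spectrum) yields $AK(s)_{\ast\ast}X=0$. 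The argument is uniform in $p$, using Voevodsky's motivic $Q_s$ at $p=2$.
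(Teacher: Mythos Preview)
Your Atiyah--Hirzebruch approach is genuinely different from the paper's. The paper instead runs the motivic Adams spectral sequence for $Ak(s)\wedge X$: the change-of-rings isomorphism of Corollary \ref{change of rings} collapses $E_2$ to $\Hom_{\Lambda(Q_s)}(\h^{\ast\ast}X,\h^{\ast\ast})\cong Ak(s)_{\ast\ast}X$, and because this isomorphism respects the $(MGL_{(p)})_{\ast\ast}$-module structure while $v_s$ visibly acts trivially on the $\Hom$ side, $Ak(s)_{\ast\ast}X$ is $v_s$-torsion and $AK(s)_{\ast\ast}X=v_s^{-1}Ak(s)_{\ast\ast}X=0$. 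This trades your differential identification for the verification that the Adams spectral sequence carries an $MGL$-module structure (set up in Section \ref{Motivic Adams spectral sequence}), and it works directly in homology with the connective $Ak(s)$, so no ring structure on the Morava theory is needed.

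Your route has a concrete gap at the last step. Proposition \ref{co-homo} requires $E$ to be a cellular \emph{ring} spectrum, but the paper explicitly records (Remark \ref{AMU module}(6)) that no ring structure on $AK(s)$ is known; the product built much later in Chapter \ref{chapter BC} is only on $AP(n)$, only homotopy associative for $p>2$, and in any case not available at this point in the argument, so your claim ``applicable since $AK(s)$ is a cellular ring spectrum'' is unjustified. You can repair this without a ring structure: $AK(s)^{\ast\ast}X=0$ is equivalent to $AK(s)_{\ast\ast}(DX)=0$, and since $\h^{\ast\ast}(DX)\cong\Hom_{\h^{\ast\ast}}(\h^{\ast\ast}X,\h^{\ast\ast})$ is again finite free over the self-dual algebra $\Lambda(Q_s)$, rerunning your AHSS argument with $DX$ in place of $X$ gives $AK(s)^{\ast\ast}(DX)=0$ and hence $AK(s)_{\ast\ast}X=0$. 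As for the differential, your Betti-realisation argument is sound over $\C$ (motivic Milnor basis elements of distinct weight remain linearly independent in $\mathcal A_{\Top}$, so realisation is injective in each fixed bidegree), but you still have to match the boundary of $\Sigma^{2(p^s-1),p^s-1}Ak(s)\xrightarrow{v_s}Ak(s)\to H\Z/p$ with the relevant slice-tower $k$-invariant governing the first nonzero differential; that bookkeeping is, as you anticipate, where the real work lies, and it is precisely what the paper's Adams-spectral-sequence route circumvents.
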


This is proven with the help of the motivic Adams spectral sequence for $Ak(s)\wedge X$, where $Ak(s)$ is the motivic analog of the connective Morava K-theory spectrum. This spectral sequence is studied in Section \ref{Motivic Adams spectral sequence}. In Section \ref{construction}, we construct a spectrum $\X_n$ satisfying the assumption of the theorem, and we show that this spectrum is indeed of motivic type $n$ (Theorem \ref{zero}). In Section \ref{constant}, we show that $c_\C(X_n)$ is also of motivic type $n$, proving that the two thick ideals in $(\SH(\C)_f)_{(p)}$ generated by $c_\C(X_n)$ respectively $\X_n$ cannot be distinguished by the motivic Morava K-theories. However, we do not know whether the ideals are actually equal or not.\\

{\bf Chapter \ref{chapter BC}} is devoted to the study of the Bousfield classes of $AK(n)$ and related motivic spectra. The main goal in writing this chapter was to prove that ${AK(n+1)_{\ast\ast}(X)}=0$ implies $AK(n)_{\ast\ast}(X)=0$, which we show for $X\in\SH(\C)^{fin}$ and $p>2$ in Theorem \ref{AK(n+1)}.

\newtheorem*{AK(n+1)}{Theorem \ref{AK(n+1)}}
\begin{AK(n+1)}
Let $p>2$. If $X\in\SH(\C)^{fin}$ satisfies $AK(n+1)_{\ast\ast}(X)=0$, then it also satisfies $AK(n)_{\ast\ast}(X)=0$.
\end{AK(n+1)}

A lot of results in Chapter \ref{chapter BC} hold more generally. In Section \ref{$v_n$-Torsion}, we prove that $v_n$-torsion in $ABP_{\ast\ast}ABP$ is also $v_{n-1}$-torsion. This holds in any $\SH(k)$, $k\subseteq\C$ (Theorem \ref{torsion}). The proof uses methods similar to the topological version of the statement, \cite[Theorem 0.1]{JY}. Another ingredient is the map of Hopf algebroids $(BP_\ast,BP_\ast BP)\rightarrow (ABP_{\ast\ast},ABP_{\ast\ast}ABP)$, as studied in \cite{MLE}. In Section \ref{Section Wurgler}, we construct certain operations on $AP(n)$ in $\SH(k)$, $k\subseteq\C$ (Theorem \ref{5.1}), similar to the operations on $P(n)$ constructed by W\"urgler in \cite[Theorem 5.1]{W}. These are used to prove the equality of Bousfield classes $\langle AK(n)\rangle=\langle AB(n)\rangle$ in $\SH(\C)$ (Corollary \ref{AK=AB}) with methods similar to those of \cite{JW}. In the proof of Corollary \ref{AK=AB}, we assume $k=\C$ because we make use of the explicitly known coefficient rings $H\Z/p_{\ast\ast}$ and $AK(n)_{\ast\ast}$. The result is used to prove Theorem \ref{decomposition}, which is the following decomposition of Bousfield classes in $\SH(\C)$, as conjectured in \cite[Question 2.17]{HornLoc} for arbitrary fields.

\newtheorem*{decomposition}{Theorem \ref{decomposition}}
\begin{decomposition}
For $p>2$,
\[\langle AE(n)\rangle = \underset{i\leq n}\bigvee\langle AK(i)\rangle \textup{ in } \SH(\C).\]
\end{decomposition}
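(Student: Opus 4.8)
\emph{Proof strategy.} The plan is to transport Ravenel's proof of the topological splitting $\langle E(n)\rangle=\bigvee_{i\le n}\langle K(i)\rangle$ \cite{RavLoc} to $\SH(\C)$, feeding in Corollary \ref{AK=AB} as the essential new input and using a $\tau$-Bockstein argument to compensate for the fact that over $\C$ the motivic Morava $K$-theories are \emph{not} minimal Bousfield classes. The only general tool needed is the identity $\langle R\rangle=\langle v^{-1}R\rangle\vee\langle R/v\rangle$, valid for any self-map $v$ of any spectrum $R$ (both summands are dominated by $\langle R\rangle$, and if $R/v\wedge X=0$ then $v$ acts invertibly on $R\wedge X$, so $R\wedge X\simeq v^{-1}R\wedge X$). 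Since $p=v_0,v_1,\dots,v_{n-1}$ is a regular sequence in $AE(n)_{\ast\ast}=\Z_{(p)}[\tau,v_1,\dots,v_{n-1},v_n^{\pm1}]$, iterating this identity along it gives
\[\langle AE(n)\rangle=\bigvee_{i=0}^{n}\langle Y_i\rangle,\]
where $Y_i=v_i^{-1}\bigl(AE(n)/(p,v_1,\dots,v_{i-1})\bigr)$ for $i<n$ and $Y_n=AE(n)/(p,v_1,\dots,v_{n-1})$. By the motivic Baas--Sullivan formalism $Y_n\simeq AK(n)$ (the analog of $E(n)/(p,\dots,v_{n-1})\simeq K(n)$), and $Y_0=p^{-1}AE(n)$ is rational, hence Bousfield equivalent to $AK(0)$ in $\SH(\C)$. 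So it remains to show $\langle Y_i\rangle=\langle AK(i)\rangle$ for $1\le i\le n-1$.

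One inequality is formal: via the ring map $ABP\to AE(n)$, the quotient $AE(n)/(p,v_1,\dots,v_{i-1})$ is a module over $AP(i)=ABP/(p,v_1,\dots,v_{i-1})$, so $Y_i$ is a nonzero homotopy-associative ring spectrum and a module over $AB(i)=v_i^{-1}AP(i)$, whence $\langle Y_i\rangle\le\langle AB(i)\rangle=\langle AK(i)\rangle$ by Corollary \ref{AK=AB}. For the reverse inequality one cannot invoke minimality of $\langle AK(i)\rangle$: over $\C$ the coefficient ring $H\Z/p_{\ast\ast}=\F_p[\tau]$ (Lemma \ref{Ah}) is not a graded field, so $\langle AK(i)\rangle=\langle\tau^{-1}AK(i)\rangle\vee\langle AK(i)/\tau\rangle$ with both summands nonzero. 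Instead I would argue after inverting $\tau$ and after reducing mod $\tau$ separately: since $\langle Z\rangle=\langle\tau^{-1}Z\rangle\vee\langle Z/\tau\rangle$ for every $Z$, it suffices to prove $\langle\tau^{-1}Y_i\rangle=\langle\tau^{-1}AK(i)\rangle$ and $\langle Y_i/\tau\rangle=\langle AK(i)/\tau\rangle$. Now $\tau^{-1}AK(i)$ and $AK(i)/\tau$ have coefficient rings $\F_p[\tau^{\pm1},v_i^{\pm1}]$ and $\F_p[v_i^{\pm1}]$, which \emph{are} graded fields, so both are genuine field spectra. Smashing with $\tau^{-1}S^0$, resp.\ with the cofiber $S^0/\tau$, is a smashing operation and hence preserves Bousfield equivalence; thus Corollary \ref{AK=AB} gives $\langle\tau^{-1}AB(i)\rangle=\langle\tau^{-1}AK(i)\rangle$ and $\langle AB(i)/\tau\rangle=\langle AK(i)/\tau\rangle$, and with the induced module structures this yields $\langle\tau^{-1}Y_i\rangle\le\langle\tau^{-1}AK(i)\rangle$ and $\langle Y_i/\tau\rangle\le\langle AK(i)/\tau\rangle$. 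Finally, for a field spectrum $F$ and a nonzero ring spectrum $Y$ with $\langle Y\rangle\le\langle F\rangle$ one has $\langle Y\rangle=\langle F\rangle$: by the field dichotomy either $F\wedge Y=0$ (in which case $Y$ is $F$-acyclic, hence $Y$-acyclic, so $Y\wedge Y=0$ and $Y=0$, a contradiction) or $\langle Y\rangle\ge\langle F\rangle$. Applying this to the nonzero ring spectra $\tau^{-1}Y_i$ and $Y_i/\tau$ upgrades the two inequalities to equalities, so $\langle Y_i\rangle=\langle AK(i)\rangle$ and the theorem follows. The hypothesis $p>2$ is used only through Corollary \ref{AK=AB} (and the W\"urgler-type operations of Theorem \ref{5.1} behind it), and the base field $\C$ only through the explicit forms of $H\Z/p_{\ast\ast}$ and $AK(n)_{\ast\ast}$.

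The step I expect to be the main obstacle is the reverse inequality $\langle AK(i)\rangle\le\langle Y_i\rangle$ for $0<i<n$, and in particular making the $\tau$-inverted and mod-$\tau$ analyses of the localized sub-quotients $Y_i$ rigorous: one needs that $\tau^{-1}Y_i$ and $Y_i/\tau$ are honest nonzero ring spectra carrying the asserted module structures over $\tau^{-1}AB(i)$, resp.\ $AB(i)/\tau$, which rests on the homotopy-ring-spectrum structure of $AP(i)$ (the motivic analog of W\"urgler's theory, cf.\ Theorem \ref{5.1}) and on the explicit coefficient computations over $\C$. The remaining ingredients (the iterated splitting of $\langle AE(n)\rangle$ and the field-spectrum dichotomy) are formal manipulations in the Bousfield lattice of $\SH(\C)$.
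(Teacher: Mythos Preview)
Your approach differs from the paper's in a genuine and interesting way. The paper does \emph{not} decompose $\langle AE(n)\rangle$ directly; instead it first splits $\langle ABP\rangle$. Iterating Lemma~\ref{bousfield2}(3) together with Corollary~\ref{AK=AB} gives
\[
\langle ABP\rangle=\langle AK(0)\rangle\vee\cdots\vee\langle AK(n)\rangle\vee\langle AP(n+1)\rangle.
\]
Then the inequality $\langle AE(n)\rangle\le\bigvee_{i\le n}\langle AK(i)\rangle$ follows because $AE(n)$ is an $ABP$-module (so $\langle AE(n)\rangle\le\langle ABP\rangle$) and $\langle AE(n)\rangle\wedge\langle AP(n+1)\rangle=\langle 0\rangle$ (Lemma~\ref{bousfield2}(2)). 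The reverse inequality is obtained from Corollary~\ref{bousfieldE} (the $v_n$-torsion theorem of Section~\ref{$v_n$-Torsion}) together with $\langle AE(i)\rangle\ge\langle AK(i)\rangle$. No field-spectrum or $\tau$-Bockstein argument is used.

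Your route gives the upper bound $\langle AE(n)\rangle=\bigvee_i\langle Y_i\rangle\le\bigvee_i\langle AK(i)\rangle$ by a cleaner mechanism (the $AB(i)$-module structure on $Y_i$ plus Corollary~\ref{AK=AB}), and this half is fine. The difficulty is entirely in your reverse inequality. Your $\tau$-Bockstein/field-dichotomy step needs (a) that $AK(i)/\tau$ and $\tau^{-1}AK(i)$ are honest homotopy ring spectra, and (b) that a cellular motivic ring spectrum whose $\pi_{\ast\ast}$ is a bigraded field satisfies the module-splitting dichotomy. Neither is established in the paper: Section~\ref{section action} builds $\mu_{AP(n)}$ by working over the strictly commutative base $MGL_{(p)}$ and quotienting by elements of $\pi_{\ast\ast}MGL_{(p)}$, but $\tau$ is \emph{not} such an element (it only appears after reducing mod $p$), so the construction does not immediately produce a product on $AK(i)/\tau$. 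Claim~(b) is plausible from \cite{DI}-type arguments but is not in the paper either. So as written there is a real gap at exactly the point you flagged.

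A painless fix is to keep your decomposition and upper bound, then get the lower bound the way the paper does: Corollary~\ref{bousfieldE} gives $\langle AE(n)\rangle\ge\langle AE(i)\rangle$, and $\langle AE(i)\rangle\ge\langle AK(i)\rangle$ is Lemma~\ref{bousfield2}(1). That yields $\langle AK(i)\rangle\le\langle AE(n)\rangle$ directly, with no appeal to field spectra, and your argument then goes through.
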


\vspace{120pt}

{\bf Acknowledgements.}

The current preprint is my PhD thesis. I want to thank my advisor Prof. Jens Hornbostel for introducing me to the exciting topic of motivic homotopy theory and for offering me this project. I am grateful for his steady support during my dissertation and for all the helpful discussions.

I thank Neil Strickland for sharing his preprint on thick ideals in equivariant stable homotopy categories with me and for allowing me to incorporate parts of it as Chapter \ref{equivariant} of this thesis.

I also owe great thanks to my colleagues Marcus Zibrowius and Jeremiah Heller, who carefully read earlier versions of this work and helped me a lot with their comments. Furthermore, I thank Marc Hoyois for answering some of my questions and I thank Kyle Ormsby for pointing out the reference \cite{Balmer} to me. 

This work was financed by the Deutsche Forschungsgemeinschaft, grant no. HO 4729/1-1.

\chapter{Thick ideals in classical stable homotopy theory}\label{classic}

In this chapter, we introduce basic notation concerning thick ideals and the stable homotopy category $\SH$. In $\SH^{fin}$, a thick ideal is the same as a thick subcategory (Lemma \ref{thicksubcat thickideal}). We recall the thick subcategory theorem of Hopkins and Smith in Theorem \ref{HS}.

\begin{defn}
A tensor triangulated category is a triple $(\mathcal T, \wedge, S)$ consisting of a triangulated category $\mathcal T$ and a symmetric monoidal product $\wedge$ on $\mathcal T$ with unit $S$, such that for any $A\in\mathcal T$, $A\wedge -$ preserves exact triangles (see e.g. \cite[Definition 1.1]{BalmerPI}).
\end{defn}

\begin{eg}
The stable homotopy category $(\SH, \wedge, S)$ with $S=S^0=\Sigma^\infty S^0$ is a tensor triangulated category.
\end{eg}

\begin{defn}\label{thick ideal}
Let $(\mathcal T,\wedge, S)$ be a tensor triangulated category. A full triangulated subcategory $\emptyset\neq\mathcal C\subseteq\mathcal T$ is called a
\begin{compactenum}[(1)]
\item thick subcategory if it is closed under retracts.
\item thick ideal if it is a thick subcategory and in addition satisfies:
\[\textup{if }X\in\mathcal T \textup{ and } Y\in \mathcal C \textup{ then } X\wedge Y\in \mathcal C.\]
\end{compactenum}
If $\mathcal X$ is an object or a set of objects, we denote the smallest thick ideal containing $\mathcal X$ by $\thickid(\mathcal X)$ and call it the thick ideal generated by $\mathcal X$. This is well defined because the intersection of thick ideals is again a thick ideal. If $\mathcal X$ is a finite set of objects, $\thickid (\mathcal X)$ is called finitely generated.
\end{defn}

\begin{rk}\label{FIdl}
Any finitely generated thick ideal is generated by a single element, namely the direct sum of all generators.
\end{rk}

\begin{eg}
If $(\mathcal T,\wedge, S)$ is a tensor triangulated category, then \[\thickid(S)=\mathcal T,\] since for any $X\in\mathcal T$, $X\cong X\wedge S$. 

More generally, if $Z$ is in the Picard group $Pic(\mathcal T)$, i.e., if there exists a $Z'$ such that $Z'\wedge Z\cong S$, then $\thickid (Z)=\mathcal T$. The Picard group of the stable homotopy category $\SH$ consists precisely of the spheres $\Sigma^n S^0$, $n\in\Z$ (see e.g. \cite{HMS}), the Picard group of the equivariant stable homotopy category $\SH(G)$ is described in \cite{FLM} and examples for elements in the Picard groups of motivic stable homotopy categories are given in \cite{HuPic}.
\end{eg}

\begin{defn}
The category $\SH^{fin}$ is the smallest full subcategory of $\SH$ that contains all finite desuspensions of suspension spectra of finite CW complexes and is closed under isomorphisms.
\end{defn}

\begin{rk}\label{SH fin}
$\SH^{fin}$ is a tensor triangulated subcategory of $\SH$. It can equivalently be defined as the smallest thick subcategory of $\SH$ that contains $S^0$, or as the full subcategory of compact objects in $\SH$ (see, e.g. \cite[Theorem II.7.4]{SymSpec}).
\end{rk}

\begin{lemma}\label{thicksubcat thickideal}
In $\SH^{fin}$, any thick subcategory is already a thick ideal.
\end{lemma}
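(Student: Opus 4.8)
The plan is to show that any thick subcategory $\mathcal C\subseteq\SH^{fin}$ is automatically closed under smashing with arbitrary objects of $\SH^{fin}$, which by Remark \ref{SH fin} means with arbitrary compact objects. The key observation is that $S^0$ generates $\SH^{fin}$ as a thick subcategory: every object of $\SH^{fin}$ is built from $S^0$ by finitely many cofiber sequences, (de)suspensions, and retracts. So first I would fix $Y\in\mathcal C$ and consider the full subcategory
\[
\mathcal D_Y=\{X\in\SH^{fin}\mid X\wedge Y\in\mathcal C\}.
\]
The goal is to prove $\mathcal D_Y=\SH^{fin}$.

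Next I would check that $\mathcal D_Y$ is a thick subcategory of $\SH^{fin}$. It contains $S^0$ since $S^0\wedge Y\cong Y\in\mathcal C$. It is closed under suspension and desuspension because $(\Sigma^{\pm1}X)\wedge Y\cong\Sigma^{\pm1}(X\wedge Y)$ and $\mathcal C$, being triangulated, is closed under (de)suspension. Given a cofiber sequence $X'\to X\to X''$ in $\SH^{fin}$, smashing with $Y$ (which is exact, as $-\wedge Y$ preserves exact triangles in a tensor triangulated category) yields a cofiber sequence $X'\wedge Y\to X\wedge Y\to X''\wedge Y$; since $\mathcal C$ is triangulated, if two of the three terms lie in $\mathcal C$ so does the third, hence if two of $X',X,X''$ lie in $\mathcal D_Y$ so does the third. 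Finally, if $X$ is a retract of $X_0\in\mathcal D_Y$, then $X\wedge Y$ is a retract of $X_0\wedge Y\in\mathcal C$, and $\mathcal C$ is closed under retracts, so $X\in\mathcal D_Y$. Thus $\mathcal D_Y$ is a thick subcategory containing $S^0$, and by the characterisation in Remark \ref{SH fin} (the smallest thick subcategory of $\SH$ containing $S^0$ is $\SH^{fin}$, and $\mathcal D_Y\subseteq\SH^{fin}$) we conclude $\mathcal D_Y=\SH^{fin}$.

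Since this holds for every $Y\in\mathcal C$, we get: for all $X\in\SH^{fin}$ and all $Y\in\mathcal C$, $X\wedge Y\in\mathcal C$, which is exactly the ideal condition. Hence $\mathcal C$ is a thick ideal. I do not anticipate a serious obstacle here; the only point that needs a little care is making precise that "thick subcategory generated by $S^0$" equals $\SH^{fin}$ and that $\mathcal D_Y$ is genuinely a subcategory of $\SH^{fin}$ (it is, by construction, a full subcategory of $\SH^{fin}$), so that the minimality characterisation applies. Everything else is a routine check that the defining closure properties of a thick subcategory are inherited by $\mathcal D_Y$ from $\mathcal C$ via the exact functor $-\wedge Y$.
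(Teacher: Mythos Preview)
Your proof is correct and follows essentially the same approach as the paper: both reduce to the fact that $\SH^{fin}$ is generated by $S^0$. The only cosmetic difference is that the paper carries out an explicit cellular induction on a filtration $S^{n_0}=Y^0\to Y^1\to\cdots\to Y^n\cong Y$, whereas you package the same induction as the ``subcategory trick'' using the characterisation of $\SH^{fin}$ from Remark~\ref{SH fin}.
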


\begin{proof}
Let $X$ be an element of the thick subcategory $\mathcal C\subseteq\SH^{fin}$ and let $Y\in\SH^{fin}$. By the definition of $\SH^{fin}$, there is a finite sequence of spectra $\{Y^k\}_{0\leq k\leq n}$ such that $Y^0=S^{n_0}$, $Y\cong Y^n$ and each $Y^k$ is the cofiber of some map $S^{n_k}\rightarrow Y^{k-1}$, $n_k\in\Z$. Any thick subcategory is closed under suspensions and desuspensions because $\Sigma^{\pm 1}X$ lies in an exact triangle with $X\overset{1}\rightarrow X$. Hence, $X\wedge Y^0\in\mathcal C$. Assume that $X\wedge Y^k\in\mathcal C$ for some $k$. Then $X\wedge S^{n_{k+1}}\rightarrow X\wedge Y^k\rightarrow X\wedge Y^{k+1}$ is an exact triangle whose first two objects are in $\mathcal C$. Since $\mathcal C$ is a thick subcategory, it follows that $X\wedge Y^{k+1}\in\mathcal C$, too, and inductively, $X\wedge Y^n\in\mathcal C$. Note further that thick subcategories are closed under isomorphisms as these are special cases of retractions. Hence, $X\wedge Y\in\mathcal C$, which proves that $\mathcal C$ is a thick ideal. 
\end{proof}

\begin{defn}
Let $p$ be a prime number. The $p$-local categories $\SH_{(p)}$ and $\SH^{fin}_{(p)}$ are defined as the Bousfield localisations of $\SH$ and $\SH^{fin}$ at the $p$-local Moore spectrum $M\Z_{(p)}$.
\end{defn}

It is a common procedure to study spectra $p$-locally for each prime $p$, i.e. instead of $X\in\SH$ one studies its image $X_{(p)}$ under $\SH\rightarrow\SH_{(p)}$, and then fits the information together. For example, $n$-th Morava K-theory $K(n)$ is defined for any fixed prime $p$, where it satisfies $K(n)_\ast(X)=K(n)_\ast(X_{(p)})$. For the construction and properties of $K(n)$, see e.g. \cite{JW}.\\

Now we are ready to state the thick subcategory theorem of Hopkins and Smith \cite[Theorem 7]{HS}, which was the main motivation for this thesis. It gives a beautiful and complete description of the thick ideals in $\SH^{fin}_{(p)}$ in terms of Morava K-theories. 

\begin{theorem}\emph{(Hopkins, Smith)}\label{HS}

In $\SH^{fin}_{(p)}$, the thick ideals are given as a chain 
\[\SH^{fin}_{(p)}= \mathcal C_0\supsetneq \mathcal C_1\supsetneq\cdots \supsetneq \mathcal C_n\supsetneq \cdots\supsetneq \mathcal C_\infty =\{0\},\]
with $\mathcal C_n=\{X\in\SH^{fin}_{(p)}\; |\; K(n-1)_\ast(X)=0\}$ for $0<n<\infty$. 
\end{theorem}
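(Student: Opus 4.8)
The plan is to deduce this classification from the three inputs advertised in the introduction: the nilpotence theorem of Hopkins--Smith, Mitchell's construction of type-$n$ spectra, and Ravenel's implication $K(n)_\ast(X)=0\Rightarrow K(n-1)_\ast(X)=0$ for finite $p$-local spectra. I would organise the argument in two halves: first show that each $\mathcal C_n$ as defined by Morava K-theory vanishing is indeed a thick ideal and that they form a strictly decreasing chain, and then show that there are \emph{no other} thick ideals.

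\textbf{Step 1: the $\mathcal C_n$ are thick ideals.} For fixed $n$, the subcategory $\mathcal C_n=\{X\mid K(n-1)_\ast(X)=0\}$ is closed under exact triangles and retracts because $K(n-1)_\ast(-)$ is a homology theory (long exact sequence plus additivity), and it is an ideal because $K(n-1)$ satisfies the K\"unneth formula: $K(n-1)_\ast(X\wedge Y)\cong K(n-1)_\ast(X)\otimes_{K(n-1)_\ast}K(n-1)_\ast(Y)$, so if $K(n-1)_\ast(Y)=0$ then the smash with any $X$ also has vanishing $K(n-1)$-homology. By Lemma \ref{thicksubcat thickideal} it would even suffice to check the thick-subcategory axioms, but the K\"unneth argument gives the ideal property directly. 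Setting $\mathcal C_0=\SH^{fin}_{(p)}$ and $\mathcal C_\infty=\{0\}$, and using Ravenel's implication $K(n)_\ast(X)=0\Rightarrow K(n-1)_\ast(X)=0$, we get the inclusions $\mathcal C_n\supseteq\mathcal C_{n+1}$. Strictness, $\mathcal C_n\supsetneq\mathcal C_{n+1}$, follows from Mitchell's theorem \cite[Theorem 4.8]{Mi}: there exists a finite $p$-local spectrum $X$ of type exactly $n$, i.e.\ $K(i)_\ast(X)=0$ for $i<n$ and $K(n)_\ast(X)\neq 0$; such an $X$ lies in $\mathcal C_n$ but not in $\mathcal C_{n+1}$. (The telescope/thick-ideal generated by the mod-$p$ Moore spectrum smashed with appropriate Smith--Toda complexes provides the standard models.)

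\textbf{Step 2: every thick ideal is one of the $\mathcal C_n$.} Let $\mathcal C\subseteq\SH^{fin}_{(p)}$ be a nonzero thick ideal and set $n=\min\{m\mid\exists X\in\mathcal C \text{ with } K(m)_\ast(X)\neq 0\}$, with the convention $n=\infty$ if no such $m$ exists, in which case $\mathcal C=\{0\}$ (if every nonzero spectrum in $\mathcal C$ had all Morava K-theories vanishing, then by the nilpotence theorem it would be contractible). By construction $\mathcal C\subseteq\mathcal C_n$. For the reverse inclusion, pick $X\in\mathcal C$ with $K(n)_\ast(X)\neq 0$ and, using Ravenel's implication again, note $K(i)_\ast(X)=0$ for all $i<n$ by minimality of $n$, so $X$ has type exactly $n$; then for any $Y\in\mathcal C_n$ (so $Y$ has type $\geq n$) one invokes the thick subcategory theorem's core consequence of the nilpotence theorem, namely that a finite $p$-local spectrum of type $\geq n$ lies in the thick ideal generated by any finite $p$-local spectrum of type $n$. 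Hence $Y\in\thickid(X)\subseteq\mathcal C$, giving $\mathcal C_n\subseteq\mathcal C$ and therefore $\mathcal C=\mathcal C_n$.

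\textbf{Main obstacle.} The genuinely hard input is the statement used in Step 2 that type-$\geq n$ spectra are generated by any single type-$n$ spectrum; this is the ``thick subcategory theorem'' proper and rests on the nilpotence theorem \cite[Theorem 3]{HS}. Since the excerpt explicitly allows us to cite the nilpotence theorem, Mitchell's existence result, and Ravenel's implication, the proof above is essentially an assembly of these; the only care needed is the bookkeeping of the degenerate endpoints $\mathcal C_0$ and $\mathcal C_\infty$ and the verification that ``$K(n)_\ast(X)\neq 0$ and minimality'' genuinely forces type exactly $n$, which is where Ravenel's implication is indispensable. I would also remark that $K(n)_\ast(X)$ depends only on $X_{(p)}$, so working in the $p$-local category loses nothing.
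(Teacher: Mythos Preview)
The paper does not give its own proof of this theorem; it is stated with attribution to Hopkins--Smith \cite[Theorem 7]{HS}, and the introduction merely lists the same three ingredients you identify (nilpotence theorem, Mitchell's type-$n$ spectra, Ravenel's implication). So there is no proof in the paper to compare against, and your outline of Step~1 is correct and matches the paper's framing.

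The genuine gap is in Step~2. The statement you invoke---that any finite $p$-local spectrum of type $\geq n$ lies in the thick ideal generated by any given type-$n$ spectrum---is not something one can simply cite as a ``consequence of the nilpotence theorem''; deducing it \emph{is} the content of the thick subcategory theorem you are trying to prove. You flag this under ``Main obstacle'' but do not carry out the deduction, so as written the argument is circular. What is missing is the argument via $\ann(X)=\Fib(S\to F(X,X))$: one shows that $\thickid(X)$ consists of those $A$ for which $\ann(X)^{\wedge m}\wedge A\to A$ is null for some $m$, and then applies the nilpotence theorem (in its ``Morava K-theories detect nilpotence'' form) to the composite $\ann(X)^{\wedge m}\to S\to F(Y,Y)$ to conclude that $\supp(Y)\subseteq\supp(X)$ forces $Y\in\thickid(X)$. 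The paper in fact develops exactly this machinery later, in the equivariant chapter (Proposition~\ref{ann} and Theorem~\ref{nilp-idl}); specialising those arguments to the trivial group would close your gap.

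A minor structural remark: you assume $\mathcal C$ nonzero and then allow $n=\infty$, only to conclude $\mathcal C=\{0\}$ in that case. Cleaner to dispose of $\mathcal C=\{0\}=\mathcal C_\infty$ first and then argue that any nonzero $\mathcal C$ automatically has $n<\infty$ by nilpotence detection.
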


\begin{defn}\label{type}
A spectrum $X\in\SH^{fin}_{(p)}$ is said to be of type $n$ if $K(n-1)_\ast(X)=0$ and $K(n)_\ast(X)\neq 0$. We write $\type(X)=n$.
\end{defn}

For any fixed prime $p$, the type of a spectrum is well-defined by \cite[Theorem 2.11]{RavLoc}.
The thick subcategory theorem implies that any spectrum $X$ of type $n$ generates $\mathcal C_{n}$ as a thick ideal.

\chapter{Thick ideals in equivariant stable homotopy theory}\label{equivariant}

The contents of this chapter (except for the introductory section and some details) are due to Neil Strickland \cite{Sch}.
We state Strickland's main results on thick ideals in $\SH(G)_f$ and their proofs. We start with the necessary recollection from equivariant stable homotopy theory. Thick ideals in $\SH(G)_f$ are classified by equivariant Morava K-theories, $K(n,H)=G/H_+\wedge \tilde E[\not\geq H]\wedge K(n)$, $H\subseteq G$, which are introduced in Section \ref{Equivariant Morava K-theories}. They are related to the classical Morava K-theories via the geometric fixed point functor (Proposition \ref{K(n) and phi}) and satisfy similar properties, such as the K\"unneth formula (Corollary \ref{Kunneth equivariant}). As in the non-equivariant theory of Hopkins and Smith, the equivariant Morava K-theories detect nilpotence in $\SH(G)_f$ (Theorem \ref{detect nilp}). The general relation between the detection of nilpotence by a family of homology theories and thick ideals is described in Theorem \ref{nilp-idl}. As a corollary, we reformulate the thick subcategory theorem \cite[Theorem 7]{HS} in a non-$p$-localised way (Theorem \ref{thm Q}) and prove a similar equivariant result, Theorem \ref{tau injective}, which describes an injective lattice homomorphism from the set of thick ideals in $\SH(G)_f$ to a particular lattice $G\mathcal Q$ and gives a lower bound for its image. An upper bound is given in Proposition \ref{upper bound}. 

For our study of thick ideals in the motivic stable homotopy categories $\SH(k)_f$, $k\subseteq \R$, we will use the here given knowledge concerning thick ideals in the $\Z/2$-equivariant stable homotopy category. Therefore, the case $G=\Z/2$ is the interesting one for the rest of this thesis and we will summarise all results on thick ideals in $\SH(\Z/2)_f$ in Section \ref{Application to Z/2}. Any thick ideal in $(\SH(\Z/2)_f)_{(p)}$ is of the form $\mathcal C_{m,n}=\{X\; |\; \phi^{\{1\}}(X)\in\mathcal C_m \textup{ and }\phi^{\Z/2}(X)\in\mathcal C_n\}$, where $m,n\in [0,\infty]$ and $\phi^H$ is the geometric fixed point functor (Corollary \ref{Cmn}). But not all $\mathcal C_{m,n}$ are different. Corollary \ref{mn} gives partial information on which ones are.

\section{Equivariant stable homotopy theory}

Let $G$ be a finite group and $\SH(G)$ be the stable homotopy category of genuine $G$-spectra. 
This category has quite a few models. We switch between spectra of $G$-CW complexes and spectra of $G$-simplicial sets, depending on which is more convenient in the concrete situation. A good model for $\SH(G)$ as a tensor triangulated category is the category of orthogonal $G$-spectra, see e.g. \cite{EquivOrth} or \cite{Schwede}. In Section \ref{Z2 model}, we make use of two other models, namely symmetric $G$-spectra and $G\Sigma_G$-spectra. The following definition of finite $G$-spectra, for example, makes sense if we use the model of orthogonal $G$-spectra with $G$-CW complexes as the underlying category of spaces. The definition induces a notion of finiteness for any other model for $\SH(G)$.

\begin{defn}
For $G$ a finite group, let $\SH(G)^{fin}$ be the smallest full subcategory of $\SH(G)$ that contains all finite desuspensions 
of suspension spectra of finite $G$-CW complexes and is closed under isomorphisms. The objects of $\SH(G)^{fin}$ are called finite $G$-CW spectra. We denote the closure of $\SH(G)^{fin}$ under retracts in $\SH(G)$ by $\SH(G)_f$.
\end{defn}

Both $\SH(G)^{fin}$ and $\SH(G)_f$ are tensor triangulated subcategorties of $\SH(G)$ because finite $G$-CW complexes are closed under cofiber sequences and under smash products and because retracts commute with smash products.

\begin{defn}\label{dualizable}
A spectrum $X\in\SH(G)$ is called dualisable, if the canonical map 
\[F(X,S^0)\wedge Y\rightarrow F(X,Y)\]
is an isomorphism for any $Y\in \SH(G)$, where $F(-,-)$ denotes the derived function spectrum and $S^0=S^0_G$ is the shere spectrum in $\SH(G)$ (for possible definitions of $S^0$ and $F(-,-)$, see e.g. \cite[Examples 2.10 and 5.12]{Schwede}).
$DX=F(X,S^0)$ is called the dual of $X$. It satisfies $DDX\cong X$ (\cite[Proposition III.1.3]{LMS}). In the following, we also use the notation $S$ for $S^0_G$, since it is the unit in $\SH(G)$. 

$X \in \SH(G)$ is called compact if $[X,-]_{\SH(G)}=\Hom_{\SH(G)}(X,-)$ preserves arbitrary coproducts.
\end{defn}

\begin{prop}\label{equiv compact}
The subcategory $\SH(G)_f\subseteq \SH(G)$ has the following equivalent descriptions:
\begin{compactenum}[(1)]
\item It is the full subcategory of retracts of finite $G$-CW spectra.
\item It is the full subcategory of dualisable objects.
\item It is the full subcategory of compact objects.
\end{compactenum}
\end{prop}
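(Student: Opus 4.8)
The plan is to prove the chain of equivalences $(1)\Leftrightarrow(2)\Leftrightarrow(3)$, exploiting the fact that $\SH(G)$ is a compactly generated tensor triangulated category whose compact generators are the orbit suspension spectra $\Sigma^\infty_+ G/H$ for $H\subseteq G$, each of which is a finite $G$-CW spectrum and is strongly dualisable. First I would record this generation statement (it is standard, see e.g. \cite{LMS} or \cite{Schwede}): every object of $\SH(G)$ is built from the $\Sigma^\infty_+ G/H$ under coproducts, triangles and retracts, and the $\Sigma^\infty_+ G/H$ are compact. The bulk of the work is then formal triangulated-category manipulation once these inputs are in place.

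For $(1)\Rightarrow(3)$: finite $G$-CW spectra are obtained from the orbit spectra $\Sigma^\infty_+ G/H$ (and their desuspensions) by finitely many cofiber sequences. Since each $\Sigma^\infty_+ G/H$ is compact, and the class of compact objects is a thick subcategory (closed under suspension, cofiber and retract — because $[-,-]$ commutes with finite sums automatically and the compactness condition is an exactness-type condition stable under these operations), every finite $G$-CW spectrum is compact, and hence so is every retract of one. For $(3)\Rightarrow(1)$: this is the equivariant Neeman/Thomason-type argument. Let $\mathcal F\subseteq\SH(G)$ be the thick subcategory generated by the $\Sigma^\infty_+ G/H$, i.e.\ exactly $\SH(G)_f$ by part (1)'s description; it is a thick subcategory of compact objects containing a generating set, so by the standard argument (any compact object is a retract of an object in $\mathcal F$, using that $\mathcal F$ is already closed under retracts and that a compact object mapping into a filtered colimit of objects of $\mathcal F$ factors through a finite stage) every compact object lies in $\mathcal F=\SH(G)_f$. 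Here I would cite the relevant general statement (e.g.\ \cite[Theorem II.7.4]{SymSpec} has the non-equivariant analogue; the equivariant version is in \cite{LMS} or can be deduced from Neeman's work).

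For $(1)\Leftrightarrow(2)$: the class of dualisable (= strongly dualisable) objects is a thick subcategory of $\SH(G)$ containing the unit $S$ and all $\Sigma^\infty_+ G/H$ (these are self-dual up to a twist, or at any rate strongly dualisable — a Spanier–Whitehead duality statement for finite $G$-CW spectra, cf.\ \cite[Chapter III]{LMS}); since the dualisable objects form a thick subcategory closed under retracts, every retract of a finite $G$-CW spectrum is dualisable, giving $(1)\Rightarrow(2)$. Conversely, for $(2)\Rightarrow(3)$ one checks that a dualisable object $X$ is compact: $[X,-]\cong [S, DX\wedge -]=\pi^G_0(DX\wedge -)$, and $DX\wedge-$ preserves coproducts while $\pi^G_0$ does too, so $[X,-]$ preserves coproducts. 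Combined with $(3)\Rightarrow(1)$ this closes the loop. The main obstacle is the $(3)\Rightarrow(1)$ direction: it is the one genuinely non-formal step, relying on the compact-generation of $\SH(G)$ together with the retract argument, and I would spend most of the write-up making sure the generating set is correct and citing the right form of the abstract theorem; everything else is bookkeeping with thick subcategories.
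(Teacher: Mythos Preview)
Your proposal is correct and follows essentially the same approach as the paper: both rely on the orbit spectra $\Sigma^\infty_+ G/H$ being dualisable compact generators, the formal argument that dualisable implies compact via $[X,-]\cong[S,DX\wedge-]$, and Neeman's theorem identifying the compact objects with the thick subcategory generated by a set of compact generators. The only organizational difference is that the paper cites \cite[Theorem~XVI.7.4]{MayEquiv} directly for $(1)\Leftrightarrow(2)$, whereas you derive it by closing the loop $(1)\Rightarrow(2)\Rightarrow(3)\Rightarrow(1)$; the content is the same.
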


\begin{proof}
(1) and (2) are equivalent by \cite[Theorem XVI.7.4]{MayEquiv}. 

Furthermore, any dualisable object $X$ is also compact, because 
\[ \left[X,\bigvee Y_i\right]=\left[S,F(X,\bigvee Y_i)\right]=\left[S,DX\wedge\bigvee Y_i\right]=\left[S,\bigvee (DX\wedge Y_i)\right]\]
\[= \bigoplus\left[S,DX\wedge Y_i\right]= \bigoplus\left[S,F(X, Y_i)\right]= \bigoplus\left[X,Y_i\right],\]
where we used that $F(X,-)$ is right adjoint to $X\wedge -$ and that the unit $S$ is compact, see e.g. \cite[Corollary 3.30(i)]{Schwede}.
Since $\pi_n^H X=0$ for all $H\subseteq G$ and $n\in\Z$ implies $X\cong 0$ in $\SH(G)$ by the definition of $\SH(G)$, $\{\Sigma^{n}\Sigma^\infty (G/H)_+\;|\; H\subseteq G, n\in\Z\}$ is a detecting set and, hence, also a generating set by \cite[Lemma 13.1.6]{MS}. That is, the smallest thick subcategory of $\SH(G)$ which is closed under infinite coproducts and contains $\{\Sigma^{n}\Sigma^\infty (G/H)_+\;|\; H\subseteq G, n\in\Z\}$ is $\SH(G)$ itself. By \cite[Corollary II.6.3]{LMS}, $\Sigma^\infty(G/H)_+$ is dualisable and, thus, compact. Hence, $\{\Sigma^\infty (G/H)_+\;|\; H\subseteq G\}$ is a set of compact generators for $\SH(G)$. 
By general theory due to Neeman \cite{Neeman}, see e.g. \cite[Theorem 13.1.14]{MS}, the full subcategory of compact objects in $\SH(G)$ is the thick subcategory generated by this set (i.e., the smallest thick subcategory of $\SH(G)$ containing this set). Therefore, (3) is also equivalent to (1) and (2). 
\end{proof}

Let $G$ be a finite group and $H\subseteq G$ a subgroup. There are functors $i:\SH\rightarrow \SH(G)$ and 
$\phi^H:\SH(G)\rightarrow \SH$, where $i$ maps a nonequivariant spectrum to the corresponding $G$-spectrum with trivial $G$-action and $\phi^H$ is the geometric fixed point functor (as defined in \cite[Definition 9.7]{LMS}, \cite[Definition 4.3]{EquivOrth} or \cite[Section 7.3]{Schwede}) concatenated with the forgetful functor from $\SH(W(H))$ to $\SH$, where $W(H)$ denotes the Weyl group of $H\subseteq G$. We will need the following properties \cite[Proposition 12.1 and Theorem 12.4]{Sch}.

\begin{prop}\label{phi}
The geometric fixed point functor $\phi^H$ has the following properties:
\begin{compactenum}[(1)]
\item In $\SH$, $\phi^H(\Sigma^\infty X)= \Sigma^\infty X^H$ for any suspension spectrum $\Sigma^\infty X\in\SH(G)$.
\item In $\SH$, $\phi^H(X\wedge Y)=\phi^H(X)\wedge \phi^H(Y)$ for any $X,Y\in\SH(G)$.
\item In $\SH$, $\phi^H(i(X))=X$ for any $X\in\SH$.
\item If $\phi^H(X)=0$ in $\SH$ for all $H\subseteq G$, then $X=0$ in $\SH(G)$.
\end{compactenum}
\end{prop}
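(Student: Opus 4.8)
The plan is to treat items (1)--(3) as essentially formal consequences of any point-set construction of $\phi^H$ and to concentrate on (4), which I would prove by induction on $|G|$ via the isotropy separation sequence.

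For (1), I would fix a point-set model of $\SH(G)$ (orthogonal or symmetric $G$-spectra) in which $\Phi^H$ is built from a left adjoint that on a representation-sphere spectrum $\Sigma^\infty_V A$ returns $\Sigma^\infty_{V^H} A^H$; specialising to $V=0$ gives precisely $\phi^H(\Sigma^\infty A)=\Sigma^\infty A^H$ for a $G$-CW complex $A$. For (2), the same functor is strong symmetric monoidal --- more precisely a symmetric monoidal left Quillen functor, so its left derived functor is strong monoidal on homotopy categories --- which is exactly the assertion $\phi^H(X\wedge Y)\cong\phi^H(X)\wedge\phi^H(Y)$; I would cite this from \cite{EquivOrth} or \cite{Schwede}. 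For (3), $i(A)$ is the space $A$ with trivial $G$-action, so $i(A)^H=A$ and (1) yields $\phi^H(i(\Sigma^\infty A))=\Sigma^\infty A$; since $\phi^H\circ i$ and $\id_{\SH}$ are both exact, preserve coproducts, and agree on the generator $S^0$ of $\SH$, they agree on all of $\SH$.

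For (4) I would induct on $|G|$, the case $|G|=1$ being trivial since $\phi^{\{1\}}=\id_{\SH}$. Assuming the claim for all proper subgroups, let $X\in\SH(G)$ satisfy $\phi^H(X)=0$ for all $H\subseteq G$. Geometric fixed points commute with restriction to subgroups, so for each proper $K\subsetneq G$ the restriction $\mathrm{res}^G_K X$ satisfies $\phi^L(\mathrm{res}^G_K X)\cong\phi^L(X)=0$ for all $L\subseteq K$; by the inductive hypothesis $\mathrm{res}^G_K X\cong 0$, i.e.\ $\pi^L_\ast(X)=0$ for every proper subgroup $L$. It remains to show $\pi^G_\ast(X)=0$. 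Letting $\mathcal{P}$ be the family of proper subgroups, I would use the isotropy separation cofiber sequence
\[ E\mathcal{P}_+\wedge X \longrightarrow X \longrightarrow \widetilde{E\mathcal{P}}\wedge X . \]
Since $E\mathcal{P}$ is built from cells $G/K_+$ with $K\in\mathcal{P}$, the spectrum $E\mathcal{P}_+\wedge X$ is a sequential homotopy colimit of spectra assembled from $G/K_+\wedge X\cong G_+\wedge_K\mathrm{res}^G_K X$ ($K$ proper); the Wirthmüller isomorphism gives $\pi^G_\ast(G/K_+\wedge X)\cong\pi^K_\ast(\mathrm{res}^G_K X)=0$, and a Milnor-sequence argument over the skeletal filtration yields $\pi^G_\ast(E\mathcal{P}_+\wedge X)=0$. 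Hence $\pi^G_\ast(X)\cong\pi^G_\ast(\widetilde{E\mathcal{P}}\wedge X)$, and by the standard identification of the categorical $G$-fixed points of $\widetilde{E\mathcal{P}}\wedge X$ with the geometric fixed points $\Phi^G X$ this equals $\pi_\ast(\phi^G X)=0$. Thus $\pi^H_\ast(X)=0$ for all $H\subseteq G$, so $X\cong 0$ in $\SH(G)$.

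The routine ingredients (exactness of the functors, the isotropy separation sequence, Wirthmüller, Milnor sequences, and the reduction that $\pi^L_\ast(X)=0$ for all proper $L$ leaves only $\pi^G_\ast(X)$ to kill) present no difficulty. The real work lies entirely in the point-set foundations of $\phi^H$: producing one explicit model that simultaneously realises (1), is strong symmetric monoidal (2), commutes with restriction, and satisfies $(\widetilde{E\mathcal{P}}\wedge X)^G\simeq\Phi^G X$. I expect that to be the main obstacle, and rather than reprove these compatibilities I would import them from \cite{LMS}, \cite{EquivOrth} or \cite{Schwede}, as \cite{Sch} does.
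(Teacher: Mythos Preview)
Your proposal is correct and aligns with the paper's treatment: the paper proves all four parts purely by citation (to \cite{LMS}, \cite{EquivOrth}, \cite{Schwede}, and \cite{Sch}), while you supply the standard isotropy-separation induction for (4) that those references contain. One small terminological slip: the skeletal filtration of $E\mathcal{P}_+$ is a sequential \emph{colimit}, so homotopy groups commute with it directly---no Milnor sequence (which concerns inverse limits) is needed---but the conclusion is unaffected.
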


\begin{proof}
A proof of (1) can be found in \cite[Corollary II.9.9]{LMS}, \cite[Corollary 4.6]{EquivOrth}, or in \cite[Example 7.7]{Schwede}. (2) follows from \cite[Theorem II.9.8(ii)]{LMS} and \cite[Proposition II.9.12(ii)]{LMS}. (3) follows directly from the definition of $\phi^H$, since $H$ acts trivially on $i(X)$. (4) is proven in \cite[Theorem 7.12]{Schwede} and in \cite[Theorem 12.4]{Sch}.
\end{proof}

\section{Equivariant Morava K-theories}\label{Equivariant Morava K-theories}

For $H\subseteq G$, $\tilde E[\ngeq H]$ denotes a $G$-space which satisfies:
\[\tilde E[\ngeq H]^{K}\simeq \begin{cases} 0 \textup{ \, if } K\ngeq_G H\\ S^0 \textup{ \, if } K\geq_G H\end{cases},\]
where $K\geq_G H$ means that $K$ contains a subgroup conjugate to $H$.

The existance of such a space $\tilde E[\ngeq H]$ follows from the theory of classifying spaces for families (see e.g. \cite[Section II.2]{LMS}), if one takes $\mathcal F$ as the family of all subgroups of $G$ for which $H$ is not subconjugate and then defines $\tilde E[\ngeq H]$ by the cofiber sequence
\[E\mathcal F_+\rightarrow S^0\rightarrow \tilde E[\ngeq H],\]
as in \cite[Notations 4.14]{EquivOrth}.

Fix a prime number $p$. Strickland \cite[Definition 16.2]{Sch} defines Morava K-theory spectra in $\SH(G)$, $G$ a finite group, by 
\[K(n,H)=G/H_+\wedge \tilde E[\ngeq H]\wedge K(n)\] 
for any subgroup $H\subseteq G$. He notes that, as a localisation of $S^0$, $\tilde E[\ngeq H]$ is a commutative ring spectrum, which together with the ring structure of $K(n)$ and the diagonal map on $G/H$ induces a ring structure on $K(n,H)$, which is commutative for $p>2$. We will only be interested in $H$ up to conjugacy, because if $H$ and $H'$ are conjugate, then $K(n,H)\cong K(n,H')$.

The following proposition serves as motivation for this definition of equivariant Morava K-theories \cite[Remark 16.4 ff]{Sch}.

\begin{prop}\label{K(n) and phi}
\[K(n,H)_\ast(X)=K(n)_\ast(\phi^H(X))\]
and
\[K(n,H)^\ast(X)=K(n)^\ast(\phi^H(X)).\]
\end{prop}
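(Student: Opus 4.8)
The plan is to identify, for a finite $G$-spectrum $X$ and an arbitrary spectrum $Y \in \SH$, a natural isomorphism of abelian groups
\[
[S, Y \wedge K(n,H) \wedge X]_{\SH(G)} \;\cong\; [S, Y \wedge K(n) \wedge \phi^H(X)]_{\SH},
\]
and dually for the cohomology version. Since $K(n,H)_\ast(X) = \pi_\ast(K(n,H)\wedge X)$ in $\SH(G)$ and $K(n)_\ast(\phi^H X) = \pi_\ast(K(n)\wedge \phi^H X)$ in $\SH$, and similarly $K(n,H)^\ast(X) = \pi_{-\ast} F(X, K(n,H))$, $K(n)^\ast(\phi^H X) = \pi_{-\ast}F(\phi^H X, K(n))$, it suffices to produce these isomorphisms naturally in the appropriate variable. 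The main technical input will be the behaviour of the geometric fixed point functor $\phi^H$ on smash products (Proposition \ref{phi}(2)) together with the observation that $\phi^H(\tilde E[\ngeq H])\simeq S^0$ and $\phi^H(G/H_+ \wedge K(n)) \simeq K(n)$.

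First I would compute $\phi^H(K(n,H))$. By Proposition \ref{phi}(2), $\phi^H(K(n,H)) = \phi^H(G/H_+) \wedge \phi^H(\tilde E[\ngeq H]) \wedge \phi^H(i K(n))$. Here $\phi^H(i K(n)) = K(n)$ by Proposition \ref{phi}(3) (the spectrum $K(n)$ carrying trivial $G$-action), and $\phi^H(\tilde E[\ngeq H]) \simeq S^0$ because $\tilde E[\ngeq H]^H \simeq S^0$ and $\phi^H$ of a suspension spectrum is the suspension spectrum of the $H$-fixed points by Proposition \ref{phi}(1), applied after noting $\tilde E[\ngeq H]$ is a filtered colimit / retract built from $G$-CW data so the formula passes through. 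Finally $\phi^H(\Sigma^\infty(G/H)_+) = \Sigma^\infty((G/H)^H_+)$; the $H$-fixed points $(G/H)^H$ are the cosets $gH$ with $H \subseteq gHg^{-1}$, which for $H$ a subgroup of a finite group is exactly $N_G(H)/H = W(H)$, so after applying the forgetful functor to $\SH$ we get a wedge of $|W(H)|$ copies of $S^0$. It is cleaner, however, to avoid pinning down $\phi^H(G/H_+)$ precisely and instead use a projection-formula style argument, which I describe next.

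The cleaner route is: for any $G$-spectrum $Z$ and any $Z'$ with trivial action one has $\phi^H(Z \wedge i Z') \cong \phi^H(Z) \wedge Z'$, and there is an adjunction / change-of-group identity relating smashing with $G/H_+$ to induction from the subgroup, under which $\tilde E[\ngeq H] \wedge (-)$ isolates the ``$H$-geometric'' part. Concretely, Strickland's Remark 16.4 presumably proceeds by showing $K(n,H) \wedge X$ and $i(K(n) \wedge \phi^H X)$ have the same homotopy groups via the string of isomorphisms
\[
[S, K(n,H)\wedge X]_{\SH(G)} \cong [S, \tilde E[\ngeq H] \wedge K(n) \wedge X]_{\SH(W(H))} \cong [S, K(n) \wedge \phi^H X]_{\SH},
\]
where the first step uses the $(G/H_+ \wedge -, (-)^H)$-type adjunction and the second uses that smashing with $\tilde E[\ngeq H]$ then taking categorical fixed points computes $\phi^H$, combined with $\phi^H$ commuting with the (trivial-action) smash factor $K(n)$. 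I would assemble exactly this chain, being careful that each step is natural in $X$ and compatible with the $Y \wedge (-)$ (for homology) resp. $F(X,-)$ (for cohomology) manipulations; the cohomology statement then follows by dualising, using that $X$ is dualisable in $\SH(G)_f$ (Proposition \ref{equiv compact}) and that $\phi^H$ commutes with duality on dualisable objects.

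The main obstacle I expect is verifying $\phi^H(\tilde E[\ngeq H]) \simeq S^0$ and the precise form of the change-of-group adjunction in the chosen model — i.e. making rigorous that smashing with $G/H_+$ together with passage to $H$-fixed points, in the presence of the localising object $\tilde E[\ngeq H]$, implements the geometric fixed point functor. This is standard (it is essentially the defining characterisation of $\phi^H$ via $\tilde E\mathcal{P}$-type spaces, cf. \cite[Section II.9]{LMS}), but it requires care about which universe and which model of $\SH(G)$ one works in, and about the naturality of the identification. Once that identification is in hand, the rest is a formal diagram chase through adjunctions and Proposition \ref{phi}(2)–(3).
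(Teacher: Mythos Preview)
Your homology argument is essentially the paper's: the paper uses the identification $\phi^H(X)\cong(\tilde E[\ngeq H]\wedge X)^H$ from \cite[Theorem II.9.8(ii)]{LMS} together with the change-of-group adjunction for $G/H_+\wedge -$ and the monoidality of $\phi^H$ to get exactly the chain
\[
\pi_\ast\bigl((X\wedge G/H_+\wedge\tilde E\wedge K(n))^G\bigr)=\pi_\ast\bigl((X\wedge\tilde E\wedge K(n))^H\bigr)=\pi_\ast\bigl(\phi^H(X\wedge K(n))\bigr)=\pi_\ast\bigl(\phi^H X\wedge K(n)\bigr),
\]
so your ``cleaner route'' is the paper's route. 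Your initial attempt via $\phi^H(K(n,H))$ was rightly abandoned: $\phi^H(G/H_+)\simeq W(H)_+$ gives a wedge of $|W(H)|$ spheres, not $S^0$, so that direct computation does not land on $K(n)$ without further work.

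For cohomology the two approaches diverge. You propose to deduce it by Spanier--Whitehead duality, using that $X\in\SH(G)_f$ is dualisable and that $\phi^H$ commutes with duals; this is correct but restricts the statement to compact $X$. The paper proves the cohomology identity for \emph{arbitrary} $X\in\SH(G)$: it uses self-duality of $G/H_+$ to rewrite $K(n,H)^\ast(X)=[X,\tilde E\wedge K(n)]^H_\ast$, observes that $\tilde E\wedge -$ is a Bousfield localisation so this equals $[\tilde E\wedge X,\tilde E\wedge K(n)]^H_\ast$, and then verifies by an orbit-by-orbit check (on $X=H/K_+$) that the $H$-fixed-point map to $[\phi^H X,\phi^H K(n)]_\ast$ is an isomorphism. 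Your dualisation shortcut buys brevity and is sufficient for all the applications later in the chapter (which only involve compact $X$), but it does not establish the proposition in the generality in which it is stated.
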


\begin{proof}
Here, we need the following formula for the geometric fixed point spectrum \cite[Theorem II.9.8(ii)]{LMS}:
\[\phi^H(X)\cong (\tilde E[\ngeq H]\wedge X)^H,\]
where $(-)^H$ is the spectrification of the levelwise fixed point functor. In the following, we abbreviate $\tilde E[\ngeq H]$ by $\tilde E$. The first equation follows from
\[K(n,H)_\ast(X)= \pi_\ast\left( (X\wedge G/H_+\wedge \tilde{E}\wedge K(n))^G\right) \]\[
=  \pi_\ast\left( (X\wedge \tilde{E}\wedge K(n))^H\right) =  \pi_\ast\left( \phi^H(X\wedge K(n))\right) \]\[
= \pi_\ast\left( \phi^H(X)\wedge K(n)\right)=K(n)_\ast(\phi^H(X)).\]
For the second equation, we use the fact that $G/H_+$ is self-dual \cite[Corollary II.6.3]{LMS}, hence
\[K(n,H)^\ast(X)= [X,F(G/H_+,\tilde{E}\wedge K(n))]^G_\ast = [X,\tilde{E}\wedge K(n)]^H_\ast.\]
We claim that this is isomorphic to $[\phi^H X,\phi^H K(n)]_\ast = K(n)^\ast(\phi^H X)$. To prove the claim, first note that because $\tilde{E}\wedge -$ is a Bousfield localisation functor, 
\[[X,\tilde{E}\wedge K(n)]^H_\ast = [\tilde{E}\wedge X,\tilde{E}\wedge K(n)]^H_\ast.\] 
From here, the $H$-fixed points yield a map 
\[\alpha:[\tilde{E}\wedge X,\tilde{E}\wedge K(n)]^H_\ast \rightarrow [(\tilde{E}\wedge X)^H,(\tilde{E}\wedge K(n))^H]_\ast\] 
and the latter group is isomorphic to $[\phi^H X,\phi^H K(n)]_\ast$.
Assume that $X^H$ is an orbit $H/K_+$ for $K\subseteq H$. If $K\neq H$, $\alpha:0\rightarrow 0$ is an isomorphism. If $K=H$, $[\tilde{E}\wedge X,\tilde{E}\wedge K(n)]^H_\ast=[S^0_G,\tilde{E}\wedge K(n)]^H_\ast =[S^0,\phi^H (K(n))]_\ast =[\phi^H X,\phi^H (K(n))]_\ast$. That is, $\alpha$ is an isomorphism on all orbit types and it follows that $\alpha$ is an isomorphism for any $X\in\SH(G)$.
\end{proof}

From this and the properties of nonequivariant Morava K-theories (see e.g. \cite[Section 1]{HS}), it follows immediately that $K(n,H)$ has coefficients like $K(n)$ and satisfies the K\"unneth formula \cite[Section 16]{Sch}. 

\begin{cor}\label{Kunneth equivariant}
The equivariant Morava K-theories satisfy the following properties:
\begin{compactenum}[(1)]
\item $K(n,H)_\ast S^0_G = K(n)_\ast S^0 =\F_p[v_n^{\pm 1}]$ for any $n>0$ and any $H\subseteq G$.
\item $K(n,H)_\ast(X\wedge Y)\cong K(n,H)_\ast (X)\otimes_{K(n)_\ast} K(n,H)_\ast(Y)$ for any $X,Y\in\SH(G)$.
\item If $X$ is dualisable, i.e., if $X\in\SH(G)_f$, then 
\[K(n,H)_\ast(DX)\cong \Hom_{K(n)_\ast}(K(n,H)_\ast(X),K(n)_\ast).\]
\end{compactenum}
\end{cor}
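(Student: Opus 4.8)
The plan is to deduce all three statements directly from Proposition \ref{K(n) and phi}, which identifies $K(n,H)_\ast(X)$ with $K(n)_\ast(\phi^H(X))$ (and similarly in cohomology), together with the corresponding well-known facts about nonequivariant Morava $K$-theory and the multiplicative properties of $\phi^H$ recorded in Proposition \ref{phi}.

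For (1): by Proposition \ref{K(n) and phi}, $K(n,H)_\ast S^0_G = K(n)_\ast(\phi^H(S^0_G))$. Now $S^0_G = \Sigma^\infty S^0$ with trivial action, so $\phi^H(S^0_G) = \Sigma^\infty (S^0)^H = S^0$ by Proposition \ref{phi}(1) (or (3), since $S^0_G = i(S^0)$). Hence $K(n,H)_\ast S^0_G = K(n)_\ast S^0 = \F_p[v_n^{\pm 1}]$, the last equality being the standard computation of Morava $K$-theory coefficients recalled e.g. in \cite[Section 1]{HS}.

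For (2): again apply Proposition \ref{K(n) and phi} to get $K(n,H)_\ast(X\wedge Y) = K(n)_\ast(\phi^H(X\wedge Y))$. By Proposition \ref{phi}(2), $\phi^H(X\wedge Y)\cong \phi^H(X)\wedge \phi^H(Y)$ in $\SH$, so this equals $K(n)_\ast(\phi^H(X)\wedge\phi^H(Y))$. Now invoke the nonequivariant Künneth isomorphism for $K(n)$ (valid since $K(n)$ is a field spectrum), giving $K(n)_\ast(\phi^H X)\otimes_{K(n)_\ast} K(n)_\ast(\phi^H Y)$, which by Proposition \ref{K(n) and phi} is $K(n,H)_\ast(X)\otimes_{K(n)_\ast} K(n,H)_\ast(Y)$. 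One should check that the isomorphism produced this way agrees with the map induced by the (commutative, for $p>2$) ring structure on $K(n,H)$ and the diagonal; this is a compatibility statement but causes no real trouble, since the nonequivariant Künneth map is itself the product map and $\phi^H$ is monoidal.

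For (3): if $X\in\SH(G)_f$ it is dualisable (Proposition \ref{equiv compact}), and $\phi^H$ being symmetric monoidal sends the dual $DX$ to a dual of $\phi^H(X)$ in $\SH$, i.e. $\phi^H(DX)\cong D\phi^H(X)$; moreover $\phi^H(X)\in\SH^{fin}$ is finite, hence dualisable in $\SH$. Then Proposition \ref{K(n) and phi} and the nonequivariant statement $K(n)_\ast(DZ)\cong\Hom_{K(n)_\ast}(K(n)_\ast Z, K(n)_\ast)$ for finite $Z$ (a consequence of the Künneth/field property of $K(n)$) give the claim. The only point requiring a sentence of care is that $\phi^H$ preserves duals: this follows formally because any symmetric monoidal exact functor takes a strong dual to a strong dual, and $DX = F(X,S^0)$ is a strong dual since $X$ is dualisable. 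The main obstacle, such as it is, is purely bookkeeping: making sure the abstract isomorphisms from $\phi^H$ being monoidal are identified with the concretely-defined ring and duality structures on $K(n,H)$ from \cite[Definition 16.2]{Sch}; there is no substantive difficulty, and in fact \cite[Section 16]{Sch} already records that these properties follow "immediately" from the nonequivariant case.
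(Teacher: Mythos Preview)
Your proof is correct and follows exactly the route the paper indicates: the paper states just before the corollary that ``from this [Proposition \ref{K(n) and phi}] and the properties of nonequivariant Morava K-theories \ldots it follows immediately,'' and you have spelled out precisely that deduction using Proposition \ref{phi} for the monoidal properties of $\phi^H$. Your additional remarks on why $\phi^H$ preserves duals and why $\phi^H(X)$ is dualisable in $\SH$ are accurate and make explicit what the paper leaves implicit.
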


Furthermore, Strickland shows \cite[Proposition 16.6]{Sch}:

\begin{prop}
If $p\neq p'$ or $n\neq n'$ or $H\neq_G H'$ (i.e., not conjugate in $G$), then 
\[K(p,n,H)\wedge K(p',n',H')= 0.\]
\end{prop}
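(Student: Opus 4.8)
The plan is to prove that $K(p,n,H)\wedge K(p',n',H')=0$ whenever the triples differ, by reducing everything to the known nonequivariant statement that $K(p,n)\wedge K(p',n')=0$ when $p\neq p'$ or $n\neq n'$, together with the behaviour of the idempotent localisations $\tilde E[\ngeq H]$ on distinct (nonconjugate) subgroups.

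First I would dispose of the case of distinct subgroups, $H\neq_G H'$. Without loss of generality assume $H$ is not subconjugate to $H'$. Smashing the two Morava K-theory spectra, the factor $\tilde E[\ngeq H]\wedge \tilde E[\ngeq H']$ appears; I would compute this by checking geometric fixed points $\phi^K$ for all $K\subseteq G$, using that $\phi^K$ is monoidal (Proposition \ref{phi}(2)) and that $\phi^K(\tilde E[\ngeq H])\cong S^0$ or $0$ according to whether $K\geq_G H$ or not. For $\phi^K(\tilde E[\ngeq H]\wedge \tilde E[\ngeq H'])$ to be nonzero one would need both $K\geq_G H$ and $K\geq_G H'$; taking $K$ to be a minimal such subgroup and exploiting that $H\not\leq_G H'$ forces a contradiction — here one uses the standard fact (or argues directly via orbit types, as in the proof of Proposition \ref{K(n) and phi}) that $\tilde E[\ngeq H]\wedge \tilde E[\ngeq H']\simeq 0$ when $H,H'$ are nonconjugate. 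Since $\phi^K$ vanishes for all $K$, Proposition \ref{phi}(4) gives $\tilde E[\ngeq H]\wedge\tilde E[\ngeq H']\simeq 0$, hence $K(p,n,H)\wedge K(p',n',H')=0$ regardless of the primes or indices $n,n'$.

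Next, for the case $H=_G H'$ (so we may take $H=H'$) but $(p,n)\neq(p',n')$, I would instead use Proposition \ref{K(n) and phi}: for any $X$, $K(p,n,H)_\ast(X)=K(p,n)_\ast(\phi^H X)$. Applying this with $X=K(p',n',H')$ and using monoidality of $\phi^H$ together with $\phi^H(G/H_+)=$ (the point, or an orbit of the Weyl group, which is a finite free spectrum) and $\phi^H(\tilde E[\ngeq H])\cong S^0$, one gets that $\phi^H(K(p',n',H))$ is, up to a finite wedge of sphere shifts, the nonequivariant spectrum $K(p',n')$. Then $K(p,n,H)_\ast(K(p',n',H))$ is computed by $K(p,n)_\ast(\phi^H(K(p',n',H)))$, which is a finite sum of shifts of $K(p,n)_\ast(K(p',n')) = \pi_\ast(K(p,n)\wedge K(p',n'))=0$ by the nonequivariant vanishing. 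Finally, a spectrum $Y\in\SH(G)$ with $K(p,n,H)_\ast(Y)=0$ and, by the same argument with the other subgroups, $K(p,n,H)_\ast$ replaced by testing on all fixed points — more simply, one observes $\phi^K(K(p,n,H)\wedge K(p',n',H))=\phi^K K(p,n,H)\wedge \phi^K K(p',n',H)$, which is zero for every $K$ (it's zero unless $K\geq_G H$, and for such $K$ it is a finite wedge of shifts of $K(p,n)\wedge K(p',n')=0$), so Proposition \ref{phi}(4) again forces $K(p,n,H)\wedge K(p',n',H)=0$.

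The cleanest organisation, which I would actually adopt, is to handle all cases uniformly via geometric fixed points: compute $\phi^K\big(K(p,n,H)\wedge K(p',n',H')\big)$ for every $K\subseteq G$ using Proposition \ref{phi}(1)(2), reduce each such group to a (possibly empty) finite wedge of suspensions of $K(p,n)\wedge K(p',n')$ or to $0$ outright, invoke the nonequivariant result $K(p,n)\wedge K(p',n')=0$ for $(p,n)\neq(p',n')$ and the nonconjugacy vanishing of $\tilde E[\ngeq H]\wedge\tilde E[\ngeq H']$, and conclude by Proposition \ref{phi}(4). The main obstacle I anticipate is the bookkeeping in the subgroup case: correctly identifying $\phi^K(G/H_+\wedge \tilde E[\ngeq H])$ — i.e. that it is the suspension spectrum of the $K$-fixed points of $G/H$ intersected with the "generic" part cut out by $\tilde E[\ngeq H]$, which is nonempty precisely when $K$ is conjugate to $H$ (giving a Weyl-group orbit) — and making sure the finite wedge of spheres one picks up is genuinely finite and thus smashing with $K(p,n)$ still yields zero. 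Everything else is a direct application of the monoidality and detection properties already recorded in Propositions \ref{phi} and \ref{K(n) and phi}.
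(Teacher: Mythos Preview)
Your proposal contains a genuine error in the nonconjugate-subgroup case. You claim as a ``standard fact'' that $\tilde E[\ngeq H]\wedge \tilde E[\ngeq H']\simeq 0$ whenever $H\neq_G H'$, and you repeat this in your ``cleanest organisation'' paragraph. This is false: for any $K$ containing conjugates of both $H$ and $H'$ (for instance $K=G$), one has $\phi^K\big(\tilde E[\ngeq H]\wedge \tilde E[\ngeq H']\big)\cong S^0\wedge S^0\cong S^0\neq 0$. Your attempted contradiction (``taking $K$ to be a minimal such subgroup and exploiting that $H\not\leq_G H'$'') does not work; there is no contradiction in having a common overgroup of two nonconjugate subgroups.

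The paper's argument, and indeed the correct argument you almost arrive at in your final ``obstacle'' paragraph, uses the factors $G/H_+$ and $G/H'_+$ essentially. One checks that the $K$-fixed points of $G/H_+\wedge \tilde E[\ngeq H]\wedge G/H'_+\wedge \tilde E[\ngeq H']$ vanish for every $K$: the fixed points $(G/H)^K$ are nonempty only when $K\leq_G H$, while $\tilde E[\ngeq H]^K\simeq S^0$ only when $K\geq_G H$; together these force $K=_G H$, and symmetrically $K=_G H'$, contradicting $H\neq_G H'$. You correctly identify this mechanism at the very end, but you do not use it in your actual argument. For the case $(p,n)\neq(p',n')$, the paper simply observes that $K(p,n)$ and $K(p',n')$ appear as smash factors and their smash product is already zero nonequivariantly; your geometric-fixed-point reduction is correct here but more elaborate than necessary.
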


\begin{proof}
In the cases $p\neq p'$ and $n\neq n'$, this follows from $K(p,n)\wedge K(p',n')= 0$ (see \cite[Theorem 2.1(i)]{RavLoc}),
 as these appear as smash factors in $K(p,n,H)\wedge K(p',n',H')$. Therefore, assume $p= p'$, $n= n'$ and $H\neq_G H'$. Now it suffices to show $G/H_+\wedge \tilde E[\not\geq H]\wedge G/H'_+\wedge \tilde E[\not\geq H']= 0$, which is easily checked on the level of $K$-fixed points for all $K\subseteq G$.
\end{proof}

\section{Nilpotence and lattices of thick ideals}\label{section lattices}

For a convenient description of the collection of thick ideals in the equivariant homotopy category of dualisable spectra, $SH(G)_f$, \cite{Sch} uses the language of lattices.

\begin{defn}\label{lattice}
A lattice is a partially ordered set $A$ for which any finite subset $F\subseteq A$ has a greatest lower bound (called meet) $\bigwedge F$ and a smallest upper bound (called join) $\bigvee F$. The largest element in $A$ is $\bigwedge\emptyset$, which we denote by $1$ and the smallest element is $0=\bigvee\emptyset$. A lattice homomorphism is an order preserving map $f:A\rightarrow B$ which also preserves all joins and meets.
\end{defn}

\begin{eg}\label{Idl}
\begin{compactenum}[(1)]
\item The collection of thick ideals $\mathcal C$ in a tensor triangulated category $\mathcal T$, partially ordered by inclusion, is a lattice. Meets are just intersections, whereas the join of a finite collection of thick ideals is the smallest thick ideal which contains all objects of the different ideals. We denote this lattice by $\Idl(\mathcal T)$.
\item The power set of any set is a lattice, partially ordered by inclusion, meets given by intersections and joins by unions.
\end{compactenum}
\end{eg}

We introduce a new notation, due to Strickland, which will be useful in the rest of this chapter.

\begin{notation}
For a prime $p$ and a nonnegative integer $n$, let $K(p^{-n})$ denote the $n$-th Morava K-theory spectrum at the prime $p$ (which above was denoted by $K(p,n)$ or just $K(n)$). 
\end{notation}

One advantage of this notation is that there is only one name for the zeroth Morava K-theory spectrum (which is independent of $p$): $K(1)=H\Q$.

\begin{defn}\label{Q}
Let \[\mathcal Q_p=\{p^{-n}\; |\; 0\leq n\leq\infty\}\]
and
\[\mathcal Q=\{u\in\prod_p \mathcal Q_p\;|\; u_p=1\,\forall\, p \textup{ or } u_p\neq 1 \,\forall \,p\}.\]
\end{defn}

The sets $\mathcal Q_p$ and $\mathcal Q$ are lattices, with the usual ordering of rational numbers and the componentwise partial ordering of products. We immediately see that Theorem \ref{HS} can be reformulated as follows.

\begin{cor}
Let $\Idl(\SH^{fin}_{(p)})$ be as in Example \ref{Idl}(1). The map
\[\tau_p: \Idl(\SH^{fin}_{(p)})\longrightarrow \mathcal Q_p,\]
\[\tau_p(\mathcal C)= \max\{p^{-n}\; |\;\type(X)=n \textup{ for some } X\in\mathcal C\},\]
is a lattice isomorphism.
\end{cor}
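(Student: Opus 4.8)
The plan is to show that $\tau_p$ is a well-defined order-preserving bijection whose inverse is also order-preserving, which for a map between lattices suffices to make it a lattice isomorphism (a bijective order isomorphism automatically preserves all meets and joins). The whole argument is essentially a bookkeeping translation of Theorem \ref{HS} into the language of $\mathcal Q_p$, using the fact that $\mathcal Q_p$ is, as a poset, just the chain $1 > p^{-1} > p^{-2} > \cdots > p^{-\infty} = 0$.

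First I would check that $\tau_p$ is well defined, i.e. that the indicated maximum exists. By Theorem \ref{HS} every thick ideal $\mathcal C$ equals some $\mathcal C_m$ in the chain, and by Definition \ref{type} together with the remark that every spectrum of type $n$ generates $\mathcal C_n$, the set of types occurring in $\mathcal C_m$ is exactly $\{n : n \geq m\}$ (including $n = \infty$ only for $\mathcal C_\infty$, where the set of types is $\{\infty\}$ since the zero spectrum has no well-defined finite type; one takes the convention $\type(0) = \infty$). Hence $\{p^{-n} : \type(X) = n,\ X \in \mathcal C_m\} = \{p^{-n} : n \geq m\}$, whose maximum in $\mathcal Q_p$ is $p^{-m}$. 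So $\tau_p(\mathcal C_m) = p^{-m}$, and in particular $\tau_p$ is a bijection $\Idl(\SH^{fin}_{(p)}) \to \mathcal Q_p$ with inverse $p^{-m} \mapsto \mathcal C_m$.

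Next I would verify order preservation in both directions. If $\mathcal C_m \subseteq \mathcal C_{m'}$ then from the chain $\mathcal C_0 \supsetneq \mathcal C_1 \supsetneq \cdots$ we get $m \geq m'$, hence $p^{-m} \leq p^{-m'}$, so $\tau_p$ is order preserving; conversely $p^{-m} \leq p^{-m'}$ forces $m \geq m'$ hence $\mathcal C_m \subseteq \mathcal C_{m'}$, so $\tau_p^{-1}$ is order preserving. A bijective, bi-order-preserving map between posets is a poset isomorphism, and a poset isomorphism between lattices is automatically a lattice homomorphism in the sense of Definition \ref{lattice} (it carries greatest lower bounds to greatest lower bounds and least upper bounds to least upper bounds), and likewise for its inverse. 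This completes the proof.

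There is no serious obstacle here; the only point requiring a little care is the treatment of the endpoints — namely making the conventions $\type(0) = \infty$ and $p^{-\infty} = 0$ explicit so that $\tau_p(\mathcal C_\infty) = p^{-\infty}$ and $\tau_p(\SH^{fin}_{(p)}) = \tau_p(\mathcal C_0) = 1$ come out correctly, and confirming that $\tau_p$ is genuinely surjective onto all of $\mathcal Q_p$ (every $p^{-n}$, $0 \le n \le \infty$, is hit, using the existence of type-$n$ spectra for all finite $n$, which is implicit in Theorem \ref{HS}). I would state these conventions at the start of the proof and then the rest is immediate from Theorem \ref{HS}.
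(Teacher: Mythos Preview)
Your proposal is correct. The paper does not actually give a proof of this corollary; it simply introduces it with the phrase ``We immediately see that Theorem \ref{HS} can be reformulated as follows,'' treating the statement as a direct restatement of the Hopkins--Smith classification in the language of $\mathcal Q_p$. Your argument is precisely the unpacking of that reformulation: identifying $\tau_p(\mathcal C_m)=p^{-m}$ from the chain in Theorem \ref{HS}, checking bijectivity and bi-order-preservation, and noting that this suffices for a lattice isomorphism. So you are taking exactly the approach the paper has in mind, just made explicit.
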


We will see in Theorem \ref{thm Q} how to merge the information for different $p$ to a classification of finitely generated thick ideals of $\SH^{fin}$ using the lattice $\mathcal Q$. But before we are able to do so, we need some more theory on thick ideals and lattice homomorphisms.

\begin{defn}
For $X\in\SH(G)_f$, let $\ann(X)$ denote the fibre of the unit map $S\rightarrow F(X,X)$ and define 
\[\mathcal A_{\ann(X)}=\{A\;|\; \ann(X)^{\wedge n}\wedge A\rightarrow A \textup{ is null for some } n>0\}.\]
The map here is the $n$-fold smash product of the map $\ann(X)\rightarrow S$ from the cofiber sequence, smashed with $A\overset{1}\rightarrow A$.
\end{defn}

The following is \cite[Proposition 15.6]{Sch}.

\begin{prop}\label{ann}
The smallest thick ideal containing $X$ is 
\[\thickid(X)=\mathcal A_{\ann(X)}\]
and $\thickid(X)\subseteq\thickid(Y)$ if and only if the map $\ann(Y)^{\wedge n}\rightarrow S$ factors through $\ann(X)\rightarrow S$ for some $n>0$.
\end{prop}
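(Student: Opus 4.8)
The plan is to prove both statements by exhibiting an explicit thick ideal and checking it coincides with $\mathcal{A}_{\ann(X)}$. First I would observe that $\mathcal{A}_{\ann(X)}$ is indeed a thick ideal: it is closed under exact triangles (if $A\to B\to C$ is a triangle and $\ann(X)^{\wedge n}\wedge A\to A$, $\ann(X)^{\wedge m}\wedge C\to C$ are null, then $\ann(X)^{\wedge(n+m)}\wedge B\to B$ is null, using the octahedral axiom applied to the map $\ann(X)^{\wedge(n+m)}\wedge B\to \ann(X)^{\wedge m}\wedge B\to B$ together with the triangle), closed under retracts (a retract of a null map is null), and an ideal (smashing the defining null map with any $Z\in\SH(G)$ still gives a null map $\ann(X)^{\wedge n}\wedge (A\wedge Z)\to A\wedge Z$). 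Next I would show $X\in\mathcal{A}_{\ann(X)}$: from the cofiber sequence $\ann(X)\to S\to F(X,X)$, the composite $S\to F(X,X)$ is the unit, and smashing with $X$ and using the (co)evaluation shows that $\ann(X)\wedge X\to X$ is null — here one uses that $X$ is dualisable, so $F(X,X)\cong DX\wedge X$ and the unit $S\to DX\wedge X$ splits after smashing with $X$; more precisely the composite $X\cong S\wedge X\to F(X,X)\wedge X\to X$ is the identity, hence $\ann(X)\wedge X\to X$ is null as the fibre term. So $\mathcal{A}_{\ann(X)}$ is a thick ideal containing $X$, whence $\thickid(X)\subseteq\mathcal{A}_{\ann(X)}$.

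For the reverse inclusion, I would show that every thick ideal $\mathcal{C}$ containing $X$ also contains all of $\mathcal{A}_{\ann(X)}$. Given $A\in\mathcal{A}_{\ann(X)}$, pick $n$ with $f:\ann(X)^{\wedge n}\wedge A\to A$ null, where $f$ is built from the canonical $\ann(X)\to S$. From the cofiber sequence $\ann(X)\to S\to F(X,X)$ and its smash powers, the cofiber of $\ann(X)^{\wedge n}\to S$ is built (as an iterated extension) out of smash powers of $F(X,X)$, each of which is of the form $F(X,X)^{\wedge i}\wedge(\text{stuff})$ and hence lies in $\thickid(X)$ since $F(X,X)\cong DX\wedge X$ contains the tensor factor $X$. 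Smashing the triangle $\ann(X)^{\wedge n}\wedge A\to A\to C$ (where $C$ is built from $F(X,X)$-powers smashed with $A$, hence $C\in\mathcal{C}$) with the nullity of $f$ shows $A$ is a retract of $C$, so $A\in\mathcal{C}$. Taking $\mathcal{C}=\thickid(X)$ gives $\mathcal{A}_{\ann(X)}\subseteq\thickid(X)$, proving the first assertion.

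For the "if and only if" statement: if $\ann(Y)^{\wedge n}\to S$ factors through $\ann(X)\to S$, then $\mathcal{A}_{\ann(Y)}\subseteq\mathcal{A}_{\ann(X)}$ (a witnessing null map for $Y$ becomes, after raising to a higher power, one factoring through $\ann(X)$-powers), giving $\thickid(Y)\subseteq\thickid(X)$ by the first part. Conversely, if $\thickid(X)\subseteq\thickid(Y)$, then $X\in\thickid(Y)=\mathcal{A}_{\ann(Y)}$, so by definition $\ann(Y)^{\wedge n}\wedge X\to X$ is null for some $n$; smashing with $DX$ and composing with the coevaluation $S\to DX\wedge X$ and evaluation translates this nullity into the statement that $\ann(Y)^{\wedge n}\to S\to F(X,X)$ is null, i.e. $\ann(Y)^{\wedge n}\to S$ lifts along the fibre $\ann(X)\to S$. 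The main obstacle I anticipate is bookkeeping the filtration argument in the reverse inclusion — tracking that the cofiber of $\ann(X)^{\wedge n}\to S$ genuinely lies in $\thickid(X)$ requires an induction using the octahedral axiom on the tower $S\to F(X,X)\to\cdots$, and one must be careful that each successive cofiber retains a smash factor of $X$ (which it does, since $F(X,X)\cong DX\wedge X$ is divisible by $X$ in the ideal sense).
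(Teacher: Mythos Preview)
Your overall strategy matches the paper's proof closely: show $\mathcal{A}_{\ann(X)}$ is a thick ideal containing $X$, then show any $A\in\mathcal{A}_{\ann(X)}$ is a retract of $S/(\ann(X)^{\wedge n})\wedge A$, and show $S/(\ann(X)^{\wedge n})\in\thickid(X)$ by induction via the cofiber sequence built from the octahedral axiom. Your argument for the converse direction of the second claim (from $\thickid(X)\subseteq\thickid(Y)$ to the factorisation) is also correct, and in fact slightly slicker than the paper's: you use the adjunction directly to see that $\ann(Y)^{\wedge n}\wedge X\to X$ being null is equivalent to $\ann(Y)^{\wedge n}\to S\to F(X,X)$ being null, whereas the paper produces the map $S/(\ann(Y)^{\wedge n})\to F(X,X)$ from the retraction and then fills in on fibres.

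However, there is a genuine error in your ``if'' direction. You claim that if $\ann(Y)^{\wedge n}\to S$ factors through $\ann(X)\to S$, then $\mathcal{A}_{\ann(Y)}\subseteq\mathcal{A}_{\ann(X)}$, i.e.\ $\thickid(Y)\subseteq\thickid(X)$. This is backwards, and your justification is also backwards. The factorisation $\ann(Y)^{\wedge n}\to\ann(X)\to S$ means higher powers of $\ann(Y)$ factor through powers of $\ann(X)$, not the reverse. So if $\ann(X)^{\wedge m}\wedge A\to A$ is null, then the composite $\ann(Y)^{\wedge nm}\wedge A\to\ann(X)^{\wedge m}\wedge A\to A$ is null, giving $A\in\mathcal{A}_{\ann(Y)}$. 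Hence $\mathcal{A}_{\ann(X)}\subseteq\mathcal{A}_{\ann(Y)}$, i.e.\ $\thickid(X)\subseteq\thickid(Y)$, which is what the proposition actually asserts. Your parenthetical ``a witnessing null map for $Y$ becomes \ldots\ one factoring through $\ann(X)$-powers'' does not work: knowing $\ann(Y)^{\wedge m}\wedge A\to A$ is null gives no information about $\ann(X)^{\wedge k}\wedge A\to A$, since the factorisation goes the other way. Note also that your two directions, as written, are inconsistent with each other.
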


\begin{proof}
We first show that $\thickid(X)\subseteq\mathcal A_{\ann(X)}$. Since $X$ is a module over $F(X,X)=DX\wedge X$, it is a retract of $DX\wedge X\wedge X$. It follows that $\ann(X)\wedge X$ is the fiber of a map $X\rightarrow DX\wedge X\wedge X$ which splits, so $\ann(X)\wedge X\rightarrow X$ is zero and hence $X\in \mathcal A_{\ann(X)}$. It is easy to see that $\mathcal A_{\ann(X)}$ is closed under exact triangles and retracts, as well as under smashing with arbitrary objects. Hence, $\mathcal A_{\ann(X)}$ is a thick ideal containing $X$, which proves $\thickid(X)\subseteq\mathcal A_{\ann(X)}$.

Now assume $A\in \mathcal A_{\ann(X)}$. We need to show $A\in\thickid(X)$. Consider the cofiber sequence
\[\ann(X)^{\wedge n}\wedge A\rightarrow A\rightarrow S/(\ann(X)^{\wedge n})\wedge A.\]
By the assumption, we can choose $n$ such that the first map is zero. Then it follows that $A$ is a retract of $S/(\ann(X)^{\wedge n})\wedge A $. Therefore, it suffices to show $S/(\ann(X)^{\wedge n})\in\thickid(X)$. By the definition of $\ann(X)$, $S/\ann(X)=F(X,X)=DX\wedge X\in\thickid(X)$. There is a cofiber sequence
\[\ann(X)\wedge S/(\ann(X)^{\wedge j})\rightarrow S/(\ann(X)^{\wedge j+1})\rightarrow S/(\ann(X)^{\wedge j}),\]
which implies inductively that $S/(\ann(X)^{\wedge n})\in\thickid(X)$. 

The existence of this cofiber sequence follows from Verdier's axiom for triangulated categories, also known as octahedral axiom (see e.g. \cite[Proposition 1.4.6]{TriCat}). Applied to the three cofiber sequences $I\wedge J\rightarrow I\wedge S\rightarrow I\wedge S/J$, as well as $I\wedge J\rightarrow S\rightarrow S/(I\wedge J)$ and $I\rightarrow S\rightarrow S/I$, the axiom yields a cofiber sequence $I\wedge S/J\rightarrow S/(I\wedge J)\rightarrow S/I$.

For the second claim, assume that $\thickid(X)\subseteq\thickid(Y)$, which is equivalent to $X\in\thickid (Y)=\mathcal A_{\ann(Y)}$. Let $n>0$ be such that $\ann(Y)^{\wedge n}\wedge X\rightarrow X$ is the zero map. Consider the two cofiber sequences
\[\xymatrix{\ar @{} [dr] |{}
\ann(Y)^{\wedge n} \ar[r] & S \ar[r]\ar@{=}[d] & S/(\ann(Y)^{\wedge n})\\
\ann(X) \ar[r] & S \ar[r] & F(X,X).}\]
The smash product of the upper sequence with $X$ is 
\[\ann(Y)^{\wedge n}\wedge X\overset{0}\rightarrow X\rightarrow S/(\ann(Y)^{\wedge n})\wedge X,\] 
so there is a retraction $S/(\ann(Y)^{\wedge n})\wedge X\rightarrow X$, which then induces a morphism ${S/(\ann(Y)^{\wedge n})\rightarrow F(X,X)}$ making the diagram commutative. From the axioms for triangulated categories it follows that we can fill in the required map $\ann(Y)^{\wedge n}\rightarrow \ann(X)$, as claimed.

On the other hand, if $\ann(Y)^{\wedge n}\rightarrow S$ factors through $\ann(X)\rightarrow S$ and $\ann(X)^{\wedge m}\wedge A\rightarrow A$ is zero then also $\ann(Y)^{\wedge (nm)}\wedge A\rightarrow A$ is zero and it follows $\mathcal A_{\ann(X)}\subseteq\mathcal A_{\ann(Y)}$.
\end{proof}

\begin{prop}\label{intersection}
Let $\mathcal I_0$ and $\mathcal J_0$ be collections of objects in $\SH(G)_f$ and let $\mathcal I$ and $\mathcal J$ be the thick ideals which they generate. Then
\[\mathcal I\cap \mathcal J =\thickid\left(\{Y\wedge Z\;|\; Y\in \mathcal I_0,\, Z\in \mathcal J_0\}\right).\]
\end{prop}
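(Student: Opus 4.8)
The plan is to prove the two inclusions separately, using Proposition \ref{ann} as the main technical tool. Write $\mathcal K = \thickid(\{Y\wedge Z \mid Y\in\mathcal I_0,\, Z\in\mathcal J_0\})$ for the right-hand side. The inclusion $\mathcal K \subseteq \mathcal I\cap\mathcal J$ is the easy half: for any $Y\in\mathcal I_0\subseteq\mathcal I$ and $Z\in\mathcal J_0\subseteq\mathcal J$, the smash product $Y\wedge Z$ lies in $\mathcal I$ (since $\mathcal I$ is an ideal and $Y\in\mathcal I$) and symmetrically in $\mathcal J$; hence every generator of $\mathcal K$ lies in the thick ideal $\mathcal I\cap\mathcal J$, and therefore so does all of $\mathcal K$.

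For the reverse inclusion $\mathcal I\cap\mathcal J\subseteq\mathcal K$, first I would reduce to the finitely generated case. By Remark \ref{FIdl}, it suffices to treat the situation where $\mathcal I_0=\{Y\}$ and $\mathcal J_0=\{Z\}$ are single objects (replacing a finite set of generators by the direct sum of its members, and using that $\mathcal I\cap\mathcal J$ is the union of the $\mathcal I'\cap\mathcal J'$ over finite subcollections); one then has to check this reduction is compatible with the smash-product description, which follows from distributivity of $\wedge$ over $\vee$ and the fact that $\thickid$ of a direct sum recovers the join. So assume $\mathcal I=\thickid(Y)$, $\mathcal J=\thickid(Z)$, and let $A\in\mathcal I\cap\mathcal J$. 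By Proposition \ref{ann}, $A\in\mathcal A_{\ann(Y)}$ and $A\in\mathcal A_{\ann(Z)}$, so there are $m,n>0$ with $\ann(Y)^{\wedge m}\wedge A\to A$ and $\ann(Z)^{\wedge n}\wedge A\to A$ both null. As in the proof of Proposition \ref{ann}, nullity of $\ann(Y)^{\wedge m}\wedge A\to A$ exhibits $A$ as a retract of $S/(\ann(Y)^{\wedge m})\wedge A$, and likewise $A$ is a retract of $S/(\ann(Z)^{\wedge n})\wedge A$. Combining, $A$ is a retract of $S/(\ann(Y)^{\wedge m})\wedge S/(\ann(Z)^{\wedge n})\wedge A$. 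Now $S/\ann(Y)=F(Y,Y)=DY\wedge Y$ and $S/\ann(Z)=DZ\wedge Z$, and the octahedral-axiom argument from Proposition \ref{ann} shows that $S/(\ann(Y)^{\wedge m})$ is built by finitely many cofiber sequences from copies of $\ann(Y)^{\wedge j}\wedge(DY\wedge Y)$, hence lies in $\thickid(DY\wedge Y)=\thickid(Y)$, and similarly $S/(\ann(Z)^{\wedge n})\in\thickid(Z)$. The key point is then that $S/(\ann(Y)^{\wedge m})\wedge S/(\ann(Z)^{\wedge n})$ lies in $\thickid(Y\wedge Z)=\mathcal K$: smashing the two finite filtrations together, every associated subquotient is (a suspension of) $\ann(Y)^{\wedge i}\wedge DY\wedge Y \wedge \ann(Z)^{\wedge j}\wedge DZ\wedge Z$, which contains $Y\wedge Z$ as a smash factor and so lies in the ideal $\mathcal K$; closure of $\mathcal K$ under cofiber sequences and retracts then gives $A\in\mathcal K$.

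The main obstacle is the bookkeeping in this last step: one must make precise that smashing a finite "cellular" filtration of $S/(\ann(Y)^{\wedge m})$ (with subquotients involving $Y$ as a smash factor) with an arbitrary object lands in $\thickid(Y)$, and that smashing two such filtrations together lands in $\thickid(Y\wedge Z)$. This is exactly the kind of induction already carried out in the proof of Proposition \ref{ann} (the chain of cofiber sequences $\ann(X)\wedge S/(\ann(X)^{\wedge j})\to S/(\ann(X)^{\wedge j+1})\to S/(\ann(X)^{\wedge j})$), so I would phrase it as a short lemma — "if $W\in\thickid(Y)$ then $W\wedge V\in\thickid(Y)$ for all $V$, and if moreover $W\in\thickid(Y)$ and $W'\in\thickid(Z)$ then $W\wedge W'\in\thickid(Y\wedge Z)$" — proved by the evident double induction on the number of cofiber sequences, and then apply it. Everything else is a direct transcription of the retract and octahedral arguments already present in the proof of Proposition \ref{ann}.
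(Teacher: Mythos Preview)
Your argument is correct, but it takes a noticeably longer route than the paper's. Both proofs share the easy inclusion $\mathcal K\subseteq\mathcal I\cap\mathcal J$ and both rely, for the reverse inclusion, on the key lemma that $W\in\mathcal I$ and $W'\in\mathcal J$ forces $W\wedge W'\in\mathcal K$. The paper proves this lemma directly for arbitrary $\mathcal I_0,\mathcal J_0$ by the two-step ``thick ideal sandwich'': set $\mathcal I'=\{Y\in\mathcal I\mid Y\wedge Z\in\mathcal K\ \forall Z\in\mathcal J_0\}$ and $\mathcal J'=\{Z\in\mathcal J\mid Y\wedge Z\in\mathcal K\ \forall Y\in\mathcal I\}$, observe these are thick subideals containing the generators, hence equal $\mathcal I$ and $\mathcal J$. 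This is exactly your proposed ``double induction'', just packaged without a preliminary reduction to single generators.

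The real difference is in how one exhibits $X\in\mathcal I\cap\mathcal J$ as a retract of a smash product. You go through the $\ann$ machinery of Proposition~\ref{ann}: reduce to single generators $Y,Z$, use $\mathcal I=\mathcal A_{\ann(Y)}$ and $\mathcal J=\mathcal A_{\ann(Z)}$ to get $X$ as a retract of $S/(\ann(Y)^{\wedge m})\wedge S/(\ann(Z)^{\wedge n})\wedge X$, and then filter this product. The paper instead observes in one line that $X$ is a retract of $DX\wedge X\wedge X$ (since $X$ is an $F(X,X)$-module), with $DX\wedge X\in\mathcal I$ and $X\in\mathcal J$; the lemma then gives $DX\wedge X\wedge X\in\mathcal K$ immediately. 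This bypasses the reduction to single generators, the $\ann$ filtrations, and the double-filtration bookkeeping entirely. Your approach works, but the $DX\wedge X\wedge X$ trick is the idea worth remembering.
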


\begin{proof}
This is \cite[Proposition 15.8]{Sch}. Let $\mathcal K =\thickid\left(\{Y\wedge Z\;|\; Y\in \mathcal I_0,\, Z\in \mathcal J_0\}\right)$. The intersection $\mathcal I\cap \mathcal J $ is a thick ideal which contains $Y\wedge Z$ for all $Y\in \mathcal I_0$ and $Z\in \mathcal J_0$. Therefore, $\mathcal K\subseteq \mathcal I\cap \mathcal J$. Now, let
\[\mathcal I'=\{Y\in\mathcal I\;|\; Y\wedge Z \in\mathcal K\;\forall\, Z\in \mathcal J_0\},\]
\[\mathcal J'=\{Z\in\mathcal J\;|\; Y\wedge Z \in\mathcal K\;\forall\, Y\in \mathcal I\}.\]
It is easy to check that $\mathcal I'$ is a thick subideal of $\mathcal I$ which contains $\mathcal I_0$. Hence, $\mathcal I'=\mathcal I$. It follows that the thick subideal $\mathcal J'\subseteq \mathcal J$ contains $\mathcal J_0$, so $\mathcal J'=\mathcal J$. That is, $Y\wedge Z\in\mathcal K$ for all $Y\in\mathcal I$, $Z\in\mathcal J$. 

Now, let $X\in\mathcal I\cap \mathcal J$. Since $X$ is an $F(X,X)$-module, $X$ is a retract of $DX\wedge X\wedge X$. Consider $DX\wedge X$ as an object of $\mathcal I$ and the other $X$ as an object of $\mathcal J$. It follows $DX\wedge X\wedge X\in\mathcal K$ and hence $X\in\mathcal K$.
\end{proof}

\begin{rk}
Note that Proposition \ref{intersection} holds in any tensor triangulated category in which internal hom objects exist and all objects are dualisable. The following definition, theorem and corollary can also be formulated in such a general setting, given a suitable notion of homology theories.
\end{rk}

\begin{defn}
Let $\{E_i\; |\; i\in I\}$ be a family of ring spectra in $\SH(G)$. For $X\in\SH(G)_f$, define 
\[\supp(X)=\{i\in I\;|\; (E_i)_\ast(X)\neq 0\}.\]
If $\mathcal C\subseteq \SH(G)_f$ is a subcategory, let
\[\supp(\mathcal C)=\bigcup_{X\in\mathcal C}\supp(X).\]
For a map $f:X\rightarrow Y$, we also define
\[\supp(f)=\{i\in I\;|\; (E_i)_\ast(f)\neq 0\}.\]
\end{defn}

\begin{rk}\label{supp thickid}
If $(E_i)_\ast(X)=0$, then $(E_i)_\ast(Y)=0$ for any $Y\in\thickid(X)$ by the following arguments. If $A\rightarrow B\rightarrow C$ is a cofiber sequence and the $(E_i)_\ast$-homology of two of the three objects is zero, then, by the long exact $(E_i)_\ast$-sequence, $(E_i)_\ast(-)$ of the third object is zero, too. If $(E_i)_\ast(A)=\pi_\ast(E_i\wedge A)=0$, then also $(E_i)_\ast(A\wedge B)=\pi_\ast(E_i\wedge A\wedge B)=0$. And, finally, if $(E_i)_\ast(A)=0$ and $B$ is a retract of $A$, then $(E_i)_\ast(B)\rightarrow 0\rightarrow (E_i)_\ast(B)$ is the identity map, hence, $(E_i)_\ast(B)=0$.

This implies  
\[\supp(\thickid(X))=\supp(X).\]
\end{rk}

The following theorem is one of the central results in \cite{Sch}, where it is Theorem 15.14.

\begin{theorem}\label{nilp-idl}\emph{(Strickland)}
Assume $\{E_i\; |\; i\in I\}$ is a family of ring spectra in $\SH(G)$ satisfying the following properties:
\begin{compactenum}[(1)]
\item If $f:X\rightarrow Y$, with $X,Y\in\SH(G)_f$, and $(E_i)_\ast(f)=0$ for all $i\in I$, then there exists $n>0$ such that $f^{\wedge n}=0$.
\item For any $X, Y\in\SH(G)_f$ and any $i\in I$, $(E_i)_\ast(X\wedge Y)\cong (E_i)_\ast(X)\otimes_{(E_i)_\ast}(E_i)_\ast(Y)$.
\item For any $i\in I$, $(E_i)_\ast=(E_i)_\ast(S^0)$ is concentrated in even degrees and any nonzero homogeneous element in $(E_i)_\ast$ is invertible.
\end{compactenum}
Then, for any $X,Y\in\SH(G)_f$, $\thickid (X)\subseteq \thickid(Y)$ if and only if $\supp(X)\subseteq\supp(Y)$.
\end{theorem}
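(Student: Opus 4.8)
The plan is to establish the two implications separately, and the ``only if'' direction is the easy one: if $\thickid(X)\subseteq\thickid(Y)$, then $X\in\thickid(Y)$, so by Remark \ref{supp thickid} we have $\supp(X)\subseteq\supp(\thickid(Y))=\supp(Y)$. All the content is in the converse, so assume $\supp(X)\subseteq\supp(Y)$; I want to conclude $X\in\thickid(Y)=\mathcal A_{\ann(Y)}$, i.e.\ that some smash power of the map $\ann(Y)\to S$ becomes null after smashing with $X$.

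First I would reduce to a statement about a single map. By Proposition \ref{ann}, $\thickid(X)\subseteq\thickid(Y)$ holds iff $\ann(Y)^{\wedge n}\to S$ factors through $\ann(X)\to S$ for some $n>0$; equivalently (smashing the relevant cofiber sequences with $X$) it suffices to show that the composite $\ann(Y)^{\wedge n}\wedge X\to X$ is null for some $n$. Here the key observation is that $\ann(X)\wedge X\to X$ is already null (this is shown inside the proof of Proposition \ref{ann}: $X$ is a retract of $DX\wedge X\wedge X$, so $\ann(X)\wedge X\to X$ splits off to zero). Therefore it is enough to produce an $n$ such that the map $g_n:\ann(Y)^{\wedge n}\wedge X\to \ann(X)\wedge X$ — or rather the composite $\ann(Y)^{\wedge n}\wedge X \to X$ — factors through $\ann(X)\wedge X\to X$, hence is null. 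So everything comes down to: \emph{the map $f:\ann(Y)\wedge X\to X$ (the smash of $\ann(Y)\to S$ with $\mathrm{id}_X$) satisfies $f^{\wedge m}=0$ for some $m$}, after which $n=m$ (or a small multiple) works by a diagram chase with the cofiber sequences, exactly as in the second half of the proof of Proposition \ref{ann}.

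To get $f^{\wedge m}=0$ I would invoke hypothesis (1), the nilpotence axiom: it suffices to check that $(E_i)_\ast(f)=0$ for every $i\in I$. Fix $i$. If $i\notin\supp(X)$, then $(E_i)_\ast(X)=0$, so the target of $(E_i)_\ast(f)$ is zero and we are done. If $i\in\supp(X)\subseteq\supp(Y)$, then $(E_i)_\ast(Y)\neq 0$. Now I use the structure of $\ann(Y)$: from the cofiber sequence $\ann(Y)\to S\to F(Y,Y)=DY\wedge Y$, the map $(E_i)_\ast(\ann(Y)\wedge X)\to (E_i)_\ast(X)$ fits into a long exact sequence, and the relevant point is that the composite $S\to DY\wedge Y$ induces, after applying $(E_i)_\ast(-\wedge X)$ and using the Künneth formula (hypothesis (2)) together with hypothesis (3), an \emph{injective} map $(E_i)_\ast(X)\to (E_i)_\ast(DY\wedge Y\wedge X)\cong (E_i)_\ast(DY)\otimes_{(E_i)_\ast}(E_i)_\ast(Y)\otimes_{(E_i)_\ast}(E_i)_\ast(X)$: since $(E_i)_\ast$ is a graded field (3) and $(E_i)_\ast(Y)\neq 0$, the element $1\in (E_i)_\ast(Y)$ and the dual evaluation in $(E_i)_\ast(DY)$ witness that $\mathrm{id}_X$ is sent to a nonzero (indeed nonzero-divisor, split) element, so the unit map $X\to DY\wedge Y\wedge X$ is $(E_i)_\ast$-split injective. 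By exactness the connecting/boundary map vanishes and $(E_i)_\ast(\ann(Y)\wedge X\to X)=(E_i)_\ast(f)$ is zero. Thus $(E_i)_\ast(f)=0$ for all $i$, hypothesis (1) gives $f^{\wedge m}=0$, and we conclude.

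The main obstacle is the $(E_i)_\ast$-splitting step: one must argue carefully, using that $Y$ is dualisable and that $(E_i)_\ast$ is a graded field with Künneth, that $(E_i)_\ast(\eta_Y\wedge X)$ is (split) injective whenever $(E_i)_\ast(Y)\neq0$ — equivalently that $(E_i)_\ast(\ann(Y)\wedge X)\to(E_i)_\ast(X)$ is zero. Everything else (the reduction via Proposition \ref{ann}, the octahedral-axiom bookkeeping to pass from ``$f$ smash-nilpotent'' to ``$X\in\mathcal A_{\ann(Y)}$'', and the ``only if'' direction via Remark \ref{supp thickid}) is formal.
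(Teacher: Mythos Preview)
Your overall strategy is sound and matches the paper's: verify that the relevant ``annihilator'' map has empty support, invoke hypothesis~(1), and then translate nilpotence back through Proposition~\ref{ann}. The easy direction and the $(E_i)_\ast$-vanishing argument are fine (modulo cosmetics: there is no ``$1\in (E_i)_\ast(Y)$''; what you want is simply that $u:S\to F(Y,Y)$ is a ring unit, so $(E_i)_\ast(u)$ is nonzero whenever $(E_i)_\ast F(Y,Y)\neq 0$, and the graded-field hypothesis makes ``nonzero'' equal ``injective'').

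The genuine gap is the passage from $f^{\wedge m}=0$ to $X\in\mathcal A_{\ann(Y)}$. With $f=v\wedge 1_X$ you get $f^{\wedge m}=v^{\wedge m}\wedge 1_{X^{\wedge m}}:\ann(Y)^{\wedge m}\wedge X^{\wedge m}\to X^{\wedge m}$, so $f^{\wedge m}=0$ only tells you $X^{\wedge m}\in\thickid(Y)$, \emph{not} that $\ann(Y)^{\wedge n}\wedge X\to X$ is null for any $n$. Your appeal to a ``diagram chase with the cofiber sequences, exactly as in the second half of the proof of Proposition~\ref{ann}'' does not do this; that argument produces $S/(\ann(X)^{\wedge n})\in\thickid(X)$ by induction on $n$, which is a different statement. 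The correct patch is the dualisability trick: $X$ is a retract of $DX\wedge X\wedge X$, so $X^{\wedge 2}\in\thickid(Y)\Rightarrow X\in\thickid(Y)$, and by downward induction $X^{\wedge m}\in\thickid(Y)\Rightarrow X\in\thickid(Y)$.

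The paper sidesteps this entirely by applying the nilpotence hypothesis to the \emph{adjoint} map. Instead of $v\wedge 1_X$, it considers the composite $g:\ann(Y)\xrightarrow{v} S\xrightarrow{u} F(X,X)$, shows $\supp(g)=\emptyset$ (same computation as yours), and gets $g^{\wedge m}=0$. The point is that $g^{\wedge m}$ lands in $F(X,X)^{\wedge m}$, and postcomposing with the multiplication $F(X,X)^{\wedge m}\to F(X,X)$ immediately gives $\ann(Y)^{\wedge m}\to S\to F(X,X)$ null, hence a factorisation through $\ann(X)$ and Proposition~\ref{ann} applies directly. Moving the extra copies of $X$ to the target via $F(X,-)$ is exactly what converts your $X^{\wedge m}$ problem into a single $X$.
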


In other words, a family $\{E_i\; |\; i\in I\}$ of spectra detecting nilpotence (see Definition \ref{def detect nilp}) and satisfying some additional properties can be used to distinguish any two different thick ideals with the help of the support functor $\supp(-)$.

\begin{proof}
Consider the cofiber sequence $\ann(Y)\overset{v}\rightarrow S\overset{u}\rightarrow F(Y,Y)$. We first show that $\supp(v)=I\setminus \supp(Y)$. Consider the long exact sequence
\[\cdots\rightarrow(E_i)_\ast(F(Y,Y))\rightarrow (E_i)_\ast(\ann(Y))\overset{(E_i)_\ast(v)}\rightarrow (E_i)_\ast(S)\overset{(E_i)_\ast(u)}\rightarrow (E_i)_\ast(F(Y,Y))\rightarrow\cdots.\]
If $i\in I\setminus \supp(Y)$, then $(E_i)_\ast(F(Y,Y))\cong (E_i)_\ast(DY)\otimes_{(E_i)_\ast}(E_i)_\ast(Y)=0$ and $(E_i)_\ast(v)$ is an isomorphism. Hence, $i\in\supp(v)$. If, on the other hand, $(E_i)_\ast(v)\neq 0$, it already has to be surjective because $(E_i)_\ast(\ann(Y))$ is an $(E_i)_\ast$-vector space (by property (3)). It follows that $(E_i)_\ast(u)=0$. But $u$ is the unit map of $F(Y,Y)$, so this implies $(E_i)_\ast(F(Y,Y))=0$. As $Y$ is a retract of $F(Y,Y)\wedge Y$, it follows that $(E_i)_\ast(Y)=0$. This proves $\supp(v)=I\setminus \supp(Y)$.

Now let $X,Y\in\SH(G)_f$ and $\supp(X)\subseteq\supp(Y)$. With $v$ as above, we have $\supp(v)=I\setminus \supp(Y)\subseteq I\setminus\supp(X)$. Hence, 
\[\supp\left( \ann(Y)\overset{v}\rightarrow S\rightarrow F(X,X) \right)=\emptyset.\]
By property (1), this map is smash nilpotent, so there is some $m>0$ such that
\[\ann(Y)^{\wedge m}\rightarrow S\rightarrow F(X,X)^{\wedge m}\]
is the zero map. Concatenation defines a map $F(X,X)^{\wedge m}\rightarrow F(X,X)$, over which the unit map $S\rightarrow F(X,X)$ factors, so we get a diagram in which the lower row is a cofiber sequence, the composition of the two upper maps is zero and the square commutes:
\[\xymatrix{\ar @{} [dr] |{}
\ann(Y)^{\wedge m} \ar[r]\ar@{-->}[d] & S \ar[r]\ar@{=}[d] & F(X,X)^{\wedge m}\ar[d]\\
\ann(X) \ar[r] & S \ar[r] & F(X,X),}\]
It follows that the map $\ann(Y)^{\wedge m}\rightarrow S$ factors over $\ann(X)$. By Proposition \ref{ann}, this is equivalent to $\thickid(X)\subseteq\thickid(Y)$.

For the other direction, assume $\thickid(X)\subseteq\thickid(Y)$. Then by Remark \ref{supp thickid}, 
\[\supp(X)=\supp(\thickid(X))\subseteq\supp(\thickid(Y))=\supp(Y).\]
\end{proof}

\begin{defn}\label{def detect nilp}
We say that a family $\{E_i\; |\; i\in I\}$ detects nilpotence, if for any $f:X\rightarrow Y$ in $\SH(G)_f$, $\supp(f)=\emptyset$ implies $f^{\wedge n}=0$ for some $n>0$.
\end{defn}

\begin{cor}\label{nilp-idl2}\emph{(Strickland)}
Under the assumptions of the above theorem, the map from the collection of thick ideals in $\SH(G)_f$ to the set of subsets of $I$,
\[\Idl(\SH(G)_f)\longrightarrow \mathcal P(I),\]
\[\mathcal C\mapsto \supp(\mathcal C),\]
is a lattice homomorphism (see Definition \ref{lattice}). It is injective on the collection of finitely generated thick ideals, $\FIdl(\SH(G)_f)$ (see Definition \ref{thick ideal}).
\end{cor}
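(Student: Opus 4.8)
The plan is to verify the two assertions—that $\mathcal C\mapsto\supp(\mathcal C)$ is a lattice homomorphism $\Idl(\SH(G)_f)\to\mathcal P(I)$, and that it restricts to an injection on $\FIdl(\SH(G)_f)$—separately, using Theorem \ref{nilp-idl}, Remark \ref{supp thickid}, and Proposition \ref{intersection}.

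First I would check that $\supp(-)$ is order-preserving: if $\mathcal C\subseteq\mathcal D$, then clearly $\supp(\mathcal C)=\bigcup_{X\in\mathcal C}\supp(X)\subseteq\bigcup_{X\in\mathcal D}\supp(X)=\supp(\mathcal D)$. Next, preservation of meets: since a meet in $\Idl$ is intersection and a meet in $\mathcal P(I)$ is intersection, I must show $\supp(\mathcal C\cap\mathcal D)=\supp(\mathcal C)\cap\supp(\mathcal D)$. The inclusion ``$\subseteq$'' is immediate from monotonicity. For ``$\supseteq$'', take $i\in\supp(\mathcal C)\cap\supp(\mathcal D)$, so $(E_i)_\ast(X)\neq 0$ for some $X\in\mathcal C$ and $(E_i)_\ast(Y)\neq 0$ for some $Y\in\mathcal D$; then by property (2) of Theorem \ref{nilp-idl}, $(E_i)_\ast(X\wedge Y)\cong (E_i)_\ast(X)\otimes_{(E_i)_\ast}(E_i)_\ast(Y)$, which is nonzero because $(E_i)_\ast$ is a graded field by property (3) (a tensor product of nonzero vector spaces over a field is nonzero). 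Since $X\in\mathcal C$ and $Y\in\mathcal D$ and both are ideals, $X\wedge Y$ lies in $\mathcal C\cap\mathcal D$, so $i\in\supp(\mathcal C\cap\mathcal D)$. For joins: the join $\mathcal C\vee\mathcal D$ is the thick ideal generated by $\mathcal C\cup\mathcal D$, and I claim $\supp(\mathcal C\vee\mathcal D)=\supp(\mathcal C)\cup\supp(\mathcal D)=\supp(\mathcal C\cup\mathcal D)$. The inclusion ``$\supseteq$'' is monotonicity again. For ``$\subseteq$'', by Remark \ref{supp thickid} (the observation that $(E_i)_\ast$ of a cofiber, a smash, or a retract vanishes when it vanishes on the relevant inputs), the class of $Z$ with $i\notin\supp(Z)$ is closed under the operations building $\mathcal C\vee\mathcal D$ from $\mathcal C\cup\mathcal D$; hence if $i\notin\supp(\mathcal C)\cup\supp(\mathcal D)$ then $i\notin\supp(Z)$ for every $Z\in\mathcal C\vee\mathcal D$. (The same bookkeeping should be done for finite joins of more than two ideals, but it is the same argument.) Together with $\supp(\{0\})=\emptyset$ and $\supp(\SH(G)_f)=I$ (the former since $(E_i)_\ast(0)=0$, the latter since $(E_i)_\ast(S^0)\neq 0$ by property (3)), this shows $\supp$ is a lattice homomorphism.

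For injectivity on finitely generated ideals: by Remark \ref{FIdl} a finitely generated thick ideal is $\thickid(X)$ for a single $X\in\SH(G)_f$, and by Remark \ref{supp thickid} we have $\supp(\thickid(X))=\supp(X)$. So suppose $\supp(\thickid(X))=\supp(\thickid(Y))$, i.e. $\supp(X)=\supp(Y)$. Then $\supp(X)\subseteq\supp(Y)$ and $\supp(Y)\subseteq\supp(X)$, and Theorem \ref{nilp-idl} gives $\thickid(X)\subseteq\thickid(Y)$ and $\thickid(Y)\subseteq\thickid(X)$, hence equality. Thus distinct finitely generated ideals have distinct supports.

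The only real subtlety—and the step I would be most careful with—is the ``$\subseteq$'' direction for joins, since it requires the full strength of the Remark \ref{supp thickid} closure argument rather than any of the nilpotence machinery; everything else reduces either to elementary facts about graded fields or to a direct citation of Theorem \ref{nilp-idl}. One should also note that $\supp$ need not be injective on \emph{all} thick ideals, only on finitely generated ones, because Theorem \ref{nilp-idl} is stated for single generators; this is why the injectivity claim is restricted to $\FIdl(\SH(G)_f)$.
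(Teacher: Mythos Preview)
Your proof is correct and follows essentially the same approach as the paper. The only minor difference is that for meets the paper invokes Proposition \ref{intersection} (so that $\mathcal C\cap\mathcal D$ is generated by smash products of generators) together with the K\"unneth formula, whereas you argue more directly that $X\wedge Y\in\mathcal C\cap\mathcal D$ whenever $X\in\mathcal C$ and $Y\in\mathcal D$; your extra verification of $\supp(\{0\})=\emptyset$ and $\supp(\SH(G)_f)=I$ is a nice touch that the paper leaves implicit.
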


\begin{proof}
This is \cite[Corollary 15.15]{Sch}. It is clear from the definition of $\supp(\mathcal C)$, that $\supp(-)$ is order preserving. The map preserves meets by Proposition \ref{intersection} and by the K\"unneth formula for $E_i$. As $\supp(\mathcal C)$ is the support of any set of generators for $\mathcal C$ and the join of thick ideals $\mathcal C_i$ is generated by the collection of generators of the individual thick ideals, it is also clear that $\supp(-)$ preserves joins. Recall that any finitely generated thick ideal is already generated by a single element (Remark \ref{FIdl}). By the above theorem, $\thickid(X)=\thickid(Y)$ if and only if $\supp(X)=\supp(Y)$, which is the same as $\supp(\thickid(X))=\supp(\thickid(Y))$. This proves the injectivity on $\FIdl$.
\end{proof}

\section{Thick ideals and equivariant Morava K-theories}

\begin{defn}
For a finite group $G$, let $\sub(G)$ denote the set of equivalence classes of conjugate subgroups of $G$. Let
\[\mathcal Q'= \{p^{-n}\;|\;p \textup{ prime}, 0\leq n<\infty\}\]
\[\textup{and }\;GQ'=\mathcal Q'\times \sub(G).\]
\end{defn}

The following theorem shows that the family of equivariant Morava K-theories $\{K(p^{-n},H)\;|\;(p^{-n},H)\in GQ'\}$ (see Section \ref{Equivariant Morava K-theories}) detects nilpotence, as required in the assumptions of Theorem \ref{nilp-idl} and Corollary \ref{nilp-idl2}. As in \cite[Theorem 1]{DHS}, there are different kinds of nilpotence, which are all detected by the Morava K-theories. Although we mainly work with smash nilpotence, the theorem, which is \cite[Theorem 16.7]{Sch}, considers all three definitions.

\begin{theorem}\label{detect nilp}\emph{(Strickland)}
\begin{compactenum}[(1)]
\item Let $R\in\SH(G)$ be a ring spectrum. Then $\alpha\in\pi_\ast^G R$ is nilpotent if and only if for all $v\in GQ'$, $K(v)_\ast(\alpha)$ is nilpotent as an element of $K(v)_\ast R$.
\item A self-map $f:\Sigma^k W\rightarrow W$, $W\in\SH(G)_f$, is nilpotent if and only if for all $v\in GQ'$, $K(v)_\ast(f)$ is nilpotent.
\item A map $f:W\rightarrow X$, with $W\in\SH(G)_f$ and $X\in \SH(G)$, is smash nilpotent if and only if for all $v\in GQ'$, $K(v)_\ast(f)$ is nilpotent.
\end{compactenum}
\end{theorem}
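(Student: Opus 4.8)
The strategy is to reduce the equivariant statement to the non-equivariant nilpotence theorem of Devinatz--Hopkins--Smith via the geometric fixed point functors $\phi^H$. By Proposition \ref{K(n) and phi}, for $v = p^{-n}$ and $H \subseteq G$ we have $K(p^{-n},H)_\ast(X) = K(p^{-n})_\ast(\phi^H X)$, so the vanishing (or, more precisely, the behaviour under $\phi^H$) of $K(v)_\ast$ applied to a map $f$ translates directly into a statement about $K(p^{-n})_\ast(\phi^H f)$ in the non-equivariant category $\SH$. Thus the condition ``$K(v)_\ast(f)$ is nilpotent for all $v \in GQ'$'' is equivalent, via Proposition \ref{phi}(2) (monoidality) and the fact that $\phi^H$ preserves suspension spectra and finiteness, to ``$K(p^{-n})_\ast(\phi^H f)$ is nilpotent for all primes $p$, all $n \geq 0$, and all $H \subseteq G$.'' One direction (nilpotence implies nilpotence after applying $K(v)_\ast$) is routine in each case, since $\phi^H$ and $K(v)_\ast$ are functorial and (for the ring case) monoidal.

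For the hard direction I would argue $H$ by $H$, ordered so that we can peel off geometric fixed points. Fix $f$ (a ring element $\alpha$, a self-map, or a general map $f : W \to X$ with $W$ finite) and assume $K(v)_\ast(f)$ is nilpotent for all $v \in GQ'$. For each $H$, the hypotheses say $K(p^{-n})_\ast(\phi^H f)$ is nilpotent for all $p,n$; by the non-equivariant theorem \cite[Theorem 1]{DHS} this forces $\phi^H f$ to be nilpotent in the relevant sense in $\SH$ (smash nilpotent, resp.\ nilpotent self-map, resp.\ nilpotent ring element) --- here one needs $\phi^H W \in \SH^{fin}$, which holds by Proposition \ref{phi}(1). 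Now one assembles a single exponent $N$ that works for all $H$ simultaneously (finitely many $H$ up to conjugacy, so take the maximum of the individual exponents) and concludes that $\phi^H(f^{\wedge N}) = (\phi^H f)^{\wedge N} = 0$ for every $H$ --- again using the monoidality of $\phi^H$. By Proposition \ref{phi}(4), a map (resp.\ element) all of whose geometric fixed points vanish is itself zero, so $f^{\wedge N} = 0$, which is the required equivariant nilpotence.

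The three cases differ only in the bookkeeping of ``which kind of nilpotence'': in case (1), $\phi^H$ sends the ring $R$ to a ring $\phi^H R$ and $\alpha \in \pi_\ast^G R$ to $\phi^H \alpha \in \pi_\ast(\phi^H R)$, and one uses that $\phi^H \alpha^m = (\phi^H \alpha)^m$; in case (2), $\phi^H$ sends a self-map $f : \Sigma^k W \to W$ to a self-map $\phi^H f : \Sigma^k \phi^H W \to \phi^H W$ of a finite spectrum, and one needs $\Sigma^k \phi^H W \cong \phi^H(\Sigma^k W)$, which follows from Proposition \ref{phi}(2) applied to $W \wedge S^k$ together with $\phi^H S^k = S^k$; in case (3), one uses smash nilpotence and the monoidality of $\phi^H$ directly. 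In every case the statement ``$f^{\wedge N}$ (or $\alpha^N$ or $f$-composed-$N$-times) is zero'' can be tested after applying all $\phi^H$, so Proposition \ref{phi}(4) closes the argument.

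The main obstacle I anticipate is purely technical: making sure that the various flavours of ``nilpotent map'' in $\SH(G)$ are detected by $\{\phi^H\}_{H \subseteq G}$ in exactly the form needed --- i.e.\ that Proposition \ref{phi}(4), which is stated for objects, also yields ``$f = 0$ if $\phi^H f = 0$ for all $H$'' for morphisms (this follows by applying (4) to the cofiber of $f$, or to the relevant mapping spectrum, but deserves a line), and that $\phi^H$ genuinely commutes with forming the $N$-fold smash power and with suspension in the precise sense used for self-maps. Once those compatibilities are in hand, the reduction to \cite{DHS} is formal. No delicate estimates are needed beyond taking a maximum over the finitely many conjugacy classes of subgroups.
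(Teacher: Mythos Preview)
Your overall strategy---reduce to the non-equivariant Devinatz--Hopkins--Smith theorem via geometric fixed points---matches the paper. But the obstacle you flag in your last paragraph is genuine and your suggested fixes do not work. Proposition \ref{phi}(4) says the family $\{\phi^H\}$ jointly detects zero \emph{objects}; it does \emph{not} jointly detect zero \emph{morphisms}. Applying (4) to the cofiber of $f$ fails: if $\phi^H f = 0$ then $\phi^H(\Cofib f) \simeq \phi^H Y \vee \Sigma\,\phi^H X$, which is typically nonzero, so you learn nothing. The ``mapping spectrum'' suggestion has the same defect. Hence your final step ``$\phi^H(f^{\wedge N}) = 0$ for all $H$ $\Rightarrow$ $f^{\wedge N} = 0$'' is unjustified.

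The paper sidesteps this by working with an \emph{object} whose vanishing encodes nilpotence: the mapping telescope $R[\alpha^{-1}]$. For part (1), one shows that $\phi^H\alpha$ nilpotent in $\pi_\ast(\phi^H R)$ forces $(\phi^H R)[\phi^H\alpha^{-1}] = 0$; since $\phi^H$ preserves filtered homotopy colimits (hence telescopes), this gives $\phi^H(R[\alpha^{-1}]) = 0$ for every $H$, and \emph{now} Proposition \ref{phi}(4) applies to the object $R[\alpha^{-1}]$, yielding $R[\alpha^{-1}] = 0$ and thus $\alpha$ nilpotent. No uniform exponent across $H$ is needed. Parts (2) and (3) are then reduced to part (1) by passing to suitable ring spectra: for a self-map $f$ one uses its adjoint in $\pi^G_\ast F(W,W)$, and for a general $f:W\to X$ one uses the free associative ring $\bigvee_{k\geq 0} F(W,X)^{\wedge k}$.
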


\begin{proof}
\begin{compactenum}[(1)]
\item Let $\alpha:S_G^d\rightarrow R$ be such that $K(v)_\ast(\alpha)$ is nilpotent for all $v\in GQ'$. For any $H\subseteq G$, $\phi^H(S_G^d)$ is a non-equivariant sphere, so $\phi^H\alpha\in\pi_\ast(\phi^H R)$. By Proposition \ref{K(n) and phi}, $K(u)_\ast(\phi^H \alpha)=K(u,H)_\ast(\alpha)$, so this is nilpotent for all $u\in\mathcal Q'$. Furthermore, $\phi^H R$ is a ring spectrum and, by \cite[Theorem 3(i)]{HS}, it follows that $\phi^H(\alpha)$ is nilpotent, so $(\phi^H R)[\phi^H\alpha^{-1}]=0$ (for the definition of this mapping telescope, see \cite[page 212]{DHS}). By \cite[Remark 7.15]{Schwede}, $\phi^H$ preserves telescopes, hence, $\phi^H (R[\alpha^{-1}])\cong (\phi^H R)[\phi^H\alpha^{-1}]=0$. This holds for all $H$, which by Proposition \ref{phi}(4) implies $R[\alpha^{-1}]=0$, and, hence, $\pi_\ast^G R[\alpha^{-1}]=0$. Thus, $\alpha$ is nilpotent.
\item The adjoint of $f$ is an element $\alpha\in\pi^G_d F(W,W)$, and $K(v)_\ast(\alpha)$ is nilpotent for all $v\in GQ'$. So, the claim follows from (1).
\item Let $R=\bigvee_{k\geq 0} F(W,X)^{\wedge k}\in\SH(G)$ be the free associative ring spectrum generated by $F(W,X)$. The map $f$ is adjoint to $\alpha\in \pi^G_d F(W,X)\subset\pi^G_d R$ such that $K(v)_\ast(\alpha)$ is nilpotent for all $v\in GQ'$, and the claim follows from (1).
\end{compactenum}
\end{proof}

\begin{defn}
Let $GQ=\underset{\sub(G)}\prod\mathcal Q$, where $\mathcal Q$ is as in Definition \ref{Q} and let
\[\max:\mathcal P(GQ')\longrightarrow GQ, \;I\mapsto \left\{ (H,p)\mapsto \max\{p^{-n}\;|\; (H,p^{-n})\in I\}\right\} ,\]
with the convention $\max(\emptyset)=0$. Let 
\[\tau=\max\circ\supp:\Idl(\SH(G)_f)\longrightarrow GQ.\] 
\end{defn}

\begin{cor}\label{injective lattice hom}
The functor 
\[\Idl(\SH(G)_f)\longrightarrow \mathcal P(GQ'),\]
\[\mathcal C\mapsto\supp(\mathcal C)= \left\{ v\in GQ'\;|\; K(v)_\ast(X)\neq 0 \textup{ for some }X\in\mathcal C\right\} ,\] 
is a lattice homomorphism. Furthermore, it is injective.
\end{cor}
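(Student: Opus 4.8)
The plan is to observe that the claimed statement is exactly the conclusion of Corollary~\ref{nilp-idl2} specialised to the family $\{K(v)\mid v\in GQ'\}$ of equivariant Morava K-theories, and then to verify that this family satisfies the three hypotheses of Theorem~\ref{nilp-idl}. Hypothesis~(1), that $(E_i)_\ast(f)=0$ for all $i\in GQ'$ forces $f^{\wedge m}=0$ for some $m>0$, follows at once from Theorem~\ref{detect nilp}(3): if every $K(v)_\ast(f)$ vanishes it is in particular nilpotent, so $f$ is smash nilpotent. Hypothesis~(2) is the Künneth isomorphism recorded in Corollary~\ref{Kunneth equivariant}(2). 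Hypothesis~(3) --- that each $K(v)_\ast=K(v)_\ast(S^0_G)$ is a graded field concentrated in even degrees --- is Corollary~\ref{Kunneth equivariant}(1) when $v=p^{-n}$ with $n>0$; the boundary case $v=p^{0}$ lies outside the scope of that corollary and must be checked by hand, but there $K(p^{0},H)_\ast S^0_G=K(0)_\ast(\phi^H S^0_G)=K(0)_\ast S^0=\Q$ by Proposition~\ref{K(n) and phi} together with $\phi^H S^0_G=S^0$ (Proposition~\ref{phi}(1)), which is a field in degree~$0$.

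Once the hypotheses are in place, the lattice-homomorphism part is formal and runs exactly as in the proof of Corollary~\ref{nilp-idl2}: $\mathcal C\mapsto\supp(\mathcal C)$ is visibly order preserving; it preserves joins because the support of a generated thick ideal is the union of the supports of any generating set (Remark~\ref{supp thickid}); and it preserves meets by Proposition~\ref{intersection} combined with the Künneth formula and the distributivity of $\cup$ over $\cap$. For injectivity, Theorem~\ref{nilp-idl} gives $\thickid(X)\subseteq\thickid(Y)\iff\supp(X)\subseteq\supp(Y)$, so $\supp$ separates finitely generated thick ideals (each such being singly generated, Remark~\ref{FIdl}). To promote this to injectivity on all of $\Idl(\SH(G)_f)$ I would write a thick ideal $\mathcal C$ as the directed union $\bigcup_{X\in\mathcal C}\thickid(X)$, and show that if $\supp(X)\subseteq\supp(\mathcal C)=\bigcup_{Y\in\mathcal C}\supp(Y)$ then already $\supp(X)\subseteq\supp(Y)$ for a single $Y\in\mathcal C$, whence $X\in\thickid(Y)\subseteq\mathcal C$; this uses that $\sub(G)$ is finite together with the identity $K(p^{-n},H)_\ast(-)=K(p^{-n})_\ast(\phi^H(-))$ of Proposition~\ref{K(n) and phi} and the chain structure of $\Idl(\SH^{fin})$ from Theorem~\ref{HS}, which forces each subgroup-slice of a support to be a union of ``tails'' in $\mathcal Q'$.

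The argument is almost entirely a translation exercise, so I do not expect a serious obstacle: everything is reduced to Corollary~\ref{nilp-idl2} and the three cited properties of equivariant Morava K-theory. The only two points needing more than a reference are the degenerate case $v=p^{0}$ in hypothesis~(3) and --- should one want injectivity for arbitrary, not merely finitely generated, thick ideals --- the directed-union reduction just sketched, which is where the explicit shape of $\Idl(\SH^{fin})$ from the Hopkins--Smith theorem is invoked.
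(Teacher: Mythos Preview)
Your proposal is correct and follows essentially the same approach as the paper: apply Corollary~\ref{nilp-idl2} to the family $\{K(v)\}$, verify its hypotheses via Theorem~\ref{detect nilp}(3) and Corollary~\ref{Kunneth equivariant}, and then upgrade injectivity from finitely generated to arbitrary thick ideals by a finiteness argument using the Hopkins--Smith chain structure on each $\phi^H$-slice. The only cosmetic difference is that the paper phrases the last step in terms of the composite $\tau=\max\circ\supp$ rather than $\supp$ itself, and picks a finite \emph{set} of $Y$'s in $\mathcal J$ (one for each pair $(H,p)$ in a finite index set) rather than a single $Y$; your version amounts to wedging those together, which is equivalent. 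Your explicit handling of the boundary case $v=p^0$ in hypothesis~(3) is a nice touch that the paper leaves implicit.
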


\begin{proof}
This is \cite[Corollary 16.8]{Sch}. It follows from Corollary \ref{nilp-idl2} applied to the family of equivariant Morava K-theories. The assumptions are satisfied by Theorem \ref{detect nilp}(3) and Proposition \ref{Kunneth equivariant}. 

Corollary \ref{nilp-idl2} states that the restriction of $\supp$ to finitely generated thick ideals $\FIdl(\SH(G)_f)$ is injective.
To show that $\supp$ is injective on arbitrary thick ideals, it suffices to show that $\tau$ is injective. Note that $\tau$ is injective on $\FIdl(\SH(G)_f)$ because, by \cite[Theorem 2.11]{RavLoc}, $K(p^{-n})_\ast(\phi^H X)=0$ implies $K(p^{-m})_\ast(\phi^H X)=0$ for all $m\leq n$. 

Now let $\tau(\mathcal I)=\tau(\mathcal J)$ for some thick ideals $\mathcal I$ and $\mathcal J$ in $\SH(G)_f$. Thus, $\tau(\mathcal I)_{H,p}=\tau(\mathcal J)_{H,p}$ for all $H\subseteq G$ and all primes $p$. Assume $X\in\mathcal I$. Then $\tau(X)_{H,p}\leq \tau(\mathcal I)_{H,p}=\tau(\mathcal J)_{H,p}$. Let $I=\{(H,p)\;|\;\tau(X)_{H,p}\neq 0,1\}$. We claim that $I$ is finite. Since $G$ is finite, there are only finitely many possibilities for $H$. If $(H,p)\in I$, then $K(p^{-0})_\ast(\phi^H X)=\h_\ast(\phi^H X,\Q)=0$. Since $\phi^H X$ is a finite spectrum, this implies $\h_\ast(\phi^H X,\F_q)=0$ for all but finitely many $q$, so $I$ is finite. For any $u\in I$, there exists by assumption a $Y_u\in\mathcal J$ with $\tau(Y_u)_u\geq\tau(X)_u$. Similarly, let $I'=\{H\;|\;\tau(X)_H=1\}$ and pick $Y_H\in\mathcal J$ with $\tau(Y_H)=1$ for each $H\in I'$. Then, by the injectivity of $\tau$ on finitely generated thick ideals, $X$ is contained in $\thickid(\{Y_u\;|\; u\in I\}\cup\{Y_H\;|\;H\in I'\})$, which is a finitely generated subideal of $\mathcal J$. It follows $\mathcal I\subseteq\mathcal J$ and, similarly, $\mathcal J\subseteq\mathcal I$.
\end{proof}

As promised, we now state a reformulation of the thick subcategory theorem from \cite{HS}, which is no longer in $p$-local form. We do not claim that all the notation from above was necessary to state this. But it is helpful for generalising the result to $\SH(G)_f$ or maybe also other categories.

\begin{theorem}\label{thm Q}
The composition 
\[\tau: \Idl(\SH^{fin})\overset{\supp}\longrightarrow \mathcal P(\mathcal Q')\overset{\max}\longrightarrow \mathcal Q\]
is bijective. Its restriction to the finitely generated thick ideals maps $\FIdl(\SH^{fin})$ bijectively to
\[ \Fin(\mathcal Q)=\{u\in\mathcal Q\;|\; u=1 \textup{ or } u_p=0 \textup{ for almost all } p\}.\]
\end{theorem}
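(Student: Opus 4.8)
The strategy is to reduce Theorem \ref{thm Q} to the $p$-local Hopkins--Smith classification (Theorem \ref{HS}, in the reformulated form given just before Theorem \ref{thm Q}) by assembling the local data across all primes. First I would unwind the definitions: for a thick ideal $\mathcal C\subseteq\SH^{fin}$, the element $\tau(\mathcal C)\in\mathcal Q$ has $p$-component $\tau(\mathcal C)_p=\max\{p^{-n}\mid K(p^{-n})_\ast(X)\neq 0\text{ for some }X\in\mathcal C\}$, which by Remark \ref{supp thickid} only depends on the supports of objects of $\mathcal C$. The key observation is that $K(p^{-n})_\ast(X)=K(p^{-n})_\ast(X_{(p)})$, so the $p$-component of $\tau(\mathcal C)$ is determined by the image of $\mathcal C$ under $p$-localisation $\SH^{fin}\to\SH^{fin}_{(p)}$, and by the reformulated Theorem \ref{HS} (the isomorphism $\tau_p:\Idl(\SH^{fin}_{(p)})\xrightarrow{\ \sim\ }\mathcal Q_p$) this local data is a complete invariant of a $p$-local thick ideal.

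\emph{Injectivity of $\tau$.} Suppose $\tau(\mathcal C)=\tau(\mathcal D)$. I would show $\mathcal C\subseteq\mathcal D$ (and symmetrically). Take $X\in\mathcal C$. For each prime $p$, the image $X_{(p)}$ lies in the $p$-local thick ideal generated by the image of $\mathcal C$, whose $\tau_p$-value is $\tau(\mathcal C)_p=\tau(\mathcal D)_p$; since $\tau_p$ is an isomorphism, $X_{(p)}$ lies in the $p$-localisation of $\mathcal D$, i.e.\ $K(p^{-n})_\ast(X)=0$ whenever $K(p^{-n})_\ast$ vanishes on all of $\mathcal D$. Thus $\supp(X)\subseteq\supp(\mathcal D)$ in $\mathcal Q'$. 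Now apply the (non-equivariant specialisation of the) nilpotence machinery: by the nilpotence theorem \cite[Theorem 3]{HS} the Morava K-theories detect nilpotence in $\SH^{fin}$, so Theorem \ref{nilp-idl} / Proposition \ref{ann} applied with $G=\{1\}$ and $Y$ replaced by a generator of $\thickid(\mathcal C)$ (using Remark \ref{FIdl} for finitely generated pieces, and a finiteness argument as in the proof of Corollary \ref{injective lattice hom} to reduce the general case to the finitely generated one) gives $X\in\mathcal D$. Hence $\mathcal C=\mathcal D$.

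\emph{Surjectivity of $\tau$, and the image of $\FIdl$.} Given $u\in\mathcal Q$, I would build a thick ideal realising it. For each prime $p$ with $u_p\neq 1$, write $u_p=p^{-n_p}$ and let $\mathcal C^{(p)}=\mathcal C_{n_p}\subseteq\SH^{fin}_{(p)}$ be the corresponding $p$-local thick ideal from Theorem \ref{HS}. Set $\mathcal C=\{X\in\SH^{fin}\mid X_{(p)}\in\mathcal C^{(p)}\text{ for all }p\}$ if $u\neq 1$, and $\mathcal C=\SH^{fin}$ if $u=1$; this is an intersection of preimages of thick ideals under the symmetric monoidal localisation functors, hence a thick ideal, and by construction $\tau(\mathcal C)=u$ since the $p$-local Morava K-theory vanishing of $X$ is read off from $X_{(p)}$. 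For the finitely generated statement, I would argue that $\thickid(X)$ for a single $X\in\SH^{fin}$ has $\tau$-value with $\tau(\thickid(X))_p=1$ for all $p$ at which $X$ is rationally nontrivial, i.e.\ $=1$ unless $\h_\ast(X;\Q)=0$, in which case it is $p^{-n_p}$ with $n_p<\infty$; and $\h_\ast(X;\Q)=0$ forces $\h_\ast(X;\F_p)\neq 0$ for all but finitely many $p$ (finite complex), so $\tau(\thickid(X))_p=0$ for almost all $p$ — landing in $\Fin(\mathcal Q)$. Conversely any $u\in\Fin(\mathcal Q)$ is realised by a finite wedge of type-$n_p$ spectra at the relevant primes, each of which exists by \cite[Theorem 4.8]{Mi}; this is where one genuinely needs the \emph{existence} of type-$n$ spectra, the one input not packaged into Theorem \ref{HS}.

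\emph{Main obstacle.} The delicate point is the passage from the finitely generated case to arbitrary thick ideals in the injectivity argument: a general $X\in\mathcal C$ might not lie in a finitely generated subideal of $\mathcal D$ for an obvious reason, so one must bound the set of "bad primes" $\{p\mid \tau(X)_p\notin\{0,1\}\}$. This is finite because $\tau(X)_p\neq 1$ forces $\h_\ast(X;\Q)=0$, which for a finite spectrum kills $\h_\ast(X;\F_p)$ for almost all $p$ and hence $K(0)_\ast(X_{(p)})\neq 0\Rightarrow$ contradiction; the argument is exactly the finiteness argument already carried out in the proof of Corollary \ref{injective lattice hom}, specialised to $G=\{1\}$, so I would simply invoke it. Everything else is bookkeeping with the lattice isomorphisms $\tau_p$ and the compatibility of $\supp$ with $p$-localisation.
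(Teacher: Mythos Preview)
Your approach is essentially the same as the paper's: injectivity via the non-equivariant case of Corollary~\ref{injective lattice hom} (including the finiteness-of-bad-primes argument you identify as the main obstacle), and surjectivity plus the $\Fin(\mathcal Q)$ statement via the existence of type-$n$ spectra. Two points deserve correction, however.

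First, your surjectivity argument is incomplete as stated. Defining $\mathcal C=\{X\mid X_{(p)}\in\mathcal C^{(p)}\text{ for all }p\}$ gives only the inequality $\tau(\mathcal C)_p\leq u_p$ ``by construction''; to obtain $\tau(\mathcal C)_p\geq u_p$ you must exhibit, for each prime $p$ with $n_p<\infty$, an object of $\mathcal C$ on which $K(p^{-n_p})_\ast$ is nonzero, and this is precisely a $p$-local type-$n_p$ spectrum (which automatically localises to $0$ at all other primes, hence lies in $\mathcal C$). So type-$n$ spectra are needed already here, not only for the $\Fin(\mathcal Q)$ statement. The paper sidesteps this by defining $\mathcal C=\thickid(X_{n_2},X_{n_3},X_{n_5},\ldots)$ directly.

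Second, in your $\Fin(\mathcal Q)$ paragraph the implication is inverted: for a finite spectrum $X$ with $\h_\ast(X;\Q)=0$, one has $\h_\ast(X;\F_p)=0$ (not $\neq 0$) for all but finitely many $p$, hence $X_{(p)}\simeq 0$ and $\tau(\thickid(X))_p=0$ for those $p$. Relatedly, your claim that $n_p<\infty$ for all $p$ is wrong; rather $n_p=\infty$ for almost all $p$. Your conclusion that $\tau(\thickid(X))\in\Fin(\mathcal Q)$ is nonetheless correct.
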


\begin{proof}
This is \cite[Proposition 19.14]{Sch}.
The theorem states that $\supp$ maps injectively onto its image, which is isomorphic to $\mathcal Q$. The map $\supp$ is injective by the nonequivariant version of Corollary \ref{injective lattice hom}. The image of $\supp$ maps injectively to $\mathcal Q$ because $K(p^{-n})_\ast X=0$ implies $K(p^{-m})_\ast X=0$ for all $m\leq n$ by \cite[Theorem 2.11]{RavLoc}. We show that $\tau$ is surjective: By \cite[Theorem B(b)]{Mi} or \cite[Section C.3]{Rav}, for any number $n>0$ there is a spectrum $X_{n}\in\SH^{fin}_{(p)}$ of type $n$. Thus, the $p$-localisation of $\tau$ is surjective onto $\mathcal Q_{p}$ (see Definition \ref{Q}). Let $u\in\mathcal Q$. If $u=1$, then $\mathcal C=\thickid(S^0)$ satisfies $\tau(\mathcal C)=u$. Assume $u\neq 1$, so $u_p=p^{-n_p}$ with $0< n_p\leq \infty$. Let $X_{n_p}$ be a $p$-local spectrum of type $n_p$ if $0<n_p<\infty$ and $X_{n_p}=0$ if $n_p=\infty$. Then $\mathcal C=\thickid(X_{n_2},X_{n_3},X_{n_5},\cdots)$ satisfies $\tau(\mathcal C)=u$. Hence, $\tau$ is surjective. If $u=1$ or $u_p=0$ for almost all $p$, then $\mathcal C$ as above is finitely generated. As in the proof of Corollary \ref{injective lattice hom}, all finitely generated thick ideals are mapped to $\Fin(\mathcal Q)$.
\end{proof}

This theorem identifies the thick ideals with a sublattice of $\mathcal P(\mathcal Q')$ that is bijective to $\mathcal Q$. Similarly, thick ideals in the equivariant category $\SH(G)_f$ are mapped injectively into $GQ$. Unlike its nonequivariant version, the equivariant version of $\tau$ is not surjective. However, we can state the following:

\begin{theorem}\label{tau injective}\emph{(Strickland)}
The composition
\[\tau: \Idl(\SH(G)_f)\overset{\supp}\longrightarrow \mathcal P(GQ')\overset{\max}\longrightarrow GQ\]
is injective. Its image contains all $u\in GQ$ which satisfy: If $H\subseteq H'$ for some $H,H'\in\sub(G)$, then $u_{H} \geq u_{H'}$.
\end{theorem}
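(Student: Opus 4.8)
The plan is to deduce injectivity from Corollary \ref{injective lattice hom} and then to construct, for each admissible $u \in GQ$, a thick ideal $\mathcal{C}$ with $\tau(\mathcal{C}) = u$. For injectivity, the point is that Corollary \ref{injective lattice hom} already gives that $\supp : \Idl(\SH(G)_f) \to \mathcal{P}(GQ')$ is injective, so it suffices to observe that the factorization $\tau = \max \circ \supp$ loses no information: the image of $\supp$ consists of "downward-closed" subsets of $GQ'$ in the $\mathcal{Q}_p$-direction, because by \cite[Theorem 2.11]{RavLoc} (applied to each $\phi^H X$, using Proposition \ref{K(n) and phi}) one has $K(p^{-n}, H)_\ast(X) = 0 \Rightarrow K(p^{-m}, H)_\ast(X) = 0$ for all $m \le n$. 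On such subsets $\max$ is injective, so $\tau$ is injective. (This is exactly the argument already spelled out in the proof of Corollary \ref{injective lattice hom}, so I would just cite it.)

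For the image statement, let $u \in GQ$ satisfy the monotonicity condition: $H \subseteq H' \Rightarrow u_H \ge u_{H'}$ (componentwise in $\prod_p \mathcal{Q}_p$). I want to produce $\mathcal{C} \in \Idl(\SH(G)_f)$ with $\tau(\mathcal{C}) = u$. The natural candidate is
\[
\mathcal{C} = \{\, X \in \SH(G)_f \;\mid\; \type(\phi^H X) \ge n_{H,p} \text{ for all } H \in \sub(G) \text{ and all primes } p \,\},
\]
where $u_{H,p} = p^{-n_{H,p}}$ (with $n_{H,p} = \infty$ allowed, and $u_H = 1$ meaning $n_{H,p} = 0$ for all $p$). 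Equivalently, $\mathcal{C}$ is the preimage under the collection of functors $\phi^H$ of the product of the $p$-local thick ideals $\mathcal{C}_{n_{H,p}}$; since each $\phi^H$ is exact and monoidal (Proposition \ref{phi}(1),(2)) and preserves retracts, $\mathcal{C}$ is a thick ideal. One then has to check two things: (a) $\tau(\mathcal{C}) \le u$, which is immediate from the definition together with Proposition \ref{K(n) and phi}, since $X \in \mathcal{C}$ forces $K(p^{-m}, H)_\ast(X) = K(p^{-m})_\ast(\phi^H X) = 0$ for $m < n_{H,p}$; and (b) $\tau(\mathcal{C}) \ge u$, i.e. for each $(H, p)$ with $n_{H,p} < \infty$ there is an $X \in \mathcal{C}$ with $\type(\phi^H X) = n_{H,p}$, realizing the maximum.

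The heart of the argument — and the step I expect to be the main obstacle — is (b): building, for a fixed $(H, p)$, an equivariant finite spectrum $X$ such that $\phi^H X$ has type exactly $n_{H,p}$ while $\phi^{H'} X$ has type $\ge n_{H', p}$ for every other $H'$, so that $X$ genuinely lies in $\mathcal{C}$. The monotonicity hypothesis on $u$ is precisely what is needed here, and it should be exploited as follows. Take a nonequivariant $p$-local type-$n_{H,p}$ spectrum $Y$ (Mitchell, \cite[Theorem B]{Mi}, as used in the proof of Theorem \ref{thm Q}) and push it into $\SH(G)_f$ via a geometric-fixed-point "inflation" or an induced construction $G/H_+ \wedge \tilde E[\ngeq H] \wedge i(Y)$ so that $\phi^H$ of it recovers (a suspension of) $Y$ while $\phi^{H'}$ of it vanishes unless $H' \ge_G H$; for $H' \ge_G H$ one gets $\phi^{H'}$ equal to $\phi^{H'/H}$ of something built from $Y$, whose type is controlled using $n_{H',p} \le n_{H,p}$. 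Combining such building blocks over all $(H,p)$ — only finitely many are relevant once one fixes a single $X$, by the finiteness argument already used in the proofs of Corollary \ref{injective lattice hom} and Theorem \ref{thm Q} — and taking wedges/smash products gives the required generators, hence $\tau(\mathcal{C}) = u$. The delicate point throughout is keeping track of conjugacy and the interaction of $\phi^{H'}$ with $\tilde E[\ngeq H]$ on the various fixed-point levels; this is where I would expect to spend most of the work, and where Strickland's explicit analysis of $\tilde E[\ngeq H]^K$ is indispensable.
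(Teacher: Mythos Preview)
Your injectivity argument is correct and coincides with the paper's.

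For the image statement there is a genuine gap: the building block you write down, $G/H_+\wedge \tilde E[\not\geq H]\wedge i(Y)$, does not lie in $\SH(G)_f$, because $\tilde E[\not\geq H]$ is not a finite $G$-CW spectrum (it is the cofiber of $E\mathcal F_+\to S^0$ for an infinite classifying space $E\mathcal F$). So neither this object nor any wedge of such objects is compact, and you cannot use it to produce elements of your ideal $\mathcal C\subseteq\SH(G)_f$.

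The paper's construction is both simpler and avoids this issue by dropping the $\tilde E$ factor entirely. For each $H\in\sub(G)$ one picks, via Theorem~\ref{thm Q}, a nonequivariant finite spectrum $X_{u_H}$ with $\tau(X_{u_H})=u_H$, and sets $Y_H=i(X_{u_H})\wedge G/H_+$, which is manifestly finite. Since $(G/H)^{H'}$ is nonempty precisely when $H'$ is subconjugate to $H$, one has $\phi^{H'}(Y_H)$ equal to a nonzero wedge of copies of $X_{u_H}$ if $H'\subseteq H$ and zero otherwise; thus $\tau_{H'}(Y_H)=u_H$ or $0$ accordingly. Taking the finite wedge $Y=\bigvee_{H\in\sub(G)}Y_H$ gives $\tau_{H'}(Y)=\max\{u_H: H\supseteq H'\}$, and the monotonicity hypothesis says exactly that this maximum equals $u_{H'}$. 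Then $\thickid(Y)$ is the desired ideal. Note that the support control here runs in the opposite direction to what you attempted (vanishing for $H'\not\le H$ rather than $H'\not\ge H$), which is why the $\tilde E$ factor is unnecessary and why monotonicity enters directly rather than through the more elaborate analysis you anticipated.
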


\begin{proof}
We already know that $\supp$ is an injective lattice homomorphism. The injectivity of $\tau$ follows as in the nonequivariant case, by considering $H$-fixed points for each $H\in\sub(G)$ separately.

Now let $u\in GQ$ be such that $H'\subseteq H$ implies $u_{H'} \geq u_H$. Let $X_{u_H}$ be a finite spectrum in $\SH$ with $\tau(X_{u_H})=u_H$. Recall that any nonequivariant spectrum maps to a $G$-spectrum with trivial $G$-action through the functor $i:\SH\rightarrow \SH(G)$. Let $Y_H=i(X_{u_H})\wedge G/H_+$. This is a finite $G$-spectrum satisfying $\phi^{H'}Y_H=X_{u_H}\wedge G/H_+$ if $H'$ is subconjugate to $H$ and $\phi^{H'}Y_H=0$ if $H'$ is not subconjugate to $H$. Hence, $\tau_{H'}(Y_H)=u_H$ if $H'\subseteq H$ and $\tau_{H'}(Y_H)=0$ if $H'\not\subseteq H$. Let $Y=\bigvee_{H\in\sub(G)} Y_H$. Then $\tau_H(Y)=\max\{u_{H'}\;|\;  H\subseteq H'\}=u_H$ by the assumption.
\end{proof}

This theorem gives a lower bound on the set of thick ideals in $\SH(G)_f$. The whole set $GQ$ is an upper bound. Strickland was able to show that $\tau(\Idl(\SH(G)_f))$ lies in a certain proper subset of $GQ$ (if $G$ is nontrivial). For a cyclic group $G=\Z/p$, his result is the following \cite[Proposition 16.9]{Sch}:

\begin{prop}\label{upper bound}\emph{(Strickland)}
If $X\in\SH(\Z/p)_f$ and $K(p^{-(n+1)})_\ast \phi^{\{1\}}X=0$, then $K(p^{-n})_\ast \phi^{\Z/p}X=0$. (Note that the same prime $p$ appears in two different roles.)
\end{prop}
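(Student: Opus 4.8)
The plan is to reduce the statement to a computation with geometric fixed points and the Tate construction. The key structural input is the isotropy separation (Tate) square for the group $\Z/p$, which relates a $\Z/p$-spectrum to its underlying spectrum together with its geometric fixed points. Concretely, for any $X \in \SH(\Z/p)$ there is a cofiber sequence $E(\Z/p)_+ \wedge X \to X \to \tilde E(\Z/p) \wedge X$, and smashing with $K(n)$ (viewed $\Z/p$-equivariantly with trivial action, via $i$) allows one to compare $K(p^{-n})_\ast\phi^{\{1\}}X = K(n)_\ast(\text{underlying }X)$ with $K(p^{-n},\Z/p)_\ast X = K(n)_\ast\phi^{\Z/p}X$. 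First I would set up this square carefully and identify the relevant terms: the underlying spectrum of $X$ controls $E(\Z/p)_+ \wedge X$, while $\phi^{\Z/p}X$ is extracted from $\tilde E(\Z/p) \wedge X$.

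Next I would use the blueshift phenomenon for Morava $K$-theory: the Tate spectrum $(K(n))^{t\Z/p}$ (or equivalently $\tilde E(\Z/p) \wedge (\text{something built from }K(n))$) is a form of $K(n-1)$ — this is the equivariant analog of the classical fact underlying the drop in chromatic height, originally due to Greenlees–Sadofsky and exploited in exactly this context. So the hypothesis $K(p^{-(n+1)})_\ast\phi^{\{1\}}X = 0$, i.e. the underlying spectrum of $X$ smashed with $K(n+1)$ vanishes, should after applying the Tate construction / geometric fixed points produce a vanishing at height $n$ on $\phi^{\Z/p}X$. The mechanism is: since $X$ is dualizable, $K(n+1)_\ast$ of its underlying spectrum vanishing forces (by the Hopkins–Smith thick subcategory theorem applied non-equivariantly, Theorem \ref{HS}) that the underlying spectrum lies in $\mathcal C_{n+2}$, hence all higher $K(m)_\ast$, $m \geq n+1$, vanish on it; one then feeds this into the geometric fixed point computation.

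The cleanest route is probably via Proposition \ref{K(n) and phi}, rewriting $K(p^{-n})_\ast\phi^{\Z/p}X = K(p^{-n},\Z/p)_\ast X = \pi_\ast\big(\tilde E[\ngeq \Z/p] \wedge X \wedge K(n)\big)^{\Z/p}$, and then analyzing the factor $\tilde E[\ngeq \Z/p] \wedge K(n)$ using the known structure of $K(n)$ over $B\Z/p$: the relevant homotopy orbit/fixed point spectral sequences for $K(n)^\ast(B\Z/p)$ are concentrated and finite, and after inverting the relevant class one sees height $n-1$ behavior. Combined with the Künneth isomorphism (Corollary \ref{Kunneth equivariant}) for $X$ dualizable, the vanishing of the underlying $K(n+1)$-homology propagates to the Tate term.

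The main obstacle I expect is making the blueshift step precise in the unstable-to-stable passage for the specific spectrum $K(n)$ (which is not an $E_\infty$-ring and whose Tate construction must be handled with care), and keeping track of exactly which chromatic height drops to which — in particular verifying that it is the $(n+1)$-st $K$-theory on the underlying spectrum, and not the $n$-th, that is needed to kill the $n$-th $K$-theory on $\phi^{\Z/p}X$. One must also ensure that $X$ being merely dualizable (rather than, say, a suspension spectrum) is enough; here the reduction to generators $\Sigma^\infty(\Z/p \, / H)_+$ together with Proposition \ref{phi}(1) and the thick subcategory theorem handles the general case, since the class of $X$ for which the implication holds is a thick ideal.
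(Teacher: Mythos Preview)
Your approach is essentially the same as the paper's: both hinge on the Tate construction and the Greenlees--Sadofsky blueshift theorem. The paper, however, makes one key simplification you do not: it first converts the hypothesis and conclusion from $K(n)$ to $E(n)$ using \cite[Theorem 2.11]{RavLoc} (for finite spectra, $K(n)_\ast X=0$ iff $E(n)_\ast X=0$). This is important because the clean blueshift statement is $\langle P_{\Z/p}E(n+1)\rangle = \langle E(n)\rangle$ \cite[Theorem 1.1]{Tate2}, and with that in hand the argument becomes a short formal manipulation: since $E(n+1)_\ast T=0$ the $\Z/p$-spectrum $E(n+1)\wedge T$ is nonequivariantly contractible, so its Tate spectrum vanishes; the Tate spectrum of $E(n+1)\wedge (T/T^{\Z/p})$ also vanishes because $T/T^{\Z/p}$ is free; hence $P_{\Z/p}(E(n+1))\wedge T^{\Z/p}=0$, and blueshift gives $E(n)\wedge T^{\Z/p}=0$. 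Your attempt to push the argument through with $K(n)$ directly runs into exactly the obstacle you anticipate --- the Tate construction on $K(n)$ itself is less tractable --- and the detour through $E(n)$ sidesteps it entirely.

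One further remark: your closing suggestion that ``the class of $X$ for which the implication holds is a thick ideal'' is not correct as stated. The set of $X$ satisfying an implication of the form ``$A(X)\Rightarrow B(X)$'' is not generally closed under cofibers or retracts, since the hypothesis on a retract does not propagate up to $X$. The paper does not need any such reduction; the Tate argument applies directly to an arbitrary dualisable $X$ (after writing it as $\Sigma^{\infty-V}T$).
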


\begin{proof}
Again, the proof is taken from \cite{Sch}. 
$X$ is of the form $X=\Sigma^{\infty-V} T$, where $T$ is a retract of a finite $\Z/p$-CW complex and $V$ is a representation of $\Z/p$. Then, by Proposition \ref{phi}, $\phi^{\Z/p}X= \Sigma^{\infty-\dim(V^{\Z/p})}T^{\Z/p}$. The assumption is equivalent to $K(p^{-(n+1)})_\ast T=0$ and the claim is equivalent to $K(p^{-n})_\ast T^{\Z/p}=0$. By \cite[Theorem 2.11]{RavLoc}, we can formulate this in terms of Johnson-Wilson theories, namely: We know $E(n+1)_\ast T=0$ and have to show $E(n)_\ast T^{\Z/p}=0$. The proof uses the Greenlees-May theory of Tate spectra \cite{Tate}. For any $G$-spectrum $Y$, let $t_G Y=F(EG_+,Y)\wedge \tilde{E}G$ and $P_G Y=(t_G Y)^G$. They have the following properties:
\begin{compactenum}[(1)]
\item The functors $t_G$ and $P_G$ preserve exact triangles.
\item We have $t_G(X\wedge Y)=X\wedge t_G Y$ for finite $G$-spectra $X$, and $P_G(X\wedge Y)=X\wedge P_G Y$ for finite spectra $X$ with trivial $G$-action.
\item If $Y$ is a free $G$-spectrum, then $t_G Y=0$ and so $P_G Y=0$.
\item If $Y$ is nonequivariantly contractible, then $t_G Y=0$ and so $P_G Y=0$.
\item If $p$ divides the order of $G$, then the spectrum $P_G E(n+1)$ has Bousfield class $\langle P_G E(n+1)\rangle = \langle E(n)\rangle$ \cite[Theorem 1.1]{Tate2}.
\end{compactenum}
As $E(n+1)_\ast T=0$, we see that the $\Z/p$-spectrum $E(n+1)\wedge T$ is nonequivariantly contractible, so $t_G(E(n+1)\wedge T)=0$ by (4). We also have $t_G(E(n+1)\wedge T/T^{\Z/p})=0$ by (3), so (1) implies $t_G(E(n+1)\wedge T^{\Z/p})=0$. Hence, $(P_G E(n+1))\wedge T^{\Z/p}=P_G (E(n+1)\wedge T^{\Z/p})=0$ and (5) gives $E(n)\wedge T^{\Z/p}=0$, as required.
\end{proof}

This result on cyclic groups can be used to derive some restriction on the types of $G$-spectra that can occur for a general group $G$. This is done by applying the result to quotients of subgroups of $G$, see \cite[Corollary 16.10]{Sch}. We do not state this result here, because it needs additional notation and because we will only be interested in the case $G=\Z/2$ later on.

\section{Thick ideals in $\SH(\Z/2)_f$}\label{Application to Z/2}

We apply the results of this chapter to $G=\Z/2$.

Recall that, for $G$ a finite group, the thick ideals in $(\SH(G)_f)_{(p)}$ are characterised by their equivariant types, i.e., by the vanishing or non-vanishing of the different equivariant Morava K-theories.

\begin{cor}\label{Cmn}
By the injectivity of $\tau$ (Theorem \ref{tau injective}), any thick ideal in the category $(\SH(\Z/2)_f)_{(p)}$ is of the form 
\[\mathcal C_{m,n}=\{X\; |\; \phi^{\{1\}}(X)\in\mathcal C_m \textup{ and }\phi^{\Z/2}(X)\in\mathcal C_n\},\]
where $m,n\in [0,\infty]$. By Proposition \ref{K(n) and phi}, $X\in\mathcal C_{m,n}$ is equivalent to $K(m-1,\{1\})_\ast(X)=0$ and $K(n-1,\Z/2)_\ast(X)=0$ if $0<m,n<\infty$. 
\end{cor}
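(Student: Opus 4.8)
The plan is to read off the statement directly from Theorem \ref{tau injective} applied to $G=\Z/2$ together with Proposition \ref{K(n) and phi}, after first setting up the $p$-localised picture. For $G=\Z/2$ the set $\sub(G)$ has exactly two elements, the classes of $\{1\}$ and of $\Z/2$, so the target lattice $GQ$ of $\tau$ is $\mathcal Q\times\mathcal Q$, and after $p$-localisation each factor $\mathcal Q$ collapses to $\mathcal Q_p=\{p^{-n}\mid 0\le n\le\infty\}$, which we identify with $[0,\infty]$ via $p^{-n}\leftrightarrow n$ (and $p^{-\infty}=0\leftrightarrow\infty$). First I would note that Theorem \ref{tau injective} says $\tau=\max\circ\supp$ is injective on $\Idl((\SH(\Z/2)_f)_{(p)})$; hence a thick ideal $\mathcal C$ is completely determined by the pair $(\tau_{\{1\}}(\mathcal C),\tau_{\Z/2}(\mathcal C))\in[0,\infty]^2$, which we call $(m,n)$ (shifting index conventions so that the value $m$ corresponds to the Morava K-theory $K(m-1)$, matching Definition \ref{type}).

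The second step is to check that the ideal $\mathcal C$ with invariant $(m,n)$ is exactly the subcategory $\mathcal C_{m,n}=\{X\mid \phi^{\{1\}}(X)\in\mathcal C_m \text{ and }\phi^{\Z/2}(X)\in\mathcal C_n\}$. By definition of $\supp$ with respect to the family $\{K(p^{-j},H)\}$ and by Remark \ref{supp thickid}, for a thick ideal $\mathcal C$ we have $(p^{-j},H)\in\supp(\mathcal C)$ iff there is some $X\in\mathcal C$ with $K(p^{-j},H)_\ast(X)\neq 0$; by Proposition \ref{K(n) and phi} this is $K(j)_\ast(\phi^H(X))\neq 0$, i.e. $\phi^H(X)\notin\mathcal C_{j+1}$ in the notation of Theorem \ref{HS}. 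Taking the max over $j$ for fixed $H$ gives $\tau_H(\mathcal C)$, and the injectivity statement of Theorem \ref{tau injective} together with the description of $\mathcal C_j$ in Theorem \ref{HS} shows that $X\in\mathcal C$ iff for every $H$ the spectrum $\phi^H(X)$ lies in the thick ideal of $\SH^{fin}_{(p)}$ generated by a type-$\tau_H(\mathcal C)$ spectrum, i.e. iff $\phi^{\{1\}}(X)\in\mathcal C_m$ and $\phi^{\Z/2}(X)\in\mathcal C_n$. This is precisely the asserted description, and it also makes clear that every pair $(m,n)$ occurring as a $\tau$-value of some thick ideal yields a $\mathcal C_{m,n}$; conversely each $\mathcal C_{m,n}$ that does arise is a thick ideal because it is cut out by vanishing conditions on homology theories (the $\phi^H$-pullbacks of Morava K-theories), which are automatically closed under triangles, retracts and smashing.

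The final step is the translation in the last sentence: for $0<m,n<\infty$, membership $\phi^{\{1\}}(X)\in\mathcal C_m$ is by Theorem \ref{HS} the condition $K(m-1)_\ast(\phi^{\{1\}}(X))=0$, which by Proposition \ref{K(n) and phi} is $K(m-1,\{1\})_\ast(X)=0$, and likewise $\phi^{\Z/2}(X)\in\mathcal C_n$ becomes $K(n-1,\Z/2)_\ast(X)=0$; I would simply spell this out. I do not expect a serious obstacle here: the entire content is bookkeeping with the index shift $n\leftrightarrow p^{-n}\leftrightarrow\mathcal C_{n+1}$ and the observation that $\sub(\Z/2)$ has two elements. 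The only point that deserves a sentence of care is that the corollary asserts \emph{every} thick ideal is of the form $\mathcal C_{m,n}$ — this is exactly the injectivity of $\tau$ from Theorem \ref{tau injective} (no surjectivity is claimed, which is consistent with the fact, noted just before Corollary \ref{mn}, that not all $\mathcal C_{m,n}$ are distinct, i.e. not every pair $(m,n)$ is realised).
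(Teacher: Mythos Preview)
Your proposal is correct and follows exactly the route the paper intends: the corollary has no separate proof in the paper because its content is read off directly from the injectivity of $\tau$ in Theorem \ref{tau injective} (specialised to $G=\Z/2$, so $\sub(G)$ has two elements) together with the identification $K(n,H)_\ast(X)=K(n)_\ast(\phi^H X)$ from Proposition \ref{K(n) and phi}. Your more detailed unpacking of why $\mathcal C=\mathcal C_{m,n}$ (showing $\mathcal C\subseteq\mathcal C_{m,n}$ from the definition of $\tau$ and then invoking injectivity) is the right justification of what the paper leaves implicit.
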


\begin{defn}\label{equivariant type}
We say that $X\in(\SH(\Z/2)_f)_{(p)}$ has type $(m,n)$, $0\leq m,n\leq \infty$, if $X\in\mathcal C_{m,n}\setminus (\mathcal C_{m+1,n}\bigcup \mathcal C_{m,n+1})$.

Let $\Gamma_p\subseteq (\Z_{\geq 0}\cup \{\infty\})\times (\Z_{\geq 0}\cup\{\infty\})$ be the sublattice of all $(m,n)$ such that a $p$-local spectrum of type $(m,n)$ exists.
\end{defn}

\begin{cor}\label{mn}
\begin{compactenum}[(1)]
\item Any type-$(m,n)$ spectrum generates $\mathcal C_{m,n}$ as a thick ideal (by Theorem \ref{tau injective}).
\item Not all pairs $(m,n)$ occur as the type of some spectrum. For example, if $p=2$, $m$ cannot be greater than $n+1$ (by Proposition \ref{upper bound}).
\item If $m\leq n$, then a type-$(m,n)$ spectrum exists, namely $X_{m,n}=(X_m\wedge \Z/2_+)\vee X_n$ with $X_k\in\SH^{fin}_{(p)}$ a type-$k$ spectrum and with $\Z/2$ acting nontrivially only on $\Z/2$ (by Theorem \ref{tau injective}). Thus, $m\leq n$ implies $(m,n)\in\Gamma_p$.
\end{compactenum}
\end{cor}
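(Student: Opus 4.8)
The plan is to treat the three parts separately, since (1) is essentially immediate from the injectivity of $\tau$, (2) is a direct translation of Proposition \ref{upper bound}, and only (3) requires a genuine construction.

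For part (1), I would argue as follows. Let $X$ have type $(m,n)$, i.e.\ $X\in\mathcal C_{m,n}$, so $\thickid(X)\subseteq\mathcal C_{m,n}$. To get the reverse inclusion it suffices, by the injectivity of $\tau$ on $\Idl(\SH(\Z/2)_f)$ (Theorem \ref{tau injective}), to check that $\tau(\thickid(X))=\tau(\mathcal C_{m,n})$. Using Remark \ref{supp thickid}, $\supp(\thickid(X))=\supp(X)$, and since $\phi^{\{1\}}(X)$ has type exactly $m$ and $\phi^{\Z/2}(X)$ has type exactly $n$ (the condition $X\notin\mathcal C_{m+1,n}\cup\mathcal C_{m,n+1}$ is precisely this), Proposition \ref{K(n) and phi} together with the fact that $K(p^{-k})_\ast$ of a finite spectrum vanishes below its type and is nonzero at its type gives that the $\{1\}$-component of $\tau(\thickid(X))$ is $p^{-m}$ and the $\Z/2$-component is $p^{-n}$. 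But these are exactly the components of $\tau(\mathcal C_{m,n})$, which is the maximal such value by definition of $\mathcal C_{m,n}$. Hence $\thickid(X)=\mathcal C_{m,n}$.

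For part (2), I would simply invoke Proposition \ref{upper bound} with $p=2$ (so the group $\Z/2$ and the coefficient prime coincide): if $X\in(\SH(\Z/2)_f)_{(2)}$ has type $(m,n)$ with $m\geq n+2$, then $K(2^{-(n+1)})_\ast\phi^{\{1\}}X=0$ since the type of $\phi^{\{1\}}X$ is $m\geq n+2>n+1$, so Proposition \ref{upper bound} forces $K(2^{-n})_\ast\phi^{\Z/2}X=0$, contradicting that $\phi^{\Z/2}X$ has type exactly $n$. Thus $m\leq n+1$.

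For part (3), assume $m\leq n$, pick $X_m,X_n\in\SH^{fin}_{(p)}$ of types $m$ and $n$ respectively (these exist by \cite[Theorem B(b)]{Mi} / \cite[Section C.3]{Rav}, with $X_\infty=0$ allowed), and set $X_{m,n}=(i(X_m)\wedge\Z/2_+)\vee i(X_n)$, where $\Z/2$ acts freely on the extra copy of $\Z/2$ and trivially on $X_m,X_n$. I would then compute the geometric fixed points using Proposition \ref{phi}: $\phi^{\{1\}}(X_{m,n})\simeq (X_m\wedge\Z/2_+)\vee X_n$, which has type $\min(m,n)=m$ since smashing with $\Z/2_+$ (two copies of the sphere) does not change Morava $K$-theory vanishing; and $\phi^{\Z/2}(X_{m,n})\simeq\phi^{\Z/2}(i(X_m)\wedge\Z/2_+)\vee X_n \simeq 0\vee X_n=X_n$, using that $\Z/2_+$ with free action has empty $\Z/2$-fixed points and $\phi^{\Z/2}i(X_n)=X_n$ by Proposition \ref{phi}(3). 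Hence $X_{m,n}$ lies in $\mathcal C_{m,n}$ but not in $\mathcal C_{m+1,n}$ or $\mathcal C_{m,n+1}$, so it has type $(m,n)$ and $(m,n)\in\Gamma_p$. The only subtle point — and the main thing to get right — is verifying that the $\{1\}$-fixed-point spectrum genuinely has type $m$ rather than something smaller; this follows because $K(p^{-k})_\ast(A\vee B)=K(p^{-k})_\ast A\oplus K(p^{-k})_\ast B$, so the join has type $\min$ of the two types, and $\min(m,n)=m$ by hypothesis. I expect no real obstacle here beyond bookkeeping with the fixed-point formulas.
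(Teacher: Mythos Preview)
Your proposal is correct and follows essentially the same approach as the paper. In fact, the paper gives no separate proof of this corollary: the justifications in parentheses are the whole argument, and you have simply (and correctly) unpacked the references to Theorem \ref{tau injective} and Proposition \ref{upper bound}; your construction in part (3) is exactly the specialization to $G=\Z/2$ of the spectrum $Y=\bigvee_{H}Y_H$ built in the proof of Theorem \ref{tau injective}, and your fixed-point computations are the ones implicit there.
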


These results of Strickland give upper and lower bounds for the set of thick ideals in $(\SH(\Z/2)_f)_{(p)}$ (which is bijective to $\Gamma_p$).

\begin{rk}\label{fin and f}
The results on thick ideals in $\SH(G)_f$ presented in this chapter also hold in the category of finite $G$-CW spectra, $\SH(G)^{fin}\subseteq \SH(G)_f$, as we did not use the closure under retracts in any argument.
\end{rk}

\chapter{Comparison functors}\label{functors}

\begin{notation}
For $k$ a field, we write $\SH(k)$ for the motivic stable homotopy category over $k$. The standard spheres are denoted by $S^{p,q}=S_s^{\wedge (p-q)}\wedge \G_m^{\wedge q}$ and their $\P^1$-suspension spectra are also denoted by $S^{p,q}$ if no confusion can arise. $\SH(k)$ is a tensor triangulated category, whose unit is the sphere spectrum $S=\Sigma^\infty_{\P^1} S^{0,0}$, which we also denote by $S^0$. For any $E\in\SH(k)$, $\pi_{p,q}(E)=[S^{p,q},E]_{\SH(k)}$ denotes the set of maps from $S^{p,q}$ to $E$ in $\SH(k)$. For $E$ and $X$ in $\SH(k)$, let $E_{p,q}(X)=\pi_{p,q}(X\wedge E)$ and $E^{p,q}(X)=[X,S^{p,q}\wedge E]$.\\
\end{notation}

For our study of thick ideals in motivic categories $\SH(k)$, $k\subseteq\C$, we want to use the given knowledge on thick ideals in the classical stable homotopy category and the $\Z/2$-equivariant stable homotopy category. For this purpose, we will make use of the functors $\SH\overset{c_k}\rightarrow \SH(k)\overset{R_k}\rightarrow \SH$ for $k\subseteq\C$ and $\SH(\Z/2)\overset{c'_k}\rightarrow \SH(k)\overset{R'_k}\rightarrow \SH(\Z/2)$ for $k\subseteq\R$.
The functors $c_k, R_\C$ and $R'_\R$ appear in various places in the literature but none of the sources conveniently covers all of the constructions. Most details on $R_\C$ can be found in \cite[Appendix]{PPR} and the unstable functor $R'_\R$ is studied in \cite[Section 5]{Du}.
Other important references are \cite[Section 3.4]{VMC}, \cite[Section 3.3]{MV} and \cite{Ayo}. The stable functors $R'_k$ and $c'_k$ are constructed and studied by Heller and Ormsby in \cite[Section 4]{HO}, which was written independently and at the same time as this chapter. Since our approach is slightly different, we give another, mostly self-contained construction of all these functors.

We start with the construction of $R_\C$ and $R'_\R$.

\section{Symmetric $\C P^1$-spectra}

Objects of the motivic stable homotopy category $\SH(\C)$ are spectra with respect to suspension by $\P_\C^1\cong S^{2,1}$. The corresponding analytic space $\P_\C^1(\C)$ is $\C P^1$. We want that the topological realisation $R(X)$ of a spectrum $X\in\SH(\C)$ is a spectrum again. Therefore, we work with $\C P^1$-spectra. 
The category of symmetric $\C P^1$-spectra, $Sp^\Sigma_{\C P^1}$, is described in \cite[Theorem A.44]{PPR} and is a model for the stable homotopy category. A symmetric $\C P^1$-spectrum is defined in the same way as a usual symmetric spectrum with $S^1$ replaced by $\C P^1\cong S^2$. The stable model structure is constructed analogously as for symmetric $S^1$-spectra in \cite{HSS}. Hence, the following results also hold for symmetric $\C P^1$-spectra \cite[Lemmas 3.4.5, 3.4.12, 3.4.13]{HSS}.

\begin{prop}\label{CP1 MS}
\begin{enumerate}
\item The stable trivial fibrations are the level trivial fibrations.
\item A map of $\C P^1$-spectra $f:X\rightarrow Y$ is a stable fibration if and only if it is a level fibration and 
\[\xymatrix{\ar @{} [dr] |{}
X_n \ar[d] \ar[r]^{\tilde\sigma} & \Omega X_{n+1}\ar[d] \\
Y_n  \ar[r]^{\tilde\sigma} & \Omega Y_{n+1} }\]
is homotopy cartesian for all $n$, where the horizontal maps are the adjoints of the structure maps.
\item The fibrant objects are $\Omega$-spectra, i.e., the adjoints of their structure maps are weak equivalences.
\end{enumerate}
\end{prop}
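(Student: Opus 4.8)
The three assertions are all standard facts about stable model structures on symmetric spectra, so the plan is simply to transport the corresponding proofs for symmetric $S^1$-spectra from \cite{HSS} along the substitution $S^1 \rightsquigarrow \C P^1 \simeq S^2$, checking that nothing in those arguments used properties of $S^1$ beyond being a cofibrant, well-pointed (here: pointed) space with the relevant smallness. The reference \cite[Theorem A.44]{PPR} already tells us that $Sp^\Sigma_{\C P^1}$ carries a stable model structure built by Bousfield-localising the levelwise (projective) model structure, exactly as in \cite{HSS}, so the cited lemmas \cite[Lemmas 3.4.5, 3.4.12, 3.4.13]{HSS} apply essentially verbatim.

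First I would recall the setup: the stable model structure is the left Bousfield localisation of the level model structure at the stabilisation maps $F_{n+1}(\C P^1 \wedge A) \to F_n(A)$ for $A$ ranging over (co)domains of generating cofibrations. For (1): a stable trivial fibration is a fibration in the localised structure with the weak equivalence property; since left Bousfield localisation does not change the cofibrations, and the trivial fibrations are characterised by right lifting against all cofibrations, the stable trivial fibrations coincide with the level trivial fibrations — this is purely formal and mirrors \cite[Lemma 3.4.5]{HSS}. For (3): the fibrant objects of a left Bousfield localisation are the fibrant objects of the original structure that are local with respect to the stabilisation maps; unwinding, $X$ is level-fibrant and each $X_n \to \Omega^{\C P^1} X_{n+1}$ (i.e. the adjoint structure map into the $\C P^1$-loop space) is a weak equivalence, which is precisely the $\Omega$-spectrum condition. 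This is \cite[Lemma 3.4.13]{HSS} with $S^1$ replaced by $\C P^1$. For (2): the characterisation of stable fibrations between arbitrary objects via "level fibration plus the square relating adjoint structure maps is homotopy cartesian" is \cite[Lemma 3.4.12]{HSS}; its proof factors a general fibration through the level-fibrant replacement and uses the fact that a map is a stable fibration iff it has the RLP against trivial cofibrations, combined with the recognition of trivial cofibrations in the localised structure — again nothing $S^1$-specific enters.

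Concretely the write-up would: (i) cite \cite[Theorem A.44]{PPR} for the existence of the stable model structure as a localisation of the level one; (ii) observe that $\C P^1$ with its natural basepoint is a cofibrant pointed simplicial presheaf (or CW complex in the topological model), so all the simplicial/2-out-of-3/smallness hypotheses used in \cite[Section 3.4]{HSS} hold; (iii) invoke \cite[Lemmas 3.4.5, 3.4.12, 3.4.13]{HSS} to conclude (1), (2), (3) respectively, indicating that the only change in each proof is the systematic replacement of $S^1$ and $\Omega = \Omega^{S^1}$ by $\C P^1$ and $\Omega^{\C P^1}$.

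The main obstacle — really the only thing that needs genuine checking rather than citation — is verifying that the specific combinatorial inputs to the Bousfield localisation machinery still hold: that $Sp^\Sigma_{\C P^1}$ with the level model structure is left proper and cellular (or combinatorial), so that the localisation exists and its fibrant objects/fibrations admit the stated descriptions. In the simplicial (motivic, presheaf) model this is automatic from \cite[Theorem A.44]{PPR}; in a topological model one should point to the analogous verification (e.g. that $\C P^1$ is a finite CW complex, hence compact, making the relevant objects small). I do not expect any real difficulty here, so I would keep this step brief and essentially delegate it to the cited references, noting only that $\C P^1 \simeq S^2$ shares with $S^1$ all the formal properties — compactness, cofibrancy, the fact that $\C P^1 \wedge -$ is a left Quillen functor — that the proofs in \cite{HSS} actually use.
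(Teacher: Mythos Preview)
Your proposal is correct and takes essentially the same approach as the paper: both simply invoke \cite[Lemmas 3.4.5, 3.4.12, 3.4.13]{HSS} together with \cite[Theorem A.44]{PPR}, noting that the construction of the stable model structure on $Sp^\Sigma_{\C P^1}$ is the same as in \cite{HSS} with $S^1$ replaced by $\C P^1\cong S^2$. The paper is in fact even more terse than you are---it gives no proof at all beyond the sentence ``Hence, the following results also hold for symmetric $\C P^1$-spectra''---so your elaboration on why the transport is legitimate (left properness, cellularity, cofibrancy of $\C P^1$) is extra detail rather than a different argument.
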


\section{$\Z/2$-equivariant symmetric spectra}\label{Z2 model}

For the $\Z/2$-equivariant stable homotopy category we use the model constructed in \cite{Ma}. We now recall Mandell's definitions and results. Let $G$ be a finite group. We only need the case $G=\Z/2$.

\begin{defn}
Let $S(G)$ denote the based simplicial $G$-set obtained by smashing together copies of the simplicial circle $S^1$ indexed on the elements of $G$, where the $G$-action permutes the smash factors according to the multiplication in $G$.
\end{defn}

For example, $S(\{1\})=S^1$ and $S(\Z/2)\cong S^2$, where $\Z/2$ acts via an orientation reversing map of degree one.

\begin{defn}
A symmetric $G$-spectrum consists of a based $G\times \Sigma_n$-simplicial set $T(n)$ for each $n\in\N$ and structure maps $T(n)\wedge S(G)\rightarrow T(n+1)$ such that the $m$-th iterated structure maps are $G\times \Sigma_n\times \Sigma_m$-equivariant. Morphisms of spectra are defined in the usual way. We denote the category of symmetric $G$-spectra by $Sp^\Sigma(G)$.
\end{defn}

Mandell replaces $Sp^\Sigma(G)$ by the isomorphic category $Sp(G\Sigma_G)$ of $G\Sigma_G$-spectra, which is defined as follows.

\begin{defn}
Let $\Sigma$ be the category whose objects are nonnegative integers $\underbar n=\{1,\cdots,n\}$ and whose morphisms are bijections. Let $\Sigma_G$ be the category $\Sigma$ together with the diagram $S$ indexed on $\Sigma$ taking $\underbar n$ to the $n$-fold smash product of $S(G)$ and with arrows permuting these smash factors.

A $G\Sigma_G$-spectrum $T$ is a functor from $\Sigma$ to based simplicial $G$-sets together with natural transformations $\sigma_{\underbar n,\underbar m}:T(\underbar n)\wedge S(\underbar m)\rightarrow T(\underbar {n+m})$ satisfying certain associativity and unitality conditions \cite[Definition 1.3]{Ma}. A morphism of $G\Sigma_G$-spectra is a natural transformation commuting with the structure maps.
\end{defn}

Mandell defines $\Omega$-spectra in $Sp(G\Sigma_G)$ and constructs a projective level model structure with level equivalences, level fibrations and projective cofibrations. The resulting homotopy category is denoted by $\Ho^l$.

\begin{defn}
A morphism of $G\Sigma_G$-spectra $f:T\rightarrow U$ is called a stable equivalence if it induces bijections $\Ho^l(U,E)\rightarrow \Ho^l(T,E)$ for all $\Omega$-spectra $E$.
\end{defn}
 
The following theorem is \cite[Theorem 4.1]{Ma}.
 
\begin{theorem}\label{Mandell}
The category $Sp(G\Sigma_G)$ has a symmetric monoidal model structure with stable equivalences as weak equivalences and projective cofibrations as cofibrations. A morphism $f:T\rightarrow U$ is a fibration if and only if it is a level fibration and 
\[\xymatrix{\ar @{} [dr] |{}
T(\underbar m) \ar[d] \ar[rr]^{\tilde\sigma_{\tiny{\underbar m,\underbar n}}} && \Omega_{\underbar n}T(\underbar{m+n})\ar[d] \\
U(\underbar m)  \ar[rr]^{\tilde\sigma_{\tiny{\underbar m,\underbar n}}} && \Omega_{\underbar n}U(\underbar{m+n}) ,}\]
is homotopy cartesian for all $\underbar m, \underbar n\in\Sigma$, where the horizontal maps are adjoint to the structure maps. 
\end{theorem}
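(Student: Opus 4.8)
\emph{Proof sketch.} The theorem is due to Mandell (\cite[Theorem 4.1]{Ma}); I sketch the strategy. The plan is to obtain the stable model structure as a left Bousfield localisation of the projective level model structure on $Sp(G\Sigma_G)$, imitating the construction of the stable model structure on ordinary symmetric spectra in \cite{HSS}. Write $F_n$ for the left adjoint to evaluation $\mathrm{Ev}_{\underbar n}$ at $\underbar n$. The level model structure is cofibrantly generated, left proper and cellular (equivalently combinatorial), being transferred from copies of based $G$-simplicial sets along the adjunctions $(F_n,\mathrm{Ev}_{\underbar n})$; hence Hirschhorn's localisation theorem applies and, for any set $S$ of maps, produces a left Bousfield localisation $L_S$ with the same cofibrations (the projective cofibrations), whose fibrant objects are the level-fibrant $S$-local objects and whose fibrations \emph{between such objects} are the level fibrations.

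Take $S$ to be the set of stabilisation maps $F_{n+1}(S(\underbar 1)\wedge B)\cup_{F_{n+1}(S(\underbar 1)\wedge A)}F_n(A)\to F_n(B)$, one for each $\underbar n$ and each generating cofibration $A\to B$ of based $G$-simplicial sets, adjoint to the structure maps. A level-fibrant $E$ is $S$-local precisely when each adjoint structure map $E(\underbar n)\to\Omega_{\underbar 1}E(\underbar{n+1})$ is a weak equivalence, i.e.\ precisely when $E$ is an $\Omega$-spectrum; this is the asserted description of the fibrant objects. Next one identifies the $L_S$-equivalences with Mandell's stable equivalences: $f$ is an $L_S$-equivalence iff $\Map(Qf,W)$ is a weak equivalence for every $S$-local fibrant $W$, and applying $\pi_0$ — then using that $\Omega$-spectra are closed under $\Omega_{\underbar 1}$ to promote the statement from $\pi_0$ to all homotopy groups — identifies this with $\Ho^l(f,W)$ being bijective for all $\Omega$-spectra $W$; the reduction from ``$S$-local fibrant'' to ``arbitrary $\Omega$-spectrum'' uses that levelwise Kan replacement preserves the $\Omega$-spectrum condition, $S(\underbar 1)$ being a finite complex.

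The explicit description of the fibrations is obtained as in the Bousfield--Friedlander setting, in complete analogy with Proposition \ref{CP1 MS}(2). One models stable fibrant replacement by a telescope $\colim_m\Theta^m E$ with $(\Theta E)(\underbar n)=\Omega_{\underbar 1}\overline E(\underbar{n+1})$ for a levelwise fibrant replacement $\overline E$; this is a stable equivalence onto an $\Omega$-spectrum because $S(\underbar 1)$ is finite, so $\Omega_{\underbar 1}$ commutes with the relevant filtered colimit. Comparing a map $f\colon T\to U$ with its stable fibrant replacement, and invoking right properness of the level structure, then shows that $f$ is an $L_S$-fibration iff it is a level fibration and the square relating $T(\underbar m),U(\underbar m)$ to $\Omega_{\underbar n}T(\underbar{m+n}),\Omega_{\underbar n}U(\underbar{m+n})$ via the adjoint structure maps is homotopy cartesian for all $\underbar m,\underbar n$.

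For the symmetric monoidal statement one checks the pushout--product and unit axioms for the Day convolution smash product in the level structure — these follow from the corresponding axioms for based $G$-simplicial sets together with $F_m(X)\wedge F_n(Y)\cong F_{m+n}(X\wedge Y)$ (with the evident $\Sigma_m\times\Sigma_n$-action) — and then checks that they persist after localising. The main obstacle, and the feature genuinely special to \emph{symmetric} rather than sequential $G$-spectra, is that smashing a cofibrant object with a stable equivalence must again be a stable equivalence; by the standard reduction this comes down to showing each generating map of $S$, smashed with a generating cofibrant object $F_k(C)$, is a stable equivalence, which follows from $F_1(S(\underbar 1))\wedge F_k(C)\cong F_{k+1}(S(\underbar 1)\wedge C)$ by induction on $k$. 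Granting this, the monoid axiom follows by the usual argument, and $Sp(G\Sigma_G)$ acquires the claimed symmetric monoidal model structure.
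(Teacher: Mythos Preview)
The paper does not give its own proof of this theorem: it is simply quoted as \cite[Theorem 4.1]{Ma}, with no argument supplied. So there is nothing to compare your proposal against in the paper itself; your sketch is an independent outline of Mandell's proof rather than a reconstruction of anything the author does.

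That said, your sketch is broadly in the right spirit for how such stable model structures are built. A couple of points to be careful about if you want it to stand on its own. First, Mandell's definition of stable equivalence is via bijections on $\Ho^l(-,E)$ for all $\Omega$-spectra $E$, not via homotopy groups, and your passage from ``$\pi_0$ of mapping spaces'' to ``all homotopy groups'' and then back to $\Ho^l$ is a bit loose; in the symmetric-spectrum setting the equivalence between the injective/local and $\pi_*$-style definitions is nontrivial and needs more care than a one-line remark. Second, the general fibration description you appeal to (``Bousfield--Friedlander setting'') requires right properness of the underlying level structure together with a well-behaved stabilisation functor satisfying Bousfield's axioms (A1)--(A3); you gesture at this but do not verify it, and this is where the actual work in Mandell's paper lies. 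Finally, your monoidal argument reduces to $F_1(S(\underbar 1))\to S^0$ being a stable equivalence after smashing with cofibrants, which is correct, but the ``induction on $k$'' you invoke is not quite the right bookkeeping; the standard argument goes through the cyclic permutation action and the fact that the relevant map is already a level equivalence after one smash with $F_1(S(\underbar 1))$.
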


Using this model structure, Mandell shows \cite[Theorem 2]{Ma}:

\begin{theorem}
The category $Sp^\Sigma(G)$ is Quillen equivalent to the stable $G$-equivariant category indexed on a complete $G$-universe. 
\end{theorem}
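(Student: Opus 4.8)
The plan is to deduce the theorem from two Quillen equivalences: a change-of-universe comparison and an equivariant version of the Hovey--Shipley--Smith identification of symmetric spectra with sequential spectra. The key preliminary observation is that the sphere $S(G)=(S^1)^{\wedge G}$ carrying the permutation action is the representation sphere $S^{\rho_G}$ of the regular representation $\rho_G=\R[G]$; for $G=\Z/2$ this is $S^{1+\sigma}$, in accordance with Section~\ref{Z2 model}. Since $\rho_G$ contains every irreducible $G$-representation, the sum $\rho_G^{\infty}=\bigoplus_{i\geq 0}\rho_G$ is a \emph{complete} $G$-universe, and the filtration $0\subset\rho_G\subset 2\rho_G\subset\cdots$ is cofinal among the finite-dimensional $G$-subrepresentations of $\rho_G^{\infty}$ (each $G$-representation $V$ embeds equivariantly in $n\rho_G$ for $n\gg 0$). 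This is exactly what makes Mandell's model a complete-universe model rather than a naive one.

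First I would introduce the intermediate category $Sp_{S(G)}(G)$ of (non-symmetric) sequential $S(G)$-spectra of based $G$-simplicial sets, equipped with the stable model structure whose weak equivalences are the maps inducing isomorphisms on $\pi_*^{H}$ for all $H\subseteq G$ (computed as the colimit of the levelwise equivariant homotopy groups along the structure maps). By the cofinality remark this is precisely a model for $G$-prespectra indexed on the complete universe $\rho_G^{\infty}$, so through the standard prespectra-versus-spectra and change-of-universe equivalences, together with the Mandell--May identification of orthogonal $G$-spectra with the Lewis--May--Steinberger category \cite{EquivOrth,LMS}, it is Quillen equivalent to $\SH(G)$. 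This step is mostly bookkeeping inside classical equivariant stable homotopy theory; the one point requiring care is the existence of the stable model structure on $Sp_{S(G)}(G)$ with the stated weak equivalences, which follows from the usual localization machinery for sequential spectra over the cofibrant suspension coordinate $S^{\rho_G}$ (which is $G$-connected, its $H$-fixed points being spheres of positive dimension).

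Next I would use the forgetful functor $U\colon Sp^\Sigma(G)\cong Sp(G\Sigma_G)\to Sp_{S(G)}(G)$ discarding the symmetric group actions, together with its left adjoint $P$ (prolongation, i.e. left Kan extension along the inclusion of the regular-representation indexing data into the naive indexing data). Both are Quillen functors once the model structure of Theorem~\ref{Mandell} is in place, and the heart of the argument is to show this adjunction is a Quillen equivalence. For this I would adapt the method of \cite{HSS}: prove that every level-cofibrant symmetric $G$-spectrum is $G$-semistable, meaning that its stable equivalence class is detected by the genuine (derived) equivariant homotopy groups, which agree with the $\pi_*^{H}$ of the underlying sequential spectrum of a fibrant replacement; identify the fibrant objects on both sides as the appropriate $\Omega$-spectra (as in Theorem~\ref{Mandell}) and check that $U$ preserves and reflects them; and conclude, using the characterization of stable equivalences of symmetric $G$-spectra via mapping into $\Omega$-spectra recalled just before Theorem~\ref{Mandell}, that $U$ creates and detects weak equivalences and that the derived unit is an isomorphism on the generators $\Sigma^{\infty}_{\rho_G}(G/H_{+})$, $H\subseteq G$. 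Composing the two Quillen equivalences then yields the theorem.

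The main obstacle is exactly this semistability/detection step. It is the familiar pathology that the naive homotopy groups of a symmetric spectrum need not compute its stable homotopy type, so one cannot simply match $\pi_*^{H}(UX)$ with the equivariant homotopy of $X$; instead one must pass through $\Omega$-spectrum replacements and prove that a stable equivalence between level-cofibrant symmetric $G$-spectra induces a $\pi_*^{H}$-isomorphism after such replacement. Here the finiteness of $G$ and the good properties of $S^{\rho_G}$ (it is cofibrant and $G$-connected, and $S^{n\rho_G}$ carries a free $\Sigma_n$-action) are what ensure that the $\Sigma_n$-actions built into a symmetric $G$-spectrum impose no homotopical constraint beyond the underlying sequential data. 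Once this detection statement is available, the remainder of the proof is formal.
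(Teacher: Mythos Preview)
The paper does not give its own proof of this theorem: it is stated as a black-box result from the literature, attributed to Mandell with the citation \cite[Theorem 2]{Ma}, and no argument is supplied beyond the preceding recollection of Mandell's stable model structure on $Sp(G\Sigma_G)$. Your proposal therefore goes well beyond what the paper does; there is nothing to compare against except the bare citation.

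That said, your outline is broadly in the right spirit and correctly identifies the key structural point---that $S(G)\cong S^{\rho_G}$ and that the filtration by $n\rho_G$ is cofinal in a complete $G$-universe, so that sequential $S^{\rho_G}$-spectra model the genuine equivariant stable category. One caution: the sentence ``prove that every level-cofibrant symmetric $G$-spectrum is $G$-semistable'' is too strong, and you yourself flag the pathology two paragraphs later. Even non-equivariantly, not every (level-cofibrant) symmetric spectrum is semistable; the forgetful functor to sequential spectra does not reflect stable equivalences in general. The correct strategy, which you gesture at afterwards, is to work with fibrant replacements ($\Omega$-spectra) and to verify the derived unit/counit on the generators $\Sigma^\infty_{\rho_G}(G/H)_+$. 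If you want to tighten the sketch, drop the semistability claim and argue directly that $(P,U)$ is a Quillen equivalence by checking it on these generators, using that $U$ detects weak equivalences between stably fibrant objects. This is essentially how Mandell proceeds; the paper simply defers to his argument rather than reproducing it.
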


\section{Complex and real topological realisation functors}\label{realization}

The aim of this section is to recall the construction of the stable topological realisation functors 
\[R=R_\C:\SH(\C)\rightarrow \SH,\]
\[R'=R'_\R:\SH(\R)\rightarrow \SH(\Z/2).\]
Various unstable and stable versions of these functors were constructed in \cite[Section 3.4]{VMC}, \cite[Section 3.3]{MV}, \cite{Ayo}, \cite{Du}, \cite[Appendix]{PPR} and \cite[Section 4]{HO}.

For $k\subseteq\C$, let $\Sm/k$ be the category of smooth schemes of finite type over $k$ and let $\sPre(\Sm /k)$ be the category of simplicial presheaves on the Nisnevich site $\Sm/k$, see e.g. \cite{MV} or \cite[Appendix B]{J}. 

We begin with the definition of $R_\C$. 
In the next section we will define $R_k$ and $R_k'$ also for subfields $k$ of $\C$ and $\R$ respectively.

The functor 
\[R:\sPre(\Sm/\C)\rightarrow \sSet\]
is defined in the following way: Any simplicial presheaf $A$ can be written as 
\[\underset{X\times \Delta^n \rightarrow A}\colim (X\times \Delta^n)\stackrel\cong\rightarrow A,\]
where the colimit is taken over the over-category of $A$, in which $X$ runs over representable presheaves, see \cite[Section A.4]{PPR}. We set 
\[R(A)= \underset{X\times \Delta^n\rightarrow A}\colim (X(\C)\times \Delta^n) \in \sSet.\]
By $X(\C)$ we actually mean the simplicial set $\Sing(X(\C)^{an})$, where $X(\C)^{an}$ denotes the set of complex points of $X$ with the analytic topology.\\

Now let $k = \R$. Let $\sSet(\Z/2)$ denote the category of $\Z/2$-simplicial sets.
The functor 
\[R':\sPre(\Sm/\R)\rightarrow \sSet(\Z/2)\]
is still defined by
\[R'(A)= \underset{X\times \Delta^n\rightarrow A}\colim (X(\C)\times \Delta^n),\]
but now $\Z/2$ acts on $X(\C)$ by precomposing with conjugation. This induces an action of $\Z/2$ on $R'(A)$, see e.g. \cite[Section 5]{Du}.\\

If $A$ is pointed, then so are $R(A)$ and $R'(A)$ respectively.\\

We equip $\sPre(\Sm/k)$ with the projective model structure defined in \cite[Section 5.1]{Du}, where this category is denoted by $\Spc '(k)_{\Nis} $. The model structure for $\sSet$ can be found in \cite[Section 3.2]{Hov} and the equivariant model structure on $\sSet(\Z/2)$ is, for example, described in \cite[Example 4.2]{Gui}:
\begin{itemize}
\item Weak equivalences are maps $f$ that induce weak equivalences $f^H$ on the fixed point sets for all $H\subseteq \Z/2$.
\item The collection $\{(\Z/2)/H\times \partial \Delta^n\rightarrow (\Z/2)/H\times \Delta^n\; |\; H\subseteq \Z/2\}$ is a set of generating cofibrations.
\item The collection $\{(\Z/2)/H\times \Lambda_i^n\rightarrow (\Z/2)/H\times \Delta^n\; |\; H\subseteq \Z/2\}$ is a set of generating trivial cofibrations.
\end{itemize}

\begin{theorem}
The functors $R$ and $R'$ and their pointed versions are strict symmetric monoidal left Quillen functors.
\end{theorem}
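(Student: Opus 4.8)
The plan is to verify the three claims---left Quillen, strict symmetric monoidal, and the pointed refinements---separately, in that order, for both $R$ and $R'$ simultaneously since the arguments are identical once one keeps track of the $\Z/2$-action. First I would exhibit the right adjoints: for $R$, this is the singular functor $\Sing_\C$ sending a simplicial set $K$ to the presheaf $U\mapsto \Map_{\sSet}(\Sing(U(\C)^{an}),K)$, and for $R'$ the analogous $\Z/2$-equivariant mapping-space presheaf using the conjugation action; adjointness $\Hom_{\sSet}(R(A),K)\cong\Hom_{\sPre}(A,\Sing_\C K)$ (and its equivariant version) is immediate from the colimit description of $R$ and the universal property, since both sides are computed by mapping out of the representables $X\times\Delta^n$. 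Having the adjunction, to show $R$ is left Quillen it suffices to check that $R$ preserves cofibrations and trivial cofibrations, and for this it is enough to check it on the generating (trivial) cofibrations of the projective model structure on $\Spc'(k)_{\Nis}$, which are of the form $X\times(\partial\Delta^n\to\Delta^n)$ and $X\times(\Lambda^n_i\to\Delta^n)$ for $X$ representable. Applying $R$ gives $\Sing(X(\C)^{an})\times(\partial\Delta^n\to\Delta^n)$, etc.; these are cofibrations (resp. trivial cofibrations) in $\sSet$ because the model structure on $\sSet$ is monoidal and $\partial\Delta^n\to\Delta^n$, $\Lambda^n_i\to\Delta^n$ are (trivial) cofibrations there. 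In the real case one uses instead that $X(\C)$ with its conjugation action is, levelwise, a free $\Z/2$-simplicial set away from the fixed locus $X(\R)$, so the generating equivariant (trivial) cofibrations $(\Z/2)/H\times(\partial\Delta^n\to\Delta^n)$ appearing in Guillou's description of $\sSet(\Z/2)$ are hit, and $R'(X\times(\partial\Delta^n\to\Delta^n))$ is a cofibration of $\Z/2$-simplicial sets by the cofibrantly generated description above.

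Next I would prove strict symmetric monoidality. The source $\sPre(\Sm/k)$ is symmetric monoidal under the objectwise (sectionwise) smash/product, with unit the representable on $\Spec k$; the targets $\sSet$ and $\sSet(\Z/2)$ are symmetric monoidal under (smash) product. The key computation is that complex (resp. complex-with-conjugation) points send products of schemes to products of spaces: $(X\times_k Y)(\C)^{an}\cong X(\C)^{an}\times Y(\C)^{an}$ as topological spaces, $\Z/2$-equivariantly in the real case, and $(\Spec k)(\C)^{an}$ is a point (resp. a point with trivial action). Combining this with the fact that $R$ is defined by a colimit over representables and that $\otimes$ in presheaves is computed objectwise, one gets a natural isomorphism $R(A\otimes B)\cong R(A)\times R(B)$: write $A$ and $B$ as colimits of representables, use that $\otimes$ and $R$ both commute with colimits (being left adjoints / defined by colimits), reduce to $A=X\times\Delta^m$, $B=Y\times\Delta^n$, and invoke the point-set identity above. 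Compatibility with the associativity, unit and symmetry constraints is then a diagram chase that is formal given that these constraints on both sides are induced from the cartesian structure. The pointed versions follow by adjoining a disjoint basepoint and noting $R$, $R'$ preserve this, hence descend to pointed objects as strict symmetric monoidal left Quillen functors for the smash product.

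The main obstacle I anticipate is not any single step but the bookkeeping in the equivariant case: one must be careful that the projective model structure on $\Spc'(\R)_{\Nis}$ used here (following Dugger) has generating cofibrations indexed only by representables $X$ and not by hypercovers or $\Z/2$-equivariant data, so that after applying $R'$ one really lands among the \emph{non-equivariantly generated} cofibrations $(\Z/2)/H\times(\partial\Delta^n\to\Delta^n)$ only through the free-away-from-$X(\R)$ structure of $X(\C)$---equivalently, one should check $R'(X)$ is a cofibrant $\Z/2$-simplicial set (which amounts to the CW-structure on $X(\C)^{an}$ being $\Z/2$-equivariant, true for smooth real varieties), or circumvent this by observing that every object of the projective model structure is already cofibrant, so only preservation of \emph{trivial} cofibrations is at issue, and that reduces to: $R'$ sends projective trivial cofibrations to $\Z/2$-weak equivalences that are injective, which one checks on generators using that $X\times\Lambda^n_i\to X\times\Delta^n$ becomes $\Sing(X(\C))\times(\Lambda^n_i\to\Delta^n)$, a trivial cofibration of underlying simplicial sets and of $H$-fixed-point simplicial sets for each $H$. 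Everything else is either the adjoint functor existence (routine) or the point-set topology identities for analytification (standard, e.g.\ as in \cite[Appendix]{PPR} and \cite[Section 5]{Du}).
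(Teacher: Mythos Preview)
Your proposal has a genuine gap in the left Quillen verification. The model structure on $\sPre(\Sm/k)$ being used is Dugger's $\Spc'(k)_{\Nis}$, which is not the plain projective model structure on simplicial presheaves but a left Bousfield localization of it at Nisnevich hypercovers. In such a localization the cofibrations are unchanged, so your check on $X\times(\partial\Delta^n\to\Delta^n)$ is fine; but the class of trivial cofibrations strictly enlarges, and the generating trivial cofibrations are no longer just the maps $X\times(\Lambda^n_i\to\Delta^n)$. Your argument therefore only shows that $R$ and $R'$ are left Quillen for the unlocalized projective model structure, not for $\Spc'(k)_{\Nis}$.

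What is missing is exactly the substantive content: one must show that $R$ and $R'$ send Nisnevich-local weak equivalences to weak equivalences of (equivariant) simplicial sets. Concretely, for an elementary Nisnevich square one needs that analytification produces a homotopy pushout of spaces, which rests on \'etale maps analytifying to local homeomorphisms and the Nisnevich lifting condition giving an honest cover on $\C$-points; for $R'$ one needs this $\Z/2$-equivariantly. The paper does not reprove this either---its proof is just the citation to \cite[Theorems~5.2, 5.5]{Du} and \cite[Theorem~A.23]{PPR}, where precisely this localization step is carried out. Your treatment of strict symmetric monoidality (via $(X\times_k Y)(\C)^{an}\cong X(\C)^{an}\times Y(\C)^{an}$ and colimit reduction) and the passage to pointed objects is correct and matches what those references do. A smaller issue: your aside that ``every object of the projective model structure is already cofibrant'' is false for projective model structures on presheaf categories, so that proposed shortcut in the equivariant case does not work either; one really does need to know that $R'(X)$ is a cofibrant $\Z/2$-simplicial set, or argue via monomorphisms.
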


\begin{proof}
This is \cite[Theorems 5.2, 5.5]{Du} and \cite[Theorem A.23]{PPR}. 
\end{proof}

Now we define the functor 
\[\Sing:\sSet\rightarrow\sPre(\Sm/\C),\]
as in \cite[Theorem A.23]{PPR}.
It maps a simplicial set $Z$ to the simplicial presheaf sending $X\in\Sm/\C$ to the simplicial set which in degree $n$ is the set of maps $\sSet(X(\C)\times \Delta^n,Z)$, that is, $\Sing(Z)(X)$ is defined as an internal hom object in $\sSet$.

\begin{prop}\label{adj1}
The functor $\Sing$ is right adjoint to $R$.
\end{prop}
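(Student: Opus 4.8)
The plan is to verify the adjunction $\Hom_{\sPre(\Sm/\C)}(R(-), -) \cong \Hom_{\sSet}(-, \Sing(-))$ by unwinding both sides using the colimit presentation of simplicial presheaves. First I would fix a simplicial set $Z$ and a simplicial presheaf $A$, and write $A$ via its canonical presentation $A \cong \colim_{X\times\Delta^n\to A}(X\times\Delta^n)$ over the over-category of representables mapping into $A$. Since $R$ is defined levelwise as a colimit over exactly this indexing category, and colimits commute with colimits, a map $R(A)\to Z$ in $\sSet$ is the same as a compatible family of maps $X(\C)\times\Delta^n \to Z$ indexed by the objects $X\times\Delta^n\to A$ of the over-category.

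Next I would identify such a compatible family with a map $A \to \Sing(Z)$ in $\sPre(\Sm/\C)$. By definition $\Sing(Z)(X)_n = \sSet(X(\C)\times\Delta^n, Z)$, so by the (co)Yoneda lemma a map $X\times\Delta^n \to \Sing(Z)$ corresponds precisely to an element of $\Sing(Z)(X)_n$, i.e. a map $X(\C)\times\Delta^n\to Z$. Using again that $A$ is the colimit of the $X\times\Delta^n$, a map $A\to\Sing(Z)$ is exactly a compatible family of such maps $X\times\Delta^n\to\Sing(Z)$, hence exactly a compatible family $X(\C)\times\Delta^n\to Z$. Chaining the two identifications gives the desired natural bijection $\Hom_{\sSet}(R(A),Z)\cong\Hom_{\sPre(\Sm/\C)}(A,\Sing(Z))$.

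The main thing to be careful about is \emph{naturality} and the precise bookkeeping of the indexing categories: one must check that the bijection is natural in both $A$ and $Z$, and that the passage ``compatible family of maps out of the pieces $X\times\Delta^n$'' $=$ ``map out of the colimit'' is applied to the correct diagram on each side (the same over-category $(\Sm/\C)_{\mathrm{rep}}\!\downarrow A$ of representables-times-simplices over $A$). A subtlety worth a sentence is that $X(\C)$ here means $\Sing(X(\C)^{an})$, but this plays no role in the formal argument: $X(\C)$ is simply a fixed simplicial set attached to each representable $X$, and the adjunction is a consequence of the general fact that a left Kan extension along the Yoneda embedding (which is what $R$ is) is left adjoint to the ``singular'' functor it determines. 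I expect the only real obstacle to be notational---making sure the two colimit presentations literally agree---rather than mathematical, since this is the standard nerve/realization adjunction in the setting of simplicial presheaves, as in \cite[Theorem A.23]{PPR}.
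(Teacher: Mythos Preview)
Your proposal is correct and follows essentially the same approach as the paper: both reduce to representables via the canonical colimit presentation $A\cong\colim_{X\times\Delta^n\to A}(X\times\Delta^n)$ and observe that on representables the bijection holds by the very definition of $\Sing$. The only cosmetic difference is that the paper first constructs the map $\Phi$ explicitly on a single representable and then invokes passage to colimits, whereas you phrase the whole argument at once in terms of compatible families and recognize it as the standard nerve/realization (left Kan extension) adjunction; this is a cleaner packaging of the same idea.
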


\begin{proof}
We have to find a bijection 
\[\Phi:\sSet(R(X),Y)\rightarrow \sPre(\Sm/\C)(X,\Sing(Y))\]
for any $X\in \sPre(\Sm/\C)$ and $Y\in\sSet$. The general case will follow by passage to colimits after we have shown this for the case that $X$ is representable. So let $X=\Sm/\C(-\times\Delta^\bullet,X')$ with $X'\in\Sm/\C$. Let $f:R(X)\rightarrow Y$ be an element of the left hand side, which now is $\sSet(X'(\C)\times\Delta^\bullet,Y)$. We have to define a natural transformation 
\[\Phi(f):X=\Sm/\C(-\times\Delta^\bullet,X')\rightarrow \sSet(-(\C)\times\Delta^\bullet,Y)=\Sing(Y).\] 
We do this by the following composition:
\[\Phi(f):\Sm/\C(-\times\Delta^\bullet,X')\stackrel{R}\longrightarrow \sSet(-(\C)\times\Delta^\bullet,X'(\C))\stackrel{f_\ast}\longrightarrow \sSet(-(\C)\times\Delta^\bullet,Y).\]
The map $\Phi$ is obviously injective. It is also surjective because any morphism from $\Sm/\C(-\times\Delta^\bullet,X')$ to $\sSet(-(\C)\times\Delta^\bullet,Y)$ factors through the realisation functor.
\end{proof}

The $\Z/2$-version of this functor,
\[\Sing':\sSet(\Z/2)\rightarrow\sPre(\Sm/\R),\]
maps an equivariant simplicial set $Z$ to the simplicial presheaf sending $X\in\Sm/\R$ to the internal hom $\sSet(\Z/2)(X(\C),Z)$. 

\begin{prop}\label{adj2}
The functor $\Sing'$ is right adjoint to $R'$.
\end{prop}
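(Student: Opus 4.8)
The plan is to run the argument of Proposition \ref{adj1} again, this time bookkeeping the $\Z/2$-action coming from complex conjugation. Since $R'$ is a left Quillen functor it is in particular a left adjoint $\sPre(\Sm/\R)\to\sSet(\Z/2)$, so a right adjoint exists and is unique up to isomorphism; the content of the statement is that this right adjoint is the explicitly defined functor $\Sing'$. Thus I must produce a bijection
\[\Phi'\colon\sSet(\Z/2)\bigl(R'(X),Y\bigr)\longrightarrow\sPre(\Sm/\R)\bigl(X,\Sing'(Y)\bigr)\]
natural in $X\in\sPre(\Sm/\R)$ and $Y\in\sSet(\Z/2)$. Because $R'$ is defined by the colimit formula $R'(X)=\colim_{U\times\Delta^n\to X}\bigl(U(\C)\times\Delta^n\bigr)$ it preserves colimits, while $\sSet(\Z/2)\bigl(-,Y\bigr)$ and $\sPre(\Sm/\R)\bigl(-,\Sing'(Y)\bigr)$ both turn colimits into limits; hence, exactly as in the proof of Proposition \ref{adj1}, it is enough to construct $\Phi'$ when $X$ is a representable presheaf $\Sm/\R(-\times\Delta^\bullet,X')$ with $X'\in\Sm/\R$, and then pass to the general case by taking limits over the over-category of such representables mapping to $X$.

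In the representable case $R'(X)=X'(\C)\times\Delta^\bullet$ (computed as in the proof of Proposition \ref{adj1}, now with $\Z/2$ acting on $X'(\C)$ by conjugation), so an element $f$ of the left-hand side is a map of $\Z/2$-simplicial sets $X'(\C)\times\Delta^\bullet\to Y$. Mimicking Proposition \ref{adj1}, define $\Phi'(f)$ to be the composite
\[\Sm/\R(-\times\Delta^\bullet,X')\overset{R'}{\longrightarrow}\sSet(\Z/2)\bigl(-(\C)\times\Delta^\bullet,X'(\C)\bigr)\overset{f_\ast}{\longrightarrow}\sSet(\Z/2)\bigl(-(\C)\times\Delta^\bullet,Y\bigr)=\Sing'(Y),\]
where the first map sends an element $U\times\Delta^n\to X'$ to the map $U(\C)\times\Delta^n\to X'(\C)$ obtained by passing to complex points. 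The only point that is new relative to Proposition \ref{adj1} is that this last map is $\Z/2$-equivariant: this holds because complex conjugation is a natural automorphism of the complex-points functor restricted to $\Sm/\R$, so it commutes with the map induced by any morphism of schemes, and it acts trivially on the $\Delta^n$-factor. As in Proposition \ref{adj1}, $\Phi'$ is injective by inspection, and it is surjective because any natural transformation from $\Sm/\R(-\times\Delta^\bullet,X')$ to $\Sing'(Y)$ factors through $R'$.

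The step I expect to require the most care is not the construction of $\Phi'$ but the verification that it is genuinely $\Z/2$-equivariant at every stage and compatible with the colimit decomposition: one must check that the structure maps of the colimit presenting $R'(X)$, the transition maps among the representables $U\times\Delta^n\to X$, and the post-composition maps $f_\ast$ all respect the conjugation action, so that the limit reassembling $\Phi'$ for general $X$ from its values on representables makes sense in $\sSet(\Z/2)$. Each of these checks again reduces to the naturality of complex conjugation on $\C$-points, but it must be invoked consistently. Once this is in place, naturality of $\Phi'$ in both variables is formal, and the fact that $\Sing'(Y)(X)=\sSet(\Z/2)(X(\C),Y)$ is built from the equivariant internal hom guarantees that $\Sing'$ indeed takes values in $\sPre(\Sm/\R)$, completing the identification of $\Sing'$ as the right adjoint of $R'$.
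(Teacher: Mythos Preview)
Your proposal is correct and follows exactly the approach the paper takes: the paper's proof is simply the one-line remark that the argument is the same as in Proposition \ref{adj1}, with $\Sm/\C$ replaced by $\Sm/\R$ and $\sSet$ by $\sSet(\Z/2)$. Your write-up is in fact more detailed than the paper's, spelling out the equivariance check that the paper leaves implicit.
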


\begin{proof}
The proof is the same as in the complex case, except that we have to replace $\Sm/\C$ by $\Sm/\R$ and $\sSet$ by $\sSet(\Z/2)$.
\end{proof}

We want to define stable versions of the functors $R, R'$ and $\Sing, \Sing'$. Therefore, we consider the category of symmetric ${\P^1}$-spectra on $\sPre(\Sm/k)$, denoted by $Sp^\Sigma_{\P^1}(k)$. The stable model structure on $Sp^\Sigma_{\P^1}(k)$ is constructed in the same way as in \cite{J}, except that we start with our different definitions of fibrations and cofibrations on $\sPre(\Sm/k)$. This construction is also described in \cite[Section A.5]{PPR}. We will only need the following information about this model structure.

Let $J$ be a level fibrant replacement functor and let $Q$ be the stabilisation functor defined in \cite[Remark 2.4]{J}. A map $f$ in $Sp^\Sigma_{\P^1}(k)$ is called a stable equivalence if $QJ(f)$ is a level equivalence.

\begin{prop}\label{QJ}
A map of symmetric ${\P^1}$-spectra, $f:X\rightarrow Y$, is a stable fibration if and only if it is a level fibration and 
\[\xymatrix{\ar @{} [dr] |{}
X_n \ar[d] \ar[r] &  QJX_{n}\ar[d] \\
Y_n  \ar[r] &  QJY_{n} }\]
is homotopy cartesian \cite[Lemma 2.7]{J}.

A stable trivial fibration is the same as a levelwise trivial fibration \cite[Theorem 2.9]{J}.  
\end{prop}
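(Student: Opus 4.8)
The plan is to deduce both statements from the corresponding results of Jardine, \cite[Lemma 2.7]{J} and \cite[Theorem 2.9]{J}, by observing that the stable model structure on $Sp^\Sigma_{\P^1}(k)$ is obtained from the level model structure by exactly the construction of \cite{J}: it is the left Bousfield localisation of the level model structure at the set of canonical stabilisation maps $F_{n+1}(\P^1\wedge C)\to F_n(C)$ (with $C$ ranging over a generating set of cofibrant objects and $F_n$ left adjoint to evaluation at level $n$), and, as recalled just before the statement, the stable equivalences are by definition the maps $f$ with $QJ(f)$ a level equivalence. Jardine's arguments use only formal properties of the level model structure — right properness, cellular/cofibrant generation, and the explicit shape of the stabilisation functor $Q$ — none of which are affected by replacing the injective local model structure on $\sPre(\Sm/k)$ by Du's projective one. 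So the first step is to record that our level model structure on $Sp^\Sigma_{\P^1}(k)$ shares all these properties, which follows from the corresponding facts about Du's model structure on $\sPre(\Sm/k)$ together with the standard right-induction of a model structure along the evaluation functors.

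For the statement about stable trivial fibrations I would argue purely formally: a left Bousfield localisation does not change the class of cofibrations, hence it does not change the class of maps with the right lifting property against all cofibrations, i.e. the trivial fibrations. Therefore a stable trivial fibration is the same thing as a level trivial fibration, and a level trivial fibration is precisely a map which is both a level equivalence and a level fibration — equivalently, a map $f$ each of whose components $f_n$ is a trivial fibration in $\sPre(\Sm/k)$. By the way the level structure is built from the projective structure on simplicial presheaves, these are exactly the levelwise trivial fibrations, giving the claim.

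For the statement about stable fibrations the main input is the standard description of fibrations in the left Bousfield localisation of a right proper model category: $f$ is a stable fibration if and only if $f$ is a level fibration and, for some (equivalently any) stable-fibrant replacement, the square comparing $f$ to that replacement is homotopy cartesian in the level model structure. It then remains to recognise $QJ$ as a functorial stable-fibrant replacement: $JX$ is a level-fibrant model of $X$, $QJX$ is a level-fibrant $\Omega$-spectrum — hence stably fibrant — and the natural map $X\to QJX$ is a stable equivalence by construction. Unwinding the homotopy-cartesian condition over this replacement level by level produces precisely the square $X_n\to (QJX)_n$, $Y_n\to (QJY)_n$ displayed in the statement.

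The hard part, exactly as in \cite{J}, will be the verification hidden in the last step: that $Q$ sends a level-fibrant spectrum to a level-fibrant $\Omega$-spectrum that is stably equivalent to it. This is where the subtleties peculiar to symmetric spectra live — a naive levelwise $\Omega$-spectrum criterion is not sufficient — and it is the substance of \cite[Remark 2.4]{J} and the discussion around \cite[Lemma 2.7]{J}. Since the present setting changes only the underlying model category of simplicial presheaves and not any of the formal properties those arguments rely on, I expect the whole proof to be a transcription of Jardine's, with Du's projective model structure on $\sPre(\Sm/k)$ playing the role of the injective local one; the one genuinely new item to check is that Du's structure really is right proper and cofibrantly generated, which is recorded in \cite{Du}.
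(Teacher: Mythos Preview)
The paper gives no proof of this proposition beyond the citations to \cite[Lemma 2.7]{J} and \cite[Theorem 2.9]{J} embedded in the statement itself, together with the sentence preceding it: ``The stable model structure on $Sp^\Sigma_{\P^1}(k)$ is constructed in the same way as in \cite{J}, except that we start with our different definitions of fibrations and cofibrations on $\sPre(\Sm/k)$.'' Your proposal is exactly this approach, only spelled out in more detail: you explain \emph{why} Jardine's arguments transfer unchanged when one replaces the injective local structure by Du's projective one, and you supplement this with the general Bousfield-localisation description of fibrations. So your approach agrees with the paper's, and in fact supplies the justification the paper leaves implicit.
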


Since $R(\P^1_\C)\cong \C P^1$ and $R'(\P^1_\R)\cong R'(S_s^1\wedge \G_m)\cong S^1_+\wedge S^1_-\cong S(\Z/2)$ (the 2-sphere with orientation reversing $\Z/2$-action) in the homotopy categories, 
we can define 
\[R:Sp^\Sigma_{\P^1}(\C)\rightarrow Sp^\Sigma_{\C P^1}\]
and
\[R':Sp^\Sigma_{\P^1}(\R)\rightarrow Sp^\Sigma(\Z/2)\]
levelwise.\\

We can also extend $\Sing$ and $\Sing'$ to the categories of spectra, as follows.

Over $\C$, the simplicial presheaf ${\P^1}$ is equivalent to the simplicial presheaf sending $Z\in\Sm/\C$ to $\Sm/\C(Z\times\Delta^n,\P^1_\C)$. realisation defines a map from $\Sm/\C(Z\times\Delta^n,\P^1_\C)$ to $\sSet(Z(\C)\times\Delta^n,\C P^1)$. These can be assembled into a map of simplicial presheaves ${\P^1}\rightarrow \Sing(\C P^1)$. For $X\in Sp^\Sigma_{\C P^1}$ we can, hence, define structure maps 
\[\Sing(X_n)\wedge {\P^1}\rightarrow \Sing(X_n)\wedge \Sing(\C P^1)\cong\Sing(X_n\wedge\C P^1)\stackrel{\Sing(\sigma_n)}\longrightarrow\Sing(X_{n+1}),\]
so that we get a spectrum $\Sing(X)\in Sp^\Sigma_{\P^1}(\C)$ defined levelwise.\\

Over $\R$, the same argument holds if we consider $\Z/2$-equivariant maps of simplicial sets. We get a spectrum $\Sing'(X)\in Sp^\Sigma_{\P^1}(\R)$ defined levelwise.

\begin{cor}
The functors $(R,\Sing)$ and $(R',\Sing')$ form adjoint pairs between the categories of symmetric spectra.
\end{cor}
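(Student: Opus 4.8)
The corollary asserts that $(R,\Sing)$ and $(R',\Sing')$ are adjoint pairs of functors between the categories of symmetric $\P^1$-spectra on the motivic side and symmetric $\C P^1$-spectra (resp.\ symmetric $\Z/2$-spectra) on the topological side.

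**Plan.** The plan is to reduce the spectrum-level adjunction to the already-established unstable adjunctions, Propositions \ref{adj1} and \ref{adj2}, by exhibiting the unit and counit levelwise and checking that they are compatible with the bonding maps. First I would recall that a morphism of symmetric $\P^1$-spectra $f\colon \Sing(X)\to Y'$ is a sequence of maps $f_n\colon \Sing(X_n)\to Y'_n$ of simplicial presheaves, equivariant for the $\Sigma_n$-actions and commuting with the structure maps; dually, a morphism $g\colon R(Y')\to X$ is a sequence $g_n\colon R(Y'_n)\to X_n$ with the analogous compatibilities. Since $R$ and $\Sing$ (resp.\ $R'$ and $\Sing'$) are defined levelwise, and since on each level $R\dashv \Sing$ by Proposition \ref{adj1} (resp.\ $R'\dashv\Sing'$ by Proposition \ref{adj2}), the natural bijection
\[
\sSet(R(Y'_n),X_n)\;\cong\;\sPre(\Sm/\C)\bigl(Y'_n,\Sing(X_n)\bigr)
\]
is already in hand for each $n$; I would note it is $\Sigma_n$-equivariant because the level adjunction is natural and $R$, $\Sing$ are functorial, so $\Sigma_n$ acts on both sides through the same functors. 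Assembling these levelwise bijections, the content is to show that a levelwise family on one side commutes with structure maps if and only if the adjoint family does on the other side.

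**Key steps.** The essential point is the interaction with the bonding maps. On the topological side the structure map of $R(Y')$ is $R(Y'_n)\wedge \C P^1\to R(Y'_{n+1})$ built from $R(\sigma^{Y'}_n)$ together with the identification $R(\P^1)\simeq \C P^1$; on the motivic side the structure map of $\Sing(X)$ is the composite constructed in the excerpt,
\[
\Sing(X_n)\wedge \P^1\to \Sing(X_n)\wedge\Sing(\C P^1)\cong \Sing(X_n\wedge \C P^1)\xrightarrow{\Sing(\sigma^X_n)}\Sing(X_{n+1}),
\]
using the canonical map $\P^1\to \Sing(\C P^1)$ that is itself the unit of the level adjunction evaluated at $\C P^1$, together with the lax monoidal structure map $\Sing(A)\wedge\Sing(B)\to\Sing(A\wedge B)$. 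The step I would carry out carefully is a diagram chase: given $g_n\colon R(Y'_n)\to X_n$ with adjoints $\hat g_n\colon Y'_n\to\Sing(X_n)$, I would show that $g_{n+1}\circ\sigma^{R(Y')}_n = R(\sigma^X_?)\circ(g_n\wedge\id)$ transposes, under the adjunction and the monoidal/unit coherence, exactly to $\hat g_{n+1}\circ\sigma^{Y'}_n = \sigma^{\Sing(X)}_n\circ(\hat g_n\wedge\id)$. This uses naturality of the adjunction counit $R(\Sing(X_n))\to X_n$ with respect to $g_n$, the fact that $R$ is strict symmetric monoidal (so $R(\P^1)\wedge(-)$-smashing is compatible with the counit), and the compatibility of the monoidal structure maps of $R$ and $\Sing$ — i.e.\ that the lax structure on $\Sing$ is adjoint to the strong monoidality of $R$. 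Once this equivalence of conditions is verified, the levelwise bijections restrict to a bijection on morphisms of spectra, and naturality in $X$ and $Y'$ is inherited from the unstable case.

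**Main obstacle.** The routine part is the existence of the levelwise bijections; the genuinely delicate bookkeeping is tracking the $\Sigma_n$-equivariance of the transposition simultaneously with the bonding-map compatibility, since the structure maps involve the non-identity permutation maps on the smash powers of $\P^1$ (resp.\ $S(\Z/2)$ in the real case), and one must confirm these permutations are matched under $R$ (this is exactly where $R(\P^1)\simeq\C P^1$ and $R'(\P^1_\R)\simeq S(\Z/2)$ as objects with symmetric-group action enter). In the real case there is the additional point that the $\Z/2$-action coming from complex conjugation must be carried along through every identification, but since $R'$ and $\Sing'$ were defined so as to be $\Z/2$-equivariant throughout and Proposition \ref{adj2} already accounts for this, no new difficulty arises — the complex argument applies verbatim with $\sSet$ replaced by $\sSet(\Z/2)$ and $\Sm/\C$ by $\Sm/\R$.
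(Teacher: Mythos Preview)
Your proposal is correct and follows exactly the approach the paper has in mind: the corollary is stated in the paper without any proof, as an immediate consequence of the unstable adjunctions (Propositions \ref{adj1} and \ref{adj2}) together with the levelwise definition of the spectrum-level functors. You have simply spelled out the bookkeeping (compatibility with $\Sigma_n$-actions and with the bonding maps via the monoidal structure of $R$ and the unit $\P^1\to\Sing(\C P^1)$) that the paper leaves implicit; one small slip is the expression ``$R(\sigma^X_?)$'' in your displayed compatibility condition, which should read $\sigma^X_n$ since $X$ is already a topological spectrum.
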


\begin{theorem}{\bf (The stable functors $R$, $R'$)}

The pairs $(R,\Sing)$ and $(R',\Sing')$ are Quillen adjunctions on the spectrum level and $R$, $R'$ are strict symmetric monoidal. 
\end{theorem}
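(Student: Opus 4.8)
I would proceed as follows. The adjunctions $(R,\Sing)$ and $(R',\Sing')$ between the categories of symmetric spectra were just established, so only the Quillen property and the strict monoidality remain. I would dispose of the monoidal statement first. On (pointed) simplicial presheaves, $R$ and $R'$ are strict symmetric monoidal and, being left adjoints, commute with all colimits. Since the level-$n$ part of $R\colon Sp^\Sigma_{\P^1}(\C)\to Sp^\Sigma_{\C P^1}$ is the presheaf-level $R$, and since the construction of this section identifies $R(\P^1_\C)$ with the object $\C P^1$ used in $Sp^\Sigma_{\C P^1}$ (respectively $R'(\P^1_\R)\cong R'(S^1_s\wedge\G_m)$ with $S(\Z/2)$), one checks that $R$ carries the sphere spectrum $\{(\P^1)^{\wedge n}\}_n$ to $\{(\C P^1)^{\wedge n}\}_n$ and, because the smash product of symmetric spectra is assembled levelwise from smash products, permutation actions and colimits, that the canonical comparison map $R(X)\wedge R(Y)\to R(X\wedge Y)$ is an isomorphism. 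Hence $R$ is strict symmetric monoidal, and the same argument applies to $R'$.

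For the Quillen property it suffices to show that $R$ and $R'$ are left Quillen, as then $\Sing$ and $\Sing'$ are automatically right Quillen. In each of $Sp^\Sigma_{\P^1}(k)$, $Sp^\Sigma_{\C P^1}$ and $Sp^\Sigma(\Z/2)$ the cofibrations coincide with the projective cofibrations of the underlying level model structure: for the motivic category this is forced by the identification of stable trivial fibrations with levelwise trivial fibrations (Proposition \ref{QJ}), for $Sp^\Sigma_{\C P^1}$ by Proposition \ref{CP1 MS}(1), and for $Sp^\Sigma(\Z/2)$ it is part of Theorem \ref{Mandell}. Since $R$ is, level by level, the left Quillen functor $R$ on presheaves, it preserves projective cofibrations of the level structures, hence stable cofibrations. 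Thus it only remains to see that $R$ preserves stable trivial cofibrations.

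Here I would use that the stable model structure on $Sp^\Sigma_{\P^1}(k)$ arises, as in \cite{J}, from the level projective structure by left Bousfield localisation at the stabilisation maps $F_{n+1}(\P^1\wedge A)\to F_n(A)$ (one for each level $n$ and each $A$ in a suitable generating set), and likewise for $Sp^\Sigma_{\C P^1}$ and $Sp^\Sigma(\Z/2)$ with $\P^1$ replaced by $\C P^1$, respectively $S(\Z/2)$. By the universal property of left Bousfield localisation, $R$ is left Quillen for the stable structures as soon as it is left Quillen for the level structures — which it is — and its left derived functor sends the stabilising maps to stable equivalences. Because $R$ commutes with colimits and smash products it strictly commutes with the free/shift functors, $R F_n\cong F_n R$, and $R(\P^1_\C)=\C P^1$, so $R$ takes the motivic stabilisation map $F_{n+1}(\P^1\wedge A)\to F_n(A)$ to the topological stabilisation map $F_{n+1}(\C P^1\wedge R(A))\to F_n(R(A))$, which is a stable equivalence by the very definition of the target's stable model structure. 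This shows $R$ is left Quillen. The argument for $(R',\Sing')$ is identical, with $\C$ replaced by $\R$, $\C P^1$ by $S(\Z/2)$, $\SH$ by $\SH(\Z/2)$, and Proposition \ref{CP1 MS} by Theorem \ref{Mandell}.

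The one step I expect to require care — and which carries the real content — is the on-the-nose compatibility of $R$ and $R'$ with the stabilising ``circles'': one needs $R(\P^1_\C)=\C P^1$ and $R'(\P^1_\R)=S(\Z/2)$ as \emph{objects} of the chosen point-set models, not merely up to weak equivalence, so that $R$ and $R'$ strictly commute with the free/shift functors $F_n$ and identify the motivic stabilisation maps with the topological, respectively equivariant, ones. This was arranged in the construction of $R$ and $R'$ in the previous section; granting it, every remaining verification is formal.
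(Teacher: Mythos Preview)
Your argument is correct, but it takes a different route from the paper's. The paper verifies the Quillen property on the \emph{right} adjoints: by Dugger's criterion it suffices that $\Sing$ and $\Sing'$ preserve stable trivial fibrations and stable fibrations between stably fibrant objects. Since stable trivial fibrations are level trivial fibrations in all three model structures (Propositions~\ref{CP1 MS}, \ref{QJ} and Theorem~\ref{Mandell}), the first condition follows from the unstable case. For the second, the paper uses that stably fibrant objects are $\Omega$-spectra and that $\Sing$, $\Sing'$ preserve $\Omega$-spectra; the homotopy-cartesian square in Proposition~\ref{QJ} is then automatic because $QJ$ is the identity on $\Omega$-spectra.

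Your approach via the left adjoints and the universal property of left Bousfield localisation is equally valid and arguably more transparent about \emph{why} the functors are stable. What it costs you is an appeal to the description of the stable structures on $Sp^\Sigma_{\C P^1}$ and $Sp^\Sigma(\Z/2)$ as localisations at stabilisation maps, which the paper does not state (it builds those model structures following \cite{HSS} and \cite{Ma} directly). The paper's argument avoids this by working only with the explicit fibration characterisations already recorded in Propositions~\ref{CP1 MS}, \ref{QJ} and Theorem~\ref{Mandell}, so it stays closer to the stated references. Your caveat about the point-set identification $R(\P^1_\C)=\C P^1$ and $R'(\P^1_\R)=S(\Z/2)$ is well placed: the paper only asserts these in the homotopy category, which is why its right-adjoint argument, needing only that $\Sing$ preserves $\Omega$-spectra, sidesteps this issue entirely.
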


\begin{proof}
The case of $R$ is covered in \cite[Theorem A.45]{PPR} and the claim for $R'$ is proven in \cite[Proposition 4.8]{HO}. For completeness, we reprove the theorem in our own words.

To show that $R$ and $R'$ are Quillen functors, we only have to prove that their right adjoints preserve stable fibrations between stably fibrant objects and stable trivial fibrations \cite[Corollary A.2]{smod}. In all model structures we are considering here, the stable trivial fibrations are the levelwise trivial fibrations. From the unstable version of this theorem it follows therefore that $\Sing$ and $\Sing'$ preserve stable trivial fibrations.

Stable fibrations are, in all of these model structures, levelwise fibrations with some additional homotopy pullback properties and stably fibrant objects are always $\Omega$-spectra. By Propositions \ref{adj1} and \ref{adj2}, the unstable functors $\Sing$ and $\Sing'$ are right Quillen functors. It follows that the levelwise-defined functors $\Sing$ and $\Sing'$ preserve $\Omega$-spectra and level fibrations. Let $f:X\rightarrow Y$ be a stable fibration between $\Omega$-spectra in $Sp^\Sigma_{\C P^1}$ with the model structure from Proposition \ref{CP1 MS} (or in $Sp^\Sigma(\Z/2)$ with the model structure from Theorem \ref{Mandell}). We have to show that
\[\xymatrix{\ar @{} [dr] |{}
\Sing(X)_n \ar[d] \ar[r] &  QJ\Sing(X)_{n}\ar[d] \\
\Sing(Y)_n  \ar[r] &  QJ\Sing(Y)_{n} }\]
is homotopy cartesian for all $n$.
Since $X$ and $Y$ are in particular level fibrant and $\Sing$ preserves level fibrations, $J\Sing(X)\simeq \Sing(X)$ and similarly for $Y$. Since $Q$ is defined using only the adjoint stucture maps, $Q\Sing(X)\simeq\Sing(X)$ and $Q\Sing(Y)\simeq\Sing(Y)$ for the $\Omega$-spectra $\Sing(X)$ and $\Sing(Y)$. It follows that the above square is in particular homotopy cartesian.

The functors $R$ and $R'$ are strict symmetric monoidal, since this holds unstably and the product of symmetric spectra is defined in the same way in all the categories considered here.
\end{proof}

\section{Realisation functors for other fields}

For $k\subseteq K$ a subfield, the canonical map $\Spec K\rightarrow \Spec k$ induces a couple of base change functors between the corresponding motivic homotopy categories. These are studied in \cite[Section 3.1]{MV} and also in \cite[Section 2]{HuBC}. For the stable version, see \cite[Section A.7]{PPR}. For a more general approach, see also \cite{AyoFun}.

Let $f:k\hookrightarrow K$ be the inclusion of the subfield.  
On the level of unpointed schemes, $f^\ast$ is given by
\[f^\ast:\Sm/k\rightarrow \Sm/K, \; f^\ast(X)=X\times_{\Spec k}\Spec K.\]
It induces a functor 
\[f^\ast:\sPre(\Sm/k)\rightarrow \sPre(\Sm/K),\]
which has a right adjoint $f_\ast$. By \cite[Proposition A.47]{PPR}, this adjunction induces a strict symmetric monoidal Quillen adjunction on the level of symmetric spectra, where $f^\ast$ is given by $f^\ast(E)_n=f^\ast(E_n)$.

One can therefore define realisation functors
\[R_k: \SH(k)\overset{f^\ast}\rightarrow\SH(\C)\overset{R}\rightarrow\SH \textup{ for } k\overset{f}\hookrightarrow \C,\]
\[R'_k: \SH(k)\overset{f^\ast}\rightarrow\SH(\R)\overset{R'}\rightarrow\SH(\Z/2) \textup{ for } k\overset{f}\hookrightarrow \R,\]
which are strict symmetric monoidal.

\section{Constant presheaf functors}

The following construction of the constant presheaf functors $c_k:\SH\rightarrow \SH(k)$ for $k\subseteq\C$ and $c'_k:\SH(\Z/2)\rightarrow \SH(k)$ for $k\subseteq \R$ is close to the one given in \cite[Section 4]{HO}.

Let $k\subseteq \C$. 
For $X\in \sSet$ we define $c_k(X)\in\sPre(\Sm/k)$ by $c_k(X)(Z)=X$ for all $Z\in\Sm/k$. Using $c_k(S^1)=S_s^1$, we extend the functor $c_k:\sSet\rightarrow \sPre(\Sm/k)$ levelwise to symmetric $S^1$-spectra and get: 
\[c_k:Sp^\Sigma_{S^1}\rightarrow Sp_{S_s^1}^\Sigma(k).\]
We postcompose this functor with the $\P^1$-suspension functor, yielding a functor to the category of symmetric $(S_s^1,\P^1)$-bispectra, $c_k:Sp^\Sigma_{S^1}\rightarrow Sp_{S_s^1,\P^1}^\Sigma(k)$. The homotopy category of $Sp_{S_s^1,\P^1}^\Sigma(k)$ is equivalent to $\SH(k)$ by \cite[Theorem 9.1]{SpecSymSpec}.

\begin{theorem}\label{Rc} {\bf (The stable functor $c_k$)}

This induces a functor $c_k:\mathcal{SH}\rightarrow\mathcal{SH}(k)$, which is strict symmetric monoidal. It is right inverse to $R_k$ and hence faithful. Furthermore, by \cite[Theorem 1]{L}, $c_k$ is full if $k$ is algebraically closed.
\end{theorem}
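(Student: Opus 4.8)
The plan is to verify the four claims in turn: (a) the levelwise point-set construction descends to homotopy categories; (b) it is strict symmetric monoidal there; (c) $R_k\circ c_k\cong\id_{\SH}$; (d) faithfulness is then formal, and fullness for algebraically closed $k$ is quoted from \cite{L}.

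For (a) and (b): at the level of simplicial presheaves the constant-presheaf functor $c_k:\sSet\to\sPre(\Sm/k)$ has a right adjoint, namely evaluation at the terminal object $A\mapsto A(\Spec k)$, since a natural transformation out of a constant presheaf is freely determined by its component at $\Spec k$ (which is terminal in $\Sm/k$). Evaluation at $\Spec k$ preserves trivial fibrations --- these are the objectwise trivial fibrations in the model structures of Section~\ref{realization}, being unchanged under left Bousfield localisation --- and it preserves fibrations between fibrant objects, which are objectwise Kan fibrations; so by \cite[Corollary A.2]{smod} it is right Quillen and $c_k$ is left Quillen. Extending $c_k$ levelwise to symmetric $S^1$-spectra and postcomposing with the left Quillen functor $\Sigma^\infty_{\P^1}$ keeps it left Quillen, by the same reasoning as for $\Sing$ and $\Sing'$ in Section~\ref{realization}, using the explicit descriptions of stable (trivial) fibrations as in Propositions~\ref{CP1 MS} and~\ref{QJ}; hence $c_k$ admits a total left derived functor $\SH\to\SH(k)$. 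For monoidality, smash products of pointed simplicial presheaves and of symmetric spectra are computed objectwise, so $(c_kX\wedge c_kY)(Z)=X\wedge Y=c_k(X\wedge Y)(Z)$ and $c_kS^0=S^0$, with identity coherence isomorphisms; together with the strict monoidality of $\Sigma^\infty_{\P^1}$ this makes $c_k$ strict symmetric monoidal on the model-category level, hence symmetric monoidal on homotopy categories for the derived smash.

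For (c) I would factor $R_k=R_\C\circ f^\ast$ for $f:k\hookrightarrow\C$ and prove two point-set isomorphisms. First, $f^\ast\circ c_k\cong c_\C$: the base-change functor $f^\ast$ preserves coproducts and sends the representable $\underline{\Spec k}$, which is the terminal (one-point constant) presheaf, to $\underline{\Spec\C}$; since the constant presheaf at a simplicial set is in each simplicial degree a coproduct of copies of the terminal presheaf, $f^\ast c_k(T)=c_\C(T)$, and this passes levelwise through $\Sigma^\infty_{\P^1}$ to spectra. Second, $R_\C\circ c_\C\cong\id$: writing the realisation of a presheaf $A$ as the colimit over its category of simplices, the category of simplices of $c_\C(X)$ splits as $\Sm/\C$ times the category of simplices of $X$, with defining functor the external product of $Y\mapsto Y(\C)$ and $(\sigma\colon\Delta^n\to X)\mapsto\Delta^n$, so by Fubini for colimits and cartesian closedness of $\sSet$
\[R_\C(c_\C(X))\;\cong\;\Bigl(\colim_{Y\in\Sm/\C}Y(\C)\Bigr)\times\Bigl(\colim_{\Delta^n\to X}\Delta^n\Bigr)\;\cong\;\{\ast\}\times X\;\cong\;X,\]
the first factor being a point because $\Spec\C$ is terminal in $\Sm/\C$ and $(\Spec\C)(\C)=\Sing(\mathrm{pt})$. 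Extending this levelwise through the suspension-spectrum and $\P^1$-stabilisation layers, and using that $R_\C$ is monoidal with $R_\C(\P^1_\C)\simeq S^2\simeq S^1\wedge S^1$ so that the change between the $S^1$- and $\P^1$-stabilised models of $\SH(\C)$ is absorbed, yields $R_\C\circ c_\C\cong\id_\SH$. As $c_\C$, $c_k$, $f^\ast$ and $R_\C$ are all left Quillen, these isomorphisms descend, giving $R_k\circ c_k\cong\id_{\SH}$.

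For (d): a right inverse is automatically faithful, since $c_k(\alpha)=c_k(\beta)$ forces $\alpha=R_kc_k(\alpha)=R_kc_k(\beta)=\beta$; and fullness of $c_k$ for algebraically closed $k$ is exactly \cite[Theorem 1]{L}. The main obstacle I expect is step (c), and within it the bookkeeping needed to promote the point-set computation $R_\C(c_\C(-))\cong(-)$ to a natural isomorphism of exact functors across the intermediate $S^1_s$- and $\P^1$-spectrum models; the collapse of the scheme direction itself is the clean computational core and poses no real difficulty.
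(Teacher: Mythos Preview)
Your proposal is correct and follows essentially the same route as the paper. The only cosmetic difference is at the unstable level: the paper verifies that $c_k$ is left Quillen by checking that the generating (trivial) cofibrations $\partial\Delta^n\hookrightarrow\Delta^n$ and $\Lambda^n_r\hookrightarrow\Delta^n$ land among the generating (trivial) cofibrations of $\sPre(\Sm/k)$, whereas you argue via the right adjoint (evaluation at $\Spec k$); at the spectrum level both proofs use the right-adjoint criterion in the same way. Your Fubini computation of $R_\C\circ c_\C$ is a correct unpacking of the paper's one-line colimit identity, and the paper handles the spectrum-level bookkeeping you flag as the main obstacle by writing down an explicit commutative diagram relating the $S^1$-, $\P^1$-, and bispectrum models and checking it on the nose.
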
 

\begin{proof}

We first show that the unstable functor, $c_k:\sSet\rightarrow \sPre(\Sm/k)$, is a left Quillen functor. The generating cofibrations of $\sSet$ are the maps $\partial\Delta^n\hookrightarrow\Delta^n$. The functor $c_k$ takes these maps to the same maps considered as morphisms of constant simplicial presheaves. These are examples of generating cofibrations in the model structure for $\sPre(\Sm/k)$, as described in \cite[Section A.3]{PPR}. The same applies to the generating trivial cofibrations $\Lambda^n_r\hookrightarrow\Delta^n$. The functor $c_k$ preserves colimits by definition, hence it is a left Quillen functor. We denote its right adjoint by $r_0$. It satisfies $r_0(S_s^1)=S^1$.

Now we show that $c_k:Sp^\Sigma_{S^1}\rightarrow Sp_{S_s^1}^\Sigma(k)$ is left Quillen, where the model structure on $Sp_{S_s^1}^\Sigma(k)$ is described in \cite[Section 4.5]{J} and satisfies the analogue of Proposition \ref{QJ}.
The right adjoint, $r$, to $c_k$ is defined by levelwise application of $r_0$. Since $r_0$, as a right Quillen functor, preserves fibrations and trivial fibrations, $r$ preserves level fibrations and level trivial fibrations. Stable trivial fibrations are the same as level trivial fibrations, hence these are preserved by $r$. We have to show that $r$ also preserves stable fibrations between stably fibrant objects. Let $f:X\rightarrow Y$ be a stable fibration in $Sp_{S_s^1}^\Sigma(k)$ with $X$ and $Y$ level fibrant $\Omega$-spectra. We have to show that $r(f)$ is a level fibration---which we already know---and that the squares
\[\xymatrix{\ar @{} [dr] |{}
r(X)_n \ar[d] \ar[r] &  \Omega r(X)_{n+1}\ar[d] \\
r(Y)_n  \ar[r] &  \Omega r(Y)_{n+1} }\]
are homotopy pullbacks. This is trivial because $r$ preserves $\Omega$-spectra (it is defined levelwise and commutes with desuspension), so $r(X)$ and $r(Y)$ are $\Omega$-spectra.
This proves that $c_k:Sp^\Sigma_{S^1}\rightarrow Sp_{S_s^1}^\Sigma(k)$ is a left Quillen functor.
It is symmetric monoidal by its pointset definition and by the definition of products of symmetric spectra. 

The $\P^1$-suspension functor $Sp_{S_s^1}^\Sigma(k)\rightarrow Sp_{S_s^1,\P^1}^\Sigma(k)$ is also a symmetric monoidal left Quillen functor if the category of symmetric $\P^1$-spectra over $Sp_{S_s^1}^\Sigma(k)$ is endowed with the stable model structure (see \cite[Theorems 5.1 and 9.1]{SpecSymSpec}). 
It follows that both functors induce a functor on the respective stable homotopy categories, and the concatenation of the induced functors, $c_k:\SH\rightarrow \SH(k)$, is also strict symmetric monoidal by \cite[Theorem 8.11]{SpecSymSpec}.

To show that $c_k$ is right inverse to $R_k$, first note that, for $f:k\hookrightarrow \C$, we have $f^\ast(\Spec ( k))=\Spec (\C)$, which implies $f^\ast\circ c_k=c_\C$. So, by definition of $R_k$, $R_k\circ c_k =R_\C\circ f^\ast\circ c_k=R_\C\circ c_\C$, and it suffices to consider the case $k=\C$. Unstably, for $A\in\sSet$, 
\[(R\circ c) (A)=\underset{X\times \Delta^\bullet\rightarrow c A}\colim (X(\C)\times \Delta^\bullet)=\underset{\Delta^\bullet \rightarrow A}\colim \Delta^\bullet =A.\]
On the level of spectra, we have used different models for constructing $R$ and $c$. We therefore have to check that the following diagram is commutative, where, by definition, the composition of the upper maps induces $c:\SH\rightarrow \SH(\C)$ and the lower map induces $R:\SH(\C)\rightarrow \SH$.
\[\xymatrix{\ar @{} [dr] |{}
Sp^\Sigma_{S^1} \ar[d]^{\sim}_{\Sigma^\infty_{\C P^1}} \ar[r]^{c} &  Sp^\Sigma_{S_s^1}(\C)\ar[r]^{\Sigma^\infty_{\P^1}} & Sp^\Sigma_{S_s^1,\P^1}(\C) \ar[dll]^{\Sigma R}\\
Sp^\Sigma_{S^1,\C P^1}   &  & \\
Sp^\Sigma_{\C P^1} \ar[u]_{\sim}^{\Sigma^\infty_{S^1}} & & Sp^\Sigma_{\P^1}(\C) \ar[uu]^{\sim}_{\Sigma^\infty_{S_s^1}}\ar[ll]_{R}}\]
Since $R(S_s^1)=S^1$, the functor $R:Sp^\Sigma_{\P^1}(\C)\rightarrow Sp^\Sigma_{\C P^1}$ induces a functor $\Sigma R$ on $S_s^1$-spectra on $Sp^\Sigma_{\P^1}(\C)$ by the levelwise definition. This induced functor is drawn as a diagonal in the above diagram and it makes the lower subdiagram commutative by definition. Thus, it suffices to check that the upper diagram is commutative. Let $X=\{X_n\}_n\in Sp^\Sigma_{S^1}$. $X$ is mapped to $\{cX_n\}_n$ in $Sp^\Sigma_{S_s^1}(\C)$ and to $\{\P^m\wedge cX_n\}_{m,n}$ in $Sp^\Sigma_{S_s^1,\P^1}(\C)$, which realises to $\{(\C P^1)^m\wedge X_n\}_{m,n}$ in $Sp^\Sigma_{S^1,\C P^1}$. This is the same as the image of $X$ under the vertical map $Sp^\Sigma_{S^1}\rightarrow Sp^\Sigma_{S^1,\C P^1}$, which completes the proof that $R\circ c=\id$ on $\SH$.
\end{proof}

For subfields $k\subseteq\R$, we want to define functors $c'_k:\SH(\Z/2)\rightarrow \SH(k)$ which are right inverse to $R'_k$. For a better understanding, we first consider $k=\R$ and then generalise.

To define a functor $c':\mathcal{SH}(\Z/2)\rightarrow \mathcal{SH}(\R)$ which is right inverse to $R'$, we first construct $R':\sSet(\Z/2)\rightarrow \sPre(\Sm/\R)$. Observe that $R'(\Spec\R)=\ast$ is the one-point set and $R'(\Spec \C)=\Z/2$ is the two-point set with non-trivial $\Z/2$-action. We let $c'(\ast)=\Spec\R$ and $c'(\Z/2)=\Spec\C$ and extend this to $\Z/2$-sets $M$ by 
\[c'(M)=\left(\underset {M^{\Z/2}}\coprod \Spec\R\right)\coprod \left(\underset{(M\setminus M^{\Z/2})/(\Z/2)}\coprod \Spec\C\right).\]
This can be done functorially, just note that $-1:\Z/2\rightarrow\Z/2$ has to be mapped to the morphism $\Spec\C\rightarrow\Spec\C$ induced by complex conjugation.
Furthermore, $c'$ extends to simplicial $\Z/2$-sets by $c'(M\times\Delta^n)=c'(M)\times\Delta^n$.
This defines the unstable, basepoint preserving functor $c':\sSet(\Z/2)\rightarrow \sPre(\Sm/\R)$. 
We extend this to a functor of spectra by postcomposing the levelwise defined functor
\[c':Sp^\Sigma_{S(\Z/2)}\rightarrow Sp_{c'(S(\Z/2))}^\Sigma(\R)\]
with the suspension spectrum functor $\Sigma^\infty_{\P^1}:Sp^\Sigma_{c'(S(\Z/2))}(\R)\rightarrow Sp^\Sigma_{c'(S(\Z/2)),\P^1}(\R)$. Note that $c'(S(\Z/2))\cong c'(S^1_+)\wedge c'(S^1_-)\cong S_s^1\wedge c'(S^1_-)$ and $c'(S^1_-)=F_{\C/\R}(S^V)$ in the notation of \cite{HuBC} (with $V$ the sign representation), where it is shown that this is invertible in $\SH(\R)$ \cite[Theorem 3.5]{HuBC}. Thus, by \cite[Theorem 9.1]{SpecSymSpec}, $\Sigma^\infty_{c'(S(\Z/2))}:Sp^\Sigma_{\P^1}(\R)\rightarrow Sp^\Sigma_{c'(S(\Z/2)),\P^1}(\R)$ is a Quillen equivalence.

\begin{theorem}{\bf (The stable functor $c'$)}\\
This induces a functor $c':\SH(\Z/2)\rightarrow \SH(\R)$, which is strict symmetric monoidal and right inverse to $R'$. In particular, $c'$ is faithful. 
\end{theorem}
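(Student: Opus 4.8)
The plan is to follow the proof of Theorem \ref{Rc} almost line for line, replacing the simplicial circle $S^1$ by $S(\Z/2)$ and $\sSet$ by $\sSet(\Z/2)$ throughout, the only genuinely new ingredient being the extra $c'(S(\Z/2))$-suspension coordinate. First I would check that the unstable functor $c':\sSet(\Z/2)\to\sPre(\Sm/\R)$ is left Quillen. It preserves colimits by its pointwise definition, so it suffices to see that it sends the generating (trivial) cofibrations of $\sSet(\Z/2)$ to (trivial) cofibrations of $\sPre(\Sm/\R)$. A generating cofibration $(\Z/2)/H\times\partial\Delta^n\hookrightarrow(\Z/2)/H\times\Delta^n$ is carried by $c'$ to the product of the representable presheaf $\Spec\R$ (if $H=\Z/2$) or $\Spec\C$ (if $H=\{1\}$) with the generating cofibration $\partial\Delta^n\hookrightarrow\Delta^n$ of $\sSet$, and such products are generating cofibrations of the projective model structure on $\sPre(\Sm/\R)$ described in \cite[Section A.3]{PPR}; the same argument with the horns $\Lambda^n_i$ in place of $\partial\Delta^n$ handles the generating trivial cofibrations. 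Denote the resulting right adjoint by $r'_0$; like $\Sing'$, it is computed as an internal hom and so commutes with the relevant loop functors.

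Next I would extend $c'$ levelwise to $c':Sp^\Sigma_{S(\Z/2)}\to Sp^\Sigma_{c'(S(\Z/2))}(\R)$ and prove it is left Quillen exactly as in the proof of Theorem \ref{Rc}. Its right adjoint $r'$, given by levelwise application of $r'_0$, preserves the level trivial fibrations, which coincide with the stable trivial fibrations, because $r'_0$ is right Quillen; and it preserves stable fibrations between $\Omega$-spectra because, being levelwise and commuting with the functors $\Omega_{\underbar n}$, it carries $\Omega$-spectra to $\Omega$-spectra, so that the homotopy pullback condition of Theorem \ref{Mandell} is automatic. Postcomposing with the symmetric monoidal left Quillen functor $\Sigma^\infty_{\P^1}$ and invoking that $\Sigma^\infty_{c'(S(\Z/2))}:Sp^\Sigma_{\P^1}(\R)\to Sp^\Sigma_{c'(S(\Z/2)),\P^1}(\R)$ is a Quillen equivalence (already noted above, using \cite[Theorem 9.1]{SpecSymSpec} and \cite[Theorem 3.5]{HuBC}), one identifies the target homotopy category with $\SH(\R)$ and so obtains $c':\SH(\Z/2)\to\SH(\R)$. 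It is strict symmetric monoidal because this holds unstably by the pointset formula for $c'$, the smash product of symmetric spectra is defined uniformly in all the categories involved, and the stabilisation and transport functors are monoidal, so \cite[Theorem 8.11]{SpecSymSpec} applies.

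For the statement that $c'$ is right inverse to $R'$, i.e. that $R'\circ c'\cong\id_{\SH(\Z/2)}$, I would again imitate Theorem \ref{Rc}. Since $R'$ and $c'$ are both cocontinuous it suffices to produce a natural isomorphism on the cells $(\Z/2)/H\times\Delta^n$. For $H=\Z/2$ one has $c'((\Z/2)/H\times\Delta^n)=\Spec\R\times\Delta^n$ and, by the same terminal-object argument used over $\C$, $R'(\Spec\R\times\Delta^n)=(\Spec\R)(\C)\times\Delta^n=\Delta^n$ with trivial action; for $H=\{1\}$ one has $c'((\Z/2)/H\times\Delta^n)=\Spec\C\times\Delta^n$ and $R'(\Spec\C\times\Delta^n)=(\Spec\C)(\C)\times\Delta^n=\Z/2\times\Delta^n$, the two $\R$-algebra maps $\C\to\C$ being interchanged by conjugation, so in both cases the original cell is recovered, just as $R\circ c=\id$ on $\sSet$ in Theorem \ref{Rc}. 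On the spectrum level one then repeats the diagram chase at the end of the proof of Theorem \ref{Rc}, with the commutative square there enlarged by the $c'(S(\Z/2))$-direction and with the diagonal realisation functor now extending $R'$ levelwise; commutativity reduces to the identity $R'(c'(S(\Z/2)))\cong S(\Z/2)$ in the homotopy category together with the unstable statement just proved. Finally, $c'$ is faithful because it is a section of $R'$: from $c'(f)=c'(g)$ one gets $R'c'(f)=R'c'(g)$, and the natural isomorphism $R'c'\cong\id$ then forces $f=g$.

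I expect the main obstacle to be exactly this last spectrum-level identification $R'\circ c'\cong\id$, since $c'$ and $R'$ are built from genuinely different ambient model categories, $\Z/2$-equivariant symmetric spectra on one side and motivic symmetric $\P^1$-spectra on the other, so one must carefully track a single object through the chain $Sp^\Sigma_{S(\Z/2)}\overset{c'}\longrightarrow Sp^\Sigma_{c'(S(\Z/2))}(\R)\overset{\Sigma^\infty_{\P^1}}\longrightarrow Sp^\Sigma_{c'(S(\Z/2)),\P^1}(\R)\overset{\sim}\longleftarrow Sp^\Sigma_{\P^1}(\R)\overset{R'}\longrightarrow Sp^\Sigma_{S(\Z/2)}$ and check that it composes to the identity, the only wrinkle relative to the purely complex case being the bookkeeping attached to the nonstandard $c'(S(\Z/2))$-suspension coordinate.
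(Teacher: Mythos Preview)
Your proposal is correct and follows essentially the same route as the paper's proof: verify that the unstable $c'$ is left Quillen by checking the generating (trivial) cofibrations, pass to spectra exactly as in Theorem \ref{Rc}, deduce the monoidal structure, and establish $R'\circ c'\cong\id$ first unstably on cells and then stably via a bispectrum diagram that accommodates the extra $c'(S(\Z/2))$-suspension coordinate. The paper phrases the image of the generating cofibrations as pushout products $(\emptyset\to\Spec\C)\boxempty(\partial\Delta^n\to\Delta^n)$ rather than as products with a representable, but these are the same maps, and the commutative diagram you anticipate at the end is exactly the one the paper draws.
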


\begin{proof}
As in the previous proof, we start by considering the functor $c':\sSet(\Z/2)\rightarrow \sPre(\Sm/\R)$. It preserves colimits. The generating cofibrations of $\sSet(\Z/2)$ are the maps $(\Z/2)/H\times \partial\Delta^n\rightarrow (\Z/2)/H\times\Delta^n$, where $H\subseteq\Z/2$ is a subgroup. The images of these maps under $c'$ can be written as pushout products:
\[c'(\Z/2\times\partial\Delta^n\rightarrow \Z/2\times\Delta^n)=(\emptyset\rightarrow\Spec\C)\boxempty(\partial\Delta^n\rightarrow\Delta^n)\]
\[c'(\partial\Delta^n\rightarrow \Delta^n)=(\emptyset\rightarrow\Spec\R)\boxempty(\partial\Delta^n\rightarrow\Delta^n).\]
These are examples of generating cofibrations for $\sPre(\Sm/\R)$ as described in \cite[Section A.3]{PPR}. The same argument holds for the generating trivial cofibrations $(\Z/2)/H\times \Lambda^n_r\rightarrow (\Z/2)/H\times\Delta^n$. The passage to the spectrum level works similarly as in the previous proof. The induced functor $c':\SH(\Z/2)\rightarrow \SH(\R)$ is symmetric monoidal by the same arguments as before.

By its definition, $c'$ is right inverse to $R'$ on the level of simplicial $\Z/2$-sets. On the level of stable homotopy categories, $R'\circ c'=\id$ follows from the commutativity of the following diagram, similarly as in the previous proposition.
 \[\xymatrix{\ar @{} [dr] |{}
Sp^\Sigma_{S(\Z/2)} \ar[d]^{\sim}_{\Sigma^\infty_{S(\Z/2)}}\ar[r]^{c'} &  Sp^\Sigma_{c'(S(\Z/2))}(\R)\ar[r]^{\Sigma^\infty_{\P^1}} & Sp^\Sigma_{c'(S(\Z/2)),\P^1}(\R) \ar[dll]^{\Sigma R'}\\
Sp^\Sigma_{S(\Z/2),S(\Z/2)} & & \\
Sp^\Sigma_{S(\Z/2)} \ar[u]_{\sim}^{\Sigma^\infty_{S(\Z/2)}} & & Sp^\Sigma_{\P^1}(\R) \ar[uu]^{\sim}_{\Sigma^\infty_{c'(S(\Z/2))}}\ar[ll]_{R'}}.\]
\end{proof}

Now let $k\subseteq\R$.
Then $R'_k(\Spec k)=\ast$ and $R'_k(\Spec (k[i]))=\Z/2$. Therefore, we let $c'_k(\ast)=\Spec k$ and $c'_k(\Z/2)=\Spec(k[i])$ and, for a $\Z/2$-set $M$, 
\[c'_k(M)=\left(\underset {M^{\Z/2}}\coprod \Spec k\right)\coprod \left(\underset{(M\setminus M^{\Z/2})/(\Z/2)}\coprod \Spec(k[i])\right).\]
For functoriality, note that $-1:\Z/2\rightarrow\Z/2$ has to be mapped to $\Spec(k[i])\rightarrow\Spec(k[i])$ induced by complex conjugation.
As before, $c_k'$ extends to $c'_k:\sSet(\Z/2)\rightarrow \sPre(\Sm/k)$ and then to 
\[c'_k:Sp^\Sigma_{S(\Z/2)}\rightarrow Sp_{c'_k(S(\Z/2))}^\Sigma(k)\rightarrow Sp_{c'_k(S(\Z/2)),\P^1}^\Sigma(k),\]
where the first functor is defined levelwise by $c'_k$ and the second one is the $\P_k^1$-suspension spectrum functor.
Here, $c'_k(S(\Z/2))\cong S_s^1\wedge c'_k(S^1_-)$ and $c'_k(S^1_-)=F_{k[i]/k}(S^V)$ in the notation of \cite{HuBC}, which is invertible in $\SH(k)$ by \cite[Theorem 3.5]{HuBC}.

\begin{theorem}{\bf (The stable functor $c'_k$)}\\
This induces a functor $c'_k:\SH(\Z/2)\rightarrow \SH(k)$ which is strict symmetric monoidal and right inverse to $R'_k$. 
\end{theorem}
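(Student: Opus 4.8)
The plan is to run the proof of the preceding theorem (the case $k=\R$) with $k$ in place of $\R$ and $k[i]$ in place of $\C$ throughout, and then to obtain the identity $R'_k\circ c'_k\cong\id$ by reducing to the already-proven case $k=\R$ via base change. First I would check that the unstable functor $c'_k:\sSet(\Z/2)\to\sPre(\Sm/k)$ is a left Quillen functor. It preserves colimits by construction, so it suffices to verify that it sends generating (trivial) cofibrations to (trivial) cofibrations. The generating cofibrations of $\sSet(\Z/2)$ are the maps $(\Z/2)/H\times\partial\Delta^n\to(\Z/2)/H\times\Delta^n$ with $H\subseteq\Z/2$, and, exactly as in the real case, their images under $c'_k$ are the pushout products
\[c'_k(\Z/2\times\partial\Delta^n\to\Z/2\times\Delta^n)=(\emptyset\to\Spec(k[i]))\boxempty(\partial\Delta^n\to\Delta^n),\]
\[c'_k(\partial\Delta^n\to\Delta^n)=(\emptyset\to\Spec k)\boxempty(\partial\Delta^n\to\Delta^n),\]
which appear among the generating cofibrations of $\sPre(\Sm/k)$ from \cite[Section A.3]{PPR}; the generating trivial cofibrations $(\Z/2)/H\times\Lambda^n_r\to(\Z/2)/H\times\Delta^n$ are handled identically.

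Passing to spectra is again as before: extend $c'_k$ levelwise to $Sp^\Sigma_{S(\Z/2)}\to Sp^\Sigma_{c'_k(S(\Z/2))}(k)$ and postcompose with the $\P^1_k$-suspension spectrum functor. The only place where the base field enters is the identification $c'_k(S(\Z/2))\cong S^1_s\wedge c'_k(S^1_-)$ with $c'_k(S^1_-)=F_{k[i]/k}(S^V)$, which is invertible in $\SH(k)$ by \cite[Theorem 3.5]{HuBC} (applicable since $k[i]/k$ is a quadratic Galois extension); hence $\Sigma^\infty_{c'_k(S(\Z/2))}:Sp^\Sigma_{\P^1}(k)\to Sp^\Sigma_{c'_k(S(\Z/2)),\P^1}(k)$ is a Quillen equivalence by \cite[Theorem 9.1]{SpecSymSpec}. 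As in the previous proofs, the right adjoint of the levelwise $c'_k$ preserves level fibrations and level trivial fibrations, hence stable trivial fibrations, and, being defined levelwise, preserves $\Omega$-spectra and therefore stable fibrations between stably fibrant objects; so $c'_k$ is left Quillen. Strict symmetric monoidality is immediate from the pointset-level definition of $c'_k$ together with the definition of the smash product of symmetric spectra.

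For $R'_k\circ c'_k\cong\id$ I would reduce to the case $k=\R$. Writing $f:k\hookrightarrow\R$ for the inclusion, one has $f^\ast(\Spec k)=\Spec\R$ and $f^\ast(\Spec(k[i]))=\Spec(k[i]\otimes_k\R)=\Spec\C$, and since $f^\ast$ preserves colimits and $\P^1_k$ this gives $f^\ast\circ c'_k=c'_\R$, first on $\sSet(\Z/2)$ and then on spectra. Consequently $R'_k\circ c'_k=(R'\circ f^\ast)\circ c'_k=R'\circ c'_\R\cong\id$ by the previous theorem; alternatively one can simply repeat the commutative-diagram argument of the $k=\R$ case verbatim with $k$ and $k[i]$ in the roles of $\R$ and $\C$. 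I do not expect a genuine obstacle here: the argument is a routine transcription, the only points demanding attention being that the constant-presheaf construction over $k$ still lands among the standard generating cofibrations of $\sPre(\Sm/k)$ and that \cite[Theorem 3.5]{HuBC} applies to the quadratic extension $k[i]/k$.
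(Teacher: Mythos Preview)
Your proposal is correct and follows essentially the same approach as the paper: the paper's proof simply says the main claim follows exactly as in the case $k=\R$, and then reduces $R'_k\circ c'_k\cong\id$ to the already-established $R'\circ c'\cong\id$ via $f^\ast\circ c'_k\cong c'$, using precisely the identifications $f^\ast(\Spec k)\cong\Spec\R$ and $f^\ast(\Spec(k[i]))\cong\Spec\C$ that you invoke. Your write-up is more detailed than the paper's, but there is no difference in strategy.
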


\begin{proof}
The main claim follows exactly as in the case $k=\R$ considered above. It is also implied by \cite[Theorem 4.6]{HO}. It remains to prove $R'_k\circ c'_k\cong \id$. Again, this follows from $f^\ast\circ c'_k\cong c'$ (for $k\overset{f}\hookrightarrow\R$) and $R'\circ c'\cong \id$, where $f^\ast\circ c'_k\cong c'$ holds because $f^\ast(\Spec k)\cong\Spec\R$ and $f^\ast(\Spec(k[i]))\cong \Spec\R\times_{\Spec k}\Spec(k[i])\cong\Spec\C$.
\end{proof}

\begin{rk}
In \cite[Theorem 1.1]{HO}, Heller and Ormsby prove that if $k$ is a real closed field, then $c_k'$ is full after $p$-completion.
\end{rk}

\begin{rk}\label{S1 spectra}
With similar methods as above, one can show that the functors $\sSet\overset{c_k}\rightarrow \sPre(\Sm/k)\overset{R_k}\rightarrow \sSet$ induce functors 
\[\SH\overset{c_k}\rightarrow \SH_{S^1_s}(k)\overset{R_k}\rightarrow\SH,\]
where $\SH_{S_s^1}(k)$ is the stable motivic homotopy category in which $S_s^1$ got inverted but $\G_m$ did not. 

For $k\subseteq\R$, the definition of the stable functor $c_k'$ relied on the invertibility of $F_{\C/\R}(S^V)$ in $\SH(\R)=\SH_{\P^1}(\R)$, as shown in \cite[Theorem 3.5]{HuBC}. One can show that the functors $\sSet(\Z/2)\overset{c'_k}\rightarrow \sPre(\Sm/k)\overset{R'_k}\rightarrow \sSet(\Z/2)$ induce functors 
\[ \SH_{S^1}(\Z/2)\overset{c'_k}\rightarrow \SH_{S^1_s}(k)\overset{R'_k}\rightarrow \SH_{S^1}(\Z/2),\]
where $\SH_{S^1}(\Z/2)$ is the naive equivariant stable homotopy category, in which only the sphere with trivial action got inverted. The functor $R':\SH_{S_s^1}(\R)\rightarrow \SH_{S^1}(\Z/2)$ sends $F_{\C/\R}(S^V)$ to $S^V=S(\Z/2)$, which is not invertible in $\SH_{S^1}(\Z/2)$. Therefore, $F_{\C/\R}(S^V)$ cannot be invertible in $\SH_{S_s^1}(\R)$. This shows that it is not possible to extend $c'_k:\sSet(\Z/2)\rightarrow \sPre(\Sm/k)$ to a functor from $\SH(\Z/2)$ to $\SH_{S_s^1}(k)$.
\end{rk}

\chapter{Thick ideals discovered by comparison functors}\label{ideals}

The aim of this chapter is to draw conclusions concerning thick subcategories and thick ideals in $\SH(k)$, $k\subseteq \C$, and in finite, local versions of this category using the functors from the previous chapter. In the next chapter, we will study thick ideals that are described by motivic Morava K-theories.

\section{Consequences of the properties of $R_k$, $R'_k$, $c_k$ and $c'_k$}\label{consequences}

\begin{prop}\label{f}
\begin{compactenum}[(1)]
\item If $\mathcal C\subseteq \mathcal{SH}$ is a thick subcategory or a thick ideal, then, for $k\subseteq\C$, $R_k^{-1}(\mathcal C)\subseteq\mathcal{SH}(k)$ is a thick subcategory or a thick ideal, respectively. 
\item If $\mathcal C\subseteq \mathcal{SH}(\Z/2)$ is a thick subcategory or a thick ideal, then, for $k\subseteq\R$, $(R'_k)^{-1}(\mathcal C)\subseteq\mathcal{SH}(k)$ is a thick subcategory or a thick ideal, respectively. 
\item If $f:k\hookrightarrow K$ and $\mathcal C\subseteq \SH(K)$ is a thick subcategory or a thick ideal, then $(f^\ast)^{-1}(\mathcal C)\subseteq\mathcal{SH}(k)$ is a thick subcategory or a thick ideal, respectively. 
\end{compactenum}
\end{prop}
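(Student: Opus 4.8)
The plan is to handle all three parts at once. Let $F$ denote any of the functors $R_k$, $R'_k$, or $f^\ast$, regarded as a functor on homotopy categories. The two properties of $F$ that I will use are: (a) $F$ is exact, i.e.\ triangulated --- so $F(0)\cong 0$, $F\circ\Sigma^{\pm1}\cong\Sigma^{\pm1}\circ F$, and $F$ sends exact triangles to exact triangles; and (b) $F$ is strict symmetric monoidal, so $F(X\wedge Y)\cong F(X)\wedge F(Y)$. Property (b) is proved for all three functors in Chapter \ref{functors}; property (a) holds because each of them is the (left) derived functor of a left Quillen functor between stable model categories and hence preserves homotopy cofiber sequences and shifts. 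So the proposition reduces to the following purely formal claim, which I would prove once: if $F\colon\mathcal T\to\mathcal T'$ is exact and symmetric monoidal and $\mathcal C\subseteq\mathcal T'$ is a thick subcategory (resp.\ thick ideal), then $F^{-1}(\mathcal C)$ is a thick subcategory (resp.\ thick ideal).

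First I would verify that $F^{-1}(\mathcal C)$ is a nonempty full triangulated subcategory. Fullness is by construction, and nonemptiness follows from $F(0)\cong 0\in\mathcal C$. For closure under $\Sigma^{\pm1}$: $F(\Sigma^{\pm1}X)\cong\Sigma^{\pm1}F(X)\in\mathcal C$ whenever $F(X)\in\mathcal C$. For closure under cofibers: given a morphism $X\to Y$ with $X,Y\in F^{-1}(\mathcal C)$, complete it to an exact triangle $X\to Y\to Z\to\Sigma X$ and apply $F$ to obtain an exact triangle with $F(X),F(Y)\in\mathcal C$; since $\mathcal C$ is triangulated, $F(Z)\in\mathcal C$, so $Z\in F^{-1}(\mathcal C)$ (this is the usual two-out-of-three property). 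Next, for the thick (retract-closure) condition: if $Y$ is a retract of $X\in F^{-1}(\mathcal C)$ via maps $Y\to X\to Y$ composing to $\id_Y$, then applying $F$ exhibits $F(Y)$ as a retract of $F(X)\in\mathcal C$, and since $\mathcal C$ is closed under retracts, $F(Y)\in\mathcal C$. This settles the thick-subcategory assertions in (1), (2), (3).

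Finally, for the ideal condition, suppose $\mathcal C$ is a thick ideal, $X\in\mathcal T$ is arbitrary, and $Y\in F^{-1}(\mathcal C)$. Using (b), $F(X\wedge Y)\cong F(X)\wedge F(Y)$, and since $F(Y)\in\mathcal C$ and $\mathcal C$ is an ideal, $F(X)\wedge F(Y)\in\mathcal C$, hence $X\wedge Y\in F^{-1}(\mathcal C)$. I do not expect a genuine obstacle here: everything is a formal consequence of exactness and monoidality of $F$. The only point that deserves to be stated with care is that the three comparison functors really are exact and symmetric monoidal, which is precisely the content of Chapter \ref{functors}.
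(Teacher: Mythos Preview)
Your proof is correct and follows essentially the same approach as the paper: both argue that the functor preserves retracts, exact triangles (via being a left Quillen functor / left adjoint), and smash products (by being symmetric monoidal), hence the preimage of a thick subcategory or ideal is again one. Your version is slightly more explicit in packaging the three cases into a single formal lemma about exact monoidal functors, but the content is identical.
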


\begin{proof}
Any functor preserves retracts, hence $R_k^{-1}(\mathcal C)$ is closed under retracts whenever $\mathcal C$ is. We also have $R_k(S_s^1)=S^1$ and $R_k$ preserves cofibers because it is a left adjoint, hence it preserves exact triangles. Therefore, $R_k^{-1}(\mathcal C)$ is closed under triangles whenever $\mathcal C$ is. Hence, $R_k^{-1}$ preserves thick subcategories. Since $R_k$ is symmetric monoidal (see Chapter \ref{functors}), $X\in R_k^{-1}(\mathcal C)$ and $Y\in\SH(k)$ implies that $R_k(X\wedge Y)\cong R_k(X)\wedge R_k(Y)$ is in $\mathcal C$, if $\mathcal C$ is a thick ideal. Thus, $X\wedge Y\in R_k^{-1}(\mathcal C)$. That is, $R_k^{-1}$ preserves thick ideals, too.

The proofs for $(R'_k)^{-1}$ and $(f^\ast)^{-1}$ are the same.
\end{proof}

\begin{prop}\label{c}
For $k\subseteq\C$, $c_k^{-1}$ preserves thick subcategories and thick ideals. 
Similarly, for $k\subseteq\R$, $(c'_k)^{-1}$ preserves thick subcategories and thick ideals.
\end{prop}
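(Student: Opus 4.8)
The plan is to reuse the proof of Proposition \ref{f} essentially verbatim, since the only features of $R_k$ that were used there are that it is an exact (triangulated) functor and that it is strict symmetric monoidal, and $c_k$ (resp.\ $c'_k$) has both properties by Theorem \ref{Rc} (resp.\ by the corresponding construction in Chapter \ref{functors}). So the first step is simply to record that $c_k\colon\SH\to\SH(k)$ is a triangulated functor: it is a left adjoint on the level of model categories, hence preserves homotopy cofiber sequences, and since $c_k(S^1_s)\cong S^1_s$ it commutes with the shift; therefore it sends exact triangles to exact triangles. As any functor, $c_k$ also preserves retracts.

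Next I would deduce the thick subcategory statement. Let $\mathcal C\subseteq\SH(k)$ be a thick subcategory. Then $c_k^{-1}(\mathcal C)$ is nonempty (it contains $0$, as $c_k(0)\cong 0\in\mathcal C$), it is closed under retracts because $c_k$ preserves retracts and $\mathcal C$ is closed under retracts, and it is closed under exact triangles because $c_k$ is triangulated and $\mathcal C$ is closed under exact triangles. Hence $c_k^{-1}(\mathcal C)$ is a thick subcategory of $\SH$.

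For the ideal statement, suppose in addition that $\mathcal C$ is a thick ideal. Take $X\in c_k^{-1}(\mathcal C)$ and an arbitrary $Y\in\SH$. Since $c_k$ is strict symmetric monoidal, $c_k(X\wedge Y)\cong c_k(X)\wedge c_k(Y)$; as $c_k(X)\in\mathcal C$ and $\mathcal C$ is an ideal in $\SH(k)$, this object lies in $\mathcal C$, so $X\wedge Y\in c_k^{-1}(\mathcal C)$. Thus $c_k^{-1}(\mathcal C)$ is a thick ideal. The argument for $(c'_k)^{-1}$ is identical, with $\SH$ replaced by $\SH(\Z/2)$ and $c_k$ by $c'_k$, which is likewise an exact strict symmetric monoidal functor by Chapter \ref{functors}.

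There is no real obstacle here; the only point that needs a word of justification is that $c_k$ and $c'_k$ are genuinely triangulated (preserve the shift and exact triangles), and this is immediate from their construction as left Quillen functors in Chapter \ref{functors}.
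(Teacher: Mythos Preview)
Your proof is correct and follows essentially the same approach as the paper: the paper's proof simply records that $c_k(S^1)=S_s^1$ and that $c_k$ preserves mapping cones (hence exact triangles), preserves retracts, and is strict symmetric monoidal, and concludes immediately. Your version spells out the preimage arguments in full, but the underlying reasoning is identical.
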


\begin{proof}
Since $S_s^1=c_k(S^1)$ and $c_k$ preserves mapping cones, $c_k$ preserves exact triangles. It also preserves retracts and is strict symmetric monoidal, hence, $c_k^{-1}$ preserves thick subcategories and thick ideals.
\end{proof}

\section{Finite motivic spectra}

The thick subcategory theorem of \cite{HS} concerns the category of finite spectra, $\SH^{fin}$, as defined in Chapter \ref{introduction}. The functors $R_k$ and $c_k$ can therefore only help us to understand subcategories of $\SH(k)$ which are at most as big as $R_k^{-1}(\SH^{fin})$. There are multiple equivalent possibilities to define $\SH^{fin}$, using the notions of finite CW-spectra, dualisable objects or compact objects. These notions are not equivalent in the motivic setting, therefore we obtain more than one possible category of ``finite'' objects in $\SH(k)$.

We will now discuss the various versions of finiteness. Let $k$ be any field.

\begin{defn}\label{fin}
\begin{compactenum}[(1)]
\item The category $\SH(k)^{fin}$ of finite cellular motivic spectra over a field $k$ is the smallest full subcategory of $\SH(k)$ that contains the spheres $S^{p,q}$ for all $p,q\in\Z$ and is closed under exact triangles \cite[Definition 8.1]{DI}. 

\item For $k\subseteq \R$, let $\SH(k)^{fin+}$ be the smallest full subcategory of $\SH(k)$ that contains $S^{p,q}\wedge (\Spec k[i])^{\wedge m}_+$ for all $p,q\in\Z$, $m\geq 0$ and is closed under exact triangles.
\item The closures of $\SH(k)^{fin}$ and $\SH(k)^{fin+}$ under colimits are denoted by $\SH(k)^{cell}$ and $\SH(k)^{cell+}$. Their objects are called cellular, see \cite[Definitions 2.1 and 2.10]{DI}.
\end{compactenum}
\end{defn}

\begin{rk}\label{constant in finite}
Note that these categories are closed under $\wedge $ because so are their sets of generators and because $\wedge$ preserves exact triangles and colimits, as it is a left adjoint.

With this definition, $\SH(k)^{fin}$ is the smallest tensor triangulated full subcategory of $\SH(k)$ that contains $c_k(\SH^{fin})$ and is closed under $-\wedge \G_m^{\pm 1}$, and $\SH(k)^{fin,+}$ is the smallest tensor triangulated full subcategory that contains $c'_k(\SH(\Z/2)^{fin})$ and is closed under $-\wedge \G_m^{\pm 1}$.
\end{rk}

The following results can mostly be found in \cite[Section 4]{MLE}. 

\begin{defn}
\begin{compactenum}[(1)]
\item Let $\mathcal D\subseteq\SH(k)$ be the collection of all (strongly) dualisable objects. That is, all spectra $X$ such that the canonical map $F(X,S)\wedge Y\rightarrow F(X,Y)$ is an isomorphism for all $Y\in \SH(k)$, where $F(-,-)$ denotes the derived internal hom in $\SH(k)$ and $S=S^{0,0}$ is the sphere spectrum. $F(X,S)$ is called the dual of $X$ and is also denoted by $DX$ (compare Definition \ref{dualizable}).
\item A motivic spectrum $F\in\SH(k)$ is called compact if $\Hom_{\SH(k)}(F,-)$ preserves arbitrary sums. Let $\SH(k)_f\subseteq\SH(k)$ denote the full subcategory of compact objects.
\end{compactenum}
\end{defn}

\begin{rk}\label{spec compact}
\begin{compactenum}[(1)]
\item $\SH(k)_f$ is a thick subcategory of $\SH(k)$ \cite[Section 4]{MLE}.
\item Any dualisable object is also compact, as shown in the proof of Proposition \ref{equiv compact}.
\item The smash product of two dualisable objects $X$ and $Y$ is again dualisable, because $ F(X\wedge Y,S)\wedge Z \cong F(X,F(Y,S))\wedge Z\cong F(X,S)\wedge F(Y,S)\wedge Z\cong F(X,S)\wedge F(Y,Z) \cong F(X,F(Y,Z)) \cong F(X\wedge Y, Z) $. 

Similarly, compact objects are closed under $\wedge$.
\item By \cite[Cor. 2.14 and Thm. 4.1]{HuBC}, $F(\Spec (k[i])_+,E)\cong \Spec(k[i])_+\wedge E$ in $\SH(k)$, $k\subseteq\R$. That is, $\Sigma^\infty\Spec(k[i])_+$ is self-dual and in particular compact.
\end{compactenum}
\end{rk}

\begin{defn}
For $\mathcal R$ a collection of objects in $\SH(k)_f$, let $\SH(k)_{\mathcal R,f}\subseteq\SH(k)_f$ be the smallest thick subcategory containing $\mathcal R$.

Let $\mathcal T_k=\{S^{p,q}\; |\; p,q\in\Z\}$ be the collection of all motivic spheres in $\SH(k)$ and let $\mathcal T_k^+=\{S^{p,q}\wedge (\Spec k[i])^{\wedge m}_+\; |\; p,q\in\Z,m\geq 0\}$ if $k\subseteq \R$. These are sets of compact objects in $\SH(k)$ by Remark \ref{spec compact}, parts (2) and (4). 
\end{defn}

Comparing the definitions, we see that $\SH(k)_{\mathcal T_k,f}$ is the closure of $\SH(k)^{fin}$ under retracts and $\SH(k)_{\mathcal T_k^+,f}$ is the closure of $\SH(k)^{fin+}$ under retracts. 

\begin{prop}
$\SH(k)_{\mathcal D,f}\subseteq\SH(k)_f$ is the full subcategory of dualisable objects of $\SH(k)$.
\end{prop}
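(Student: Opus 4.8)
The plan is to reduce the statement to showing that the class $\mathcal D$ of dualisable objects is \emph{itself} a thick subcategory of $\SH(k)$ which is contained in $\SH(k)_f$. Granting this, $\SH(k)_{\mathcal D,f}$ — being by definition the smallest thick subcategory of $\SH(k)_f$ that contains $\mathcal D$ — must coincide with $\mathcal D$, which is exactly the assertion. The inclusion $\mathcal D\subseteq\SH(k)_f$ is already recorded in Remark \ref{spec compact}(2): every dualisable object is compact. So the content is that $\mathcal D$ is closed under shifts, exact triangles and retracts.

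First I would note that $\mathcal D$ is closed under (de)suspension: for $X\in\mathcal D$ the object $\Sigma^{\pm 1}X$ is dualisable with dual $\Sigma^{\mp 1}DX$, since the canonical comparison map for $\Sigma^{\pm 1}X$ is just the $(\pm 1)$-fold shift of the one for $X$. Next, for exact triangles, fix a distinguished triangle $X\to Y\to Z\to\Sigma X$ in which two of the three objects lie in $\mathcal D$, and fix an arbitrary $W\in\SH(k)$. Both $-\wedge W$ and the contravariant functors $F(-,S)$ and $F(-,W)$ are exact, so applying $F(-,S)\wedge W$ and $F(-,W)$ to the triangle yields two distinguished triangles, and the canonical maps $\nu_{(-)}\colon F(-,S)\wedge W\to F(-,W)$ assemble, by naturality, into a morphism of distinguished triangles $(\nu_X,\nu_Y,\nu_Z)$ (up to the evident shift). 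A morphism of distinguished triangles two of whose components are isomorphisms has its third component an isomorphism as well; hence $\nu$ is an isomorphism on the third object for \emph{every} $W$, i.e.\ that object is dualisable. Finally, for retracts, suppose $X$ is a retract of $Y\in\mathcal D$ via $i\colon X\to Y$, $r\colon Y\to X$ with $ri=\id_X$. Then for every $W$ the comparison map $\nu_X\colon F(X,S)\wedge W\to F(X,W)$ is a retract of $\nu_Y$: on the source use $Dr\wedge W$ and $Di\wedge W$, on the target use $F(r,W)$ and $F(i,W)$, which are compatible with $\nu$ by naturality, and whose composites are the identity because $D(ri)=\id_{DX}$ and $F(ri,W)=\id_{F(X,W)}$. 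A retract of an isomorphism is an isomorphism, so $X\in\mathcal D$.

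Putting these together, $\mathcal D$ is a thick subcategory of $\SH(k)$ contained in $\SH(k)_f$, hence $\SH(k)_{\mathcal D,f}=\mathcal D$. The only step requiring a little care is the map-of-triangles argument: one must check that the naturality squares for $\nu$ genuinely produce a morphism of distinguished triangles (including the compatibility with the connecting map, up to the correct shift) before invoking the triangulated two-out-of-three principle for isomorphisms; everything else is formal and could alternatively be cited from the axiomatic treatment underlying \cite[Section 4]{MLE}.
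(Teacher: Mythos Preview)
Your proof is correct and follows exactly the same strategy as the paper: show that $\mathcal D$ is already a thick subcategory of $\SH(k)_f$, so that $\SH(k)_{\mathcal D,f}=\mathcal D$. The paper simply cites \cite[Lemma 4.2]{MLE} for the fact that dualisable objects form a thick subcategory, whereas you spell out the closure under shifts, triangles and retracts directly; the arguments you give are the standard ones and are correct.
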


\begin{proof}
This is  \cite[Lemma 4.2]{MLE}. By Remark \ref{spec compact}(2), $\mathcal D$ is a collection of compact objects. Furthermore, the full subcategory spanned by $\mathcal D$ is already a thick subcategory.
\end{proof}

Since $\SH(k)_{\mathcal D,f}\subseteq\SH(k)_f$ is a thick subcategory and $\SH(k)_f\subseteq\SH(k)$ is a thick subcategory, it follows that the strongly dualisable objects form a thick subcategory of $\SH(k)$. Note also that all the categories mentioned above are closed under $\wedge$.

\begin{prop}\label{prop dualizable}
We have $\SH(k)^{fin}\subseteq\SH(k)_{\mathcal T_k,f}\subseteq \SH(k)_{\mathcal D,f}\subseteq \SH(k)_f$ and for $k\subseteq \R$, $\SH(k)^{fin+}\subseteq\SH(k)_{\mathcal T_k^+,f}\subseteq \SH(k)_{\mathcal D,f}\subseteq \SH(k)_f$. In particular, all objects in $\SH(k)^{fin}$ and $\SH(k)^{fin+}$ are strongly dualisable. Furthermore, $\SH(k)^{fin}$ and $\SH(k)^{fin+}$ are closed under taking duals. 
\end{prop}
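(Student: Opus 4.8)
The plan is to build the chain of inclusions link by link, in each case invoking the ``smallest subcategory'' property defining one of the categories involved, and then to read off the two remaining assertions as formal consequences.

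For the first link, $\SH(k)^{fin}\subseteq\SH(k)_{\mathcal T_k,f}$, I would observe that $\SH(k)_{\mathcal T_k,f}$ is a thick subcategory containing every sphere $S^{p,q}$; being thick it is in particular closed under exact triangles, hence it contains the smallest full subcategory with these two properties, which by Definition \ref{fin}(1) is $\SH(k)^{fin}$. For $\SH(k)_{\mathcal T_k,f}\subseteq\SH(k)_{\mathcal D,f}$ I would note that each sphere is invertible, hence strongly dualisable, so $\mathcal T_k\subseteq\mathcal D$; since $\SH(k)_{\mathcal D,f}$ is a thick subcategory containing $\mathcal D$ and $\SH(k)_{\mathcal T_k,f}$ is by definition the smallest thick subcategory containing $\mathcal T_k$, the inclusion follows. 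The last link $\SH(k)_{\mathcal D,f}\subseteq\SH(k)_f$ is just the definition of $\SH(k)_{\mathcal D,f}$. The ``$+$''-chain is handled identically; the only extra input is $\mathcal T_k^+\subseteq\mathcal D$, which holds because $\Sigma^\infty\Spec(k[i])_+$ is self-dual by Remark \ref{spec compact}(4), spheres are invertible, and smash products of dualisable objects are dualisable by Remark \ref{spec compact}(3). Given the chain, the clause that all objects of $\SH(k)^{fin}$ and $\SH(k)^{fin+}$ are strongly dualisable is immediate, since by the preceding Proposition $\SH(k)_{\mathcal D,f}$ is exactly the full subcategory of dualisable objects.

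It remains to prove closure under duals. On the thick subcategory $\SH(k)_{\mathcal D,f}$ the functor $D=F(-,S)$ is contravariant exact with $DD\cong\id$, so it carries an exact triangle $X\to Y\to Z\to\Sigma X$ to an exact triangle on $DX,DY,DZ$ (after the evident rotation, using $F(\Sigma X,S)\cong\Sigma^{-1}DX$). I would then consider the full subcategory $\mathcal E=\{X\in\SH(k)_{\mathcal D,f}\mid DX\in\SH(k)^{fin}\}$: it contains every sphere because $DS^{p,q}\cong S^{-p,-q}\in\SH(k)^{fin}$, and it is closed under exact triangles because $\SH(k)^{fin}$ is (the cone of a morphism of dualisable objects being dualisable, as $\mathcal D$ is thick), so by minimality $\SH(k)^{fin}\subseteq\mathcal E$, which is precisely the desired statement. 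The same argument applies to $\SH(k)^{fin+}$ once one records that $D\bigl(S^{p,q}\wedge(\Spec k[i])^{\wedge m}_+\bigr)\cong S^{-p,-q}\wedge(\Spec k[i])^{\wedge m}_+$ lies in $\SH(k)^{fin+}$, using self-duality of $\Sigma^\infty\Spec(k[i])_+$ together with $D(A\wedge B)\cong DA\wedge DB$ from Remark \ref{spec compact}(3). There is essentially no obstacle; the single point requiring a line of care is that $\SH(k)^{fin}$ and $\SH(k)^{fin+}$ are closed under both suspension and desuspension, so that the shifts appearing when dualising a triangle do not leave these categories --- this follows because $0=\mathrm{cone}(\id_{S^{0,0}})\in\SH(k)^{fin}$, whence $\Sigma^{\pm1}X$ sits in an exact triangle with $X$ and $0$.
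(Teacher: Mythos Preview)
Your proof is correct and essentially follows the approach the paper has in mind: the paper simply cites \cite[Section 4]{MLE} for the first chain and the closure of $\SH(k)^{fin}$ under duals, and notes that the self-duality of $\Sigma^\infty\Spec(k[i])_+$ (Remark \ref{spec compact}(4)) is the only extra input for the ``$+$'' case. You have spelled out the standard thick-subcategory and minimality arguments that underlie that citation, together with exactly the additional ingredient the paper singles out.
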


\begin{proof}
The first line and the case $\SH(k)^{fin}$ are proven in \cite[Section 4]{MLE}. For $k\subseteq\R$, the only additional input is the self-duality of $\Sigma^\infty\Spec(k[i])_+$.
\end{proof}

\begin{rk}
\cite[Remark 8.2]{MLE} gives an example for an object in $\SH(S)$ that is compact but not dualisable, where $S$ is the spectrum of a discrete valuation ring.
\end{rk}

A stronger result holds if $k$ is a field of characteristic $0$. It is also proven in \cite{RiouSWDuality}.

\begin{prop}
Let $k$ be of characteristic $0$. Then $\SH(k)_{\mathcal D,f} = \SH(k)_f$ is the thick subcategory of $\SH(k)$ generated by $\{\Sigma^{2n,n}\Sigma^\infty U_+\; |\; U\in\Sm/k, n\in\Z\}$. Hence, any object of $\SH(k)$ is dualisable if and only if it is compact. 
\end{prop}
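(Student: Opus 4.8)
The plan is to establish the two displayed equalities separately and then read off the last sentence. Write $\mathcal G_0=\{\Sigma^{2n,n}\Sigma^\infty U_+\mid n\in\Z,\ U\in\Sm/k\}$ and $\mathcal G=\{\Sigma^{p,q}\Sigma^\infty U_+\mid p,q\in\Z,\ U\in\Sm/k\}$, and let $\mathcal C$ be the thick subcategory of $\SH(k)$ generated by $\mathcal G_0$. Since $\SH(k)$ is compactly generated with $\mathcal G$ a set of compact generators --- each $\Sigma^\infty U_+$ is compact and $S^{p,q}$ is $\otimes$-invertible --- Neeman's theorem (recalled in the proof of Proposition \ref{equiv compact}) identifies $\SH(k)_f$ with the thick subcategory generated by $\mathcal G$. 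So the first step, valid over any field, is to show $\mathcal G\subseteq\mathcal C$. For $U\in\Sm/k$ the rational point $1\in\G_m$ splits $\Sigma^\infty(\G_m\times U)_+\simeq\Sigma^\infty U_+\vee\bigl(S^{1,1}\wedge\Sigma^\infty U_+\bigr)$, so $S^{1,1}\wedge\Sigma^\infty U_+$ is a retract of the generator $\Sigma^{0,0}\Sigma^\infty(\G_m\times U)_+$ and lies in $\mathcal C$; as $\{X\mid X\wedge S^{1,1}\in\mathcal C\}$ is thick and contains $\mathcal G_0$, the category $\mathcal C$ is closed under $-\wedge S^{1,1}$. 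It is also closed under $-\wedge S^{\pm2,\pm1}$ (a permutation of $\mathcal G_0$) and under the shift $[\pm1]=-\wedge S^{\pm1,0}$, hence under $-\wedge S^{-1,-1}=-\wedge S^{-2,-1}\wedge S^{1,0}$, and therefore under $-\wedge S^{p,q}$ for all $p,q\in\Z$. Thus $\Sigma^{p,q}\Sigma^\infty U_+=\Sigma^{0,0}\Sigma^\infty U_+\wedge S^{p,q}\in\mathcal C$, giving $\SH(k)_f=\mathcal C$.

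Now let $\characteristic k=0$. As recalled above, $\SH(k)_{\mathcal D,f}$ is precisely the full subcategory of dualisable objects of $\SH(k)$, and dualisable objects are compact (Remark \ref{spec compact}(2)), so $\SH(k)_{\mathcal D,f}\subseteq\SH(k)_f$. Conversely, the dualisable objects form a thick subcategory of $\SH(k)$, so since $\SH(k)_f=\mathcal C$ it suffices to prove that each generator $\Sigma^{2n,n}\Sigma^\infty U_+$ is dualisable; as $S^{2n,n}$ is invertible this reduces to showing that $\Sigma^\infty U_+$ is dualisable for every $U\in\Sm/k$. When $U$ is smooth projective this is the algebraic Atiyah--Spanier--Whitehead duality of \cite{RiouSWDuality} (embed $U$ in a projective space and take the Thom spectrum of the negative normal bundle). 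For a general smooth $U$ one uses resolution of singularities --- this is where $\characteristic k=0$ is essential --- to choose an open immersion $U\hookrightarrow\bar U$ with $\bar U$ smooth projective and $\bar U\setminus U$ a strict normal crossings divisor; then the cofiber sequence $\Sigma^\infty U_+\to\Sigma^\infty\bar U_+\to\bar U/U$, together with homotopy purity along the (smooth, projective) strata of the boundary, an induction on $\dim U$, and the stability of dualisable objects under cofibres and retracts, reduces $\Sigma^\infty U_+$ to suspension spectra of smooth projective varieties and Thom spectra of vector bundles over them --- all dualisable, the latter because $\mathrm{Th}(E)\simeq\P(E\oplus\mathcal O)/\P(E)$ is a cofibre of suspension spectra of smooth projective varieties. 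I expect the careful bookkeeping of the normal-crossings strata (in particular, phrasing the induction so that it also covers the non-proper smooth pieces that appear) to be the one genuinely non-formal point; the details are exactly those of \cite{RiouSWDuality}.

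Putting the inclusions together gives $\SH(k)_{\mathcal D,f}=\SH(k)_f=\mathcal C$. The final equivalence is then immediate: a dualisable object lies in $\SH(k)_{\mathcal D,f}=\SH(k)_f$ and is therefore compact, while a compact object lies in $\SH(k)_f=\SH(k)_{\mathcal D,f}$ and is therefore dualisable.
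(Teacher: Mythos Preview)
Your proof is correct and follows the same two-step skeleton as the paper (show that the displayed set generates the compacts, then show the generators are dualisable), but you take a different route at both steps.

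For generation, the paper simply cites \cite[Theorem 9.2]{DI}, which already states that $\{\Sigma^{2n,n}\Sigma^\infty U_+\}$ is a set of compact generators; your $\G_m$-splitting argument is a correct way to deduce this from the more commonly quoted fact that all $\Sigma^{p,q}\Sigma^\infty U_+$ generate, but is not needed if one accepts the reference. For dualisability, the paper first passes to quasi-projective $U$ (using that smooth schemes are locally affine, hence the quasi-projective ones still generate) and then invokes \cite[Theorem 4.9]{RO} as a black box; you instead stay with arbitrary smooth $U$ and sketch the actual proof via compactification with strict normal crossings boundary, homotopy purity, and induction on dimension---essentially unpacking the argument of \cite{RiouSWDuality}. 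Your flagged subtlety (that the open strata appearing in the filtration are smooth but not proper, so the induction must cover them too) is exactly the non-formal point, and your outline handles it correctly. The paper's version is shorter by outsourcing both ingredients; yours is more self-contained and makes the role of resolution of singularities (hence of $\characteristic k=0$) explicit.
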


\begin{proof}
By \cite[Theorem 9.2]{DI}, $\{\Sigma^{2n,n}\Sigma^\infty U_+\; |\; U\in\Sm/k,\, n\in\Z\}$ is a set of compact generators for $\SH(k)$, which means two things: First, these objects are compact and second, the only full triangulated subcategory of $\SH(k)$ containing this set and being closed under infinite direct sums is $\SH(k)$ itself. Since schemes are locally affine, also $\{\Sigma^{2n,n}\Sigma^\infty U_+\; |\; U\in\Sm/k \textup{ quasi-projective}\}$ is a set of compact generators. 
General theory \cite[Theorem 13.1.14]{MS} implies that $\SH(k)_f$ is the thick subcategory of $\SH(k)$ generated by $\{\Sigma^{2n,n}\Sigma^\infty U_+\; |\; U\in\Sm/k \textup{ quasi-projective}\}$. By \cite[Theorem 4.9]{RO}, $\Sigma^\infty U_+$ is dualisable for any such $U$. Since dualisability is preserved by exact triangles and retracts, the thick subcategory generated by $\{\Sigma^{2n,n}\Sigma^\infty U_+\; |\; U\in\Sm/k\textup{ quasi-projective}\}$ is contained in $\SH(k)_{\mathcal D, f}$. Thus, $\SH(k)_f=\SH(k)_{\mathcal D,f}$.
\end{proof}

For $k\subseteq\C$ ($k\subseteq\R$) all these categories are furthermore included in the preimage of compact topological spectra under $R_k$ ($R_k'$):

\begin{prop}\label{compact1}\label{compact2}
For $k\subseteq\C$, 
\[\SH(k)_f\subseteq R_k^{-1}(\SH^{fin})\] and for $k\subseteq\R$, \[\SH(k)_f\subseteq R_k'^{-1}(\SH(\Z/2)_f).\]
\end{prop}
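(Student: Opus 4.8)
The plan is to identify $\SH(k)_f$ with the subcategory of strongly dualisable objects and then use that a symmetric monoidal functor preserves dualisability. Since $k\subseteq\C$ (resp.\ $k\subseteq\R$) has characteristic $0$, the preceding proposition on characteristic-$0$ fields gives $\SH(k)_f=\SH(k)_{\mathcal D,f}$; that is, every compact $X\in\SH(k)_f$ is strongly dualisable, say with dual $DX=F(X,S)$ and coevaluation and evaluation maps $S\to X\wedge DX$ and $DX\wedge X\to S$ witnessing the duality.

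Next I would recall from Chapter \ref{functors} that $R_k$ and $R'_k$ are strict (hence strong) symmetric monoidal on homotopy categories. A strong symmetric monoidal functor sends a dual pair to a dual pair: applying $R_k$ to the coevaluation and evaluation of $X$ and composing with the structure isomorphisms $R_k(S)\cong S$ and $R_k(X\wedge DX)\cong R_k(X)\wedge R_k(DX)$ (and similarly for $DX\wedge X$) exhibits $R_k(X)$ as dualisable with dual $R_k(DX)$. Hence $R_k(X)$ is dualisable in $\SH$ and $R'_k(X)$ is dualisable in $\SH(\Z/2)$. Finally, by Proposition \ref{equiv compact} applied to the trivial group the dualisable objects of $\SH$ are precisely those of $\SH^{fin}$, and applied to $G=\Z/2$ the dualisable objects of $\SH(\Z/2)$ are precisely those of $\SH(\Z/2)_f$; therefore $R_k(X)\in\SH^{fin}$ and $R'_k(X)\in\SH(\Z/2)_f$, i.e.\ $X\in R_k^{-1}(\SH^{fin})$ (resp.\ $X\in(R'_k)^{-1}(\SH(\Z/2)_f)$), which is the claim.

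An alternative, more hands-on route bypasses dualisability: by Proposition \ref{f} the preimage $R_k^{-1}(\SH^{fin})$ is a thick subcategory, and in characteristic $0$ the category $\SH(k)_f$ is the thick subcategory generated by the objects $\Sigma^{2n,n}\Sigma^\infty U_+$ for $U\in\Sm/k$; since $R_k(\Sigma^{2n,n}\Sigma^\infty U_+)\simeq\Sigma^{2n}\Sigma^\infty U(\C)_+$ and the complex points of an algebraic variety have the homotopy type of a finite CW complex, this lands in $\SH^{fin}$, and one concludes by thickness, and similarly for $R'_k$ once one knows $U(\C)$ with its conjugation action is a finite $\Z/2$-CW complex. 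I expect the dualisability argument to be the clean one, since it stays entirely within results already recorded above; the only point requiring care is that the derived functors $R_k,R'_k$ are genuinely symmetric monoidal on the level of homotopy categories, which is exactly what was set up in Chapter \ref{functors}.
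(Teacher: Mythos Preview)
Your dualisability argument is correct and genuinely different from the paper's proof. The paper instead takes what you call the ``hands-on route'': it reduces to $k=\C$ (resp.\ $k=\R$) via base change, notes that $\SH(k)_f$ is generated by $\Sigma^{2n,n}\Sigma^\infty U_+$ with $U$ smooth quasi-projective, replaces $U$ by a smooth affine variety $V$ via Jouanolou's trick, and then proves that $V(\C)$ has the homotopy type of a finite CW complex using Morse theory on the squared-distance function. For $k\subseteq\R$ the paper works harder still, invoking equivariant Morse theory (after some care to make the Morse function $\Z/2$-invariant) to produce a finite $\Z/2$-CW structure on $V(\C)$.

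Your argument is considerably shorter and stays entirely within results already recorded in the paper: the preceding proposition gives compact $=$ dualisable in characteristic $0$ (via Riou's theorem), Chapter~\ref{functors} gives strong symmetric monoidality of $R_k$ and $R'_k$, and Proposition~\ref{equiv compact} identifies dualisable with compact on the target side. The paper's approach, by contrast, is independent of Riou's dualisability theorem and yields the slightly finer geometric information that $R_k(X)$ is actually a finite CW spectrum (not just a retract of one), though this is not needed for the application.
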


\begin{proof}
We have to show that $R_k$ and $R_k'$ preserve compact objects.
Let $f:k\hookrightarrow \C$. Then $f^\ast$ restricts to a functor between the categories of compact objects: $f^\ast:\SH(k)_f\rightarrow \SH(\C)_f$, because the base change functor preserves smooth schemes and so $f^\ast$ sends a compact generator $\Sigma^{(2n,n)}\Sigma^\infty U_+$, $U\in\Sm/k$, $n\in\Z$, of $\SH(k)$ to a compact generator of $\SH(\C)$. Hence, for the first claim it suffices to prove that $R=R_\C$ preserves compact objects. Similarly, for the second claim it suffices to show that $R'=R_\R'$ preserves compact objects.

Let $X=\Sigma^{(2n,n)}\Sigma^\infty U_+$ be a compact generator of $\SH(\C)$. As in the proof of the previous proposition, we can assume that $U$ is a smooth quasi-projective scheme. By Jouanolou's trick \cite[Lemma 1.5]{Jouanolou}, there exists an affine vector bundle torsor over $U$. This is a vector bundle $E\rightarrow U$, together with a torsor $p:V\rightarrow U$ on $E$ with $V$ affine. By the definition of a torsor, $V$ is locally isomorphic to $E$. This implies that for some $m$, $U\times \A^m$ is locally isomorphic to an affine smooth scheme $V$. In particular, $U$ is $\A^1$-equivalent to $V$, and so $X\cong \Sigma^{(2n,n)}\Sigma^\infty V_+$ in $\SH(\C)$. Now, since $V$ is smooth and affine, its complex realisation has the homotopy type of a finite CW complex by \cite[Example 3.1.9]{Lazarsfeld}. Hence, $R(X)\in\SH$ is isomorphic to $\Sigma^n\Sigma^\infty Y_+$ for some finite CW complex $Y$ and, so, $R(X)$ is compact by Remark \ref{SH fin}. 

The proof that for any smooth and affine variety $V$, $V(\C)\subseteq \C^r$ has the homotopy type of a finite CW-complex, can be summarised as follows \cite[Example 3.1.9]{Lazarsfeld}: $V(\C)$ is a complex submanifold of $\C^r$ without boundary (because $V$ is smooth, see e.g. \cite[Section 3.1.2]{AlGeo}), which is closed as a subset of $\C^r$ (because the zero locus of any polynomial is closed). For almost any $c\in\C^r$, the squared distance function $\phi_c:V(\C)\rightarrow \R$, $\phi_c(x)=||x-c||^2$, has only non-degenerate critical points \cite[Theorem 6.6]{Morse}. Furthermore, $\phi_c$ being real algebraic implies that it has only finitely many critical points, as in \cite[Example 3.1.9]{Lazarsfeld}. Using Morse theory \cite[Theorem 3.5]{Morse}, it follows that $V(\C)$ has the homotopy type of a CW complex with one cell of dimension $n$ for each critical point of $\phi_c$ of index $n$.

For $X$ a compact generator of $\SH(\R)$, we also have $X\cong \Sigma^{(2n,n)}\Sigma^\infty V_+$ and now $V$ is a smooth and affine real variety. Its realisation $R'(V)=V(\C)\subseteq\C^r$ is still a complex manifold, which is closed, but with the property that, for any $x\in V(\C)$, also its complex conjugate $\bar x$ lies in $V(\C)$. This was used for the definition of the $\Z/2$-action on $V(\C)$: $\rho(x)=\bar x$ for $\rho\in\Z/2$ the generator. As before, we can choose $c\in\C^r$ such that $\phi_c$ has finitely many critical points, which are all non-degenerate. If $c\in\R^r$, $\phi_c$ is $\Z/2$-invariant and the claim follows from equivariant Morse theory: by the proof of \cite[Theorem 2.2]{Mayer}, any invariant Morse function can be turned into a special invariant Morse function and, by \cite[Theorem 3.3]{Mayer}, a manifold with a special invariant Morse function is equivariantly homotopy equivalent to an equivariant CW complex with one equivariant cell for each critical orbit. Thus, $V(\C)$ is equivalent to a finite $\Z/2$-CW complex, whose suspension spectrum is compact in $\SH(\Z/2)$ by Proposition \ref{equiv compact}. For more information on equivariant Morse theory in English language, we refer the reader to \cite{Wasserman}.

Now, assume $c\in\C^r\setminus\R^r$. We would like to take $x\mapsto\min(\phi_c(x),\phi_c(\bar{x}))$ as a Morse function. It is continuous and $\Z/2$-invariant but it is not differentiable for $x\in\R^r$. Outside of $\R^r$, the critical points of $\min(\phi_c(x),\phi_c(\bar{x}))$ are a subset of the critical points of $\phi_c$ and their complex conjugates. The idea is to proceed in two steps: first, to take care of the real part $V(\C)\cap \R^r=V(\R)=V(\C)^{\Z/2}$ and, second, to use $\min(\phi_c(x),\phi_c(\bar{x}))$ as a Morse function away from $\R^r$. 

$V(\R)\subseteq\R^r$ is a manifold, which is closed and without boundary and for which there exists $d\in\R^r$ such that $\phi_d:V(\R)\rightarrow \R$ is a Morse function with finitely many critical points (analogously to $V(\C)\subseteq\C^r$). Let $B$ be an open ball around $0\in\C^r$ which contains a ball $D(d)$ around $d$ containing all critical points of $\phi_d$ and a ball $D(c)$ around $c$ containing all critical points of $\phi_c$ and their conjugates. $V(\R)$ can be contracted inside $\R^r$ to $V(\R)\cap \overline{B}$, following the orthogonal trajectories of the hypersurfaces on which $\phi_d$ is constant, until $\overline{B}$ is reached (that is, we glue an infinite sequence of the diffeomorphisms $M^a\cong M^b$ constructed in the proof of \cite[Theorem 3.1]{Morse} to one homotopy equivalence). We extend this homotopy equivalence to a narrow open neighborhood $U\subset\C^r$ of $\R^r\setminus (B\cap\R^r)$ so that $\C^r\simeq\C^r\setminus U$ by a $\Z/2$-equivariant homotopy equivalence which does not add critical points to $\min(\phi_c(x),\phi_c(\bar{x}))$ outside of $B$. Using this homotopy equivalence, we can assume that $V(\C)\setminus B$ does not contain any points near the real subspace $\R^r$. Now we use $\phi_c$ to contract $V(\C)$ to $V(\C)\cap\bar{B}$ with an equivariant homotopy equivalence: any $x\in V(\C)\setminus B$ with $\phi_c(x)<\phi_c(\bar{x})$ gets moved into $B$ along the line of steepest descent of $\phi_c$ inside $V(\C)$. If, however, $\phi_c(\bar{x})<\phi_c({x})$, $x$ gets moved along the line of steepest descent of $\phi_{\bar{c}}$. Note that $\phi_{\bar{c}}(x)=\phi_c(\bar{x})$ and that the homotopy equivalence is well defined because we removed $\R^r$ before and because all critical points of $\phi_c$ and $\phi_{\bar{c}}$ are inside $B$. Hence, this defines a $\Z/2$-equivariant homotopy equivalence between $V(\C)$ and $V(\C)\cap\bar{B}$.

Thus, $V(\C)$ is $\Z/2$-homotopy equivalent to a $\Z/2$-manifold (with boundary) which is closed and bounded and, hence, compact in $\C^r$. Using equivariant Morse theory, this compact manifold is $\Z/2$-homotopy equivalent to a finite $\Z/2$-CW complex (again, see \cite[Theorems 2.2, 3.3]{Mayer} or \cite{Wasserman}). Note that, although most of the literature only studies Morse theory for manifolds without boundary, it still covers this case for the following reason: Since $D(c)\subset B$, the Morse function $\phi_c$ evaluated on the boundary of $V(\C)\cap \overline{B}$ takes a higher value than on any critical point. Therefore, $V(\C)\cap \overline{B}$ appears at a finite step in the Morse theoretical construction of the CW complex associated with $\phi_c$, i.e., $V(\C)\cap \overline{B}\simeq V(\C)^{m}=\phi_c^{-1}\left([0,m]\right)$ for some $m\in\R$, which is a finite $\Z/2$-CW complex. 
\end{proof}

\section{Motivic thick ideals}\label{Motivic thick ideals}

Let $k\subseteq \C$ and $p$ be any prime. The following theorem identifies important families of thick ideals in $(\SH(k)_f)_{(p)}$. It is the main result in this chapter.

\begin{theorem}\label{lower bound} {\bf (Lower bound on the number of motivic thick ideals)}
\begin{compactenum}[(1)]
\item
The category $(\SH(k)_f)_{(p)}$ contains at least an infinite chain of different thick ideals, given by $\overline{R}_k^{-1}(\mathcal C_n)$, $0\leq n\leq\infty$, where $\overline{R}_k$ denotes the $p$-localisation of the restriction of $R_k$ to $\SH(k)_f$ and $\mathcal C_n\subseteq\SH^{fin}_{(p)}$ is as defined in Chapter \ref{introduction}.
\item
If $k\subseteq\R$, then $(\SH(k)_f)_{(p)}$ contains at least a two-dimensional lattice of different thick ideals, given by $(\overline{R}'_k)^{-1}(\mathcal C_{m,n})$, for all $(m,n)\in\Gamma_p$ as in Definition \ref{equivariant type}. 
\end{compactenum}
\end{theorem}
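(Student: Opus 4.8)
The plan is to deduce the theorem by combining three facts established earlier in the thesis: the $p$-local analogue of Proposition~\ref{f}, which guarantees that the preimage of a thick ideal under a symmetric monoidal triangulated functor is again a thick ideal; Proposition~\ref{compact1}, which ensures that $\overline R_k$ and $\overline R'_k$ are genuinely defined as functors $(\SH(k)_f)_{(p)}\to\SH^{fin}_{(p)}$ resp. $(\SH(k)_f)_{(p)}\to(\SH(\Z/2)_f)_{(p)}$; and the comparison functors $c_k$, $c'_k$ of Chapter~\ref{functors}, which satisfy $R_k\circ c_k\cong\id$ and $R'_k\circ c'_k\cong\id$. First I would record that $c_k$ carries $\SH^{fin}$ into $\SH(k)^{fin}\subseteq\SH(k)_f$ (Remark~\ref{constant in finite}, Proposition~\ref{prop dualizable}) and that $c'_k$ carries $\SH(\Z/2)^{fin}$ into $\SH(k)^{fin+}\subseteq\SH(k)_f$; since $c'_k$ also preserves retracts and $\SH(k)_f$ is closed under retracts, $c'_k(\SH(\Z/2)_f)\subseteq\SH(k)_f$. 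Hence the $p$-localisations $\overline c_k$ and $\overline c'_k$ restrict to sections of $\overline R_k$ resp. $\overline R'_k$ on the relevant categories of compact $p$-local spectra.

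For part~(1), each $\overline R_k^{-1}(\mathcal C_n)$ is a thick ideal of $(\SH(k)_f)_{(p)}$ by (the $p$-local form of) Proposition~\ref{f}(1), and the chain $\SH^{fin}_{(p)}=\mathcal C_0\supseteq\mathcal C_1\supseteq\cdots\supseteq\mathcal C_\infty=\{0\}$ of Theorem~\ref{HS} passes to a chain of preimages. To see that all inclusions are strict, I would fix for each finite $n\geq0$ a type-$n$ spectrum $X_n\in\SH^{fin}_{(p)}$ (with $X_0=S^0_{(p)}$, and using Mitchell's theorem \cite{Mi} for $n\geq1$) and set $Y_n=\overline c_k(X_n)\in(\SH(k)_f)_{(p)}$; then $\overline R_k(Y_n)\cong X_n$, which lies in $\mathcal C_n$ but, being of type $n<\infty$, lies neither in $\mathcal C_{n+1}$ nor in $\{0\}=\mathcal C_\infty$. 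Thus $Y_n\in\overline R_k^{-1}(\mathcal C_n)\setminus\overline R_k^{-1}(\mathcal C_m)$ for every $m>n$, so the map $n\mapsto\overline R_k^{-1}(\mathcal C_n)$ is an order-reversing injection of the chain $\{0\leq n\leq\infty\}$ into $\Idl((\SH(k)_f)_{(p)})$, which is exactly the asserted infinite chain of distinct thick ideals.

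For part~(2), each $(\overline R'_k)^{-1}(\mathcal C_{m,n})$ is a thick ideal by the $p$-local form of Proposition~\ref{f}(2). For $(m,n)\in\Gamma_p$ I would fix a $p$-local spectrum $W_{m,n}\in(\SH(\Z/2)_f)_{(p)}$ of equivariant type $(m,n)$ (which exists by the very definition of $\Gamma_p$, see Definition~\ref{equivariant type}) and set $Y_{m,n}=\overline c'_k(W_{m,n})$, so that $\overline R'_k(Y_{m,n})\cong W_{m,n}$. The definition of equivariant type together with Proposition~\ref{K(n) and phi} applied to $\phi^{\{1\}}W_{m,n}$ and $\phi^{\Z/2}W_{m,n}$, which then have topological types exactly $m$ and $n$, gives the clean criterion that $W_{m,n}\in\mathcal C_{m',n'}$ holds if and only if $m'\leq m$ and $n'\leq n$; hence $Y_{m,n}\in(\overline R'_k)^{-1}(\mathcal C_{m',n'})$ exactly when $m'\leq m$ and $n'\leq n$. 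Testing this against the two spectra $Y_{m,n}$ and $Y_{m',n'}$ shows that $(\overline R'_k)^{-1}(\mathcal C_{m,n})=(\overline R'_k)^{-1}(\mathcal C_{m',n'})$ forces $(m,n)=(m',n')$, and more precisely that $(m,n)\mapsto(\overline R'_k)^{-1}(\mathcal C_{m,n})$ is an order-reversing embedding of the two-dimensional lattice $\Gamma_p$ into $\Idl((\SH(k)_f)_{(p)})$.

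The computations involved are all routine; the one point that needs mild care is the bookkeeping around $p$-localisation, namely checking that Proposition~\ref{f} applies verbatim to the symmetric monoidal triangulated localisations, and that $\overline R_k$, $\overline R'_k$ together with their sections $\overline c_k$, $\overline c'_k$ land in the correct categories of compact $p$-local spectra. The genuinely substantive input, that the realisation functors preserve compactness, is precisely Proposition~\ref{compact1}, which has already been proved, so no serious obstacle remains.
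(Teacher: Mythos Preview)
Your proposal is correct and follows essentially the same route as the paper: both use that $c_k$ (resp.\ $c'_k$) lands in $\SH(k)_f$, that $R_k$ (resp.\ $R'_k$) preserves compactness (Proposition~\ref{compact1}), that these functors pass to the $p$-localisations with $\overline R_k\circ\overline c_k\cong\id$, and then lift a type-$n$ (resp.\ type-$(m,n)$) spectrum along $\overline c_k$ (resp.\ $\overline c'_k$) to witness that the preimage thick ideals are pairwise distinct. The only cosmetic difference is that you spell out part~(2) in full whereas the paper dismisses it with ``proven similarly''; your more detailed argument there, in particular the criterion $W_{m,n}\in\mathcal C_{m',n'}\Leftrightarrow m'\leq m\text{ and }n'\leq n$, is correct.
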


\begin{proof}
We prove the first part, the second is proven similarly. By Remark \ref{constant in finite}, Proposition \ref{prop dualizable} and Proposition \ref{compact1}, $c_k(\SH^{fin})\subseteq \SH(k)_f\subseteq R_k^{-1}(\SH^{fin})$, hence $c_k$ and $R_k$ restrict to functors  $\SH^{fin}\overset{c_k}\longrightarrow \SH(k)_f \overset{\overline{R}_k}\longrightarrow \SH^{fin}$.
Since the motivic $p$-local Moore spectrum (the homotopy colimit of a diagram of sphere spectra whose arrows are multiplications by integers prime to $p$) is the image of the topological $p$-local Moore spectrum under $c_k$, we get induced functors between the localised categories,
\[\SH^{fin}_{(p)}\overset{c_k}\longrightarrow (\SH(k)_f)_{(p)}\overset{\overline{R}_k}\longrightarrow \SH^{fin}_{(p)},\] 
which still have the properties that $c_k$ and $\overline{R}_k$ preserve exact triangles and smash products and that $\overline{R}_k\circ c_k=\id$, as proven in Theorem \ref{Rc}. Similarly to Proposition \ref{f}, it follows that $\overline{R}_k^{-1}(\mathcal C_n)$ is a thick ideal. Now let $n<m$. Then $\mathcal C_m\subset \mathcal C_n$ in $\SH^{fin}_{(p)}$ and there is some $X\in\mathcal C_n\setminus\mathcal C_m$. It follows that $\overline{R}_k^{-1}(\mathcal C_m)\subseteq \overline{R}_k^{-1}(\mathcal C_n)$ and that $c_k(X)\in \overline{R}_k^{-1}(\mathcal C_n)\setminus \overline{R}_k^{-1}(\mathcal C_m)$. Thus, $\overline{R}_k^{-1}(\mathcal C_n)$, $0\leq n\leq\infty$, form a chain of pairwise different thick ideals.

For the second part, one needs to consider the functors 
\[(\SH(\Z/2)_f)_{(p)}\overset{c_k'}\longrightarrow (\SH(k)_f)_{(p)}\overset{\overline{R}_k'}\longrightarrow (\SH(\Z/2)_f)_{(p)}.\]
\end{proof}

In the following, we will omit the overline and use the notation $R_k$, $R'_k$ for the ($p$-localised) restricted functors, too.

\begin{rk}
In the above theorem, $\SH(k)_f$ can be replaced by any other tensor triangulated full subcategory $\mathcal D\subseteq\SH(k)$ satisfying 
\[c_k(\SH^{fin})\subseteq \mathcal D\subseteq R_k^{-1}(\SH^{fin}),\;\textup{or } \]
\[c'_k(\SH(\Z/2)^{fin+})\subseteq \mathcal D\subseteq (R'_k)^{-1}(\SH(\Z/2)_f)\;\textup{respectively}.\]
In particular, the theorem applies to any 
\[\mathcal D\in\{\SH(k)^{fin},\;\SH(k)^{fin+},\; \SH(k)_{\mathcal T_k,f},\; \SH(k)_{\mathcal T^+_k,f},\; R^{-1}_k(\SH^{fin}),\; R'^{-1}_k(\SH(\Z/2)_f)\}\]
for the following reasons. 
Recall from Remark \ref{fin and f} that Strickland's characterisation of equivariant thick ideals works in $\SH(G)^{fin}$ as well as in its closure under retracts, $\SH(G)_f$. Therefore, we can here take $\SH(k)^{fin(+)}$ as well as its closure under retracts, $\SH(k)_{\mathcal T_k^{(+)},f}$. Note also that all categories mentioned here are closed under $\wedge$ because they are generated as thick or triangulated subcategories by classes of objects closed under $\wedge$: $\SH(k)^{fin(+)}$ and $\SH(k)_{\mathcal T^{(+)}_k,f}$ are generated by $\{S^{p,q}(\wedge \Spec(k[i])^{\wedge m}_+)\}$ and $\SH(k)_f$ is generated by smooth schemes, which are also closed under smash product. $R_k^{-1}(\SH^{fin})$ and $(R'_k)^{-1}(\SH(\Z/2)_f)$ are closed under $\wedge$ because $\wedge$ commutes with $R_k$, $R'_k$.
\end{rk}

\begin{rk}
The construction of the functors $R_k$ and $c_k$ does not depend on the fact that $\P^1$ is invertible in $\SH(k)$. They can also be constructed for the category $\SH_{S_s^1}(k)$ in which only $S_s^1=S^{1,0}$ got inverted and $\G_m$ did not. Therefore, part (1) of the theorem also holds for $(\SH_{S_s^1}(k)_f)_{(p)}$. The construction of $c'_k$, however, needed the invertibility of $\P^1$ (see Remark \ref{S1 spectra}). Thus, part (2) cannot be applied to $(\SH_{S_s^1}(k)_f)_{(p)}$.
\end{rk}

\begin{defn}
For a full subcategory $\mathcal C$ of a tensor triangulated category $\mathcal T$, let $\Delta(\mathcal C)$ denote the smallest thick subcategory of $\mathcal T$ that contains $\mathcal C$ and recall that $\thickid(\mathcal C)$ denotes the smallest thick ideal that contains $\mathcal C$.
\end{defn}

We state two more observations about thick ideals in the category $(\SH(k)_f)_{(p)}$. They also hold in any of the above categories $\mathcal D$.

\begin{prop}
Let $X_n\in \SH^{fin}_{(p)}$ be any spectrum of type $n$, i.e. $X_n\in\mathcal C_{n}\setminus\mathcal C_{n+1}$ (see Definition \ref{type}) and let $X_{m,n}\in\SH(\Z/2)^{fin}_{(p)}$ be any spectrum of type $(m,n)$ (see Definition \ref{equivariant type}). Then  
\[\thickid(c_k X_n)=\thickid(c_k \mathcal C_{n})\;\textup{ \emph{in} } (\SH(k)_f)_{(p)}\textup{ \emph{if} }k\subseteq\C\textup{ \emph{and} }\]
\[\thickid(c'_k X_{m,n})=\thickid(c'_k \mathcal C_{m,n}) \;\textup{ \emph{in} } (\SH(k)_f)_{(p)}\textup{ \emph{if} }k\subseteq\R.\]
\end{prop}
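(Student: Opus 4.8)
The plan is to deduce the proposition directly from two facts already in place: that $c_k$ (resp.\ $c_k'$) is an exact, strict symmetric monoidal functor which restricts to a functor $\SH^{fin}_{(p)}\to(\SH(k)_f)_{(p)}$ (resp.\ $\SH(\Z/2)^{fin}_{(p)}\to(\SH(k)_f)_{(p)}$, using $c_k'(\SH(\Z/2)^{fin})\subseteq\SH(k)^{fin+}\subseteq\SH(k)_f$ from Remark \ref{constant in finite} and Proposition \ref{prop dualizable}), and that a type-$n$ spectrum $X_n$ generates $\mathcal C_n$ as a thick ideal of $\SH^{fin}_{(p)}$ (Hopkins--Smith, Theorem \ref{HS}, and the remark after Definition \ref{type}), while a type-$(m,n)$ spectrum $X_{m,n}$ generates $\mathcal C_{m,n}$ by Corollary \ref{mn}(1) together with Remark \ref{fin and f}.

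The inclusion $\thickid(c_kX_n)\subseteq\thickid(c_k\mathcal C_n)$ is immediate, since $X_n\in\mathcal C_n$ gives $c_kX_n\in c_k\mathcal C_n$. For the reverse inclusion I would introduce the subcategory $\mathcal S=\{Y\in\SH^{fin}_{(p)}\;|\;c_kY\in\thickid(c_kX_n)\}$ and check that it is a thick ideal of $\SH^{fin}_{(p)}$ containing $X_n$; then $\mathcal S\supseteq\thickid_{\SH^{fin}_{(p)}}(X_n)=\mathcal C_n$, so $c_k\mathcal C_n\subseteq\thickid(c_kX_n)$ and hence $\thickid(c_k\mathcal C_n)\subseteq\thickid(c_kX_n)$. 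That $\mathcal S$ is a thick subcategory follows because $c_k$ commutes with suspension ($c_kS^1=S_s^1$), preserves mapping cones and hence exact triangles (as in the proof of Proposition \ref{c}), and preserves retracts; closure of $\thickid(c_kX_n)$ under these operations therefore pulls back along $c_k$. That $\mathcal S$ is an \emph{ideal} of $\SH^{fin}_{(p)}$ uses that $c_k$ is strict symmetric monoidal: for $Y\in\mathcal S$ and $A\in\SH^{fin}_{(p)}$ we have $c_k(A\wedge Y)\cong c_kA\wedge c_kY$ with $c_kA\in(\SH(k)_f)_{(p)}$ and $c_kY\in\thickid(c_kX_n)$, so $c_k(A\wedge Y)\in\thickid(c_kX_n)$ because the latter is a thick ideal of $(\SH(k)_f)_{(p)}$. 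The real case is word for word the same with $c_k$ replaced by $c_k'$ and $\mathcal C_n$, $X_n$ replaced by $\mathcal C_{m,n}$, $X_{m,n}$.

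There is no genuinely hard step; the only points requiring care are (i) invoking that $X_n$ really generates all of $\mathcal C_n$ as a thick ideal, which is exactly the Hopkins--Smith classification (and Corollary \ref{mn}(1) equivariantly, via Remark \ref{fin and f} since $X_{m,n}$ lives in $\SH(\Z/2)^{fin}_{(p)}$ rather than its retract closure), and (ii) noting that transporting the ideal condition through $c_k$ only ever requires smashing with objects in the image of $c_k$, which is harmless precisely because $\thickid(c_kX_n)$ is already closed under smashing with all of $(\SH(k)_f)_{(p)}$. Localisation at $p$ plays no role in the argument and may be carried along unchanged.
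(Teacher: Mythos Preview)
Your proof is correct and follows the same approach as the paper: your subcategory $\mathcal S$ is precisely $c_k^{-1}(\thickid(c_kX_n))$, and the paper simply cites Proposition \ref{c} (that $c_k^{-1}$ preserves thick ideals) rather than unpacking the verification as you do. The only cosmetic difference is that the paper phrases the reverse inclusion as ``$c_k^{-1}(\thickid(c_kX_n))$ is a thick ideal containing $X_n$, hence contains $\mathcal C_n$'' in one line.
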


\begin{proof}
It is clear that $\thickid(c_k X_n)\subseteq\thickid(c_k \mathcal C_{n})$. Since $c_k^{-1}$ preserves thick ideals by Proposition \ref{c}, $c_k^{-1}(\thickid(c_k X_n))$ is a thick ideal containing $X_n$. Since $\mathcal C_{n}$ is the smallest thick ideal containing $X_n$, we have $\mathcal C_{n}\subseteq c_k^{-1}(\thickid(c X_n))$ and hence $c_k \mathcal C_{n}\subseteq \thickid(c_k X_n)$, which implies the first claim. The same proof shows the second claim.
\end{proof}

\begin{prop}
If $X,Y\in (\SH(k)_f)_{(p)}$ with $\type(R_k X)\neq\type(R_k Y)$ (see Definition \ref{type}) or, if $k\subseteq\R$, $\type(R'_k X)\neq\type(R'_k Y)$ (see Definition \ref{equivariant type}), then \[\thickid(X)\neq \thickid (Y).\] 
\end{prop}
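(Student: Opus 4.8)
The plan is to prove the contrapositive: assuming $\thickid(X)=\thickid(Y)$, I will show that $R_k X$ and $R_k Y$ (respectively $R'_k X$ and $R'_k Y$) generate the same thick ideal in $\SH^{fin}_{(p)}$ (respectively $(\SH(\Z/2)_f)_{(p)}$), and then read off from the classification theorems that their types coincide. The one structural input needed is that the preimage of a thick ideal under the $p$-localised restriction of $R_k$ is again a thick ideal; this holds by exactly the argument of Proposition~\ref{f}, since this functor $(\SH(k)_f)_{(p)}\to\SH^{fin}_{(p)}$ is symmetric monoidal and exact and does land in $\SH^{fin}_{(p)}$ by Proposition~\ref{compact1} (compare the set-up in the proof of Theorem~\ref{lower bound}).

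First I would fix $\mathcal C=\thickid(R_k Y)\subseteq\SH^{fin}_{(p)}$ and form $R_k^{-1}(\mathcal C)\subseteq(\SH(k)_f)_{(p)}$, which is a thick ideal containing $Y$. Since $\thickid(Y)$ is the smallest thick ideal containing $Y$, we obtain $X\in\thickid(X)=\thickid(Y)\subseteq R_k^{-1}(\mathcal C)$, hence $R_k X\in\mathcal C$ and therefore $\thickid(R_k X)\subseteq\thickid(R_k Y)$. Running the same argument with the roles of $X$ and $Y$ exchanged gives the reverse inclusion, so $\thickid(R_k X)=\thickid(R_k Y)$.

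Now I would invoke the Hopkins--Smith theorem (Theorem~\ref{HS}): a spectrum of type $n$ generates $\mathcal C_n$, so $\thickid(R_k X)=\mathcal C_{\type(R_k X)}$ and $\thickid(R_k Y)=\mathcal C_{\type(R_k Y)}$ (with the convention $\mathcal C_\infty=\{0\}$ when a realisation vanishes). From $R_k X\in\mathcal C_{\type(R_k Y)}$ and the description $\mathcal C_n=\{Z\mid K(n-1)_\ast Z=0\}$ we get $\type(R_k X)\ge\type(R_k Y)$, and symmetrically the reverse inequality, so $\type(R_k X)=\type(R_k Y)$, contradicting the hypothesis. The case $k\subseteq\R$ is identical once $\SH^{fin}_{(p)}$, $R_k$ and Theorem~\ref{HS} are replaced by $(\SH(\Z/2)_f)_{(p)}$, $R'_k$ and Strickland's classification: a spectrum of equivariant type $(m,n)$ generates $\mathcal C_{m,n}$ by Corollary~\ref{mn}(1), and $R'_k X\in\mathcal C_{m',n'}$ forces $\type(R'_k X)=(m,n)\ge(m',n')$ componentwise, by the defining property of the equivariant type in Definition~\ref{equivariant type} (the two coordinates are the ordinary types of $\phi^{\{1\}}R'_k X$ and $\phi^{\Z/2}R'_k X$); symmetry then gives equality of the equivariant types.

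There is essentially no serious obstacle here: the statement is a formal consequence of $R_k$ (resp.\ $R'_k$) being a symmetric monoidal exact functor into the relevant category of finite objects, together with the known classification of thick ideals there. The only points requiring a moment's care are that one works with the $p$-localised restricted functors — so Proposition~\ref{compact1} is needed to guarantee the target is $\SH^{fin}_{(p)}$, and Proposition~\ref{f} must be re-read for these functors exactly as in the proof of Theorem~\ref{lower bound} — and, in the real case, that the equivariant type is genuinely recovered from the generated thick ideal, which is clear because $\mathcal C_{m,n}$ is characterised fixed-point-wise through the non-equivariant types of $\phi^{\{1\}}$ and $\phi^{\Z/2}$.
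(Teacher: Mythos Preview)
Your proof is correct and uses essentially the same idea as the paper's own argument, namely that $R_k^{-1}$ (resp.\ $(R'_k)^{-1}$) of a thick ideal is a thick ideal. The paper argues directly rather than by contrapositive: assuming without loss of generality $\type(R_k X)=n>\type(R_k Y)$, one has $\thickid(X)\subseteq R_k^{-1}(\mathcal C_n)$ while $Y\notin R_k^{-1}(\mathcal C_n)$, so the thick ideals differ; your route via $\thickid(R_k X)=\thickid(R_k Y)$ and then reading off equal types is a slightly longer repackaging of the same content.
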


\begin{proof}
Let $\type(R_k X)=n> \type(R_k Y)$. Then $\thickid (X)\subseteq R_k^{-1}(\mathcal C_{n})$ but $Y\notin R_k^{-1}(\mathcal C_{n})$. The case of $R'_k$ is similar. 
\end{proof}

The next proposition gives a description of thick ideals in the categories of finite cellular spectra, $\SH(k)^{fin}$, $k\subseteq\C$, and $\SH(k)^{fin+}$, $k\subseteq\R$, from Definition \ref{fin}. In this case, a thick ideal is a thick subcategory that is closed under $-\wedge \G_m^{\pm 1}$ and under $-\wedge\Spec(k[i])_+$ if $k\in\R$.

\begin{prop}\label{closed under Gm}
Let $\mathcal C\subseteq\SH(k)^{fin}$ be a subcategory, $k\subseteq\C$. Then 
\[\thickid(\mathcal C)=\Delta\left(\underset{n\in\Z}\bigcup \mathcal C\wedge \G_m^{\wedge n}\right).\] 
For a subcategory $\mathcal C\subseteq\SH(k)^{fin+}$, $k\subseteq\R$, we have 
\[\thickid(\mathcal C)=\Delta\left(\underset{n\in\Z}\bigcup (\mathcal C\wedge \G_m^{\wedge n}) \cup \underset{n\in\Z}\bigcup (\mathcal C\wedge \G_m^{\wedge n}\wedge\Spec(k[i])_+)\right).\]
\end{prop}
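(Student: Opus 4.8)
The plan is to establish the two inclusions separately. Throughout, write $\mathcal D=\Delta\bigl(\bigcup_{n\in\Z}\mathcal C\wedge\G_m^{\wedge n}\bigr)$ for the complex case and $\mathcal D^{+}=\Delta\bigl(\bigcup_{n\in\Z}(\mathcal C\wedge\G_m^{\wedge n})\cup\bigcup_{n\in\Z}(\mathcal C\wedge\G_m^{\wedge n}\wedge\Spec(k[i])_+)\bigr)$ for the real case. The inclusion $\mathcal D\subseteq\thickid(\mathcal C)$ is immediate: $\thickid(\mathcal C)$ is a thick ideal containing $\mathcal C$, and each $\G_m^{\wedge n}=S^{n,n}$ (and, in the real case, each $\G_m^{\wedge n}\wedge\Spec(k[i])_+=S^{n,n}\wedge\Spec(k[i])_+$) lies in $\SH(k)^{fin}$ (resp.\ $\SH(k)^{fin+}$), so the objects generating $\mathcal D$ (resp.\ $\mathcal D^{+}$) all lie in $\thickid(\mathcal C)$; being thick, $\thickid(\mathcal C)$ then contains the whole thick subcategory they generate.

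For the reverse inclusion I would prove that $\mathcal D$ is a thick ideal of $\SH(k)^{fin}$ and then appeal to minimality of $\thickid(\mathcal C)$. Since $\mathcal D$ is thick and contains $\mathcal C$ by construction, the point is closure under $-\wedge Y$ for every $Y\in\SH(k)^{fin}$. First observe that $-\wedge\G_m^{\pm1}$ is a triangulated self-equivalence of $\SH(k)$ carrying the generating family $\bigcup_n\mathcal C\wedge\G_m^{\wedge n}$ of $\mathcal D$ onto itself (it merely shifts the index $n$), hence it preserves $\mathcal D$. Now, arguing exactly as in Lemma~\ref{thicksubcat thickideal}, fix $X\in\mathcal D$ and set $\mathcal E=\{\,Y\in\SH(k)^{fin}\mid X\wedge Y\in\mathcal D\,\}$. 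Because $X\wedge-$ is exact and $\mathcal D$ is triangulated, $\mathcal E$ is closed under exact triangles; and $\mathcal E$ contains every sphere, since $X\wedge S^{p,q}\cong\Sigma^{p-q}\bigl(X\wedge\G_m^{\wedge q}\bigr)$ lies in $\mathcal D$ by the previous observation together with the fact (used already in Lemma~\ref{thicksubcat thickideal}) that a thick subcategory is closed under (de)suspension. As $\SH(k)^{fin}$ is by definition the smallest full subcategory containing all spheres and closed under exact triangles, this forces $\mathcal E=\SH(k)^{fin}$; hence $\mathcal D$ is a thick ideal and $\thickid(\mathcal C)\subseteq\mathcal D$.

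The real case runs along the same lines, with one additional verification: $\mathcal D^{+}$ must also be shown closed under $-\wedge\Spec(k[i])_+$, which is needed because $\SH(k)^{fin+}$ is generated, as a triangulated subcategory, by the objects $S^{p,q}$ and $S^{p,q}\wedge\Spec(k[i])_+$ alone --- indeed $\Spec(k[i])_+^{\wedge m}$ for $m\ge2$ is a finite wedge of copies of $\Spec(k[i])_+$, so the generators with $m\ge2$ in Definition~\ref{fin}(2) are redundant. The elementary input here is the isomorphism of $k$-schemes $\Spec(k[i])\times_{\Spec k}\Spec(k[i])\cong\Spec(k[i])\sqcup\Spec(k[i])$, coming from $k[i]\otimes_k k[i]\cong k[i]\times k[i]$ (valid since $i\notin k$), which yields $\Spec(k[i])_+\wedge\Spec(k[i])_+\cong\Spec(k[i])_+\vee\Spec(k[i])_+$. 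Using this, the class $\mathcal F=\{\,Z\in\SH(k)^{fin+}\mid Z\wedge\Spec(k[i])_+\in\mathcal D^{+}\,\}$ is a thick subcategory (as $-\wedge\Spec(k[i])_+$ preserves exact triangles and retracts) containing both generating families of $\mathcal D^{+}$: for $C\in\mathcal C\wedge\G_m^{\wedge n}$ one has $C\wedge\Spec(k[i])_+\in\mathcal D^{+}$ by definition, while $C\wedge\Spec(k[i])_+\wedge\Spec(k[i])_+\cong(C\wedge\Spec(k[i])_+)\vee(C\wedge\Spec(k[i])_+)$ lies in $\mathcal D^{+}$ because thick subcategories are closed under finite coproducts. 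Thus $\mathcal D^{+}\subseteq\mathcal F$, i.e.\ $\mathcal D^{+}$ is closed under $-\wedge\Spec(k[i])_+$, and the argument of the previous paragraph, run for the generating set $\{S^{p,q},\,S^{p,q}\wedge\Spec(k[i])_+\}$ of $\SH(k)^{fin+}$, finishes the proof.

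I expect the only genuinely delicate step to be this last piece of bookkeeping in the real case, namely controlling $-\wedge\Spec(k[i])_+$ applied to the $\Spec(k[i])_+$-twisted generators of $\mathcal D^{+}$; that is exactly where the splitting of $\Spec(k[i])_+^{\wedge2}$ is used. Everything else reduces to the standard ``generate along the definition of $\SH(k)^{fin}$'' argument already carried out in Lemma~\ref{thicksubcat thickideal}.
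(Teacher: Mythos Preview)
Your proof is correct and follows essentially the same approach as the paper's: both show the easy inclusion, then prove the thick subcategory $\mathcal D$ (resp.\ $\mathcal D^+$) is already an ideal by checking closure under smashing with the generators $S^{p,q}$ (and $\Spec(k[i])_+$), using exactly the splitting $\Spec(k[i])_+^{\wedge 2}\cong\Spec(k[i])_+\vee\Spec(k[i])_+$ for the real case. The only cosmetic difference is that the paper varies the ``test object'' while holding $\overline{\mathcal C}$ fixed, whereas you fix $X\in\mathcal D$ and vary $Y$; these are dual formulations of the same argument.
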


\begin{proof}
We prove the second claim, the proof of the first claim is slightly shorter. Let $k\subseteq\R$. 
The subcategory $\thickid(\mathcal C)$ contains 
\[\overline{\mathcal C}=\Delta\left(\underset{n\in\Z}\bigcup (\mathcal C\wedge \G_m^n) \cup \underset{n\in\Z}\bigcup (\mathcal C\wedge \G_m^n\wedge\Spec(k[i])_+)\right)\] 
because $\thickid(\mathcal C)$ is closed under $-\wedge\G_m^n$ and $-\wedge\Spec(k[i])_+$ and is a thick subcategory.
We have to show that the triangulated subcategory $\overline{\mathcal C}$ is already a thick ideal. Let $\mathcal D\subseteq \SH(k)^{fin+}$ be the full subcategory consisting of all objects $X$ such that $\overline{\mathcal C}$ is closed under $-\wedge X$. We have to show $\mathcal D=\SH(k)^{fin+}$. First note that $\mathcal D$ contains all spheres $S^{p,q}$ because, as a triangulated subcategory, $\overline{\mathcal C}$ is closed under $-\wedge S^{p,0}$ and because we added $S^{q,q}$. 
If $X\rightarrow Y\rightarrow Z$ is a triangle with two objects in $\mathcal D$, then $A\wedge X\rightarrow A\wedge Y\rightarrow A\wedge Z$ is a triangle with two objects in $\overline{\mathcal C}$ for any $A\in\overline{\mathcal C}$, hence the third object is also in $\overline{\mathcal C}$. It follows that $\mathcal D$ is closed under exact triangles. Furthermore, $\mathcal D$ is closed under $\wedge$ by definition. It remains to show that $\mathcal D$ contains $\Sigma^\infty_T\Spec(k[i])_+$. By the equivalence 
\[\Spec(k[i])_+\wedge \Spec (k[i])_+\cong \Spec(k[i])_+\vee\Spec(k[i])_+\cong \cof\left(\Spec (k[i])_+\overset{0}\rightarrow \Spec (k[i])_+\right),\] 
any triangulated subcategory of $\SH(k)$ containing the spectrum $\Sigma^\infty_T\Spec(k[i])_+$ also contains $\Sigma^\infty_T\Spec(k[i])_+^{\wedge l}$, $l\geq 1$. It follows that $\overline{\mathcal C}$ contains $\mathcal C\wedge\G_m^n\wedge\Spec(k[i])_+^{\wedge l}$, $l\geq 0$. Hence, $\mathcal D$ contains $\Sigma^\infty_T\Spec(k[i])_+$ and is thus equal to $\SH(k)^{fin+}$.
\end{proof}

For $k\subseteq\C$, we have so far identified the following thick ideals in $(\SH(k)_f)_{(p)}$:
\[\xymatrix@1{
R_k^{-1}(\mathcal C_{0})\ar @{} [d] |-*[@]{{\vsupseteq}} \ar @{} [r] |-*[@]{\supset} &  R_k^{-1}(\mathcal C_{1})\ar @{} [d] |-*[@]{{\vsupseteq}} \ar @{} [r] |-*[@]{\supset} &  R_k^{-1}(\mathcal C_{2})\ar @{}[d] |-*[@]{{\vsupseteq}} \ar @{} [r] |-*[@]{\supset}  &\cdots\\
\thickid(c_k\mathcal C_{0}) \ar @{} [r] |-*[@]{\supset} &  \thickid(c_k\mathcal C_{1}) \ar @{} [r] |-*[@]{\supset} & \thickid(c_k\mathcal C_{2}) \ar @{} [r] |-*[@]{\supset} &\cdots}\]

For $k\subseteq\R$, the picture has another dimension and depends on the classification of thick ideals in the equivariant category as described in Chapter \ref{equivariant}. There is at least one spot where the inclusion from the lower row into the upper row is actually an equality.

\begin{prop}
\[R_k^{-1}(\mathcal C_{0})=\thickid(c_k(\mathcal C_{0}))=(\SH(k)_f)_{(p)}\;\textup{ and}\]
\[(R'_k)^{-1}(\mathcal C_{0,0})=\thickid(c'_k(\mathcal C_{0,0}))=(\SH(k)_f)_{(p)}.\]
\end{prop}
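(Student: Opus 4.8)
The plan is to observe that both sides of each equality are forced to be the whole category for essentially formal reasons, and that the only non-formal input has already been supplied by Proposition \ref{compact1}.

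First I would recall that $\mathcal C_0=\SH^{fin}_{(p)}$ by the indexing convention fixed in Chapter \ref{introduction} (Theorem \ref{HS}), and likewise, since $\mathcal C_{m,n}=\{X\mid\phi^{\{1\}}X\in\mathcal C_m,\ \phi^{\Z/2}X\in\mathcal C_n\}$ and $\mathcal C_0=\SH^{fin}_{(p)}=\SH(\Z/2)^{fin}_{(p)}$ is everything, we have $\mathcal C_{0,0}=(\SH(\Z/2)_f)_{(p)}$. Consequently the preimage of $\mathcal C_0$ under $\overline R_k$ is simply the set of all objects $X\in(\SH(k)_f)_{(p)}$ with $\overline R_k(X)\in\SH^{fin}_{(p)}$, and by Proposition \ref{compact1} (together with the fact that $p$-localisation commutes with $R_k$, as noted in the proof of Theorem \ref{lower bound}) this holds for every $X$; hence $\overline R_k^{-1}(\mathcal C_0)=(\SH(k)_f)_{(p)}$. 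The same argument with $\overline R_k'$ and Proposition \ref{compact2} gives $(\overline R_k')^{-1}(\mathcal C_{0,0})=(\SH(k)_f)_{(p)}$.

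For the middle terms I would use that $c_k$ and $c_k'$ are strict symmetric monoidal (Theorems \ref{Rc} and the $c_k'$ theorem), so they send the unit $S^0_{(p)}$ to the unit $S^{0,0}_{(p)}$ of $(\SH(k)_f)_{(p)}$; in particular $c_k(S^0_{(p)})\cong S^{0,0}_{(p)}$ and $c_k'(S^0_{(p)})\cong S^{0,0}_{(p)}$. Since $S^0_{(p)}\in\mathcal C_0$ and $S^0_{(p)}\in\mathcal C_{0,0}$, we get
\[
\thickid(c_k(\mathcal C_0))\ \supseteq\ \thickid(c_k(S^0_{(p)}))\ =\ \thickid(S^{0,0}_{(p)})\ =\ (\SH(k)_f)_{(p)},
\]
where the last equality is the general fact that the thick ideal generated by the unit is the whole category ($X\cong X\wedge S^{0,0}$, cf.\ Example after Definition \ref{thick ideal}); the reverse inclusion is trivial. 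The identical chain with $c_k'$ handles the real case. Combining the three equalities yields the proposition.

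I do not expect any genuine obstacle here: the content is entirely bookkeeping once one notices that $\mathcal C_0$ and $\mathcal C_{0,0}$ are the full categories. The only point that needs a word of care is that the restricted, $p$-localised functors $\overline R_k,\overline R_k',c_k,c_k'$ still land where claimed and remain symmetric monoidal — but this is exactly what was established in the proof of Theorem \ref{lower bound}.
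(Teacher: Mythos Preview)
Your proof is correct and essentially the same as the paper's. The paper condenses both equalities into the single observation that each of the subcategories in question is a thick ideal of $(\SH(k)_f)_{(p)}$ containing the sphere spectrum, hence is all of $(\SH(k)_f)_{(p)}$; your argument for the middle terms is identical, and for $R_k^{-1}(\mathcal C_0)$ you take the (equally valid but slightly less uniform) route of invoking Proposition~\ref{compact1} to see directly that every object lies in the preimage rather than just noting that the sphere does.
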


\begin{proof}
This is because all these subcategories contain the sphere spectrum and are closed under smashing with arbitrary elements of $(\SH(k)_f)_{(p)}$.
\end{proof}

\chapter{Thick ideals associated with cohomology theories}\label{idealsCohomo}

\section{Equivalence of homology and cohomology theories}

We will now concentrate on the categories of finite cellular spectra $\SH(k)^{fin}$, $\SH(k)^{fin+}$, and on (co)homology theories represented by objects in $\SH(k)^{cell}$ or $\SH(k)^{cell+}$ respectively, because these satisfy some useful additional properties. A couple of these are proven in \cite{DI}. 

A consequence of the results in \cite{DI} is the following proposition, which states that cellular homology theories and cohomology theories for $\SH(k)^{fin}$ are exchangeable. We will state the analogous result for $\SH(k)^{fin+}$ below, in Corollary \ref{co-homo real}.

\begin{prop}\label{co-homo}
Let $E\in\SH(k)^{cell}$ (see Definition \ref{fin}) be a ring spectrum and $X\in\SH(k)^{fin}$. Then $E_{\ast\ast}(X)=0$ if and only if $E^{\ast\ast}(X)=0$.
\end{prop}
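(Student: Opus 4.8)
The plan is to exploit the strong dualizability available in the cellular setting. By Proposition \ref{prop dualizable}, every $X\in\SH(k)^{fin}$ is strongly dualizable with dual $DX\in\SH(k)^{fin}$, and taking duals is an involution on $\SH(k)^{fin}$. The key identity is that for a strongly dualizable $X$ one has, for any spectrum $Z$, a natural isomorphism $F(X,Z)\simeq DX\wedge Z$; applying this with $Z=S^{p,q}\wedge E$ gives
\[
E^{p,q}(X)=[X,S^{p,q}\wedge E]=\pi_{0,0}F(X,S^{p,q}\wedge E)\cong\pi_{0,0}(DX\wedge S^{p,q}\wedge E)=E_{-p,-q}(DX).
\]
Hence $E^{\ast\ast}(X)=0$ if and only if $E_{\ast\ast}(DX)=0$. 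So it suffices to prove the single implication: for $Y\in\SH(k)^{fin}$, $E_{\ast\ast}(Y)=0$ implies $E_{\ast\ast}(DY)=0$ (then apply it with $Y=X$ for one direction and with $Y=DX$, using $DDX\cong X$, for the other).

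First I would show that $\{Y\in\SH(k)^{fin}\mid E_{\ast\ast}(Y)=0\}$ is a thick ideal of $\SH(k)^{fin}$: it is closed under exact triangles by the long exact sequence in $E_{\ast\ast}$, closed under retracts, and closed under smashing with arbitrary objects of $\SH(k)^{fin}$ because $E_{\ast\ast}(A)=\pi_{\ast\ast}(E\wedge A)=0$ forces $\pi_{\ast\ast}(E\wedge A\wedge B)=0$ (this is the motivic analogue of Remark \ref{supp thickid}). Then I would invoke Proposition \ref{prop dualizable}, which says $\SH(k)^{fin}$ is closed under taking duals; more precisely, I would argue directly that if $E_{\ast\ast}(Y)=0$ then $E\wedge Y\simeq 0$ (since $Y$ is cellular, $E\wedge Y$ is cellular, and a cellular spectrum with vanishing homotopy groups $\pi_{\ast\ast}$ is contractible — this is the content of the cellularity machinery of \cite{DI}, where $\SH(k)^{cell}$ is compactly generated by the spheres $S^{p,q}$). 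Smashing the contractible object $E\wedge Y$ with $DY$ and using that $E\wedge DY$ is a retract of $E\wedge DY\wedge Y\wedge DY$ (because $DY$ is a module over $F(Y,Y)=DY\wedge Y$, exactly as in the proof of Proposition \ref{ann}), we get $E\wedge DY\simeq 0$, hence $E_{\ast\ast}(DY)=0$.

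The main obstacle I anticipate is the step ``$E_{\ast\ast}(Y)=0\Rightarrow E\wedge Y\simeq 0$'': this is false in general $\SH(k)$ but holds here precisely because $E\in\SH(k)^{cell}$ and $Y\in\SH(k)^{fin}\subseteq\SH(k)^{cell}$, so $E\wedge Y$ lies in $\SH(k)^{cell}$, which is the localizing subcategory generated by the spheres $S^{p,q}$; for such a spectrum, vanishing of all bigraded homotopy groups $\pi_{p,q}=[S^{p,q},-]$ implies contractibility by a standard cellular-induction / Whitehead-type argument (see \cite[Section 7]{DI}). I would make sure to cite the precise statement from \cite{DI} rather than re-proving it. The rest is formal duality bookkeeping, and the retract argument avoids any need to assume $E$ is dualizable (it is not). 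An alternative, slightly slicker route avoiding the contractibility lemma: use the natural isomorphism $E_{p,q}(X)\cong E^{-p,-q}(DX)$ together with the thick-ideal argument applied on the cohomological side directly, but one still needs one genuine input relating the two, so the duality identity above seems the cleanest skeleton.
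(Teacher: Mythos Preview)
Your proof is correct and takes a genuinely different route from the paper. The paper applies the universal coefficient spectral sequence of \cite[Proposition~7.7]{DI} twice: once with $M=X\wedge E$ to get $\Ext_{E_{\ast\ast}}(E_{\ast\ast}X,E_{\ast\ast})\Rightarrow E^{\ast\ast}X$, and once with $M=F(X,E)=DX\wedge E$ to get $\Ext_{E_{\ast\ast}}(E^{\ast\ast}X,E_{\ast\ast})\Rightarrow E_{\ast\ast}X$; vanishing of the input collapses each sequence. Your argument instead reduces everything to the duality identity $E^{\ast\ast}(X)\cong E_{\ast\ast}(DX)$, then shows the thick ideal $\{Y:E\wedge Y\simeq 0\}$ is closed under duals via the standard retract $DY\hookrightarrow DY\wedge Y\wedge DY$, with the crucial passage $E_{\ast\ast}(Y)=0\Rightarrow E\wedge Y\simeq 0$ supplied by \cite[Proposition~7.1]{DI} (the paper also invokes this result elsewhere, e.g.\ in Proposition~\ref{C_E}). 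Your approach is more elementary and, notably, never uses the ring structure on $E$: it proves the stronger statement for any cellular $E$, whereas the paper's spectral-sequence argument genuinely needs $E$ to be a ring spectrum to form the UCSS over $E_{\ast\ast}$.
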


\begin{proof}
In the universal coefficient spectral sequence (see \cite[Proposition 7.7]{DI} or Proposition \ref{ucss}),
\[E_2=\Ext_{E_{\ast\ast}}^{a,b,c}(M_{\ast\ast},N_{\ast\ast})\Rightarrow \pi_{-a-b,-c}F_E(M,N),\]
we set $M=X\wedge E$ and $N=E$:
\[E_2=\Ext_{E_{\ast\ast}}^{a,b,c}(E_{\ast\ast}(X),E_{\ast\ast})\Rightarrow \pi_{-a-b,-c}F_E(X\wedge E,E).\]
Since $F_E(X\wedge E,E)=F(X,E)$, the spectral sequence converges conditionally to $E^{\ast\ast}(X)$. If $E_{\ast\ast}(X)=0$, the spectral sequence collapses and thus converges strongly to $E^{\ast\ast}(X)$, which, hence, is $0$. For the other direction, we set $M=F(X,E)$ and $N=E$. Note that this $M$ is cellular because $X$ is dualisable and its dual is again a finite cell spectrum by Proposition \ref{prop dualizable}. So we get
\[E_2=\Ext_{E_{\ast\ast}}^{a,b,c}(E^{\ast\ast}(X),E_{\ast\ast})\Rightarrow \pi_{-a-b,-c}F_E(F(X,E),E).\]
Now $F_E(F(X,E),E)=F_E(D(X)\wedge E,E)=F(D(X),E)=X\wedge E$, so the sequence converges conditionally to $E_{\ast\ast}(X)$. Hence, $E^{\ast\ast}(X)=0$ implies $E_{\ast\ast}(X)=0$.
\end{proof}

One important result of Dugger and Isaksen, \cite[Proposition 7.1]{DI}, states that for cellular objects $E\in\SH(k)^{cell}$, $\pi_{\ast\ast}E=0$ implies $E\cong 0$. We will show that an adjusted statement holds for $E\in\SH(k)^{cell+}$, $k\subseteq\R$. 

We know from equivariant stable homotopy theory that a generalisation of homotopy groups is needed to obtain the corresponding result in $\SH(G)$. In $\SH(\Z/2)$, for example, we have the equivariant homotopy groups 
\[\pi_n^{\Z/2}(X)=[S^n,X]_{\Z/2}=[S^n,X^{\Z/2}]_{\{1\}}\textup{ and }\pi_n^{\{1\}}(X)=[S^n\wedge \Z/2_+,X]_{\Z/2}=[S^n,X]_{\{1\}}.\]
We can similarly define homotopy groups in $\SH(k)$, $k\subseteq\R$ such that $R'_k$ maps $\pi_{p,q}^{k}(X)$ to $\pi_p^{\Z/2}(R'_k X)$ and $\pi_{p,q}^{k[i]}(X)$ to $\pi_p^{\{1\}}(R'_k X)$.

\begin{defn}\label{astast}
For $k\subseteq\R$ and $X\in\SH(k)$, let
\[\pi_{p,q}^{k}(X)=[S^{p,q},X]_{\SH(k)}\textup{ and }\pi_{p,q}^{k[i]}(X)=[S^{p,q}\wedge \Spec(k[i])_+,X]_{\SH(k)}.\]
We write $\pi^+_{\ast\ast}(X)=0$ if $\pi_{\ast\ast}^K(X)=0$ for both $K=k$ and $K=k[i]$. Furthermore, $E^+_{\ast\ast}(X)=0$ will mean $\pi^+_{\ast\ast}(E\wedge X)=0$ and $(E^+)^{\ast\ast}(X)=0$ will mean $\pi^+_{\ast\ast}(F(X,E))=0$.
\end{defn}

The same arguments as in \cite[Proposition 7.1]{DI} now imply the following.

\begin{prop}\label{7.1}
If $X\in\SH(k)^{cell+}$, $k\subseteq\R$ and $\pi^+_{\ast\ast}(X)=0$ then $X\cong 0$.
\end{prop}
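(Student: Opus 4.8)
The plan is to mimic the proof of \cite[Proposition 7.1]{DI}, which proves the analogous statement for $\SH(k)^{cell}$, adapting it to the ``$+$'' setting where the generating cells include the extra spectra $S^{p,q}\wedge\Spec(k[i])_+$. First I would recall the structure of the Dugger--Isaksen argument: one shows that the full subcategory of $X\in\SH(k)^{cell}$ with $\pi_{\ast\ast}(X)=0$ is a triangulated subcategory closed under arbitrary coproducts, and that it contains a point if and only if the corresponding statement holds on generators; the key point is that $\pi_{\ast\ast}$ detects triviality on the generating spheres and that cellular spectra are built from the spheres by triangles and coproducts. In our situation, $\SH(k)^{cell+}$ is by Definition \ref{fin} the closure under colimits of the triangulated subcategory generated by the cells $S^{p,q}\wedge\Spec(k[i])^{\wedge m}_+$, $m\geq 0$; so the correct invariant is the pair of homotopy group functors $\pi^k_{\ast\ast}$ and $\pi^{k[i]}_{\ast\ast}$ from Definition \ref{astast}, packaged as $\pi^+_{\ast\ast}$.

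The key steps, in order, are as follows. First I would let $\mathcal{Z}=\{X\in\SH(k)^{cell+}\mid \pi^+_{\ast\ast}(X)=0\}$ and check that $\mathcal{Z}$ is closed under (de)suspension, exact triangles, retracts, and arbitrary coproducts: suspension is clear since $\pi^K_{p,q}(\Sigma^{a,b}X)\cong\pi^K_{p-a,q-b}(X)$; triangles follow from the long exact sequences in $\pi^k_{\ast\ast}$ and $\pi^{k[i]}_{\ast\ast}$ obtained by mapping the compact objects $S^{p,q}$ and $S^{p,q}\wedge\Spec(k[i])_+$ into a cofiber sequence; coproducts follow because $S^{p,q}$ and $S^{p,q}\wedge\Spec(k[i])_+$ are compact in $\SH(k)$ (Remark \ref{spec compact}(2),(4)), so $[S^{p,q}\wedge\Spec(k[i])^{\wedge m}_+,-]$ commutes with coproducts; retracts are immediate. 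Second, I would observe that $\mathcal{Z}$ contains every object $Y$ of the triangulated subcategory generated by the cells such that $Y$ is a coproduct (or more carefully, a cellular object) built so that the evaluation against cells is computed by a conditionally convergent spectral sequence with zero $E_2$-page — this is exactly the ``$\pi_{\ast\ast}$ detects contractibility'' mechanism of \cite[Proposition 7.1]{DI}, where one uses the cellular tower / skeletal filtration of $X$ and the fact that its associated graded is a wedge of cells on which $\pi^+_{\ast\ast}$ vanishes by hypothesis together with the long exact sequences. Third, putting these together: if $X\in\SH(k)^{cell+}$ has $\pi^+_{\ast\ast}(X)=0$, then running the cellular filtration argument shows $X$ lies in the trivial class, i.e.\ $X\cong 0$.

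The main obstacle I anticipate is making precise the ``same arguments as in \cite[Proposition 7.1]{DI}'' step, namely that a cellular spectrum with a skeletal filtration whose subquotients are wedges of the generating cells is detected by $\pi^+_{\ast\ast}$. In the $\SH(k)^{cell}$ case this rests on the lim$^1$/conditional convergence bookkeeping for the homotopy colimit defining a cellular object, and on $\pi_{\ast\ast}$ being a \emph{conservative} family of functors on the subcategory generated by the spheres. The adaptation requires knowing that $\{\pi^k_{\ast\ast},\pi^{k[i]}_{\ast\ast}\}$ is conservative on the triangulated subcategory generated by $\{S^{p,q}\wedge\Spec(k[i])^{\wedge m}_+\}$; since $\Spec(k[i])_+\wedge\Spec(k[i])_+\cong\Spec(k[i])_+\vee\Spec(k[i])_+$ (the splitting used in the proof of Proposition \ref{closed under Gm}), every cell $S^{p,q}\wedge\Spec(k[i])^{\wedge m}_+$ with $m\geq 1$ is a retract of a wedge of copies of $S^{p,q}\wedge\Spec(k[i])_+$, so the two functors $\pi^k_{\ast\ast}$, $\pi^{k[i]}_{\ast\ast}$ already see all the generating cells; conservativity on the generated triangulated subcategory then follows from the long exact sequences. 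Once this is in place, the rest of the proof is a verbatim transcription of the Dugger--Isaksen argument with ``$S^{p,q}$'' replaced by ``$S^{p,q}$ and $S^{p,q}\wedge\Spec(k[i])_+$'' and ``$\pi_{\ast\ast}$'' by ``$\pi^+_{\ast\ast}$'', so I would simply cite \cite[Proposition 7.1]{DI} for the formal skeleton and spell out only these two modifications.
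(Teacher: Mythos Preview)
Your proposal is correct and arrives at essentially the same proof as the paper: both reduce to the Dugger--Isaksen argument by using the splitting $\Spec(k[i])_+\wedge\Spec(k[i])_+\cong\Spec(k[i])_+\vee\Spec(k[i])_+$ to see that all generating cells $S^{p,q}\wedge\Spec(k[i])_+^{\wedge m}$ are already detected by $\pi^k_{\ast\ast}$ and $\pi^{k[i]}_{\ast\ast}$, and then transcribe \cite[Proposition~7.1]{DI} verbatim with this enlarged set of cells. The paper's write-up is simply more direct: it fixes $X$, considers the class $\mathcal D=\{Y:\Map(Y^{\mathrm{cof}},X)\simeq\ast\}$, checks it contains the basic cells and is closed under homotopy colimits, and concludes $X\in\mathcal D$; your intermediate discussion of the localizing subcategory $\mathcal Z$ and skeletal spectral sequences is extra scaffolding that you correctly discard in the last paragraph when you defer to the DI argument.
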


\begin{proof}
Assuming $X$ is cofibrant and fibrant, one considers the class $\mathcal D$ of all $Y$ such that $\Map(Y^{cof},X)$ is contractible. By assumption, $\mathcal D$ contains $S^{p,q}$ and $S^{p,q}\wedge \Spec(k[i])_+$. Furthermore, $\mathcal D$ is closed under isomorphisms, (de-)suspensions and homotopy colimits. As in the proof of Proposition \ref{closed under Gm}, it follows that it also contains $S^{p,q}\wedge (\Spec(k[i])_+)^{\wedge m}$, $m\geq 0$. As $\SH(k)^{cell+}\subseteq\SH(k)$ is generated by $\{S^{p,q}\wedge(\Spec(k[i])_+)^{\wedge m}\; |\; $ $p,q\in\Z,m\geq 0\}$ under isomorphisms and homotopy colimits, it follows that $\SH(k)^{cell+}\subseteq \mathcal D$, in particular $X\in\mathcal D$. Hence, $\Map(X,X)$ is contractible, which implies $X\cong 0$. 
\end{proof}

The following corollary is a $k\subseteq\R$-version of Proposition \ref{co-homo}.

\begin{cor}\label{co-homo real}
Let $E\in\SH(k)^{cell+}$ be a ring spectrum and $X\in\SH(k)^{fin+}$. Then $E^+_{\ast\ast}(X)=0$ if and only if $(E^+)^{\ast\ast}(X)=0$.
\end{cor}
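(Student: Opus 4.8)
The plan is to mimic the proof of Proposition~\ref{co-homo}, replacing the universal coefficient spectral sequence argument with its $\SH(k)^{cell+}$-analogue and using Proposition~\ref{7.1} in place of \cite[Proposition 7.1]{DI} to conclude vanishing. First I would observe that, just as in \cite[Proposition 7.7]{DI}, there is a universal coefficient spectral sequence for the function spectrum $F_E(M,N)$ with $M,N$ cellular $E$-modules, and that the relevant homotopy groups to read off are now the two flavours $\pi^K_{\ast\ast}$, $K\in\{k,k[i]\}$, appearing in Definition~\ref{astast}. Since $\Sigma^\infty\Spec(k[i])_+$ is compact and self-dual (Remark~\ref{spec compact}(4)), smashing the spectral sequence for $F_E(M,N)$ with $\Spec(k[i])_+$ (equivalently, mapping out of $\Spec(k[i])_+$) yields a corresponding spectral sequence computing $\pi^{k[i]}_{\ast\ast}$, so the whole argument runs in parallel for both $K$.

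The forward direction: take $M = X\wedge E$, $N = E$. Then $F_E(X\wedge E, E)\cong F(X,E)$, and $\pi^+_{\ast\ast}$ of this is $(E^+)^{\ast\ast}(X)$ by Definition~\ref{astast}. The $E_2$-term is $\Ext_{E_{\ast\ast}}^{\ast,\ast,\ast}(E_{\ast\ast}(X), E_{\ast\ast})$; if $E^+_{\ast\ast}(X)=0$, i.e.\ $\pi^K_{\ast\ast}(E\wedge X)=0$ for both $K$, then in particular $E_{\ast\ast}(X)=\pi_{\ast\ast}(E\wedge X)$ vanishes (this is the $K=k$ part), so $E_2=0$, the spectral sequence collapses and converges strongly, and $(E^+)^{\ast\ast}(X)=0$. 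For the converse, set $M = F(X,E) = D(X)\wedge E$ and $N=E$; here I need $D(X)$ cellular, which holds because $X\in\SH(k)^{fin+}\subseteq\SH(k)_{\mathcal T_k^+,f}$ is dualisable with finite-cell dual by Proposition~\ref{prop dualizable}. Then $F_E(F(X,E),E)\cong F(D(X),E)\cong X\wedge E$, the spectral sequence converges conditionally to $\pi^+_{\ast\ast}(X\wedge E) = E^+_{\ast\ast}(X)$, and the hypothesis $(E^+)^{\ast\ast}(X)=0$ forces $E_2=0$, hence $E^+_{\ast\ast}(X)=0$.

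The main obstacle I anticipate is not in the algebra but in checking that the universal coefficient spectral sequence of \cite[Proposition 7.7]{DI} behaves correctly after smashing with $\Spec(k[i])_+$ — i.e.\ that ``conditional/strong convergence'' is preserved and that the $E_2$-term is genuinely the ordinary $\Ext$ over $E_{\ast\ast}$ (not over some $(E\wedge\Spec(k[i])_+)$-coefficients). The cleanest way around this is to note that $-\wedge\Spec(k[i])_+$ is exact and preserves cellularity (by the computation in Remark~\ref{spec compact}(3) and the fact that $\Spec(k[i])_+$ is a finite cell spectrum), so one may simply apply the original spectral sequence of \cite[Proposition 7.7]{DI} to the cellular $E$-modules $M\wedge\Spec(k[i])_+$ and $N$, or invoke that $\pi^{k[i]}_{\ast\ast}(Z)=\pi^k_{\ast\ast}(Z\wedge\Spec(k[i])_+)$ and quote the theorem verbatim. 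Once that is granted, the proof is formally identical to that of Proposition~\ref{co-homo}, and I would simply write: ``The same argument as in the proof of Proposition~\ref{co-homo}, applied levelwise to both $\pi^k_{\ast\ast}$ and $\pi^{k[i]}_{\ast\ast}$ and using Proposition~\ref{7.1} in place of \cite[Proposition~7.1]{DI}, gives the claim.''
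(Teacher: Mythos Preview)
You correctly identify the obstacle, but your proposed fix does not close the gap. The Dugger--Isaksen spectral sequence of \cite[Proposition 7.7]{DI} requires $M\in\SH(k)^{cell}$, whereas here $M=X\wedge E$ with $X\in\SH(k)^{fin+}$ and $E\in\SH(k)^{cell+}$ lies only in $\SH(k)^{cell+}$. Convergence in \cite{DI} relies on \cite[Proposition 7.1]{DI}, which needs DI-cellularity; without it, the inverse limit of the resolving tower may have trivial $\pi_{\ast\ast}$ yet fail to be contractible, so even a vanishing $E_2$-page does not force the abutment to vanish. Smashing with $\Spec(k[i])_+$ does not help: the assertion that $\Spec(k[i])_+\in\SH(k)^{fin}$ (so that $-\wedge\Spec(k[i])_+$ preserves DI-cellularity) is precisely what is \emph{not} known, and is the entire reason for introducing the enlarged categories $\SH(k)^{fin+}$ and $\SH(k)^{cell+}$ in Definition~\ref{fin}. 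Remark~\ref{spec compact}(3) concerns dualisability, not cellularity.

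The paper's route is instead to invoke the equivariant analogue of the universal coefficient spectral sequence --- a spectral sequence of Mackey functors, established in \cite{LM} --- and to replace \cite[Proposition 7.1]{DI} by Proposition~\ref{7.1}. In the $+$-setting the resolution of $M$ must use free $E$-modules built on \emph{both} cell types $S^{p,q}$ and $S^{p,q}\wedge\Spec(k[i])_+$; the $E_2$-term is then a Mackey-functor $\Ext$ rather than an ordinary $\Ext$ over $E_{\ast\ast}$, and convergence follows from Proposition~\ref{7.1}. Once that spectral sequence is in place, the deduction is formally identical to that of Proposition~\ref{co-homo}.
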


\begin{proof}
The spectral sequence from \cite[Proposition 7.7]{DI} is derived similarly to the universal coefficient spectral sequence in \cite[Section IV.5]{EKMM}. The crucial point is the convergence, where cellularity is needed to apply \cite[Proposition 7.1]{DI}. The equivariant version of this spectral sequence (a spectral sequence of Mackey functors) is proven in \cite{LM}. Using \cite{LM} and Proposition \ref{7.1}, one can derive the universal coefficient spectral sequence for our motivic homotopy groups over $k\subseteq\R$. The new version of Proposition \ref{co-homo} can then be deduced from this spectral sequence as before.
\end{proof}

\section{Thick ideals}\label{thick ideals}

In $\SH^{fin}_{(p)}$, any thick ideal can be described by some cohomology theory (namely, $n$-th Morava K-theory). Conversely, any cohomology theory defines a thick ideal. We are therefore interested in thick ideals of $\SH(k)_f$, as well as $\SH(k)^{fin}$, $\SH(k)^{fin+}$, described by cohomology theories as follows.

\begin{lemma}\label{Lemma C_E}
Let $k$ be any field and let $\mathcal T\subseteq\SH(k)$ be a tensor triangulated subcategory of $\SH(k)$.
For any $E\in\SH(k)$, the full subcategory of $\mathcal T$ given by
\[\mathcal C_E=\{X\in\mathcal T\; |\; X\wedge E\cong 0\}\]
is a thick ideal of $\mathcal T$.
\end{lemma}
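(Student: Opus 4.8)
The plan is to verify the three closure properties in the definition of a thick ideal (Definition \ref{thick ideal}) directly for $\mathcal C_E$, using only that $\wedge$ preserves exact triangles and commutes (up to natural isomorphism) with finite coproducts and retracts. First I would check that $\mathcal C_E$ is closed under exact triangles: given a triangle $X\to Y\to Z\to \Sigma X$ in $\mathcal T$ with two of the three objects in $\mathcal C_E$, smashing with $E$ yields an exact triangle $X\wedge E\to Y\wedge E\to Z\wedge E\to \Sigma X\wedge E$ in which two objects are isomorphic to $0$; since a triangulated category has the property that in any triangle with two zero objects the third is also zero (the identity on it factors through $0$), the third object is $\cong 0$, so it lies in $\mathcal C_E$.

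Next I would check closure under retracts: if $X\in\mathcal C_E$ and $Y$ is a retract of $X$ in $\mathcal T$, then $Y\wedge E$ is a retract of $X\wedge E\cong 0$, hence $Y\wedge E\cong 0$ (a retract of a zero object is zero, as $\id_{Y\wedge E}$ factors through $0$). This also shows $\mathcal C_E$ is closed under isomorphism, and it is nonempty since $0\in\mathcal C_E$; together with the previous paragraph this makes $\mathcal C_E$ a thick subcategory of $\mathcal T$.

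Finally I would verify the ideal property: if $X\in\mathcal C_E$ and $Z\in\mathcal T$ is arbitrary, then using symmetry and associativity of $\wedge$ we get $(Z\wedge X)\wedge E\cong Z\wedge(X\wedge E)\cong Z\wedge 0\cong 0$, so $Z\wedge X\in\mathcal C_E$; here $Z\wedge 0\cong 0$ because $0\wedge E \cong 0$ already and $Z\wedge -$ preserves that, or simply because $Z\wedge -$ is an exact functor and hence sends the zero object to a zero object. Since $\mathcal T$ is a tensor triangulated subcategory, $Z\wedge X$ again lies in $\mathcal T$, so this is well-defined inside $\mathcal T$.

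I do not expect any genuine obstacle here; the statement is a routine unwinding of the axioms, and the only mild subtlety is to be careful that all the objects involved stay inside $\mathcal T$ (which holds because $\mathcal T$ is assumed tensor triangulated) and that ``$\cong 0$'' propagates through triangles and retracts, which are standard facts about triangulated categories that were already used implicitly in Remark \ref{supp thickid} and the proof of Lemma \ref{thicksubcat thickideal}.
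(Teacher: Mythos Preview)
Your proof is correct and follows essentially the same approach as the paper: both verify directly that $\mathcal C_E$ is closed under exact triangles (since $-\wedge E$ preserves them), under retracts (since a retract of zero is zero), and under smashing with arbitrary objects of $\mathcal T$ (by associativity of $\wedge$). Your write-up is simply more detailed than the paper's terse version.
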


\begin{proof}
Since $-\wedge E$ preserves exact triangles, $\mathcal C_E$ is closed under these. If $Y$ is a retract of $X$ and $X\wedge E=0$, then $Y\wedge E\rightarrow 0\rightarrow Y\wedge E$ is the identity on $Y\wedge E$, hence $Y\wedge E\cong 0$. Let $X\in\mathcal C_E$ and $Y\in \mathcal T$. Then $ X\wedge Y\wedge E\cong 0$, hence $X\wedge Y\in\mathcal C_E$.
\end{proof}

\begin{prop}\label{C_E}
Let $\mathcal C_E$ be as defined in the above lemma.

If $k\subseteq\C$, $E\in\SH(k)^{cell}$ and $\mathcal T=\SH(k)^{fin}$ or $\mathcal T=\SH(k)^{cell}$, then
\[\mathcal C_E= \{X\in\mathcal T\; |\; E_{\ast\ast}(X)=0\}.\]
If, furthermore, $E$ is a ring spectrum and $\mathcal T=\SH(k)^{fin}$, then also
\[\mathcal C_E=\{X\in\mathcal T\; |\; E^{\ast\ast}(X)=0\}. \]
If $k\subseteq\R$, $E\in\SH(k)^{cell+}$ and $\mathcal T=\SH(k)^{fin+}$ or $\mathcal T=\SH(k)^{cell+}$, then
\[\mathcal C_E=\{X\in\mathcal T\; |\; (E^+)_{\ast\ast}(X)=0\}.\]
If, furthermore, $E$ is a ring spectrum and $\mathcal T=\SH(k)^{fin+}$, then also
\[\mathcal C_E=\{X\in\mathcal T\; |\; (E^+)^{\ast\ast}(X)=0\}.\]
The same descriptions apply if $\mathcal T$ is the $p$-localisation of any of these categories for some prime $p$.
\end{prop}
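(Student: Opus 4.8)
The plan is to identify $\mathcal{C}_E$, defined via $X \wedge E \cong 0$, with the various homology/cohomology vanishing conditions, using the results on cellular spectra already established. The key observation is that for $X$ in any of these categories, $X \wedge E$ is again cellular (by Remark~\ref{constant in finite} and Proposition~\ref{prop dualizable}, since the generators are closed under $\wedge$), so we may apply Dugger--Isaksen's detection principle.

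First I would handle the complex case. If $k \subseteq \C$, $E \in \SH(k)^{cell}$ and $X \in \SH(k)^{fin} \subseteq \SH(k)^{cell}$, then $X \wedge E \in \SH(k)^{cell}$. By \cite[Proposition 7.1]{DI}, $X \wedge E \cong 0$ if and only if $\pi_{\ast\ast}(X \wedge E) = 0$, i.e.\ $E_{\ast\ast}(X) = 0$. This gives the first equality. The same argument works verbatim for $\mathcal{T} = \SH(k)^{cell}$. For the cohomological description, when $E$ is in addition a ring spectrum and $\mathcal{T} = \SH(k)^{fin}$, Proposition~\ref{co-homo} tells us $E_{\ast\ast}(X) = 0$ iff $E^{\ast\ast}(X) = 0$, so $\mathcal{C}_E$ also equals $\{X \mid E^{\ast\ast}(X) = 0\}$.

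For the real case $k \subseteq \R$, I would run the parallel argument with $\pi^+_{\ast\ast}$ in place of $\pi_{\ast\ast}$. If $E \in \SH(k)^{cell+}$ and $X \in \SH(k)^{fin+} \subseteq \SH(k)^{cell+}$, then $X \wedge E \in \SH(k)^{cell+}$ (again because the generators $S^{p,q} \wedge (\Spec k[i])^{\wedge m}_+$ are closed under smash, cf.\ Remark~\ref{constant in finite}), so by Proposition~\ref{7.1}, $X \wedge E \cong 0$ iff $\pi^+_{\ast\ast}(X \wedge E) = 0$, which by Definition~\ref{astast} is exactly $(E^+)_{\ast\ast}(X) = 0$. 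When $E$ is a ring spectrum and $\mathcal{T} = \SH(k)^{fin+}$, Corollary~\ref{co-homo real} converts this to the statement $(E^+)^{\ast\ast}(X) = 0$, finishing that case. The claim for $\mathcal{T}$ equal to a $p$-localisation follows because localisation is a smashing localisation that commutes with $-\wedge E$ and preserves cellularity of the generators, so all the cited input results apply verbatim in the localised category; alternatively one checks directly that $X \wedge E \cong 0$ in the localised category is detected by the localised homotopy groups.

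The only mild subtlety — and the closest thing to an obstacle — is making sure that $X \wedge E$ genuinely lands in the cellular subcategory so that the Dugger--Isaksen detection criterion and the universal coefficient spectral sequences of Proposition~\ref{co-homo} and Corollary~\ref{co-homo real} apply. This is exactly the content of Remark~\ref{constant in finite}: the generating sets are closed under $\wedge$, and $\wedge$ preserves exact triangles and colimits, so $\SH(k)^{cell}$ (resp.\ $\SH(k)^{cell+}$) is closed under smashing with arbitrary objects of itself; in particular $X \wedge E$ is cellular whenever both $X$ and $E$ are. Everything else is a direct appeal to the already-proven results, so the proof is short.
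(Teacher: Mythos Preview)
Your proof is correct and follows essentially the same approach as the paper: apply \cite[Proposition~7.1]{DI} (resp.\ Proposition~\ref{7.1}) to equate $X\wedge E\cong 0$ with vanishing of homology, and then Proposition~\ref{co-homo} (resp.\ Corollary~\ref{co-homo real}) for the cohomological statement. Your added remarks about why $X\wedge E$ stays cellular and about $p$-localisation are reasonable amplifications of points the paper leaves implicit.
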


\begin{proof}
For $E$ and $X$ as in the first claim, we have $E_{\ast\ast}(X)=0 \Leftrightarrow E\wedge X\cong 0$ by \cite[Proposition 7.1]{DI} and for $X$ finite, $E$ a ring spectrum,  
$E^{\ast\ast}(X)=0\Leftrightarrow E_{\ast\ast}(X)=0 $ by Proposition \ref{co-homo}. 
For $k\subseteq\R$, the same arguments hold 
by Proposition \ref{7.1} and Corollary \ref{co-homo real}. 
\end{proof}

\section{Morava K-theories in $\SH(k)$}\label{section morava K}

Topology suggests that thick ideals described by Morava K-theories are particularly interesting. Therefore, we want to study motivic Morava K-theories and their properties.

\subsection{Construction and properties of $AK(n)$}\label{section AK(n)}

Motivic Morava K-theories were introduced in \cite{Bo}. The rough idea is as follows. One starts with $MGL$, the motivic analogue of $MU$. Since $MU_\ast$ is a subring of $MGL_{\ast\ast}$, elements in $MU_\ast$ can be used to define maps on $MGL$ and to construct motivic spectra which are analogous to certain other topological spectra constructed from $MU$.

\begin{defn}\label{AK(n)}
Let $k\subseteq\C$. 
Let $MGL$ be the algebraic cobordism spectrum as constructed in \cite[Section 3.5]{VMC}, see also \cite[Section 6.3]{VoevA1} or \cite[Section 6.5]{PY}. In \cite[Theorem 10]{Bo}, elements $a_i\in MGL_{2i,i}$, $i\geq 1$, are defined, whose images under $R_k$ are $a_i^{\Top}\in MU_{2i}$. If $E$ is an $MGL$-module, then $E/a_i$ and $a_i^{-1}E$ can be defined as in \cite[Definition 2.10]{HornLoc}.

Note that the functor $MGL\wedge -$ from $\SH(k)$ to the category of $MGL$-modules in $\SH(k)$ has $F(MGL,-)$ as right adjoint and has the forgetful functor as left adjoint. As in \cite[Lemma II.1.3]{EKMM} and \cite[page 99]{Bo}, it follows that the category of $MGL$-modules is complete and cocomplete. Thus, $E/a_i$ and $a_i^{-1}E$ are again $MGL$-modules (see Lemma \ref{module map} for the proof that the action of $a_i$ on $E$ is a map of $MGL$-modules).

The motivic Brown-Peterson spectrum for a fixed prime $p$ was first constructed in \cite[Section 5]{Vez}. By \cite[Remark 6.20]{Hoy}, it is equivalent as an $MGL$-module to $MGL_{(p)}/I$, where $I$ is the image under $MU_\ast\rightarrow MGL_{\ast\ast}$ of a regular sequence in the Lazard ring $L$ that generates the vanishing ideal for $p$-typical formal group laws. We take this as the definition of the motivic Brown Peterson spectrum $ABP$ at the prime $p$. That is, \[ABP=MGL_{(p)}/(a_i\; |\; i\neq p^j-1).\]

With $v_0=p$ and $v_i=a_{p^i-1}$ for $i\geq 1$, we define:
\[AP(n)=ABP/(v_0,\cdots,v_{n-1}) \textup{ and } AB(n)=v_n^{-1}AP(n),\] 
\[Ak(n)= ABP/(v_0,\cdots,v_{n-1},v_{n+1},v_{n+2},\cdots) \textup{ and } AK(n)=v_n^{-1}Ak(n),\] 
\[AE(n)=v_n^{-1} ABP/(v_{n+1},v_{n+2},\cdots).\]
\end{defn}

We will make use of all these spectra in Chapter \ref{chapter BC}. For now, we are mostly interested in $AK(n)$.

\begin{lemma}
Let $Ah$ be one of the motivic spectra mentioned in the above definition, e.g. $Ah=AK(n)$ with $h=K(n)$. Then $R_k(Ah)=h$.
\end{lemma}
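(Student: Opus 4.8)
The plan is to exploit the two facts about $R_k$ established earlier in the text: it is a strict symmetric monoidal left Quillen functor (hence preserves smash products, exact triangles, and homotopy colimits), and $R_k \circ c_k \cong \id$, so that the topological $a_i^{\Top} \in MU_{2i}$ are recovered from the $a_i \in MGL_{2i,i}$ under $R_k$. The key structural input is that each of the spectra in Definition \ref{AK(n)} is built from $MGL$ by a sequence of two basic operations: (i) quotienting an $MGL$-module $E$ by the action of an element $a_i$, which sits in a cofibre sequence $\Sigma^{2i,i} E \xrightarrow{a_i} E \to E/a_i$; and (ii) inverting an element $a_i$, which is a mapping telescope $a_i^{-1}E = \colim(E \xrightarrow{a_i} \Sigma^{-2i,-i}E \xrightarrow{a_i} \cdots)$. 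First I would observe that, by definition of $MGL$ and the construction of $c_k$ together with the computation of the images $R_k(a_i) = a_i^{\Top}$ from \cite[Theorem 10]{Bo}, we have $R_k(MGL) \cong MU$ as ring spectra (this is well known and implicitly used throughout; if one prefers, one can cite \cite{PPR} or argue directly from the defining $\Gr$-presentation of $MGL$ and the fact that $R_k$ commutes with the relevant homotopy colimits).

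Next I would treat operations (i) and (ii) in turn. Since $R_k$ preserves exact triangles and $R_k(\Sigma^{2i,i}E) \cong \Sigma^{2i}R_k(E)$, applying $R_k$ to the cofibre sequence $\Sigma^{2i,i}E \xrightarrow{a_i} E \to E/a_i$ yields a cofibre sequence $\Sigma^{2i}R_k(E) \xrightarrow{a_i^{\Top}} R_k(E) \to R_k(E/a_i)$, which identifies $R_k(E/a_i) \cong R_k(E)/a_i^{\Top}$. Iterating this over a (possibly infinite) regular sequence of $v_j$'s — using that $R_k$ commutes with the sequential homotopy colimit that assembles an infinite quotient such as $ABP/(v_0,v_1,\ldots,\widehat{v_n},\ldots)$ — gives $R_k(ABP) \cong BP$, $R_k(AP(n)) \cong P(n)$, and $R_k(Ak(n)) \cong k(n)$, matching the definitions recalled in e.g. \cite{JW}. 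For operation (ii), $R_k$ commutes with the mapping telescope defining $a_i^{-1}E$ (it is a left adjoint, hence preserves all homotopy colimits), and $R_k$ of the telescope over multiplication by $a_i$ is the telescope over multiplication by $a_i^{\Top}$, so $R_k(a_i^{-1}E) \cong (a_i^{\Top})^{-1}R_k(E)$. Combining the two then gives $R_k(AB(n)) \cong B(n)$, $R_k(AK(n)) \cong K(n)$, and $R_k(AE(n)) \cong E(n)$, i.e. $R_k(Ah) \cong h$ in every case listed.

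The main obstacle is making the compatibility of $R_k$ with infinite homotopy colimits (and with the telescope construction of localisations, and with $MGL$-module structures) fully rigorous at the point-set level, rather than just in the homotopy category: one must know that the stabilisation/telescope functors used in Definition \ref{AK(n)} to form $E/a_i$ and $a_i^{-1}E$ are modelled by homotopy colimits that the left Quillen functor $R_k$ preserves, and that $R_k$ respects the $MGL$-module structures so that these constructions are sent to the analogous $MU$-module constructions. For the module issue one can invoke the fact (used in the excerpt, cf. \cite[Lemma II.1.3]{EKMM} and \cite[page 99]{Bo}) that the category of $MGL$-modules is cocomplete and that $R_k$, being strict monoidal, carries $MGL$-modules to $MU$-modules; for the colimit issue one uses that a left Quillen functor preserves homotopy colimits. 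Once these bookkeeping points are granted, the argument is the routine induction sketched above, and I would present it by first reducing to the two cofibre-sequence/telescope lemmas and then running the induction over the generating regular sequence $(v_0, v_1, \ldots)$.
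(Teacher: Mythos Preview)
Your proposal is correct and takes essentially the same approach as the paper. The paper's proof is a one-line version of exactly what you wrote: it invokes $R_k(MGL)=MU$, $R_k(v_i)=v_i^{\Top}$, and the fact that $R_k$ preserves colimits (hence cofibres and telescopes); your argument is simply a careful unpacking of these three ingredients.
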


\begin{proof}
This follows from $R_k(MGL)=MU$ \cite[Section 3.5]{VMC} and $R_k(v_i)=v_i^{\Top}$ because $R_k$ preserves colimits. 
\end{proof}

\begin{rk}\label{AMU module}
\begin{compactenum}[(1)]
\item All spectra mentioned in the above definition are cellular: Theorem 6.4 in \cite{DI} shows that $MGL$ is cellular. Since the spectra above are constructed from $MGL$ by taking homotopy cofibers and homotopy colimits, they are cellular. The mod-$p$ version of the main result in \cite{Hoy} states that $H\Z/p$ is the cofiber of $v_n:\Sigma^{2(p^n-1),p^n-1}Ak(n)\rightarrow Ak(n)$. Hence, $H\Z/p$ is cellular, too.
\item $ABP$ is a homotopy commutative ring spectrum by construction \cite[Definition 5.3]{Vez} and the orientation of $MGL$ induces an orientation on $ABP$ by \cite[Theorem 1.1]{PPRUniv}. The ring structure of $ABP$ induces an $ABP$-module structure on the quotients of $ABP$ defined above.
\item Since $MGL$ is a ring spectrum, $ABP$ is also an $MGL_{(p)}$-module spectrum and so is
$ABP/I$ for $I\subseteq MU_\ast$ a regular ideal.
\item As remarked at the end of the introduction in \cite{MLE}, $ABP$ is Landweber exact in the sense of \cite{MLE} and their results for $MGL$ also hold for $ABP$.
\item $MGL_{2\ast,\ast}$ and $\h_{\ast\ast}MGL$ (with coefficients in $\Z$ or $\Z/p$) are known (see e.g. \cite[Theorem 5]{Bo} and \cite[Corollary 6.9]{Hoy}). From this, one can compute $Ah_{2\ast,\ast}$ and $\h_{\ast\ast}Ah$ if $Ah$ is one of the above spectra (\cite[Theorem 12]{Bo} and \cite[Lemma 6.10]{Hoy}). More can be said about $Ah_{\ast\ast}$ if $\h_{\ast\ast}(\Spec k)$ is known, see Lemma \ref{Ah}.
\item In general, however, $Ah_{\ast\ast}$ is not known in degrees different from $(2i,i)$. It is also not known whether $Ah$ can be given the structure of a ring spectrum (except for $Ah=ABP$). Another open question is whether $AK(n)$ satisfies the K\"unneth formula like $K(n)$.
\end{compactenum}
\end{rk}

We will make use of motivic Atiyah-Hirzebruch spectral sequences as discovered by Hopkins and Morel and worked out by Hoyois \cite[Example 8.13]{Hoy}. See also \cite[Section 11]{HtpyConivTower}.

\begin{prop}\label{AHSS}
Let $h=MU_{(p)}/I$ and $Ah=MGL_{(p)}/I$ for some regular ideal $I\subseteq MU_\ast$.
For $X\in\SH(k)^{fin}_{(p)}$, there are strongly convergent spectral sequences: 
\[E_2^{p,q,t}=\h^{p+2t,q+t}(X,h_{t})\Rightarrow Ah^{p,q}(X),\]
\[E_2^{p,q,t}=\h^{p+2t,q+t}(X,(v_n^{-1}h)_{t})\Rightarrow (v_n^{-1}Ah)^{p,q}(X).\]
\end{prop}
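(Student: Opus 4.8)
The plan is to obtain both spectral sequences from Voevodsky's slice filtration, following \cite[Example 8.13]{Hoy}. Recall that each $E\in\SH(k)$ carries the slice tower $\cdots\to f_{t+1}E\to f_tE\to\cdots$, with homotopy colimit $E$ and slices $s_tE=\cof(f_{t+1}E\to f_tE)$. For compact $X$, applying $[X,\Sigma^{p,q}(-)]$ to this tower yields a spectral sequence whose starting page is $\bigoplus_t[X,\Sigma^{p,q}s_tE]$, abutting to $E^{p,q}(X)=[X,\Sigma^{p,q}E]$, and conditionally convergent provided the tower is exhaustive (automatic) and the homotopy limit of the $f_tE$ vanishes. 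The proof then splits into three tasks: (i) compute the slices of $Ah$ and of $v_n^{-1}Ah$; (ii) rewrite the starting page in terms of motivic cohomology of $X$; (iii) prove strong convergence for $X\in\SH(k)^{fin}_{(p)}$.

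For (i), I would invoke the Hopkins--Morel--Hoyois computation (Remark \ref{AMU module} and \cite{Hoy}): $s_tMGL\simeq\Sigma^{2t,t}H(MU_{2t})$, where $H(-)$ denotes the motivic Eilenberg--MacLane spectrum of an abelian group. Since $I\subseteq MU_\ast$ is generated by a regular sequence and the slice functors $f_t$, $s_t$ are exact and commute with the homotopy colimits used to form $MGL_{(p)}/I$, an induction over the regular sequence (via the cofiber sequences $\Sigma^{\ast,\ast}E\xrightarrow{v}E\to E/v$) gives $s_t(Ah)\simeq\Sigma^{2t,t}H(h_t)$ with $h=MU_{(p)}/I$. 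As $v_n$ has bidegree $(2(p^n-1),p^n-1)$ and $f_t$ commutes with filtered homotopy colimits, inverting $v_n$ merely translates the slice indices and produces $s_t(v_n^{-1}Ah)\simeq\Sigma^{2t,t}H\bigl((v_n^{-1}h)_t\bigr)$.

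For (ii), substituting these identifications,
\[
[X,\Sigma^{p,q}s_t(Ah)]=[X,\Sigma^{p+2t,\,q+t}H(h_t)]=\h^{p+2t,\,q+t}(X,h_t),
\]
and likewise with $h_t$ replaced by $(v_n^{-1}h)_t$. Since the slices are Eilenberg--MacLane spectra, these motivic cohomology groups are exactly the terms conventionally displayed as the $E_2$-page of the slice spectral sequence, so (ii) is immediate once (i) is in hand; the abutments are $Ah^{p,q}(X)$, resp.\ $(v_n^{-1}Ah)^{p,q}(X)$.

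The main obstacle is (iii), strong convergence --- particularly delicate for the periodic spectrum $v_n^{-1}Ah$, whose slices are nonzero in unboundedly many degrees. Here I would use that, by Proposition \ref{prop dualizable}, $X\in\SH(k)^{fin}_{(p)}$ is strongly dualizable with dual $DX$ again a finite cell spectrum, so that the slice spectral sequence computing $Ah^{\ast\ast}(X)$ may be replaced by the one for the effective-type spectrum $Ah\wedge DX$ (and similarly for $v_n^{-1}Ah\wedge DX$). For a finite cell spectrum $X$ the motivic cohomology $\h^{a,b}(X,M)$ vanishes once $b$ leaves a bounded range determined by the cell structure of $X$ alone; combined with the known connectivity of the slices of $MGL$, and hence of $Ah$, this forces only finitely many slices $s_t$ to contribute in each fixed total degree. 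That bounds the filtration, kills the obstructing $\lim^1$-term, and upgrades conditional to strong convergence; the same argument handles $v_n^{-1}Ah$ because inverting $v_n$ only shifts the tower, and one may alternatively appeal to the convergence results of \cite[Section 11]{HtpyConivTower}. Assembling (i)--(iii) yields the two stated spectral sequences.
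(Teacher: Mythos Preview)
Your overall strategy---slice filtration, identification of slices via Hopkins--Morel--Hoyois, and rewriting the $E_2$-page as motivic cohomology---is exactly what the paper does: the proposition is simply a citation of \cite[Example 8.13]{Hoy}, and Remark~\ref{remark AHSS} spells out precisely your steps (i) and (ii). So on the level of method there is no real difference.

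Where you diverge from the paper is in step (iii), and here your argument is shakier than the paper's. The paper (Remark~\ref{remark AHSS}(2)) handles convergence by quoting Hoyois's convergence result for $X\in\Sm/S$ and then extending to $X\in\SH(k)^{fin}_{(p)}$ by motivic cellular induction. Your route---passing to $Ah\wedge DX$ via dualisability and then invoking a vanishing range for $\h^{a,b}(X,M)$---has two problems. First, the dualisability step is unnecessary and, as written, goes nowhere: after replacing the question by one about $Ah\wedge DX$ you immediately revert to speaking about $\h^{a,b}(X,M)$, so the detour accomplishes nothing. Second, and more seriously, your vanishing claim is false as stated: it is not true that $\h^{a,b}(X,M)$ vanishes once $b$ leaves a bounded range---already for $X=S^{0,0}$ over $\C$ one has $\h^{0,b}(\Spec\C,\Z/p)\cong\F_p$ for all $b\geq 0$. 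What actually makes only finitely many $t$ contribute in a fixed bidegree $(p,q)$ is the vanishing $\h^{a,b}(\Spec k,M)=0$ for $a>b$ (bounding $t$ above, since $a-b=p-q+t$) together with vanishing in negative weights $b<0$ (bounding $t$ below in the periodic case). This is essentially what Hoyois's notion of ``convergent'' in \cite[Section 8.5]{Hoy} encodes; the paper is right to simply cite it and then induct over cells rather than reprove it.
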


\begin{rk}\label{remark AHSS}
\begin{compactenum}[(1)]
\item 
These are the spectral sequences associated with the slice filtrations of $Ah$ and $v_n^{-1}Ah$. 
The $n$-th truncation of $MGL$ in the slice filtration is described in the proof of \cite[Theorem 4.6]{Spitz} as the colimit of a certain diagram $D_{\deg\geq n}$, meaning that $f_n MGL$ is constructed from $MGL$ by quotienting out all monomials $a_1^{k_1}\cdots a_m^{k_m}$ with $a_i\in MGL_{2\ast,\ast}$ and $\sum_{i=1}^m ik_i\geq n$. The same construction with $MU$ instead of $MGL$ yields the Postnikov truncation of $MU$ because $MU_\ast=\Z[a^{\Top}_1,a^{\Top}_2,\cdots]$ with $|a^{\Top}_i|=2i$. It follows that, for $k\subseteq\C$, $R_k:\SH(k)\rightarrow \SH$ maps the slice filtration of $MGL$ to the Postnikov filtration of $MU$. By the construction of $Ah$ and $v_n^{-1}Ah$ from $MGL$, this implies that their slice filtrations also realise to the Postnikov filtrations of $h$ and $v_n^{-1}h$. For the associated spectral sequences, this means that $R_k$ maps the above spectral sequences to the analogous topological Atiyah-Hirzebruch spectral sequences.
\item
In \cite[Example 8.13]{Hoy}, the convergence is stated for $X\in\Sm/S$ and follows from $Ah$, $v_n^{-1}Ah$ being convergent with respect to $[\Sigma^{0,q}\Sigma^\infty X_+,-]$, $X\in\Sm/S$, in the sense of \cite[Section 8.5]{Hoy}. But this implies convergence with respect to $[X,-]$ for all finite cell spectra $X$ by motivic cellular induction. Hence, the sequences converge for all finite cell spectra $X$. Note that \cite[Example 8.13]{Hoy} shows the convergence for Landweber exact spectra, but his proof holds for quotients of $MGL$ as well.
\end{compactenum}
\end{rk}

In Chapters \ref{motivic type n} and \ref{chapter BC}, we will often assume $k=\C$ to be able to prove more results than for general $k$. The reason is that the coefficients of $H\Z/p$ are particularly simple in this case.

\begin{lemma}\label{H(C)}
For $k=\C$,
\[\h^{\ast\ast}(\Spec(\C),\Z/p)\cong\F_p[\tau]\]
with $\deg(\tau)=(0,1)$.
\end{lemma}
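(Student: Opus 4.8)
The plan is to rewrite $\h^{a,b}(\Spec\C,\Z/p)$ as the motivic cohomology group $H^a_{\mathcal M}(\Spec\C,\Z/p(b))$ (the groups represented by $H\Z/p$, with $a$ the cohomological degree and $b$ the weight) and to compute them weight by weight. It is standard that over a field $k$ the mod-$p$ motivic complex $\Z/p(b)$ vanishes for $b<0$ and is concentrated in cohomological degrees $0\le a\le b$ for $b\ge 0$; in particular $H^a_{\mathcal M}(\Spec\C,\Z/p(b))=0$ whenever $b<0$, and the weight-$0$ part is $H^a_{\mathcal M}(\Spec\C,\Z/p(0))=\Z/p$ for $a=0$ and $0$ otherwise. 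So the content lies in the weights $b\ge 1$.

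For those the key input is the Beilinson--Lichtenbaum isomorphism, which is a theorem as a consequence of the norm residue isomorphism theorem (formerly the Bloch--Kato conjecture) of Voevodsky, Rost and others: for a field $k$ of characteristic $0$ and every $b\ge 0$,
\[H^a_{\mathcal M}(\Spec k,\Z/p(b))\;\cong\;H^a_{\mathrm{et}}(\Spec k,\mu_p^{\otimes b})\quad\text{for }a\le b,\]
while $H^a_{\mathcal M}(\Spec k,\Z/p(b))=0$ for $a>b$. I would then specialise to $k=\C$: since $\C$ is algebraically closed, its absolute Galois group is trivial, so $H^a_{\mathrm{et}}(\Spec\C,-)=0$ for all $a>0$ and $H^0_{\mathrm{et}}(\Spec\C,\mu_p^{\otimes b})=\mu_p(\C)^{\otimes b}$, which is cyclic of order $p$, a choice of primitive $p$-th root of unity furnishing a generator. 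Combining with the previous paragraph, $\h^{a,b}(\Spec\C,\Z/p)$ is $\Z/p$ exactly when $a=0$ and $b\ge 0$, and $0$ otherwise; this holds uniformly in $p$, since over $\C$ the only possible extra class, $\rho=[-1]\in\h^{1,1}(\Spec\C,\Z/p)=\C^\times/(\C^\times)^p$, vanishes because $-1$ is a $p$-th power in $\C^\times$.

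It remains to identify the graded ring. Since the cohomology is concentrated in cohomological degree $0$, the cup product is determined by the maps $\h^{0,b}\otimes\h^{0,b'}\to\h^{0,b+b'}$, which under the identifications above are the natural pairings $\mu_p^{\otimes b}\otimes\mu_p^{\otimes b'}\to\mu_p^{\otimes(b+b')}$; these are isomorphisms onto the target. Hence, fixing a generator $\tau\in\h^{0,1}(\Spec\C,\Z/p)=\mu_p(\C)$, the power $\tau^b$ generates $\h^{0,b}$ for each $b\ge 0$, and the graded ring homomorphism $\F_p[\tau]\to\h^{\ast\ast}(\Spec\C,\Z/p)$ sending $\tau$ to $\tau$ is an isomorphism, with $\deg(\tau)=(0,1)$ as claimed.

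The only genuine obstacle is that the argument rests on the norm residue isomorphism theorem, which is deep; granting it, the computation is entirely routine, and in practice one may simply quote the statement, which is recorded in the literature (Voevodsky's computation at $p=2$ and its odd-primary analogues that are used throughout the work on the motivic Adams spectral sequence).
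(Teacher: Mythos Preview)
Your argument is correct. The paper does not actually prove this lemma; it simply records it as a known computation, citing Voevodsky's \cite[Equation (74)]{Voe10}. Your proposal supplies the standard computation behind that citation: reduce to \'etale cohomology via Beilinson--Lichtenbaum (equivalently, the norm residue isomorphism theorem), observe that $\Spec\C$ has trivial Galois cohomology in positive degrees, and read off the ring structure from the pairings on $\mu_p^{\otimes b}$. So there is nothing to compare at the level of methods---the paper treats the statement as input, while you have sketched why it holds. Your remark that the argument ultimately rests on deep theorems of Voevodsky and Rost is accurate and is precisely why the paper opts for a citation rather than a proof.
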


\begin{proof}
This is \cite[Equation (74)]{Voe10}. 
\end{proof}

This, and the above spectral sequence, can be used to calculate the coefficients of theories like $Ak(n)$.

\begin{lemma}\label{Ah}
Let $k=\C$ and $h=MU_{(p)}/I$ with $(p)\subseteq I\subseteq MU_\ast$ a regular ideal. Let $Ah=MGL_{(p)}/I$. Then 
\[Ah_{\ast\ast}\cong \h_{\ast\ast}(\Spec\C,\F_p)\otimes_{\F_p} h_\ast\cong h_\ast[\tau].\]
\end{lemma}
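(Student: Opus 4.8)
The plan is to compute $Ah_{\ast\ast} = \pi_{\ast\ast}(Ah)$ by specialising the motivic Atiyah--Hirzebruch (slice) spectral sequence of Proposition \ref{AHSS} to $X = S^0$, and to observe that over $k = \C$ this spectral sequence degenerates at $E_2$ with no extension problems, thanks to the extreme sparseness of the mod-$p$ motivic cohomology of a point over $\C$ (Lemma \ref{H(C)}). Recall that, by the definition of $Ah^{\ast\ast}$, one has $Ah^{p,q}(S^0) = \pi_{-p,-q}(Ah) = Ah_{-p,-q}$, so it suffices to analyse $Ah^{\ast\ast}(S^0)$.

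First I would set up the $E_2$-page. Since $(p)\subseteq I$, the coefficient ring $h_\ast = MU_\ast/I$ is a graded $\F_p$-algebra concentrated in even topological degrees, so every $h_t$ is an $\F_p$-vector space; hence the spectral sequence of Proposition \ref{AHSS} for $X=S^0$ takes the form
\[E_2^{p,q,t} = \h^{p+2t,\,q+t}(\Spec\C, h_t) \cong h_t \otimes_{\F_p} \h^{p+2t,\,q+t}(\Spec\C,\F_p) \;\Longrightarrow\; Ah^{p,q}(S^0).\]
By Lemma \ref{H(C)}, $\h^{a,b}(\Spec\C,\F_p)$ is one-dimensional, spanned by $\tau^{b}$, when $a=0$ and $b\geq 0$, and zero otherwise. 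Thus $E_2^{p,q,t}\neq 0$ forces $p+2t=0$. In particular, in each fixed bidegree $(p,q)$ exactly one slice degree $t=-p/2$ contributes, which removes all extension problems; and since a differential $d_r$ changes the index $p$ by one and the slice index $t$ by $r$ (leaving $q$ fixed), it changes $p+2t$ by an odd integer and hence must vanish. So the spectral sequence collapses at $E_2$, and by its strong convergence (Proposition \ref{AHSS}, Remark \ref{remark AHSS}) we obtain
\[Ah^{p,q}(S^0) \cong h_{-p}\otimes_{\F_p}\h^{0,\,q-p/2}(\Spec\C,\F_p),\]
which, after setting $(s,w)=(-p,-q)$ and using $\h_{a,b}(\Spec\C,\F_p)=\h^{-a,-b}(\Spec\C,\F_p)$, reads
\[Ah_{s,w} \cong h_s\otimes_{\F_p}\h_{0,\,w-s/2}(\Spec\C,\F_p).\]
Summing over all bidegrees, this says precisely that $Ah_{\ast\ast}\cong\h_{\ast\ast}(\Spec\C,\F_p)\otimes_{\F_p}h_\ast$; substituting $\h_{\ast\ast}(\Spec\C,\F_p)\cong\F_p[\tau]$ from Lemma \ref{H(C)} rewrites this as $h_\ast[\tau]$, as claimed.

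The structural ingredients, namely degeneration and strong convergence, are essentially automatic here, so the only delicate point is the bigraded bookkeeping: one must be careful that the slice of index $t$ really lives in motivic weight $t$ and topological degree $2t$ (matching $MGL_{2t,t}$ and $MU_{2t}$), so that Lemma \ref{H(C)} is applied in the right bidegrees, and that the ``one slice per bidegree'' observation genuinely splits the slice filtration in each bidegree, not merely its associated graded.
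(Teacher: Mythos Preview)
Your proof is correct and follows essentially the same approach as the paper: apply the motivic Atiyah--Hirzebruch (slice) spectral sequence at the point $X=\Spec\C$, use Lemma~\ref{H(C)} to see that the $E_2$-page is concentrated on the line $p+2t=0$, and conclude that the spectral sequence collapses. Your version is a bit more explicit---you spell out the parity argument for the vanishing of differentials and note that the ``one slice per bidegree'' observation also resolves extensions---but the core argument is identical to the paper's.
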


\begin{proof}
This is remarked in \cite{Ya}, below Corollary 3.9. The reason is the following: For $X=\Spec \C$, the motivic Atiyah-Hirzebruch spectral sequence is
\[\h^{p+2t,q+t}(\Spec\C,\F_p)\otimes h_t\Rightarrow Ah^{p,q}(\Spec\C).\]
By Lemma \ref{H(C)}, $\h^{\ast\ast}(\Spec\C,\F_p)\cong \F_p[\tau]$ with $\deg(\tau)=(0,1)$. Thus, for fixed $t$, $E_2^{p,q,t}=\h^{p+2t,q+t}(X,h_{t})$ can only be nonzero in the column $p=-2t$, which implies that all differentials vanish.
Therefore, the spectral sequence collapses immediately, proving $Ah_{\ast\ast}\cong \h_{\ast\ast}\otimes h_\ast$. 
\end{proof}

\subsection{Thick ideals and Morava K-theories}\label{section motivic model}

Let $K$ be a cellular spectrum in $\SH(k)$, $k\subseteq\C$, such that $R_k(K)=K(n)$ is the $n$-th Morava K-theory with respect to a fixed prime $p$. We call such a $K$ a motivic model for $K(n)$. 

Let $\mathcal C_K=\{X\in(\SH(k)_f)_{(p)}\; |\; K\wedge X\cong 0\}$, which is a thick ideal in the $p$-localised category $(\SH(k)_f)_{(p)}$ by Lemma \ref{Lemma C_E}. If $K\wedge X\cong 0$, then $R_k(K\wedge X)\cong K(n)\wedge R_k(X)\cong 0$. Consequently, $R_k(X)\in\mathcal C_{n+1}$ for any $X\in\mathcal C_K$. This is content of the folowing proposition.

\begin{prop}\label{motivic model}
For $K$ a motivic model for $K(n)$, we have an inclusion of thick ideals in $(\SH(k)_f)_{(p)}$:
\[\mathcal C_K\subseteq R_k^{-1}(\mathcal C_{n+1}).\]
The same is true in the category $\SH(k)^{fin}_{(p)}$.
\end{prop}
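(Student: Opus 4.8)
The statement is essentially a formal consequence of the fact that $R_k$ is a symmetric monoidal exact functor together with the computation $R_k(K) = K(n)$ built into the definition of a motivic model. The plan is as follows. First I would take $X \in \mathcal C_K$, so that $K \wedge X \cong 0$ in $(\SH(k)_f)_{(p)}$. Applying $R_k$, which preserves smash products (it is strict symmetric monoidal by Chapter \ref{functors}) and preserves zero objects (it is a triangulated functor, being a left adjoint), yields $R_k(K) \wedge R_k(X) \cong R_k(K \wedge X) \cong 0$. Since $R_k(K) = K(n)$ by the defining property of a motivic model, this says $K(n) \wedge R_k(X) \cong 0$, i.e.\ $K(n)_\ast(R_k(X)) = 0$. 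By Definition \ref{type} and the definition of $\mathcal C_{n+1}$ in Chapter \ref{introduction} (recall $\mathcal C_{n+1} = \{Y \in \SH^{fin}_{(p)} \mid K(n)_\ast(Y) = 0\}$), this is precisely the statement $R_k(X) \in \mathcal C_{n+1}$, hence $X \in R_k^{-1}(\mathcal C_{n+1})$.

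Two small technical points need attention. One is the passage to $p$-localised categories: I should note that $R_k$ descends to a functor $(\SH(k)_f)_{(p)} \to \SH^{fin}_{(p)}$ still preserving smash products and exact triangles, which was already arranged in the proof of Theorem \ref{lower bound} (the $p$-local Moore spectrum on the motivic side maps to its topological counterpart). The other is that $R_k$ restricts to a functor landing in $\SH^{fin}$ on the subcategory $\SH(k)_f$: this is exactly Proposition \ref{compact1}, which tells us $\SH(k)_f \subseteq R_k^{-1}(\SH^{fin})$, so the composite $R_k|_{\SH(k)_f}$ really does take values in finite spectra, where the notion of type and the categories $\mathcal C_{n+1}$ are defined. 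Once these are in place, the argument above goes through verbatim.

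For the final sentence of the proposition — the same inclusion in $\SH(k)^{fin}_{(p)}$ — I would simply observe that $\SH(k)^{fin}_{(p)} \subseteq (\SH(k)_f)_{(p)}$ by Proposition \ref{prop dualizable}, that $\mathcal C_K$ as defined on $\SH(k)^{fin}_{(p)}$ is the intersection of the $\mathcal C_K$ in $(\SH(k)_f)_{(p)}$ with $\SH(k)^{fin}_{(p)}$ (both are cut out by the condition $K \wedge X \cong 0$, using Lemma \ref{Lemma C_E} applied to the tensor triangulated subcategory $\SH(k)^{fin}_{(p)}$), and that $R_k$ restricts appropriately — so the inclusion $\mathcal C_K \subseteq R_k^{-1}(\mathcal C_{n+1})$ is inherited by restriction.

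\textbf{Main obstacle.} Honestly there is no deep obstacle here; the content is bookkeeping about which functors restrict to which subcategories and behave well under $p$-localisation. The one place requiring a little care — and the thing I would flag as the crux — is making sure $R_k$ actually lands in $\SH^{fin}_{(p)}$ when restricted to compact objects, since the target categories $\mathcal C_{n+1}$ and the whole Hopkins--Smith picture only make sense for finite spectra; but this is precisely the content of Proposition \ref{compact1} (preservation of compactness), so the work has already been done and the present proof can just cite it.
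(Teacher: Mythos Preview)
Your proposal is correct and follows exactly the paper's approach: the paper proves this proposition in the paragraph immediately preceding it, by observing that $K\wedge X\cong 0$ implies $R_k(K\wedge X)\cong K(n)\wedge R_k(X)\cong 0$, whence $R_k(X)\in\mathcal C_{n+1}$. Your additional bookkeeping about $p$-localisation and Proposition~\ref{compact1} is more explicit than the paper's treatment (which relies on these facts having been established earlier), but the core argument is identical.
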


\begin{eg}
The motivic Morava K-theory spectra $AK(n)$, as defined in Definition \ref{AK(n)}, satisfy $R_k(AK(n))=K(n)$ and are cellular. In particular, they satisfy the previous proposition. 

Another possibility for such a spectrum $K$ is the constant Morava K-theory spectrum $c_k (K(n))$. In Sections \ref{vanishing} - \ref{constant} we will have a closer look at the thick ideals $\mathcal C_{AK(n)}$ and $\mathcal C_{c_k K(n)}$ for $k=\C$.
\end{eg}

\begin{rk}\label{infinite}
The spectrum $K(n)$ is not finite. One possibility to see this is by the equivalence \cite[Theorem 2.1.(h) and (i)]{RavLoc}:
\[K(m)\wedge K(n)\cong 0\Leftrightarrow m\neq n\] 
If $K(n)$ were finite, we would have $K(m)_\ast K(n)=0\Rightarrow K(m-1)_\ast K(n)=0$ by \cite[Theorem 2.11]{RavLoc}, which is wrong for $n=m-1$. As a consequence, any spectrum $K$ with $R_k(K)=K(n)$ can also not be finite cellular.
\end{rk}

\begin{rk}
Another corollary of the above equivalence and of \cite[Theorem 2.1.(h)]{RavLoc} is the following: If $K$ and $K'$ are motivic models for $K(n)$, then $R_k(K\wedge K')=K(n)\wedge K(n)$, which has the same Bousfield class as $K(n)$. In particular, any $X\in \mathcal C_{K\wedge K'}$ satisfies $R_k(X)\in \mathcal C_{n+1}\setminus \mathcal C_{n+2}$. Hence, 
\[\mathcal C_{K\wedge K'}\subseteq R_k^{-1}(\mathcal C_{n+1}) \textup{ and } \mathcal C_{K\wedge K'}\nsubseteq R_k^{-1}(\mathcal C_{n+2}).\]
\end{rk}

\chapter{$\SH(k)_\f$ has more thick ideals than $\SH^{\fin}$}\label{nilpotence}

\section{The motivic Hopf map}

In this chapter, we study the cofiber of the motivic Hopf map, which generates a thick ideal that is not of the form $R_k^{-1}(\mathcal C_n)$ or $\mathcal C_{AK(n)}$. The first part of this claim was proven by Balmer in \cite[Proposition 10.4]{Balmer}. We will reprove and extend this result, as well as summarise and apply other results of Balmer's work on prime ideals in tensor triangulated categories. 

For $k\subseteq\R$, we have already shown that $\SH(k)_f$ has more thick ideals than $\SH^{fin}$. This chapter is, therefore, in particular interesting for $k=\C$.

To understand why there cannot be a complete analogy between motivic thick ideals and topological thick ideals, recall the reasoning in the topological case: the thick subcategory (i.e., thick ideal) theorem of Hopkins and Smith \cite{HS} is derived from the fact that Morava K-theories detect nilpotence, which follows from the theorem that $MU$ detects nilpotence. This is not the case in the motivic setting.\\

The Hopf map in $\SH(k)$ is given by $\eta:\A^2\setminus\{0\}\rightarrow \P^1$, $(x,y)\mapsto [x:y]$, defining an element $\eta\in\pi_{1,1}(S)$. Unlike the topological Hopf element, $\eta$ is not nilpotent \cite[Theorem 4.7]{MorelA1algtop}. 
The following lemma is due to Morel, see e.g. \cite{MorelPi}, but we could not find a proof in the literature. It shows that the spectrum $MGL$ does not detect the non-nilpotence of $\eta$. 

\begin{lemma}
\[MGL\wedge \eta\cong 0\]
\end{lemma}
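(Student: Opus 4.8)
The plan is to show that $\eta$ acts nilpotently — in fact trivially — on $MGL$, which by stability of the cofiber sequence will give $MGL \wedge C\eta \cong MGL \vee \Sigma^{2,1}MGL$ and hence, after proving $MGL \wedge \eta \cong 0$ in the form we want, the statement. The key structural fact is that $MGL$ is an \emph{oriented} motivic ring spectrum (it carries a universal formal group law, see the references to \cite{VMC} and \cite{PPRUniv} already cited in the excerpt). Orientability forces the action of $\eta$ to vanish: this is the motivic analogue of the classical statement that $\eta$ dies in any complex-orientable theory. So the main line of argument is: (1) recall that $MGL$ is oriented; (2) show that for any oriented ring spectrum $E \in \SH(k)$, multiplication by $\eta \in \pi_{1,1}(S)$ on $E$ is zero; (3) conclude $MGL \wedge \eta \cong 0$.

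For step (2), I would argue as follows. An orientation on $E$ gives a Thom class, equivalently an element $c_1^E \in E^{2,1}(\P^\infty)$ restricting to a generator of $\widetilde{E}^{2,1}(\P^1)$, and the projective bundle formula gives $E^{\ast\ast}(\P^\infty \times \P^\infty) = E^{\ast\ast}[[x,y]]$ together with a formal group law $F_E$. The Hopf map $\eta$ is, up to sign and suspension, the attaching map of the $4$-cell of $\P^2$ (equivalently it is detected by the relation $[2](x) \equiv 2x \pmod{\text{higher}}$, with the coefficient of $x^2$ governing $\eta$). Concretely: in $\widetilde{E}^{\ast\ast}(\P^2)$ one has $x^2 = \eta \cdot (\text{something})$, reflecting the cofiber sequence $S^{3,2} \xrightarrow{\eta} S^{2,1} \to \P^2 \to S^{4,2}$; but in an oriented theory the class $x$ exists already on $\P^\infty \supseteq \P^2$ with $x^2 \neq 0$ coming from the genuine second power, so the would-be obstruction vanishes — more precisely, existence of a Chern class on all of $\P^\infty$ splits the relevant cofiber sequence after smashing with $E$, forcing $E \wedge \eta = 0$. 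Equivalently and more cleanly: Morel's computation of $\pi_{\ast\ast}(S)$ shows $\eta$ generates a summand detected by the motivic Hopf map / Milnor--Witt $K$-theory, and $MGL_{\ast\ast}$ (being a quotient/localization built from the Lazard ring, as recalled in Remark~\ref{AMU module} and Definition~\ref{AK(n)}) receives the unit map $S \to MGL$ which kills the $\eta$-divisible part; so $\eta \mapsto 0$ in $MGL_{1,1}$, and since $\eta$ acts through the ring structure, $MGL \wedge \eta = \eta \cdot \id_{MGL} = 0$.

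The cleanest route I would actually write up: the element $\eta \in \pi_{1,1}(S)$ maps under the unit $S \to MGL$ to an element of $MGL_{1,1}$; but $MGL_{2\ast,\ast} = \L$ (the Lazard ring, concentrated in bidegrees $(2i,i)$) and $MGL_{\ast\ast}$ in the relevant low degrees is known (cited as \cite[Theorem~5]{Bo}), and in particular $MGL_{1,1} = 0$ — there is simply no room. Hence the composite $S^{1,1} \xrightarrow{\eta} S \xrightarrow{\text{unit}} MGL$ is null, i.e. $MGL \wedge \eta \colon \Sigma^{1,1} MGL \to MGL$, which factors as $\eta$ smashed with $\id_{MGL}$ and thus through the null composite above smashed with $MGL$, is zero in $\SH(k)$.

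The main obstacle is pinning down precisely \emph{which} input about $MGL_{\ast\ast}$ one is entitled to cite and making the "no room in bidegree $(1,1)$" claim airtight: one needs that $MGL$ is an oriented (equivalently Landweber-exact-type, as in \cite{MLE}) ring spectrum so that the slice/Atiyah--Hirzebruch description (Proposition~\ref{AHSS}, Remark~\ref{AMU module}(5)) pins down $MGL_{1,1}$, together with $\h^{\ast\ast}(\Spec k, \Z)$ vanishing in the offending degree — over $\C$ this is immediate from Lemma~\ref{H(C)}-type computations, but for general $k \subseteq \C$ one should instead invoke orientability directly and the classical argument that $\eta$ maps to $0$ under $S \to E$ for any orientable $E$ (because $c_1$ of a line bundle is additive to first order: $F_E(x,y) \equiv x+y$ modulo degree $2$, which is exactly the statement that the $\eta$-component of the unit vanishes). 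I would therefore present the orientability argument as the primary proof and remark that over $\C$ it also follows by direct inspection of coefficients.
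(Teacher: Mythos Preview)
Your proposal converges on essentially the paper's argument, though the write-up meanders. The paper's proof is: the unit $u\colon S^{0,0}\to MGL$ factors through $\Sigma^{-2,-1}C\eta\cong\Sigma^{-2,-1}\P^2$ (the orientation class in $MGL^{2,1}(\P^\infty)$ restricted to $\P^2$ does this, citing Hoyois), and then one smashes the cofiber sequence $S^{1,1}\to S^{0,0}\to\Sigma^{-2,-1}C\eta$ with $MGL$ and uses $m\circ(1\wedge u)=\id_{MGL}$ to conclude $1_{MGL}\wedge\eta=0$. This is exactly your observation that ``existence of a Chern class on all of $\P^\infty$ splits the relevant cofiber sequence after smashing with $E$'', combined with your deduction that $u\circ\eta=0$ plus the ring structure yields $MGL\wedge\eta=0$. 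So the core is the same.

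Two points to clean up. First, your ``cleanest route'' via $MGL_{1,1}=0$ is not justified for general $k\subseteq\C$: as the paper itself records (Remark~\ref{AMU module}(6)), $MGL_{\ast\ast}$ is not known off the diagonal $(2i,i)$. You correctly flag this and fall back to orientability; the paper never touches coefficients, so stick with that. Second, your formal-group-law aside is not right: \emph{every} formal group law satisfies $F(x,y)\equiv x+y$ modulo degree~$2$ (this is forced by the axioms $F(x,0)=x$, $F(0,y)=y$), so that congruence carries no information and cannot be ``exactly the statement that the $\eta$-component of the unit vanishes''. The actual content is precisely the factorization of $u$ through $\Sigma^{-2,-1}C\eta$ coming from the orientation class on $\P^2$---which you already have. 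Drop the FGL remark and write up the factorization argument directly.
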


\begin{proof}
The unit map $u$ of the ring spectrum $MGL$ factors through $\Sigma^{-2,-1}C\eta$, as in the proof of \cite[Theorem 3.8]{Hoy}. Consider the diagram:
\[\xymatrix{\ar @{} [dr] |{}
MGL\wedge S^{1,1} \ar[d]^{\cong} \ar[rrr]^{1\wedge\Sigma^{-2,-1}\eta} &&& MGL\wedge S^{0,0}\ar[d]^{\cong}\ar[r]\ar[rd]_{1\wedge u} & MGL\wedge \Sigma^{-2,-1}C\eta \ar[d]\\
MGL\wedge S^{1,1}  \ar[rrr] &&& MGL & MGL\wedge MGL \ar[l]^{m}  }\]
The upper row is a cofiber sequence and $u,m$ are the structure maps of the ring spectrum $MGL$. The diagram is commutative. It follows that $1\wedge\Sigma^{-2,-1}\eta$ factors through its own cofiber, hence it must be zero. Suspending by $S^{2,1}$, we get $MGL\wedge\eta\cong 0$.
\end{proof}

It seems likely that also $AK(n)\wedge \eta\cong 0$, but this does not immediately follow from the above lemma. For our interests, it will suffice to know that $AK(n)\wedge C\eta \not\cong 0$, which we prove differently.

\begin{rk}
By \cite[Lemma 6.2.1]{MorelIntro}, $C\eta\cong \P^2$.
\end{rk}

\begin{prop}\label{RCeta}
For any $k\subseteq\C$ and any prime $p$, 
\[\thickid (R_k( C\eta_{(p)}))=\mathcal C_{0}=\SH^{fin}_{(p)}.\]
For $k\subseteq\R$, 
\[\thickid (R'_k( C\eta_{(p)}))=\begin{cases} \mathcal C_{0,1}\subset (\SH(\Z/2)_f)_{(2)} & \mbox{ if } p=2 \\ \mathcal C_{0,\infty}\subset(\SH(\Z/2)_f)_{(p)} & \mbox{ if } p\neq 2,  \end{cases}\]
where $\mathcal C_n$ is the thick ideal defined in Theorem \ref{HS} and $\mathcal C_{m,n}$ is the thick ideal defined in Corollary \ref{Cmn}.
\end{prop}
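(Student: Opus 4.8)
The statement computes the thick ideal generated by the realisation of $C\eta_{(p)}$ in the topological, resp.\ $\Z/2$-equivariant, setting. Since $R_k$ (resp.\ $R'_k$) is symmetric monoidal and sends the cofibre sequence $S^{1,1}\xrightarrow{\eta}S^{0,0}\to C\eta$ to the corresponding topological cofibre sequence, we have $R_k(C\eta)\cong C(R_k\eta)$, where $R_k\eta$ is the realisation of the motivic Hopf map. The plan is: first identify $R_k\eta$ and $R'_k\eta$ explicitly as the classical Hopf map $\eta^{\mathrm{top}}\in\pi_1(S^0)$, resp.\ an equivariant analogue; second, compute the type of the cofibre using Morava K-theory; third, invoke the thick subcategory theorem (Theorem \ref{HS}) in the topological case, resp.\ Strickland's classification via Corollary \ref{Cmn} and Theorem \ref{tau injective} in the equivariant case, to conclude that the generated thick ideal is the one asserted.

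\emph{Topological case ($k\subseteq\C$).} Since $R_k$ factors through $R=R_\C$, it suffices to treat $k=\C$. The realisation of $\eta:\A^2\setminus\{0\}\to\P^1$ on complex points is, up to homotopy, the classical Hopf fibration $S^3\to S^2$, so $R_\C(\eta_{(p)})=\eta^{\mathrm{top}}_{(p)}\in\pi_1(S^0_{(p)})$, and $R_\C(C\eta_{(p)})\cong C\eta^{\mathrm{top}}_{(p)}\cong (\Sigma^{-2}\C P^2)_{(p)}$. For $p=2$, $\eta^{\mathrm{top}}$ is nonzero, so $C\eta^{\mathrm{top}}$ is not a wedge of spheres; one checks $K(0)_\ast(C\eta^{\mathrm{top}}_{(2)})=H\Q_\ast(C\eta^{\mathrm{top}})\neq 0$, hence $\mathrm{type}(C\eta^{\mathrm{top}}_{(2)})=0$, so by Theorem \ref{HS} the thick ideal it generates is $\mathcal C_0=\SH^{fin}_{(2)}$. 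For $p$ odd, $\eta^{\mathrm{top}}$ is already $p$-locally null (it has order $2$), so $C\eta^{\mathrm{top}}_{(p)}\simeq S^0_{(p)}\vee S^2_{(p)}$, whose thick ideal is again all of $\SH^{fin}_{(p)}=\mathcal C_0$. Either way $\thickid(R_k(C\eta_{(p)}))=\mathcal C_0$.

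\emph{Equivariant case ($k\subseteq\R$).} Again reduce to $k=\R$. Here $R'_\R$ takes $\eta$ to a $\Z/2$-equivariant map whose geometric fixed-point behaviour must be determined: $\phi^{\{1\}}(R'_\R\eta)=\eta^{\mathrm{top}}$ by forgetting the action (recall $R'_k$ on underlying spectra recovers $R_k$ up to the usual comparison), and $\phi^{\Z/2}(R'_\R C\eta)=\phi^{\Z/2}R'_\R(\P^1_\R)^{\wedge\text{-related}}$ computed via $\phi^{\Z/2}\circ R'_\R$ sending $\P^1_\R$ to its real points $\R P^1\simeq S^1$; more carefully, $R'_\R(C\eta)=R'_\R(\P^2_\R)$ has underlying space $\C P^2$ with $\Z/2$ acting by conjugation and $\Z/2$-fixed points $\R P^2$. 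Thus one computes $\phi^{\{1\}}(R'_\R C\eta_{(p)})=C\eta^{\mathrm{top}}_{(p)}$, of type $0$ as above, and $\phi^{\Z/2}(R'_\R C\eta_{(p)})=(\Sigma^? \R P^2)_{(p)}$; for $p=2$ this is a type-$1$ spectrum (it has $H\F_2$-homology but the Morava $K(0)$ realisation is already the rational homology, which vanishes since $\R P^2$ is rationally trivial mod $2$—so in fact $K(0)_\ast=0$ while $K(1)_\ast\neq 0$, giving type $1$), whereas for odd $p$ the $p$-localisation of $\R P^2$ is contractible, so $\phi^{\Z/2}(R'_\R C\eta_{(p)})\simeq 0$, which is "type $\infty$". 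By Corollary \ref{Cmn} and the fact (Corollary \ref{mn}(1)) that a spectrum of type $(m,n)$ generates $\mathcal C_{m,n}$, we conclude $\thickid(R'_k(C\eta_{(p)}))=\mathcal C_{0,1}$ for $p=2$ and $\mathcal C_{0,\infty}$ for $p$ odd.

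\emph{Main obstacle.} The delicate point is the precise determination of the realisations $R_k(\eta)$ and especially $R'_\R(\eta)$ together with the fixed-point computations: one must verify that complex realisation of the algebraic Hopf map is indeed the topological Hopf map (via the explicit description $\A^2\setminus\{0\}\to\P^1$ on $\C$-points, which is the standard Hopf bundle $S^3\to S^2$), and then carefully track the $\Z/2$-conjugation action through $R'_\R$ to identify the geometric fixed points of $R'_\R(C\eta)=R'_\R(\P^2_\R)$ as $\R P^2$ (up to the correct suspension, which one reads off from $R'_\R(S^{1,1})=S(\Z/2)$ and $\phi^{\Z/2}S(\Z/2)=S^1$). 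Once the realisations and their fixed points are pinned down, the type computations are elementary (rational and mod-$p$ homology of $\C P^2$ and $\R P^2$) and the identification of the generated thick ideals is immediate from Theorem \ref{HS}, Corollary \ref{Cmn} and Corollary \ref{mn}(1).
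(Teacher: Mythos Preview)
Your proposal is correct and follows essentially the same approach as the paper: reduce to $k=\C$ or $k=\R$, identify the realisation explicitly, compute the (equivariant) type via Morava $K$-theories, and invoke Theorem~\ref{HS} resp.\ Corollary~\ref{mn}(1). The paper's identification of the $\Z/2$-fixed points is marginally more direct: instead of reading off $\R P^2$ as the real points of $\P^2$, it computes $R'(\eta)^{\Z/2}$ as the quotient $\R^2\setminus\{0\}\to\R P^1$, i.e.\ the degree-$2$ self-map of $S^1$, so that $\phi^{\Z/2}(R'(C\eta))\cong S/2$ (your $\R P^2$ up to a suspension), and then verifies $K(1)_\ast(S/2)\neq 0$ by noting that $K(1)$ is a summand of mod-$2$ topological $K$-theory---a justification your sketch leaves implicit.
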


\begin{proof}
Since, by definition, $R_k=R_\C\circ f^\ast$ for $f:k\hookrightarrow\C$, $R'_k=R_\R\circ f^\ast$ for $f:k\hookrightarrow \R$, and $f^\ast(\eta_k)=\eta_K$ for $f:k\hookrightarrow K$, it suffices to prove the claims for $k=\C$ and $k=\R$.

For $k=\C$, we have $R(C\eta)=R(\P^2)=\C P^2$. Since $K(0)_\ast(\C P^2_{(p)})=\h_\ast(\C P^2_{(p)},\Q)\neq 0$, $R(C\eta_{(p)})$ has type $0$. By \cite[Theorem 7]{HS}, any spectrum of type $0$ generates $\SH^{fin}_{(p)}$. So, $\thickid (R( C\eta_{(p)}))=\SH^{fin}_{(p)}$.

For $k=\R$, $R'(C\eta)^{\{1\}}=C\eta(\C)=\C P^2$ as before. Therefore, if $R'( C\eta_{(p)})\in\mathcal C_{m,n}$, then $m=0$. 

Furthermore, $R'(\eta)^{\Z/2}=\eta(\R)$ is the quotient map $\R^2\setminus\{0\}\rightarrow \R P^1$, which is isomorphic to $2:S^1\rightarrow S^1$. It follows that $R'(C\eta)^{\Z/2}=S/2$.

If $p\neq 2$, $S\overset{2}\rightarrow S$ is an isomorphism in $\SH^{fin}_{(p)}$ and, hence, $(S/2)_{(p)}=0$, which has type $\infty$. This proves $\thickid (R'( C\eta_{(p)}))=\mathcal C_{0,\infty}$ if $p\neq 2$. 

Now let $p=2$. Since $S\overset{2}\rightarrow S$ is an isomorphism rationally, we have $K(0)_\ast(S/2)=H_\ast(S/2,\Q)=0$. On the other hand, $K(1)$ is a direct summand of mod $2$ topological K-theory, so $K(1)_\ast(S\overset{2}\rightarrow S)=0$ and $K(1)_\ast(S/2)\neq 0$, as in the proof of \cite[Proposition 9.4]{Balmer}. Hence, the type of $S/2\in\SH^{fin}_{(2)}$ is $1$ and, therefore, $\thickid (R'_k((C\eta)_{(2)}))=\mathcal C_{0,1}$.
\end{proof}

In terms of the motivic thick ideals $\thickid( C\eta_{(p)})\subseteq(\SH(k)_f)_{(p)}$, the above proposition states the following: For $k\subseteq\C$, $\thickid( C\eta_{(p)})\subseteq R_k^{-1}(\mathcal C_{0})$ (which is a trivial statement) and $\thickid( C\eta_{(p)})\not\subseteq R_k^{-1}(\mathcal C_{n})$ for $n> 0$. For $k\subseteq\R$ and $p\neq 2$, $\thickid( C\eta_{(p)}) \subseteq (R'_k)^{-1}(\mathcal C_{0,\infty})$ and $\thickid( C\eta_{(p)}) \not\subseteq (R'_k)^{-1}(\mathcal C_{m,n})$ for any $m> 0$, $n$ arbitrary. If $p=2$, then $\thickid((C\eta)_{(2)}) \subseteq (R'_k)^{-1}(\mathcal C_{0,1})$ and $\thickid((C\eta)_{(2)}) \not\subseteq (R'_k)^{-1}(\mathcal C_{m,n})$ for any $m> 0$ or $n>1$.

The next proposition contains two more results on $\thickid( C\eta_{(p)})\subseteq(\SH(k)_f)_{(p)}$. 

\begin{prop}\label{counter example}
For $k\subseteq\C$, let $\thickid( C\eta_{(p)})\subseteq(\SH(k)_f)_{(p)}$ denote the thick ideal generated by the $p$-localised cofiber of the Hopf map. Then the following hold:
\begin{compactenum}[(1)]
\item $\thickid( C\eta_{(p)})\not\subseteq \mathcal C_{AK(n)} $ for any $ n\geq 0$ and any prime $p$,
\item $\thickid( C\eta_{(p)})\not \subseteq R_k^{-1}(\mathcal C_n)$ for any $ n> 0$ and any prime $p$,
\item $\thickid( C\eta_{(p)})\subsetneq \thickid(S^0_{(p)})=(\SH(k)_f)_{(p)}$ if $k\subseteq \R$ and $p$ is any prime or $k\subseteq \C$ and $p=2$.
\item For any prime $p$, the thick ideals $\thickid( C\eta_{(p)})\cap R_k^{-1}(\mathcal C_n)$ are distinct for different $n\geq 0$ and in particular nonzero if $n<\infty$.
\end{compactenum}
\end{prop}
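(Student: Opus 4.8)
The plan is to treat the four parts almost independently, each reducing either to a monoidality property of the realisation $R_k$ (together with the Hopkins--Smith theorem) or to a non-vanishing statement about the $\eta$-inverted sphere.

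For \emph{(1)} and \emph{(2)} I would argue with the generator $C\eta_{(p)}$ itself. In \emph{(1)}, since $\mathcal C_{AK(n)}=\{X\in(\SH(k)_f)_{(p)}\mid X\wedge AK(n)\cong 0\}$, it suffices to show $C\eta_{(p)}\wedge AK(n)\not\cong 0$: then $C\eta_{(p)}\notin\mathcal C_{AK(n)}$, hence $\thickid(C\eta_{(p)})\not\subseteq\mathcal C_{AK(n)}$. Because $R_k$ is symmetric monoidal with $R_k(AK(n))=K(n)$ (the lemma after Definition~\ref{AK(n)}), one has $R_k(C\eta_{(p)}\wedge AK(n))\cong R_k(C\eta_{(p)})\wedge K(n)$; by Proposition~\ref{RCeta} the topological spectrum $R_k(C\eta_{(p)})$ generates $\mathcal C_0$, i.e. has type $0$, so $K(n)_\ast R_k(C\eta_{(p)})\neq 0$ for every $n\geq 0$, and therefore $C\eta_{(p)}\wedge AK(n)\not\cong 0$ since $R_k(0)\cong 0$. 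For \emph{(2)}: if $\thickid(C\eta_{(p)})\subseteq R_k^{-1}(\mathcal C_n)$ then $R_k(C\eta_{(p)})\in\mathcal C_n$, hence $\SH^{fin}_{(p)}=\thickid\bigl(R_k(C\eta_{(p)})\bigr)\subseteq\mathcal C_n$ by Proposition~\ref{RCeta}, which is false for $n>0$.

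For \emph{(3)} I would produce a proper thick ideal of $(\SH(k)_f)_{(p)}$ containing $C\eta_{(p)}$. Let $\eta^{-1}S^0_{(p)}\in\SH(k)$ be the $\eta$-inverted sphere, i.e. the homotopy colimit of $S^0_{(p)}\xrightarrow{\eta}\Sigma^{-1,-1}S^0_{(p)}\xrightarrow{\eta}\Sigma^{-2,-2}S^0_{(p)}\to\cdots$, on which $\eta$ acts invertibly. Smashing the cofibre sequence $S^{1,1}\xrightarrow{\eta}S^{0,0}\to C\eta$ with $\eta^{-1}S^0_{(p)}$ yields $C\eta_{(p)}\wedge\eta^{-1}S^0_{(p)}\cong 0$, so $C\eta_{(p)}$ lies in the thick ideal $\mathcal C_{\eta^{-1}S^0_{(p)}}=\{X\mid X\wedge\eta^{-1}S^0_{(p)}\cong 0\}$ of Lemma~\ref{Lemma C_E}. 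It then remains to check $S^0_{(p)}\notin\mathcal C_{\eta^{-1}S^0_{(p)}}$, i.e. $\eta^{-1}S^0_{(p)}\not\cong 0$. Using Morel's identification $\pi_{m,m}(S^0_k)\cong K^{MW}_{-m}(k)$ and the fact that multiplication by $\eta$ is an isomorphism $K^{MW}_{-m}(k)\xrightarrow{\ \sim\ }K^{MW}_{-m-1}(k)\cong W(k)$ for $m\geq 1$, one computes $\pi_{0,0}(\eta^{-1}S^0_{(p)})\cong W(k)_{(p)}$, where $W(k)$ is the Witt ring. This is nonzero in both ranges of the statement: if $k\subseteq\R$ then $W(k)$ surjects onto $\Z$ via a signature, so $W(k)_{(p)}\neq 0$ for every $p$; and if $p=2$ then $W(k)$ surjects onto $\Z/2$ via the rank (as $\mathrm{char}\,k=0$), so $W(k)_{(2)}\neq 0$. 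For $k\subseteq\R$ one may alternatively invoke Proposition~\ref{RCeta} directly: it gives $\thickid(C\eta_{(p)})\subseteq(R'_k)^{-1}(\mathcal C_{0,1})$ for $p=2$ and $\subseteq(R'_k)^{-1}(\mathcal C_{0,\infty})$ for $p$ odd, and these are proper since $R'_k(S^0_{(p)})=S^0_{(p)}$ and $\phi^{\Z/2}(S^0)=S^0$ has nonzero rational homology, so $S^0_{(p)}\notin\mathcal C_{0,1},\mathcal C_{0,\infty}$.

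For \emph{(4)}, fix a type-$n$ spectrum $X_n\in\SH^{fin}_{(p)}$ (with $X_\infty=0$) and set $Y_n=c_k(X_n)\wedge C\eta_{(p)}\in\thickid(C\eta_{(p)})$. Since $R_k\circ c_k=\id$ and $R_k$ is monoidal, $R_k(Y_n)\cong X_n\wedge R_k(C\eta_{(p)})$; as $R_k(C\eta_{(p)})$ has type $0$, the Künneth formula for topological Morava K-theory shows $R_k(Y_n)$ again has type $n$. Hence $Y_n\in R_k^{-1}(\mathcal C_n)\setminus R_k^{-1}(\mathcal C_{n+1})$, and $Y_n\not\cong 0$ whenever $n<\infty$. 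Because $R_k^{-1}(\mathcal C_m)\subseteq R_k^{-1}(\mathcal C_n)$ for $n\leq m$, the ideals $\thickid(C\eta_{(p)})\cap R_k^{-1}(\mathcal C_n)$ form a strictly descending chain in $n$, each nonzero for $n<\infty$. The one genuinely nontrivial point is \emph{(3)} in the case $k=\C$, $p=2$: recognising $\eta^{-1}S^0$ as the right detecting object and extracting its $p$-local nontriviality from Morel's description of the stable motivic stems through Milnor--Witt $K$-theory and the Witt ring of $k$; everything else is formal from symmetric monoidality of $R_k$, $c_k$ and the thick subcategory theorem.
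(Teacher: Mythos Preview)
Your proof is correct and for parts (1), (2), and (4) is essentially the paper's argument: in (1) the paper reduces to (2) via Proposition~\ref{motivic model}, while you argue directly by applying $R_k$, but both amount to showing $K(n)_\ast R_k(C\eta_{(p)})\neq 0$; (2) and (4) match the paper almost verbatim, with you making the choice $X=c_k(X_n)$ explicit.

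The one genuine variation is in (3). The paper shows $\eta_{(p)}$ is not nilpotent (using the cohomology theory $\h^\ast(-,\underline{K}_\ast^{MW}[\eta^{-1}])$ and the identification $K_0^W(k)\cong W(k)$, checked for $k=\C,\R$ and then extended via base change $f^\ast$), and then invokes Balmer's characterisation \cite[Theorem 2.15]{Balmer} that $\thickid(Cf)=\mathcal T$ iff $f$ is nilpotent. You instead exhibit the proper thick ideal $\mathcal C_{\eta^{-1}S^0_{(p)}}$ directly via Lemma~\ref{Lemma C_E} and check $S^0_{(p)}\notin\mathcal C_{\eta^{-1}S^0_{(p)}}$ by computing $\pi_{0,0}(\eta^{-1}S^0_{(p)})\cong W(k)_{(p)}$ straight from Morel's description of $\pi_{m,m}(S^0)$. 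Your route is slightly more self-contained (no external appeal to Balmer's theorem, and no separate base-change step since you handle general $k\subseteq\C$ at once via the rank map $W(k)\twoheadrightarrow\Z/2$ and the signature $W(k)\twoheadrightarrow\Z$); the paper's route gives a sharper statement about what $\thickid(C\eta_{(p)})$ actually is. Both rest on the same Witt-ring computation.
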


\begin{proof}
\begin{compactenum}[(1)]
\item 
By Proposition \ref{motivic model}, $\mathcal C_{AK(n)}\subseteq R_k^{-1}(\mathcal C_{n+1})$. Assuming $\thickid( C\eta_{(p)})\subseteq \mathcal C_{AK(n)}$ therefore implies $\thickid( C\eta_{(p)})\subseteq R_k^{-1}(\mathcal C_{n+1})$. Hence, (1) will follow from (2).
\item 
This can either be derived from the previous proposition or can be seen by the following argument.

$R_k(\eta)$ is the topological Hopf map, which is nilpotent. Hence, for $n\geq 1$, $R_k(\eta)_\ast$ is not surjective in the sequence
\[\cdots\rightarrow K(n-1)_{\ast}(S^{3})\overset{R_k(\eta)_\ast}\rightarrow K(n-1)_{\ast}(S^{2})\rightarrow K(n-1)_{\ast}(C(R_k\eta))\rightarrow\cdots.\]
It follows that $K(n-1)_{\ast}(C(R_k\eta))\neq 0$ and, since $C(R_k\eta)\cong R_k(C\eta)$, $ C\eta_{(p)}\not\in R_k^{-1}(\mathcal C_n)$.
\item 
Note that $C\eta\in\thickid(S^0)=\SH(k)_f$, so $\thickid( C\eta_{(p)})\subseteq\thickid( S^0_{(p)})$ for any $p$. We have to show that $S^0_{(p)}\not\in\thickid(C\eta_{(p)})$. We consider the sheaf cohomology theory $\h^\ast(-,\underline{K}_\ast^{MW}[\eta^{-1}])$, on which $\eta$ induces an isomorphism as in the proof of \cite[Theorem 4.7]{MorelA1algtop}, where Morel concludes that $\eta$ cannot be nilpotent. Localising at $p$, we get that $\eta_{(p)}$ induces an isomorphism on $\h^\ast(S^0_{(p)},\underline{K}_\ast^{MW}[\eta^{-1}])$. 
Consider $\h^0(S^0_{(p)},\underline{K}_\ast^{MW}[\eta^{-1}])=K_\ast^{MW}(k)[\eta^{-1}]_{(p)}$. As in the proof of \cite[Theorem 4.7]{MorelA1algtop}, this is $K_0^W(k)[\eta,\eta^{-1}]_{(p)}$, and $K_0^W(k)$ is isomorphic to the Witt ring $W(k)$ by \cite[Remark 4.2]{MorelA1algtop}. We have $W(\C)=\Z/2$ and $W(\R)=\Z$, see e.g. \cite[p. 34]{Lam}.

Hence, for $k=\C$, $\h^0(S^0_{(p)},\underline{K}_\ast^{MW}[\eta^{-1}])=(\Z/2[\eta,\eta^{-1}])_{(p)}$, which is nonzero if $p=2$. And, for $k=\R$, $\h^0(S^0_{(p)},\underline{K}_\ast^{MW}[\eta^{-1}])=(\Z[\eta,\eta^{-1}])_{(p)}\neq 0$ for any $p$. Therefore, the above mentioned isomorphism induced by $\eta_{(p)}$ is not the zero map, and, as in \cite[Theorem 4.7]{MorelA1algtop}, it follows that $\eta_{(p)}$ is not nilpotent in these cases.

Now we apply \cite[Theorem 2.15]{Balmer}, which states that, for a map $f:X\rightarrow Y$ between invertible objects $X$ and $Y$ in a tensor triangulated category $\mathcal T$, the thick ideal generated by $Cf$ is equal to the full subcategory consisting of all objects $A$ such that $f^{\wedge n}\wedge A=0$ for some $n\geq 1$ (this is similar to Proposition \ref{ann}). A corollary of this theorem is that $\thickid(Cf)=\mathcal T$ if and only if $f$ is nilpotent. 

It follows that, since $\eta_{(p)}$ is not nilpotent, $S^0_{(p)}\not\in \thickid(C\eta_{(p)})$ and $\thickid( C\eta_{(p)})\neq\thickid(S^0_{(p)})$ in the cases $k=\C$ and $p=2$ or $k=\R$ and $p$ any prime.

Now, let $f:k\hookrightarrow \C$. By Proposition \ref{f}, $(f^{\ast})^{-1}$ preserves thick ideals. Since $f^{\ast}(C\eta_k)=C\eta_\C$, $\thickid((C\eta_k)_{(2)})\subseteq (f^{\ast})^{-1}(\thickid((C\eta_\C)_{(2)}))$. As $f^\ast((S^0_k)_{(2)})=(S^0_\C)_{(2)}\not\in\thickid((C\eta_\C)_{(2)})$, it follows $(S^0_k)_{(2)}\not\in (f^{\ast})^{-1}(\thickid((C\eta_\C)_{(2)}))$ and, hence, $(S^0_k)_{(2)}\not\in\thickid((C\eta_k)_{(2)})$. 

The same argument holds for arbitrary primes $p$ if $f:k\hookrightarrow \R$. Alternatively, the statement for $k\subseteq\R$ can be derived from Proposition \ref{RCeta} and Theorem \ref{lower bound}: 
\[\thickid(C\eta_{(p)})\subseteq (R'_k)^{-1}(\thickid(R'_k(C\eta_{(p)})))\neq (R'_k)^{-1}(\mathcal C_{0,0})=\thickid(S^0_{(p)}).\]
\item
Let $X\in R_k^{-1}(\mathcal C_n)$ be such that $R_k(X)$ is of type $n$, i.e., $R_k(X)\in\mathcal C_n\setminus \mathcal C_{n+1}$. From the proof of (2), we know that $R_k(C\eta_{(p)})$ is of type $0$. From the K{\"u}nneth formulas for $K(n)$ and $K(n+1)$, it follows that $R_k(C\eta_{(p)}\wedge X)\cong R_k(C\eta_{(p)})\wedge R_k(X)$ is of type $n$. Therefore, \[C\eta_{(p)}\wedge X\in \left(\thickid(C\eta_{(p)})\cap R_k^{-1}(\mathcal C_n)\right)\setminus \left(\thickid(C\eta_{(p)})\cap R_k^{-1}(\mathcal C_{n+1})\right).\]
\end{compactenum}
\end{proof}

\begin{cor}
For $k\subseteq\C$, $\thickid((C\eta)_{(2)})$ is neither of the form $\mathcal C_{AK(n)}$ for any $n\geq 0$ (by Proposition \ref{counter example}(1)) nor of the form $R_k^{-1}(\mathcal C_n)$ for any $n\geq 0$ (by Proposition \ref{counter example}(2) and (3)).
\end{cor}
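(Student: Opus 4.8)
The plan is to assemble the three parts of Proposition \ref{counter example} and to use the elementary fact that equality of thick ideals forces inclusion in both directions; thus any of the one-sided non-inclusions recorded in Proposition \ref{counter example} already suffices to rule out the corresponding equality. Throughout one works at the prime $p = 2$, which is the case in which all three parts of Proposition \ref{counter example} apply for $k \subseteq \C$.

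First I would dispose of the family $\mathcal C_{AK(n)}$. Suppose, for a contradiction, that $\thickid((C\eta)_{(2)}) = \mathcal C_{AK(n)}$ for some $n \geq 0$. Then in particular $\thickid((C\eta)_{(2)}) \subseteq \mathcal C_{AK(n)}$, contradicting Proposition \ref{counter example}(1) with $p = 2$. Hence no such $n$ exists.

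Next I would treat the family $R_k^{-1}(\mathcal C_n)$, splitting into $n > 0$ and $n = 0$. If $\thickid((C\eta)_{(2)}) = R_k^{-1}(\mathcal C_n)$ with $n > 0$, then $\thickid((C\eta)_{(2)}) \subseteq R_k^{-1}(\mathcal C_n)$, contradicting Proposition \ref{counter example}(2) with $p = 2$. For $n = 0$ one uses instead that $\mathcal C_0 = \SH^{fin}_{(2)}$ and, since by Proposition \ref{compact1} every object of $(\SH(k)_f)_{(2)}$ has $R_k$-image in $\SH^{fin}_{(2)}$, one has $R_k^{-1}(\mathcal C_0) = (\SH(k)_f)_{(2)}$. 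But Proposition \ref{counter example}(3), in the case $k \subseteq \C$ and $p = 2$, gives $\thickid((C\eta)_{(2)}) \subsetneq (\SH(k)_f)_{(2)}$, so $\thickid((C\eta)_{(2)}) \neq R_k^{-1}(\mathcal C_0)$. This covers all $n \geq 0$ and completes the argument.

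There is essentially no obstacle here: the corollary is pure bookkeeping built on Proposition \ref{counter example}, where all the genuine content resides (Morel's non-nilpotence of $\eta$ together with Balmer's Theorem 2.15 behind part (3), and the Künneth formulas for $K(n)$, $K(n+1)$ together with the behaviour of $R_k$ on the relevant spectra behind parts (1) and (2)). The only point worth making explicit is the reduction ``equality $\Rightarrow$ one-sided inclusion,'' which is exactly what lets the non-inclusion statements of Proposition \ref{counter example} rule out each of the two prescribed forms.
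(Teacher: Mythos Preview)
Your argument is correct and matches the paper's approach exactly: the paper gives no separate proof for this corollary, indicating via the parenthetical references that it follows immediately from parts (1), (2), and (3) of Proposition~\ref{counter example} in precisely the way you spell out. The only thing you add is the explicit ``equality $\Rightarrow$ inclusion'' unpacking and the identification $R_k^{-1}(\mathcal C_0) = (\SH(k)_f)_{(2)}$, both of which are routine.
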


For $k\subseteq\R$, the statements analogous to (2) and (4) of Proposition \ref{counter example} read as follows.

\begin{prop}
Let $\eta$ denote the Hopf map in $\SH(k)$, $k\subseteq\R$. In $(\SH(k)_f)_{(p)}$ the following inequalities hold.
\begin{compactenum}[(1)]
\item $\thickid(C\eta_{(p)})\not\subseteq (R'_k)^{-1}(\mathcal C_{m,n})$ for any $m> 0$ ($p,n$ arbitrary) or, if $p=2$, $m=0$ and $n> 1$.
\item Let $(m,n)$ and $(m',n')$ be pairs of integers $\geq 0$ such that $\Z/2$-equivariant spectra of types $(m,n)$ and $(m',n')$ exist (see Chapter \ref{equivariant}). If $p$ is any prime and $m\neq m'$ or $p=2$ and $\max(n,1)\neq\max(n',1)$ then 
\[\thickid(C\eta_{(p)})\cap (R'_k)^{-1}(\mathcal C_{m,n})\neq \thickid(C\eta_{(p)})\cap (R'_k)^{-1}(\mathcal C_{m',n'}).\]
Otherwise, the two thick ideals are equal.
\end{compactenum}
\end{prop}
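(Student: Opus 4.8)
The strategy is to reduce every assertion to the computation of $\thickid(R'_k(C\eta_{(p)}))$ recorded in Proposition~\ref{RCeta}, combined with the K\"unneth formula for equivariant Morava K-theories. Write $W=R'_k(C\eta_{(p)})$; it is compact by Proposition~\ref{compact1}, and from the proof of Proposition~\ref{RCeta} we know that $\phi^{\{1\}}(W)$ has type $0$, while $\phi^{\Z/2}(W)$ has type $1$ if $p=2$ and is zero if $p\neq2$. Throughout I use that $(R'_k)^{-1}$ sends thick ideals to thick ideals (Proposition~\ref{f}), that $R'_k$ is symmetric monoidal with $R'_k\circ c'_k\cong\id$, and the identity $K(n,H)_\ast(-)=K(n)_\ast(\phi^H(-))$ from Proposition~\ref{K(n) and phi}. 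For part (1): if $\thickid(C\eta_{(p)})\subseteq(R'_k)^{-1}(\mathcal C_{m,n})$ then $W\in\mathcal C_{m,n}$, hence $\thickid(W)\subseteq\mathcal C_{m,n}$, and by Proposition~\ref{RCeta} this thick ideal contains $\mathcal C_{0,\infty}$ (and equals $\mathcal C_{0,1}$ when $p=2$). Now $i(S^0)\wedge\Z/2_+\in\mathcal C_{0,\infty}$ has $\phi^{\{1\}}$ of type $0$, so it cannot lie in $\mathcal C_{m,n}$ when $m>0$; and for $p=2$ a type-$(0,1)$ spectrum $X_{0,1}$ (Corollary~\ref{mn}(3)) lies in $\mathcal C_{0,1}$ but not in $\mathcal C_{0,n}$ for $n>1$. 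This yields exactly the asserted cases where the containment fails.

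For part (2) the key computational input is: if $X\in(\SH(\Z/2)_f)_{(p)}$ has equivariant type $(a,b)$ (Definition~\ref{equivariant type}), then $W\wedge X$ has equivariant type $(a,\max(1,b))$ if $p=2$, and $(a,\infty)$ if $p\neq2$. Indeed $\phi^H(W\wedge X)=\phi^H(W)\wedge\phi^H(X)$ by Proposition~\ref{phi}, and the K\"unneth formula (Corollary~\ref{Kunneth equivariant}, via Proposition~\ref{K(n) and phi}) gives $\type(A\wedge B)=\max(\type A,\type B)$ nonequivariantly, so we only need the types of $\phi^{\{1\}}(W)$ and $\phi^{\Z/2}(W)$. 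To prove the intersections are \emph{distinct} when $m\neq m'$, say $m<m'$: choose a type-$(m,n)$ spectrum $X_{m,n}\in\SH(\Z/2)^{fin}_{(p)}$, which exists since $(m,n)\in\Gamma_p$, and put $Y=C\eta_{(p)}\wedge c'_k(X_{m,n})\in\thickid(C\eta_{(p)})$; then $R'_k(Y)\cong W\wedge X_{m,n}$, whose geometric $\{1\}$-fixed points have type exactly $m$, so $R'_k(Y)\in\mathcal C_{m,n}\setminus\mathcal C_{m',n'}$, and hence $Y$ lies in $\thickid(C\eta_{(p)})\cap(R'_k)^{-1}(\mathcal C_{m,n})$ but not in $\thickid(C\eta_{(p)})\cap(R'_k)^{-1}(\mathcal C_{m',n'})$. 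When $m=m'$ (forcing $p=2$) and $\max(n,1)\neq\max(n',1)$, say $\max(n,1)<\max(n',1)=n'$, the same $Y$ built from $X_{m,n}$ has $\phi^{\Z/2}(W\wedge X_{m,n})$ of type $\max(1,n)<n'$, so again $R'_k(Y)\in\mathcal C_{m,n}\setminus\mathcal C_{m,n'}$.

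It remains to treat the cases $m=m'$ with either $p\neq2$, or $p=2$ and $\max(n,1)=\max(n',1)$, and show the intersections coincide there. Since $(R'_k)^{-1}(\thickid(W))$ is a thick ideal containing $C\eta_{(p)}$, every $Y\in\thickid(C\eta_{(p)})$ satisfies $R'_k(Y)\in\thickid(W)$, which by Proposition~\ref{RCeta} equals $\mathcal C_{0,1}$ if $p=2$ and $\mathcal C_{0,\infty}$ if $p\neq2$; thus $\phi^{\Z/2}(R'_k Y)$ has type $\ge1$ when $p=2$ and vanishes when $p\neq2$. Consequently, for $Y\in\thickid(C\eta_{(p)})$ the condition $R'_k(Y)\in\mathcal C_{m,n}$ is equivalent to $\phi^{\{1\}}(R'_k Y)\in\mathcal C_m$ together with (when $p=2$) $\phi^{\Z/2}(R'_k Y)\in\mathcal C_{\max(n,1)}$, using $\mathcal C_n\cap\mathcal C_1=\mathcal C_{\max(n,1)}$; for $p\neq2$ the second clause is vacuous. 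Under the stated hypotheses these conditions agree for $(m,n)$ and $(m',n')$, so the two intersections are equal.

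The only genuinely nonformal steps are the type computation for $W\wedge X$ and this last reduction of the membership condition inside $\thickid(C\eta_{(p)})$; the remainder is bookkeeping with the partial order on the $\mathcal C_{m,n}$ and with the set $\Gamma_p$, and I expect the case analysis involving $\max(n,1)$ to be the place demanding the most care.
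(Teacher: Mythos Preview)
Your proof is correct and follows essentially the same approach as the paper: reduce everything to the equivariant type of $W=R'_k(C\eta_{(p)})$ from Proposition~\ref{RCeta}, then use the K\"unneth formula to compute the type of $W\wedge X_{m,n}$ and the containment $\thickid(C\eta_{(p)})\subseteq(R'_k)^{-1}(\thickid(W))$ for the equality case. Your treatment is in fact more explicit than the paper's in two places: for (1) the paper simply notes that $W$ itself fails to lie in $\mathcal C_{m,n}$ in the stated cases (so your detour through auxiliary spectra like $i(S^0)\wedge\Z/2_+$ is unnecessary, though not wrong), and for the ``otherwise equal'' clause in (2) the paper compresses your argument into a single ``Consequently'', whereas you spell out why membership of $R'_k(Y)$ in $\mathcal C_{m,n}$ depends only on $m$ and $\max(n,1)$ for $Y\in\thickid(C\eta_{(p)})$.
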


\begin{proof}
(1) is a reformulation of the second part of Proposition \ref{RCeta}. For (2), take $X=c'_k(X_{m,n})$ or any other spectrum whose realisation is a spectrum of type $(m,n)$. Then $R'_k(C\eta_{(p)}\wedge X)\cong R'_k(C\eta_{(p)})\wedge R'_k(X)$ is the smash product of a spectrum of type $(0,\infty)$ if $p$ is odd ($(0,1)$ if $p=2$) with a spectrum of type $(m,n)$. By the equivariant K{\"u}nneth formula, Corollary \ref{Kunneth equivariant}, it follows that $R'_k(C\eta_{(p)}\wedge X)$ has type $(m,\infty)$ if $p$ is odd and $(m, \max(n,1))$ if $p=2$. Consequently, $\thickid(C\eta_{(p)})\cap (R'_k)^{-1}(\mathcal C_{m,n})$ is equal to $\thickid(C\eta_{(p)})\cap(R'_k)^{-1}(\mathcal C_{m,\infty})$ if $p$ is odd and to $\thickid(C\eta_{(p)})\cap(R'_k)^{-1}(\mathcal C_{m,\max(n,1)})$ if $p=2$. The intersection is not contained in $(R'_k)^{-1}(\mathcal C_{m',n'})$ for any $m'>m$ or, if $p=2$, for any $n'>\max(n,1)$.
\end{proof}

Related to the previous propositions are the following conjectures.
\begin{conj}\label{conjectures}
\begin{compactenum}[(1)]
\item $R_k^{-1}(\mathcal C_n)\not\subseteq \thickid(C\eta_{(p)})$ for all $0\leq n < \infty$.
\item $\mathcal C_{AK(n)}\not\subseteq \thickid(C\eta_{(p)})$ for all $n\geq 0$.
\item The thick ideals $\thickid(C\eta_{(p)})\cap \mathcal C_{AK(n)}$ are distinct for different $n\geq 0$ and in particular nonzero if $n<\infty$.
\item For $k\subseteq\R$, $\thickid(C\eta_{(p)})\subsetneq (R'_k)^{-1}(\mathcal C_{0,\infty})$ for odd primes $p$ and, for $p=2$, $\thickid(C\eta_{(p)})\subsetneq (R'_k)^{-1}(\mathcal C_{0,1})$.
\item For $k\subseteq\R$, $(R'_k)^{-1}(\mathcal C_{m,n})\not\subseteq \thickid (C\eta_{(p)})$ for all $0\leq m,n < \infty$.
\end{compactenum}
\end{conj}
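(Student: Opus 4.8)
The common engine is Balmer's identification \cite[Theorem 2.15]{Balmer}, already invoked in the proof of Proposition \ref{counter example}: for $\eta_{(p)}\in\pi_{1,1}(S^0_{(p)})$ one has $\thickid(C\eta_{(p)})=\{A\in(\SH(k)_f)_{(p)}\mid \eta_{(p)}^{\wedge m}\wedge A\cong 0\text{ for some }m\geq 1\}$. So each clause of the conjecture amounts to producing, inside the thick ideal on its left, an object $A$ on which $\eta_{(p)}$ is \emph{not} smash-nilpotent. To certify $\eta$-non-nilpotence I would use, as Morel does for $S^0$ in the proof of \cite[Theorem 4.7]{MorelA1algtop}, the $\eta$-periodic Milnor--Witt cohomology $\h^{\ast\ast}(-,\underline{K}_\ast^{MW}[\eta^{-1}])$: $\eta$ acts invertibly on it, its value on $S^{0,0}$ is the Witt ring $W(k)$ with $\eta$ inverted, and if $\h^{\ast\ast}(A,\underline{K}_\ast^{MW}[\eta^{-1}])_{(p)}\neq 0$ then the map $\eta^{\wedge m}\wedge\mathrm{id}_A$ induces an isomorphism on a nonzero group for every $m$, hence is never null, hence $A\notin\thickid(C\eta_{(p)})$. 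One point must be recorded first: if $k=\C$ and $p$ is odd, then $\eta$ has order $2$ in $\pi_{1,1}(S^0)\cong K^{MW}_{-1}(\C)=W(\C)=\Z/2$, so $\eta_{(p)}=0$, $C\eta_{(p)}\simeq S^0_{(p)}\oplus S^{2,1}_{(p)}$ and $\thickid(C\eta_{(p)})=(\SH(k)_f)_{(p)}$, making (1)--(3) and (5) vacuously false; so the conjecture is to be read in the nondegenerate regime $\eta_{(p)}\neq 0$, i.e.\ $k\subseteq\R$ (any $p$) or $k=\C$, $p=2$ --- exactly the hypothesis of Proposition \ref{counter example}(3), in which $W(k)$ is formally real or equals $\Z/2$, so that $W(k)_{(p)}\neq 0$.

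For (1) I would take $A=c_k(X_n)_{(p)}$ with $X_n$ a topological type-$n$ spectrum (\cite{Mi}, Theorem \ref{HS}); then $R_k(A)=X_n\in\mathcal C_n$, so $A\in R_k^{-1}(\mathcal C_n)$, and since $c_k(X_n)$ is a finite cell spectrum assembled from the simplicial spheres $S^{d,0}$ I would compute $\h^{\ast\ast}(c_k X_n,\underline{K}_\ast^{MW}[\eta^{-1}])_{(p)}$ by cellular induction from its value $W(k)[\eta^{\pm1}]$ on those spheres; this is nonzero after $p$-localisation because $W(k)_{(p)}\neq 0$ and $\h_\ast(X_n;\F_p)\neq 0$ ($X_n$ being a nonzero $p$-local finite spectrum), so $\eta_{(p)}$ is not nilpotent on $A$. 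Statement (2) uses the same object with $n$ replaced by $n+1$: the $\eta$-argument is identical, and $c_k(X_{n+1})_{(p)}\in\mathcal C_{AK(n)}$ because $AK(n)\wedge c_k(X_{n+1})\cong 0$; the latter I would obtain from the motivic Atiyah--Hirzebruch spectral sequence of Proposition \ref{AHSS} together with Lemma \ref{Ah}, applied to $AK(n)^{\ast\ast}(c_k DX_{n+1})$ (note $DX_{n+1}$ is again type $n+1$): it realises under $R_k$ to the topological AHSS converging to $K(n)^\ast(X_{n+1})=0$, and over $\C$, where $\h_{\ast\ast}(\Spec\C,\F_p)=\F_p[\tau]$ is polynomial, this forces the motivic $E_\infty$-page to vanish; since $c_k X_{n+1}$ is dualisable (Proposition \ref{prop dualizable}) this yields $AK(n)_{\ast\ast}(c_k X_{n+1})=0$, hence $AK(n)\wedge c_k X_{n+1}\cong 0$ by Proposition \ref{C_E}. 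For (5), given finite $m,n$ set $N=\max(m,n)$ and take $A=c'_k(X_{N,N})_{(p)}$, where $X_{N,N}=(X_N\wedge\Z/2_+)\vee X_N$ is the equivariant type-$(N,N)$ spectrum of Corollary \ref{mn}(3); then $R'_k(A)=X_{N,N}\in\mathcal C_{N,N}\subseteq\mathcal C_{m,n}$, so $A\in(R'_k)^{-1}(\mathcal C_{m,n})$, and since $c'_k$ agrees with $c_k$ on spectra with trivial $\Z/2$-action, $c_k(X_N)$ is a wedge summand of $A$, so $\eta_{(p)}$ is non-nilpotent on $A$ by the argument for (1).

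Statement (3) I would prove in parallel with Proposition \ref{counter example}(4). Put $Y_n=C\eta_{(p)}\wedge c_k(X_n)_{(p)}$. Then $Y_n\in\thickid(C\eta_{(p)})$; also $AK(n-1)\wedge Y_n\cong C\eta_{(p)}\wedge(AK(n-1)\wedge c_k X_n)\cong 0$ by the AHSS vanishing above (now using $K(n-1)^\ast(X_n)=0$), so $Y_n\in\mathcal C_{AK(n-1)}$; whereas $R_k(AK(n)\wedge Y_n)=K(n)\wedge\C P^2_{(p)}\wedge X_n$, which is nonzero by the Künneth formula for $K(n)$ and the fact that the type-$0$ spectrum $\C P^2_{(p)}$ has $K(n)_\ast\neq 0$ (otherwise $K(n)_\ast$ would vanish on $\thickid(\C P^2_{(p)})=\SH^{fin}_{(p)}\ni S^0$), so $AK(n)\wedge Y_n\not\cong 0$, i.e.\ $Y_n\notin\mathcal C_{AK(n)}$. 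Thus $Y_n\in(\thickid(C\eta_{(p)})\cap\mathcal C_{AK(n-1)})\setminus(\thickid(C\eta_{(p)})\cap\mathcal C_{AK(n)})$, which gives distinctness of these intersections once one knows the monotonicity $\mathcal C_{AK(n+1)}\subseteq\mathcal C_{AK(n)}$ --- this is Theorem \ref{AK(n+1)} for $p>2$, and is the open $p=2$ analogue otherwise --- and $Y_{n+1}\neq 0$ (its realisation is nonzero) settles non-triviality for $n<\infty$. For (4) with $p=2$ the inclusion $\subseteq$ is contained in Proposition \ref{RCeta}, and for properness I would take $A=\Sigma^\infty\Spec(k[i])_{+}$ localised at $2$, which is compact (Remark \ref{spec compact}(4)); it lies in $(R'_k)^{-1}(\mathcal C_{0,\infty})\subseteq(R'_k)^{-1}(\mathcal C_{0,1})$ because $R'_k(\Sigma^\infty\Spec(k[i])_+)\cong\Sigma^\infty\Z/2_+$ has trivial geometric $\Z/2$-fixed points, and $\h^{0,0}(\Spec(k[i])_+,\underline{K}_\ast^{MW}[\eta^{-1}])_{(2)}=W(k[i])_{(2)}\neq 0$ since the dimension-mod-$2$ map gives $W(k[i])\twoheadrightarrow\Z/2$, so $\eta_{(2)}$ is non-nilpotent on $A$ and $A\notin\thickid(C\eta_{(2)})$.

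The two places where I expect genuine difficulty are the following. First, the $\eta$-periodic Witt-cohomology nonvanishing statements, although morally evident, require the Atiyah--Hirzebruch/cellular-induction argument over the coefficient sheaf $\underline{K}_\ast^{MW}[\eta^{-1}]$ to be carried out carefully --- in particular one must check that the higher $\eta$-periodic differentials do not annihilate the whole $E_\infty$-page --- and over a subfield of $\R$ other than $\R$ (or over $\C$) both this sheaf and the slice/AHSS description of $AK(n)$ are markedly less explicit than over $\C$, where Lemma \ref{Ah} and $\h_{\ast\ast}(\Spec\C)=\F_p[\tau]$ make everything transparent. Secondly, and more seriously, clause (4) for odd $p$ --- and any attempt to prove (1), (2), (5) over a base that is not formally real, with $p$ odd --- appears to lie outside the reach of this method: an object $A$ with $\phi^{\Z/2}(R'_k A)\cong 0$ that is built from smooth schemes forces the real points to be empty, hence the relevant Witt groups to be $2$-primary torsion, hence zero after $p$-localisation at an odd prime; and since $\phi^{\Z/2}(R'_k\eta)$ is multiplication by $2$, the $\eta$-inverted realisation of any such $A$ likewise has trivial geometric $\Z/2$-fixed points $p$-locally, so Morel's Witt-theoretic detection of non-nilpotence is structurally unavailable. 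Establishing (4) for odd $p$ would thus need either a new $\eta$-periodic invariant seeing the kernel of $R'_k$ on $p$-local compact objects, or an argument that $R'_k$ fails to be conservative there by other means; this is where I expect the real work to lie, and presumably why the statement is left as a conjecture.
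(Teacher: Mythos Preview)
The statement you are addressing is a \emph{conjecture} in the paper; there is no proof to compare against. What the paper supplies is the Remark immediately following the conjecture, which sketches possible lines of attack on each part, and your proposal should be read against that.

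Your strategy coincides almost exactly with the paper's Remark. For (1) and (2) the paper suggests taking $X=c_k(X_n)$ (respectively a spectrum of motivic type $n$, such as $\X_n$ from Chapter~\ref{motivic type n}) and showing $\h^\ast(X,\underline{K}_\ast^{MW}[\eta^{-1}])\neq 0$; this is precisely your $\eta$-periodic Witt-cohomology detection. For (3) the paper proposes choosing a motivic type-$n$ spectrum $X$ and verifying that $C\eta\wedge X$ is again of motivic type $n$; your $Y_n=C\eta_{(p)}\wedge c_k(X_n)_{(p)}$ implements this, and your use of Theorem~\ref{AKcX} to secure $AK(n-1)\wedge c_k(X_n)\cong 0$ is exactly what the paper makes available. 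For (4) the paper proposes $X=(\Sigma^\infty\Spec\C_+)_{(p)}$, which is your $\Spec(k[i])_+$ after base change. Your treatment of (5) via $c'_k(X_{N,N})$ is a natural addition the paper does not spell out.

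You go further than the paper's Remark in two useful respects. First, you isolate the degenerate regime $k\not\subseteq\R$ with $p$ odd, where $2\eta=0$ forces $\eta_{(p)}=0$ and the conjecture collapses; the paper's Proposition~\ref{counter example}(3) already restricts to the complementary regime, but the conjecture as stated does not, and this is worth flagging. Second, your diagnosis of why (4) for odd $p$ resists the Witt-theoretic method --- that any candidate $A$ with $\phi^{\Z/2}(R'_k A)\cong 0$ will have $p$-locally trivial $\eta$-inverted Witt invariants --- is sharper than anything in the paper and identifies a genuine structural obstruction.

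The gaps you acknowledge are real and are precisely why the paper leaves this as a conjecture: the nonvanishing of $\h^{\ast\ast}(c_k X_n,\underline{K}_\ast^{MW}[\eta^{-1}])_{(p)}$ requires a cellular-induction argument over a sheaf whose cohomology is not controlled by any spectral sequence worked out in the paper, and neither you nor the paper carries this out; your reliance on Theorem~\ref{AK(n+1)} for the monotonicity needed in (3) restricts that clause to $p>2$ and $k=\C$; and (4) for odd $p$ remains out of reach by the available tools.
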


\begin{rk}
A possible approach for proving (1) or (2) might be to choose a motivic spectrum $X$ whose realisation is of type $n$ (e.g. $c_k(X_{n})$) or a spectrum which is of motivic type $n$ (e.g. as constructed in Chapter \ref{motivic type n}), respectively. If one can show that $\h^\ast(X,\underline{K}_\ast^{MW}[\eta^{-1}])\neq 0$, this proves (1) respectively (2).

An idea to prove (3) is to choose a spectrum $X$ of motivic type $n$ and to show that $C\eta\wedge X$ is again of motivic type $n$ for this particular choice. 

For (4), $p$ odd, one has to find a spectrum $X\in (R'_k)^{-1}(\mathcal C_{0,\infty})$, which is not contained in $\thickid(C\eta_{(p)})$. $X=(\Sigma^\infty \Spec\C_+)_{(p)}$ satisfies the first condition and it might be possible to show $\h^\ast(X,\underline{K}_\ast^{MW}[\eta^{-1}])\neq 0$, which would imply the second. Since $\mathcal C_{0,\infty}\subseteq\mathcal C_{0,1}$, the same $X$ could be used for the case $p=2$.
\end{rk}

\begin{rk}
If and only if the first conjecture is true, the ideals in Proposition \ref{counter example}(4) are different from the ideals $R_k^{-1}(\mathcal C_n)$ for all $n\geq 0$.
\end{rk}

\section{Prime ideals}\label{prime ideals}

Let us recall some results of \cite{Balmer}, where Balmer studies prime ideals in tensor triangulated categories. 

\begin{defn} 
A prime ideal is a proper thick ideal $\mathcal C$ with the additional property that $X\wedge Y\in\mathcal C$ implies $X\in\mathcal C$ or $Y\in\mathcal C$. The set of prime ideals of a tensor triangulated category $\mathcal T$ is denoted by $\Spc(\mathcal T)$. 
\end{defn}

In \cite{Balmer}, the endomorphism ring of the unit object of $\mathcal T$ is denoted by $R_{\mathcal T}$. To avoid confusion with the realisation functors, we use the notation $\pi_0^{\mathcal T}$ instead of $R_{\mathcal T}$. In \cite[Corollary 5.6]{Balmer}, Balmer defines a functor
\[\rho_{\mathcal T}:\Spc(\mathcal T)\rightarrow \Spec(\pi_0^{\mathcal T}),\]
\[\rho_{\mathcal T}(\mathcal P)=\{f\in\pi_0^{\mathcal T}\; |\; C(f)\not\in\mathcal P\},\]
which he proves to be surjective if $\mathcal T$ is connective \cite[Theorem 7.13]{Balmer}, where connectivity means that $\Hom_{\mathcal T}(S,\Sigma^i S)=0$ for all $i>0$ and $S$ the unit object of $\mathcal T$. Furthermore, this functor is natural for tensor triangulated functors $f:\mathcal T\rightarrow \mathcal T'$ by \cite[Theorem 5.3(c) and Corollary 5.6(b)]{Balmer}.

\subsection{Prime ideals in the topological categories $\SH^{fin}$ and $\SH(\Z/2)_f$}

Applied to $\SH^{fin}_{(p)}$, this yields the following \cite[Proposition 9.4]{Balmer}: 

All proper thick ideals $\mathcal C_n\subset\SH^{fin}_{(p)}$, $0<n\leq\infty$ are prime ideals, as can be seen either from the K\"unneth formula for $K(n)$ or by the linear ordering of the $\mathcal C_n$. The functor
\[\rho_{\SH^{fin}_{(p)}}:\Spc(\SH^{fin}_{(p)})\rightarrow \Spec(\Z_{(p)})\]
has the following values:
\[\rho_{\SH^{fin}_{(p)}}(\mathcal C_n)=\begin{cases} p\Z_{(p)} &\mbox{if } n>1, \\ 0 &\mbox{if } n=1.\end{cases}\]
Note that $\mathcal C_0$ is not proper, so it is not in $\Spc(\SH_{(p)}^{fin})$. Note also that \cite{Balmer} uses a different indexing convention, in which the indices are shifted by one.

The prime ideals in the non-localised category $\SH^{fin}$ are given as follows.

\begin{prop}
A subcategory $\mathcal C\subset\SH^{fin}$ is a prime ideal if and only if there exist a prime $p$ and a number $1\leq n\leq \infty$ such that
\[\mathcal C=\mathcal C_{p,n}=\{X\in\SH^{fin}\;|\; K(p,n-1)_\ast X=0\}\]
if $n<\infty$, or
\[\mathcal C=\mathcal C_{p,\infty}=\{X\in\SH^{fin}\;|\; X_{(p)}=0\}\]
if $n=\infty$.
Here, $\mathcal C_{p,1}=\mathcal C_{q,1}=\mathcal C_1$ for any primes $p$ and $q$. Except for $n=1$, the $\mathcal C_{p,n}$ are pairwise different. 

The functor $\rho_{\SH^{fin}}:\Spc(\SH^{fin})\rightarrow \Spec(\Z)$ maps $\mathcal C_{p,n}$ to $p\Z$ for any $n>1$ and it maps $\mathcal C_1$ to $0$.
\end{prop}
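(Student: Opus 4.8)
The plan is to reduce the classification of prime ideals in $\SH^{fin}$ to the $p$-local case, which is already understood: we know from the paragraph preceding this proposition that in $\SH^{fin}_{(p)}$ every proper thick ideal $\mathcal C_n$, $0<n\leq\infty$, is prime. First I would recall that, by Theorem \ref{thm Q}, every thick ideal of $\SH^{fin}$ corresponds to an element $u\in\mathcal Q$, and that $\mathcal C$ is the intersection $\bigcap_p(\iota_p)^{-1}(\mathcal C_{n_p})$ where $n_p$ is determined by $u_p=p^{-n_p}$ and $\iota_p:\SH^{fin}\to\SH^{fin}_{(p)}$ is the localisation functor. The key observation is that $\iota_p$ is a tensor triangulated functor, so $(\iota_p)^{-1}$ of a prime ideal is prime; hence each $\mathcal C_{p,n}:=(\iota_p)^{-1}(\mathcal C_n)$ is a prime ideal of $\SH^{fin}$, and a spectrum $X$ lies in $\mathcal C_{p,n}$ iff $X_{(p)}\in\mathcal C_n$, i.e. iff $K(p,n-1)_\ast X=0$ (for $n<\infty$) or $X_{(p)}\cong 0$ (for $n=\infty$). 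Note that $\mathcal C_{p,1}$ is the subcategory of $X$ with $H_\ast(X,\mathbb Q)=0$, which is independent of $p$, so it equals $\mathcal C_1$.

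Next I would show these are the only primes. Let $\mathcal C\subsetneq\SH^{fin}$ be prime, corresponding to $u\in\mathcal Q$. The multiplicativity of the smash product together with the K\"unneth formula for Morava K-theories forces: if $u\neq 1$, then $u_p\neq 1$ for all $p$, and I claim $u_p$ is the \emph{same} for all $p$ unless that common value is $p^{-1}$ (i.e. $n_p=1$). Indeed, suppose $n_p\geq 2$ and $n_q\geq 2$ with $p\neq q$; pick type-$n_p$ and type-$n_q$ spectra $X,Y\in\SH^{fin}$ (localised appropriately, then realised back by a finiteness argument, or directly using \cite[Section C.3]{Rav}). Using the Moore-spectrum trick and a smash product $X'\wedge Y'$ with $X'\in\mathcal C$, $Y'\notin\mathcal C$, one contradicts primeness: one exhibits two spectra, each outside $\mathcal C$, whose smash product lies in $\mathcal C$ because the relevant $K(p,*)$- and $K(q,*)$-homologies vanish for different reasons. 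Concretely, if $n_p,n_q\geq 2$ and they are not handled by a single $p$, then $S/p$ is in $\mathcal C$ at the prime $q$-part (type $\infty$ there) but... — the cleaner route is: a prime ideal $\mathcal C$ must, by the primeness applied to $X\wedge(\text{type }\infty\text{ away from }p)$, be detected by a single prime. So exactly one coordinate $n_p$ is $\geq 2$ (or all are $1$), and $\mathcal C=\mathcal C_{p,n_p}$.

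Then I would record pairwise distinctness: for $n,n'\geq 2$ with $(p,n)\neq(p',n')$, the ideals $\mathcal C_{p,n}$ and $\mathcal C_{p',n'}$ differ because $\tau$ (Theorem \ref{thm Q}) is injective and the corresponding elements of $\mathcal Q$ differ; and each differs from $\mathcal C_1$. Finally, for the functor $\rho_{\SH^{fin}}$: since $\pi_0^{\SH^{fin}}=\pi_0^s(S^0)\cong\mathbb Z$ and $\rho$ is natural with respect to the tensor triangulated functor $\iota_p$, we get a commuting square relating $\rho_{\SH^{fin}}$ to $\rho_{\SH^{fin}_{(p)}}:\Spc(\SH^{fin}_{(p)})\to\Spec(\mathbb Z_{(p)})$, whose values were recalled above. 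Chasing $\mathcal C_{p,n}=(\iota_p)^{-1}(\mathcal C_n)$ through this square, and using $\rho_{\SH^{fin}}(\mathcal C_{p,n})=\{f\in\mathbb Z\mid C(f)\notin\mathcal C_{p,n}\}=\{f\mid f\text{ is not a unit in }\mathbb Z_{(p)}\text{ via }\rho_{\SH^{fin}_{(p)}}(\mathcal C_n)\}$, one reads off $\rho_{\SH^{fin}}(\mathcal C_{p,n})=p\mathbb Z$ for $n>1$ and $\rho_{\SH^{fin}}(\mathcal C_1)=0$ (since $C(p)=S/p$ has $H_\ast(-,\mathbb Q)=0$, so $S/p\in\mathcal C_1$, forcing $p\notin\rho(\mathcal C_1)$; and $C(0)=S^0\vee S^1\notin\mathcal C_1$, so $0\in\rho(\mathcal C_1)$).

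The main obstacle is the uniqueness argument — showing a prime ideal is "concentrated at a single prime." The careful point is that a naive inductive smash-product argument must be set up so that both factors genuinely fail to lie in $\mathcal C$, which requires producing finite (non-localised) spectra of the right type at one prime that are rationally trivial and type-$\infty$ at all other primes; this is exactly the content of the existence of Smith–Toda–type complexes with controlled $p$-local behaviour, together with the observation that $S/p^{k}$ and its smashes are genuinely finite. I expect the bookkeeping of which coordinate survives to be the only genuinely delicate part; everything else is formal from Theorem \ref{thm Q}, the K\"unneth formula, and naturality of $\rho$.
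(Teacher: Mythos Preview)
Your overall approach matches the paper's: use Theorem~\ref{thm Q} to write any proper thick ideal as $\bigcap_p \mathcal C_{p,n_p}$, verify that each $\mathcal C_{p,n}$ is prime (you do this via pullback along the tensor functor $\iota_p$, the paper does it via the K\"unneth formula for $K(p,n)$; both are fine), and then argue that a genuine intersection cannot be prime.

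The gap is in your uniqueness argument, which is muddled and does not land. The sentence ``$u_p$ is the same for all $p$'' is literally false; what you need is that at most one $n_p$ exceeds $1$. More substantively, you reach for type-$n_p$ spectra, an unspecified ``Moore-spectrum trick'', and a finiteness argument you flag as ``the only genuinely delicate part'' --- none of this is necessary. The paper's device (attributed to Proposition~\ref{intersection}, though it really follows immediately from the ideal axiom) is the elementary observation that if $\mathcal I\not\subseteq\mathcal J$ and $\mathcal J\not\subseteq\mathcal I$ then $\mathcal I\cap\mathcal J$ is never prime: choose $X\in\mathcal I\setminus\mathcal J$ and $Y\in\mathcal J\setminus\mathcal I$; then $X\wedge Y\in\mathcal I\cap\mathcal J$ while neither factor is. To apply this when $n_p,n_q\geq 2$ for distinct primes $p,q$, the witnesses to incomparability are simply the mod-$p$ and mod-$q$ Moore spectra: $S/p\in\mathcal C_{q,n_q}\setminus\mathcal C_{p,n_p}$ since $(S/p)_{(q)}\simeq 0$ while $(S/p)_{(p)}$ has type $1<n_p$, and symmetrically $S/q$; then $S/p\wedge S/q\simeq 0\in\mathcal C$ with $S/p,S/q\notin\mathcal C$. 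No higher type-$n$ spectra or Smith--Toda bookkeeping enters at all. With this correction, your treatment of pairwise distinctness and of $\rho$ goes through.
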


\begin{proof}
This is \cite[Corollary 9.5]{Balmer}. We give another proof of the first statement. By Theorem \ref{thm Q}, any thick ideal of $\SH^{fin}$ is an intersection 
\[\mathcal C=\underset{p\in P}\bigcap \{X\in\SH^{fin}\;|\; X_{(p)}\in\mathcal C_{n_p}\subseteq \SH^{fin}_{(p)} \}\]
for some set of primes $P$ and some numbers $1\leq n_p\leq \infty$. If $|P|=0$, then $\mathcal C=\SH^{fin}$, which is not proper, and, thus, is no prime ideal. Assume $|P|\geq 1$. By Proposition \ref{intersection}, the intersection of two thick ideals $\mathcal I$ and $\mathcal J$ satisfying $\mathcal I\not\subseteq\mathcal J$ and $\mathcal J\not\subseteq\mathcal I$ is never a prime ideal, because it contains all $X\wedge Y$ with $X\in\mathcal I$ and $Y\in\mathcal J$. It follows that, if $\mathcal C$ is a prime ideal, then $|P|=1$. This proves that any prime ideal of $\SH^{fin}$ is of the form $\mathcal C=\{X\in\SH^{fin}\;|\; X_{(p)}\in\mathcal C_{n_p} \}$ for some prime $p$ and some $1\leq n_p\leq\infty$. 

On the other hand, the K\"unneth formula implies that $K(p,n)_\ast (X\wedge Y)$ can only be zero if $K(p,n)_\ast X=0$ or $K(p,n)_\ast Y=0$, proving that $\mathcal C_{p,n}$ as above is indeed a prime ideal. Note that, for $n=\infty$, we have $X\in\mathcal C_{p,n}$ if and only if $K(m)_\ast(X)=0$ for all $m\geq 0$, so $\mathcal C_{p,\infty}$ is also a prime ideal by the K\"unneth formula (using that $K(m+1)_\ast(X)=0$ implies $K(m)_\ast(X)=0$). 
\end{proof}

Now we turn to the $\Z/2$-equivariant category, $\mathcal T=\SH(\Z/2)_f$. By \cite[Corollary 1]{Segal}, the map 
\[\pi_0^{\mathcal T}\rightarrow A(\Z/2),\; [f]\mapsto (\deg(f^{\{1\}}),\deg(f^{\Z/2}))\] 
is an isomorphism to the Burnside ring $A(\Z/2)\cong\Z\oplus\Z$.
Hence, \cite[Theorem 7.13]{Balmer} yields surjective functors
\[\rho_{\SH(\Z/2)_{f}}:\Spc(\SH(\Z/2)_{f})\rightarrow \Spec(\Z\oplus\Z)\]
and
\[\rho_{(\SH(\Z/2)_{f})_{(p)}}:\Spc((\SH(\Z/2)_{f})_{(p)})\rightarrow \Spec(\Z_{(p)}\oplus\Z_{(p)}).\]

\begin{prop}\label{equiv prime ideals}
A thick ideal $\mathcal C_{m,n}\subseteq (\SH(\Z/2)_{f})_{(p)}$ is a prime ideal if and only if $\mathcal C_{m,n}=\mathcal C_{m,0}$, $0<m\leq\infty$, or $\mathcal C_{m,n}=\mathcal C_{0,n}$, $0<n\leq \infty$.
\end{prop}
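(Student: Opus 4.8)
The plan is to characterise exactly when a thick ideal $\mathcal C_{m,n}$ in $(\SH(\Z/2)_f)_{(p)}$ satisfies the prime condition: $X\wedge Y\in\mathcal C_{m,n}$ implies $X\in\mathcal C_{m,n}$ or $Y\in\mathcal C_{m,n}$. The key observation is that $\mathcal C_{m,n}=\mathcal C_{m,\infty}\cap\mathcal C_{\infty,n}$ where $\mathcal C_{m,\infty}=\{X\mid\phi^{\{1\}}(X)\in\mathcal C_m\}$ and $\mathcal C_{\infty,n}=\{X\mid\phi^{\Z/2}(X)\in\mathcal C_n\}$ (using the notation $\mathcal C_\infty=\{0\}$ from Theorem~\ref{HS}). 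More precisely, $\mathcal C_{m,n}$ fails to be expressible as one of the two ``axis'' ideals precisely when both $m>0$ and $n>0$ strictly, i.e. when $\mathcal C_{m,n}$ is a proper intersection of two distinct comparable-or-incomparable thick ideals.

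First I would establish the ``if'' direction. Suppose $\mathcal C=\mathcal C_{m,0}$ with $0<m\le\infty$. By Corollary~\ref{Cmn}, $X\in\mathcal C_{m,0}$ iff $\phi^{\{1\}}(X)\in\mathcal C_m$, i.e. iff $K(m-1,\{1\})_\ast(X)=0$ (interpreting the condition as $\phi^{\{1\}}(X)=0$ when $m=\infty$). Since $\phi^{\{1\}}$ is symmetric monoidal (Proposition~\ref{phi}(2)) and $K(m-1)$ (or, for $m=\infty$, the whole family of Morava $K$-theories together with the fact that $K(j{+}1)_\ast(-)=0\Rightarrow K(j)_\ast(-)=0$ on finite spectra) satisfies the Künneth formula, the condition $K(m-1)_\ast(\phi^{\{1\}}X\wedge\phi^{\{1\}}Y)=0$ forces one of the two factors to vanish. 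Hence $\mathcal C_{m,0}$ is prime, and $\mathcal C_{m,0}$ is proper because $S^0_{(p)}\notin\mathcal C_{m,0}$. The same argument with $\phi^{\Z/2}$ handles $\mathcal C_{0,n}$, $0<n\le\infty$.

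For the ``only if'' direction, suppose $m>0$ and $n>0$ and that $\mathcal C_{m,n}$ is not of the form $\mathcal C_{m',0}$ or $\mathcal C_{0,n'}$; I would show it is not prime by exhibiting $X,Y$ with $X\wedge Y\in\mathcal C_{m,n}$ but $X,Y\notin\mathcal C_{m,n}$. Here the natural candidates come from Corollary~\ref{mn}(3): take $X$ with equivariant type $(m,0)$ (for instance $X=(X_m\wedge\Z/2_+)$ after adjusting, or more simply any spectrum with $\phi^{\{1\}}(X)$ of type $\ge m$ and $\phi^{\Z/2}(X)$ of type $0$) and $Y$ with $\phi^{\{1\}}(Y)$ of type $0$ and $\phi^{\Z/2}(Y)$ of type $\ge n$. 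Then by the Künneth formula applied after $\phi^{\{1\}}$ and $\phi^{\Z/2}$ separately, $\phi^{\{1\}}(X\wedge Y)$ has type $\ge m$ and $\phi^{\Z/2}(X\wedge Y)$ has type $\ge n$, so $X\wedge Y\in\mathcal C_{m,n}$; but $X\notin\mathcal C_{m,n}$ because $\phi^{\Z/2}(X)$ has type $0<n$, and $Y\notin\mathcal C_{m,n}$ because $\phi^{\{1\}}(Y)$ has type $0<m$. The only subtlety is ensuring such spectra exist with the prescribed types; Corollary~\ref{mn}(3) guarantees the needed types whenever the two coordinates of the relevant pair are weakly ordered, which holds here since one coordinate is $0$. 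Actually this parallels Proposition~\ref{intersection}: $\mathcal C_{m,n}=\mathcal C_{m,\infty}\cap\mathcal C_{\infty,n}$ is a proper intersection of two thick ideals neither of which contains the other (as soon as $m,n>0$ and $(m,n)\notin\{(m',\infty),(\infty,n')\}$ ... and even in those boundary cases one is a proper axis ideal), so it contains $\thickid(\{X\wedge Y\mid X\in\mathcal C_{m,\infty},\,Y\in\mathcal C_{\infty,n}\})$ and hence cannot be prime unless it equals one of the two.

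The main obstacle I anticipate is the careful bookkeeping at the boundary values $m=\infty$ or $n=\infty$: one must check that $\mathcal C_{\infty,n}$ for $0<n<\infty$ is genuinely of the allowed form $\mathcal C_{0,n}$ only when the first coordinate is forced to be $0$, and more generally that the list $\{\mathcal C_{m,0}:0<m\le\infty\}\cup\{\mathcal C_{0,n}:0<n\le\infty\}$ is exactly the set of primes with no redundancy beyond $\mathcal C_{0,0}=(\SH(\Z/2)_f)_{(p)}$ being excluded as improper. This requires using the injectivity of $\tau$ (Theorem~\ref{tau injective}) to know distinct pairs give distinct ideals on the relevant range, together with Proposition~\ref{upper bound} (for $p=2$, which constrains which $(m,n)$ occur but does not affect the axis ideals since those always exist by Corollary~\ref{mn}).
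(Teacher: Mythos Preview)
Your approach is essentially the paper's: decompose $\mathcal C_{m,n}=\mathcal C_{m,0}\cap\mathcal C_{0,n}$, use Proposition~\ref{intersection} to see that a prime equal to an intersection of two ideals must coincide with one of them, and use the K\"unneth formula for $K(m-1,\{1\})$ resp.\ $K(n-1,\Z/2)$ (via Proposition~\ref{K(n) and phi}) to verify that the axis ideals are prime. Two points need correction, however.

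First, a notational slip: you write $\mathcal C_{m,n}=\mathcal C_{m,\infty}\cap\mathcal C_{\infty,n}$ and define $\mathcal C_{m,\infty}=\{X\mid\phi^{\{1\}}(X)\in\mathcal C_m\}$. In the paper's convention (Corollary~\ref{Cmn}) that set is $\mathcal C_{m,0}$, since $\mathcal C_0=\SH^{fin}_{(p)}$ imposes no condition; the ideal $\mathcal C_{m,\infty}$ would instead require $\phi^{\Z/2}(X)=0$. You use $\mathcal C_{m,0}$ correctly in the ``if'' paragraph, so this is just an internal inconsistency to fix.

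Second, your explicit witnesses in the ``only if'' direction need not exist. You ask for $X$ with $\phi^{\{1\}}(X)$ of type $\ge m$ and $\phi^{\Z/2}(X)$ of type $0$, but for $p=2$ Proposition~\ref{upper bound} forces $\phi^{\Z/2}(X)$ to have type $\ge m-1$, and Corollary~\ref{mn}(3) only supplies type-$(a,b)$ spectra when $a\le b$, not $(m,0)$ with $m>0$. The clean fix is the abstract argument you mention at the end (and which the paper uses): from the hypothesis $\mathcal C_{m,n}\neq\mathcal C_{m,0}$ and $\mathcal C_{m,n}\neq\mathcal C_{0,n}$ choose any $X\in\mathcal C_{m,0}\setminus\mathcal C_{m,n}$ and $Y\in\mathcal C_{0,n}\setminus\mathcal C_{m,n}$; then $X\wedge Y\in\mathcal C_{m,0}\cap\mathcal C_{0,n}=\mathcal C_{m,n}$ contradicts primality. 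No control on the precise types of $X$ and $Y$ is needed.
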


\begin{proof}
From Section \ref{Application to Z/2}, we know that any thick ideal in $(\SH(\Z/2)_{f})_{(p)}$ is of the form 
\[\mathcal C_{m,n}=\{X\; |\; \phi^{\{1\}}(X)\in\mathcal C_m \textup{ and }\phi^{\Z/2}(X)\in\mathcal C_n\}=\mathcal C_{m,0}\cap\mathcal C_{0,n}.\]
By Proposition \ref{intersection}, this is equal to $\{X\wedge Y\; |\; X\in \mathcal C_{m,0}, Y\in\mathcal C_{0,n}\}$. It follows that $\mathcal C_{m,n}$ being a prime ideal implies $\mathcal C_{m,n}=\mathcal C_{m,0}$ or $\mathcal C_{m,n}=\mathcal C_{0,n}$. On the other hand, if either $m$ or $n$ is $0$ then $\mathcal C_{m,n}$ is a prime ideal by the K\"unneth formula for $K(m,\{1\})$ or $K(n,\Z/2)$. Note that $\mathcal C_{0,0}=(\SH(\Z/2)_f)_{(p)}$ is not a prime ideal since it is no proper subcategory. 
\end{proof}

\begin{rk}
As a consequence of Strickland's results \cite{Sch}, $\mathcal C_{m,n}=\mathcal C_{0,n}$ implies $m=0$ but $\mathcal C_{m,n}=\mathcal C_{m,0}$ does not always imply that $n=0$. This follows from Corollary \ref{mn}: for $m\leq n$, a type $(m,n)$-spectrum $X_{m,n}$ always exists, and $X_{0,n}\in\mathcal C_{0,n}\setminus\mathcal C_{m,n}$ for all $m>0$. On the other hand, Proposition \ref{upper bound} implies that, for $p=2$, $\mathcal C_{m,0}=\mathcal C_{m,n}$ for any $0\leq n\leq m-1$.
\end{rk}

Recall from Definition \ref{equivariant type} that the thick ideals in $(\SH(\Z/2)_f)_{(p)}$ are in bijection with a lattice $\Gamma_p\subseteq(\Z_{\geq 0}\cup\{\infty\})\times(\Z_{\geq 0}\cup\{\infty\})$ containing all $(m,n)$ such that a spectrum of type $(m,n)$ exists. In the case just considered, $(m,0)$ would not be in $\Gamma_2$ (at least for $m>1$). In terms of the bijection to $\Gamma_p$, the above proposition instead reads as follows:

\begin{cor}
For any $(m,n)\in\Gamma_p$, the thick ideal $\mathcal C_{m,n}$ is a prime ideal if and only if one of the following conditions holds: 
\begin{compactenum}[(1)]
\item $(m,n)=(0,n)$, with $0<n\leq\infty$,
\item $0<m\leq\infty$, $0\leq n\leq\infty$ and $(m,n-1)\not\in\Gamma_p$.
\end{compactenum}
\end{cor}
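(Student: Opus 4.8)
The plan is to translate the previous proposition, which characterizes primality of $\mathcal C_{m,n}\subseteq(\SH(\Z/2)_f)_{(p)}$ in terms of $m$ and $n$ themselves, into the language of the bijection with $\Gamma_p$. The subtlety is that the pair $(m,n)$ occurring as an index for a thick ideal need not equal the pair of types realized by a generator: by Corollary \ref{mn} and Proposition \ref{upper bound}, two different index pairs $(m,n)$ and $(m,n')$ can name the \emph{same} thick ideal, and the element of $\Gamma_p$ corresponding to a given thick ideal is the unique type pair actually realized. So the task is: given $(m,n)\in\Gamma_p$ (meaning a type-$(m,n)$ spectrum exists, hence $(m,n)$ is the ``true'' index of the thick ideal), decide when $\mathcal C_{m,n}$ is prime.

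First I would recall from Proposition \ref{equiv prime ideals} that $\mathcal C_{m,n}$ is prime precisely when it coincides with some $\mathcal C_{0,n}$ with $0<n\le\infty$ or with some $\mathcal C_{m,0}$ with $0<m\le\infty$. For the ``only if'' direction, suppose $(m,n)\in\Gamma_p$ and $\mathcal C_{m,n}$ is prime. If $m=0$: since a type-$(0,n)$ spectrum exists and $\mathcal C_{0,0}=(\SH(\Z/2)_f)_{(p)}$ is not proper, we must have $n>0$, giving case (1). If $m>0$: primality forces $\mathcal C_{m,n}=\mathcal C_{0,n'}$ for some $n'$ or $\mathcal C_{m,n}=\mathcal C_{m',0}$ for some $m'$; the first is impossible because a type-$(m,n)$ spectrum with $m>0$ cannot lie in $\mathcal C_{0,n'}$ (its geometric $\{1\}$-fixed points are nonzero after the appropriate Morava $K$-theory, so $\tau_{\{1\}}\ne 0$), so $\mathcal C_{m,n}=\mathcal C_{m',0}$, and comparing the realized first coordinates (which are bijection-invariant) gives $m'=m$, i.e.\ $\mathcal C_{m,n}=\mathcal C_{m,0}$. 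But $\mathcal C_{m,n}=\mathcal C_{m,0}$ together with $(m,n)\in\Gamma_p$ means no type-$(m,n')$ spectrum exists for $n'>n$ beyond what is already forced — concretely it means $(m,n-1)\notin\Gamma_p$, since if a type-$(m,n-1)$ spectrum existed it would lie in $\mathcal C_{m,n}\setminus\mathcal C_{m,0}$ when $n-1<n$... here I need to argue carefully: $\mathcal C_{m,0}\subseteq\mathcal C_{m,n}$ always, and equality holds iff there is no spectrum $X$ with $\phi^{\{1\}}X\in\mathcal C_m$, $\phi^{\Z/2}X\notin\mathcal C_0$ but $\phi^{\Z/2}X$ of type $<n$; since every $\phi^{\Z/2}X$ has some type, equality is equivalent to the nonexistence of any type $(m,j)$ spectrum with $0\le j<n$, which (because $\Gamma_p$ is a sublattice and $(m,n)\in\Gamma_p$) is equivalent to $(m,n-1)\notin\Gamma_p$. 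This yields condition (2).

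For the ``if'' direction: case (1), $(m,n)=(0,n)$ with $0<n\le\infty$, gives primality directly by the K\"unneth formula for the equivariant Morava $K$-theory $K(n-1,\Z/2)$ (or for $n=\infty$, by the fact that $K(j+1)_\ast=0\Rightarrow K(j)_\ast=0$ together with the K\"unneth formula), exactly as in Proposition \ref{equiv prime ideals}. For case (2), $0<m\le\infty$, $0\le n\le\infty$, $(m,n-1)\notin\Gamma_p$: by the discussion above, $(m,n-1)\notin\Gamma_p$ forces $\mathcal C_{m,n}=\mathcal C_{m,0}$, and $\mathcal C_{m,0}$ is prime by the K\"unneth formula for $K(m-1,\{1\})$ (again with the usual modification for $m=\infty$); it is proper because $m>0$.

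The main obstacle I anticipate is getting the equivalence ``$\mathcal C_{m,n}=\mathcal C_{m,0}\iff(m,n-1)\notin\Gamma_p$'' airtight, including the convention at $n=0$ (where $\mathcal C_{m,-1}$ should be read as undefined, i.e.\ $(m,-1)\notin\Gamma_p$ vacuously, so that $\mathcal C_{m,0}$ is always allowed) and at $n=\infty$; this requires using that $\Gamma_p$ is a sublattice and invoking Corollary \ref{mn}(3) to know type-$(m,j)$ spectra exist whenever $j\ge$ some bound, so the only way for $(m,n-1)\notin\Gamma_p$ is the ``upper bound'' phenomenon of Proposition \ref{upper bound}. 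Everything else is a routine repackaging of Proposition \ref{equiv prime ideals} and the K\"unneth formula (Corollary \ref{Kunneth equivariant}).
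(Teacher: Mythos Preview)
Your overall strategy is exactly what the paper intends (the corollary is stated without proof there, as an immediate reformulation of Proposition~\ref{equiv prime ideals} and the remark following it), but there are a few slips and one genuine gap you should tighten.

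First, two typos: the inclusion goes the other way, $\mathcal C_{m,n}\subseteq\mathcal C_{m,0}$ (the condition on $\phi^{\Z/2}$ is more restrictive for larger $n$); and ``$\phi^{\Z/2}X\notin\mathcal C_0$'' should read ``$\phi^{\Z/2}X\notin\mathcal C_n$'', since $\mathcal C_0$ is the whole category.

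The real gap is in your equivalence ``$\mathcal C_{m,n}=\mathcal C_{m,0}\iff(m,n-1)\notin\Gamma_p$''. Failure of equality means there exists $Y$ of type $(m',j)$ with $m'\ge m$ and $j<n$, not necessarily $m'=m$. You invoke ``$\Gamma_p$ is a sublattice'' to bridge this, but you should actually carry it out: since $\tau$ is an injective lattice homomorphism (Theorem~\ref{tau injective}), $\Gamma_p$ is closed under componentwise $\min$ and $\max$. From $(m',j)\in\Gamma_p$ and the given $(m,n)\in\Gamma_p$, taking componentwise $\min$ gives $(m,j)\in\Gamma_p$; then from $(m,j)$ and $(0,n-1)\in\Gamma_p$ (Corollary~\ref{mn}(3)), taking componentwise $\max$ gives $(m,n-1)\in\Gamma_p$. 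The converse direction is immediate: a type-$(m,n-1)$ spectrum lies in $\mathcal C_{m,0}\setminus\mathcal C_{m,n}$. With this filled in, your argument is complete and matches the paper's intended reasoning.
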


For evaluating the functor $\rho_{(\SH(\Z/2)_{f})_{(p)}}$, we use the first description of the prime ideals in $(\SH(\Z/2)_f)_{(p)}$, given by Proposition \ref{equiv prime ideals}.

\begin{prop}
The functor
\[\rho_{(\SH(\Z/2)_{f})_{(p)}}:\Spc((\SH(\Z/2)_{f})_{(p)})\rightarrow \Spec(\Z_{(p)}\oplus\Z_{(p)})\]
has the following values:
\[\rho_{(\SH(\Z/2)_{f})_{(p)}}(\mathcal C_{m,0})=\begin{cases} 0\oplus\Z_{(p)} &\mbox{if } m=1 \\ p\Z_{(p)}\oplus\Z_{(p)} &\mbox{if } m> 1,\end{cases}\]
\[\rho_{(\SH(\Z/2)_{f})_{(p)}}(\mathcal C_{0,n})=\begin{cases} \Z_{(p)}\oplus 0&\mbox{if } n=1 \\ \Z_{(p)}\oplus p\Z_{(p)}&\mbox{if } n> 1.\end{cases}\]
\end{prop}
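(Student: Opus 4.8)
The plan is to reduce the computation to the already-known value of $\rho$ on $\SH^{fin}_{(p)}$ by exploiting the naturality of $\rho$ for tensor triangulated functors. The starting point is that, since $\mathcal C_0=\SH^{fin}_{(p)}$, Corollary \ref{Cmn} expresses the relevant prime ideals as pullbacks along the geometric fixed point functors: $\mathcal C_{m,0}=(\phi^{\{1\}})^{-1}(\mathcal C_m)$ and $\mathcal C_{0,n}=(\phi^{\Z/2})^{-1}(\mathcal C_n)$, where $\mathcal C_m,\mathcal C_n\subseteq\SH^{fin}_{(p)}$ are the thick ideals of Theorem \ref{HS}. Hence, once we know that $\phi^{\{1\}}$ and $\phi^{\Z/2}$ descend to tensor triangulated functors $(\SH(\Z/2)_f)_{(p)}\to\SH^{fin}_{(p)}$, the naturality square for $\rho$ (\cite[Theorem 5.3(c) and Corollary 5.6(b)]{Balmer}) gives $\rho_{(\SH(\Z/2)_f)_{(p)}}(\mathcal C_{m,0})=(\pi_0\phi^{\{1\}})^{-1}\bigl(\rho_{\SH^{fin}_{(p)}}(\mathcal C_m)\bigr)$ and likewise for $\mathcal C_{0,n}$ with $\phi^{\Z/2}$. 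The values on the right were recorded above from \cite[Proposition 9.4]{Balmer}: $\rho_{\SH^{fin}_{(p)}}(\mathcal C_\ell)$ equals $0$ if $\ell=1$ and $p\Z_{(p)}$ if $1<\ell\le\infty$ (the case $\ell=\infty$ being that $\rho_{\SH^{fin}_{(p)}}(\{0\})$ is the ideal of non-invertible degrees, namely $p\Z_{(p)}$).

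Concretely, I would first verify the hypotheses: $\phi^{\{1\}}$ (the underlying-spectrum functor) and $\phi^{\Z/2}$ (geometric fixed points, with $W(\Z/2)$ trivial) are strict symmetric monoidal and exact, and by Proposition \ref{phi}(1) they send suspension spectra of finite $\Z/2$-CW complexes to suspension spectra of finite CW complexes, hence $\SH(\Z/2)^{fin}$ to $\SH^{fin}$ and, since they preserve retracts, $\SH(\Z/2)_f$ to $\SH^{fin}=\SH_f$; they pass to the $p$-local categories as in the proof of Theorem \ref{lower bound}. Next I would identify the induced maps on unit endomorphism rings: $\pi_0\phi^{\{1\}}$ and $\pi_0\phi^{\Z/2}$ are ring homomorphisms $\pi_0^{(\SH(\Z/2)_f)_{(p)}}\to\pi_0^{\SH^{fin}_{(p)}}=\Z_{(p)}$ sending a self-map $f$ of the sphere to $\deg(f^{\{1\}})$ and to $\deg(f^{\Z/2})$ respectively, so under the identification $\pi_0^{(\SH(\Z/2)_f)_{(p)}}\cong\Z_{(p)}\oplus\Z_{(p)}$, $[f]\mapsto(\deg(f^{\{1\}}),\deg(f^{\Z/2}))$, recalled above, they are exactly the two coordinate projections $\pr_1$ and $\pr_2$. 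Substituting the values of $\rho_{\SH^{fin}_{(p)}}$ then yields $\pr_1^{-1}(0)=0\oplus\Z_{(p)}$ for $m=1$ and $\pr_1^{-1}(p\Z_{(p)})=p\Z_{(p)}\oplus\Z_{(p)}$ for $m>1$, and symmetrically $\pr_2^{-1}(0)=\Z_{(p)}\oplus 0$ for $n=1$ and $\pr_2^{-1}(p\Z_{(p)})=\Z_{(p)}\oplus p\Z_{(p)}$ for $n>1$, which is the asserted table.

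I do not expect a substantial obstacle here; the argument is essentially a transport along naturality. The points that need attention, and which I would spell out, are: that the geometric fixed point functors genuinely satisfy Balmer's hypotheses on the relevant small (compact, $p$-local) categories; that $\mathcal C_m$ and $\mathcal C_n$ really are prime ideals of $\SH^{fin}_{(p)}$ for all indices in $\{1,\dots,\infty\}$, which follows from the Künneth formula for Morava K-theory (as already used for Proposition \ref{equiv prime ideals}); and the bookkeeping of the identification $\pi_0^{(\SH(\Z/2)_f)_{(p)}}\cong\Z_{(p)}\oplus\Z_{(p)}$. For $p=2$ this ring is the Burnside ring $A(\Z/2)_{(2)}$ rather than a literal product of two copies of $\Z_{(2)}$; one then reads $0\oplus\Z_{(2)}$, $p\Z_{(2)}\oplus\Z_{(2)}$ and their counterparts as the preimages of $0$ and $p\Z_{(2)}$ under $\pi_0\phi^{\{1\}}$ (respectively $\pi_0\phi^{\Z/2}$), and the argument is unchanged.
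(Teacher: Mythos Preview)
Your proposal is correct and follows essentially the same approach as the paper's proof: both use the naturality of $\rho$ with respect to the geometric fixed point functors $\phi^{\{1\}}$ and $\phi^{\Z/2}$, identify $\pi_0(\phi^H)$ as the coordinate projections via \cite[Corollary 1]{Segal}, and thereby reduce to the known values of $\rho_{\SH^{fin}_{(p)}}$. Your version is more detailed than the paper's (in particular, your caveat about the Burnside ring at $p=2$ not being a literal product is a genuine subtlety the paper glosses over), but the logical skeleton is the same.
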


\begin{proof}
By \cite[Corollary 5.6(b)]{Balmer}, $\rho_{\mathcal T}$ is natural in $\mathcal T$.
Since $X\in\mathcal C_{m,0}$ is only a condition on $\phi^{\{1\}}X$ and $X\in\mathcal C_{0,n}$ is only a condition on $\phi^{\Z/2}X$, the fixed point functors can be used to derive this result from the nonequivariant case. Note that the induced functor between the endomorphism rings, 
\[\pi_0(\phi^H): \pi_0^{(\SH(\Z/2)_{f})_{(p)}}\rightarrow \pi_0^{\SH^{fin}_{(p)}}\] 
is the projection onto the first summand of $\Z\oplus\Z$ if $H=\{1\}$ and onto the second summand of $\Z\oplus\Z$ if $H=\Z/2$ \cite[Corollary 1]{Segal}.
\end{proof}

The generalisation to the non-localised category $\SH(\Z/2)_f$ works similarly as in the non-equivariant case. In particular, any prime ideal in $\SH(\Z/2)_f$ is the preimage under the $p$-localisation functor of a prime ideal in $(\SH(\Z/2)_f)_{(p)}$ for some prime $p$.\\

Summarising the result, we can say that all information on thick ideals in $\SH(\Z/2)_f$ given in \cite{Balmer}---namely, that they map surjectively to $\Spec(\Z\oplus\Z)$---is recovered in the results of \cite{Sch} as presented in Chapter \ref{equivariant}. Furthermore, \cite{Sch} does not only specify a preimage of any element in $\Spec(\Z\oplus\Z)$ but gives a complete list of all possible such preimages.

\subsection{Prime ideals in the motivic category $\SH(k)_f$}

\cite[Section 10]{Balmer} studies $\mathcal T=\SH(F)_f$ for $F$ a perfect field, in which case the map $\rho_{\mathcal T}:\Spc(\mathcal T)\rightarrow \Spec(GW(F))$ is surjective. That is, $\Spec(GW(F))$ gives a lower bound on the thick ideals of $\SH(F)_f$. \\

For $F=\C$, the naturality of $\rho_{\mathcal T}$ yields a commutative diagram:
\[\xymatrix{\ar @{} [dr] |{}
\Spc(\SH(\C)_f) \ar@{>>}[r]^{\rho} & \Spec(\pi_0^{\SH(\C)_f}) \\
\Spc(\SH^{fin})\ar[u]^{\Spc(R)}  \ar@{>>}[r]_{\rho} & \Spec(\pi_0^{\SH^{fin}})\ar[u]_{\Spec(\pi_0(R))}.}\]
The map $\Spc(R)$ takes a prime ideal $\mathcal C$ of $\SH^{fin}$ to its preimage under $R=R_\C$. The endomorphism ring $\pi_0^{\SH^{fin}}$ is isomorphic to $\Z$, generated by the identity $S^0\rightarrow S^0$. By \cite[Corollary 4.11]{MorelA1algtop}, the same holds for $\pi_0^{\SH(\C)_f}\cong\Z$. Since $R(\id:S_\C^0\rightarrow S_\C^0)=(\id:S^0\rightarrow S^0)$, it follows that the map $\Spec(\pi_0(R))$ is isomorphic to the identity map $\Spec(\Z)\rightarrow\Spec(\Z)$. Thus:

\begin{cor}
For any $0<n\leq\infty$, the thick ideal $R^{-1}(\mathcal C_n)\subseteq (\SH(\C)_f)_{(p)}$ is a prime ideal, and 
\[\rho_{(\SH(\C)_f)_{(p)}}(R^{-1}(\mathcal C_n))=\begin{cases} 0 & \textup{ if } n=1, \\ 
p\Z_{(p)}& \textup{ if } n>1.\end{cases}\]
\end{cor}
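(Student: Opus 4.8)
The proof of this corollary is essentially a formal consequence of the naturality of Balmer's functor $\rho_{\mathcal T}$ together with two facts that have already been established in the excerpt: first, that $R^{-1}(\mathcal C_n)$ is a thick ideal (Proposition \ref{f}), and second, that $R^{-1}(\mathcal C_n)$ is a prime ideal because $\mathcal C_n$ is prime in $\SH^{fin}_{(p)}$ and the preimage of a prime ideal under a tensor triangulated functor is again prime (by \cite[Theorem 5.3(c)]{Balmer}, which says $\Spc(R)$ lands in $\Spc$; one only needs to note additionally that $R^{-1}(\mathcal C_n)$ is proper, since $S^0_\C \notin R^{-1}(\mathcal C_n)$ as $R(S^0_\C)=S^0 \notin \mathcal C_n$ for $n>0$). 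So the first step is just to record these two points and conclude that $R^{-1}(\mathcal C_n) \in \Spc((\SH(\C)_f)_{(p)})$.

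For the computation of $\rho_{(\SH(\C)_f)_{(p)}}(R^{-1}(\mathcal C_n))$, I would invoke the commutative square displayed just above the statement, which expresses the naturality of $\rho$ for the tensor triangulated functor $R\colon \SH^{fin} \to \SH(\C)_f$ (and its $p$-localisation). The left vertical arrow $\Spc(R)$ sends $\mathcal C_n$ to $R^{-1}(\mathcal C_n)$, and the right vertical arrow $\Spec(\pi_0(R))$ is, as argued in the paragraph preceding the corollary, the identity on $\Spec(\Z_{(p)})$ — this uses $\pi_0^{\SH^{fin}_{(p)}} \cong \Z_{(p)}$, $\pi_0^{(\SH(\C)_f)_{(p)}} \cong \Z_{(p)}$ by \cite[Corollary 4.11]{MorelA1algtop}, and the fact that $R$ sends the identity of the sphere to the identity of the sphere. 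Commutativity then gives
\[
\rho_{(\SH(\C)_f)_{(p)}}\bigl(R^{-1}(\mathcal C_n)\bigr) = \rho_{(\SH(\C)_f)_{(p)}}\bigl(\Spc(R)(\mathcal C_n)\bigr) = \Spec(\pi_0(R))\bigl(\rho_{\SH^{fin}_{(p)}}(\mathcal C_n)\bigr) = \rho_{\SH^{fin}_{(p)}}(\mathcal C_n),
\]
and the right-hand side is $0$ for $n=1$ and $p\Z_{(p)}$ for $n>1$ by the values of $\rho_{\SH^{fin}_{(p)}}$ recalled from \cite[Proposition 9.4]{Balmer} earlier in this subsection. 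This yields exactly the claimed case distinction.

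There is no real obstacle here; the only point requiring a moment's care is the identification of $\Spec(\pi_0(R))$ with the identity map, which rests on the nontrivial input $\pi_0^{(\SH(\C)_f)_{(p)}} \cong \Z_{(p)}$ from Morel's computation — but this is quoted rather than proved. One should also double-check the edge case $n=\infty$: there $\mathcal C_\infty = \{0\}$ is prime (it is the intersection of all the $\mathcal C_n$, or directly: $X\wedge Y \cong 0$ with both compact forces $K(m)_{\ast}X = 0$ or $K(m)_\ast Y = 0$ for every $m$ by the Künneth formula, hence $X\cong 0$ or $Y\cong 0$ by the thick subcategory theorem), so $R^{-1}(\mathcal C_\infty)$ is prime and its image under $\rho$ is $\rho_{\SH^{fin}_{(p)}}(\mathcal C_\infty) = p\Z_{(p)}$, consistent with the formula for $n>1$. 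With these remarks in place the proof is complete.
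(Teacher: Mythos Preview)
Your proposal is correct and follows exactly the paper's approach: the corollary is stated without a separate proof because it is immediate from the commutative square and the identification of $\Spec(\pi_0(R))$ with the identity on $\Spec(\Z_{(p)})$, together with Balmer's formula for $\rho_{\SH^{fin}_{(p)}}(\mathcal C_n)$. One small slip: you wrote ``$R\colon \SH^{fin} \to \SH(\C)_f$'', but the realisation functor goes the other way, $R\colon (\SH(\C)_f)_{(p)} \to \SH^{fin}_{(p)}$; your use of $\Spc(R)$ and $R^{-1}$ is nonetheless consistent with the correct direction, so this is only a typo.
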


\begin{rk}
From Proposition \ref{counter example}(2) or \cite[Proposition 10.4]{Balmer}, we know that $\Spc(R)$ is not surjective, since $C\eta$ lies in some prime ideal which is not of the form $R^{-1}(\mathcal C_n)$. \\
\end{rk}

For $F=\R$, there is also a commutative diagram:
\[\xymatrix{\ar @{} [dr] |{}
\Spc(\SH(\R)_f) \ar@{>>}[r]^{\rho} & \Spec(\pi_0^{\SH(\R)_f}) \\
\Spc(\SH(\Z/2)_f)\ar[u]^{\Spc(R')}  \ar@{>>}[r]_{\rho} & \Spec(\pi_0^{\SH(\Z/2)_f})\ar[u]_{\Spec(\pi_0(R'))}.}\]

\begin{lemma}
In the above diagram, the right map $\Spec(\pi_0(R'))$ is an isomorphism.
\end{lemma}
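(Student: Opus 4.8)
The plan is to deduce the lemma from the fact that $\Spec$ is a (contravariant) equivalence between commutative rings and affine schemes, so that it suffices to prove the ring homomorphism
$\pi_0(R')\colon \pi_0^{\SH(\R)_f}\to\pi_0^{\SH(\Z/2)_f}$
induced on the endomorphism rings of the units is an isomorphism. First I would identify the two rings. By Morel's computation \cite[Corollary 4.11]{MorelA1algtop} (as already used for $F=\C$ above, and underlying Balmer's discussion of $\SH(F)_f$ in \cite[Section 10]{Balmer}) one has $\pi_0^{\SH(\R)_f}=[S^0_\R,S^0_\R]_{\SH(\R)}\cong GW(\R)$, while $\pi_0^{\SH(\Z/2)_f}\cong A(\Z/2)$ by \cite[Corollary 1]{Segal}. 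The key structural observation is that both of these are \emph{free abelian groups of rank two}: this is immediate for $A(\Z/2)\cong\Z\oplus\Z$, and for $GW(\R)$ it follows since $GW(\R)$ is additively generated by $\langle 1\rangle$ and $\langle -1\rangle$ and is embedded into $\Z\oplus\Z$ by the rank and signature homomorphisms (compare $W(\R)=\Z$, \cite[p. 34]{Lam}).

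Next I would bring in the comparison functor $c'_\R\colon\SH(\Z/2)\to\SH(\R)$ from Chapter \ref{functors}. It is strict symmetric monoidal and right inverse to $R'_\R$, so applying $\pi_0(-)$ to the monoidal natural isomorphism $R'_\R\circ c'_\R\cong\id$ gives $\pi_0(R')\circ\pi_0(c'_\R)=\id_{A(\Z/2)}$; here one uses that $R'$ and $c'_\R$ are strict symmetric monoidal, hence send units to units and induce ring homomorphisms on the endomorphism rings of the units. Thus $\pi_0(R')$ is a \emph{split surjective} ring homomorphism, with a splitting given by $\pi_0(c'_\R)$.

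Finally I would invoke the elementary fact that a split surjection of free abelian groups of the same finite rank is an isomorphism: the kernel is a direct summand of $GW(\R)$ (a complement being the image of $\pi_0(c'_\R)$), hence free, and a rank count after $-\otimes_\Z\Q$ forces it to have rank zero, so, being torsion-free, it vanishes. Hence $\pi_0(R')$ is a ring isomorphism and $\Spec(\pi_0(R'))$ is an isomorphism. I do not expect a real obstacle here; the only point demanding a little care is recording that the two endomorphism rings are free of rank exactly two, while all the needed compatibilities (commutativity of the square, the ring-homomorphism property, the identity $\pi_0(R')\circ\pi_0(c'_\R)=\id$) come for free from the strict symmetric monoidality of $R'$ and $c'_\R$ and the naturality of $\rho_{\mathcal T}$ recorded in \cite[Corollary 5.6]{Balmer}.
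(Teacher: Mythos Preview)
Your proof is correct and takes a genuinely different route from the paper. The paper argues by explicitly tracking generators: it identifies $1$ and $\epsilon$ (the twist on $S^{1,1}\wedge S^{1,1}$) as generators of $\pi_0^{\SH(\R)}\cong GW(\R)$, observes that $R'$ sends this twist to the twist in $\SH(\Z/2)$, and then cites \cite{Arolla} to see that $1$ and $\epsilon$ also generate $\pi_0^{\SH(\Z/2)}\cong A(\Z/2)$. Your argument instead exploits the section $c'_\R$ already constructed in Chapter~\ref{functors}: from $R'\circ c'\cong\id$ you get that $\pi_0(R')$ is split surjective, and then a rank count on the two rank-two free abelian groups forces it to be bijective. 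Your approach is cleaner in that it uses only material already established in the paper and avoids the external reference \cite{Arolla}; the paper's approach, on the other hand, yields the explicit matching of generators $\epsilon\mapsto\epsilon$, which is more informative if one later wants to compute with these rings.
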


\begin{proof}
The realisation functor $R'$ maps generators of $\pi_0^{\SH(\R)}\cong GW(\R)\cong \Z\oplus\Z$ to generators of $\pi_0^{\SH(\Z/2)}$. This follows from \cite[Section 4]{MorelA1algtop} and is explained on \cite[Slides 15-16 and 22-24]{Arolla}: $\pi_0^{\SH(\R)}$ is generated by $1$ and $\epsilon=-1-\rho_{-1}\eta$ (in $GW(\R)$, $\epsilon$ corresponds to the quadratic form $q(x)=-x^2$). The element $\epsilon\in \pi_0^{\SH(\R)}$ is also represented by the twist map $S^{1,1}\wedge S^{1,1}\rightarrow S^{1,1}\wedge S^{1,1}$ \cite[Slide 22]{Arolla}. Now, $S^{1,1}=\G_m=\A^1\setminus \{0\}$ is mapped by $R'$ to the circle with $\Z/2$ acting by involution, which is also denoted by $S^{1,1}$. Hence, $R'(\epsilon)=\epsilon:S^{1,1}\wedge S^{1,1}\rightarrow S^{1,1}\wedge S^{1,1}$, the twist map in $\SH(\Z/2)$. By \cite[Slide 16]{Arolla}, $1$ and $\epsilon$ are generators for $\pi_0^{\SH(\Z/2)}$. Thus, $\pi_0(R'):\pi_0^{\SH(\R)}\rightarrow \pi_0^{\SH(\Z/2)}$ is an isomorphism.
\end{proof}

\begin{cor}
For any prime ideal $\mathcal C_{m,n}\subseteq (\SH(\Z/2)_f)_{(p)}$, $(R')^{-1}(\mathcal C_{m,n})$ is a prime ideal of $(\SH(\R)_f)_{(p)}$ and $\rho_{(\SH(\R)_f)_{(p)}}((R')^{-1}(\mathcal C_{m,n}))$ can be identified with the same prime ideal of $\Z_{(p)}\oplus\Z_{(p)}$ as $\rho_{(\SH(\Z/2)_f)_{(p)}}(\mathcal C_{m,n})$.
\end{cor}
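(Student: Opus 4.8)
The statement is purely formal once the preceding lemma and the functoriality of Balmer's construction are in hand, so the plan is to assemble the pieces. First I would record that $R'=R'_\R$ restricts to a functor between the subcategories of compact objects: by Proposition~\ref{compact1}, $R'$ sends $\SH(\R)_f$ into $\SH(\Z/2)_f$, and since $R'$ is strict symmetric monoidal and carries the $p$-local Moore spectrum to the $p$-local Moore spectrum, it descends to a tensor triangulated functor $R'\colon(\SH(\R)_f)_{(p)}\to(\SH(\Z/2)_f)_{(p)}$. By Proposition~\ref{f}, $(R')^{-1}$ takes thick ideals to thick ideals. To see it takes a prime ideal $\mathcal P$ to a prime ideal: if $X\wedge Y\in(R')^{-1}(\mathcal P)$, then $R'(X)\wedge R'(Y)\cong R'(X\wedge Y)\in\mathcal P$, so $R'(X)\in\mathcal P$ or $R'(Y)\in\mathcal P$, i.e. $X\in(R')^{-1}(\mathcal P)$ or $Y\in(R')^{-1}(\mathcal P)$; and $(R')^{-1}(\mathcal P)$ is proper because a proper thick ideal cannot contain the unit, so $S^0_{(p)}=R'(S^0_{(p)})\notin\mathcal P$ forces $S^0_{(p)}\notin(R')^{-1}(\mathcal P)$. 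Thus $(R')^{-1}(\mathcal C_{m,n})$ is a prime ideal of $(\SH(\R)_f)_{(p)}$, equivalently $\Spc(R')$ makes sense at the level of the $p$-localised categories.

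Next I would invoke the naturality of $\rho$ for tensor triangulated functors (\cite[Theorem 5.3(c), Corollary 5.6(b)]{Balmer}), exactly as in the diagram preceding the previous lemma but with everything $p$-localised, to get a commutative square
\[\xymatrix{\ar @{} [dr] |{}
\Spc((\SH(\R)_f)_{(p)}) \ar[r]^{\rho} & \Spec(\pi_0^{(\SH(\R)_f)_{(p)}}) \\
\Spc((\SH(\Z/2)_f)_{(p)})\ar[u]^{\Spc(R')}  \ar[r]_{\rho} & \Spec(\pi_0^{(\SH(\Z/2)_f)_{(p)}})\ar[u]_{\Spec(\pi_0(R'))}.}\]
Reading this square at $\mathcal C_{m,n}$ gives
\[\rho_{(\SH(\R)_f)_{(p)}}\big((R')^{-1}(\mathcal C_{m,n})\big)=\Spec(\pi_0(R'))\big(\rho_{(\SH(\Z/2)_f)_{(p)}}(\mathcal C_{m,n})\big).\]

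Finally, the previous lemma shows that $\pi_0(R')\colon\pi_0^{\SH(\R)}\to\pi_0^{\SH(\Z/2)}$ is a ring isomorphism sending the generators $1,\epsilon$ of $GW(\R)$ to the generators $1,\epsilon$ of $A(\Z/2)$; after $p$-localisation it identifies $\pi_0^{(\SH(\R)_f)_{(p)}}\cong GW(\R)_{(p)}\cong\Z_{(p)}\oplus\Z_{(p)}$ with $\pi_0^{(\SH(\Z/2)_f)_{(p)}}\cong A(\Z/2)_{(p)}\cong\Z_{(p)}\oplus\Z_{(p)}$ compatibly with the standard identifications, so $\Spec(\pi_0(R'))$ is the corresponding bijection on spectra. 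Hence the left-hand side of the displayed equation is, under this identification, precisely the prime ideal of $\Z_{(p)}\oplus\Z_{(p)}$ computed for $\mathcal C_{m,n}$ in the preceding proposition, which is the assertion. There is no genuine obstacle here; the only points demanding a little care are verifying that $R'$ truly restricts to the compact subcategories and descends to the $p$-localisations (handled by Proposition~\ref{compact1} and by $R'$ being monoidal and Moore-spectrum-preserving), and applying Balmer's naturality to the $p$-localised rather than integral categories — but the $p$-localised $R'$ is still tensor triangulated, so this is immediate.
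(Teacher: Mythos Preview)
Your proof is correct and follows the same approach as the paper: the corollary is stated without explicit proof there, being an immediate consequence of the commutative naturality square for $\rho$ (displayed just before the preceding lemma) together with that lemma's conclusion that $\Spec(\pi_0(R'))$ is an isomorphism. You have simply unpacked the details that the paper leaves implicit, including the fact that $\Spc(R')$ takes primes to primes (which is already part of \cite[Theorem 5.3(c)]{Balmer}) and the passage to the $p$-localised categories.
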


\begin{rk}
Conjecture \ref{conjectures}(4) would imply that in the above diagram, $(R')^{-1}$ is not surjective, as $\thickid(C\eta_{(p)})$ would have to lie in some prime ideal which is not of the form $(R')^{-1}(\mathcal C_{m,n})$. 
\end{rk}

\begin{rk}
Since the K{\"u}nneth formula might not hold for $AK(n)$, we do not know whether $\mathcal C_{AK(n)}$ or $\mathcal C_{AK(m,n)}$ are prime ideals. \end{rk}

\begin{rk}
If $k\subset \C$ or $k\subset \R$ is a field such that $\Spec(GW(\C))\rightarrow\Spec(GW(k))$ is not surjective or $\Spec(GW(\R))\rightarrow\Spec(GW(k))$ is not surjective respectively then \cite[Corollary 10.1]{Balmer} already implies that there is an infinite family of thick ideals in $\SH(k)_f$ which are not of the form $R_k^{-1}(\mathcal C)$ or $(R'_k)^{-1}(\mathcal C)$ respectively.
\end{rk}

\chapter{Motivic type-n spectra}\label{motivic type n}

\begin{defn}
Let $AK(n)$ be the motivic Morava K-theory spectrum as defined in Definition \ref{AK(n)}. We say that $X\in(\SH(k)_f)_{(p)}$ has motivic type $n$ if $AK(n-1)_{\ast\ast}(X)=0$ and $AK(n)_{\ast\ast}(X)\neq 0$. 
\end{defn}

A priori, the motivic type of $X$ might not be unique, as we do not know whether $\mathcal C_{AK(n-1)}\subseteq \mathcal C_{AK(s)}$ for all $s<n$. 
In Section \ref{section n+1}, we will prove that any $X\in\SH(\C)^{fin}_{(p)}$, $p>2$, has a unique motivic type.
For any prime $p$, the motivic type-$n$ spectra that we are going to consider in this chapter satisfy $AK(s)_{\ast\ast}(X)=0$ for all $s<n$.

\begin{rk}
In the topological category $\SH_{(p)}^{fin}$, the notion of types is equivalent to the notion of thick ideals by the thick subcatgory theorem \cite[Theorem 7]{HS}. In Chapter \ref{equivariant}, we have seen that, in equivariant homotopy theory, a more general notion of types is required. Also in the motivic world, not every thick ideal can be described in the language of types, as defined above. For example, as shown in Chapter \ref{nilpotence}, the motivic Morava K-theories $AK(n)$ do not distinguish between the nonequal thick ideals $\thickid(C\eta_{(2)})$ and $\thickid(S^0_{(2)})$, as both are generated by a spectrum of motivic type $0$. 
\end{rk}

In this chapter, we will often assume $k=\C$, because we will need explicit knowledge of $\h^{\ast\ast}$ and of $AK(n)_{\ast\ast}$ for some of our arguments. The results might hold in greater generality, but this seems to require different methods of proof.

If $X_n\in \SH^{fin}_{(p)}$ has type $n$ (see Definition \ref{type}), then $c(X_n)\in\SH(\C)^{fin}_{(p)}$ has motivic type $n$ (Section \ref{constant}). That is, $c(X_n)\in\mathcal C_{AK(n-1)}$ and $c(X_n)\not\in\mathcal C_{AK(n)}$. 
For any given $n$, we show how to construct a spectrum $\X_n\in (\SH(\C)_f)_{(p)}$ with motivic type $n$ that is not in the image of the functor $c$. The construction will be similar to topological constructions given by \cite[Section 4]{Mi} and \cite[Appendix C]{Rav}. We stick to the approach in \cite{Rav} but we believe that a motivic version of Mitchell's spectrum would give another spectrum of motivic type $n$.

This chapter is organised as follows:

We first discuss some foundations that are needed to apply the motivic Adams spectral sequence and obtain a vanishing result for motivic Morava K-theory (Theorem \ref{Mitchell}). Afterwards, we construct a spectrum satisfying the conditions of the theorem and we prove that it has indeed motivic type $n$ (Section \ref{construction}). Finally, we compare our findings to a constant type-$n$ spectrum, $c(X_n)$ (Section \ref{constant}), realising that motivic Morava K-theory does not distinguish the thick ideal generated by $\X_n$ from the one generated by $cX_n$, meaning that both spectra have motivic type $n$. We partly calculate their types with respect to the cohomology theories $c(K(s))$, as well. However, we do not have the answer to the question whether $\thickid(\X_n)$ equals $\thickid(c(X_n))$.

\section{Universal coefficient and K\"unneth theorems}

This section states some general results, which hold in $\SH(k)$, $k$ any field, and which will be used later on.
In the whole chapter, we use the notation $H=H\Z/p$. Recall from Remark \ref{AMU module}(1) that $H$ is cellular.

Proposition 7.7 in \cite{DI} describes the following universal coefficient spectral sequences:

\begin{prop}\label{ucss}
Let $E\in\SH(k)$ be a motivic ring spectrum, $M$ a right $E$-module and $N$ a left $E$-module. Furthermore, assume that $E,M\in\SH(k)^{cell}$ (see Definition \ref{fin}).
\begin{compactenum}[(1)]
\item There is a strongly convergent spectral sequence
\[E^2=\Tor^{E_{\ast\ast}}_{a,b,c}(M_{\ast\ast},N_{\ast\ast})\Rightarrow \pi_{a+b,c}(M\wedge_E N).\]
\item There is a conditionally convergent spectral sequence 
\[E_2=\Ext_{E_{\ast\ast}}^{a,b,c}(M_{\ast\ast},N_{\ast\ast})\Rightarrow \pi_{-a-b,-c}F_E(M,N),\]
where $F_E(-,-)$ denotes the $E$-function spectrum.
\end{compactenum}
\end{prop}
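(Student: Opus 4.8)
This is \cite[Proposition 7.7]{DI}, so the following merely sketches how one would prove it. The construction is the motivic, tri-graded analogue of the universal coefficient spectral sequences of \cite[Section IV.5]{EKMM}; the one extra ingredient the motivic setting demands is that homotopy groups be conservative on cellular spectra, i.e. Proposition \ref{7.1} (a cellular $Y$ with $\pi_{\ast\ast}Y=0$ is contractible; over a general field this is \cite[Proposition 7.1]{DI}).

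The plan is first to resolve $M$ as an $E$-module. I would pick a free resolution $\cdots\to P_1\to P_0\to M_{\ast\ast}\to 0$ of tri-graded $E_{\ast\ast}$-modules, write each free module $P_i$ as $\pi_{\ast\ast}(W_i)$ for a wedge $W_i=\bigvee_\alpha\Sigma^{p_\alpha,q_\alpha}E$ of bigraded suspensions of $E$ (possible since $[\Sigma^{p,q}E,Z]_{E\text{-mod}}\cong Z_{p,q}$ for an $E$-module $Z$, and $\pi_{\ast\ast}$ commutes with arbitrary wedges, the motivic spheres being compact), lift the differentials and the augmentation to $E$-module maps, and assemble the $W_i$ into an Adams-type tower $\cdots\to M_1\to M_0=M$ whose successive fibres are the $W_i$ and whose homotopy recovers the chosen resolution degreewise. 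Here cellularity of $M$, via Proposition \ref{7.1}, is what I would use to guarantee that the tower genuinely resolves $M$. A resolution-free alternative is to filter the two-sided simplicial bar construction $B_\bullet(M,E,N)$ by simplicial degree, which yields the $\Tor$ spectral sequence directly.

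Next I would apply $-\wedge_E N$ to the tower: it is exact and preserves colimits and wedges, so the successive fibres become $W_i\wedge_E N\simeq\bigvee_\alpha\Sigma^{p_\alpha,q_\alpha}N$ with homotopy $P_i\otimes_{E_{\ast\ast}}N_{\ast\ast}$; the associated spectral sequence then has $E^1$-term the complex $P_\bullet\otimes_{E_{\ast\ast}}N_{\ast\ast}$, hence $E^2=\Tor^{E_{\ast\ast}}_{a,b,c}(M_{\ast\ast},N_{\ast\ast})$, abutting to $\pi_{a+b,c}(M\wedge_E N)$. Dually, applying $F_E(-,N)$, which is exact and turns the colimit into a homotopy limit, produces a tower with homotopy limit $F_E(M,N)$ and layers of homotopy $\Hom_{E_{\ast\ast}}(P_i,N_{\ast\ast})$, giving $E_2=\Ext_{E_{\ast\ast}}^{a,b,c}(M_{\ast\ast},N_{\ast\ast})\Rightarrow\pi_{-a-b,-c}F_E(M,N)$.

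For convergence I would argue as in \cite[Section IV.5]{EKMM}: the homological spectral sequence converges strongly, the only change being that the requisite exactness and vanishing are detected on $\pi_{\ast\ast}$ by Proposition \ref{7.1}, so cellularity of $E$ and $M$ is essential; the cohomological version, being the spectral sequence of a homotopy limit of a tower, converges only conditionally, the obstruction being the usual $\lim^1$ term. I expect the main obstacle to be the first step — building the tower of $E$-modules that simultaneously realises a prescribed free resolution and, after $\wedge_E N$ (resp. $F_E(-,N)$), reassembles to the desired spectrum — together with verifying that the resulting filtration converges to $M\wedge_E N$ (resp. $F_E(M,N)$); both verifications hinge on the conservativity of $\pi_{\ast\ast}$ on cellular objects.
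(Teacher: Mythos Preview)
Your proposal is correct and matches the paper's treatment: the paper does not give its own proof but simply cites \cite[Proposition 7.7]{DI}, exactly as you do in your first line. Your sketch of the EKMM-style argument adapted to the motivic setting, with cellularity of $E$ and $M$ used via \cite[Proposition 7.1]{DI} to make $\pi_{\ast\ast}$ conservative, is an accurate outline of how Dugger--Isaksen establish the result.
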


We apply (2) to the case $E=H=H\Z/p$, $M=A\wedge H$ and $N=H$ for $A$ a cell spectrum and get:
\[\Ext_{\h_{\ast\ast}}(\h_{\ast\ast}A,\h_{\ast\ast})\Rightarrow \pi_{\ast\ast}F_H(A\wedge H,H)=\h^{\ast\ast}A.\]

If $A$ is a finite cell spectrum, we can also apply the spectral sequence to the case $E=H$, $M=F(A,H)$, $N=H$:
\[\Ext_{\h^{\ast\ast}}(\h^{\ast\ast}A,\h^{\ast\ast})\Rightarrow \pi_{\ast\ast}F_H(F(A,H),H)=\pi_{\ast\ast}F(F(A,S^0),H)=\h_{\ast\ast}A.\]
The first equality holds because $F(A,H)=F(A,S^0)\wedge H$ for finite $A$ and the second holds because taking the dual of $A$ twice gives $A$ again (see \cite[Proposition III.1.3]{LMS}).

In the case of vanishing higher $\Ext$-groups, the spectral sequence collapses and we get the following result:

\begin{cor}\label{uct}
If $A\in\SH(k)^{cell}$ is any cell spectrum such that $\h_{\ast\ast}A$ is free over $\h_{\ast\ast}$, then $\h^{\ast\ast}A\cong\Hom_{\h_{\ast\ast}}(\h_{\ast\ast}A,\h_{\ast\ast})$.
If $A$ is a finite cell spectrum with $\h^{\ast\ast}A$ free over $\h^{\ast\ast}$, then $\h_{\ast\ast}A\cong\Hom_{\h^{\ast\ast}}(\h^{\ast\ast}A,\h^{\ast\ast})$.
\end{cor}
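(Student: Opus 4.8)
The plan is to read off both isomorphisms directly from the two universal coefficient spectral sequences displayed immediately before the corollary, which are the special cases $E=H$, $M=A\wedge H$, $N=H$ (respectively $E=H$, $M=F(A,H)$, $N=H$ for finite $A$) of Proposition \ref{ucss}(2). In each case the $E_2$-page is a tri-graded $\Ext$ group over $\h_{\ast\ast}$ (respectively over $\h^{\ast\ast}$), and the entire content of the corollary is that the freeness hypothesis forces this $\Ext$ to be concentrated in homological degree $0$, where it is exactly the $\Hom$ group appearing in the statement.

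For the first statement I would argue as follows. A free module is projective, and $\Ext$ takes direct sums in the first variable to products, so from $\Ext^{a}_{\h_{\ast\ast}}(\h_{\ast\ast},-)=0$ for $a>0$ one gets $\Ext_{\h_{\ast\ast}}^{a,b,c}(\h_{\ast\ast}A,\h_{\ast\ast})=0$ for $a>0$ and $\Ext_{\h_{\ast\ast}}^{0,b,c}(\h_{\ast\ast}A,\h_{\ast\ast})=\Hom_{\h_{\ast\ast}}(\h_{\ast\ast}A,\h_{\ast\ast})$ in the relevant bidegree. Hence the spectral sequence $\Ext_{\h_{\ast\ast}}(\h_{\ast\ast}A,\h_{\ast\ast})\Rightarrow\h^{\ast\ast}A$ is concentrated on a single homological line: there are no differentials, no filtration quotients to assemble, and it collapses at $E_2$. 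The same computation, now with $\h^{\ast\ast}$ in place of $\h_{\ast\ast}$ and the spectral sequence $\Ext_{\h^{\ast\ast}}(\h^{\ast\ast}A,\h^{\ast\ast})\Rightarrow\h_{\ast\ast}A$ — which is available because $A$ finite makes $F(A,H)\simeq DA\wedge H$ a cellular $H$-module with $DA\in\SH(k)^{fin}$ by Proposition \ref{prop dualizable} — yields the second statement.

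The only point that needs care is convergence: Proposition \ref{ucss}(2) only asserts conditional convergence. The resolution is standard: a conditionally convergent spectral sequence that collapses at a finite page converges strongly (since $E_r=E_2$ for all $r\ge 2$, the derived limit $RE_\infty$ vanishes and Boardman's criterion applies), and because $E_\infty$ is concentrated in homological degree $0$ the induced filtration on each bidegree of the abutment is trivial, so the edge homomorphism is the claimed isomorphism. I expect this bookkeeping — tracking the three gradings and verifying that ``concentrated on one line'' genuinely kills both the differentials and any extension problem — to be the only mildly delicate part; there is no substantive obstacle beyond it.
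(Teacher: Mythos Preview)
Your proposal is correct and follows essentially the same approach as the paper: the paper's entire proof is the single sentence ``In the case of vanishing higher $\Ext$-groups, the spectral sequence collapses and we get the following result,'' applied to the two spectral sequences set up in the paragraphs preceding the corollary. Your additional remarks on convergence and on the cellularity of $F(A,H)$ via $DA\in\SH(k)^{fin}$ (which the paper already notes just before the statement) are accurate elaborations of details the paper leaves implicit.
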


The universal coefficient spectral sequence also implies the following K\"unneth theorem \cite[Remark 8.7]{DI}:

\begin{prop}\label{kunneth}
Let $A$ and $B$ be motivic spectra such that $A$ is a finite cell spectrum. If $\h^{\ast\ast}A$ is free over $\h^{\ast\ast}$, then 
\[\h^{\ast\ast}(A)\otimes_{\h^{\ast\ast}}\h^{\ast\ast}(B)\cong \h^{\ast\ast}(A\wedge B).\]
\end{prop}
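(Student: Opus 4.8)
The strategy is to reduce the cohomological K\"unneth isomorphism to the Tor universal coefficient spectral sequence of Proposition \ref{ucss}(1) over the ring spectrum $H=H\Z/p$, exploiting the strong dualizability of the finite cell spectrum $A$. First I would rewrite the target object cohomologically. Since $A$ is a finite cell spectrum it is strongly dualizable (Proposition \ref{prop dualizable}), so the canonical map $DA\wedge Y\rightarrow F(A,Y)$ is an isomorphism for every $Y\in\SH(k)$; combining this with the adjunction isomorphism $F(A\wedge B,H)\cong F(A,F(B,H))$ gives
\[\h^{\ast\ast}(A\wedge B)=\pi_{-\ast,-\ast}F(A\wedge B,H)\cong\pi_{-\ast,-\ast}\bigl(DA\wedge F(B,H)\bigr),\]
where $DA\in\SH(k)^{fin}$ again by Proposition \ref{prop dualizable}. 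Because $F(B,H)$ is an $H$-module spectrum and $H\wedge_H N\cong N$ for every $H$-module $N$, one has $DA\wedge F(B,H)\cong(DA\wedge H)\wedge_H F(B,H)$, and here $DA\wedge H$ is a cellular right $H$-module, being the smash of a finite cell spectrum with the cellular spectrum $H$ (Remark \ref{AMU module}(1)).

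Next I would apply Proposition \ref{ucss}(1) with $E=H$, $M=DA\wedge H$ and $N=F(B,H)$; this is legitimate since $H$ and $DA\wedge H$ are cellular, and no cellularity hypothesis on $B$ is needed. The $E^2$-term is $\Tor^{\h_{\ast\ast}}_{a,b,c}\bigl(M_{\ast\ast},N_{\ast\ast}\bigr)$ with $M_{\ast\ast}=\h_{\ast\ast}(DA)$ and $N_{\ast\ast}=\pi_{\ast\ast}F(B,H)\cong\h^{-\ast,-\ast}(B)$. The key point is that $\h_{\ast\ast}(DA)$ is free over $\h_{\ast\ast}$: by double duality $\h^{\ast\ast}(DA)\cong\h_{\ast\ast}(A)$, and since $\h^{\ast\ast}A$ is free (and finitely generated, $A$ being finite cell) Corollary \ref{uct} gives $\h_{\ast\ast}(A)\cong\Hom_{\h^{\ast\ast}}(\h^{\ast\ast}A,\h^{\ast\ast})$, which is finitely generated free; applying Corollary \ref{uct} once more to the finite cell spectrum $DA$, whose cohomology we have just shown to be free, yields $\h_{\ast\ast}(DA)\cong\Hom_{\h^{\ast\ast}}(\h^{\ast\ast}(DA),\h^{\ast\ast})$, again finitely generated free. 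Freeness of $M_{\ast\ast}$ kills the higher $\Tor$-groups, so the spectral sequence collapses to the edge isomorphism
\[\pi_{\ast\ast}\bigl((DA\wedge H)\wedge_H F(B,H)\bigr)\cong\h_{\ast\ast}(DA)\otimes_{\h_{\ast\ast}}\pi_{\ast\ast}F(B,H).\]
Unwinding the regradings ($\h_{\ast\ast}(DA)\cong\h^{-\ast,-\ast}(A)$, $\pi_{\ast\ast}F(B,H)\cong\h^{-\ast,-\ast}(B)$, and the left-hand side $\cong\h^{-\ast,-\ast}(A\wedge B)$) produces the claimed isomorphism $\h^{\ast\ast}(A\wedge B)\cong\h^{\ast\ast}(A)\otimes_{\h^{\ast\ast}}\h^{\ast\ast}(B)$, and the last step is to check that this isomorphism coincides with the external product pairing, hence is natural.

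\textbf{Main obstacle.} The difficulty is bookkeeping rather than conceptual: one must keep all identifications compatible with the bigrading and, more delicately, verify that the collapse of the spectral sequence delivers the \emph{multiplicative} K\"unneth map and not merely an abstract isomorphism of $\h^{\ast\ast}$-modules, i.e.\ that the edge homomorphism agrees with the smash-product pairing. A secondary subtlety is the point-set justification of $DA\wedge F(B,H)\cong(DA\wedge H)\wedge_H F(B,H)$, which I would extract from the module-spectrum formalism underlying Proposition \ref{ucss}; since $DA$ is a strict finite cell spectrum this reduces to a cell-by-cell check and presents no real difficulty.
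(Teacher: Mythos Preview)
Your proof is correct. The paper itself does not give a proof of this proposition at all: it simply states that the result follows from the universal coefficient spectral sequence and cites \cite[Remark 8.7]{DI}. Your argument is a valid and complete way of fleshing out that citation, and it uses exactly the tool the paper points to (Proposition \ref{ucss}).

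One remark on efficiency: you can avoid the detour through $DA$ and the Tor spectral sequence by applying part (2) of Proposition \ref{ucss} directly. With $E=H$, $M=A\wedge H$ and $N=F(B,H)$ one has $F_H(A\wedge H,F(B,H))\cong F(A,F(B,H))\cong F(A\wedge B,H)$, so the spectral sequence reads
\[\Ext_{\h_{\ast\ast}}^{a,b,c}\bigl(\h_{\ast\ast}A,\,\h^{-\ast,-\ast}B\bigr)\;\Rightarrow\;\h^{\ast\ast}(A\wedge B).\]
Since $\h^{\ast\ast}A$ is finitely generated free, Corollary \ref{uct} gives $\h_{\ast\ast}A$ finitely generated free, the higher $\Ext$ groups vanish, and the edge gives $\Hom_{\h_{\ast\ast}}(\h_{\ast\ast}A,\h^{\ast\ast}B)\cong \h^{\ast\ast}A\otimes_{\h^{\ast\ast}}\h^{\ast\ast}B$. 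This is the same argument as yours with one fewer dualization; your version has the mild advantage of making the convergence statement strong rather than conditional. Your worry about identifying the edge with the external product is legitimate for applications but not required by the bare statement of the proposition, which only asserts an isomorphism of $\h^{\ast\ast}$-modules.
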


\section{The motivic Steenrod algebra}

In this section, too, we let $H=H\Z/p$. Let $k\subseteq\C$. 

The motivic mod-$p$ Steenrod algebra, $\mathcal A=\mathcal A^{\ast\ast}$ has first been defined in \cite[Section 11]{Operations} as the algebra of certain bistable natural transformations $\h^{\ast\ast}(-)\rightarrow \h^{\ast\ast}(-)$. By \cite[Theorem 3.49]{Voe10} and \cite[Lemma 5.7]{Hoy}, $\mathcal A$ is the algebra of all such operations and, as an $\h^{\ast\ast}$-module,  
\[\mathcal A\cong \h^{\ast\ast}\otimes_{\F_p}\mathcal A_{\Top},\]
where $\mathcal A_{\Top}$ is the topological mod-$p$ Steenrod algebra. Thus, as $\h^{\ast\ast}$-modules,
\[\mathcal A\cong  \h^{\ast\ast}\otimes_{\F_p} RP\otimes_{\F_p} \Lambda_{\F_p}(Q_0,Q_1,\cdots),\] 
where $RP$ is the $\F_p$-module generated by certain products of reduced powers $P^i:\h^{\ast\ast}(-)\rightarrow\h^{\ast+2i(p-1),\ast+i(p-1)}(-)$, $i\geq 0$, and $\Lambda_{\F_p}(Q_0,Q_1,\cdots)$ denotes the exterior algebra over $\F_p$ generated by $Q_i:\h^{\ast\ast}(-)\rightarrow \h^{\ast+2p^i-1,\ast+p^i-1}$, $i\geq 0$, as defined in \cite[Section 13]{Operations}. See also \cite[Corollaries 3 and 4]{Bo} or \cite[Equation (2.18)]{Ya}.

Borghesi \cite[Theorem 12]{Bo} computes the cohomology of the motivic connective Morava K-theory spectrum, which can be expressed by the same formula as in topology.

\begin{prop}\label{borghesi}
\[\h^{\ast\ast}(Ak(s))=\mathcal A/\mathcal A Q_s=\h^{\ast\ast}\otimes RP\otimes \Lambda_{\F_p}(Q_0,\cdots,Q_{s-1},Q_{s+1},\cdots).\]
\end{prop}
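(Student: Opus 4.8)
\textbf{Proof plan for Proposition \ref{borghesi}.}

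The statement is that $\h^{\ast\ast}(Ak(s)) = \mathcal{A}/\mathcal{A}Q_s$, and that as an $\h^{\ast\ast}$-module this is $\h^{\ast\ast}\otimes RP\otimes \Lambda_{\F_p}(Q_0,\dots,Q_{s-1},Q_{s+1},\dots)$. Since this is attributed to Borghesi \cite[Theorem 12]{Bo}, the plan is to follow the topological template (Baas--Sullivan / the usual computation of $\h^\ast(k(n);\F_p)$ as $\mathcal{A}/\mathcal{A}Q_n$) and transfer it to the motivic setting, using the structural facts already assembled in the excerpt: namely that $\mathcal{A}\cong \h^{\ast\ast}\otimes_{\F_p}\mathcal{A}_{\Top}$ as $\h^{\ast\ast}$-modules, the motivic Hopkins--Morel--Hoyois description of $H\Z/p$ as the cofiber of $v_s\colon \Sigma^{2(p^s-1),p^s-1}Ak(s)\to Ak(s)$ (Remark \ref{AMU module}(1)), and the motivic Atiyah--Hirzebruch / slice machinery from Proposition \ref{AHSS} together with the realisation compatibility of Remark \ref{remark AHSS}.

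The key steps I would carry out, in order. First, recall that $Ak(s)$ is built from $MGL_{(p)}$ by killing the regular sequence $v_0,\dots,v_{s-1},v_{s+1},\dots$; hence there is a sequence of cofiber sequences $\Sigma^{2(p^j-1),p^j-1}E_j \xrightarrow{v_j} E_j \to E_{j+1}$ starting from $ABP$ and converging to $Ak(s)$. Applying $\h^{\ast\ast}(-)=\h^{\ast\ast}(-;\F_p)$ and using the fact that $\h^{\ast\ast}(ABP)=\mathcal{A}//E$, where $E=\Lambda(Q_0,Q_1,\dots)$ (the motivic analogue of $\h^\ast(BP)=\mathcal{A}//E$, which follows from the structure of $\mathcal{A}$ and the known $\h_{\ast\ast}MGL$, e.g.\ \cite[Corollary 6.9]{Hoy}), one checks inductively that killing $v_j$ for $j\neq s$ exactly introduces the exterior generator $Q_j$, because on $\h^{\ast\ast}$-cohomology $v_j$ acts by (the dual of) $Q_j$; the Bockstein-type long exact sequences split because the relevant classes are polynomial/exterior generators and there is no room for differentials. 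Second, one must check that the $\h^{\ast\ast}$-module extension problems are trivial, i.e.\ that each long exact cohomology sequence splits as $\h^{\ast\ast}$-modules; here the grading argument is the same as topologically — the realisation functor $R_k$ sends the motivic AHSS to the topological AHSS (Remark \ref{remark AHSS}(1)) and is injective on the relevant associated-graded pieces because $\h^{\ast\ast}(\Spec k)\supseteq \F_p$ and the module is free over it — so freeness and the splitting descend from the known topological answer $\h^\ast(k(s);\F_p)=\mathcal{A}_{\Top}/\mathcal{A}_{\Top}Q_s$. Third, assemble: taking the colimit over the (finite at each degree) tower gives $\h^{\ast\ast}(Ak(s)) = \h^{\ast\ast}\otimes_{\F_p}\big(RP\otimes_{\F_p}\Lambda_{\F_p}(Q_0,\dots,Q_{s-1},Q_{s+1},\dots)\big)$, which by the $\h^{\ast\ast}$-module description of $\mathcal{A}$ is precisely $\mathcal{A}/\mathcal{A}Q_s$.

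The main obstacle I expect is not the bookkeeping of which $Q_j$ appears — that is forced by the realisation comparison — but rather establishing rigorously that $\h^{\ast\ast}(ABP)$ really is $\mathcal{A}//E$ with the correct $\h^{\ast\ast}$-module structure, i.e.\ that no exotic motivic phenomena (extra $\tau$-torsion, failure of the relevant spectral sequences to collapse over a general $k\subseteq\C$) intervene. Over $k=\C$ this is clean because $\h^{\ast\ast}(\Spec\C;\F_p)=\F_p[\tau]$ (Lemma \ref{H(C)}) makes everything a base change of the topological computation, and Lemma \ref{Ah} already records $Ah_{\ast\ast}=h_\ast[\tau]$; for general $k\subseteq\C$ one leans on the fact (Remark \ref{AMU module}(5), \cite[Theorem 12]{Bo}) that $\h_{\ast\ast}Ah$ and the module structure over $\mathcal{A}$ are computed in the cited references. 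So in practice the proof reduces to: quote Borghesi for the cohomology of $MGL$ and the structure of $\mathcal{A}$ (both already in the excerpt), then run the inductive cofiber-sequence argument above to pass from $MGL_{(p)}$ to $Ak(s)$, checking at each stage that the extension is the expected split $\h^{\ast\ast}$-module extension by comparison with \cite{HS} via $R_k$.
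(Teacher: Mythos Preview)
The paper does not give its own proof of this proposition: it simply attributes the result to Borghesi \cite[Theorem 12]{Bo} and states it as a black-box citation. Your proposal, by contrast, outlines an actual argument along the standard topological template---inductively killing the $v_j$ via cofiber sequences starting from $ABP$ and tracking the effect on $\h^{\ast\ast}$---which is essentially how such computations are carried out (and is the strategy behind Borghesi's result).

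Your sketch is sound in outline. The points that would require the most care in a full write-up are: (i) the identification of the connecting map in the long exact $\h^{\ast\ast}$-sequence for the cofiber of $v_j$ with (the dual of) multiplication by $Q_j$, which is the heart of the induction; and (ii) the claim that the resulting extensions split as $\mathcal{A}$-modules. Your appeal to realisation to import these facts from topology is reasonable in spirit, but the phrase ``freeness and the splitting descend from the known topological answer'' via injectivity on associated-graded pieces is not quite a proof---injectivity on a graded piece does not by itself force a module extension to split. One would instead argue directly that the relevant $\Ext^1$ groups vanish for degree reasons, or use that the cohomology is free over $\h^{\ast\ast}$ at each stage (which can be read off from the known $\h_{\ast\ast}MGL$ via the cofiber sequences). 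Since the paper itself does not attempt any of this, your proposal goes beyond what is written there.
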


If $X$ is a finite cell spectrum such that $\h^{\ast\ast}(X)$ is free over $\h^{\ast\ast}$, we can apply the K\"unneth theorem to $Ak(s)\wedge X$.

\begin{cor}\label{kunneth Ak(n)}
Let $X\in\SH(k)^{fin}$ with $\h^{\ast\ast}(X)$ free over $\h^{\ast\ast}$. Then \[\h^{\ast\ast}(Ak(s)\wedge X)=\mathcal A/\mathcal A Q_s\otimes_{\h^{\ast\ast}}\h^{\ast\ast}(X).\]
\end{cor}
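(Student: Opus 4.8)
The plan is to combine Proposition \ref{borghesi} with the K\"unneth theorem, Proposition \ref{kunneth}, which is applicable precisely because $X$ is a finite cell spectrum whose mod-$p$ cohomology is free over $\h^{\ast\ast}$. First I would record that $Ak(s)$ is cellular (Remark \ref{AMU module}(1)), so $Ak(s)\wedge X$ is again a cell spectrum and all the machinery of \cite{DI} applies to it. The hypothesis that $\h^{\ast\ast}(X)$ is free over $\h^{\ast\ast}$ is exactly what Proposition \ref{kunneth} requires of its first (finite) argument, so with $A=X$ and $B=Ak(s)$ we obtain
\[
\h^{\ast\ast}(X)\otimes_{\h^{\ast\ast}}\h^{\ast\ast}(Ak(s))\;\cong\;\h^{\ast\ast}(X\wedge Ak(s))\;=\;\h^{\ast\ast}(Ak(s)\wedge X),
\]
the last equality being symmetry of the smash product up to canonical isomorphism.

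Next I would substitute the known value $\h^{\ast\ast}(Ak(s))=\mathcal A/\mathcal A Q_s$ from Proposition \ref{borghesi}. Since the tensor product is over the commutative ring $\h^{\ast\ast}$ and $\h^{\ast\ast}(X)$ is free (hence flat), I may reorder the factors to get
\[
\h^{\ast\ast}(Ak(s)\wedge X)\;\cong\;\mathcal A/\mathcal A Q_s\otimes_{\h^{\ast\ast}}\h^{\ast\ast}(X),
\]
which is the claimed formula. To make the identification as $\mathcal A$-modules (and not merely as $\h^{\ast\ast}$-modules), I would note that the K\"unneth isomorphism of \cite{DI} is induced by the external product in cohomology, which is compatible with the action of the Steenrod operations via the Cartan formula; since $Q_s$ acts trivially on the factor $\mathcal A/\mathcal A Q_s$ in the relevant sense, the $\mathcal A$-module structure on the right-hand side is the evident diagonal one, matching the left-hand side.

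I do not expect a serious obstacle here: the statement is essentially a formal consequence of two results already established in the excerpt. The only points requiring a little care are verifying the freeness/flatness hypothesis is used in the right slot of Proposition \ref{kunneth} (the \emph{finite} spectrum with free cohomology must be one of the smash factors, and here it is $X$), and being precise about in which category of modules the isomorphism is asserted. If one only wants the isomorphism of $\h^{\ast\ast}$-modules, the proof is immediate; the module-over-$\mathcal A$ refinement follows from naturality of the K\"unneth map with respect to stable cohomology operations, exactly as in the topological case.
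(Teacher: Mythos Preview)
Your proposal is correct and follows essentially the same approach as the paper: the paper simply remarks, just before stating the corollary, that one can apply the K\"unneth theorem (Proposition \ref{kunneth}) to $Ak(s)\wedge X$ and then inserts Borghesi's computation $\h^{\ast\ast}(Ak(s))=\mathcal A/\mathcal A Q_s$ from Proposition \ref{borghesi}. Your additional remarks about cellularity and the $\mathcal A$-module structure are fine elaborations, but the core argument is identical.
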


Writing $\Lambda(Q_s)$ for the exterior algebra over $\h^{\ast\ast}$ generated by $Q_s$, we have $\mathcal A=\mathcal A/\mathcal A Q_s\otimes_{\h^{\ast\ast}}\Lambda(Q_s)$. Any resolution of $\h^{\ast\ast}X$ by projective $\Lambda(Q_s)$-modules $P_i$ yields a resolution $\mathcal A/\mathcal A Q_s\otimes_{\h^{\ast\ast}}P_i$ of $\mathcal A/\mathcal A Q_s\otimes_{\h^{\ast\ast}}\h^{\ast\ast}X$ by projective $\mathcal A$-modules. 
This implies the following change of rings isomorphism.

\begin{cor}\label{change of rings}
For any finite cell spectrum $X$ with $\h^{\ast\ast}(X)$ free over $\h^{\ast\ast}$, we have 
\[\Ext_{\mathcal A}(\h^{\ast\ast}(Ak(s)\wedge X),\h^{\ast\ast})\cong \Ext_{\Lambda(Q_s)}(\h^{\ast\ast}(X),\h^{\ast\ast}).\]
\end{cor}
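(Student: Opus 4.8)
The statement to prove is Corollary \ref{change of rings}: for $X$ a finite cell spectrum with $\h^{\ast\ast}(X)$ free over $\h^{\ast\ast}$,
\[\Ext_{\mathcal A}(\h^{\ast\ast}(Ak(s)\wedge X),\h^{\ast\ast})\cong \Ext_{\Lambda(Q_s)}(\h^{\ast\ast}(X),\h^{\ast\ast}).\]

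The plan is to exploit the factorization $\mathcal A \cong \mathcal A/\mathcal A Q_s \otimes_{\h^{\ast\ast}} \Lambda(Q_s)$ of the motivic Steenrod algebra (as $\h^{\ast\ast}$-modules, which holds because $\mathcal A \cong \h^{\ast\ast}\otimes \mathit{RP}\otimes\Lambda_{\F_p}(Q_0,Q_1,\dots)$ and $\mathcal A/\mathcal A Q_s \cong \h^{\ast\ast}\otimes \mathit{RP}\otimes \Lambda_{\F_p}(Q_0,\dots,Q_{s-1},Q_{s+1},\dots)$ by Proposition \ref{borghesi}), together with Corollary \ref{kunneth Ak(n)}, which identifies $\h^{\ast\ast}(Ak(s)\wedge X)$ with $\mathcal A/\mathcal A Q_s \otimes_{\h^{\ast\ast}} \h^{\ast\ast}(X)$ as an $\mathcal A$-module. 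This is the motivic analog of the classical change-of-rings argument (as used, e.g., in the construction of the Adams spectral sequence for connective Morava $K$-theory), and the intended proof is essentially the one already sketched in the paragraph preceding the corollary.

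First I would take a resolution $\cdots \to P_1 \to P_0 \to \h^{\ast\ast}(X) \to 0$ of $\h^{\ast\ast}(X)$ by projective (in fact free) $\Lambda(Q_s)$-modules; this exists since $\Lambda(Q_s)$ is an $\h^{\ast\ast}$-algebra and $\h^{\ast\ast}(X)$ is a module over it. Applying $\mathcal A/\mathcal A Q_s \otimes_{\h^{\ast\ast}} (-)$ I would obtain a complex $\mathcal A/\mathcal A Q_s \otimes_{\h^{\ast\ast}} P_\bullet$; I need that this is still a resolution, i.e. that the functor is exact, and that each $\mathcal A/\mathcal A Q_s \otimes_{\h^{\ast\ast}} P_i$ is a projective $\mathcal A$-module. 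Exactness follows because $\mathcal A/\mathcal A Q_s$ is free as an $\h^{\ast\ast}$-module (so tensoring is exact over $\h^{\ast\ast}$), combined with the fact that each map in $P_\bullet$ is a map of $\h^{\ast\ast}$-modules; projectivity of $\mathcal A/\mathcal A Q_s \otimes_{\h^{\ast\ast}} \Lambda(Q_s) \cong \mathcal A$ over $\mathcal A$ is clear, and $\mathcal A/\mathcal A Q_s \otimes_{\h^{\ast\ast}} (-)$ sends free $\Lambda(Q_s)$-modules to free $\mathcal A$-modules (up to the appropriate bigrading shifts). Then, using Corollary \ref{kunneth Ak(n)}, $\mathcal A/\mathcal A Q_s \otimes_{\h^{\ast\ast}} P_\bullet$ is a projective $\mathcal A$-resolution of $\h^{\ast\ast}(Ak(s)\wedge X)$.

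Next I would compute $\Ext_{\mathcal A}$ using this resolution: $\Hom_{\mathcal A}(\mathcal A/\mathcal A Q_s \otimes_{\h^{\ast\ast}} P_i, \h^{\ast\ast})$. Here the adjunction between the extension-of-scalars-type functor $\mathcal A/\mathcal A Q_s \otimes_{\h^{\ast\ast}}(-)$ (from $\Lambda(Q_s)$-modules to $\mathcal A$-modules) and restriction along $\Lambda(Q_s)\hookrightarrow \mathcal A$ gives a natural isomorphism $\Hom_{\mathcal A}(\mathcal A/\mathcal A Q_s \otimes_{\h^{\ast\ast}} P_i, \h^{\ast\ast}) \cong \Hom_{\Lambda(Q_s)}(P_i, \h^{\ast\ast})$, where $\h^{\ast\ast}$ on the right is regarded as a $\Lambda(Q_s)$-module via restriction (i.e. $Q_s$ acts as zero, consistent with $\h^{\ast\ast} = \mathcal A/\mathcal A\cdot\mathcal A^{>0}$ restricted). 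Taking cohomology of the resulting complex $\Hom_{\Lambda(Q_s)}(P_\bullet,\h^{\ast\ast})$ yields $\Ext_{\Lambda(Q_s)}(\h^{\ast\ast}(X),\h^{\ast\ast})$, giving the claimed isomorphism. I would also remark that this is compatible with bigradings so that the isomorphism respects the internal degrees.

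The main obstacle I anticipate is verifying the flatness/projectivity claims carefully in the \emph{motivic} setting with its bigrading: one must check that $\mathcal A/\mathcal A Q_s$ really is free (not merely flat) as an $\h^{\ast\ast}$-module, which is exactly the content of Proposition \ref{borghesi} (so this is available), and that the tensor-hom adjunction $\mathcal A/\mathcal A Q_s\otimes_{\h^{\ast\ast}}(-) \dashv \mathrm{res}$ holds with $\mathcal A \cong \mathcal A/\mathcal A Q_s \otimes_{\h^{\ast\ast}} \Lambda(Q_s)$ as a \emph{right} $\Lambda(Q_s)$-module in the appropriate sense — i.e. that the factorization of $\mathcal A$ is not just an $\h^{\ast\ast}$-module splitting but is compatible with the subalgebra inclusion $\Lambda(Q_s)\hookrightarrow\mathcal A$ on one side. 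This requires knowing that $\mathcal A$ is free as a right (or left) $\Lambda(Q_s)$-module with basis $\mathcal A/\mathcal A Q_s$, which follows from the structure theory of the motivic Steenrod algebra over $k\subseteq\C$ as in \cite{Voe10,Hoy,Bo} cited above; once that input is granted, the homological algebra is formal and mirrors the topological argument.
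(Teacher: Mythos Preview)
Your proposal is correct and follows exactly the approach the paper uses: the paper's entire proof is the sentence before the corollary, which takes a projective $\Lambda(Q_s)$-resolution of $\h^{\ast\ast}X$, tensors with $\mathcal A/\mathcal A Q_s$ (using $\mathcal A\cong \mathcal A/\mathcal A Q_s\otimes_{\h^{\ast\ast}}\Lambda(Q_s)$) to obtain a projective $\mathcal A$-resolution of $\h^{\ast\ast}(Ak(s)\wedge X)$, and then invokes the change-of-rings isomorphism. Your version simply spells out the exactness, projectivity, and adjunction steps that the paper leaves implicit.
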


This isomorphism will later be applied to the motivic Adams spectral sequence for $Ak(s)_{\ast\ast}(X)$.

\section{The motivic Adams spectral sequence}\label{Motivic Adams spectral sequence}

Recall the notation $H=H\Z/p$. Let $k\subseteq\C$.

Our aim in this section is to show the existence and convergence of the following Adams spectral sequence:
\[E_2^{s,t,u}=\Ext_{\mathcal A}^{s,t,u}(\h^{\ast\ast}(Ak(s)\wedge X),\h^{\ast\ast})\Rightarrow Ak(s)_{\ast\ast}(X)\]
for finite cell spectra $X\in\SH(k)^{fin}$ (see Definition \ref{fin}), with $\h^{\ast\ast}X$ free over $\h^{\ast\ast}$.

Motivic Adams spectral sequences were first described in \cite{MorASS}. \cite[Corollary 3]{HKO} shows that over fields of characteristic $0$, the Adams spectral sequence for a motivic cell spectrum $X$ of finite type converges to the homotopy groups of the completion $X_{p,\eta}^{\wedge}$. In \cite{Motad}, calculations are made for the case $p=2$, $X=S$. More details and explanations can be found in \cite{Sven}. The convergence of the Adams spectral sequence for $Ak(s)\wedge X$ will follow from \cite{HKO}. However, we have to start again from the Adams resolution, to get the limit term $Ak(s)_{\ast\ast}(X)$ using arguments from \cite[Section 2.1]{green}, and to get a module structure on the sequence.

\begin{defn}
An Adams resolution $(Y_s,g_s,K_s,f_s)_{s\geq 0}$ for a motivic cell spectrum $Y\in\SH(k)^{cell}$ is a diagram
\[\xymatrix{\ar @{} [dr] |{}
Y \ar @{=}[r] & Y_0 \ar[d]^{f_0} & Y_1  \ar[d]^{f_1} \ar[l]^{g_0} & Y_2 \ar[d]^{f_2}\ar[l]^{g_2} & \cdots \ar[l]^{g_3} \\
 & K_0 & K_1 & K_2   },\]
where each $K_s$ is a wedge of suspensions of $H$, $f_s$ is surjective on motivic mod-$p$ cohomology and $Y_{s+1}$ is the fiber of $f_s$. 
\end{defn}

Such an Adams resolution exists whenever $\h^{\ast\ast}Y$ is a free module over $\h^{\ast\ast}$ of motivically finite type \cite{HKO}, \cite[Section 2.5.4]{Sven}. The finite type condition is defined in \cite[Definition 2.12]{Motad}. For our purposes, it suffices to know that, if the generators of $\h^{\ast\ast}Y$ are located in degrees $(i_\alpha,j_\alpha)_\alpha$ with $i_\alpha$ bounded below and, for each $\alpha$, there are only finitely many $\beta$'s with $i_\alpha=i_\beta$ and $j_\alpha\geq j_\beta$, then $\h^{\ast\ast}Y$ is of motivically finite type. This holds, for example, if $Y\in\SH(k)^{fin}$.

\begin{cor}\label{AdRes}
Let $X$ be a finite cell spectrum such that $\h^{\ast\ast}X$ is free over $\h^{\ast\ast}$ and let $Ak(s)$ be the motivic connective Morava K-theory spectrum (see Definition \ref{AK(n)}). Then $Y=Ak(s)\wedge X$ satisfies the finite type condition as described above.
\end{cor}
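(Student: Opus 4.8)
The plan is to reduce the finite-type condition for $Y = Ak(s) \wedge X$ to the description of $\h^{\ast\ast}(Ak(s))$ given in Proposition \ref{borghesi} combined with the K\"unneth computation of $\h^{\ast\ast}(Ak(s)\wedge X)$ in Corollary \ref{kunneth Ak(n)}. Recall that the finite-type condition says: the generators of $\h^{\ast\ast}Y$ as an $\h^{\ast\ast}$-module sit in bidegrees $(i_\alpha, j_\alpha)$ with $i_\alpha$ bounded below, and for each $\alpha$ only finitely many $\beta$ have $i_\alpha = i_\beta$ and $j_\alpha \geq j_\beta$. So first I would record that, since $X \in \SH(k)^{fin}$ with $\h^{\ast\ast}X$ free over $\h^{\ast\ast}$, the module $\h^{\ast\ast}X$ has finitely many $\h^{\ast\ast}$-generators, each in some bidegree $(a_\gamma, b_\gamma)$, $1 \leq \gamma \leq r$.

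Next I would invoke Corollary \ref{kunneth Ak(n)}, which gives $\h^{\ast\ast}(Ak(s)\wedge X) \cong \mathcal{A}/\mathcal{A}Q_s \otimes_{\h^{\ast\ast}} \h^{\ast\ast}(X)$, and Proposition \ref{borghesi}, which identifies $\mathcal{A}/\mathcal{A}Q_s \cong \h^{\ast\ast} \otimes RP \otimes \Lambda_{\F_p}(Q_0,\dots,Q_{s-1},Q_{s+1},\dots)$ as an $\h^{\ast\ast}$-module. Hence $\h^{\ast\ast}(Ak(s)\wedge X)$ is free over $\h^{\ast\ast}$ on the set of products $e \cdot x_\gamma$, where $e$ runs over the $\F_p$-basis of $RP \otimes \Lambda_{\F_p}(Q_0,\dots,\widehat{Q_s},\dots)$ and $x_\gamma$ over the generators of $\h^{\ast\ast}X$. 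The key point is then a degree bookkeeping: each admissible monomial $e$ of this standard basis has topological degree (first coordinate) bounded below by $0$ and, for each fixed value $N$ of the first coordinate, there are only finitely many such $e$ with first coordinate $\leq N$ — this is exactly the classical finite-type property of $RP \otimes \Lambda_{\F_p}(Q_i : i \neq s)$, which one sees because the $P^i$ and $Q_i$ all raise the first degree strictly, by amounts $2i(p-1)$ and $2p^i - 1$ respectively, and the weight (second coordinate) is determined from the first up to the weight of $x_\gamma$ by the fixed ratios $i(p-1)$ and $p^i-1$. So for each generator $e \cdot x_\gamma$ in bidegree $(i_\alpha, j_\alpha) = (|e|_1 + a_\gamma,\, |e|_2 + b_\gamma)$, boundedness below of $i_\alpha$ follows since $|e|_1 \geq 0$ and $a_\gamma \geq \min_\gamma a_\gamma$; and finiteness of the set of $\beta$ with $i_\beta = i_\alpha$, $j_\beta \leq j_\alpha$ follows because fixing $i_\beta$ forces $|e'|_1$ into one of finitely many values (as $\gamma'$ ranges over the finite set $\{1,\dots,r\}$), and for each such value there are only finitely many admissible $e'$.

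The argument is essentially routine once the building blocks are assembled; the only place that requires a little care is making the degree/weight bookkeeping for the motivic Steenrod operations precise — specifically checking that a bound on the first grading really does bound the number of admissible monomials in $RP \otimes \Lambda_{\F_p}(Q_i : i \neq s)$, and that the weights are constrained accordingly. This is the motivic shadow of the fact that the topological Steenrod algebra is of finite type, and it is handled by the standard admissibility/excess estimates; in the motivic setting one additionally notes that each generator carries a weight which is a fixed affine function of the first degree plus a bounded contribution from $x_\gamma$, so no new weights can appear in a given first-degree slice. I would close by remarking that this verifies precisely the hypothesis needed for the existence of an Adams resolution of $Y = Ak(s) \wedge X$ in the sense of the preceding definition, so Corollary \ref{AdRes} follows.
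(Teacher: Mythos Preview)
Your proposal is correct and follows essentially the same approach as the paper: both apply Corollary~\ref{kunneth Ak(n)} to write $\h^{\ast\ast}(Ak(s)\wedge X)\cong \mathcal A/\mathcal A Q_s\otimes_{\h^{\ast\ast}}\h^{\ast\ast}X$, then observe that $\mathcal A/\mathcal A Q_s$ is of motivically finite type (its $\h^{\ast\ast}$-generators have nonnegative bidegrees with only finitely many in each bidegree) and that $\h^{\ast\ast}X$ is of finite type since $X$ is finite, so the tensor product inherits the condition. The paper's proof is more terse, while you spell out the degree bookkeeping more carefully, but the argument is the same.
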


\begin{proof}
By Corollary \ref{kunneth Ak(n)}, $\h^{\ast\ast}(Y)\cong \mathcal A / \mathcal A Q_s\otimes_{\h^{\ast\ast}}\h^{\ast\ast}X$.
The motivic Steenrod algebra $\mathcal A$ is of motivically finite type, since, for the bidegrees $(i_\alpha,j_\alpha)$ of its $\h^{\ast\ast}$-generators, $i_\alpha$ and $j_\alpha$ are nonnegative and there are only finitely many generators of a fixed bidegree. The same holds for $\mathcal A/\mathcal A Q_s$. Furthermore, $\h^{\ast\ast}X$ is of motivically finite type, since $X$ is finite. It follows that the tensor product $\mathcal A / \mathcal A Q_s\otimes_{\h^{\ast\ast}}\h^{\ast\ast}X$ is also of motivically finite type. 
\end{proof}

\begin{cor}\label{resolution} {\bf (Convergence of the ASS)}

Let $k=\C$ (or any other field satisfying the assumptions of \cite[Theorem 1]{HKO}) %be a field such that $\pi_{i,j}Ak(s)$ is a finite $p$-group for any $(i,j)$ (e.g., $k=\C$ and $s>0$, by Lemma \ref{Ah}).
and let $X$ be a finite cell spectrum such that $\h^{\ast\ast}(X)$ is a free $\h^{\ast\ast}$-module.
Then the Adams spectral sequence for $Y=Ak(s)\wedge X$, $s>0$, strongly converges to $Ak(s)_{\ast\ast}(X)$.
\end{cor}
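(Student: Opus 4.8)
The plan is to deduce the convergence of the Adams spectral sequence for $Y=Ak(s)\wedge X$ from the general convergence theorem of Hu--Kriz--Ormsby \cite[Theorem 1]{HKO}, together with the identification of the $E_2$-page already set up in this section. First I would recall that the motivic Adams spectral sequence associated to an Adams resolution $(Y_s,g_s,K_s,f_s)$ as in Definition~\ref{resolution} has $E_2$-term $\Ext_{\mathcal A}^{\ast\ast\ast}(\h^{\ast\ast}Y,\h^{\ast\ast})$; this is the standard derivation from the exact couple obtained by applying $\pi_{\ast\ast}$ to the resolution, using that each $K_s$ is a wedge of suspensions of $H$ so that $\pi_{\ast\ast}K_s=\h^{\ast\ast}$-dual data, and that $f_s$ is surjective on $\h^{\ast\ast}(-)$. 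By Corollary~\ref{AdRes}, $\h^{\ast\ast}(Y)\cong \mathcal A/\mathcal A Q_s\otimes_{\h^{\ast\ast}}\h^{\ast\ast}X$ is of motivically finite type (and free over $\h^{\ast\ast}$, since $X$ has free $\h^{\ast\ast}$-cohomology and $\mathcal A/\mathcal A Q_s$ is free over $\h^{\ast\ast}$ by Proposition~\ref{borghesi}), so an Adams resolution exists.

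Next I would invoke \cite[Corollary 3]{HKO}: over a field $k$ of characteristic $0$ (in particular $k=\C$), the motivic Adams spectral sequence of a cellular spectrum of motivically finite type converges to the homotopy of the $(p,\eta)$-completion $Y^\wedge_{p,\eta}$. So the remaining point is to identify this completion for $Y=Ak(s)\wedge X$ with $Y$ itself, i.e.\ to show $Ak(s)\wedge X$ is already $(p,\eta)$-complete. For the $\eta$-part: $Ak(s)$ is built from $MGL_{(p)}$ by quotients and it is an $MGL$-module (Remark~\ref{AMU module}(3)), and $MGL\wedge\eta\cong 0$ by the lemma in Chapter~\ref{nilpotence}, so $\eta$ acts as zero on $Ak(s)$ and hence on $Ak(s)\wedge X$; thus the $\eta$-completion is trivial to perform. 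For the $p$-part: $Ak(s)=ABP/(v_0,\dots,v_{s-1},v_{s+1},\dots)$ with $v_0=p$, so $Ak(s)$ is a $p$-torsion spectrum (multiplication by $p$ is null on it), hence $p$-complete; smashing with the finite spectrum $X$ preserves $p$-completeness. Therefore $Y^\wedge_{p,\eta}\simeq Y$ and the spectral sequence converges to $\pi_{\ast\ast}(Ak(s)\wedge X)=Ak(s)_{\ast\ast}(X)$. Strong convergence follows because, $X$ being finite and $\h^{\ast\ast}(Ak(s)\wedge X)$ bounded below in the appropriate sense, the filtration is complete and exhaustive with the usual finiteness giving the Mittag-Leffler condition, as in \cite[Section 2.1]{green} and \cite{Sven}.

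The main obstacle I anticipate is the careful identification of the abutment as $Ak(s)_{\ast\ast}(X)$ rather than the homotopy of some completion: one must check that no genuine completion is happening, which relies precisely on the two facts above ($\eta$ and $p$ both act nilpotently/trivially on $Ak(s)$). A secondary subtlety is to ensure the finite-type and freeness hypotheses of \cite{HKO} are met by $Ak(s)\wedge X$ and not merely by $X$; this is handled by Corollary~\ref{AdRes} and Corollary~\ref{kunneth Ak(n)}. Once these are in place, the conclusion is immediate, and combining with the change-of-rings isomorphism of Corollary~\ref{change of rings} will give in the next section the more usable form $E_2^{s,t,u}=\Ext_{\Lambda(Q_s)}^{s,t,u}(\h^{\ast\ast}X,\h^{\ast\ast})\Rightarrow Ak(s)_{\ast\ast}(X)$.
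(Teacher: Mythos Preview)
Your proposal is correct and follows essentially the same route as the paper: invoke Corollary~\ref{AdRes} for the finite-type hypothesis, apply \cite[Corollary 3]{HKO} and \cite[Theorem 1]{HKO} for strong convergence to the completion, and then identify the completion with $Y$ itself using that $Ak(s)$ is a quotient of $ABP/(p)$. The paper's proof is terser---it cites \cite[Theorem 1]{HKO} as giving convergence directly to the $p$-completion and does not separately discuss the $\eta$-completion---whereas you unpack the $(p,\eta)$-completion into its two pieces and dispatch the $\eta$-part via $MGL\wedge\eta\cong 0$; this extra care is harmless and arguably clarifying, but not a different argument.
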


\begin{proof}
Since $Y$ is of motivically finite type by the previous corollary, strong convergence follows from \cite[Corollary 3]{HKO}. By \cite[Theorem 1]{HKO}, the spectral sequence converges to the $p$-completion of $\pi_{\ast\ast}Y$. Since $Ak(s)$ is a quotient of $ABP/(p)$ by definition, $Y^{\wedge}_p=Y$, whence the limit term is $Ak(s)_{\ast\ast}(X)$.
\end{proof}

\begin{rk}\label{weaker condition}
In Corollary \ref{resolution}, $X$ does not neccessarily need to be finite. All we need to know is that $Ak(s)\wedge X$ is of motivically finite type.
\end{rk}

In Theorem \ref{vanishing}, it will be crucial that this particular spectral sequence is a spectral sequence of $(MGL_{(p)})_{\ast\ast}$-modules.

Since $Ak(s)$ is an $MGL_{(p)}$-module spectrum by Remark \ref{AMU module}(3), $Ak(s)_{\ast\ast}X$ and $\h^{\ast\ast}(Ak(s)\wedge X)$ are $(MGL_{(p)})_{\ast\ast}$-modules. 
Before we can prove compatibility of the Adams spectral sequence with the module structure, we need to determine $\h_{\ast\ast}Ak(s)$. Borghesi \cite[Theorem 5 and Remark 2.2]{Bo} shows:

\begin{lemma}
\[\h_{\ast\ast}(MGL)\cong \h_{\ast\ast}[m_1,m_2,\cdots]\cong \h_{\ast\ast}\otimes_{\F_p}\h_\ast(MU,\Z/p).\]
\end{lemma}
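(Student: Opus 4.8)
The plan is to compute $\h_{\ast\ast}(MGL)$ directly from the geometry of $MGL$ and then identify the answer with the base change of the topological computation. First I would reduce to the motivic homology of the infinite Grassmannian via the Thom isomorphism. Recall $MGL = \colim_n \Sigma^{-2n,-n}\Sigma^\infty \mathrm{Th}(\gamma_n)$, where $\gamma_n \to BGL_n$ is the tautological rank-$n$ bundle. Since $\h = H\Z/p$ is an oriented motivic ring spectrum, each $\mathrm{Th}(\gamma_n)$ carries a Thom class, giving Thom isomorphisms $\h_{\ast\ast}(\mathrm{Th}(\gamma_n)) \cong \h_{\ast-2n,\ast-n}((BGL_n)_+)$ compatible with the bonding maps of the colimit system. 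Passing to the colimit (harmless in homology, which commutes with filtered colimits) yields $\h_{\ast\ast}(MGL) \cong \h_{\ast\ast}(BGL_+)$ as graded $\h_{\ast\ast}$-modules, and as $\h_{\ast\ast}$-algebras once the multiplicative structure induced by the Whitney sum maps $BGL_n \times BGL_m \to BGL_{n+m}$ is tracked.

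Next I would compute $\h_{\ast\ast}(BGL_+)$. Starting from $\h^{\ast\ast}(\P^\infty) \cong \h^{\ast\ast}[[\xi]]$ with $\deg \xi = (2,1)$ (the motivic cohomology of projective spaces, or the projective bundle formula), an induction via the flag-bundle presentation of $BGL_n$ gives $\h^{\ast\ast}(BGL_n) \cong \h^{\ast\ast}[[c_1,\dots,c_n]]$ on Chern classes with $\deg c_i = (2i,i)$, and dually $\h_{\ast\ast}((BGL_n)_+)$ is free over $\h_{\ast\ast}$ on the monomials in classes $\beta_1,\beta_2,\dots$ dual to powers of the hyperplane class, $\deg \beta_i = (2i,i)$. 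Taking the colimit over $n$, $\h_{\ast\ast}(BGL_+) \cong \h_{\ast\ast}[b_1, b_2, \dots]$ with $b_i = \beta_i$, $\deg b_i = (2i,i)$. Combining with the previous step, $\h_{\ast\ast}(MGL) \cong \h_{\ast\ast}[m_1, m_2, \dots]$ after renaming generators; the preferred generators $m_i$ are those pinned down by the comodule structure over the dual motivic Steenrod algebra, but any polynomial generators in these bidegrees suffice for the ring statement.

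Finally, for the tensor-product description it remains to observe that the answer is the base change of the topological one. Classically $\h_\ast(MU,\Z/p) \cong \F_p[m_1, m_2, \dots]$ with $|m_i| = 2i$, so $\h_{\ast\ast} \otimes_{\F_p} \h_\ast(MU,\Z/p) \cong \h_{\ast\ast}[m_1, m_2, \dots]$ with $m_i$ placed in bidegree $(2i,i)$, which agrees with the algebra computed above. Moreover the topological realization $R$ carries $\h^{\ast\ast}(\P^\infty)$ to $H^\ast(\C P^\infty;\Z/p)$ and the motivic Chern and Thom classes to the topological ones, so the isomorphism can be chosen compatibly with $R(MGL) \cong MU$; this is the content of Remark 2.2 in \cite{Bo}. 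The main technical point to watch is the passage to the colimit in the Thom-isomorphism step together with the degreewise finiteness needed to dualize cohomology to homology without $\lim^1$ trouble, both handled by working in homology (where filtered colimits are exact) and by noting that $\h_{\ast\ast}(BGL_n)$ is free of finite type in each bidegree; the compatibility of the Thom isomorphisms with the Whitney sum maps is the only place requiring a small bookkeeping argument.
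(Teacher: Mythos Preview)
Your proposal is correct and follows the standard route (Thom isomorphism for the oriented theory $H\Z/p$, followed by the projective-bundle/Chern-class computation of $\h_{\ast\ast}(BGL_+)$, then base-change identification with $\h_\ast(MU,\Z/p)$). The paper itself does not give an independent proof of this lemma: it simply attributes the result to Borghesi \cite[Theorem 5 and Remark 2.2]{Bo}, whose argument is precisely the one you have sketched.
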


Since $\pi_{\ast\ast}(H\wedge MGL_{(p)})=\pi_{\ast\ast}(H_{(p)}\wedge MGL)$ and $H_{(p)}=H$, it follows that also $\h_{\ast\ast}(MGL_{(p)})$ is a free $\h_{\ast\ast}$-module with basis elements $m_I=m_{i_1}^{k_1}m_{i_2}^{k_2}\cdots m_{i_t}^{k_t}$. Here, $m_i$ is defined as the Hurewicz image of $a_i\in MGL_{2i,i}$ and $a_i$ is the image of a polynomial generator of $MU_\ast$. The motivic connective Morava K-theory spectra are defined as homotopy colimits of spectra $E_i$, which are defined by successively taking cofibers of maps $\Sigma^{2(i-1),i-1}E_{i-1}\stackrel{a_{i-1}}\rightarrow E_{i-1}$, starting from $MGL_{(p)}$ (see Definition \ref{AK(n)}). Passing from $\h_{\ast\ast}E_{i-1}$ to $\h_{\ast\ast}E_i$, the $\h_{\ast\ast}$-basis changes but the fact that the homology is a free $\h_{\ast\ast}$-module remains true for each $i$.  

\begin{cor}\label{dual}
$\h_{\ast\ast}(Ak(s))$ is a free $\h_{\ast\ast}$-module. Hence, 
\[\h_{\ast\ast}(Ak(s))\cong \Hom_{\h^{\ast\ast}}(\h^{\ast\ast}(Ak(s)),\h^{\ast\ast}).\]
\end{cor}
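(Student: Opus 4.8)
The plan is to deduce the freeness of $\h_{\ast\ast}(Ak(s))$ over $\h_{\ast\ast}$ from the discussion immediately preceding the statement, and then to obtain the asserted duality isomorphism from the universal coefficient theorem of Corollary \ref{uct}. The freeness part is essentially already assembled above: $\h_{\ast\ast}(MGL_{(p)})$ is free over $\h_{\ast\ast}$ with basis the monomials $m_I$, and $Ak(s)$ is built from $MGL_{(p)}$ as a homotopy colimit of spectra $E_i$ obtained by iterated cofiber sequences along the maps $a_{i-1}$. First I would make precise the inductive step: given that $\h_{\ast\ast}(E_{i-1})$ is free over $\h_{\ast\ast}$, one studies the long exact homology sequence associated with the cofiber sequence $\Sigma^{2(i-1),i-1}E_{i-1}\xrightarrow{a_{i-1}}E_{i-1}\to E_i$, and checks that multiplication by $a_{i-1}$ (equivalently, by its Hurewicz image $m_{i-1}$) is injective on $\h_{\ast\ast}(E_{i-1})$, so that the sequence splits into short exact sequences and $\h_{\ast\ast}(E_i)\cong \h_{\ast\ast}(E_{i-1})/m_{i-1}$, again a free $\h_{\ast\ast}$-module (with a modified basis, as noted in the text). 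Taking the homotopy colimit over $i$, and using that homology commutes with filtered homotopy colimits, gives that $\h_{\ast\ast}(Ak(s))$ is free over $\h_{\ast\ast}$.

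Once freeness is established, the second assertion is a direct application of Corollary \ref{uct}: that corollary states that for any cell spectrum $A\in\SH(k)^{cell}$ with $\h_{\ast\ast}(A)$ free over $\h_{\ast\ast}$, one has $\h^{\ast\ast}(A)\cong \Hom_{\h_{\ast\ast}}(\h_{\ast\ast}(A),\h_{\ast\ast})$. Since $Ak(s)$ is cellular (Remark \ref{AMU module}(1)), applying this with $A=Ak(s)$ gives $\h^{\ast\ast}(Ak(s))\cong \Hom_{\h_{\ast\ast}}(\h_{\ast\ast}(Ak(s)),\h_{\ast\ast})$. The displayed formula in the statement is written in the other direction, $\h_{\ast\ast}(Ak(s))\cong \Hom_{\h^{\ast\ast}}(\h^{\ast\ast}(Ak(s)),\h^{\ast\ast})$; I would obtain this by dualising once more, using that $\h_{\ast\ast}(Ak(s))$ is free and of motivically finite type (Corollary \ref{AdRes}), so that the natural double-dual map is an isomorphism degreewise — each graded piece is a finitely generated free $\h_{\ast\ast}$-module, for which $\Hom_{\h^{\ast\ast}}(\Hom_{\h_{\ast\ast}}(M,\h_{\ast\ast}),\h^{\ast\ast})\cong M$.

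The main obstacle I anticipate is the injectivity of multiplication by $a_{i}$ (resp.\ $m_i$) on the successive homology groups $\h_{\ast\ast}(E_{i-1})$, which is what makes the long exact sequences break up and keeps the modules free at each stage. This should follow from the known structure of $\h_{\ast\ast}(MGL_{(p)})\cong \h_{\ast\ast}\otimes_{\F_p}\h_\ast(MU,\Z/p)$ together with the fact that the $m_i$ form (part of) a polynomial basis, so multiplication by any $m_i$ is injective on a polynomial ring over $\h_{\ast\ast}$; one then has to track that this injectivity persists after passing to the successive quotients, which is a standard regular-sequence argument (the images of the $a_i$ in $MU_\ast$ form a regular sequence, and tensoring up with $\h_{\ast\ast}$ preserves this since $\h_{\ast\ast}$ is flat over $\F_p$). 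Everything else — cellularity, commuting homology with homotopy colimits, invoking Corollary \ref{uct} — is routine given the results already established in the excerpt.
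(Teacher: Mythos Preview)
Your approach matches the paper's: freeness comes from the inductive discussion preceding the statement, and the duality is then read off from Corollary \ref{uct} together with cellularity of $Ak(s)$. Your observation that the first part of Corollary \ref{uct} yields the isomorphism in the other direction, and that one then needs a double-dual argument (using motivic finite type) to obtain the displayed formula, is more careful than the paper's one-line citation; that step is fine.

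There is, however, a real wrinkle in your injectivity argument for the inductive step. The construction of $Ak(s)$ from $MGL_{(p)}$ involves quotienting not only by the $a_i$ with $i\neq p^j-1$ but also by $v_0=p$ and by the $v_j=a_{p^j-1}$. On mod-$p$ homology, multiplication by $p$ is obviously the zero map, not an injection; more generally the Hurewicz images of the $v_j$ in $\h_{\ast\ast}(ABP)$ vanish (the Hurewicz map $BP_\ast\to H_\ast(BP;\F_p)$ kills every $v_j$), so these steps are also zero on homology. Your proposed fix via ``the $a_i$ form a regular sequence in $MU_\ast$ and tensoring with $\h_{\ast\ast}$ preserves this'' misfires: what matters is the sequence of Hurewicz images in $\h_{\ast\ast}(MGL_{(p)})$, not the $a_i$ themselves in $MU_\ast$, and that sequence is not regular. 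The remedy is easy: at each cofiber step the induced map on $\h_{\ast\ast}$ is either multiplication by a polynomial generator $m_i$ (injective, with free quotient) or the zero map (giving a split short exact sequence and hence a free direct sum). Either way freeness is preserved, which is exactly what the paper's sentence ``the $\h_{\ast\ast}$-basis changes but the fact that the homology is a free $\h_{\ast\ast}$-module remains true'' is asserting.
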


\begin{proof}
The second statement follows from Corollary \ref{uct}. For the cellularity of $Ak(s)$, see Remark \ref{AMU module}(1).
\end{proof}

\begin{prop}
Let $k$ and $X$ be as in Corollary \ref{resolution}. The Adams spectral sequence for $Ak(s)\wedge X$ is a spectral sequence of $(MGL_{(p)})_{\ast\ast}$-modules.
\end{prop}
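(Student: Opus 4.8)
The statement to prove is that the motivic Adams spectral sequence for $Ak(s)\wedge X$ (with $k$, $X$ as in Corollary \ref{resolution}) is a spectral sequence of $(MGL_{(p)})_{\ast\ast}$-modules. The plan is to exhibit the whole Adams tower as living in the category of $MGL_{(p)}$-modules, or at least as being built out of maps of $MGL_{(p)}$-modules, so that the induced spectral sequence is automatically one of $(MGL_{(p)})_{\ast\ast}$-modules. I would proceed as follows.

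First I would set $Y = Ak(s)\wedge X$ and recall from Remark \ref{AMU module}(3) that $Ak(s)$, being a quotient $ABP/(v_0,\dots,v_{s-1},v_{s+1},\dots)$ of $MGL_{(p)}$ by a regular ideal, is an $MGL_{(p)}$-module spectrum; hence so is $Y = Ak(s)\wedge X$, via the action on the left factor (here one uses that $-\wedge X$ preserves $MGL_{(p)}$-module structures since $MGL_{(p)}$ is a ring and smashing with $X$ is symmetric monoidal). Consequently $\h^{\ast\ast}Y = \h^{\ast\ast}(Ak(s)\wedge X)$ and $\pi_{\ast\ast}Y = Ak(s)_{\ast\ast}X$ are $(MGL_{(p)})_{\ast\ast}$-modules. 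Next I would build the Adams resolution $(Y_t, g_t, K_t, f_t)_{t\ge 0}$ of $Y$ inside the category of $MGL_{(p)}$-modules: since this category is complete and cocomplete (Definition \ref{AK(n)}, using the argument from \cite[Lemma II.1.3]{EKMM}) and triangulated, one can form fibers and wedges of suspensions there. The point is to choose each $K_t$ to be a wedge of suspensions of $H\wedge MGL_{(p)}$-free... more precisely, a wedge of suspensions of $MGL_{(p)}\wedge H$ (the free $MGL_{(p)}$-module on $H$), so that $f_t\colon Y_t\to K_t$ is a map of $MGL_{(p)}$-modules which is surjective on $\h^{\ast\ast}$, and $Y_{t+1} = \Fib(f_t)$ is again an $MGL_{(p)}$-module. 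One must check that such $f_t$ can be chosen surjective on mod-$p$ cohomology: this follows because $\h^{\ast\ast}$ of a free $MGL_{(p)}$-module on $H$ surjects appropriately, using Corollary \ref{dual} (freeness of $\h_{\ast\ast}Ak(s)$ over $\h_{\ast\ast}$) together with the finite type statement Corollary \ref{AdRes}, exactly as in the classical construction of the relative Adams resolution over a ring spectrum in \cite[Section 2.1]{green}.

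Then I would observe that the exact couple defining the spectral sequence — built from the long exact sequences $\cdots\to \pi_{\ast\ast}Y_{t+1}\to \pi_{\ast\ast}Y_t\to \pi_{\ast\ast}K_t\to\cdots$ — consists entirely of $(MGL_{(p)})_{\ast\ast}$-modules and $(MGL_{(p)})_{\ast\ast}$-linear maps, since every object $Y_t$, $K_t$ is an $MGL_{(p)}$-module and every structure map $g_t$, $f_t$ is a map of $MGL_{(p)}$-modules, and $\pi_{\ast\ast}(-)$ of an $MGL_{(p)}$-module is a $(MGL_{(p)})_{\ast\ast}$-module functorially. Hence every page $E_r$, every differential $d_r$, and the associated graded of the filtration on $\pi_{\ast\ast}Y = Ak(s)_{\ast\ast}X$ inherits $(MGL_{(p)})_{\ast\ast}$-module structure, and the filtration itself is by $(MGL_{(p)})_{\ast\ast}$-submodules. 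Finally I would note that this resolution computes the same spectral sequence (from $E_2$ onward) as the one in Corollary \ref{resolution}: the $E_2$-term is still $\Ext_{\mathcal A}(\h^{\ast\ast}Y,\h^{\ast\ast})$ because the relative Adams resolution over $MGL_{(p)}$ with $K_t$ a wedge of copies of $MGL_{(p)}\wedge H$ computes the same $\Ext$ over the Steenrod algebra (one uses $\h^{\ast\ast}(MGL_{(p)}\wedge H) \cong \mathcal A$-free summands, again via Corollary \ref{dual} and the Künneth Proposition \ref{kunneth}), and convergence is as in Corollary \ref{resolution}.

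\textbf{Main obstacle.} The delicate point is the construction of the Adams resolution \emph{within} the category of $MGL_{(p)}$-modules in a way that is simultaneously (a) surjective on ordinary mod-$p$ cohomology $\h^{\ast\ast}$ at each stage, and (b) of motivically finite type so that \cite{HKO} applies for convergence, and (c) yields an $E_2$-page which is still $\Ext$ over the full motivic Steenrod algebra rather than over some relative/$MGL_{(p)}$-based analogue. Reconciling (a) and (c) is the crux: one needs that smashing a wedge of suspensions of $H$ up to a wedge of suspensions of $MGL_{(p)}\wedge H$ does not change the relevant cohomological surjectivity or the $\Ext$-computation, which hinges on $\h^{\ast\ast}(MGL_{(p)}\wedge H)$ and $\h_{\ast\ast}(MGL_{(p)})$ being appropriately free (Corollaries \ref{dual}, \ref{uct}) so that the change-of-rings machinery behaves. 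This is essentially the motivic transcription of the classical fact that an $E$-module spectrum admits an Adams resolution by free $E\wedge H$-modules; the routine-but-careful part is checking the finite type hypothesis survives, which is handled by Corollary \ref{AdRes}.
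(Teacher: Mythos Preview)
Your approach is in the right spirit but takes a harder road than the paper, and the step you flag as the main obstacle is genuinely problematic in your setup. You propose building the resolution directly in $MGL_{(p)}$-modules with $K_t$ a wedge of free modules $MGL_{(p)}\wedge H$, and then arguing that this still computes $\Ext_{\mathcal A}$. But constructing an $MGL_{(p)}$-module map $f_t\colon Y_t\to \bigvee\Sigma^{\ast\ast}(MGL_{(p)}\wedge H)$ that is surjective on $\h^{\ast\ast}$ is not as automatic as you suggest: cohomology classes give spectrum maps $Y_t\to H$, and the free--forgetful adjunction produces $MGL_{(p)}$-linear maps \emph{out of} free modules, not into them. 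You would have to pass through the splitting $MGL_{(p)}\wedge H\simeq\bigvee\Sigma^{\ast\ast}H$ anyway to make sense of this, at which point the detour through free modules has bought nothing.

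The paper avoids all of this by a cleaner maneuver: rather than resolving $Y=Ak(s)\wedge X$, it takes an ordinary Adams resolution $\{X_t,g_t,K_t,f_t\}$ of the finite spectrum $X$ and smashes the whole tower with $Ak(s)$. The maps $1_{Ak(s)}\wedge g_t$ and $1_{Ak(s)}\wedge f_t$ are then $MGL_{(p)}$-module maps for free, since the module structure lives on the $Ak(s)$ factor. Surjectivity on cohomology of $1\wedge f_t$ follows because $g_t$ induces zero on $\h^{\ast\ast}$, hence so does $1\wedge g_t$. The one nontrivial point is that $Ak(s)\wedge K_t$ must again be a wedge of suspensions of $H$; this is exactly where Corollary~\ref{dual} enters: $\h_{\ast\ast}Ak(s)$ is free over $\h_{\ast\ast}$, so $Ak(s)\wedge H$ is a cellular $H$-module with free homotopy, and \cite[Lemma 5.3]{Hoy} gives the splitting $Ak(s)\wedge H\simeq\bigvee\Sigma^{\ast\ast}H$. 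The resulting diagram is therefore a \emph{standard} Adams resolution of $Y$ --- so your obstacle (c) never arises --- which just happens to live in $MGL_{(p)}$-modules.
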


\begin{proof}
We show that there is an Adams resolution by $MGL_{(p)}$-modules. The construction of the spectral sequence from the Adams resolution preserves such a module structure in every step and the claim will follow.

We use similar arguments as in Corollary \ref{resolution}, but starting with an Adams resolution $\{X_t,g_t,K_t,f_t\}$ for $X$. 
Then we take the smash product of such a resolution with $Ak(s)$ and show that this yields an Adams resolution for $Y=Ak(s)\wedge X$. Since $Ak(s)$ is an $MGL_{(p)}$-module, this will be a resolution by $MGL_{(p)}$-modules. By definition, $f_t$ is surjective on cohomology, which is equivalent to $g_t$ inducing zero in cohomology. It follows that $Ak(s)\wedge g_t$ induces zero in cohomology, hence, $Ak(s)\wedge f_t$ is surjective on cohomology by the long exact fiber sequence.

It remains to show that $Ak(s)\wedge K_t$ is a wedge of suspensions of $H$. Consider one wedge summand $H$ of $K_t$. 
By Corollary \ref{dual}, $\pi_{\ast\ast}(Ak(s)\wedge H)$ is free over $\pi_{\ast\ast}(H)$. Furthermore, $Ak(s)\wedge H$ is a cellular $H$-module. Therefore, \cite[Lemma 5.3]{Hoy} implies that $Ak(s)\wedge H$ is split, that is, $Ak(s)\wedge H\cong \bigvee \Sigma^{\ast\ast}H$, as we wanted to show.
\end{proof}

\section{Vanishing criterion for motivic Morava K-theory}\label{vanishing}

Now we can prove the following motivic version of the vanishing result \cite[Theorem 4.8]{Mi}.

\begin{theorem}\label{Mitchell}{\bf (Vanishing criterion)}

Let $p$ be any prime, $s>0$ and $k$ be as in Corollary \ref{resolution}. Let $X\in\SH(k)^{fin}$ be a finite motivic cell spectrum such that $\h^{\ast\ast}X$ is free over $\Lambda(Q_s)$ (the exterior algebra over $\h^{\ast\ast}$) as a module over the Steenrod algebra. Then \[AK(s)_{\ast\ast}X=0.\]
\end{theorem}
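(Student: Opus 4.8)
The strategy mirrors Mitchell's topological argument \cite[Theorem 4.8]{Mi}, but carried out with the motivic Adams spectral sequence constructed in the previous section. First I would set up the motivic Adams spectral sequence for $Ak(s)\wedge X$, which by Corollary \ref{resolution} strongly converges to $Ak(s)_{\ast\ast}X$, and whose $E_2$-page is $\Ext_{\mathcal A}(\h^{\ast\ast}(Ak(s)\wedge X),\h^{\ast\ast})$. Applying the change-of-rings isomorphism of Corollary \ref{change of rings}, the $E_2$-page becomes $\Ext_{\Lambda(Q_s)}(\h^{\ast\ast}X,\h^{\ast\ast})$. Now the hypothesis that $\h^{\ast\ast}X$ is free over $\Lambda(Q_s)$ enters: a free module over the exterior algebra $\Lambda(Q_s)$ has vanishing higher $\Ext$-groups, so $\Ext^{s',\ast,\ast}_{\Lambda(Q_s)}(\h^{\ast\ast}X,\h^{\ast\ast})=0$ for $s'>0$, and the spectral sequence is concentrated in Adams filtration zero.

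Having collapsed the spectral sequence onto the $s'=0$ line, I would next compute $Ak(s)_{\ast\ast}X$ as the filtration-zero term $\Hom_{\Lambda(Q_s)}(\h^{\ast\ast}X,\h^{\ast\ast})$, or equivalently read off that $Ak(s)_{\ast\ast}X$ is a free module over $Ak(s)_{\ast\ast}$ concentrated in the appropriate degrees — in any case, it is $v_s$-torsion-free. This is where the module structure over $(MGL_{(p)})_{\ast\ast}$, established in the previous section, is essential: the action of $v_s=a_{p^s-1}$ on $Ak(s)_{\ast\ast}X$ must be compatible with the spectral sequence, and because the spectral sequence is concentrated in a single filtration the $v_s$-action is visible directly on $E_\infty = E_2$. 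Since $v_s$ acts injectively (indeed freely, up to the polynomial-generator bookkeeping) on the collapsed $E_2$-page, it acts injectively on $Ak(s)_{\ast\ast}X$. Then inverting $v_s$ and using $AK(s)=v_s^{-1}Ak(s)$ gives $AK(s)_{\ast\ast}X = v_s^{-1}(Ak(s)_{\ast\ast}X)$; but an injective self-map on a module whose colimit one wants to show is zero forces one more input.

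The last ingredient is a connectivity or finiteness argument: $X$ is a finite cell spectrum, so $\h^{\ast\ast}X$ is a finitely generated free $\Lambda(Q_s)$-module, hence $Ak(s)_{\ast\ast}X$ is bounded below (in the appropriate sense, using that $Ak(s)_{\ast\ast}$ itself is bounded below in the first grading). A multiplication-by-$v_s$ map raises the first grading by $2(p^s-1)>0$, so the colimit $v_s^{-1}Ak(s)_{\ast\ast}X=\colim(Ak(s)_{\ast\ast}X\xrightarrow{v_s}\Sigma^{-2(p^s-1),-(p^s-1)}Ak(s)_{\ast\ast}X\to\cdots)$ of a bounded-below module under a degree-raising map is zero. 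Therefore $AK(s)_{\ast\ast}X=0$, as claimed.

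\textbf{Main obstacle.} The delicate point is not the algebra of the collapse but the compatibility of the $v_s$-action with the Adams filtration, together with making precise the ``bounded below'' statement in the bigraded motivic setting — here the second (weight) grading can in principle be unbounded, so one must check that the relevant boundedness is in the first index, which is exactly what $Ak(s)$ being a (colimit of quotients of) $MGL_{(p)}$ and $X$ being a finite cell spectrum guarantees. One must also be slightly careful that ``free over $\Lambda(Q_s)$ as a module over the Steenrod algebra'' is the hypothesis actually used: it is the restriction of scalars along $\Lambda(Q_s)\hookrightarrow\mathcal A$ that appears in Corollary \ref{change of rings}, so the freeness needed is precisely freeness of $\h^{\ast\ast}X$ as a $\Lambda(Q_s)$-module, and one should note this is implied by (and in the cases of interest equivalent to) the stated hypothesis.
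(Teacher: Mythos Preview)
Your setup through the collapse of the spectral sequence is correct and matches the paper. The error is in the analysis of the $v_s$-action: you claim $v_s$ acts injectively on the collapsed $E_2$-page $\Hom_{\Lambda(Q_s)}(\h^{\ast\ast}X,\h^{\ast\ast})$, but in fact it acts \emph{trivially}. The $(MGL_{(p)})_{\ast\ast}$-module structure on this $\Hom$ group is visibly through the target $\h^{\ast\ast}$, and the ring map $(MGL_{(p)})_{\ast\ast}\to \h^{\ast\ast}$ kills every $a_i$, in particular $v_s=a_{p^s-1}$. Since the spectral sequence is concentrated in a single Adams filtration there can be no hidden extensions, so $v_s$ acts as zero on $Ak(s)_{\ast\ast}X$ itself; hence $AK(s)_{\ast\ast}X=v_s^{-1}Ak(s)_{\ast\ast}X=0$ immediately. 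This is exactly the paper's argument, and it is the content of Mitchell's original proof.

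Your ``boundedness'' rescue also fails on its own terms: an injective degree-raising self-map on a bounded-below module does \emph{not} force the localisation to vanish. The basic counterexample is $Ak(s)_{\ast\ast}$ itself (for $k=\C$ this is $\F_p[v_s,\tau]$), which is bounded below in the first grading and on which $v_s$ acts injectively, yet $v_s^{-1}Ak(s)_{\ast\ast}=AK(s)_{\ast\ast}\neq 0$. You would need bounded \emph{above}, which $Ak(s)_{\ast\ast}X$ is not. So the finiteness of $X$ plays no role in the vanishing step --- it is used only to guarantee convergence of the Adams spectral sequence and the applicability of the K\"unneth and change-of-rings isomorphisms.
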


\begin{proof}
With the preparations made so far, the rest of the proof is exactly as in \cite[Theorem 4.8]{Mi}. Since $AK(s)=v_s^{-1}Ak(s)$, it suffices to show that $Ak(s)_{\ast\ast}X$ is $v_s$-torsion. We apply the change of rings isomorphism, Corollary \ref{change of rings}, to the Adams spectral sequence for $Ak(s)\wedge X$ and get:
\[E_2\cong \Ext_{\Lambda(Q_s)}(\h^{\ast\ast}X,\h^{\ast\ast})\Rightarrow Ak(s)_{\ast\ast}X.\]
By the assumption on $X$, this collapses to $\Hom_{\Lambda(Q_s)}(\h^{\ast\ast}X,\h^{\ast\ast})\cong Ak(s)_{\ast\ast}X$, and, by the previous proposition, this is an isomorphism of $(MGL_{(p)})_{\ast\ast}$-modules. Since $(MGL_{(p)})_{\ast\ast}$ acts trivially on the left hand side of this isomorphism, $v_s$ acts trivially on $Ak(s)_{\ast\ast}X$, too. Hence, $AK(s)_{\ast\ast}X=0$.
\end{proof}

\section{Construction of an $X$ such that $\h^{\ast\ast}X$ is free over $\Lambda(Q_s)$}\label{construction}

In this section, we assume $k=\C$ because we will work explicitly with $\h^{\ast\ast}\cong\F_p[\tau]$, $\deg(\tau)=(0,1)$ (see Lemma \ref{H(C)}). 
The construction of a spectrum $X$ with the properties required in the previous theorem can be done similarly as in \cite[Appendix C]{Rav}. That is, one starts with a so-called weakly type $n$ spectrum ($n>s$) and then uses a particular idempotent to split off a (strongly) type $n$ spectrum of a certain smash power of it.

\subsection{Idempotents for free $\h^{\ast\ast}$-modules}

For $V^{\ast\ast}$ an Adams graded graded abelian group (i.e., $V^{\ast\ast}$ has a sign rule in the first grading but not in the second one, see e.g. \cite[Section 3]{MLE}) which is a free $\h^{\ast\ast}$-module, let $V^+=\underset{p \textup{ even}}\bigoplus V^{p,q}$ and $V^-=\underset{p\textup{ odd}}\bigoplus V^{p,q}$ be the even and odd dimensional parts of $V$. 
That is, commuting with an element of $V^+$ does not change the sign but commuting two elements of $V^-$ does. For a vector space $V^\ast$ over $\F_p$, Ravenel defines a number $k_V$ and an idempotent $e_V\in\Z_{(p)}[\Sigma_{k_V}]$, which only depend on $\dim_{\F_p} V^+$ and $\dim_{\F_p} V^-$ \cite[Appendix C.2]{Rav}. The analogous definition can be formulated using $\dim_{\h^{\ast\ast}} V^+$ and $\dim_{\h^{\ast\ast}} V^-$ for our bigraded $\h^{\ast\ast}$-modules. The symmetric group $\Sigma_{k_V}$ acts on $V^{\otimes k_V}$ by permuting the factors. As $V$ is an $\F_p$-module, this induces an action of $\Z_{(p)}[\Sigma_{k_V}]$ on $V^{\otimes k_V}$. For our purposes, it will not be important to know the precise definitions of $k_V$ and $e_V$. We just need to know that they are defined in such a way that the following analogue of \cite[Theorem C.2.1]{Rav} holds.

\begin{prop}
Let $k_V$ and $e_V\in\Z_{(p)}[\Sigma_{k_V}]$ be the number and idempotent defined in \cite[Appendix C.2]{Rav} and let $W=V^{\otimes k_V}$. Then $e_V W\neq 0$. If $U\subset V$ has $\dim U^+\leq \dim V^+ -1$ or $\dim U^-\leq\dim V^- -(p-1)$, then $e_V U^{\otimes k_V}=0.$ Here, $\dim$ denotes the $\h^{\ast\ast}$-dimension and $\otimes$ denotes the tensor product over $\h^{\ast\ast}$.
\end{prop}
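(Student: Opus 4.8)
The plan is to reduce the bigraded statement to the ungraded one of \cite[Theorem C.2.1]{Rav} by a ``base change along $\h^{\ast\ast}$'' argument. First I would observe that $V^{\ast\ast}$ is a free $\h^{\ast\ast}$-module, so choosing an $\h^{\ast\ast}$-basis identifies $V^{\ast\ast}\cong \h^{\ast\ast}\otimes_{\F_p} \overline V$ for an $\F_p$-vector space $\overline V=\overline V^+\oplus\overline V^-$ with $\dim_{\F_p}\overline V^\pm=\dim_{\h^{\ast\ast}}V^\pm$ (this is exactly where Lemma \ref{H(C)}, giving $\h^{\ast\ast}=\F_p[\tau]$ concentrated in degrees $(0,q)$, makes the even/odd splitting behave well: $\tau$ has even first degree, so tensoring with $\h^{\ast\ast}$ does not mix $V^+$ and $V^-$). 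Since the tensor powers in the proposition are taken over $\h^{\ast\ast}$, one has a canonical isomorphism $W=V^{\otimes_{\h^{\ast\ast}} k_V}\cong \h^{\ast\ast}\otimes_{\F_p}\overline V^{\otimes_{\F_p} k_V}$, and this isomorphism is $\Sigma_{k_V}$-equivariant for the permutation actions, hence $\Z_{(p)}[\Sigma_{k_V}]$-equivariant. Because $k_V$ and $e_V$ depend, by their definitions in \cite[Appendix C.2]{Rav}, only on $\dim\overline V^+=\dim_{\h^{\ast\ast}}V^+$ and $\dim\overline V^-=\dim_{\h^{\ast\ast}}V^-$, the number and idempotent attached to $V^{\ast\ast}$ coincide with those attached to $\overline V$.

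Next I would use flatness of $\h^{\ast\ast}=\F_p[\tau]$ over $\F_p$: applying the exact functor $\h^{\ast\ast}\otimes_{\F_p}-$ to the identities $e_{\overline V}\,\overline V^{\otimes k_V}\neq 0$ and (for the relevant submodules $\overline U$) $e_{\overline V}\,\overline U^{\otimes k_V}=0$ furnished by \cite[Theorem C.2.1]{Rav}. The first gives $e_V W=\h^{\ast\ast}\otimes_{\F_p} e_{\overline V}\overline V^{\otimes k_V}\neq 0$ since tensoring a nonzero module over a nonzero ring with a free module is nonzero. For the vanishing statement, given $U^{\ast\ast}\subset V^{\ast\ast}$ with $\dim_{\h^{\ast\ast}} U^+\leq \dim_{\h^{\ast\ast}} V^+-1$ or $\dim_{\h^{\ast\ast}} U^-\leq \dim_{\h^{\ast\ast}} V^- -(p-1)$, I would first pass to the $\h^{\ast\ast}$-span $U'$ of $U$ (a free direct summand of $V$ of the same or smaller even/odd dimensions) so that $U'\cong \h^{\ast\ast}\otimes_{\F_p}\overline U$ with $\overline U\subset\overline V$ of the appropriate dimension drop; then $e_V U^{\otimes k_V}\subseteq e_V (U')^{\otimes k_V}=\h^{\ast\ast}\otimes_{\F_p} e_{\overline V}\overline U^{\otimes k_V}=0$ by \cite[Theorem C.2.1]{Rav}.

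The main obstacle is the bookkeeping around the even/odd decomposition: I must check that all the structure in sight (the submodule $U$, its $\h^{\ast\ast}$-span, the permutation action on tensor powers, the idempotent $e_V$) respects the $V^+/V^-$ splitting and is compatible with the identification $V\cong\h^{\ast\ast}\otimes_{\F_p}\overline V$, so that Ravenel's combinatorial lemma applies verbatim to $\overline V$. This is routine precisely because $\tau$ sits in bidegree $(0,1)$ with even topological degree $0$, so scalars never flip signs; the one point to be careful about is that $U\subset V$ need not be a free $\h^{\ast\ast}$-submodule, which is why I replace it by its $\h^{\ast\ast}$-span before invoking flatness. With these reductions in place, the proposition follows immediately from \cite[Theorem C.2.1]{Rav}.
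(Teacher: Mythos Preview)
Your base-change argument is correct and is essentially an explicit unpacking of the paper's one-line proof, which simply asserts that ``the proof of \cite[Theorem C.2.1]{Rav} applies to our setting without changes.'' Ravenel's argument is purely about the $\Z_{(p)}[\Sigma_{k_V}]$-module structure of tensor powers of $\Z/2$-graded free modules and depends only on the even/odd ranks; your identification $V\cong\h^{\ast\ast}\otimes_{\F_p}\overline V$ makes this transparent.

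One small correction: in the vanishing step, the $\h^{\ast\ast}$-span $U'$ of $U$ need not be a direct summand of $V$ (take $U=\tau V$ for $V$ of rank one), so you cannot in general arrange $\overline U\subset\overline V$ as you claim. This is harmless, however: since $\h^{\ast\ast}=\F_p[\tau]$ is a PID, any graded submodule $U\subset V$ is already free, so $U\cong\h^{\ast\ast}\otimes_{\F_p}\overline U$ abstractly and $e_V U^{\otimes k_V}\cong\h^{\ast\ast}\otimes_{\F_p} e_V\,\overline U^{\otimes k_V}$ as $\Z_{(p)}[\Sigma_{k_V}]$-modules. Ravenel's vanishing conclusion depends only on $\dim\overline U^\pm$, not on any embedding into $\overline V$, so it applies directly to this abstract $\overline U$. (For the same PID reason, your worry that $U$ might fail to be a free $\h^{\ast\ast}$-submodule is unfounded.)
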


\begin{proof}
The proof of \cite[Theorem C.2.1]{Rav} applies to our setting without changes.
\end{proof}

We will need the following Lemma.

\begin{lemma}\label{free module}
Let $M$ be a module over $\F_p[\tau,Q]/Q^2$ which is free as a module over $\F_p[\tau]$ and free as a module over $\F_p[Q]/Q^2$. Then $M$ is a free $\F_p[\tau,Q]/Q^2$-module.
\end{lemma}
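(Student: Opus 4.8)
The plan is to establish this as a special case of a general algebraic fact: if $R = S[Q]/Q^2$ with $S$ graded and $Q$ in some odd degree (so $Q$ is a square-zero element), and $M$ is an $R$-module that is free over $S$ and free over $\Lambda_S(Q) = S[Q]/Q^2$, then $M$ is free over $R$. In our situation $S = \F_p[\tau]$, $R = \F_p[\tau,Q]/Q^2 = \Lambda_{\F_p[\tau]}(Q)$, and the hypotheses are exactly that $M$ is $\F_p[\tau]$-free and $\F_p[Q]/Q^2$-free. Note $\F_p[Q]/Q^2$-freeness is a slightly weaker-looking statement than $\Lambda_{\F_p[\tau]}(Q)$-freeness, so the first thing to check is that these agree or that the weaker one suffices for the argument.

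First I would pick a homogeneous $\F_p[\tau]$-basis of $M/QM$ and lift it to homogeneous elements $x_\alpha \in M$; these generate $M$ over $R$ by a graded Nakayama argument with respect to the ideal $(Q)$ (using that $Q$ is nilpotent, so $\bigcap_n (Q)^n M = 0$ trivially since $Q^2 = 0$), together with $\tau$-completeness being unnecessary because $\F_p[\tau]$-freeness already gives us control in each bidegree. Concretely: the $x_\alpha$ together with the $Qx_\alpha$ span $M$ over $\F_p[\tau]$ because they do so modulo $Q$ and $Q\cdot(\text{span}) = Q M$. So it remains to prove they are $\F_p[\tau]$-linearly independent, i.e. that $M$ is the free $\F_p[\tau]$-module on $\{x_\alpha\} \cup \{Qx_\alpha\}$. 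Suppose $\sum a_\alpha x_\alpha + \sum b_\alpha Q x_\alpha = 0$ with $a_\alpha, b_\alpha \in \F_p[\tau]$. Reducing mod $Q$ gives $\sum \bar a_\alpha \bar x_\alpha = 0$ in $M/QM$, hence each $a_\alpha \in (Q)\F_p[\tau] = 0$, so $a_\alpha = 0$. Then $\sum b_\alpha Q x_\alpha = 0$. Here is where I would use the $\F_p[Q]/Q^2$-freeness: write $M \cong \bigoplus_\beta \F_p[Q]/Q^2 \cdot e_\beta$ as an $\F_p[Q]/Q^2$-module; the submodule $QM = \bigoplus_\beta \F_p \cdot Q e_\beta$ is $\F_p$-free (hence $\F_p[\tau]$-torsion considerations must be handled carefully—$QM$ is killed by $Q$ but is a module over $\F_p[\tau]$ only if $\tau$ acts, which it does). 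The cleanest route: $Q\colon M/QM \to QM$ is an isomorphism of $\F_p[\tau]$-modules? That is not automatic and is essentially the content of the lemma, so I should not assume it.

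Let me instead structure the linear-independence step as follows. The key is that $QM \cong M/\mathrm{ann}_M(Q)$ and I want $\mathrm{ann}_M(Q) = QM$. From $\F_p[Q]/Q^2$-freeness, $\mathrm{ann}_M(Q) = QM$ holds (in a free $\F_p[Q]/Q^2$-module, the annihilator of $Q$ is exactly $Q\cdot(\text{module})$). Therefore $Q$ induces an injection $M/QM \hookrightarrow M$ with image $QM$, i.e. $Q\colon M/QM \xrightarrow{\sim} QM$ as $\F_p$-vector spaces, and this map is $\F_p[\tau]$-linear, so it is an iso of $\F_p[\tau]$-modules. Since $M/QM$ is $\F_p[\tau]$-free on $\{\bar x_\alpha\}$, $QM$ is $\F_p[\tau]$-free on $\{Q x_\alpha\}$. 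Now from $\sum b_\alpha Q x_\alpha = 0$ in $QM$ we conclude $b_\alpha = 0$. This completes linear independence, so $M$ is $\F_p[\tau]$-free on $\{x_\alpha\}\cup\{Qx_\alpha\}$, which is precisely being free over $\F_p[\tau,Q]/Q^2$ on $\{x_\alpha\}$.

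The main obstacle I anticipate is making the generation step (Nakayama) fully rigorous in the graded setting without accidentally needing completeness hypotheses: one must argue bidegree by bidegree that $\{x_\alpha\}\cup\{Qx_\alpha\}$ spans, which works because in each fixed bidegree $(p,q)$ only finitely many generators are relevant (as $\h^{\ast\ast}X$ is of motivically finite type in the intended application, or more simply because $\F_p[\tau]$ is concentrated in a half-plane), so $M_{p,q}$ is finite-dimensional and the mod-$Q$ argument plus $Q^2 = 0$ closes the induction on $\tau$-degree. I would also double-check that the identification $\mathrm{ann}_M(Q) = QM$ genuinely follows from $\F_p[Q]/Q^2$-freeness and does not secretly require the $\F_p[\tau]$-freeness to be compatible — it does not, it is purely a statement about $\F_p[Q]/Q^2$-modules — and that is the crucial input that distinguishes this lemma from a triviality.
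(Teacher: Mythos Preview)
Your argument is correct and arguably cleaner than the paper's. Both proofs rest on the same key observation --- that $\F_p[Q]/Q^2$-freeness gives $\mathrm{ann}_M(Q) = QM$, so $Q$ induces an $\F_p[\tau]$-linear isomorphism $M/QM \xrightarrow{\sim} QM$ --- but they deploy it differently. The paper starts from an $\F_p[\tau]$-basis $\{m_i\}$ of $M$ itself, uses an $\F_p$-splitting $M = M' \oplus QM'$ coming from the $\F_p[Q]/Q^2$-structure to write each $m_i = a_i + Qb_i$, and then massages this into a basis whose elements lie in $M'$ or $QM'$. You instead work from the quotient: lift an $\F_p[\tau]$-basis of $M/QM$ and show the lifts together with their $Q$-multiples form an $\F_p[\tau]$-basis of $M$. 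Your route avoids the paper's basis-modification step (``replace $m_i$ by $a_i$ and $Qb_i$, then remove elements to get a basis'') and makes the role of the annihilator identity more transparent.

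One point you should make explicit: you need $M/QM$ to be $\F_p[\tau]$-free before you can pick a basis of it, and this is not given. It follows once you have the isomorphism $M/QM \cong QM$, since $\F_p[\tau]$ is a PID and $QM$ is a submodule of the free module $M$. Also, your worries about Nakayama and completeness are unnecessary: because $Q^2=0$, the spanning step $M = \F_p[\tau]\{x_\alpha\} + QM = \F_p[\tau]\{x_\alpha, Qx_\alpha\}$ is immediate, with no finiteness or finite-type hypotheses required.
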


\begin{proof}
Let $\{m_i\}_{i\in I}$ be a basis of $M$ over $\F_p[\tau]$ and $\{n_j\}_{j\in J}$ a basis over $\F_p[Q]/Q^2$. Then $M$ is a free $\F_p$-module with bases $\{\tau^k m_i\}_{i\in I,k\in\N}$ and $\{n_j,Qn_j\}_{j\in J}$. As an $\F_p[Q]/Q^2$-module, $M$ decomposes as $M\cong M'\oplus QM'$ with $M'\cong QM'\cong QM$ as $\F_p$-modules. Hence, the elements $m_i$ can be written as $m_i=a_i+Qb_i$ with $a_i,b_i\in M'$. For any $i\in I$ such that both $a_i$ and $b_i$ are nonzero, we replace the basis element $m_i$ by the two elements $a_i$ and $Qb_i$. Then we still have a set of generators for $M$ over $\F_p[\tau]$, which can be turned into a basis by removing elements. Hence, we can assume that all $m_i$ are of the form $m_i=a_i$ or $m_i=Qb_i$. Let $I'=\{i\in I\; |\; m_i=a_i\in M'\}$. Then $M'\cong \F_p\{\tau^k a_i\}_{i\in I',\tau \in\N}$ as an $\F_p$-module. Hence, $QM'\cong \F_p\{Q\tau^k a_i\}_{i\in I',\tau \in\N}$ and $M\cong \F_p\{\tau^ka_i,Q\tau^ka_i\}$. It follows that $\{a_i\}_{i\in I'}$ is a basis of $M$ as a free $\F_p[\tau,Q]/Q^2$-module.
\end{proof}

Theorem C.2.2 of \cite{Rav} explains how to split off a free module over the exterior $\F_p$-algebra generated by $Q_s$ from a module with nontrivial $Q_s$-action. In our setting, this also works for the exterior $\h^{\ast\ast}$-algebra $\Lambda(Q_s)$.

\begin{prop}
Let $V=U\oplus F$ be a splitting of $\Lambda(Q_s)$-modules which are free over $\h^{\ast\ast}$, and $F\neq 0$ be free over $\Lambda(Q_s)$. Then $e_V V^{\otimes k_V}$ is a free $\Lambda(Q_s)$-module.
\end{prop}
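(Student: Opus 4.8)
The plan is to transcribe the proof of \cite[Theorem C.2.2]{Rav}, systematically replacing the exterior $\F_p$-algebra on $Q_s$ by the bigraded exterior algebra $\Lambda(Q_s)=\F_p[\tau,Q_s]/(Q_s^2)$ and absorbing the extra variable $\tau$ via Lemma \ref{free module}. The organising principle is the freeness criterion: a $\Lambda(Q_s)$-module $M$ that is free over $\h^{\ast\ast}=\F_p[\tau]$ is free over $\Lambda(Q_s)$ if and only if its Margolis homology $H(M;Q_s)=\ker(Q_s)/\mathrm{im}(Q_s)$ vanishes. Indeed, for an $\F_p$-vector space carrying a square-zero operator, freeness over $\F_p[Q_s]/(Q_s^2)$ is equivalent to $Q_s$-acyclicity, so the "if" direction is exactly Lemma \ref{free module} applied with $Q=Q_s$, and the "only if" direction is clear.

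First I would expand $W=V^{\otimes k_V}=(U\oplus F)^{\otimes k_V}=\bigoplus_{T\subseteq\{1,\dots,k_V\}}M_T$, where $M_T$ is the tensor product over $\h^{\ast\ast}$ of copies of $F$ in the slots indexed by $T$ and copies of $U$ in the remaining slots; here $Q_s$ acts on $W$ through the iterated coproduct (Cartan formula), and since this respects the $U$- and $F$-slots it makes each $M_T$ a $\Lambda(Q_s)$-submodule. For $T\neq\emptyset$ the module $M_T$ has a tensor factor $F$, which is free over $\Lambda(Q_s)$; since $\Lambda(Q_s)$-freeness is preserved under tensoring over $\h^{\ast\ast}$ with an $\h^{\ast\ast}$-free module equipped with the diagonal $Q_s$-action, each such $M_T$, and hence $W'=\bigoplus_{T\neq\emptyset}M_T$, is free over $\Lambda(Q_s)$. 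The symmetric group $\Sigma_{k_V}$ acts on $W$ by permuting tensor slots (with the Koszul signs carried only by the topological grading, as built into the definition of $V^{\pm}$); it sends $M_T$ to $M_{\sigma(T)}$, hence preserves both $M_\emptyset=U^{\otimes k_V}$ and $W'$, and — because $Q_s$ acts through the cocommutative coproduct — every element of $\Z_{(p)}[\Sigma_{k_V}]$, in particular $e_V$, is $\Lambda(Q_s)$-linear.

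Now I would split $e_V W=e_V M_\emptyset\oplus e_V W'$ (a decomposition of $\Lambda(Q_s)$-modules, since $M_\emptyset$ and $W'$ are $\Z_{(p)}[\Sigma_{k_V}]$-submodules and $e_V$ is $\Lambda(Q_s)$-linear) and treat the two summands. For the first: since $F\neq0$ is $\Lambda(Q_s)$-free we have $\dim_{\h^{\ast\ast}}F^{+}\geq 1$, hence $\dim_{\h^{\ast\ast}}U^{+}\leq\dim_{\h^{\ast\ast}}V^{+}-1$, and the proposition above gives $e_V M_\emptyset=e_V U^{\otimes k_V}=0$. For the second: $e_V W'$ is a retract of $W'$, which is free over $\h^{\ast\ast}=\F_p[\tau]$ (a graded PID, so retracts of free graded modules are free) and satisfies $H(W';Q_s)=0$; as $e_V$ is $Q_s$-linear, $H(e_V W';Q_s)=e_V H(W';Q_s)=0$, so $e_V W'$ is free over $\Lambda(Q_s)$ by the criterion above. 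Therefore $e_V W=e_V W'$ is a free $\Lambda(Q_s)$-module, as claimed.

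The one point I expect to require genuine care — rather than mechanical transcription — is the bookkeeping of signs in the $\Sigma_{k_V}$-action: one must confirm that the Koszul signs depend only on the first (topological) grading, so that $e_V$ is honestly $\Lambda(Q_s)$-linear and so that the two combinatorial inputs imported from \cite[Appendix C.2]{Rav}, namely $e_V W\neq0$ and $e_V U^{\otimes k_V}=0$, remain valid verbatim in the bigraded setting. Granting that, the argument is a routine adaptation of Ravenel's.
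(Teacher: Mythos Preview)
Your proof is correct and follows essentially the same route as the paper's: decompose $V^{\otimes k_V}=U^{\otimes k_V}\oplus F'$ (your $W'$), kill $e_V U^{\otimes k_V}$ via the dimension drop $\dim U^{+}<\dim V^{+}$, and argue that $e_V F'$ is free over $\Lambda(Q_s)$ by combining freeness over $\F_p[Q_s]/(Q_s^2)$ with freeness over $\F_p[\tau]$ through Lemma~\ref{free module}. The only cosmetic difference is that you phrase the last step via the Margolis-homology criterion and the graded-PID argument, whereas the paper invokes ``summand of a free module over a local ring is free''; these are equivalent packagings of the same fact.
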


\begin{proof}
We write the tensor product as $V^{\otimes k_V}=U^{\otimes k_V}\oplus F'$, where $F'=\underset{\substack{a+b=k_V \\ b\geq 1}}\bigoplus U^{\otimes a}\otimes F^{\otimes b}$. By the proof of \cite[Theorem C.2.2]{Rav}, $F'$ is free over the Hopf algebra $\F_p[Q_s]/Q_s^2$, which we abbreviate by $E$. Let us give the reason for this statement. We show that if the $E$-module $U$ has basis $\{u_i\}_I$ over $\F_p$ and $F$ has basis $\{f_j\}_J$ over $E$, then $U\otimes_{\F_p}F$ has $\F_p$-basis $\{u_i\otimes f_j, Q_s(u_i\otimes f_j)\}_{I,J}$ and hence is a free module over $E$. The module structure of $U\otimes F$ is defined by $Q_s(u\otimes f)=Q_su\otimes f+u\otimes Q_sf$. Since $\{f_j,Q_sf_j\}_J$ defines an $\F_p$-basis of $F$, an $\F_p$-basis of $U\otimes F$ can be given by $\{u_i\otimes f_j,u_i\otimes Q_sf_j\}$. In the formula $Q_s(u_i\otimes f_j)=Q_su_i\otimes f_j+u_i\otimes Q_sf_j$, $Q_su_i=\Sigma r_k u_k$ can be expressed by the basis elements of $U$, hence $Q_su_i\otimes f_j\in \F_p\{u_k\otimes f_j\}$ and the basis elements $u_i\otimes Q_s f_j$ of $U\otimes F$ can be replaced by $Q_s(u_i\otimes f_j)$. We obtain $U\otimes F=\F_p\{u_i\otimes f_j,Q_s(u_i\otimes f_j)\}$. Inductively, it follows that all mixed summands $U^{\otimes a}\otimes F^{\otimes b}$ in $V^{\otimes k_V}$ are free and hence $F'$ is free over $E$. 

The analogue holds if we consider $U$ and $F$ as free modules over $\F_p[\tau]$ instead of $\F_p$ and use $\otimes_{\F_p[\tau]}$. 
It follows that $F'$ is free over $\Lambda(Q_s)$. The direct summands are invariant under the $\Sigma_{k_V}$-action. Hence, we have a short exact sequence 
\[0\rightarrow e_V U^{\otimes k_V}\rightarrow e_V V^{\otimes k_V}\rightarrow e_V F'\rightarrow 0.\]
Since $\deg(Q_s)=(2p^i-1,p^i-1)$, multiplication by $Q_s$ sends $V^+$ to $V^-$ and vice versa. It follows that $\dim F^+>0$ (and $\dim F^->0$) and, hence, $\dim U^+ < \dim V^+$. By the previous proposition, this implies $e_V U^{\otimes k_V}=0$. It follows that $e_V V^{\otimes k_V}=e_V F'$. We have to show that $e_V F'$ is a free $\Lambda(Q_s)$-module. As a module over the exterior $\F_p$-algebra over $Q_s$, this is a direct summand of a free module over a local ring. Hence, $e_V F'$ is free over $\F_p[Q_s]/(Q_s^2)$. Since $e_V F'$ is also a free $\h^{\ast\ast}$-module, it is free over $\Lambda(Q_s)=\F_p[\tau,Q_s]/Q_s^2$ by Lemma \ref{free module}.
\end{proof}

We can apply this to motivic cohomology in the following way:

\begin{theorem}\label{hY}{\bf (Splitting off free $\Lambda(Q_s)$-modules)}

Let $X\in\SH(\C)^{fin}_{(p)}$ be a p-local finite cell spectrum such that $Q_s$ acts nontrivially on $\h^{\ast\ast}(X)$ as an element of the Steenrod algebra. Assume that $V=\h^{\ast\ast}(X)$ is a free $\h^{\ast\ast}$-module and let $Y=e_V(X^{\wedge k_V})$. Then $\h^{\ast\ast}(Y)$ is free over $\Lambda(Q_s)$.
\end{theorem}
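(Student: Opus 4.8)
The plan is to combine the two structural results established just above — the K\"unneth theorem for motivic cohomology (Proposition \ref{kunneth}) and the two propositions on the idempotents $e_V$ — and to feed into them the hypothesis that $Q_s$ acts nontrivially. First I would record that $V=\h^{\ast\ast}(X)$ is a module over the motivic Steenrod algebra $\mathcal A$, and in particular over the exterior subalgebra $\Lambda(Q_s)$; since $V$ is by hypothesis free over $\h^{\ast\ast}$, it makes sense to ask for a splitting of $\Lambda(Q_s)$-modules. The key algebraic input is that a finitely generated $\Lambda(Q_s)$-module which is free over $\h^{\ast\ast}$ decomposes as $V = U\oplus F$ with $F$ free over $\Lambda(Q_s)$ and $U$ having trivial $Q_s$-action (this is the motivic analogue of the standard fact used in \cite[Appendix C]{Rav}, provable by the same argument once one knows $\h^{\ast\ast}$ is a nice enough base ring — here $\F_p[\tau]$ is a PID, which suffices). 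The nontriviality of the $Q_s$-action guarantees that $F\neq 0$.

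Next I would identify $\h^{\ast\ast}(Y)$. Since $X$ is a finite cell spectrum with $\h^{\ast\ast}(X)$ free over $\h^{\ast\ast}$, iterating Proposition \ref{kunneth} gives $\h^{\ast\ast}(X^{\wedge k_V}) \cong V^{\otimes k_V}$, the tensor power taken over $\h^{\ast\ast}$, and this isomorphism is $\Sigma_{k_V}$-equivariant for the permutation action on both sides (the permutation of smash factors realizes the permutation of tensor factors, up to the sign coming from the Adams grading, which is exactly the grading convention built into the definition of $e_V$). Because $X$ is $p$-local, $\Z_{(p)}[\Sigma_{k_V}]$ acts on $X^{\wedge k_V}$ in $\SH(\C)^{fin}_{(p)}$ and the idempotent $e_V$ splits off the summand $Y = e_V(X^{\wedge k_V})$; applying $\h^{\ast\ast}(-)$ and using that cohomology sends retracts to retracts, I get $\h^{\ast\ast}(Y) \cong e_V\bigl(V^{\otimes k_V}\bigr)$ as $\Lambda(Q_s)$-modules. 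Now the second idempotent proposition above applies verbatim to the splitting $V = U\oplus F$: it says precisely that $e_V(V^{\otimes k_V})$ is a free $\Lambda(Q_s)$-module. Combining the two isomorphisms yields that $\h^{\ast\ast}(Y)$ is free over $\Lambda(Q_s)$, as claimed.

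The main obstacle I anticipate is the bookkeeping at the interface between topology and algebra: verifying that the $\Sigma_{k_V}$-action on $\h^{\ast\ast}(X^{\wedge k_V})$ coming from permuting smash factors agrees — signs and bigradings included — with the action on $V^{\otimes k_V}$ that the propositions on $e_V$ presuppose, so that the purely algebraic splitting result can legitimately be transported to cohomology. This is the step where the Adams-graded sign convention (the sign rule in the first grading but not the second, as in \cite[Section 3]{MLE}) has to be matched carefully against Ravenel's definition of $e_V$; everything else is a formal consequence of the K\"unneth isomorphism and the already-proved freeness statement for $e_V(V^{\otimes k_V})$. A secondary point to be careful about is that $Y$ is genuinely an object of $\SH(\C)^{fin}_{(p)}$ (a retract of a finite cell spectrum is finite by Proposition \ref{prop dualizable}, so this is immediate) and that $e_V$ really does act — this only needs $X$ to be $p$-local, which is part of the hypothesis.
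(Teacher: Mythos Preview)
Your approach is essentially the same as the paper's: identify $\h^{\ast\ast}(Y)\cong e_V(V^{\otimes k_V})$ via the K\"unneth isomorphism and the compatibility of the $\Sigma_{k_V}$-actions, then invoke the preceding algebraic proposition on $e_V$ once you know $V$ has a nonzero free $\Lambda(Q_s)$-summand. The paper's proof is terser but structurally identical.

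Two small corrections. First, you overclaim in the decomposition $V=U\oplus F$: you assert $U$ has \emph{trivial} $Q_s$-action, but the proposition you are feeding into only requires $F\neq 0$ free over $\Lambda(Q_s)$, with no hypothesis on $U$ beyond being a $\Lambda(Q_s)$-module free over $\h^{\ast\ast}$. The paper accordingly only asserts the existence of a nontrivial free summand, which is all that is needed and is easier to justify. Second, your side remark that $Y\in\SH(\C)^{fin}_{(p)}$ because ``a retract of a finite cell spectrum is finite by Proposition~\ref{prop dualizable}'' is incorrect: $\SH(\C)^{fin}$ is by definition closed only under exact triangles, not retracts (its retract-closure is the larger $\SH(\C)_{\mathcal T_k,f}$), and Proposition~\ref{prop dualizable} does not say otherwise. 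The paper explicitly flags this issue immediately after the theorem, noting that $Y$ ``maybe is not finite itself'' and supplying a separate argument (Corollary~\ref{7.4.5}) to handle this. Fortunately this does not affect the proof of the theorem as stated, since the conclusion is purely about $\h^{\ast\ast}(Y)$.
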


\begin{proof}
This is analogous to a statement in \cite[Theorem C.3.2]{Rav}. Since $Q_s$ acts nontrivially, $\h^{\ast\ast}(X)$ contains a nontrivial summand which is free over $\Lambda(Q_s)$. The previous proposition yields the claim. Note that $\h^{\ast\ast}(e_V X^{\wedge k_V})=e_V\h^{\ast\ast}(X)^{\otimes k_V}$ by the K\"unneth theorem (Proposition \ref{kunneth}) and by the way $\Z_{(p)}[\Sigma_{k_V}]$ acts. The K\"unneth theorem also holds for $p$-local finite spectra because $p$-localisation commutes with $\h^{\ast\ast}(-)$.
\end{proof}

$Y$ is a retract of the p-local finite cell spectrum $X^{\wedge k_V}$, but maybe it is not finite itself. Therefore, we need an additional argument which shows that Theorem \ref{Mitchell} holds for $Y$.

\begin{cor}\label{7.4.5}
For $s>0$ and $Y$ as in Theorem \ref{hY}, $AK(s)_{\ast\ast}(Y)=0$.
\end{cor}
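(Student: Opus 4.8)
The statement to prove is Corollary \ref{7.4.5}: for $s>0$ and $Y=e_V(X^{\wedge k_V})$ as in Theorem \ref{hY}, we have $AK(s)_{\ast\ast}(Y)=0$.

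The plan is to reduce the claim for the (possibly non-finite) retract $Y$ to the vanishing criterion Theorem \ref{Mitchell}, which was only stated for finite cell spectra. First I would recall the two facts available: by Theorem \ref{hY}, $\h^{\ast\ast}(Y)$ is free over $\Lambda(Q_s)$; and $Y$ is a retract of the finite cell spectrum $X^{\wedge k_V}$, hence a cell spectrum of motivically finite type (its cohomology is a summand of that of a finite spectrum, so it satisfies the finite-type bound from Corollary \ref{AdRes}). The key point is that every ingredient of the proof of Theorem \ref{Mitchell} goes through for $Y$: the Adams resolution and its convergence (Corollary \ref{resolution}, applied via Remark \ref{weaker condition}, which explicitly says finiteness of $X$ is not needed — only that $Ak(s)\wedge Y$ is of motivically finite type), the change-of-rings isomorphism (Corollary \ref{change of rings}, whose proof only uses freeness of $\h^{\ast\ast}(Y)$ over $\h^{\ast\ast}$ and over $\Lambda(Q_s)$, not finiteness), and the $(MGL_{(p)})_{\ast\ast}$-module structure on the spectral sequence.

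So the steps, in order, are: (1) verify $\h^{\ast\ast}(Y)$ is a free $\h^{\ast\ast}$-module of motivically finite type — this follows since it is a retract of $\h^{\ast\ast}(X^{\wedge k_V})=\h^{\ast\ast}(X)^{\otimes k_V}$, which is free of finite type, and $Y$ is cellular as a retract of a finite cell spectrum; (2) conclude $Ak(s)\wedge Y$ is of motivically finite type, using the Künneth description $\h^{\ast\ast}(Ak(s)\wedge Y)\cong \mathcal A/\mathcal A Q_s\otimes_{\h^{\ast\ast}}\h^{\ast\ast}(Y)$ exactly as in Corollary \ref{AdRes}; (3) invoke Corollary \ref{resolution} together with Remark \ref{weaker condition} to get strong convergence of the motivic Adams spectral sequence for $Ak(s)\wedge Y$ to $Ak(s)_{\ast\ast}(Y)$; (4) apply the change-of-rings isomorphism (Corollary \ref{change of rings}) to identify $E_2$ with $\Ext_{\Lambda(Q_s)}(\h^{\ast\ast}(Y),\h^{\ast\ast})$, which collapses to $\Hom_{\Lambda(Q_s)}(\h^{\ast\ast}(Y),\h^{\ast\ast})$ since $\h^{\ast\ast}(Y)$ is free over $\Lambda(Q_s)$; (5) as in the proof of Theorem \ref{Mitchell}, observe that this is an isomorphism of $(MGL_{(p)})_{\ast\ast}$-modules and that $(MGL_{(p)})_{\ast\ast}$, in particular $v_s$, acts trivially on the $\Hom$-term, so $Ak(s)_{\ast\ast}(Y)$ is $v_s$-torsion; (6) conclude $AK(s)_{\ast\ast}(Y)=v_s^{-1}Ak(s)_{\ast\ast}(Y)=0$.

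I do not expect a serious obstacle here: the corollary is essentially a bookkeeping observation that the proof of Theorem \ref{Mitchell} never used finiteness of the spectrum beyond guaranteeing the finite-type condition needed for the Adams spectral sequence, and that condition is inherited by retracts of finite cell spectra. The one point that deserves a sentence of care is confirming that Corollary \ref{change of rings} and the module-structure proposition really only require $\h^{\ast\ast}(Y)$ to be free over $\h^{\ast\ast}$ (and over $\Lambda(Q_s)$) rather than $Y$ being finite — but this is visible from their proofs, which build projective resolutions out of the cohomology module alone. Thus the proof can be written compactly by citing Theorem \ref{hY}, Remark \ref{weaker condition}, Corollary \ref{resolution}, Corollary \ref{change of rings}, and the argument of Theorem \ref{Mitchell} verbatim.
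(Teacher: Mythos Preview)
Your proposal is correct and follows the same route as the paper: reduce to the proof of Theorem \ref{Mitchell} via Remark \ref{weaker condition}, using that $\h^{\ast\ast}(Y)$ is free over $\Lambda(Q_s)$ and of motivically finite type. The one place the paper adds detail that you gloss over is the K\"unneth identification $\h^{\ast\ast}(Ak(s)\wedge Y)\cong \mathcal A/\mathcal A Q_s\otimes_{\h^{\ast\ast}}\h^{\ast\ast}(Y)$ in your step~(2): since Corollary \ref{kunneth Ak(n)} (via Proposition \ref{kunneth}) is stated only for \emph{finite} cell spectra, the paper obtains this isomorphism for $Y$ by applying K\"unneth to the finite spectrum $X^{\wedge k_V}$ and then commuting with the idempotent $e_V$, rather than citing Corollary \ref{AdRes} directly.
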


\begin{proof}
We have to show $\h^{\ast\ast}(Ak(s)\wedge Y)\cong \mathcal A/\mathcal A Q_s\otimes_{\h^{\ast\ast}}\h^{\ast\ast}(Y)$. Since $\h^{\ast\ast}Y\cong e_V \h^{\ast\ast}(X)^{\otimes k_V}$, $Y$ is of motivically finite type. Remark \ref{weaker condition} applies and the claim follows as in the proof of Theorem \ref{Mitchell}. The left hand side of the claimed isomorphism can be rewritten as 
\[\h^{\ast\ast}(Ak(s)\wedge e_V X^{\wedge k_V})\cong \h^{\ast\ast}((1\wedge e_V)(Ak(s)\wedge X^{\wedge k_V}))\cong (1\otimes e_V)\h^{\ast\ast}(Ak(s)\wedge X^{\wedge k_V}).\] 
Now we can apply the K\"unneth isomorphism and get $(1\otimes e_V)(\mathcal A/\mathcal A Q_s\otimes_{\h^{\ast\ast}}\h^{\ast\ast}(X)^{\wedge k_V})$. This is isomorphic to $\mathcal A/\mathcal A Q_s\otimes_{\h^{\ast\ast}}e_V\h^{\ast\ast}(X)^{\wedge k_V}\cong \mathcal A/\mathcal A Q_s\otimes_{\h^{\ast\ast}}\h^{\ast\ast}(Y)$, which is the right hand side.
\end{proof}

This result tells us that, given a nontrivial $Q_s$-action on $\h^{\ast\ast}(X)$, $X\in\SH(\C)^{fin}_{(p)}$, we can construct a spectrum $Y$ for which $AK(s)_{\ast\ast}(Y)=0$. So, let's construct such an $X$.

\subsection{A finite cell spectrum with nontrivial $Q_s$-action and trivial $Q_n$-action}

Let $k=\C$. 
We combine ideas of Ravenel \cite{Rav} with computations by Voevodsky \cite{Operations}. 
In \cite[Lemma 6.2.6]{Rav}, the given example of a spectrum with nontrivial $Q_s^{\Top}$-action, $s<n$, and trivial $Q_n^{\Top}$-action on $\h^\ast (X)$ is $X=(B\Z/p)^{2p^n}_2$, that is, the suspension spectrum of the $2p^n$-skeleton of the classifying space $B\Z/p$ modulo its $1$-skeleton. Cutting off higher dimensional cells leads to a trivial $Q_n^{\Top}$-action, which is needed for nontrivial $n$-th Morava K-theory.
In \cite[Section 6]{Operations}, the algebraic analogue to $B\Z/p$ is defined as $B\mu_p=\colim_n \tilde{V}_n/\mu_p$, where $\tilde{V}_n=\A^n\setminus \{0\}$ (see the proof of \cite[Lemma 6.3]{Operations}) and $\mu_p$ acts by multiplication with a $p$-th root of unity in each of the $n$ coordinates. Under $R=R_\C$, this action realises to the $\Z/p$-action on $S^{2n-1}\subset \C^n$ rotating each $\C$ factor by a $p$-th root of unity.

\begin{lemma}
\[R(\tilde{V}_n/\mu_p)\cong S^{2n-1}/(\Z/p)\]
is the $(2n-1)$-skeleton of $B\Z/p$ in the CW-structure having one cell in each dimension.
\end{lemma}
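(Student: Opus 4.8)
The plan is to compute the complex realisation of $\tilde V_n/\mu_p$ directly from the construction $\tilde V_n = \mathbb{A}^n\setminus\{0\}$ together with the free $\mu_p$-action by coordinatewise multiplication by a fixed primitive $p$-th root of unity. First I would recall that $R=R_\C$ sends a smooth scheme to the singular complex of its space of complex points with the analytic topology, so $R(\tilde V_n) = \Sing((\mathbb{C}^n\setminus\{0\})^{an})$, which is $\Z/p$-equivariantly homotopy equivalent to $S^{2n-1}$ with the standard free $\Z/p$-action rotating each $\mathbb{C}$-summand by the chosen root of unity (this is the radial deformation retraction of $\mathbb{C}^n\setminus\{0\}$ onto the unit sphere, which is $\Z/p$-equivariant because the action is by isometries). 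Since the $\mu_p$-action on $\tilde V_n$ is free and $R$ commutes with the relevant quotients (both are geometric quotients by a finite free action, so $R(\tilde V_n/\mu_p) \cong R(\tilde V_n)/(\Z/p)$), I get $R(\tilde V_n/\mu_p)\simeq S^{2n-1}/(\Z/p)$.

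Next I would identify $S^{2n-1}/(\Z/p)$ with the $(2n-1)$-skeleton of $B\Z/p$. The standard model for $B\Z/p = S^\infty/(\Z/p)$ uses the infinite-dimensional sphere $S^\infty = \colim_n S^{2n-1}$ with the free rotation action, and the resulting CW-structure on the lens space $S^\infty/(\Z/p)$ has exactly one cell in each dimension $j\geq 0$; its $(2n-1)$-skeleton is precisely $S^{2n-1}/(\Z/p)$ (the standard CW-structure on the infinite lens space $L^\infty(p)$, see e.g. Hatcher). So the skeletal filtration of $B\Z/p$ is realised by the $S^{2n-1}/(\Z/p)$, and the lemma follows. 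On passing to suspension spectra and comparing with $B\mu_p = \colim_n \tilde V_n/\mu_p$, this also shows $R$ of the algebraic $B\mu_p$ is $B\Z/p$, which is the point of making the identification.

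The one thing that needs a little care, and which I expect to be the main (though still routine) obstacle, is justifying that $R$ commutes with the free quotient, i.e. that $R(\tilde V_n/\mu_p) \cong R(\tilde V_n)/(\Z/p)$ as spaces up to weak equivalence. This is not formal from $R$ being a left Quillen functor alone; one should argue either via the fact that for a finite group acting freely on a variety the quotient map $\tilde V_n \to \tilde V_n/\mu_p$ is a $\mu_p$-torsor, hence its analytification is a principal $\Z/p$-bundle and in particular a covering map realising the topological quotient, or one can use that $\tilde V_n/\mu_p$ represents the relevant functor and $R$ preserves the homotopy colimit (Borel construction) presenting it. Given the other identifications already in place in the excerpt (in particular that $R$ of the sign-representation sphere is the topological sphere with the expected action, and the general behaviour of $R$ on smooth schemes), this step is standard and I would only sketch it. After that the identification with the skeleton of $B\Z/p$ is a direct comparison of CW-structures and requires no further input.
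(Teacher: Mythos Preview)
Your proposal is correct and follows essentially the same approach as the paper. The paper's proof is in fact much terser: it takes the identification $R(\tilde V_n/\mu_p)\cong S^{2n-1}/(\Z/p)$ as already clear from the preceding discussion (where it is noted that the $\mu_p$-action realises to the standard $\Z/p$-rotation on $S^{2n-1}\subset\C^n$) and only cites Hatcher's Example 2.43 for the skeletal identification, so your care about $R$ commuting with the free quotient actually goes beyond what the paper spells out.
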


\begin{proof}
$B\Z/p$ is the infinite dimensional lens space, as studied for example in \cite[Example 2.43]{Hatcher}. There, it is explained that the $(2n-1)$-skeleton is precisely the $(2n-1)$-dimensional lens space, which is defined as the orbit space $S^{2n-1}/(\Z/p)$.
\end{proof}

By \cite[Example 2.43, page 146]{Hatcher}, the attaching map of the $2k$-cell of $B\Z/p$ is the quotient map $S^{2k-1}\rightarrow S^{2k-1}/(\Z/p)$. 
We define $V_n\in\SH(\C)$ to be the cofiber of the quotient map of suspension spectra $\tilde{V}_{p^n}\rightarrow \tilde{V}_{p^n}/\mu_p$, so that the following lemma holds.

\begin{lemma}
\[R(V_n)=(B\Z/p)^{2p^n}.\]
\end{lemma}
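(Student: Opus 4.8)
The statement to prove is that $R(V_n) = (B\Z/p)^{2p^n}$, where $V_n$ is defined as the cofiber of the quotient map $\Sigma^\infty\tilde V_{p^n}\to\Sigma^\infty(\tilde V_{p^n}/\mu_p)$ and $R=R_\C$ is the complex realisation functor. The plan is to simply trace the definitions through the realisation functor, using the two preceding lemmas of this subsection together with the fact (established in Chapter \ref{functors}) that $R$ preserves cofiber sequences and suspension spectra of the relevant spaces.

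First I would recall that $R$ is a left adjoint and hence preserves homotopy cofibers, and that on suspension spectra of smooth schemes it is computed by taking complex points with the analytic topology (the functor $R$ on simplicial presheaves from Section \ref{realization}). Applying $R$ to the defining cofiber sequence $\Sigma^\infty\tilde V_{p^n}\to\Sigma^\infty(\tilde V_{p^n}/\mu_p)\to V_n$ therefore yields a cofiber sequence $R(\Sigma^\infty\tilde V_{p^n})\to R(\Sigma^\infty(\tilde V_{p^n}/\mu_p))\to R(V_n)$ in $\SH$. Now $\tilde V_{p^n}=\A^{p^n}\setminus\{0\}$ realises to $\C^{p^n}\setminus\{0\}\simeq S^{2p^n-1}$, and by the previous lemma $R(\tilde V_{p^n}/\mu_p)\cong S^{2p^n-1}/(\Z/p)$, which is the $(2p^n-1)$-skeleton of $B\Z/p$. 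Moreover, the map $\tilde V_{p^n}\to\tilde V_{p^n}/\mu_p$ realises to the quotient map $S^{2p^n-1}\to S^{2p^n-1}/(\Z/p)$, which by \cite[Example 2.43]{Hatcher} is precisely the attaching map of the $2p^n$-cell of $B\Z/p$ in its standard CW-structure with one cell in each dimension.

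Hence the cofiber of this quotient map is, by the definition of the cellular skeleton via attaching maps, the $2p^n$-skeleton $(B\Z/p)^{2p^n}$ built from $(B\Z/p)^{2p^n-1}=S^{2p^n-1}/(\Z/p)$ by coning off the attaching map $S^{2p^n-1}\to S^{2p^n-1}/(\Z/p)$. Passing to suspension spectra, $R(V_n)\cong\Sigma^\infty(B\Z/p)^{2p^n}$, which is the asserted identification. I would phrase this last step carefully: one must note that there are no further cells in dimensions between $2p^n-1$ and $2p^n$ (there is exactly one cell in each dimension), so coning off the single attaching map does produce exactly the next skeleton rather than something larger.

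The only genuine subtlety—and the one point I would take care to justify rather than assert—is the compatibility of $R$ with the \emph{map} $\tilde V_{p^n}\to\tilde V_{p^n}/\mu_p$, i.e.\ that its realisation really is the $\Z/p$-quotient map on spheres and not merely abstractly equivalent to something of that homotopy type; this is what is needed to identify the cofiber with a \emph{skeleton} of $B\Z/p$ rather than just with a two-cell complex of the right homotopy type. This follows from the explicit point-set description of $R$ on representable presheaves (Section \ref{realization}) together with the identification of the $\mu_p$-action on $\tilde V_{p^n}$ with the coordinatewise multiplication by a $p$-th root of unity, realising to the rotation action on $S^{2p^n-1}\subset\C^{p^n}$. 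Everything else is bookkeeping with the definitions already in place.
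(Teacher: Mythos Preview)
Your proof is correct and follows exactly the reasoning the paper intends. In fact, the paper does not give a separate proof of this lemma at all: the sentence immediately preceding it says that $V_n$ is \emph{defined} to be the cofiber of the quotient map ``so that the following lemma holds,'' relying on the observation (recorded just above, via Hatcher's Example 2.43) that the attaching map of the $2p^n$-cell of $B\Z/p$ is precisely the quotient $S^{2p^n-1}\to S^{2p^n-1}/(\Z/p)$. Your write-up simply makes explicit what the paper leaves implicit---that $R$ preserves cofibers, that the quotient map realises to the attaching map on the nose, and that coning off this map yields the next skeleton---so there is nothing to correct or add.
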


Let $\B$ be the cofiber of the composite map $\tilde{V}_1/\mu_p\rightarrow \tilde{V}_{p^n}/\mu_p\rightarrow V_n$. Then $\B$ is a finite cell spectrum and satisfies the following corollary.

\begin{cor}
\[R(\B)=(B\Z/p)^{2p^n}_2.\]
\end{cor}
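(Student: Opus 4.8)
The plan is to identify the cofiber $\B$ topologically by realising the defining cofiber sequence and matching it with the standard CW-structure on the infinite lens space $B\Z/p$. First I would record what has been established just above: $R=R_\C$ preserves colimits (it is a left adjoint), hence preserves cofiber sequences, and we already know $R(\tilde V_{p^n}/\mu_p)\cong S^{2p^n-1}/(\Z/p)$ is the $(2p^n-1)$-skeleton of $B\Z/p$, that $R(\tilde V_1/\mu_p)\cong S^1/(\Z/p)$ is the $1$-skeleton, and that $R(V_n)=(B\Z/p)^{2p^n}$, the $2p^n$-skeleton. The composite $\tilde V_1/\mu_p\to\tilde V_{p^n}/\mu_p\to V_n$ realises, by functoriality of $R$, to the composite $S^1/(\Z/p)\hookrightarrow S^{2p^n-1}/(\Z/p)\to (B\Z/p)^{2p^n}$, which is exactly the inclusion of the $1$-skeleton of $B\Z/p$ into its $2p^n$-skeleton (the second map being the skeletal inclusion $(B\Z/p)^{2p^n-1}\hookrightarrow (B\Z/p)^{2p^n}$ composed with the first, or more simply: both maps in the lens-space model are the standard inclusions of skeleta).

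Next I would apply $R$ to the cofiber sequence
\[\tilde V_1/\mu_p\longrightarrow V_n\longrightarrow \B\]
to obtain a cofiber sequence in $\SH$,
\[(B\Z/p)^2\longrightarrow (B\Z/p)^{2p^n}\longrightarrow R(\B),\]
where I write $(B\Z/p)^2$ for the $1$-skeleton (the $2$-skeleton and $1$-skeleton agree here, as $B\Z/p$ has no $2$-cell beyond what sits on the $1$-cell; more precisely the lens space CW-structure has a single cell in each dimension, so the $1$-skeleton is the relevant subcomplex being collapsed). Since collapsing a subcomplex of a CW complex yields the quotient complex, the cofiber of the skeletal inclusion $(B\Z/p)^{\le 1}\hookrightarrow (B\Z/p)^{\le 2p^n}$ is the suspension spectrum of $(B\Z/p)^{2p^n}/(B\Z/p)^{1}=(B\Z/p)^{2p^n}_2$ in the notation of \cite{Rav}. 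Hence $R(\B)\cong (B\Z/p)^{2p^n}_2$, which is the claim. I would also note in passing that $\B$ is a finite cell spectrum: it is built from $\tilde V_{p^n}/\mu_p$ and $\tilde V_1/\mu_p$ by finitely many cofiber sequences, and these motivic spaces are finite cellular (this is essentially the content of the surrounding discussion and of Voevodsky's model for $B\mu_p$ truncated at level $p^n$).

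The one genuine point requiring care — and the step I expect to be the main obstacle — is verifying that the composite motivic map $\tilde V_1/\mu_p\to\tilde V_{p^n}/\mu_p\to V_n$ really does realise to the CW-skeletal inclusion $(B\Z/p)^{1}\hookrightarrow(B\Z/p)^{2p^n}$, rather than to some other map with the same cofiber up to equivalence. This comes down to checking that the motivic inclusion $\tilde V_1\hookrightarrow\tilde V_{p^n}$ ($\A^1\setminus\{0\}\hookrightarrow\A^{p^n}\setminus\{0\}$ as the first coordinate axis minus origin) is $\mu_p$-equivariant and descends to the evident inclusion of quotients, and that this inclusion of quotients realises under $R_\C$ to the standard inclusion $S^1\hookrightarrow S^{2p^n-1}$ of the first $\C$-coordinate's unit circle, compatibly with the $\Z/p$-actions by root-of-unity rotation. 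All of this is forced by the explicit coordinate descriptions already in play and by naturality of the realisation functor on the lens-space models, so the argument is routine once the bookkeeping is set up; I would simply cite \cite[Example 2.43]{Hatcher} for the identification of the attaching maps and skeletal filtration on the topological side and the preceding two lemmas for the realisations of $\tilde V_n/\mu_p$ and $V_n$.
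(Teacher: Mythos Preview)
Your proposal is correct and is precisely the argument the paper intends; in fact the paper states this corollary without proof, treating it as immediate from the preceding two lemmas and the fact that $R$ preserves cofiber sequences. One small slip to fix: the $1$-skeleton and $2$-skeleton of $B\Z/p$ do \emph{not} agree (the CW structure has one cell in each dimension, so there is a nontrivial $2$-cell), but you correctly identify that it is the $1$-skeleton $S^1/(\Z/p)$ being collapsed, which is what the notation $(B\Z/p)^{2p^n}_2$ means.
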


The following is a special case of \cite[Proposition 6.10]{Operations} (as explained on \cite[page 20]{Operations}).

\begin{prop}
$\h^{\ast\ast}(B\mu_p)$ is a free $\h^{\ast\ast}$-module with basis $\{v^i,uv^i\; |\; i\geq 0\}$, where $v\in\h^{2,1}(B\mu_p)$ and $u\in\h^{1,1}(B\mu_p)$.
\end{prop}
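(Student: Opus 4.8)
The statement to be proved is the computation of $\h^{\ast\ast}(B\mu_p)$ as a free $\h^{\ast\ast}$-module on the basis $\{v^i, uv^i \mid i \geq 0\}$ with $v \in \h^{2,1}$, $u \in \h^{1,1}$.

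\textbf{Plan of proof.} The plan is to deduce this as a special case of the general computation of the motivic cohomology of $B\mu_\ell$ carried out by Voevodsky in \cite[Section 6]{Operations}, specialising the coefficients to $\F_p$ and the ground field to one where $\F_p$ contains the relevant roots of unity (e.g. $k=\C$, or more generally $k$ containing $\mu_p$), so that the cyclotomic subtleties in Voevodsky's general statement disappear. First I would recall that $B\mu_p = \colim_n \tilde V_n/\mu_p$ with $\tilde V_n = \A^n \setminus \{0\}$, so that $\h^{\ast\ast}(B\mu_p)$ is computed as an inverse limit of the motivic cohomologies of the finite-dimensional approximations $\tilde V_n/\mu_p$; since $B\mu_p$ is a filtered colimit of finite cell spectra, Milnor-type $\lim^1$-exactness and the vanishing of $\lim^1$ in the relevant range (the generators stabilise) give $\h^{\ast\ast}(B\mu_p) = \lim_n \h^{\ast\ast}(\tilde V_n/\mu_p)$.

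Next I would invoke \cite[Proposition 6.10]{Operations}, which computes $\h^{\ast\ast}(B\mu_\ell; \Z/\ell)$ over a field $k$ with $1/\ell \in k$: it is a free module over $\h^{\ast\ast}(\Spec k; \Z/\ell)$ on generators $1, v, v^2, \dots$ and $u, uv, uv^2, \dots$, where $v$ is the first Chern class of the tautological line bundle (living in bidegree $(2,1)$) and $u$ is a degree-$(1,1)$ class whose square is related to $v$ by the motivic Bockstein formula $u^2 = \tau v + \rho u$ (with $\rho$ the class of $-1$ in $\h^{1,1}(\Spec k;\Z/2)$, which vanishes for odd $p$ or for $k$ containing $\sqrt{-1}$). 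The point is that for $p$ odd, or for $k = \C$, the ring structure may be more complicated but the \emph{module} structure over $\h^{\ast\ast}$ is exactly as claimed: free on $\{v^i, uv^i\}_{i\geq 0}$. I would then note that the excerpt only asks for the $\h^{\ast\ast}$-module structure, so no analysis of the multiplicative relations is needed; one simply reads off the module statement from Voevodsky's proposition, with $\ell = p$.

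\textbf{Main obstacle.} The main thing to be careful about is the translation between Voevodsky's indexing and notation and the present one: Voevodsky's $B\mu_\ell$ is a simplicial presheaf / motivic space, while here $\B$ and $V_n$ are defined as cofibers of suspension spectra of the skeleta $\tilde V_n/\mu_p$, so I would need to check that the motivic cohomology of the space $B\mu_p$ and of its stabilisation agree in the range under consideration, and that the classes $u, v$ have the stated bidegrees $(1,1)$ and $(2,1)$ after this identification. This is routine given \cite{Operations}, but it is the only place where genuine verification (rather than citation) is required; I expect it to be a short argument using that $B\mu_p$ is $(-1)$-connective and cellular so that its cohomology is computed skeletonwise.

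\begin{proof}
This is the special case $\ell = p$ of \cite[Proposition 6.10]{Operations}. Over $k=\C$ (or any field containing a primitive $p$-th root of unity), $1/p \in k$, so that proposition applies and computes $\h^{\ast\ast}(B\mu_p, \F_p)$. Writing $B\mu_p = \colim_n \tilde V_{n}/\mu_p$ with $\tilde V_n = \A^n\setminus\{0\}$, each approximation $\tilde V_n/\mu_p$ is a smooth scheme and $B\mu_p$ is cellular and $(-1)$-connective, so its motivic cohomology is the inverse limit of the motivic cohomologies of the finite skeleta, with the $\lim^1$-term vanishing because the generators stabilise. By \cite[Proposition 6.10]{Operations}, the resulting $\h^{\ast\ast}$-module is free with basis $\{1, v, v^2, \dots\} \cup \{u, uv, uv^2, \dots\}$, where $v \in \h^{2,1}(B\mu_p)$ is the first motivic Chern class of the tautological line bundle and $u \in \h^{1,1}(B\mu_p)$ is the class described there; this is exactly the asserted basis $\{v^i, uv^i \mid i\geq 0\}$. (We only use the module structure, not the multiplicative relations among $u$, $v$ and $\tau$.)
\end{proof}
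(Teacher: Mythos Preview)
Your proposal is correct and matches the paper's approach exactly: the paper does not give an independent proof but simply states that the proposition is a special case of \cite[Proposition 6.10]{Operations} (with a pointer to the explanation on \cite[page 20]{Operations}). Your additional remarks about the colimit presentation and $\lim^1$-vanishing are not needed for the citation to go through, but they do no harm.
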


From this, it follows for dimensional reasons:

\begin{prop}\label{hB}
The cohomology $\h^{\ast\ast}\B$ is the free $\h^{\ast\ast}$-module with basis $\{v^i\; |\; 1\leq i\leq p^n\}\cup\{uv^i\; |\; 1\leq i\leq p^n-1\}$.
\end{prop}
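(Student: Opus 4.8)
The plan is to deduce the claim about $\h^{\ast\ast}\B$ from the already-stated computation of $\h^{\ast\ast}(B\mu_p)$ together with the two cofiber sequences used to define $\B$. Recall that $\B$ is the cofiber of the composite $\tilde V_1/\mu_p \to \tilde V_{p^n}/\mu_p \to V_n$, where $V_n$ itself is the cofiber of $\tilde V_{p^n}/\mu_p \to B\mu_p$. So first I would identify the cohomology of each intermediate object. Since $\h^{\ast\ast}(B\mu_p)$ is free on $\{v^i, uv^i \mid i\ge 0\}$ with $\deg v = (2,1)$, $\deg u = (1,1)$, and $\tilde V_{p^n}/\mu_p = R^{-1}$-analog of the $(2p^n-1)$-skeleton, the map $\tilde V_{p^n}/\mu_p \to B\mu_p$ is (up to the suspension-spectrum conventions) the inclusion of a skeleton, hence split-surjective in cohomology through the relevant range; its cofiber $V_n$ picks up exactly the classes in degrees $\le 2p^n$, i.e. $\h^{\ast\ast}V_n$ is free on $\{v^i \mid 0\le i\le p^n\}\cup\{uv^i\mid 0\le i\le p^n-1\}$ after reindexing. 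Dually one can phrase this via the long exact sequence in $\h^{\ast\ast}$ associated to the cofiber sequence, checking that the connecting maps vanish for dimensional reasons — the two terms being compared live in non-overlapping degree ranges.

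Next I would handle the bottom cell. The map $\tilde V_1/\mu_p \to V_n$ factors through $\tilde V_{p^n}/\mu_p$ and, on cohomology, $\tilde V_1/\mu_p$ contributes precisely the degree-$(1,1)$ and degree-$(0,0)$ classes (it is the analog of $B\Z/p^{(1)} = S^1$, realizing to the $1$-skeleton). Forming the cofiber of this composite kills the generators in the lowest degrees, i.e. $v^0 = 1$ and $uv^0 = u$, leaving exactly $\{v^i \mid 1\le i\le p^n\}\cup\{uv^i\mid 1\le i\le p^n-1\}$. Again the key point is that all the relevant connecting homomorphisms in the long exact $\h^{\ast\ast}$-sequence vanish because the source and target of each such map sit in disjoint bidegrees; this is what "for dimensional reasons" in the statement refers to, and it is exactly parallel to the topological computation that $\h^\ast((B\Z/p)^{2p^n}_2)$ is free on the classes $x_1,\dots$ in the correct range. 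Freeness over $\h^{\ast\ast}$ is inherited because a sub-$\h^{\ast\ast}$-module and quotient $\h^{\ast\ast}$-module spanned by a subset of a basis of a free module are again free (here $\h^{\ast\ast}=\F_p[\tau]$ is a PID, so this is automatic), and one only has to check that the surviving classes do form an $\h^{\ast\ast}$-basis, which follows from the explicit basis of $\h^{\ast\ast}(B\mu_p)$.

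The main obstacle I anticipate is bookkeeping rather than conceptual: one must be careful about the suspension-spectrum indexing conventions (the excerpt already slightly elides the shift between $\tilde V_n/\mu_p$, its skeleton interpretation, and the suspension spectrum) and about whether the relevant maps are genuinely skeletal inclusions after applying $\Sigma^\infty$ and passing to $\SH(\C)$. Once the identification $\h^{\ast\ast}\tilde V_{p^n}/\mu_p$, $\h^{\ast\ast}\tilde V_1/\mu_p$ with the appropriate truncations of $\h^{\ast\ast}(B\mu_p)$ is pinned down, the long-exact-sequence argument is short. Alternatively — and this may be the cleanest route — I would simply apply $R = R_\C$: by the preceding corollary $R(\B) = (B\Z/p)^{2p^n}_2$, whose integral/mod-$p$ cohomology is classically known to be free on the analogous classes, and then invoke the comparison $\h^{\ast\ast}(-) \cong \h^{\ast\ast}(\Spec\C)\otimes_{\F_p}\h^\ast(R(-),\F_p) \cong \F_p[\tau]\otimes \h^\ast((B\Z/p)^{2p^n}_2)$ valid for cellular spectra over $\C$ (this is the same mechanism as Lemma \ref{Ah} and the structure of the motivic Steenrod algebra over $\C$), giving the $\h^{\ast\ast}$-basis $\{v^i\}_{1\le i\le p^n}\cup\{uv^i\}_{1\le i\le p^n-1}$ directly. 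I would present the short cofiber-sequence argument as the main proof and mention the realization shortcut as a sanity check.
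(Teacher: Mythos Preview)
Your overall strategy---run the long exact sequences attached to the cofiber sequences that define $\B$, starting from the known $\h^{\ast\ast}(B\mu_p)$, and observe that the connecting maps vanish for degree reasons---is exactly what the paper does; the paper compresses the whole argument into the single phrase ``for dimensional reasons.''  So the main approach is right.

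There is, however, a concrete error in your description of the cofiber sequences.  In the paper, $V_n$ is \emph{not} the cofiber of $\tilde V_{p^n}/\mu_p \to B\mu_p$; it is the cofiber of the quotient map $\tilde V_{p^n}\to \tilde V_{p^n}/\mu_p$.  Here $\tilde V_{p^n}=\A^{p^n}\setminus\{0\}\simeq S^{2p^n-1,\,p^n}$, so this step attaches a single cell in bidegree $(2p^n,p^n)$---the class $v^{p^n}$---to the motivic $(2p^n-1)$-skeleton $\tilde V_{p^n}/\mu_p$.  With your misstated definition (cofibering off a skeleton inside $B\mu_p$) the cohomology of the cofiber would consist of the \emph{high}-degree classes, not the low ones, so the computation you wrote down does not follow from the sequence you wrote down.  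Once you use the correct definition of $V_n$, the dimensional argument goes through as you indicate.

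Your proposed ``realization shortcut'' also does not work as stated.  The isomorphism $\h^{\ast\ast}(X)\cong \F_p[\tau]\otimes_{\F_p}\h^\ast(R(X),\F_p)$ is \emph{not} valid for arbitrary cellular spectra over $\C$; Proposition~\ref{HcX} proves it only for constant spectra $X=cY$, where the generators land in second degree $0$.  The spectrum $\B$ is genuinely motivic (built from $\A^m\setminus\{0\}$ and its $\mu_p$-quotients), and indeed $v\in\h^{2,1}$ and $u\in\h^{1,1}$ have nonzero weight, whereas the corresponding classes $x,y$ in $\h^{\ast\ast}(cB)$ sit in bidegrees $(2,0)$ and $(1,0)$ (Corollary~\ref{hcB}).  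So realization can serve as a rank check, but it cannot replace the bidegree computation, which really must come from $\h^{\ast\ast}(B\mu_p)$.
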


Furthermore, Voevodsky shows \cite[Lemmas 11.2 and 11.3]{Operations}:

\begin{lemma}
On
\[\h^{\ast\ast}(B\mu_p)/\h^{\ast,>0}\h^{\ast\ast}(B\mu_p)\cong \F_p[u,v]/(u^2=0),\]
$\mathcal A/(\h^{\ast,>0}\mathcal A)$ acts by $\beta(u)=v$, $P^i(u)=0$ for $i>0$, $\beta(v^k)=0$ and $P^i(v^k)= \tbinom{k}{i}v^{k+i(p-1)}$.
\end{lemma}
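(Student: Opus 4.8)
This is \cite[Lemmas 11.2 and 11.3]{Operations}; I would reconstruct it from the computation of $\h^{\ast\ast}(B\mu_p)$ recalled above together with the Cartan formula and Voevodsky's instability axioms for the motivic reduced powers. The first step is to unwind the quotient. By \cite[Proposition 6.10]{Operations}, $\h^{\ast\ast}(B\mu_p)$ is a free $\h^{\ast\ast}$-module on $\{v^i,\,uv^i : i\ge 0\}$ with $u\in\h^{1,1}$ and $v\in\h^{2,1}$, so $\h^{\ast\ast}(B\mu_p)/\h^{\ast,>0}\h^{\ast\ast}(B\mu_p)$ is the $\F_p$-vector space on the same basis, where $\h^{\ast,>0}$ denotes the ideal of positive-weight coefficients (over $\C$ this is $(\tau)$, since $\h^{\ast\ast}(\Spec\C,\F_p)=\F_p[\tau]$ by Lemma \ref{H(C)}). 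The relation $u^2\equiv 0$ modulo $\h^{\ast,>0}$ holds because $u^2=0$ already for $p$ odd, while for $p=2$ over $\C$ one has $u^2=\tau v$ with $\tau\in\h^{\ast,>0}$; hence the target is $\F_p[u,v]/(u^2)$. Moreover, over $\C$ the element $\tau$ commutes with $\beta$ and the $P^i$ up to terms in $\h^{\ast,>0}$ (because $\beta\tau$ and $P^{>0}\tau$ vanish in $\h^{\ast\ast}(\Spec\C)$ for degree reasons), so $\h^{\ast,>0}\h^{\ast\ast}(B\mu_p)$ is $\mathcal A$-stable and the quotient carries a well-defined action of $\mathcal A/\h^{\ast,>0}\mathcal A$. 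Since $\mathcal A$ is generated over $\h^{\ast\ast}$ by $\beta$ and the $P^i$, it then suffices to evaluate these on $u$ and $v$.

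By construction $v=\beta(u)$, so $\beta(v)=\beta^2(u)=0$, and since $\beta$ is a graded derivation, $\beta(v^k)=kv^{k-1}\beta(v)=0$ and $\beta(uv^k)=\beta(u)v^k-u\beta(v^k)=v^{k+1}$. For the reduced powers, the weight-one class $u$ satisfies $P^i(u)=0$ for all $i>0$, and the ``Chern class'' degree element $v\in\h^{2,1}$ satisfies $P^0(v)=v$, $P^1(v)=v^p$ and $P^i(v)=0$ for $i\ge 2$, i.e. $P^i(v)=\tbinom{1}{i}v^{1+i(p-1)}$; these are exactly the instability computations of \cite[Section 11]{Operations} and can be double-checked against complex realization, under which $u,v$ map to the standard generators of $\h^\ast(B\Z/p,\F_p)$. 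Writing $P=\sum_i P^i$ for the total reduced power, the Cartan formula gives $P(v^k)=P(v)^k=(v+v^p)^k=v^k(1+v^{p-1})^k$, whence $P^i(v^k)=\tbinom{k}{i}v^{k+i(p-1)}$, and likewise $P^i(uv^k)=\sum_{a+b=i}P^a(u)P^b(v^k)=\tbinom{k}{i}uv^{k+i(p-1)}$. Since every basis element is a product of $u$'s and $v$'s, this determines the action of $\mathcal A/\h^{\ast,>0}\mathcal A$ completely and yields the stated formulas.

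The only genuinely delicate points are the two base cases $P^i(u)=0$ for $i>0$ and $P^1(v)=v^p$, which rest on Voevodsky's instability axioms rather than on formal manipulation, and the bookkeeping needed to see that $u^2$ (and, more generally over an arbitrary base, any coefficient-dependent correction terms involving $\tau$ or $\rho$) lands in $\h^{\ast,>0}\h^{\ast\ast}(B\mu_p)$ — this is precisely what the reduction is designed to kill. Everything else is formal, using only that $\beta$ is a derivation and the Cartan formula for the $P^i$; in particular once the two base cases are granted, the passage to arbitrary $v^k$ and $uv^k$ is automatic.
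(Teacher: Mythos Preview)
The paper does not give its own proof of this lemma; it simply records the statement and cites Voevodsky \cite[Lemmas 11.2 and 11.3]{Operations}. Your proposal is a correct reconstruction of that argument: the identification of the quotient, the base cases $\beta(u)=v$, $P^i(u)=0$ for $i>0$, $P^1(v)=v^p$ from Voevodsky's instability axioms, and the extension to $v^k$ via the Cartan formula and the total power $P(v)=v+v^p$ are exactly the ingredients Voevodsky uses, so there is nothing to compare beyond noting that you have supplied the details the paper omits.
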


Over $\C$, the action of $Q_i$ can be defined by $Q_0=\beta$ and $Q_{i+1}=P^{p^i}Q_i-Q_iP^{p^i}$ \cite[Proposition 3.1]{VMC}. From this, we can inductively compute the action of $Q_s$ on $uv^k\in \h^{\ast\ast}(\B)/\h^{\ast,>0}\h^{\ast\ast}(\B)$. We get $Q_{s}(uv^k)=cv^{k+p^s}$ with $c\equiv 1 \mod p$, which is nontrivial for $s<n$ and $k<p^n-p^s$. It follows that $Q_s$ acts nontrivially on $\h^{\ast\ast}(\B)$ for $s<n$.

Now we have all ingredients for the motivic type $n$ spectrum in $\SH(\C)$.

\begin{theorem}\label{zero}{\bf (A spectrum of motivic type $n$)}

For a fixed $n>0$, let $V=\h^{\ast\ast}(\B_{(p)})$ and $X=e_V(\B_{(p)})^{\wedge k_V}$, then $AK(s)_{\ast\ast}(X)=0$ for all $s<n$ and $AK(n)_{\ast\ast}(X)\neq 0$. 
\end{theorem}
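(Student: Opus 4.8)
The plan is to establish the two assertions about $X = e_V(\B_{(p)})^{\wedge k_V}$ separately: the vanishing $AK(s)_{\ast\ast}(X)=0$ for $s<n$, and the non-vanishing $AK(n)_{\ast\ast}(X)\neq 0$. For the vanishing part, I would combine the pieces already assembled in this section. The key observation recorded just before the theorem is that $Q_s$ acts nontrivially on $\h^{\ast\ast}(\B)$ for every $s<n$ (via the computation $Q_s(uv^k) = cv^{k+p^s}$ with $c\equiv 1\bmod p$, which is nonzero whenever $s<n$ and $k<p^n-p^s$). Hence for each fixed $s<n$ the spectrum $\B_{(p)}$ satisfies the hypothesis of Theorem \ref{hY} (nontrivial $Q_s$-action, $\h^{\ast\ast}$ free by Proposition \ref{hB}), so $\h^{\ast\ast}(X) = \h^{\ast\ast}(e_V\B_{(p)}^{\wedge k_V})$ is free over $\Lambda(Q_s)$. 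Then Corollary \ref{7.4.5} (which is the version of the vanishing criterion Theorem \ref{Mitchell} adapted to retracts of finite cell spectra, using Remark \ref{weaker condition} to handle the possible non-finiteness of $X$) gives $AK(s)_{\ast\ast}(X)=0$. Running this argument for each $s=1,\dots,n-1$ completes the vanishing half. (One should note the idempotent $e_V$ and the number $k_V$ depend only on $V=\h^{\ast\ast}(\B_{(p)})$, not on $s$, so the single spectrum $X$ works simultaneously for all $s<n$.)

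For the non-vanishing $AK(n)_{\ast\ast}(X)\neq 0$, the natural route is to pass to the topological realisation $R = R_\C$ and use that $R(AK(n)) = K(n)$ (the lemma following Definition \ref{AK(n)}) together with $R$ being symmetric monoidal and colimit-preserving. Since $R(\B) = (B\Z/p)^{2p^n}_2$ and realisation commutes with the $\Z_{(p)}[\Sigma_{k_V}]$-action and with $\wedge$ and with the idempotent splitting (the K\"unneth theorem Proposition \ref{kunneth} and the discussion in Theorem \ref{hY}), we get $R(X) = e_V\bigl((B\Z/p)^{2p^n}_{2,(p)}\bigr)^{\wedge k_V}$, which is exactly Ravenel's type-$n$ spectrum from \cite[Section C.3]{Rav} (or at least one built by the same recipe). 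By \cite[Appendix C]{Rav}, that spectrum has topological type $n$, so $K(n)_\ast(R(X))\neq 0$. It then suffices to deduce $AK(n)_{\ast\ast}(X)\neq 0$ from $K(n)_\ast(R(X)) = K(n)_\ast(AK(n)\wedge R(X))_{\text{top}}$; concretely, if $AK(n)\wedge X \cong 0$ in $\SH(\C)$ then $R(AK(n)\wedge X) \cong K(n)\wedge R(X) \cong 0$, contradicting $K(n)_\ast(R(X))\neq 0$. Hence $AK(n)\wedge X\not\cong 0$, and since $X$ is a retract of a finite cell spectrum (hence dualisable and cellular), $AK(n)\wedge X\not\cong 0$ is equivalent to $AK(n)_{\ast\ast}(X)\neq 0$ by Proposition \ref{7.1} (or its finite-spectrum predecessor).

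I expect the main obstacle to be the non-vanishing half, specifically pinning down that $R(X)$ really is (isomorphic to) a type-$n$ spectrum in the sense needed. Two points need care: first, that realisation genuinely commutes with forming the idempotent summand $e_V(-)^{\wedge k_V}$ — this requires knowing that $R$ is additive, monoidal, and that the relevant K\"unneth isomorphism for $\h^{\ast\ast}$ matches the topological one under $R$ (so that $R$ sends the splitting of $\h^{\ast\ast}(\B)$ used to define $e_V$ to the corresponding topological splitting); second, that the particular cells cut off in passing from $B\mu_p$ to $\B$ realise to exactly the skeleton $(B\Z/p)^{2p^n}_2$ that Ravenel uses, which was already checked in the lemmas preceding the theorem (the realisation computations $R(\tilde V_n/\mu_p) = S^{2n-1}/(\Z/p)$, $R(V_n) = (B\Z/p)^{2p^n}$, $R(\B) = (B\Z/p)^{2p^n}_2$). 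Given those, the argument is essentially bookkeeping: the vanishing comes from the motivic Adams-spectral-sequence machinery built in Sections \ref{Motivic Adams spectral sequence}--\ref{vanishing} and the $\Lambda(Q_s)$-freeness of $\h^{\ast\ast}(X)$, and the non-vanishing is transported from the known topological statement via $R$.
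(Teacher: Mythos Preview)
Your non-vanishing argument is essentially the paper's: you show $R(X)$ is Ravenel's type-$n$ spectrum (this is exactly what Section~\ref{type R(X)} does, using that $R(\B)=(B\Z/p)^{2p^n}_2$ and that the idempotent $e_V$ and number $k_V$ depend only on the $\h^{\ast\ast}$-dimensions of $V^\pm$, which match their topological counterparts), and then deduce $AK(n)_{\ast\ast}(X)\neq 0$ by realisation. The paper phrases the last step via Proposition~\ref{motivic model} ($\mathcal C_{AK(n)}\subseteq R^{-1}(\mathcal C_{n+1})$, while $X\notin R^{-1}(\mathcal C_{n+1})$), but your direct contrapositive is the same content.

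There is, however, a genuine gap in your vanishing half: you only run the argument for $s=1,\dots,n-1$, but the theorem asserts $AK(s)_{\ast\ast}(X)=0$ for \emph{all} $s<n$, including $s=0$. Your route through Corollary~\ref{7.4.5} cannot handle $s=0$, because both Theorem~\ref{Mitchell} and Corollary~\ref{7.4.5} carry the hypothesis $s>0$. This restriction is not cosmetic: the convergence of the Adams spectral sequence in Corollary~\ref{resolution} uses that $Ak(s)$ is a quotient of $ABP/(p)$ (hence already $p$-complete), which fails for $s=0$ since $Ak(0)\cong H\Z_{(p)}$.

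The paper treats $s=0$ by an entirely separate argument: by the main result of \cite{Hoy}, $AK(0)\cong p^{-1}(H\Z)_{(p)}\cong H\Q$, so $AK(0)_{\ast\ast}X\cong\pi_{\ast\ast}(p^{-1}(H\Z\wedge X)_{(p)})$. Since $p$ acts trivially on $\tilde V_m/\mu_p$, the spectrum $X$ is $p$-torsion, whence $p^{-1}X\cong 0$ and $AK(0)_{\ast\ast}X=0$. You should add this step.
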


\begin{proof}
For $s>0$, $AK(s)_{\ast\ast}(X)=0$ follows from Corollary \ref{7.4.5}, whose assumptions are satisfied by the above considerations. 

For $s=0$, note that $AK(0)=p^{-1}MGL_{(p)}/(a_1,a_2,\cdots)$ by Definition \ref{AK(n)}. The main result of \cite{Hoy} implies $AK(0)\cong p^{-1}(H\Z)_{(p)}$. It follows that 
\[AK(0)_{\ast\ast}X\cong \pi_{\ast\ast}(p^{-1}(H\Z\wedge X)_{(p)}).\]
But $p$ acts trivially on $\tilde{V}_m/\mu_p$, which implies that $X$ is $p$-torsion. Therefore, $p^{-1}X\cong 0$ and $AK(0)_{\ast\ast}X=0$.
It remains to show that $AK(n)_{\ast\ast}(X)\neq 0$. 
This can either be done analogously to \cite[Theorem 4.8]{Mi}, using the motivic Atiyah Hirzebruch spectral sequence from Proposition \ref{AHSS}, or by considering the topological realisation $R_\C(X)$. In Section \ref{type R(X)}, we show that $R(X)$ is of type $n$. It follows that $X\in R^{-1}(\mathcal C_n\setminus \mathcal C_{n+1})$. In particular, $X\not\in R^{-1}(\mathcal C_{n+1})$. By Proposition \ref{motivic model}, $\mathcal C_{AK(n)}\subseteq R^{-1}(\mathcal C_{n+1})$. This proves $X\not\in \mathcal C_{AK(n)}$.
\end{proof}

\begin{rk}
The spectrum $X$ is a retract of the $p$-local finite cell spectrum $\B_{(p)}^{\wedge k_V}$ and it follows by Remark \ref{spec compact}(1) that $X$ is compact, i.e., $X\in(\SH(\C)_f)_{(p)}$.
\end{rk}

\begin{rk}
$X=e_V(\B_{(p)})^{\wedge k_V}$ is an example of a motivic spectrum with vanishing Margolis homology groups $MH_s^{p,q}(X)$ for all $s<n$, $p,q\in\Z$, as defined in \cite[Section 3]{VoevZ/2}: we have shown that $\h^{\ast\ast}(X)$ is free over $\Lambda(Q_s)$, which implies that $\ker Q_s=\im Q_s$ for $Q_s:\h^{\ast\ast}(X)\rightarrow \h^{\ast\ast}(X)$.
\end{rk}

\section{The type of the realisation $R(X)$}\label{type R(X)}

Let $k=\C$ and $B=(B\Z/p)^{2p^n}_2$.
We already know that for $R:\SH(\C)\rightarrow \SH$, $R(\B)=B$. Since $R$ preserves $\wedge$-products, it follows 
\[R(X)=R(e_V (\B_{(p)})^{\wedge k_V})=e_V R((\B_{(p)}))^{\wedge k_V}=e_V (B_{(p)})^{\wedge k_V}.\]
The $\F_p$-vector space $\h^\ast (B_{(p)})$ is generated by similar elements as the $\F_p[\tau]$-vector space $V=\h^{\ast\ast}(\B_{(p)})$ (compare Proposition \ref{hB} with \cite[Lemma 6.2.6]{Rav}). In particular, $\dim_{\F_p} (\h^\ast (B_{(p)}))^+=\dim_{\h^{\ast\ast}} V^+$ and $\dim_{\F_p} (\h^\ast(B_{(p)}))^-=\dim_{\h^{\ast\ast}} V^-$. It follows that $R (X)$ is the type-$n$ spectrum defined in \cite[Theorem C.3.2]{Rav}.

\section{The constant type-n spectrum}\label{constant}

In this section, let $k=\C$ and let $X_n=e_V B^{\wedge k_V}$ be the type-n spectrum defined by Ravenel. We want to determine the motivic type of $c X_n$, where $c:\SH\rightarrow \SH(\C)$ as in Chapter \ref{functors}. $X_n$ is constructed via an idempotent from the finite cell spectrum $B=B\Z/p_2^{2p^n}$ \cite[Lemma 6.2.6]{Rav}. First, we calculate $\h^{\ast\ast}(cB)$.

\begin{prop}\label{HcX}
Let $X=\Sigma^\infty Y$ be the suspension spectrum of a finite CW complex. Then $\h^{\ast\ast}(c X)\cong \h^{\ast} (X)[\tau]$ as $\F_p$-modules, where a generator in degree $i$ from the right hand side maps to bidegree $(i,0)$ on the left hand side.
\end{prop}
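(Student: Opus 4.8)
The plan is to reduce the computation of $\h^{\ast\ast}(cX)$ to the known structure of motivic cohomology over $\C$, namely $\h^{\ast\ast}(\Spec\C,\Z/p)\cong\F_p[\tau]$ from Lemma \ref{H(C)}, together with the classical topological value $\h^\ast(X,\Z/p)$. The key observation is that $c = c_\C:\SH\to\SH(\C)$ is a strict symmetric monoidal exact functor, so it is enough to establish the formula first for spheres and then propagate it through the finite cell structure of $X = \Sigma^\infty Y$.

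First I would treat the base case $X = S^0$. Here $\h^{\ast\ast}(cS^0) = \h^{\ast\ast}(S^0) = \h^{\ast\ast}(\Spec\C,\Z/p) \cong \F_p[\tau]$ with $\deg\tau = (0,1)$ by Lemma \ref{H(C)}, and $\h^\ast(S^0,\Z/p) = \F_p$ concentrated in degree $0$, so indeed $\h^{\ast\ast}(cS^0)\cong\h^\ast(S^0)[\tau]$ with the generator in degree $0$ sitting in bidegree $(0,0)$. Next, since $c(S^1) = S^1_s = S^{1,0}$ (this is built into the construction of $c_k$ in Chapter \ref{functors}), we get $c(\Sigma^\infty S^n) = S^{n,0}$, and $\h^{\ast\ast}(S^{n,0}\wedge E)$ is just $\h^{\ast\ast}(E)$ shifted by $(n,0)$; this handles all spheres and matches the claimed degree bookkeeping, because suspension in $\SH$ shifts topological cohomological degree by $n$ and $c$ sends it to a $(n,0)$-shift.

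The inductive step uses the finite cell structure: $Y$ is built from a point by attaching finitely many cells, so $X = \Sigma^\infty Y$ sits in a finite tower of cofiber sequences $X^{k-1}\to X^k\to \bigvee S^{n_k,0}$ (after applying $c$). Applying the long exact sequence in $\h^{\ast\ast}(-)$ and comparing with the corresponding long exact sequence in topological $\h^\ast(-)$, where $c$ is exact and commutes with wedges, one sees by induction on the number of cells that $\h^{\ast\ast}(cX)$ is a free $\F_p[\tau]$-module whose generators sit in bidegrees $(i,0)$ in bijection with an $\F_p$-basis of $\h^\ast(X,\Z/p)$ in degree $i$. The comparison of the boundary maps is the point requiring care: one must check that the connecting homomorphisms in the motivic long exact sequence, restricted to the $\tau^0$-part, agree with the topological ones — this follows because $R_\C\circ c = \id$ on $\SH$ (Theorem \ref{Rc}) and $R_\C$ is compatible with the cofiber sequences and sends the degree-$(i,0)$ part of motivic cohomology onto topological degree-$i$ cohomology, so naturality of $R_\C$ pins down the boundary maps modulo $\tau$.

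The main obstacle I anticipate is making the identification ``$\h^{\ast\ast}(cX)$ is free over $\F_p[\tau]$ generated in weight $0$'' rigorous rather than merely an additive count: a priori the motivic cohomology of a cellular spectrum could have $\tau$-torsion or generators in nonzero weight arising from the attaching maps. Over $\C$ this does not happen for suspension spectra of finite complexes because the relevant Atiyah–Hirzebruch / motivic-to-singular comparison (or, equivalently, the collapse argument as in Lemma \ref{Ah}, using that $\h^{\ast\ast}(\Spec\C,\F_p)$ is concentrated in a single column) forces the motivic cohomology to be the topological one tensored up with $\F_p[\tau]$; I would cite the relevant statement (e.g. the Dugger–Isaksen comparison of the cellular motivic category over $\C$ with spectra, or Voevodsky's computation) and reduce the degree-placement claim to that collapse. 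Everything else is a routine five-lemma induction.
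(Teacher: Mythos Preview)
Your proposal is correct and follows essentially the same route as the paper: verify the formula for spheres using $c(S^n)=S^{n,0}$ and Lemma~\ref{H(C)}, then induct over a finite cell decomposition via the five lemma, using $R_\C\circ c=\id$ to compare attaching maps.

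The one place where you hedge unnecessarily is your ``main obstacle.'' You worry that naturality of $R_\C$ only pins down the boundary maps modulo $\tau$, and you propose to fall back on an Atiyah--Hirzebruch collapse or a Dugger--Isaksen comparison. The paper avoids this entirely: since $\h^{i,j}(S_s^{n_k})$ is at most one-dimensional over $\F_p$ in each fixed bidegree, the realisation map $R$ is \emph{injective} on it bidegree by bidegree. Hence knowing $R\bigl((c\alpha)^\ast(x\tau^j)\bigr)=\alpha^\ast(x)$ forces $(c\alpha)^\ast(x\tau^j)=\alpha^\ast(x)\tau^j$, i.e.\ $(c\alpha)^\ast=\alpha^\ast\otimes_{\F_p}\F_p[\tau]$ on the nose. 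The five lemma then closes the induction with no external input; your concern about $\tau$-torsion or generators in nonzero weight never materialises.
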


\begin{proof}
For any $F\in \SH(\C)$, we have $\h^{\ast\ast}(F)\cong \h^{\ast+n,\ast}(F\wedge S_s^n)$. For $F=S^0$, we get $\h^{\ast\ast}(S_s^n)\cong \h^{\ast-n,\ast}(S^0)\cong \h^{\ast-n,\ast}\cong \h^\ast(S^n)[\tau]$. Now let $(Y^k)_k$ be a CW decomposition of $Y$, that is, $\Sigma^\infty Y^k$ is the cofiber of some $\Sigma^\infty\alpha_k:S^{n_k}\rightarrow\Sigma^\infty Y^{k-1}$. We write $X^k$ for $\Sigma^\infty Y^k$. Since $c$ preserves cofiber sequences, we get a cofiber sequence of suspension spectra $S_s^{n_k}\stackrel{c\alpha}\rightarrow c X^{k-1}\rightarrow c X^{k}$. It induces a long exact sequence 
\[\cdots\rightarrow \h^{\ast-1,\ast}(cX^{k-1})\stackrel{(c\alpha)^\ast}\rightarrow \h^{\ast-1,\ast}(S_s^{n_k})\rightarrow \h^{\ast\ast}(cX^{k})\rightarrow \h^{\ast\ast}(cX^{k-1})\stackrel{(c\alpha)^\ast}\rightarrow \h^{\ast\ast}(S_s^{n_k})\rightarrow\cdots\]
We assume inductively that $\h^{i,j}(cX^{k-1})\cong \h^i(X^{k-1})\{\tau^j\}$. let $x \tau^j\in\h^{i,j}(cX^{k-1})$. Since $R(\tau)=1$ and $R(c\alpha)=\alpha$, we have 
\[R((c\alpha)^\ast(x \tau^j))=\alpha^\ast(R(x \tau^j))=\alpha^\ast(x).\]
The only element in $\h^{i,j}(S_s^{n_k})$ which is mapped to $\alpha^\ast(x)$ by $R$ is $\alpha^\ast(x)\tau^j$. This proves $(c\alpha)^\ast\cong\alpha^\ast[\tau]$.
By the five lemma, it follows that the map $\h^{\ast\ast}(cX^k)\rightarrow \h^{\ast}(X^k)[\tau]$, given by sending $x\in\h^{i,j}(cX^k)$ to $ R(x)\tau^j$, is an isomorphism and, inductively, $\h^{\ast\ast}(c X)\cong \h^{\ast} (X)[\tau]$.
\end{proof}

\begin{cor}\label{hcB}
As $\F_p$-modules, 
\[\h^{\ast\ast}(cB)\cong\h^{\ast}(B)[\tau]\cong \h^{\ast\ast}\{x^k\; |\; 1\leq k\leq p^n\}\cup\{yx^k\; |\; 1\leq k\leq p^n-1\},\]
with $\deg(x)=(2,0)$ and $\deg(y)=(1,0)$.
\end{cor}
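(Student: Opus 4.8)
The final statement to prove is Corollary \ref{hcB}, which computes $\h^{\ast\ast}(cB)$ as an $\F_p$-module (equivalently as an $\h^{\ast\ast}$-module) for $B = (B\Z/p)^{2p^n}_2$.

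The plan is to invoke Proposition \ref{HcX} directly. Since $B = (B\Z/p)^{2p^n}_2$ is by definition the suspension spectrum of a finite CW complex --- namely the $2p^n$-skeleton of $B\Z/p$ modulo its $1$-skeleton, which is a finite CW complex --- Proposition \ref{HcX} applies and gives
\[\h^{\ast\ast}(cB) \cong \h^\ast(B)[\tau]\]
as $\F_p$-modules, where a degree-$i$ generator on the right maps to bidegree $(i,0)$ on the left. So the only remaining task is to identify $\h^\ast(B)$ as an $\F_p$-vector space with an explicit basis, and then to rewrite $\h^\ast(B)[\tau]$ in the $\h^{\ast\ast}$-module form claimed (using $\h^{\ast\ast} = \h^{\ast\ast}(\Spec\C,\Z/p) \cong \F_p[\tau]$ from Lemma \ref{H(C)}).

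First I would recall the classical computation: the mod-$p$ cohomology of $B\Z/p$ is, as an $\F_p$-module, free on generators $\{v^k, y v^k \mid k \geq 0\}$ with $\deg(v) = 2$, $\deg(y) = 1$ (the standard lens-space computation, e.g. \cite[Example 2.43]{Hatcher}; this is also the topological realisation shadow of the motivic Proposition \ref{hB}). Cutting to the $2p^n$-skeleton and quotienting the $1$-skeleton removes the generator in degree $0$ and all generators in degrees above $2p^n$; by a dimension count identical to the one used in Proposition \ref{hB}, the surviving $\F_p$-basis of $\h^\ast(B)$ is $\{x^k \mid 1 \leq k \leq p^n\} \cup \{y x^k \mid 1 \leq k \leq p^n - 1\}$ with $\deg(x) = 2$, $\deg(y) = 1$ (here $x, y$ are the images of $v, y$ respectively). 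Tensoring with $\F_p[\tau]$ and reindexing degree-$i$ elements into bidegree $(i,0)$ then gives exactly the stated description $\h^{\ast\ast}(cB) \cong \h^{\ast\ast}\{x^k \mid 1 \leq k \leq p^n\} \cup \{y x^k \mid 1 \leq k \leq p^n - 1\}$ with $\deg(x) = (2,0)$, $\deg(y) = (1,0)$.

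There is no real obstacle here: the content is entirely in Proposition \ref{HcX}, which has already been proven, and the identification of $\h^\ast(B)$ is a standard (and already essentially recorded) computation. The only point requiring a small amount of care is the bookkeeping of which skeletal cells survive the truncation to $(B\Z/p)^{2p^n}_2$ --- i.e. confirming the ranges $1 \leq k \leq p^n$ and $1 \leq k \leq p^n - 1$ --- but this is exactly parallel to the dimension argument already carried out for the motivic spectrum $\B$ in Proposition \ref{hB}, so one can simply cite that reasoning (or \cite[Lemma 6.2.6]{Rav}) rather than redo it.
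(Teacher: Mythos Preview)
Your proposal is correct and follows essentially the same approach as the paper: apply Proposition~\ref{HcX} to $B$, then identify $\h^\ast(B)$ using the classical computation (the paper simply cites \cite[Lemma~6.2.6]{Rav} for this), and conclude by adjoining $\tau$. Your write-up is more detailed than the paper's two-sentence proof, but the content is the same.
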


\begin{proof}
The cohomology of $B$ is described in the proof of \cite[Lemma 6.2.6]{Rav}. We only have to add the polynomial generator $\tau\in\h^{\ast\ast}$.
\end{proof}

For $X=cY$, $Y$ a finite CW spectrum, Proposition \ref{AHSS} implies the convergence of the motivic Atiyah Hirzebruch spectral sequence, 
\[E^{p,q,t}_2=\h^{p+2t,q+t}(X)\otimes K(n)_t\Rightarrow AK(n)^{p,q}(X).\]
By Remark \ref{remark AHSS}(1), the realisation functor $R$ maps this spectral sequence to the topological Atiyah Hirzebruch spectral sequence:
\[\xymatrix{\ar @{} [dr] |{}
E^{p,q,t}_{r} \ar[d]_{R} \ar [rrr]^{d_{r}} &&& E^{p-2r+1,q-r,t+2r}_{r} \ar[d]^{R}  \\
F^{p,t}_{r} \ar[rrr]_{d'_{r}}  &&& F^{p-2r+1,t+2r}_{r} ,  }\]
where $(E^{p,q,t}_{r},d_{r})$ is the spectral sequence mentioned above and $(F^{p,t}_{r},d'_{r})$ is the Atiyah Hirzebruch spectral sequence (see, e.g., \cite[Theorem A.3.7]{Rav}) 
\[F^{p,t}_2=\h^{p+2t}(R(X))\otimes K(n)_t\Rightarrow K(n)^p(R(X)).\]
Note that there is no differential in between $d'_r$ and $d'_{r+1}$, since $K(n)_\ast$ is concentrated in even degrees.\\

Now let $r$ be small enough, so that $E_{r}$ and $F_{r}$ are still equal to the 2-pages. Using Proposition \ref{HcX}, the above square is
\[\xymatrix{\ar @{} [dr] |{}
\h^{p+2t}(R(X))\cdot \tau^{q+t}\otimes K(n)_t \ar[d]_{R} \ar [rrr]^{d_{r}} &&& \h^{{p+2t}+2r+1}(R(X))\cdot\tau^{{q+t}+r}\otimes K(n)_{t+2r} \ar[d]^{R}  \\
\h^{{p+2t}}(R(X))\otimes K(n)_t \ar[rrr]_{d'_{r}}  &&& \h^{{p+2t}+2r+1}(R(X))\otimes K(n)_{t+2r} . }\]
Since $R(\tau)=1$, 
 the vertical maps are isomorphisms. Hence, for this $X$, $d_r$ is completely determined by $d'_r$ through $d_{r}=d'_{r}\cdot \tau^r$. Taking homology, it follows by induction that $E^{p,q,t}_r=F^{p,t}_r\cdot\tau^q$ for all $r<\infty$. Hence, this also holds for $r=\infty$. Since both spectral sequences are strongly convergent, we get:

\begin{prop}
Let $Y$ be a finite CW spectrum. Then 
\[AK(n)^{\ast\ast}(c(Y))=0 \Leftrightarrow K(n)^\ast(Y)=0.\]
\end{prop}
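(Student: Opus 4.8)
The plan is to exploit the comparison between the motivic Atiyah--Hirzebruch spectral sequence for $AK(n)^{\ast\ast}(c(Y))$ and the topological one for $K(n)^\ast(Y)$, which has just been set up in the preceding display. By Proposition~\ref{AHSS} (applied to $X = c(Y)$, which is a finite cell spectrum since $Y$ is a finite CW spectrum and $c$ preserves cofiber sequences and the spheres), the motivic sequence
\[E_2^{p,q,t} = \h^{p+2t,q+t}(c(Y))\otimes K(n)_t \Rightarrow AK(n)^{p,q}(c(Y))\]
is strongly convergent, and likewise the classical sequence
\[F_2^{p,t} = \h^{p+2t}(Y)\otimes K(n)_t \Rightarrow K(n)^p(Y)\]
is strongly convergent. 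By Proposition~\ref{HcX} we have the identification $E_2^{p,q,t} = F_2^{p,t}\cdot\tau^{q+t}$, and by Remark~\ref{remark AHSS}(1) the realisation functor $R$ carries the motivic sequence onto the topological one, with $R(\tau)=1$.

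First I would record the inductive claim that $E_r^{p,q,t} = F_r^{p,t}\cdot\tau^q$ and $d_r = d_r'\cdot\tau^r$ for all finite $r$; this is exactly the computation displayed just before the proposition, so I would simply refer to it. The key point is that $R$ restricted to each page is an isomorphism of the relevant groups (because multiplication by powers of $\tau$ is, page by page, just a relabelling that $R$ collapses), so a differential $d_r$ vanishes if and only if the corresponding $d_r'$ vanishes, and taking homology propagates the identification to the next page. Hence $E_\infty^{p,q,t} = F_\infty^{p,t}\cdot\tau^q$ as well.

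Next I would conclude. Suppose $K(n)^\ast(Y) = 0$. By strong convergence of the classical AHSS this forces $F_\infty^{p,t} = 0$ for all $p,t$, hence $E_\infty^{p,q,t} = 0$ for all $p,q,t$, and by strong convergence of the motivic AHSS, $AK(n)^{\ast\ast}(c(Y)) = 0$. Conversely, if $AK(n)^{\ast\ast}(c(Y)) = 0$, strong convergence gives $E_\infty^{p,q,t} = 0$ for all $p,q,t$; picking any fixed $q$ (say $q = 0$) yields $F_\infty^{p,t} = 0$ for all $p,t$, and strong convergence of the classical sequence then gives $K(n)^\ast(Y) = 0$.

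The only genuine subtlety — and the step I would be most careful about — is the compatibility of the two spectral sequences with $R$ and the precise bookkeeping $d_r = d_r'\cdot\tau^r$ on \emph{every} page, not just the $E_2$-page; this rests on Remark~\ref{remark AHSS}(1) (that $R$ sends the slice filtration of $Ah$ to the Postnikov filtration of $h$, hence the motivic AHSS to the topological AHSS) together with the fact that $\h^{\ast\ast}(c(Y))$ is a free $\h^{\ast\ast} = \F_p[\tau]$-module with generators concentrated in weight $0$ (Proposition~\ref{HcX}), so that each page is $\tau$-free and $R$ is a levelwise isomorphism after forgetting the $\tau$-grading. Everything else is a formal consequence of strong convergence of both spectral sequences.
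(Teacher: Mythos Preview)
Your proposal is correct and follows essentially the same approach as the paper: compare the motivic and topological Atiyah--Hirzebruch spectral sequences via $R$, use Proposition~\ref{HcX} to identify the pages up to a power of $\tau$, and conclude from strong convergence on both sides. In fact, the paper does all of this work in the discussion immediately preceding the proposition and then simply records the statement; your write-up just makes the logical flow of both implications more explicit.
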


We have proven:

\begin{theorem}\label{AKcX}{\bf (Constant type-$n$ spectra have motivic type $n$)}

The spectrum $c X_n$, where $X_n$ denotes Ravenel's type-$n$ spectrum (or any other $p$-local finite type-$n$ spectrum), has motivic type $n$.
\end{theorem}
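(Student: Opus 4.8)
The plan is to deduce Theorem \ref{AKcX} from the immediately preceding Proposition, which identifies $AK(n)^{\ast\ast}(c(Y)) = 0$ with $K(n)^\ast(Y) = 0$ for $Y$ a finite CW spectrum, together with the equivalence of motivic homology and cohomology vanishing on finite cell spectra (Proposition \ref{co-homo}). Concretely: let $X_n \in \SH^{fin}_{(p)}$ be a $p$-local finite type-$n$ spectrum, so by Definition \ref{type} we have $K(n-1)_\ast(X_n) = 0$ and $K(n)_\ast(X_n) \neq 0$, and (by the duality/finiteness of $X_n$, or by \cite{RavLoc} as used throughout) equivalently $K(n-1)^\ast(X_n) = 0$ and $K(n)^\ast(X_n) \neq 0$. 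First I would note that $c(X_n) \in \SH(\C)^{fin}_{(p)}$: it is the constant-presheaf image of a finite cell spectrum, and $c$ preserves finite cell spectra by Remark \ref{constant in finite}, so $c(X_n)$ is a finite cellular motivic spectrum and Proposition \ref{co-homo} applies to it.

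The main step is then: apply the preceding Proposition with $Y$ running over $X_n$ and over the spheres. For $s < n$: since $K(s)^\ast(X_n) = 0$ (as $s \leq n-1$ and $K(n-1)^\ast(X_n)=0$ implies $K(s)^\ast(X_n)=0$ by \cite[Theorem 2.11]{RavLoc}), the Proposition gives $AK(s)^{\ast\ast}(c(X_n)) = 0$, hence by Proposition \ref{co-homo} also $AK(s)_{\ast\ast}(c(X_n)) = 0$. In particular $AK(n-1)_{\ast\ast}(c(X_n)) = 0$. For $s = n$: since $K(n)^\ast(X_n) \neq 0$, the Proposition gives $AK(n)^{\ast\ast}(c(X_n)) \neq 0$, hence $AK(n)_{\ast\ast}(c(X_n)) \neq 0$ by Proposition \ref{co-homo} again. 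Combining, $c(X_n)$ satisfies $AK(n-1)_{\ast\ast}(c(X_n)) = 0$ and $AK(n)_{\ast\ast}(c(X_n)) \neq 0$, which is exactly the definition of motivic type $n$; moreover $AK(s)_{\ast\ast}(c(X_n)) = 0$ for all $s < n$, so the motivic type is unambiguous here.

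One subtlety I would be careful about: the preceding Proposition is stated for $AK(n)$, but to invoke it for all $s < n$ including $s = 0$ I need the $s = 0$ case of the Atiyah–Hirzebruch comparison, or alternatively I can argue $s=0$ directly as in the proof of Theorem \ref{zero}, using $AK(0) \cong p^{-1}(H\Z)_{(p)}$ and the fact that $K(0)^\ast(X_n) = H^\ast(X_n,\Q) = 0$ for $X_n$ of type $n \geq 1$ (rationally $X_n$ is contractible), so $AK(0)_{\ast\ast}(c(X_n)) = \pi_{\ast\ast}(p^{-1}(H\Z \wedge c(X_n))_{(p)}) = 0$. The only genuine obstacle is bookkeeping: making sure the homology-versus-cohomology swap (Proposition \ref{co-homo}) is legitimately available — it requires $c(X_n) \in \SH(\C)^{fin}$ and $AK(s) \in \SH(\C)^{cell}$, both of which hold (the latter by Remark \ref{AMU module}(1)) — and checking that the cited implication $K(n-1)^\ast X = 0 \Rightarrow K(s)^\ast X = 0$ for $s \leq n-1$ follows from \cite[Theorem 2.11]{RavLoc} for finite $X$. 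None of this is deep; the theorem is essentially a packaging of the preceding Proposition.
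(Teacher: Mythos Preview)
Your overall strategy is exactly the paper's: Theorem~\ref{AKcX} appears immediately after the Proposition comparing $AK(n)^{\ast\ast}(c(Y))$ with $K(n)^\ast(Y)$, and the paper simply writes ``We have proven:'' --- the theorem is a repackaging of that Proposition, as you say.

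There is one genuine gap in your write-up, however. You invoke Proposition~\ref{co-homo} to pass from $AK(s)^{\ast\ast}(c(X_n))=0$ to $AK(s)_{\ast\ast}(c(X_n))=0$, but that proposition explicitly requires $E$ to be a \emph{ring spectrum}, and by Remark~\ref{AMU module}(6) it is not known whether $AK(s)$ carries a ring structure. The hypotheses you check (finiteness of $c(X_n)$ and cellularity of $AK(s)$) are not sufficient; your appeal to Proposition~\ref{co-homo} fails as stated. The easy repair is to use duality rather than a universal coefficient argument: since $c$ is strict symmetric monoidal (Theorem~\ref{Rc}), one has $D(c(X_n))\cong c(DX_n)$, and for any dualisable $X$ and any $E$ whatsoever, $E_{\ast\ast}(X)\cong E^{-\ast,-\ast}(DX)$ by adjunction. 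Thus
\[
AK(s)_{\ast\ast}(c(X_n))=0 \;\Longleftrightarrow\; AK(s)^{\ast\ast}(c(DX_n))=0 \;\Longleftrightarrow\; K(s)^\ast(DX_n)=0 \;\Longleftrightarrow\; K(s)_\ast(X_n)=0,
\]
where the middle step is the preceding Proposition applied to the finite CW spectrum $DX_n$, and the last is topological duality (or, if you prefer, the topological analogue of Proposition~\ref{co-homo}, which is valid since $K(s)$ \emph{is} a ring spectrum). Your side-worry about $s=0$ is unnecessary: the preceding Proposition is stated for arbitrary $n$ and the Atiyah--Hirzebruch comparison goes through uniformly.
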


This and the previous sections can be summarised by:

\begin{cor}
Let $X=c X_n$ or $X=\X_n$, where $\X_n=e_V(\B_{(p)})^{\wedge k_V}$. Then 
\[\thickid(X)\subseteq \mathcal C_{AK(n-1)}\subseteq R^{-1}(\mathcal C_{n})\] 
is a chain of thick ideals in $(\SH(\C)_f)_{(p)}$, and $\thickid(X)\nsubseteq R^{-1}(\mathcal C_{n+1})$.
\end{cor}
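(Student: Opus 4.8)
The statement is essentially a bookkeeping consequence of the three main results already proved in this chapter, applied to $X \in \{cX_n, \mathbb{X}_n\}$ where $\mathbb{X}_n = e_V(\mathbb{B}_{(p)})^{\wedge k_V}$. First I would record the two facts that have been established about the motivic type of $X$: by Theorem \ref{AKcX} (for $X = cX_n$) and by Theorem \ref{zero} together with its proof (for $X = \mathbb{X}_n$), we have $AK(s)_{\ast\ast}(X) = 0$ for all $s < n$ and $AK(n)_{\ast\ast}(X) \neq 0$. In particular $AK(n-1)_{\ast\ast}(X) = 0$, so $X \in \mathcal{C}_{AK(n-1)}$; since $\mathcal{C}_{AK(n-1)}$ is a thick ideal (Lemma \ref{Lemma C_E}, via Proposition \ref{C_E}: for cellular ring spectra and finite cellular spectra, $AK(n-1)_{\ast\ast}(X) = 0 \Leftrightarrow AK(n-1) \wedge X \cong 0$), the smallest thick ideal containing $X$ satisfies $\thickid(X) \subseteq \mathcal{C}_{AK(n-1)}$.

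Next I would invoke Proposition \ref{motivic model}, which gives $\mathcal{C}_{AK(m)} \subseteq R_\mathbb{C}^{-1}(\mathcal{C}_{m+1})$ for any $m \geq 0$; taking $m = n-1$ yields $\mathcal{C}_{AK(n-1)} \subseteq R_\mathbb{C}^{-1}(\mathcal{C}_n)$. Concatenating the two inclusions gives the claimed chain
\[
\thickid(X) \subseteq \mathcal{C}_{AK(n-1)} \subseteq R_\mathbb{C}^{-1}(\mathcal{C}_n).
\]

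For the final, non-inclusion assertion $\thickid(X) \not\subseteq R_\mathbb{C}^{-1}(\mathcal{C}_{n+1})$, I would argue via the topological realisation. By Proposition \ref{f}, $R_\mathbb{C}^{-1}(\mathcal{C}_{n+1})$ is a thick ideal, and a spectrum lies in it iff its realisation lies in $\mathcal{C}_{n+1}$, i.e. iff $K(n)_\ast R_\mathbb{C}(-) = 0$. For $X = \mathbb{X}_n$ this is exactly the content of Section \ref{type R(X)}: $R_\mathbb{C}(\mathbb{X}_n) = e_V(B_{(p)})^{\wedge k_V}$ is Ravenel's type-$n$ spectrum, so $K(n)_\ast R_\mathbb{C}(\mathbb{X}_n) \neq 0$ and hence $\mathbb{X}_n \notin R_\mathbb{C}^{-1}(\mathcal{C}_{n+1})$, which already forces $\thickid(\mathbb{X}_n) \not\subseteq R_\mathbb{C}^{-1}(\mathcal{C}_{n+1})$. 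For $X = cX_n$, since $R_\mathbb{C} \circ c \cong \id$ (Theorem \ref{Rc}), $R_\mathbb{C}(cX_n) \cong X_n$, which is of topological type $n$, so again $cX_n \notin R_\mathbb{C}^{-1}(\mathcal{C}_{n+1})$. In either case the generator of $\thickid(X)$ is not in $R_\mathbb{C}^{-1}(\mathcal{C}_{n+1})$, which gives the strict non-containment. There is no serious obstacle here — every ingredient has been assembled in the preceding sections; the only mild care needed is to make sure one cites the realisation-of-$\mathbb{X}_n$ computation for the $\mathbb{X}_n$ case and $R_\mathbb{C} \circ c \cong \id$ for the $cX_n$ case, rather than mixing them up.
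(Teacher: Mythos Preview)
Your proposal is correct and matches the paper's approach exactly: the paper presents this corollary without a separate proof, simply stating that it summarises the preceding sections and recalling that the second inclusion is Proposition \ref{motivic model}. You have correctly unpacked this summary, citing Theorems \ref{zero} and \ref{AKcX} for $X\in\mathcal C_{AK(n-1)}$, Proposition \ref{motivic model} for $\mathcal C_{AK(n-1)}\subseteq R^{-1}(\mathcal C_n)$, and Section \ref{type R(X)} together with $R\circ c\cong\id$ for the non-inclusion. One minor quibble: in your parenthetical you describe the equivalence $AK(n-1)_{\ast\ast}(X)=0\Leftrightarrow AK(n-1)\wedge X\cong 0$ as requiring a cellular \emph{ring} spectrum, but Proposition \ref{C_E} only needs $E$ cellular for the homological characterisation (the ring hypothesis is for the cohomological one), which is fortunate since $AK(n)$ is not known to be a ring spectrum (Remark \ref{AMU module}(6)).
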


Recall that we derived the second of the above inclusions in Proposition \ref{motivic model}. 

As $cK(n)$ is another motivic model for Morava K-theory (see Section \ref{section motivic model}), we can likewise ask the question, whether also $\thickid(X)\subseteq \mathcal C_{c K(n-1)}$. Let us first consider $c X_n$. 

\begin{prop}
$c K(s)\wedge c(X_n)\cong 0$ if and only if $s<n$. Hence, 
\[\thickid(c X_n)\subseteq \mathcal C_{c K(n-1)}\subseteq R^{-1}(\mathcal C_{n})\]
in $(\SH(\C)_f)_{(p)}$. Furthermore, $\thickid(c X_n)\not\subseteq R^{-1}(\mathcal C_{n+1})$.
\end{prop}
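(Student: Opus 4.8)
The strategy is to transfer everything to the topological side via the symmetric monoidal functor $R=R_\C:\SH(\C)\to\SH$ and then invoke what we already know about $K(s)$, $X_n$ and the spectra $\mathcal C_n$. First I would recall that $c=c_\C$ is symmetric monoidal and satisfies $R\circ c=\id$ (Theorem \ref{Rc}), so $R(cK(s)\wedge cX_n)\cong R(cK(s))\wedge R(cX_n)\cong K(s)\wedge X_n$. Now $X_n$ has type $n$, i.e.\ $K(s)_\ast X_n=0$ for $s<n$ and $K(n)_\ast X_n\neq 0$; since $X_n$ is a finite $p$-local spectrum and $K(s)$ is a field spectrum satisfying the K\"unneth formula, $K(s)_\ast X_n=0$ is equivalent to $K(s)\wedge X_n\cong 0$. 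Hence for $s<n$ we get $R(cK(s)\wedge cX_n)\cong 0$. To upgrade this to $cK(s)\wedge cX_n\cong 0$ in $\SH(\C)$, I would use cellularity: $cK(s)$ is a motivic model for $K(s)$, in particular cellular, and $cX_n$ is cellular, so $cK(s)\wedge cX_n\in\SH(\C)^{cell}$; by Proposition \ref{co-homo} (or rather the vanishing part used there, \cite[Proposition 7.1]{DI}), a cellular spectrum with vanishing homotopy is contractible. Here one computes $\pi_{\ast\ast}(cK(s)\wedge cX_n)$ directly: because $c$ preserves smash products and $X_n$ is finite, $cK(s)\wedge cX_n\cong c(K(s)\wedge X_n)\cong c(0)\cong 0$ already on the nose, so cellularity is not even strictly needed for the vanishing direction.

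For the converse direction ($s\geq n$), the point is that $cK(s)\wedge cX_n\cong c(K(s)\wedge X_n)$ is not zero, since $K(n)_\ast X_n\neq 0$ forces $K(n)\wedge X_n\not\cong 0$, and $c$ is faithful (Theorem \ref{Rc}), so $c$ of a nonzero object is nonzero. Combining the two directions gives the stated equivalence $cK(s)\wedge cX_n\cong 0 \Leftrightarrow s<n$. Then $\mathcal C_{cK(n-1)}=\{X\in(\SH(\C)_f)_{(p)}\mid cK(n-1)\wedge X\cong 0\}$ is a thick ideal by Lemma \ref{Lemma C_E}, and $cX_n\in\mathcal C_{cK(n-1)}$ by the case $s=n-1<n$; since $\thickid(cX_n)$ is the smallest thick ideal containing $cX_n$, we get $\thickid(cX_n)\subseteq\mathcal C_{cK(n-1)}$. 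The inclusion $\mathcal C_{cK(n-1)}\subseteq R^{-1}(\mathcal C_n)$ is exactly Proposition \ref{motivic model} applied to the motivic model $K=cK(n-1)$ for $K(n-1)$.

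Finally, $\thickid(cX_n)\not\subseteq R^{-1}(\mathcal C_{n+1})$ follows because $R(cX_n)\cong X_n$ has type $n$, hence $K(n)_\ast X_n\neq 0$, so $X_n\notin\mathcal C_{n+1}$, i.e.\ $cX_n\notin R^{-1}(\mathcal C_{n+1})$; since $cX_n\in\thickid(cX_n)$, the thick ideal $\thickid(cX_n)$ is not contained in $R^{-1}(\mathcal C_{n+1})$. I do not expect a serious obstacle here: every ingredient ($R\circ c=\id$, faithfulness of $c$, $c$ monoidal, the K\"unneth formula for $K(s)$, the type of $X_n$, Lemma \ref{Lemma C_E}, Proposition \ref{motivic model}) is already available in the excerpt. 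The only mild subtlety is making sure $cX_n$ and $cK(s)$ land in $(\SH(\C)_f)_{(p)}$ and in $\SH(\C)^{cell}$ respectively, which follows from Remark \ref{constant in finite}, Proposition \ref{prop dualizable} and the cellularity of constant suspension spectra used throughout Chapter \ref{motivic type n}.
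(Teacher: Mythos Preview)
Your argument is correct and follows the same route as the paper: use that $c$ is symmetric monoidal to rewrite $cK(s)\wedge cX_n\cong c(K(s)\wedge X_n)$, then reduce to the topological statement $K(s)\wedge X_n\cong 0\Leftrightarrow s<n$, and finally read off the thick ideal inclusions from Lemma~\ref{Lemma C_E}, Proposition~\ref{motivic model}, and $R(cX_n)=X_n$.

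Two small remarks. First, in the converse direction you write ``$K(n)_\ast X_n\neq 0$ forces $K(n)\wedge X_n\not\cong 0$'' while the case under discussion is $s\geq n$; you mean $K(s)_\ast X_n\neq 0$ for all $s\geq n$, which is the standard consequence of \cite[Theorem~2.11]{RavLoc}. Second, the paper invokes full faithfulness of $c$ over $\C$ (Levine's theorem) to conclude $c(Z)\cong 0\Leftrightarrow Z\cong 0$, whereas you use only faithfulness of $c$ (equivalently, $R\circ c=\id$) from Theorem~\ref{Rc}. Your version is the more economical one: faithfulness of an additive functor already reflects zero objects, so Levine's fullness result is not needed here.
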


\begin{proof}
We have $c(K(s)\wedge X_n)\cong 0$ if and only if $X_n$ is in $\mathcal C_{s+1}$, since $c$ is fully faithful by \cite[Theorem 1]{L}. Since $X_n\in\mathcal C_{n}\setminus \mathcal C_{n+1}$, the first claim follows. The second claim holds because $R(cX_n)=X_n$.
\end{proof}

For $\X_n$, we know so far:

\begin{lemma}
If $s\geq n$, then $c K(s)\wedge\X_n\not\cong 0$. 
\end{lemma}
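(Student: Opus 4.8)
The plan is to reduce the non-triviality of $cK(s)\wedge\X_n$ for $s\geq n$ to the non-triviality of the topological smash product $K(s)\wedge R_\C(\X_n)$, using the fact that $R_\C$ is symmetric monoidal and detects non-zero objects on the relevant subcategory. Recall from Theorem \ref{zero} (together with Section \ref{type R(X)}) that $R_\C(\X_n)$ is Ravenel's type-$n$ spectrum $X_n$, so in particular $R_\C(\X_n)\in\mathcal C_n\setminus\mathcal C_{n+1}$; hence $K(s)_\ast(R_\C(\X_n))\neq 0$ for every $s\geq n$ by the definition of type and by \cite[Theorem 2.11]{RavLoc}. The first step is therefore to record that $R_\C(cK(s)\wedge\X_n)\cong R_\C(cK(s))\wedge R_\C(\X_n)\cong K(s)\wedge X_n$, using $R_\C\circ c=\id$ from Theorem \ref{Rc} and the monoidality of $R_\C$. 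Since $K(s)\wedge X_n$ has $\pi_\ast\neq 0$ (it computes $K(s)_\ast X_n\neq 0$), the topological spectrum $K(s)\wedge X_n$ is not contractible, and a functor cannot send a non-zero object to the zero object while its source is mapped to something non-zero; concretely, if $cK(s)\wedge\X_n\cong 0$ in $\SH(\C)$, then applying $R_\C$ would give $K(s)\wedge X_n\cong 0$, a contradiction.

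Concretely I would write: suppose for contradiction that $cK(s)\wedge\X_n\cong 0$ in $\SH(\C)_{(p)}$ for some $s\geq n$. Applying the symmetric monoidal functor $R_\C$ and using $R_\C\circ c\cong\id$ (Theorem \ref{Rc}), we obtain
\[
0\cong R_\C(cK(s)\wedge\X_n)\cong R_\C(cK(s))\wedge R_\C(\X_n)\cong K(s)\wedge X_n,
\]
so $K(s)_\ast(X_n)=\pi_\ast(K(s)\wedge X_n)=0$. But $X_n$ has type $n\le s$, so $K(s)_\ast(X_n)\neq 0$ by Definition \ref{type} and \cite[Theorem 2.11]{RavLoc} (the vanishing-implies-lower-vanishing fact: $K(s)_\ast X_n=0$ would force $K(n)_\ast X_n=0$, contradicting $X_n\notin\mathcal C_{n+1}$). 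This contradiction shows $cK(s)\wedge\X_n\not\cong 0$.

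The only genuine subtlety — and the one place I would be careful — is making sure $\X_n$ actually lies in a subcategory of $\SH(\C)$ on which we are entitled to apply these formulas; but $\X_n=e_V(\B_{(p)})^{\wedge k_V}$ is a retract of a $p$-local finite cell spectrum, hence compact ($\X_n\in(\SH(\C)_f)_{(p)}$ by Remark \ref{spec compact}(1)), so $R_\C$ is defined on it and preserves it (Proposition \ref{compact1}), and the identification $R_\C(\X_n)\cong X_n$ has already been established in Section \ref{type R(X)}. One should also note that $cK(s)$ need not be compact, but that is irrelevant: the monoidality identity $R_\C(A\wedge B)\cong R_\C A\wedge R_\C B$ holds for all $A,B\in\SH(\C)$ since $R_\C$ is strict symmetric monoidal (Chapter \ref{functors}), and $R_\C(cK(s))\cong K(s)$ by the lemma in Section \ref{section AK(n)} identifying $R_k(Ah)=h$ together with $R_\C\circ c=\id$. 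I do not expect a serious obstacle here; the statement is essentially a formal consequence of monoidality plus the already-computed realisation $R_\C(\X_n)=X_n$ and the basic theory of type-$n$ spectra.
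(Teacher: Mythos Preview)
Your proof is correct and follows exactly the same approach as the paper: assume $cK(s)\wedge\X_n\cong 0$, apply the symmetric monoidal functor $R_\C$ together with $R_\C\circ c\cong\id$ to obtain $K(s)\wedge X_n\cong 0$, and derive a contradiction from the fact that $X_n$ has type $n\leq s$. The paper's proof is just the two-line version of what you wrote; your additional remarks on compactness and monoidality are fine but not needed.
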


\begin{proof}
Assume $c K(s)\wedge \X_n\cong 0$. Then $R(c K(s)\wedge \X_n)\cong K(s)\wedge X_n\cong 0$ and it follows $s<n$.
\end{proof}

For now, the question whether $c K(s)\wedge\X_n\cong 0$ for $s<n$ remains open. If it is true, then $\thickid(\X_n)\subseteq \mathcal C_{c K(n-1)}\subseteq R^{-1}(\mathcal C_{n})$. If it is not true, then $\thickid(\X_n)\nsubseteq \mathcal C_{c K(n-1)}$ and in particular $\X_n\notin \thickid (c X_n)$ and $\thickid(\X_n)\neq\thickid(cX_n)$.\\

\noindent
{\bf Summary.}

We have constructed two different lifts of a topological type-$n$ spectrum to the motivic category $(\SH(\C)_f)_{(p)}$, one of them is in $c(\SH^{fin}_{(p)})$ and the other one is not. These are candidates for generators of different thick sub-ideals of $R^{-1}(\mathcal C_{n})$ inside $(\SH(\C)_f)_{(p)}$. We proved that both spectra have motivic type $n$, meaning that the thick ideals generated by them cannot be distinguished using motivic Morava K-theories $AK(s)$. It is an open question, whether these ideals are equal or whether they can be distinguished by some other motivic model for Morava K-theory or any other method.

\chapter{Bousfield classes}\label{chapter BC}

So far, we have seen that the thick ideals $R_k^{-1}\mathcal C_n$ form a descending chain and that $\mathcal C_{AK(n-1)}$ is a thick ideal contained in $R_k^{-1}\mathcal C_n$. However, we have not seen that the thick ideals $\mathcal C_{AK(n)}$ form a descending chain themselves. The aim of this chapter is to prove this, at least for $k=\C$ and finite cell spectra. That is, we prove that $AK(n)_{\ast\ast}X=0$ implies $AK(n-1)_{\ast\ast}X=0$ for $X\in\SH(\C)^{fin}$, where $n\geq 1$ and $p>2$ (Theorem \ref{AK(n+1)}). As is done in topology \cite[Theorem 2.11]{RavLoc}, we will work in terms of Bousfield classes (Definition \ref{Bousfield class}). 

We proceed as follows. In the first two sections, $k$ can be any subfield of $\C$. In Section \ref{$v_n$-Torsion}, we show that $v_n$-torsion is also $v_{n-1}$-torsion (Theorem \ref{torsion}). Then we show that some basic results on Bousfield classes also apply to the motivic setting (Lemma \ref{bousfield2}). In Section \ref{section action}, we construct a product on $AP(n)$. Here, we need to know that $AP(n)_{\ast\ast}$ vanishes in certain degrees, which we do if we assume $k=\C$, $p>2$ and $n>0$. We continue by showing that, for $p>2$ and $n>0$, $\langle AK(n)\rangle=\langle AB(n)\rangle$ in $\SH(\C)$ (Corollary \ref{AK=AB}), passing from $AK(n)$ to the cohomology theory $AB(n)=v_n^{-1}AP(n)$, which is slightly easier to understand. On the way, we need to compute a couple of things like $AP(m)^{\ast\ast}AP(n)$, to construct stable operations $AP(n)^{\ast\ast}(-)\rightarrow AP(n)^{\ast\ast}(-)$ (Theorem \ref{5.1}). Here, the assumption $k=\C$ is also helpful, as we make explicit use of the formula $AP(n)_{\ast\ast}\cong P(n)_\ast[\tau]$ (Lemma \ref{Ah}). An application of all these results is Theorem \ref{decomposition}, where, for $p>2$ and $k=\C$, we prove 
\[\langle AE(n)\rangle =\underset{0\leq i\leq n}\bigvee\langle AK(i)\rangle.\] 
For the definitions of $AK(n)$, $AB(n)$, $AP(n)$ and $AE(n)$, see Definition \ref{AK(n)}.

Let's start with a definition of Bousfield classes.

\begin{defn}\label{Bousfield class}
Let $k$ be a field and let $\mathcal T=\SH(k)$, $\SH(k)^{cell}$ or $\SH(k)^{fin}$. For any $E\in\SH(k)$, the class of all spectra $X\in\mathcal T$ satisfying $E_{\ast\ast}X\neq 0$ is denoted by $\langle E\rangle$. We write $\langle E\rangle\leq \langle F\rangle$ if $\langle E\rangle$ is a subclass of $\langle F\rangle$. Meet and join of Bousfield classes are given by $\langle E\rangle \wedge\langle F\rangle =\langle E\wedge F\rangle$ and $\langle E\rangle\vee \langle F\rangle=\langle E\vee F\rangle$ \cite[Definition 1.20]{RavLoc}.
\end{defn}

\begin{rk}\label{802}
In \cite[Definition 1.19]{RavLoc}, $\langle E\rangle$ is defined to be the equivalence class of all $F$ such that, for all $X$, $E_\ast X=0$ if and only if $F_\ast X=0$. Thus, $\langle E\rangle $ is determined by the collection of all $X$ such that $E_\ast X\neq 0$, as in the definition above. 
For $\mathcal T=\SH(k)^{cell}$ or $\SH(k)^{fin}$ and $E\in\SH(k)^{cell}$, $\langle E\rangle=\{X\in\mathcal T\;|\;E\wedge X\not\cong 0\}=\mathcal C_E$ by Proposition \ref{C_E}.
\end{rk}

\section{$v_n$-Torsion}\label{$v_n$-Torsion}

In this section, we work in the category $\SH(k)$, $k\subseteq\C$. We prove the following theorem, refining \cite[Theorem 0.1]{JY}.

\begin{theorem}\label{torsion}
Any $v_n$-torsion element in an $\BP_{\ast\ast}\BP$-comodule is also a $v_{n-1}$-torsion element.
\end{theorem}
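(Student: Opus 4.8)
The statement is the motivic analogue of \cite[Theorem 0.1]{JY}, so the plan is to transport the topological argument along the map of Hopf algebroids
\[
(BP_\ast, BP_\ast BP)\longrightarrow (ABP_{\ast\ast}, ABP_{\ast\ast}ABP)
\]
studied in \cite{MLE}. First I would recall the topological input: in an $BP_\ast BP$-comodule, if $x$ is $v_n$-torsion then $x$ is $v_{n-1}$-torsion; the proof of this (following \cite{JY}) uses the structure of the comodule algebra and the coaction formulas relating $v_n$ and $v_{n-1}$ modulo lower invariant primes, together with the fact that the invariant prime ideals $I_n=(p,v_1,\dots,v_{n-1})$ are the only invariant primes of $BP_\ast$. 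The goal is to show that each ingredient has a motivic counterpart in $ABP_{\ast\ast}$, with the extra polynomial generator $\tau$ (over $\C$; over a general $k\subseteq\C$ one works with the relevant $MGL$-coefficient data from \cite{MLE}) behaving as an invariant scalar that does not interfere.

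The key steps, in order, are: (1) Establish that $ABP_{\ast\ast}ABP$ is a flat Hopf algebroid over $ABP_{\ast\ast}$ and that the coaction on $ABP_{\ast\ast}$ is compatible with the topological one via the ring map above — this is essentially collected from \cite{MLE} and the Landweber exactness of $ABP$ (Remark \ref{AMU module}(4)). (2) Identify the images of $v_0=p, v_1, \dots$ in $ABP_{\ast\ast}$ and verify the analogues of the Hazewinkel/Araki congruences that govern how $v_n$ interacts with the ideal generated by $v_0,\dots,v_{n-1}$; the point is that these congruences are formal consequences of the formal group law on $MGL$, which maps to the one on $MU$ under $R_k$, so they lift. (3) Run the comodule-theoretic argument of \cite{JY}: given a $v_n$-torsion element $x$ in an $ABP_{\ast\ast}ABP$-comodule $M$, examine its coaction, reduce modulo the invariant ideal $(v_0,\dots,v_{n-1})$, and use the congruences from step (2) together with the primitivity properties of $v_n$ mod $I_n$ to force a $v_{n-1}$-multiple of $x$ to vanish. (4) Handle the bookkeeping of the extra generators in $ABP_{\ast\ast}$ (the class $\tau$, and in general the coefficients coming from $H_{\ast\ast}$) — these lie in the "invariant" part and commute past the argument without change.

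\textbf{Main obstacle.} The hard part will be step (2)–(3): ensuring that the invariant prime ideal structure used in \cite{JY} genuinely lifts to the motivic setting, i.e.\ that $(v_0,\dots,v_{n-1})$ remains invariant under the $ABP_{\ast\ast}ABP$-coaction and that no new invariant ideals (involving $\tau$ or other motivic generators) are created that could obstruct the descent from $v_n$-torsion to $v_{n-1}$-torsion. Since $ABP_{\ast\ast}$ is not simply $BP_\ast$ but (over $\C$) $BP_\ast[\tau]$, one must check that the relevant comodule algebra computations are unchanged after this base extension; flatness of $ABP_{\ast\ast}ABP$ and the explicit description of the coaction from \cite{MLE} should make this manageable, but it is where the motivic subtleties concentrate. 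A secondary technical point is that the full strength of \cite{JY} uses properties of the $BP$-based Adams–Novikov filtration; one should either reprove the needed finiteness/convergence statements motivically or restrict to the algebraic (comodule) statement as phrased, which is purely homological and therefore insulated from convergence issues.
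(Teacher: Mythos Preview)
Your plan matches the paper's approach: establish the Hopf algebroid map $(BP_\ast,BP_\ast BP)\to(ABP_{\ast\ast},ABP_{\ast\ast}ABP)$ (Lemma~\ref{hopf alg}), use it to lift the key congruence for the elementary operations $s_E(v_n^{kp^s})\bmod I_m^{s+1}$ from \cite[Lemma~2.1]{JY} (Corollary~\ref{modulo}), and then run \cite[Lemmas~2.2--2.3]{JY} unchanged using the Cartan formula (Lemma~\ref{Cartan}). One correction of emphasis: your ``main obstacle'' is not a real obstacle --- the argument never needs a classification of invariant prime ideals nor any special handling of $\tau$; the entire content is that the operations $s_E$ (defined from the coaction by $\psi_X(x)=\sum_E c(t^E)\otimes s_E(x)$) and their Cartan formula lift formally along the Hopf algebroid map, and the single congruence needed is read off directly from the topological one because $v_i$ and $t^E$ in the motivic setting are, by construction, the images of $v_i^{\Top}$ and $t^E_{\Top}$.
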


Recall that $BP_\ast BP\cong BP_\ast\{t^E_{\Top}\}$, where $E=(e_1,e_2,\cdots)$ runs over all finite sequences of non-negative integers and $\deg(t^E_{\Top})=\sum e_i(2p^i-2)$, as is shown in \cite[Theorem II.16.1(ii)]{Adams}. 

From \cite[Definition 5.3]{Vez}, we know that $\BP$ is a commutative ring spectrum. It follows that $\BP_{\ast\ast}\BP$ is an $\BP_{\ast\ast}$-module. We describe its structure:

\begin{lemma}\label{ABPABP}
\begin{compactenum}[(1)]
\item As a left $\BP_{\ast\ast}$-module,
\[\BP_{\ast\ast}\BP\cong \BP_{\ast\ast}\{t^E\},\] 
where $E$ runs over all finite sequences of non-negative integers, 
\[\deg\left(t^{(e_1,e_2,\cdots)}\right)=\left(\sum_i{e_i(2p^i-2)},\sum_i{e_i(p^i-1)}\right),\]
and $R_k(t^E)=t^E_{\Top}$ 
Consequently, as a right $\BP_{\ast\ast}$-module, 
\[\BP_{\ast\ast}\BP\cong \BP_{\ast\ast}\{c(t^E)\},\] 
where $c:\BP_{\ast\ast}\BP\rightarrow \BP_{\ast\ast}\BP$ 
is the conjugation, induced by the twist map $\BP\wedge \BP\rightarrow \BP\wedge \BP$.
\item As a left $ABP^{\ast\ast}$-module, 
\[\BP^{\ast\ast}\BP\cong \BP^{\ast\ast}[[s^E]],\] 
which is the completion of $\BP^{\ast\ast}\{s^E\}$ under infinite sums. Here, $\deg(s^E)=\deg(t^E)$ and the $s^E$ are the dual basis elements to $t^E$.
\end{compactenum}
\end{lemma}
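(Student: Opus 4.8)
\textbf{Proof plan for Lemma \ref{ABPABP}.}

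The plan is to bootstrap the entire statement from the analogous topological facts together with the comparison functor $R_k:\SH(k)\to\SH$ and the cellularity of $ABP$. First I would recall that $ABP$ is cellular (Remark \ref{AMU module}(1)) and that $ABP_{\ast\ast}\cong BP_\ast[\tau]$ when $k=\C$ — but since here we only need $k\subseteq\C$, I would instead lean on the fact, cited from \cite{MLE}, that $ABP$ is Landweber exact and that the Hopf algebroid map $(BP_\ast,BP_\ast BP)\to(ABP_{\ast\ast},ABP_{\ast\ast}ABP)$ exists and is well-behaved. For part (1), the key input is that $ABP\wedge ABP$ is a cellular $ABP$-module whose homotopy is computed by the Künneth/universal coefficient spectral sequence of Proposition \ref{ucss}(1): $\mathrm{Tor}^{ABP_{\ast\ast}}_{\ast\ast\ast}(ABP_{\ast\ast},ABP_{\ast\ast}ABP)$, which — because $ABP_{\ast\ast}ABP$ is a free (hence flat) $ABP_{\ast\ast}$-module, exactly as in the topological case where $BP_\ast BP$ is free over $BP_\ast$ by \cite[Theorem II.16.1]{Adams} — collapses, giving a free $ABP_{\ast\ast}$-module. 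To pin down the basis and its degrees, I would map everything to topology: $R_k$ sends $ABP$ to $BP$, is symmetric monoidal, and preserves colimits, so it sends the Hurewicz-type generators $t^E\in ABP_{\ast\ast}ABP$ (defined by lifting the topological $t^E_{\mathrm{Top}}$ along the Hopf algebroid map, or constructed directly via the $ABP$-module structure) to $t^E_{\mathrm{Top}}$. The bidegree $\bigl(\sum_i e_i(2p^i-2),\sum_i e_i(p^i-1)\bigr)$ is forced by motivic weight considerations: each Landweber/Hazewinkel-type generator of $ABP_{\ast\ast}$ lives in bidegree $(2m,m)$, and the topological degree under $R_k$ recovers the first coordinate while the motivic weight is half of it, so the stated formula is the unique bidegree compatible with $R_k(t^E)=t^E_{\mathrm{Top}}$.

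For the right module structure, I would invoke that the twist map $ABP\wedge ABP\to ABP\wedge ABP$ induces a conjugation $c$ which is an automorphism of $ABP_{\ast\ast}ABP$ intertwining the two $ABP_{\ast\ast}$-actions; then freeness over the left module structure plus applying $c$ gives freeness over the right with basis $\{c(t^E)\}$, exactly mirroring the topological argument. The degrees are preserved because $c$ is degree-preserving (the twist map is).

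For part (2), the cohomology $ABP^{\ast\ast}ABP$ is computed by the universal coefficient spectral sequence of Proposition \ref{ucss}(2), namely $\mathrm{Ext}^{\ast\ast\ast}_{ABP_{\ast\ast}}(ABP_{\ast\ast}ABP, ABP_{\ast\ast})\Rightarrow ABP^{\ast\ast}ABP$; since $ABP_{\ast\ast}ABP$ is free over $ABP_{\ast\ast}$ by part (1), the higher $\mathrm{Ext}$ groups vanish and the limit term is $\mathrm{Hom}_{ABP_{\ast\ast}}(ABP_{\ast\ast}ABP, ABP_{\ast\ast})$. Because the free basis $\{t^E\}$ is infinite but located in degrees that grow without bound in any fixed total degree only finitely often (the motivic finite-type condition, which holds here by the degree formula), this $\mathrm{Hom}$ is the completion $ABP^{\ast\ast}[[s^E]]$ of the free module on the dual basis $s^E$, $\deg(s^E)=\deg(t^E)$, under infinite sums — again in parallel with the classical description of $BP^\ast BP$. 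The main obstacle I anticipate is \emph{convergence and degree bookkeeping}: verifying that the universal coefficient spectral sequences genuinely converge for $ABP\wedge ABP$ and $F(ABP,ABP)$ (one needs $ABP\wedge ABP$ to be a cellular spectrum of suitable finite type, which requires care since $ABP$ is an infinite colimit), and that the completion in part (2) is exactly with respect to the filtration making the $\mathrm{Hom}$ spectral sequence strongly convergent. Once convergence is secured, the identification of bases and degrees is essentially a transport of structure along $R_k$ together with weight constraints, which is routine.
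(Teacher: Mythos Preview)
Your plan for part (1) is circular. The Tor spectral sequence from Proposition \ref{ucss}(1) computes $\pi_{\ast\ast}(M\wedge_E N)$ from $\Tor^{E_{\ast\ast}}(M_{\ast\ast},N_{\ast\ast})$; with $E=ABP$, to have the target be $ABP_{\ast\ast}ABP$ you must take one of the inputs to already be $ABP_{\ast\ast}ABP$ --- indeed, the $E_2$-term you wrote, $\Tor^{ABP_{\ast\ast}}(ABP_{\ast\ast},ABP_{\ast\ast}ABP)$, is tautologically just $ABP_{\ast\ast}ABP$ again. So the spectral sequence carries no information, and your justification ``because $ABP_{\ast\ast}ABP$ is a free $ABP_{\ast\ast}$-module'' is precisely the statement to be proven. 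You do mention Landweber exactness, and that is in fact the entire argument: by Remark \ref{AMU module}(4) together with \cite[Proposition 9.1(i)]{MLE} one has directly
\[
ABP_{\ast\ast}ABP\cong ABP_{\ast\ast}\otimes_{BP_\ast}BP_\ast BP,
\]
and then the classical formula $BP_\ast BP\cong BP_\ast\{t^E_{\Top}\}$ gives freeness and the basis in one line. The bidegrees are forced because $BP_\ast\to ABP_{\ast\ast}$ lands in degree $(2\ast,\ast)$; no separate transport along $R_k$ is needed.

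For part (2) your Ext universal coefficient argument is essentially correct once part (1) is in hand, and close in spirit to the paper's approach. The paper, however, again uses Landweber exactness rather than running a spectral sequence: \cite[Proposition 9.7(i)]{MLE} (together with projectivity of $BP_\ast BP$ over $BP_\ast$) gives $ABP^{\ast\ast}ABP\cong\Hom_{BP_\ast}(BP_\ast BP,ABP_{\ast\ast})$ directly, and this is then identified with $ABP^{\ast\ast}[[s^E]]$ via the classical description in \cite[Lemma 5.12]{JY}. This route sidesteps your anticipated convergence obstacle entirely, since no spectral sequence over the infinite spectrum $ABP$ is ever run.
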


In particular, $\BP_{\ast\ast}\BP$ is a flat $\BP_{\ast\ast}$-module.

\begin{proof}
By Remark \ref{AMU module}(4), \cite[Proposition 9.1.(i)]{MLE} applies to $\BP$, so we have $\BP_{\ast\ast}\BP\cong \BP_{\ast\ast}\otimes_{BP_\ast}BP_\ast BP$. Since $BP_\ast BP\cong BP_\ast\{t^E_{\Top}\}$, the first claim follows. As $BP_\ast BP$ is projective over $BP_\ast$, \cite[Proposition 9.7(i)]{MLE} implies  $\BP^{\ast\ast}\BP\cong \Hom_{BP_\ast}(BP_\ast BP,\BP_{\ast\ast})$. Since the analogue holds for $BP^\ast BP$, this is the same as $\BP^{\ast\ast}\otimes_{BP^\ast}BP^\ast BP$, which is $\BP^{\ast\ast}[[s^E]]$ by \cite[Lemma 5.12]{JY}.
\end{proof}

For any finite motivic cell spectrum $X$, the morphism 
\[ m_{\ast}:\BP_{\ast\ast}(\BP)\otimes_{\BP_{\ast\ast}}\BP_{\ast\ast}( X)\rightarrow \BP_{\ast\ast}(\BP\wedge X)\] 
induced by the $\BP$-module structure of $\BP\wedge X$ is an isomorphism: this holds for $X=S^0$, since $\BP_{\ast\ast}\BP$ is free over $\BP_{\ast\ast}$ and, hence, for any finite $X$ by cellular induction via the five lemma, see also \cite[Lecture 3, Lemma 1]{AdaLect}. More precisely, one has to check that a cofiber sequence $X\rightarrow Y\rightarrow Z$ induces a long exact sequence on both ends of $ m_{\ast}$. On the right hand side, this is the long exact $\BP_{\ast\ast}$-sequence induced by $\BP\wedge X\rightarrow \BP\wedge Y\rightarrow \BP\wedge Z$ and on the left hand side, we get a long exact $\BP_{\ast\ast}$-sequence tensored over the field $\F_p$ with $\F_p\{t^E\}$, which is still exact.
Actually, the above map is an isomorphism for any motivic spectrum $X$ by \cite[Lemma 5.1(i)]{MLE}.

This can be used to define elementary $\BP$-operations as in \cite[Section 1]{JY}:

\begin{defn}\label{s_E}
Let 
\[\psi_X:\BP_{\ast\ast}X\rightarrow \BP_{\ast\ast}\BP\otimes_{\BP_{\ast\ast}}\BP_{\ast\ast}X\]
be the map induced by $1\wedge i\wedge 1:\BP\wedge S^0\wedge X\rightarrow \BP\wedge \BP\wedge X$ (where $i$ is the unit of the ring spectrum $ABP$) followed by $( m_\ast)^{-1}$. Then the elementary $\BP$-operation $s_E:\BP_{\ast\ast}X\rightarrow \BP_{\ast\ast}X$ is defined by 
\[\psi_X(x)=\sum_{E}{c(t^E)\otimes s_E(x)}.\]
\end{defn}

The $s^E\in \BP^{\ast\ast}\BP$ from the above lemma are special cases of these operations (see \cite[Lemma 5.12]{JY}). The $\BP$-operations satisfy a Cartan formula similar to \cite[Formula (1.7)]{JY}:

\begin{lemma}\label{Cartan}
If $y\in \BP_{\ast\ast}$ and $x\in \BP_{\ast\ast}X$, then 
\[s_E(yx)=\sum_{F+G=E}s_F(y)s_G(x).\]
\end{lemma}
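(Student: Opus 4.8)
The plan is to prove the Cartan formula for the elementary $\BP$-operations $s_E$ by transporting the coaction-compatibility of the multiplication map $\BP_{\ast\ast}\otimes_{\F_p}\BP_{\ast\ast}X\to\BP_{\ast\ast}X$ into a statement about the comodule structure maps $\psi$. First I would recall that $\psi_X$ is a map of $\BP_{\ast\ast}\BP$-comodules, meaning it is natural in $X$ and, crucially, that the $\BP$-module structure map $m_X:\BP\wedge\BP\wedge X\to\BP\wedge X$ and the corresponding map for $X=S^0$ fit into a commuting square with the multiplication $\BP_{\ast\ast}\otimes\BP_{\ast\ast}X\to\BP_{\ast\ast}X$. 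Concretely, for $y\in\BP_{\ast\ast}=\BP_{\ast\ast}S^0$ and $x\in\BP_{\ast\ast}X$, the product $yx$ is obtained by applying $\BP\wedge\mu\wedge\id:\BP\wedge\BP\wedge X\to\BP\wedge X$ after the appropriate smash of representatives, and $\psi$ is defined using $1\wedge i\wedge 1$; the compatibility of these two operations yields $\psi_X(yx)=\psi_{S^0}(y)\cdot\psi_X(x)$ where the product on the right is taken in $\BP_{\ast\ast}\BP\otimes_{\BP_{\ast\ast}}(-)$ using the ring structure of $\BP_{\ast\ast}\BP$.

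Next I would expand both sides in the basis $\{c(t^E)\}$ of $\BP_{\ast\ast}\BP$ as a right $\BP_{\ast\ast}$-module (Lemma~\ref{ABPABP}(1)). Writing $\psi_{S^0}(y)=\sum_F c(t^F)\otimes s_F(y)$ and $\psi_X(x)=\sum_G c(t^G)\otimes s_G(x)$, the product $\psi_{S^0}(y)\cdot\psi_X(x)$ involves the multiplication $c(t^F)\cdot c(t^G)$ in $\BP_{\ast\ast}\BP$. Here I would use that $R_k(t^E)=t^E_{\Top}$ and that, by Lemma~\ref{ABPABP}, $\BP_{\ast\ast}\BP\cong\BP_{\ast\ast}\otimes_{BP_\ast}BP_\ast BP$, so the structure constants of the product in the basis $\{t^E\}$ (equivalently $\{c(t^E)\}$) are exactly the topological ones base-changed along $BP_\ast\to\BP_{\ast\ast}$. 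In topology the Cartan formula $s_E(yx)=\sum_{F+G=E}s_F(y)s_G(x)$ of \cite[Formula (1.7)]{JY} is equivalent to the statement that $t^F\cdot t^G$, when expressed in the $t$-basis, has the "additive" leading behaviour making the coefficient of $c(t^E)$ in the product pick out precisely the sum over $F+G=E$; comparing coefficients of $c(t^E)$ on both sides of $\psi_X(yx)=\psi_{S^0}(y)\cdot\psi_X(x)$ then gives the claimed formula verbatim in the motivic setting.

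The cleanest route, which I would ultimately prefer, is to avoid re-deriving the combinatorics by pushing everything through the Hopf algebroid map $(BP_\ast,BP_\ast BP)\to(\BP_{\ast\ast},\BP_{\ast\ast}\BP)$ of \cite{MLE}: since $\psi_X$ is obtained from the topological coaction by base change (using $\BP_{\ast\ast}(\BP\wedge X)\cong\BP_{\ast\ast}\BP\otimes_{\BP_{\ast\ast}}\BP_{\ast\ast}X$ and the analogous topological isomorphism, both compatible via $R_k$ and the unit), the operations $s_E$ and their action on products are the base-changed versions of the topological $s_E$. Then the Cartan formula follows formally from \cite[Formula (1.7)]{JY}. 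The main obstacle I anticipate is purely bookkeeping: making precise that $\psi_X$ on the motivic side really is the base change of the topological comodule structure map — i.e. checking that the isomorphism $m_\ast:\BP_{\ast\ast}\BP\otimes_{\BP_{\ast\ast}}\BP_{\ast\ast}X\to\BP_{\ast\ast}(\BP\wedge X)$ is natural and compatible with its topological counterpart under $R_k$, and that the unit $i:S^0\to\BP$ and multiplication $\mu:\BP\wedge\BP\to\BP$ realise to their topological analogues — after which the formula is immediate. No genuinely new computation should be required beyond what is already recorded in Lemma~\ref{ABPABP} and the discussion following it.
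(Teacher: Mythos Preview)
Your first two paragraphs are essentially the paper's approach: establish $\psi_X(yx)=\psi_{S^0}(y)\psi_X(x)$, expand in the basis $\{c(t^E)\}$, and compare coefficients. The difference is that you make the middle step far harder than necessary. You propose to import the structure constants of the product $c(t^F)c(t^G)$ from topology via the base change $\BP_{\ast\ast}\BP\cong\BP_{\ast\ast}\otimes_{BP_\ast}BP_\ast BP$, and then invoke the topological Cartan formula \cite[(1.7)]{JY} to finish. The paper instead observes the trivial fact: by definition $t^E=\prod_i t_i^{e_i}$ is a monomial in the polynomial generators $t_i$, so $t^F t^G=t^{F+G}$ identically, hence $c(t^F)c(t^G)=c(t^{F+G})$, and the coefficient comparison is immediate. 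No appeal to topology is needed.

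Your third paragraph, the ``cleanest route'' of reducing to the topological formula via realisation and base change, is actually the shakiest part. The compatibility in Lemma~\ref{hopf alg}(3) only concerns the coaction on $\BP_{\ast\ast}$ itself (i.e.\ $X=S^0$), where $BP_\ast\hookrightarrow\BP_{\ast\ast}$. For a general motivic $X$ there is no inclusion of $BP_\ast(R_k X)$ into $\BP_{\ast\ast}X$ along which to transport the topological $s_E$, so ``the operations $s_E$ are the base-changed versions of the topological $s_E$'' does not make sense as stated. Stick with the direct argument; it is two lines once you notice $t^F t^G=t^{F+G}$.
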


\begin{proof}
We have to show that 
\[\psi_X(yx)=\sum_E\left(c(t^E)\otimes \sum_{F+G=E}s_F(y)s_G(x)\right).\] 
Since $\psi_X$ is a map of $\BP_{\ast\ast}$-modules, $\psi_X(yx)=\psi_{S^0}(y)\psi_X(x)$, which is equal to $\sum_E\sum_{F+G=E}(c(t^F)c(t^G)\otimes s_F(y)s_G(x))$. As $F$ and $G$ are exponent sequences, $t^F t^G=t^E$ and it follows that $\psi_X(yx)=\sum_E(c(t^E)\otimes \sum_{F+G=E}s_F(y)s_G(x))$.
\end{proof}

The next lemma compares the Hopf algebroid structures of $(BP_\ast,BP_\ast BP)$ and $(ABP_{\ast\ast}, ABP_{\ast\ast}ABP)$ and is closely related to \cite[Section 5]{MLE}.

\begin{lemma}\label{hopf alg}
$(\BP_{\ast\ast},\BP_{\ast\ast}\BP)$ is a flat Hopf algebroid, and there is a map of Hopf algebroids $(BP_\ast,BP_\ast BP)\rightarrow (\BP_{\ast\ast},\BP_{\ast\ast}\BP)$ such that the following hold:
\begin{compactenum}[(1)]
\item $BP_\ast\rightarrow \BP_{\ast\ast}$ is the inclusion into $\underset{i}\bigoplus \BP_{(2i,i)}$, mapping $v_i^{\Top}$ to $v_i$.
\item $BP_\ast BP\rightarrow \BP_{\ast\ast}\BP$ is the map $BP_\ast\{t^E_{\Top}\}\rightarrow \BP_{\ast\ast}\{t^E\}$ given by (1) on $BP_\ast$ and mapping $t^E_{\Top}$ to $t^E$.
\item The map $\psi=\psi_{S^0}$ from Definition \ref{s_E} is the coaction map of $\BP_{\ast\ast}$ as a left $(\BP_{\ast\ast},\BP_{\ast\ast}\BP)$-comodule and, similarly, for the map $\psi^{\Top}$ from \cite{JY}. Furthermore, the map of Hopf algebroids preserves the comodule structure in the sense that
\[\xymatrix{\ar @{} [dr] |{}
BP_\ast \ar[d] \ar[rrr]^{\psi^{\Top}} &&& BP_\ast BP\otimes_{BP_\ast}BP_\ast\ar[d] \\
\BP_{\ast\ast}  \ar[rrr]^{\psi} &&& \BP_{\ast\ast} \BP\otimes_{\BP_{\ast\ast}}\BP_{\ast\ast }}\]
commutes.
\end{compactenum}
\end{lemma}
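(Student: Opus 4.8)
The statement is essentially a transfer of structure along the known topological facts, using the Landweber exactness of $ABP$ recorded in Remark \ref{AMU module}(4) and the computations of $ABP_{\ast\ast}ABP$ from Lemma \ref{ABPABP}. First I would establish that $(ABP_{\ast\ast},ABP_{\ast\ast}ABP)$ is a flat Hopf algebroid: flatness of $ABP_{\ast\ast}ABP$ over $ABP_{\ast\ast}$ is already noted after Lemma \ref{ABPABP} (it is even free, with basis $\{t^E\}$), and the structure maps $\eta_L,\eta_R,\epsilon,\Delta,c$ are obtained by applying $\pi_{\ast\ast}$ to the evident maps of spectra $ABP\to ABP\wedge ABP$ (two unit maps), $ABP\wedge ABP\to ABP$ (multiplication), $ABP\wedge ABP\to ABP\wedge ABP\wedge ABP$ (insert a unit in the middle), and the twist $ABP\wedge ABP\to ABP\wedge ABP$. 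Checking the Hopf-algebroid axioms is formal once one knows that the relevant smash-product identifications are the ones coming from $ABP$ being a homotopy-commutative ring spectrum (Remark \ref{AMU module}(2)); this is the same verification as in topology and I would only indicate it, citing \cite[Appendix A1]{Rav} or \cite[Section 2]{MLE} for the template.

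Next I would construct the map of Hopf algebroids. Part (1): the ring map $BP_\ast\to ABP_{\ast\ast}$ is the composite of the Hopf-algebroid unit $BP_\ast\to MU_\ast\xrightarrow{\text{Hurewicz/orientation}} MGL_{2\ast,\ast}$ with the projection to $ABP_{2\ast,\ast}$; concretely this is the map sending the chosen polynomial generators $v_i^{\mathrm{Top}}=a_{p^i-1}^{\mathrm{Top}}$ to $v_i=a_{p^i-1}$, which lands in $\bigoplus_i ABP_{(2i,i)}$ by the degree bookkeeping already used in Definition \ref{AK(n)} and Lemma \ref{ABPABP}. This is exactly the map studied in \cite[Section 5]{MLE} (it is the composite $BP_\ast=\pi_\ast BP\to\pi_{\ast\ast}c_k(BP)\to ABP_{\ast\ast}$, or equivalently comes from Landweber exactness), and I would quote that reference for the fact that it is a ring map. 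Part (2): on $BP_\ast BP=BP_\ast\{t^E_{\mathrm{Top}}\}$ one defines the map by (1) on coefficients and $t^E_{\mathrm{Top}}\mapsto t^E$; that this is compatible with $\eta_L,\eta_R,\Delta,c,\epsilon$ follows because, by Lemma \ref{ABPABP} and the Landweber-exactness isomorphism $ABP_{\ast\ast}ABP\cong ABP_{\ast\ast}\otimes_{BP_\ast}BP_\ast BP$, the target Hopf algebroid is literally the base change of $(BP_\ast,BP_\ast BP)$ along $BP_\ast\to ABP_{\ast\ast}$ — so the map of Hopf algebroids is the canonical one into a base change, and all axioms are automatic. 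I would phrase this as: the square in the statement is the unit of the base-change adjunction, hence a map of Hopf algebroids by construction.

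For part (3), the claim is that $\psi=\psi_{S^0}$ of Definition \ref{s_E} agrees with the left-comodule coaction of $ABP_{\ast\ast}$ over $(ABP_{\ast\ast},ABP_{\ast\ast}ABP)$, and likewise topologically, and that the displayed square commutes. The first point is immediate from the definitions: the comodule coaction on $ABP_{\ast\ast}=ABP_{\ast\ast}(S^0)$ is by definition $\pi_{\ast\ast}$ of $S^0\wedge ABP\to ABP\wedge S^0\wedge ABP$ followed by the inverse of $m_\ast\colon ABP_{\ast\ast}ABP\otimes_{ABP_{\ast\ast}}ABP_{\ast\ast}\xrightarrow{\cong}ABP_{\ast\ast}(ABP)$, and that is exactly $\psi_{S^0}$. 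The commutativity of the square then follows from naturality: the map of spectra realizing $\psi^{\mathrm{Top}}$ maps, under $c_k$ (or under the Landweber-exact comparison of \cite{MLE}), to the one realizing $\psi$, and $m_\ast$ is natural, so inverting it preserves the square. I expect the main obstacle to be purely bookkeeping: making sure the two different descriptions of $ABP_{\ast\ast}ABP$ — the geometric one (as $\pi_{\ast\ast}$ of a smash product, with its Hopf-algebroid maps read off spectra) and the algebraic base-change one (from Lemma \ref{ABPABP}) — are identified compatibly with all structure maps, so that "base change of a Hopf algebroid" is justified rather than merely plausible. Once that identification is pinned down (it is \cite[Propositions 9.1, 9.7]{MLE}), everything else is formal, and I would keep the write-up short, citing \cite{MLE} for the comparison and \cite{JY} for the topological $\psi^{\mathrm{Top}}$.
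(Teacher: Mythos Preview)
Your proposal is correct and largely parallels the paper's proof: both establish the flat Hopf algebroid structure via \cite[Corollary 5.2(i)]{MLE} and handle (1) and (3) essentially as you describe. The main difference is in part (2). You argue that $(ABP_{\ast\ast}, ABP_{\ast\ast}ABP)$ is the base change of $(BP_\ast, BP_\ast BP)$ along $BP_\ast \to ABP_{\ast\ast}$, so the map is the canonical one into a base change and compatibility with all structure maps is automatic. The paper instead takes a more explicit route: it invokes \cite[Corollary 6.7]{MLE} to obtain a map of Hopf algebroids $(MU_\ast, MU_\ast MU) \to (MGL_{\ast\ast}, MGL_{\ast\ast}MGL)$ from the orientation (classifying the formal group law and the strict isomorphism between the left- and right-unit FGLs on $MGL \wedge MGL$), and then verifies by a direct formal-group-law computation that this map sends $b_i^{\Top}$ to the element $b_i$ defined via the base-change isomorphism --- which is precisely the check that the orientation-induced Hopf algebroid map agrees with the base-change inclusion under the identification of Lemma \ref{ABPABP}. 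It then passes to the $p$-typical case to obtain $t_i^{\Top} \mapsto t_i$. Your approach is shorter but delegates the ``bookkeeping'' you flag to \cite{MLE}; the paper's FGL argument is exactly that bookkeeping carried out explicitly, and has the virtue of being self-contained about why the geometric and base-change descriptions of the Hopf algebroid structure on $ABP_{\ast\ast}ABP$ agree.
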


\begin{proof}
In \cite[Corollary 5.2(i)]{MLE}, it is shown that $(E_{\ast\ast},E_{\ast\ast}E)$ is a flat Hopf algebroid whenever $E$ is a cellular ring spectrum and $E_{\ast\ast}E$ is a flat $E_{\ast\ast}$-module. This is the case for $E=\BP$ by Remark \ref{AMU module} and Lemma \ref{ABPABP}(1). Furthermore, an orientation on $E$ induces a map of Hopf algebriods $(MU_\ast,MU_\ast MU)\rightarrow (E_{\ast\ast},E_{\ast\ast}E)$ by \cite[Corollary 6.7]{MLE}, where $MU_\ast\rightarrow E_{\ast\ast}$ is the map classifying the formal group law (FGL) given by the orientation on $E$ and $MU_\ast MU\rightarrow E_{\ast\ast}E$ classifies the strict isomorphism of formal group laws induced on $E\wedge E$ by the left and right units $E\rightarrow E\wedge E$.

If the FGL associated with the orientation of $E$ is p-typical, the map of Hopf algebroids factors through $(BP_{\ast},BP_{\ast}BP)$ because $BP_\ast$ and $BP_\ast BP$ classify p-typical group laws and strict isomorphisms of p-typical group laws, respectively (see \cite[Appendix 2]{green}). Recall that $\BP$ is oriented by $MU_\ast\rightarrow MGL_{\ast\ast}\rightarrow \BP_{\ast\ast}$ and its FGL is p-typical because this factors as $MU_\ast\rightarrow BP_\ast\rightarrow \BP_{\ast\ast}$ by the construction of $\BP$, where the latter map is as claimed in (1). Hence, we get a map of Hopf algebroids $(BP_\ast,BP_\ast BP)\rightarrow (\BP_{\ast\ast},\BP_{\ast\ast}\BP)$ satisfying (1).

Before we pove (2), we will show the analogous statement for $MGL$. By \cite[Corollary 6.7]{MLE}, as above, there is a map of Hopf algebroids 
\[(MU_\ast, MU_\ast MU)\rightarrow (MGL_{\ast\ast},MGL_{\ast\ast}MGL),\] determined by the complex orientation on $MGL$. Let $x$ be the orientation on $MGL\wedge MGL$ induced by the left unit $MGL\rightarrow MGL\wedge MGL$ and $x'$ be the orientation induced by the right unit. By \cite[Lemma 6.4.(ii)]{MLE}, $x'=\underset{i\geq 0}\sum b_i x^{i+1}$. The orientations $x$ and $x'$ correspond to formal group laws $F_L$ and $F_R$ and the formula implies that the $b_i$ are the coefficients of the power series of the strict isomorphism $\varphi$ between $F_L$ and $F_R$ (as in the proof of \cite[Theorem 4.1.11]{green}). The same formula holds for the orientations on $MU\wedge MU$ by \cite[Lemma 4.1.8]{green} and the strict isomorphism between the FGLs $F_L^{\Top}$ and $F_R^{\Top}$ therefore has coefficients $b_i^{\Top}$. By definition, $b_i$ is the image of $b_i^{\Top}$ under $MU_\ast[b_i^{\Top}]\cong MU_\ast MU\rightarrow MGL_{\ast\ast}\otimes_{MU_\ast}MU_\ast MU\cong MGL_{\ast\ast} MGL$, as in \cite[Lemma 6.4.(i)]{MLE}. 

$MU_\ast MU$ classifies strict isomorphisms $F\overset{f}\rightarrow G$ of FGLs in the following way: $MU_\ast MU\cong MU_\ast[b_i^{\Top}]$, where $MU_\ast$ classifies $F$, $f$ is given by a power series in $b_i^{\Top}$ and $G$ is determined by $F$ and $f$. 

In our setting, the map $MU_\ast MU\rightarrow MGL_{\ast\ast}MGL$ is the map corresponding to the strict isomorphism $F_L\overset{\varphi}\rightarrow F_R$. Furthermore, $F_L$ is the FGL associated with the orientation of $MGL_{\ast\ast} MGL$ given by $MU_\ast\rightarrow MGL_{\ast\ast}\rightarrow MGL_{\ast\ast}[b_i]\cong MGL_{\ast\ast}MGL$ (because the isomorphism herein is an isomorphism of left $MGL_{\ast\ast}$-modules), and, similarly, $F_L^{\Top}$
 is the FGL associated with the orientation $MU_\ast\rightarrow MU_\ast[b_i^{\Top}]\cong MU_\ast MU$. This implies that the following square commutes, where the left horizontal maps are the obvious inclusions, and the vertical maps are the maps from \cite[Corollary 6.7]{MLE}.
\[\xymatrix{\ar @{} [dr] |{}
MU_\ast \ar[d] \ar[r] & MU_\ast[b_i^{\Top}]\ar[r]^{\cong}& MU_\ast MU\ar[d] \\
MGL_{\ast\ast}  \ar[r] & MGL_{\ast\ast}[b_i] \ar[r]^{\cong}& MGL_{\ast\ast}MGL.}\]
In terms of group laws, the right hand map sends $\varphi^{\Top}$ to $\varphi$. Since these are the power series described above, $b_i^{\Top}$ is sent to $b_i$. This proves the $MGL$-version of (2).

For the $\BP$-version, one has to show that the following diagram commutes, where the right map is the Hopf algebroid morphism and the left map is as described in (2).
\[\xymatrix{\ar @{} [dr] |{}
BP_\ast[t_i^{\Top}]\ar[d]\ar[r]^{\cong}& BP_\ast BP\ar[d] \\
\BP_{\ast\ast}[t_i] \ar[r]^{\cong}& \BP_{\ast\ast}\BP.}\]
The proof is exactly the same as in the case of $MGL$. One simply has to replace each FGL by the corresponding $p$-typical FGL.

For (3), note that, by its definition, $\psi$ is the coaction map that comes naturally with any flat Hopf algebroid $(E_{\ast\ast},E_{\ast\ast}E)$ (as in \cite[Corollary 5.2(i)]{MLE}), meaning in particular that the diagram in (3) commutes.
\end{proof}

\begin{defn}\label{exp seq}
For an exponent sequence $E=(e_1,e_2,\cdots)$ as in Lemma \ref{ABPABP}(1), we set $|E|=\sum_i{e_i(2p^i-2)}$. Let $I_m=(p,v_1,\cdots,v_{m-1})\subset \BP_{\ast\ast}$ be the usual prime ideal.
\end{defn}

\begin{cor}\label{modulo}
Consider $s_E:\BP_{\ast\ast}\rightarrow \BP_{\ast\ast}$ as in Definition \ref{s_E} and assume that $|E|\geq 2kp^s(p^n-p^m)$ for $n\geq m$, $s\geq 0$ and $k\geq 1$. Then 
\[s_E(v_n^{kp^s})=\begin{cases}v_m^{kp^s} &\mod I^{s+1}_m \textup{ if } e_{n-m}=kp^{s+m}\textup{ and } e_i=0 \textup{ for } i\neq n-m\\ 0&\mod I^{s+1}_m \textup{ otherwise.}\end{cases}\]
\end{cor}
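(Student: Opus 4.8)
The statement is the motivic analog of the classical computation of the elementary $BP$-operations $s_E$ applied to powers of $v_n$ modulo powers of the invariant prime ideal, carried out in \cite{JY} (see their Lemma around the proof of Theorem 0.1). The plan is to transfer the topological computation along the map of Hopf algebroids $(BP_\ast, BP_\ast BP) \to (ABP_{\ast\ast}, ABP_{\ast\ast}ABP)$ from Lemma \ref{hopf alg}. First I would recall that, over $\C$, by Lemma \ref{Ah} we have $ABP_{\ast\ast} \cong BP_\ast[\tau]$ with $v_i^{\Top} \mapsto v_i$, and similarly $ABP_{\ast\ast}ABP \cong BP_\ast BP[\tau] \cong BP_{\ast\ast}\{t^E\}$ by Lemma \ref{ABPABP}(1), with $\tau$ a polynomial generator in bidegree $(0,1)$ that is a permanent cycle, mapping to $1$ under $R_k$; note the degree bookkeeping: $|E|$ counts only the topological (first) grading, matching Definition \ref{exp seq}.

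The key step is the compatibility square in Lemma \ref{hopf alg}(3): the coaction $\psi$ on $ABP_{\ast\ast}$ is obtained from $\psi^{\Top}$ by base change along $BP_\ast \to ABP_{\ast\ast}$, i.e. $\psi = \psi^{\Top} \otimes_{BP_\ast} ABP_{\ast\ast}$ in the appropriate sense, and it sends $v_n \in BP_\ast \subseteq ABP_{\ast\ast}$ to the image of $\psi^{\Top}(v_n^{\Top})$. Since $v_n$ lies in the image of $BP_\ast$, the operation $s_E(v_n^{kp^s})$ is simply the image under $BP_\ast \to ABP_{\ast\ast}$ of $s_E^{\Top}((v_n^{\Top})^{kp^s})$. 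Therefore I would invoke the topological formula: in \cite{JY} it is shown (this is exactly their ingredient for \cite[Theorem 0.1]{JY}) that $s_E^{\Top}((v_n^{\Top})^{kp^s}) \equiv (v_m^{\Top})^{kp^s} \bmod (I_m^{\Top})^{s+1}$ precisely when $E$ is the sequence with $e_{n-m} = kp^{s+m}$ and all other entries zero (the hypothesis $|E| \geq 2kp^s(p^n - p^m)$ rules out, for degree reasons, all sequences of larger weight contributing, and the displayed sequence is the minimal-weight one giving a nonzero contribution modulo $I_m^{s+1}$), and is $\equiv 0$ otherwise. Applying the ring map $BP_\ast \to ABP_{\ast\ast}$, which sends $I_m^{\Top} = (p, v_1^{\Top}, \ldots, v_{m-1}^{\Top})$ into $I_m = (p, v_1, \ldots, v_{m-1})$ and respects powers, gives the claimed congruence in $ABP_{\ast\ast}$.

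The main obstacle I anticipate is making the identification "$s_E$ on elements of $BP_\ast$ agrees with the base change of $s_E^{\Top}$" fully rigorous: one must check that the definition of $s_E$ via $\psi_{S^0}$ and $(m_\ast)^{-1}$ (Definition \ref{s_E}) is compatible with the Hopf algebroid coaction identified in Lemma \ref{hopf alg}(3), and that the dual-basis element $c(t^E)$ in $ABP_{\ast\ast}ABP$ is indeed the image of $c(t^E_{\Top})$, so that extracting the coefficient of $c(t^E)$ commutes with base change. This is essentially bookkeeping once Lemma \ref{hopf alg} is in hand, together with the fact (Lemma \ref{ABPABP}) that $ABP_{\ast\ast}ABP$ is free on $\{t^E\}$ over $ABP_{\ast\ast}$ with the $t^E$ being the images of $t^E_{\Top}$, so the $\{c(t^E)\}$ form a basis as well and coefficient extraction is unambiguous. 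A secondary, purely arithmetic point is verifying the degree inequality $|E| \geq 2kp^s(p^n - p^m)$ does the same job as in \cite{JY}: since $\deg t^{(e_1,e_2,\ldots)}$ has first component $\sum e_i(2p^i - 2) = |E|$ and $v_n^{kp^s}$ has first degree $2kp^s(p^n - 1)$ while $v_m^{kp^s}$ has first degree $2kp^s(p^m-1)$, the weight available for $t^E$ is exactly $2kp^s(p^n - p^m)$, forcing the sequence $E$ as stated; this is identical to the topological count and requires no new input.
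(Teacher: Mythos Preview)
Your proposal is correct and follows essentially the same approach as the paper: use the commutative square of Lemma \ref{hopf alg}(3) to transport the topological computation of $s_E^{\Top}((v_n^{\Top})^{kp^s})$ from \cite[Lemma 2.1]{JY} along the map of Hopf algebroids, noting that $v_n$, $t^E$, $c(t^E)$ and $I_m$ are the images of their topological counterparts. One small remark: your invocation of Lemma \ref{Ah} (which requires $k=\C$) is unnecessary here---the paper's proof works for any $k\subseteq\C$ using only Lemma \ref{hopf alg} and Lemma \ref{ABPABP}, so you can drop that step.
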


\begin{proof}
By the above Lemma, the following diagram commutes:
\[\xymatrix{\ar @{} [dr] |{}
BP_\ast \ar[d] \ar[rrr]^{\psi^{\Top}} &&& BP_\ast BP\otimes_{BP_\ast}BP_\ast\ar[d] \\
\BP_{\ast\ast}  \ar[rrr]^{\psi} &&& \BP_{\ast\ast} \BP\otimes_{\BP_{\ast\ast}}\BP_{\ast\ast },}\]
where the vertical arrows send $v_n^{\Top}$ to $v_n$ and $t^E_{\Top}$ to $t^E$. We consider the element $(v_n^{\Top})^{kp^s}\in BP_{\ast}$. It is mapped horizontally to 
\[\psi^{\Top}((v_n^{\Top})^{kp^s})=\sum_E{c^{\Top}(t_{\Top}^E)\otimes s_E^{\Top}((v_n^{\Top})^{kp^s})}.\] 
By \cite[Lemma 2.1]{JY}, $s_E^{\Top}((v_n^{\Top})^{kp^s})$ satisfies the formula we want to prove. Since all the elements from the topological case map to the corresponding elements in the lower row, the formula has to hold there, too.
\end{proof}

The rest of the proof of Theorem \ref{torsion} is exactly the same as \cite[Lemmas 2.2 and 2.3]{JY}, relying mainly on the above lemma and the Cartan formula. Theorem \ref{torsion} implies the following corollary. The analogous topological statement can be found in the proof of \cite[Theorem 2.1(d)]{RavLoc}.

\begin{cor}\label{bousfieldE}
Let $k\subseteq \C$. If $AE(n)_{\ast\ast} X=0$, then also $AE(i)_{\ast\ast}X=0$ for all $i\leq n$. In terms of Bousfield classes in $\SH(k)$:
\[\left\langle AE(n) \right\rangle \geq \left\langle AE(i) \right\rangle \textup{ for all } n\geq i.\]
\end{cor}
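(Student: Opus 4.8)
The plan is to mimic the derivation of the topological statement, \cite[proof of Theorem 2.1(d)]{RavLoc}, using Theorem \ref{torsion} as the motivic replacement for \cite[Theorem 0.1]{JY}. Recall that $AE(n) = v_n^{-1}ABP/(v_{n+1},v_{n+2},\cdots)$. First I would observe that, for any $X\in\SH(k)$, the graded group $AE(n)_{\ast\ast}X = \pi_{\ast\ast}(AE(n)\wedge X)$ carries the structure of an $ABP_{\ast\ast}ABP$-comodule: since $ABP_{\ast\ast}ABP$ is flat over $ABP_{\ast\ast}$ (Lemma \ref{ABPABP} and the paragraph after it) and $AE(n)$ is an $ABP$-module spectrum, one gets a coaction $AE(n)_{\ast\ast}X \to ABP_{\ast\ast}ABP \otimes_{ABP_{\ast\ast}} AE(n)_{\ast\ast}X$ exactly as in the construction preceding Definition \ref{s_E}. (Alternatively, one can work directly with $ABP_{\ast\ast}(AE(n)\wedge X)$, which is an $ABP_{\ast\ast}ABP$-comodule, and note it is nonzero iff $AE(n)_{\ast\ast}X$ is.)

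Next, the key algebraic input: in $AE(n)_{\ast\ast}$ the class $v_i$ is $v_n$-torsion for every $i<n$ (indeed, $v_i$ is nilpotent modulo the relations, or more precisely $v_n$ acts invertibly while $v_i$ lies in the ideal generated by $v_0,\dots,v_{n-1}$ in a way that becomes $v_n$-torsion after inverting $v_n$ — this is the motivic shadow of the topological computation of $E(n)_\ast$, available over $\C$ via $AE(n)_{\ast\ast}\cong E(n)_\ast[\tau]$ by Lemma \ref{Ah}, and over general $k\subseteq\C$ by applying $R_k$ and the fact that $R_k(v_i)=v_i^{\Top}$). More to the point, I would argue as in \cite{RavLoc}: if $AE(n)_{\ast\ast}X = 0$, then every element of the comodule $ABP_{\ast\ast}(AE(n)\wedge X)$ is killed by a power of $v_n$ — but this is vacuous, so instead one runs the argument the other way. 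The actual mechanism in the topological proof is that $\langle E(n)\rangle$ is detected by a Bockstein-type long exact sequence relating $E(n)$, $E(n-1)$ and $v_n$-multiplication, combined with the torsion theorem. So the honest plan is: use Theorem \ref{torsion} to conclude that any $v_n$-torsion element in the relevant $ABP_{\ast\ast}ABP$-comodule associated to $X$ is already $v_{n-1}$-torsion, then feed this into the cofiber sequence $\Sigma^{2(p^n-1),p^n-1}AE(n-1) \xrightarrow{v_n} AE(n-1)$-type sequence (or the one relating $AP(n-1)$ and $AP(n)$) to pass from vanishing of $AE(n)_{\ast\ast}X$ down to vanishing of $AE(n-1)_{\ast\ast}X$, and then induct.

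Concretely the induction step is: assume $AE(n)_{\ast\ast}X=0$; I want $AE(n-1)_{\ast\ast}X=0$. One has a cofiber sequence of $ABP$-modules $\Sigma^{?,?}AE(n-1) \xrightarrow{v_n} AE(n-1) \to AE(n)\wedge(\text{something})$ realizing the passage from $AE(n-1)$ to a $v_n$-local object; smashing with $X$ and taking $\pi_{\ast\ast}$ gives that multiplication by $v_n$ on $AE(n-1)_{\ast\ast}X$ has cokernel and kernel controlled by $AE(n)_{\ast\ast}X = 0$, hence $v_n$ acts bijectively on $AE(n-1)_{\ast\ast}X$. But $AE(n-1)_{\ast\ast}X$ is a comodule all of whose elements are $v_{n-1}$-torsion (since $v_{n-1}$ is already inverted-away in $AE(n-1)$, wait — rather $v_{n-1}$ is inverted in $AE(n-1)$, so one instead uses that $v_n$ is \emph{nilpotent} on $AE(n-1)_{\ast\ast}$ modulo... ), so by Theorem \ref{torsion} applied appropriately the bijectivity of $v_n$ forces the whole module to vanish. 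I would then iterate to get all $i\le n$, and finally restate in terms of Bousfield classes: $AE(n)_{\ast\ast}X=0 \Rightarrow AE(i)_{\ast\ast}X = 0$ says precisely that the class $\langle AE(i)\rangle = \{X : AE(i)_{\ast\ast}X\neq 0\}$ is contained in $\langle AE(n)\rangle$, i.e. $\langle AE(n)\rangle \ge \langle AE(i)\rangle$.

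\textbf{Main obstacle.} The delicate point is getting the exact cofiber sequence and the exact torsion statement to line up: in the topological case \cite{RavLoc} this is a short, well-oiled argument because $E(n)_\ast$ is completely explicit and the $v_n$-Bockstein is standard. Motivically I must be careful that (a) the relevant module is genuinely an $ABP_{\ast\ast}ABP$-comodule so that Theorem \ref{torsion} applies — this needs the flatness and Landweber-type results of Lemma \ref{ABPABP} and Lemma \ref{hopf alg}, which are in hand; and (b) the structural facts about $AE(n)_{\ast\ast}$ (e.g. exactly which $v_i$ are torsion, and the cofiber sequence relating successive $AE(n)$) hold over $k\subseteq\C$ and not merely $k=\C$. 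For (b) the safest route is to reduce along $R_k:\SH(k)\to\SH$, using $R_k(AE(n))=E(n)$ and $R_k(v_i)=v_i^{\Top}$ together with the corresponding topological facts; the only thing one loses under $R_k$ is information about the $\tau$-direction, but for the vanishing statement that is irrelevant since $AE(n)_{\ast\ast}X$ is a module over $ABP_{\ast\ast}$ and $v_n$-torsion can be tested after applying $R_k$ once the comodule structure is known. I expect this reduction, plus invoking Theorem \ref{torsion}, to be the crux, and the rest to be formal manipulation of long exact sequences.
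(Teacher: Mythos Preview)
Your instinct to reduce everything to Theorem \ref{torsion} is correct, but the mechanism you propose for doing so does not work. There is no cofiber sequence of the shape $\Sigma^{?,?}AE(n-1)\xrightarrow{v_n}AE(n-1)\to AE(n)\wedge(\text{something})$: the spectrum $AE(n-1)=v_{n-1}^{-1}ABP/(v_n,v_{n+1},\dots)$ already has $v_n$ killed, so multiplication by $v_n$ on $AE(n-1)$ is the zero map, and its cofiber is just $AE(n-1)\vee\Sigma AE(n-1)$, which carries no information about $AE(n)$. Your own hesitation in the middle of the argument (``wait --- rather $v_{n-1}$ is inverted in $AE(n-1)$'') is exactly the symptom: you are trying to run a Bockstein that does not exist at the level of the spectra $AE(n)$.

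The missing idea is Landweber exactness. Since $E(n)$ is Landweber exact over $BP_\ast$, the $ABP$-version of \cite[Theorem 8.7]{MLE} gives a natural isomorphism
\[
AE(n)_{\ast\ast}(X)\;\cong\;ABP_{\ast\ast}(X)\otimes_{BP_\ast}E(n)_\ast
\]
for every $X\in\SH(k)$, with the $ABP_{\ast\ast}ABP$-comodule structure coming from $\psi_X$ on the factor $ABP_{\ast\ast}(X)$ (Definition \ref{s_E}). This single formula replaces all of the cofiber-sequence gymnastics. Writing $M=ABP_{\ast\ast}(X)\otimes_{BP_\ast}BP_\ast/(v_{n+1},v_{n+2},\dots)$, the condition $AE(n)_{\ast\ast}X=0$ says precisely that the $ABP_{\ast\ast}ABP$-comodule $M$ is $v_n$-torsion. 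Theorem \ref{torsion} then gives that $M$ is $v_i$-torsion for every $i\le n$; passing to the further quotient by $(v_{i+1},\dots,v_n)$ preserves $v_i$-torsion, so $ABP_{\ast\ast}(X)\otimes_{BP_\ast}BP_\ast/(v_{i+1},\dots)$ is $v_i$-torsion, i.e.\ $AE(i)_{\ast\ast}X=0$. No induction, no cofiber sequences, no appeal to $R_k$, and the argument works uniformly for all $k\subseteq\C$ because the Landweber isomorphism does.
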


\begin{proof}
Since $E(n)$ is Landweber exact (see \cite{Landweber} or \cite[Section 4.2]{green}), the $ABP$-version of \cite[Theorem 8.7]{MLE} applies to $AE(n)_{\ast\ast}(X)$, yielding
\[AE(n)_{\ast\ast}(X)\cong ABP_{\ast\ast}(X)\otimes_{BP_{\ast}}E(n)_\ast,\]
which is an $ABP_{\ast\ast}ABP$-comodule via the map $\psi_X$ from Definition \ref{s_E}. 

As $E(n)=(v_n^{\Top})^{-1}BP/(v_{n+1}^{\Top},v_{n+2}^{\Top},\cdots)$, the condition $AE(n)_{\ast\ast}(X)=0$ is equivalent to $ABP_{\ast\ast}(X)\otimes_{BP_{\ast}}BP/(v^{\Top}_{n+1},v^{\Top}_{n+2},\cdots)$ being $v_n$-torsion. By Theorem \ref{torsion}, it follows that $ABP_{\ast\ast}(X)\otimes_{BP_{\ast}}BP/(v^{\Top}_{n+1},v^{\Top}_{n+2},\cdots)$ is $v_i$-torsion for any $i\leq n$. This implies that also $ABP_{\ast\ast}(X)\otimes_{BP_{\ast}}BP/(v^{\Top}_{i+1},v^{\Top}_{i+2},\cdots)$ is $v_i$-torsion, which is equivalent to $AE(i)_{\ast\ast}(X)=0$.
\end{proof}

\section{Properties of Bousfield classes}

Ravenel has shown the following properties of Bousfield classes \cite[Section 1]{RavLoc}. They hold in any tensor triangulated category $(\mathcal T,\wedge)$.

\begin{lemma}\label{bousfield}
\begin{compactenum}[(1)]
\item In an exact triangle, each Bousfield class is less or equal to the wedge of the other two \cite[Proposition 1.23]{RavLoc}.
\item If $M$ is a module spectrum over the ring spectrum $E$, then $\left\langle M \right\rangle\leq \left\langle E \right\rangle$ \cite[Proposition 1.24]{RavLoc}.
\item Let $\Sigma$ be an auto-equivalence in $\mathcal T$. If $Y$ is the homotopy cofiber of $\Sigma^d X\overset{f}\rightarrow X$ and $\hat X=\underset{f}\colim (\Sigma^{-k d} X)$, then 
$\left\langle X \right\rangle=\langle \hat X \rangle\vee \left\langle Y \right\rangle$ \cite[Lemma 1.34]{RavLoc}.
\end{compactenum}
\end{lemma}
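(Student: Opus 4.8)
The plan is to transcribe Ravenel's arguments from \cite[Section 1]{RavLoc}, checking at each step that the only structural features used are available in a tensor triangulated category $(\mathcal T,\wedge)$ — and, at the one point where cellularity intervenes, in $\SH(k)^{cell}$ and $\SH(k)^{fin}$. Throughout I would use that $-\wedge W$ preserves exact triangles, that $[S^{p,q},-]$ sends exact triangles to long exact sequences, that $-\wedge W$ commutes with homotopy colimits (being a left adjoint), and that the spheres $S^{p,q}$ are compact in $\mathcal T$ so that $[S^{p,q},-]$ commutes with filtered homotopy colimits.

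For (1), I would fix an arbitrary $W\in\mathcal T$, smash the exact triangle $X\to Y\to Z\to \Sigma X$ with $W$, and apply $[S^{p,q},-]$ to obtain a long exact sequence
\[\cdots\to X_{p,q}W\to Y_{p,q}W\to Z_{p,q}W\to X_{p-1,q}W\to\cdots.\]
If the bigraded groups associated to two of the three objects vanish, exactness forces the third to vanish too; since $\langle Y\rangle\vee\langle Z\rangle=\langle Y\vee Z\rangle$ and $(Y\vee Z)_{\ast\ast}W\cong Y_{\ast\ast}W\oplus Z_{\ast\ast}W$, reading off this sequence yields all three inequalities $\langle X\rangle\le\langle Y\rangle\vee\langle Z\rangle$, $\langle Y\rangle\le\langle X\rangle\vee\langle Z\rangle$, $\langle Z\rangle\le\langle X\rangle\vee\langle Y\rangle$ at once.

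For (2), the key observation is that the unit axiom for the $E$-module $M$ makes the composite $M\to E\wedge M\to M$ of ($S\to E$)$\,\wedge\,\mathrm{id}_M$ with the action map equal to the identity, so $M$ is a retract of $E\wedge M$ in $\mathcal T$; hence $M\wedge W$ is a retract of $E\wedge M\wedge W\cong M\wedge(E\wedge W)$ for every $W$. If $E_{\ast\ast}W=0$ then, in $\SH(k)^{cell}$ or $\SH(k)^{fin}$ using Proposition \ref{C_E} (which rests on \cite[Proposition 7.1]{DI}), $E\wedge W\cong 0$, so $M\wedge(E\wedge W)\cong 0$ and therefore its retract $M\wedge W\cong 0$, giving $M_{\ast\ast}W=0$; this is precisely $\langle M\rangle\le\langle E\rangle$. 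For general $\mathcal T=\SH(k)$ one argues on homotopy groups directly as in \cite{RavLoc}. For (3), I would first note that, because smashing with $W$ commutes with the homotopy colimit defining $\hat X=\colim(\Sigma^{-kd}X)$ and $[S^{p,q},-]$ commutes with it too, one has $\hat X_{\ast\ast}W\cong (X_{\ast\ast}W)[f_{\ast\ast}^{-1}]$, the colimit of the tower obtained by iterating (a suspension of) $f_{\ast\ast}\colon (X\wedge W)_{\ast\ast}\to(X\wedge W)_{\ast\ast}$. The inequality $\langle\hat X\rangle\vee\langle Y\rangle\le\langle X\rangle$ is then immediate: $\langle Y\rangle\le\langle X\rangle$ by part (1) applied to $\Sigma^d X\overset{f}\to X\to Y$, and $X_{\ast\ast}W=0$ forces the localized module to vanish. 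For the reverse inequality, if $\hat X_{\ast\ast}W=0$ and $Y_{\ast\ast}W=0$, then the vanishing of $Y_{\ast\ast}W$ says $f\wedge\mathrm{id}_W$ is a $\pi_{\ast\ast}$-isomorphism, so $(X_{\ast\ast}W)[f_{\ast\ast}^{-1}]=X_{\ast\ast}W$, and this equals $\hat X_{\ast\ast}W=0$; hence $\langle X\rangle\le\langle\hat X\rangle\vee\langle Y\rangle$.

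I expect the only genuinely delicate point to be the bookkeeping in part (3): one must check that the mapping telescope $\hat X$ carries the correct grading shifts and that $[S^{p,q},-]$ commutes with it — i.e. that the spheres $S^{p,q}$ are compact, which is the one category-specific input (compact generation of $\SH(k)$, already invoked in Chapter \ref{ideals}). Everything else is formal triangulated-category manipulation together with the two cellularity facts from \cite{DI} quoted above.
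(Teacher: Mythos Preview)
The paper does not give its own proof of this lemma: it simply states the three items with references to \cite[Section~1]{RavLoc} and the remark that ``they hold in any tensor triangulated category.'' Your proposal is a faithful transcription of Ravenel's arguments, and is correct; there is nothing in the paper to compare it against beyond the bare citations.

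One small caveat on part (2): your retraction argument needs $E_{\ast\ast}W=0\Rightarrow E\wedge W\cong 0$, which you correctly attribute to Proposition~\ref{C_E} (i.e.\ \cite[Proposition~7.1]{DI}) in the cellular and finite settings. Your fallback ``for general $\mathcal T=\SH(k)$ one argues on homotopy groups directly'' is a bit quick, since $\pi_{\ast\ast}$ does not detect equivalences in $\SH(k)$; to make the statement hold in $\SH(k)$ you should either assume $E$ cellular and pass to a cellular approximation of $W$ (as the paper itself does later in the proof of Theorem~\ref{ABAK}), or simply note that in every application in this chapter $E$ and $M$ are cellular. This is a minor packaging issue, not a gap in the mathematics.
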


Furthermore, the following relations from \cite[Section 2]{RavLoc} also hold in $\SH(k)$:

\begin{lemma}\label{bousfield2}
\begin{compactenum}[(1)]
\item $\langle AE(n)\rangle \geq \langle AK(n)\rangle$,
\item $\langle AE(n)\rangle \wedge\langle AP(n+1)\rangle =\langle v_n^{-1} \BP\rangle\wedge\langle AP(n+1)\rangle=\langle 0\rangle$,
\item $\langle AP(n)\rangle = \langle AB(n)\rangle \vee \langle AP(n+1)\rangle$.
\end{compactenum}
\end{lemma}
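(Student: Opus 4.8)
\textbf{Proof plan for Lemma \ref{bousfield2}.}

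The strategy is to mimic the proof of the topological analogue \cite[Section 2]{RavLoc}, using the fact that the motivic spectra $ABP$, $AP(n)$, $AB(n)$, $AE(n)$, $AK(n)$ are constructed from $MGL$ by the same quotient-and-localise recipe as their topological counterparts are from $MU$, together with Lemma \ref{bousfield} which already transports the formal properties of Bousfield classes to $\SH(k)$. First I would prove (1): by Definition \ref{AK(n)}, $AK(n)=v_n^{-1}Ak(n)$ and $Ak(n)=ABP/(v_0,\dots,v_{n-1},v_{n+1},\dots)$, while $AE(n)=v_n^{-1}ABP/(v_{n+1},v_{n+2},\dots)$; thus $AE(n)$ is an $AE(n)$-module and $AK(n)$ is obtained from $AE(n)$ by a Koszul-type sequence of cofibrations quotienting out $v_0,\dots,v_{n-1}$ (each of which is a self-map of an $MGL$-module, hence detects a cofiber sequence). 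By Lemma \ref{bousfield}(1), applied inductively along these cofiber sequences, $\langle AK(n)\rangle\le\langle AE(n)\rangle$, since each time we quotient by $v_i$ the Bousfield class can only decrease (the telescope summand drops out because $v_i$ acts invertibly on $v_n^{-1}(-)$ only for $i=n$). Concretely, one uses Lemma \ref{bousfield}(3) with $X$ the $MGL$-module in question and $f=v_i$: the colimit term $\hat X=v_i^{-1}X$ vanishes for $i<n$ after having already inverted $v_n$... actually the cleanest route is to observe that $AK(n)$ is an $AE(n)$-module spectrum (it is a module over $v_n^{-1}ABP$, which maps to $AE(n)$), so Lemma \ref{bousfield}(2) gives $\langle AK(n)\rangle\le\langle AE(n)\rangle$ directly.

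For (2), the equality $\langle AE(n)\rangle\wedge\langle AP(n+1)\rangle=\langle v_n^{-1}ABP\rangle\wedge\langle AP(n+1)\rangle$ follows because $AE(n)=v_n^{-1}(ABP/(v_{n+1},v_{n+2},\dots))$ receives a map from $v_n^{-1}ABP$ making it a module, and conversely $AE(n)\wedge AP(n+1)$ and $v_n^{-1}ABP\wedge AP(n+1)$ have the same Bousfield class because smashing with $AP(n+1)=ABP/(v_0,\dots,v_n)$ already kills $v_0,\dots,v_n$ — so quotienting by $v_{n+1},v_{n+2},\dots$ on the $AE(n)$ side is matched by the module structure. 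The vanishing $\langle v_n^{-1}ABP\rangle\wedge\langle AP(n+1)\rangle=\langle 0\rangle$ is the key point: $AP(n+1)$ is, up to the cofiber-sequence construction, $ABP$ with $v_n$ acting nilpotently (indeed $v_n\cdot\mathrm{id}=0$ on $ABP/(v_0,\dots,v_n)$ after one more cofibration), hence $v_n^{-1}(ABP\wedge AP(n+1))\cong 0$; applying $R_k$ to reduce to the topological statement \cite[Theorem 2.1]{RavLoc} is not available here since $v_n^{-1}$ need not be detected by realisation, but the argument is internal: $v_n$ is a self-map of the $ABP$-module $ABP\wedge AP(n+1)$ that factors through a trivial map, so its telescope is contractible, and by Lemma \ref{bousfield}(3) the Bousfield class of the localisation is $\langle 0\rangle$.

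For (3), $\langle AP(n)\rangle=\langle AB(n)\rangle\vee\langle AP(n+1)\rangle$: apply Lemma \ref{bousfield}(3) to $X=AP(n)$ and the self-map $f=v_n:\Sigma^{2(p^n-1),p^n-1}AP(n)\to AP(n)$ (this is a map of $MGL$-modules by the remarks after Definition \ref{AK(n)}). Its homotopy cofiber is $AP(n+1)=AP(n)/v_n$ by definition, and its mapping telescope $\hat X=\mathrm{colim}_f(\Sigma^{-kd}AP(n))$ is $v_n^{-1}AP(n)=AB(n)$, again by Definition \ref{AK(n)}. Lemma \ref{bousfield}(3) then gives exactly $\langle AP(n)\rangle=\langle AB(n)\rangle\vee\langle AP(n+1)\rangle$. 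I expect the main obstacle to be (2): making rigorous the claim that $v_n$ acts nilpotently (in fact trivially, up to one cofibration) on $ABP\wedge AP(n+1)$ so that the localised smash product is contractible, and verifying that the two module structures identifying $\langle AE(n)\wedge AP(n+1)\rangle$ with $\langle v_n^{-1}ABP\wedge AP(n+1)\rangle$ are compatible. This requires carefully tracking the cofiber-sequence definitions in Definition \ref{AK(n)} and using that $ABP$-module structures are preserved under these constructions (Remark \ref{AMU module}(2),(3)); all the Bousfield-class bookkeeping is then routine via Lemma \ref{bousfield}.
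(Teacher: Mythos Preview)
Your arguments for (1) and (3) are essentially those of the paper: (3) is exactly Lemma \ref{bousfield}(3) applied to $X=AP(n)$ and $f=v_n$, and your first approach to (1), building $AK(n)$ from $AE(n)$ by iterated cofiber sequences and invoking Lemma \ref{bousfield}(1), is what the paper does. However, your ``cleanest route'' for (1) does not go through: it is not known whether $AE(n)$ is a ring spectrum in $\SH(k)$ (see Remark \ref{AMU module}(6)), so Lemma \ref{bousfield}(2) is not available with $E=AE(n)$.

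For (2) there is a genuine gap in your plan, and the paper's argument is both different and simpler. First, you do not need to identify $\langle AE(n)\wedge AP(n+1)\rangle$ with $\langle v_n^{-1}ABP\wedge AP(n+1)\rangle$ via compatible module structures: since $AE(n)$ is a quotient of $v_n^{-1}ABP$ by cofiber sequences, Lemma \ref{bousfield}(1) gives $\langle AE(n)\rangle\le\langle v_n^{-1}ABP\rangle$, and once the second class smashes with $\langle AP(n+1)\rangle$ to $\langle 0\rangle$ the first does too. Second, and more importantly, your proposal for the vanishing --- that ``$v_n$ acts nilpotently (in fact trivially, up to one cofibration) on $ABP\wedge AP(n+1)$'' --- amounts to showing that $v_n$ acts trivially on $AP(n+1)$. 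That statement is true over $\C$ but is established only later in the paper (Corollary \ref{trivial action}) via a delicate argument that needs $\pi_{\ast\ast}(MGL_{(p)}/(p,v_i))$ to vanish in odd first degree. The paper instead proves $v_n^{-1}ABP\wedge AP(n+1)\cong 0$ directly and over any $k\subseteq\C$: write this smash as the cofiber of $v_n\wedge 1$ on $AP(n)\wedge v_n^{-1}ABP$, and show $(v_n\wedge 1)_\ast=(1\wedge v_n)_\ast$ on $\pi_{\ast\ast}$. The key input is the Hopf algebroid relation $\eta_L(v_n)\equiv\eta_R(v_n)\bmod I_n$ in $BP_\ast BP$, transported to $ABP_{\ast\ast}ABP$ via Lemma \ref{ABPABP}(1), together with an easy induction showing $I_n\mapsto 0$ in $AP(n)_{\ast\ast}ABP$. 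Since $1\wedge v_n$ is invertible after inverting $v_n$ on the right, the cofiber is contractible. This avoids any discussion of products on $AP(n)$ or of $v_i$ acting trivially on quotients.
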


\begin{proof}
Constructing $AK(n)$ from $AE(n)$, (1) follows from Lemma \ref{bousfield}(1). (3) is a direct application of Lemma \ref{bousfield}(3) (see also \cite[Theorem 2.1(c)]{RavLoc}). For the first part of (2), note that $\langle AE(n)\rangle\leq\langle v_n^{-1}\BP\rangle$ since $AE(n)=v_n^{-1}\BP/(v_{n+1},v_{n+2},\cdots)$. When we prove the second equation in (2), the first one will follow from this inequality because $\langle 0\rangle$ is the empty set.

It remains to show that $AP(n+1)\wedge v_n^{-1}\BP\cong 0$, as proven in the topological setting in \cite[Lemma 2.3]{RavLoc}. This spectrum is the homotopy cofiber of the map
\[v_n\wedge 1:AP(n)\wedge v_n^{-1}\BP\rightarrow AP(n)\wedge v_n^{-1}\BP.\] 
We claim that $(v_n\wedge 1)_\ast=(1\wedge v_n)_\ast$ on $\pi_{\ast\ast}(AP(n)\wedge \BP)$. Since $(1\wedge v_n)_\ast$ is an isomorphism on $\pi_{\ast\ast}(AP(n)\wedge v_n^{-1}\BP)$, this will imply that the homotopy cofiber is contractible.

To prove this claim, note that $(v_n\wedge 1)_\ast$ and $(1\wedge v_n)_\ast$ are induced by the respective maps on $\pi_{\ast\ast}(\BP\wedge \BP)$, where they are given by applying the left respectively right unit $\BP_{\ast\ast}\rightarrow \BP_{\ast\ast}\BP$ to $v_n$. In the topological case, the left and right units applied to $v_n$ are the same modulo $I_n$ by \cite[II.16.1 (ii)]{Adams}. By Lemma \ref{ABPABP}(1) and the inclusion of $BP_\ast$ in $\BP_{\ast\ast}$, this also holds motivically. 
Hence, $(v_n\wedge 1)_\ast$ and $(1\wedge v_n)_\ast$ are the same modulo $I_n$. It remains to show that $I_n\subseteq \BP_{\ast\ast}\BP$ maps to $0$ under $\BP_{\ast\ast}\BP\rightarrow AP(n)_{\ast\ast}\BP$. For $n=0$, there is nothing to show. Assume that $I_n$ is mapped to zero in $AP(n)_{\ast\ast}\BP$ for some $n$. Consider the map $AP(n)_{\ast\ast}\BP\overset {i_\ast}\rightarrow AP(n+1)_{\ast\ast}\BP$ induced by the map to the cofiber in $AP(n)\overset{v_n}\rightarrow AP(n)\overset i\rightarrow AP(n+1)$. The inductive assumption implies that $I_n$ is still zero in $AP(n+1)_{\ast\ast}\BP$. Recall that $I_{n+1}\subseteq \BP_{\ast\ast}\BP$ is the ideal generated by $I_n$ and $v_n$. Since $i_\ast\circ (v_n)_\ast=0$ in the long exact sequence 
\[\cdots \rightarrow AP(n)_{\ast\ast}\BP \overset{(v_n)_\ast}\rightarrow AP(n)_{\ast\ast}\BP\overset{i_\ast}\rightarrow AP(n+1)_{\ast\ast}\BP\rightarrow\cdots,\]
$i_\ast$ maps $v_n=(v_n)_\ast(1)$ to $0$. Hence, $I_{n+1}=0$ in $AP(n+1)_{\ast\ast}\BP$.
\end{proof}

\section{The action of $v_i$ on $AP(n)$}\label{section action}

In \cite[Appendix]{JW}, a geometric proof using the Baas-Sullivan construction of $P(n)$ shows that the action of $v_i^{\Top}$ on $P(n)_\ast(X)$ is zero for any $0\leq i<n$. A non-geometric proof of this result is given by \cite[Satz 1.3.4]{Nassau}, which was motivated by \cite{W-products}. Nassau's proof can be simplified using the language of triangulated categories of modules, which is basically done in \cite[Lemma V.2.4]{EKMM}, as well as in \cite[Lemma 3.2]{Strickland-modules}. 
These proofs rely on the fact that the $v_i$ are non-zero divisors of $MU_\ast$ and that $BP_{\ast}$ vanishes in certain degrees, which is not known in the motivic case. In the following, we will use ideas from \cite{Nassau} and \cite{EKMM} to give a proof which also works in $\SH(\C)$. The main difference is that we only know coefficients after passing to $MGL_{(p)}/(p)$ (see Lemma \ref{Ah}), which is why we have to work with $R/(x,y)$, while \cite[Chapter V]{EKMM} only works with $R/x$ for some ring spectrum $R$. 

In this section, we will prove that $v_i$ acts trivially on $AP(n)$ if $k=\C$. Furthermore, we will show that if $p$ is odd, then $AP(n)_{\ast\ast}(X)$ and $AP(n)^{\ast\ast}(X)$ are $AP(n)_{\ast\ast}$-modules for any $X\in \SH(\C)$.\\

Recall that $MGL$ can be constructed as an $E_\infty$-ring spectrum \cite[Theorem 14.2]{Hu}, which is equivalent to a strictly commutative ring spectrum by the motivic version of \cite[Corollary II.3.6]{EKMM}.

Let $R\in\SH(k)$ be a strictly commutative ring spectrum with multiplication $m:R\wedge R\rightarrow R$ and unit $i:S^{0,0}\rightarrow R$. Let $x:S^{k,l}\rightarrow R$ for some $k,l\in\Z$. In our application, we will have $R=MGL_{(p)}$. Note that $MGL_{(p)}$ is the homotopy colimit of the diagram of maps $MGL\overset{n}\rightarrow MGL$ for all positive integers relatively prime to $p$ (see \cite[end of Section 14]{Hu}). As these are maps of strictly commutative ring spectra, $MGL_{(p)}$ is also a strictly commutative ring spectrum (\cite[Theorem 4.1(3)]{SS} applied to $S^{0,0}$-algebras implies that the category of strictly commutative ring spectra is cocomplete).  

Let $M$ be an $R$-module with action map $\nu_M:R\wedge M\rightarrow M$. Let 
\[\phi=\nu_M\circ(x\wedge 1_M):S^{k,l}\wedge M\rightarrow R\wedge M\rightarrow M.\] 
The map $\phi$ is the action of $x$ on $M$.

The $R$-module structure on $S^{k,l}\wedge M$ is given by 
\[\nu_{S^{k,l}\wedge M}:R\wedge S^{k,l}\wedge M\overset{\tau\wedge 1_M}\longrightarrow S^{k,l}\wedge R\wedge M\overset{1_{S^{k,l}}\wedge \nu_M}\longrightarrow S^{k,l}\wedge M.\]

\begin{lemma}\label{module map}
The map $\phi$ is an $R$-module map.
\end{lemma}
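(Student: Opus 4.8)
The statement is that the action map $\phi = \nu_M \circ (x \wedge 1_M) : S^{k,l} \wedge M \to M$ is a map of $R$-modules, where $M$ is an $R$-module over a strictly commutative ring spectrum $R$ and $x : S^{k,l} \to R$. The plan is to verify directly that $\phi$ commutes with the two $R$-module structure maps, i.e. that the square
\[
\xymatrix{
R \wedge S^{k,l} \wedge M \ar[r]^-{1_R \wedge \phi} \ar[d]_{\nu_{S^{k,l}\wedge M}} & R \wedge M \ar[d]^{\nu_M} \\
S^{k,l} \wedge M \ar[r]_-{\phi} & M
}
\]
commutes up to homotopy in $\SH(k)$. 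First I would substitute the definitions: the left vertical map is $\nu_{S^{k,l}\wedge M} = (1_{S^{k,l}} \wedge \nu_M) \circ (\tau \wedge 1_M)$, and $\phi = \nu_M \circ (x \wedge 1_M)$, so the clockwise composite is $\nu_M \circ (1_R \wedge \nu_M) \circ (1_R \wedge x \wedge 1_M)$ and the counterclockwise composite is $\nu_M \circ (x \wedge 1_M) \circ (1_{S^{k,l}} \wedge \nu_M) \circ (\tau \wedge 1_M)$.

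The heart of the argument is then a chase using two structural facts: the associativity/unitality of the $R$-action on $M$ (so that $\nu_M \circ (1_R \wedge \nu_M) = \nu_M \circ (m \wedge 1_M)$ as maps $R \wedge R \wedge M \to M$), and the commutativity of the multiplication $m$ on $R$ (here is where we use that $R$ is \emph{strictly} commutative, or at least homotopy commutative, namely $m \circ \tau = m$). Feeding $1_R \wedge x \wedge 1_M$ into the associativity relation turns the clockwise composite into $\nu_M \circ (m \wedge 1_M) \circ (1_R \wedge x \wedge 1_M) = \nu_M \circ ((m \circ (1_R \wedge x)) \wedge 1_M)$. On the other side, $(x \wedge 1_M) \circ (1_{S^{k,l}} \wedge \nu_M) = (1_R \wedge \nu_M) \circ (x \wedge 1_R \wedge 1_M)$ by interchange (naturality of $\wedge$), and then applying associativity again rewrites the counterclockwise composite as $\nu_M \circ ((m \circ (x \wedge 1_R) \circ \tau) \wedge 1_M)$. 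Since $m \circ (1_R \wedge x)$ and $m \circ (x \wedge 1_R) \circ \tau$ agree by commutativity of $m$ (both are ``multiply by $x$ from the left $=$ multiply by $x$ from the right after swap''), the two composites coincide, proving that $\phi$ is an $R$-module map.

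I do not expect a genuine obstacle here: the whole proof is a diagram chase in the symmetric monoidal triangulated category $\SH(k)$ using associativity of the module structure and (homotopy) commutativity of $R$, exactly as in \cite[Chapter V]{EKMM}. The only mild subtlety is bookkeeping of the symmetry isomorphisms $\tau$ and the coherence of the smash product; invoking that $\SH(k)$ is a symmetric monoidal category (and that $R$ can be taken strictly commutative, as recalled just above via \cite[Theorem 14.2]{Hu} and the motivic analog of \cite[Corollary II.3.6]{EKMM}) makes all of these manipulations legitimate. So the step to present carefully is the identification of the two composites via associativity followed by commutativity; everything else is naturality of $\wedge$.
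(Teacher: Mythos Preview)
Your proposal is correct and follows essentially the same route as the paper: both arguments unwind the two composites, use associativity of the $R$-action on $M$ to replace $\nu_M\circ(1_R\wedge\nu_M)$ by $\nu_M\circ(m\wedge 1_M)$, and then invoke the commutativity $m\circ\tau=m$ together with naturality of the swap to identify the two sides. The paper phrases this as a reduction of one commutative square to another, whereas you write out the composites explicitly, but the content is the same.
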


\begin{proof}
We have to check the commutativity of the following diagram:
\[\xymatrix{\ar @{} [dr] |{}
R\wedge S^{k,l}\wedge M \ar[rr]^{1\wedge x\wedge 1}\ar[d]_{\tau\wedge 1} && R\wedge R\wedge M \ar[r]^{1\wedge \nu_M} & R\wedge M \ar[d]^{\nu_M}\\
S^{k,l}\wedge R\wedge M \ar[r]^{1\wedge \nu_M} & S^{k,l}\wedge M \ar[r]^{x\wedge 1} & R\wedge M \ar[r]^{\nu_M} & M.}\]
In this diagram, we can replace $(x\wedge 1)\circ (1\wedge \nu_M)$ by 
\[S^{k,l}\wedge R\wedge M \overset{x\wedge 1\wedge 1}\longrightarrow R\wedge R\wedge M \overset{1\wedge \nu_M}\longrightarrow R\wedge M\]
and we can fill in a diagonal across the upper left corner,
\[S^{k,l}\wedge R\wedge M \overset{(\tau\wedge 1)\circ (x\wedge 1\wedge 1)}\longrightarrow R\wedge R\wedge M.\]
It follows that the above diagram commutes if and only if the following diagram commutes:
\[\xymatrix{\ar @{} [dr] |{}
S^{k,l}\wedge R\wedge M \ar[r]^{x\wedge 1\wedge 1}\ar[d]_{x\wedge 1\wedge 1} & R\wedge R\wedge M \ar[r]^{\tau\wedge 1} & R\wedge R\wedge M \ar[r]^{1\wedge \nu_M} & R\wedge M\ar[d]^{\nu_M}\\
R\wedge R\wedge M \ar[rr]^{1\wedge \nu_M} && R\wedge M \ar[r]^{\nu_M} & M.}\]
Since $R$ is commutative, we have $m\circ\tau = m$ and, hence,
\[\nu_M\circ(m\wedge 1)\circ(\tau\wedge 1)= \nu_M\circ(m\wedge 1).\]
Since $M$ is an $R$-module, this is the same as 
\[\nu_M\circ(1\wedge \nu_M)\circ (\tau \wedge 1)=\nu_M\circ(1\wedge \nu_M),\]
proving the commutativity of the above diagram.
\end{proof}

In the following, we denote the homotopy category of $R$-modules by $\RMod$. A stable model structure on $R$-modules is given by \cite[Theorem 4.1]{SS} applied to the motivic stable model structure from \cite{J}, so that $\RMod$ is a triangulated category (compare \cite[page 554]{MLE}).  
Since $\phi$ from above is a map of $R$-modules (Lemma \ref{module map}), there is an exact triangle in $\RMod$,
\[S^{k,l}\wedge M\overset\phi\longrightarrow M \overset\eta\longrightarrow N\overset\partial \longrightarrow S^{k+1,l}\wedge M.\]
The cofiber $N$ is also denoted $M/x$. Application of $[-,N]_{\RMod}$ to this exact triangle yields a long exact sequence
\[\cdots \rightarrow [S^{2k+1,2l}\wedge M,N]_{\RMod} \overset{\partial^\ast}\rightarrow [S^{k,l}\wedge N,N]_{\RMod} \overset{\eta^\ast}\rightarrow [S^{k,l}\wedge M,N]_{\RMod} \rightarrow \cdots.\]
Let $\psi=\nu_N\circ(x\wedge 1_N):S^{k,l}\wedge N\rightarrow R\wedge N\rightarrow N$. This map is the action of $x$ on $M/x$, and a map of $R$-modules by Lemma \ref{module map}. We want to show that, under certain assumptions, $\psi=0$, meaning that $x$ acts trivially on $M/x$.

First, we consider 
\[\eta^\ast\psi=\psi\circ(1_{S^{k,l}}\wedge \eta): S^{k,l}\wedge M\overset{1\wedge\eta}\longrightarrow S^{k,l}\wedge N\overset{\psi}\rightarrow N.\]
By the definition of $\psi$, this is the map $\nu_N\circ(x\wedge 1_N)\circ(1_{S^{k,l}}\wedge \eta)=\nu_N\circ(1_R\wedge \eta)\circ (x\wedge 1_M)$. Since $\eta$ is a map of $R$-modules, this is the same as $\eta\circ\nu_M\circ(x\wedge 1_M)$, which, by definition of $\phi$, is the map $\eta\circ\phi$. By the above exact triangle, it follows that $\eta^\ast\psi=\eta\circ\phi=0$. The long exact sequence implies that there is a map in $\RMod$, 
\[\overline{\psi}:S^{2k+1,2l}\wedge M\rightarrow N,\]
such that $\psi=\partial^\ast \overline{\psi}$.\\

Now, we assume that either $M=R$ or that $M=R/y$ for some $y\in R_{\ast\ast}$. Furthermore, we assume that $\pi_{2k+1,2l}N=0$.

Note that Case 1 is a special instance of Case 2 (with $y=0$), so the reader may skip the following paragraph and continue reading at Case 2. However, Case 1 is easier, for which reason it might still be a good idea to read it, anyway. \\

{\bf Case 1.} $M=R$.

We have $\overline{\psi}:S^{2k+1,2l}\wedge R\rightarrow N$. Since $R$ is a ring spectrum, the unit $i:S^{0,0}\rightarrow R$ satisfies $1_R=m\circ (1_R\wedge i):R\wedge S^{0,0}\rightarrow R\wedge R\rightarrow R$. Hence,
\[\overline{\psi}=\overline{\psi}\circ(1_{S^{2k+1,2l}}\wedge m)\circ (1_{S^{2k+1,2l}\wedge R}\wedge i).\]
By the definition of the $R$-module structure $\nu_{S^{2k+1,2l}\wedge R}$ on $S^{2k+1,2l}\wedge R$,
\[(1_{S^{2k+1,2l}}\wedge m)=\nu_{S^{2k+1,2l}\wedge R}\circ(\tau\wedge 1_R):S^{2k+1,2l}\wedge R\wedge R\rightarrow R\wedge S^{2k+1,2l}\wedge R\rightarrow S^{2k+1,2l}\wedge R.\]
Hence, 
\[\overline{\psi}=\overline{\psi}\circ\nu_{S^{2k+1,2l}\wedge R}(\tau\wedge 1_R)(1_{S^{2k+1,2l}\wedge R}\wedge i)\]
\[= \overline{\psi}\circ\nu_{S^{2k+1,2l}\wedge R}(1_R\wedge 1_{S^{2k+1,2l}}\wedge i)(\tau\wedge 1_R).\]
Since $\overline{\psi}$ is an $R$-module map, $\overline{\psi}\nu_{S^{2k+1,2l}\wedge R}=\nu_N(1_R\wedge\overline{\psi})$, and therefore
\[\overline{\psi}=\nu_N(1_R\wedge \overline{\psi})(1_R\wedge 1_{S^{2k+1,2l}}\wedge i)(\tau\wedge 1_R)\]
\[=\nu_N(1_R\wedge (\overline{\psi}(1_{S^{2k+1,2l}}\wedge i)))(\tau\wedge 1_R).\]
Now, $\overline{\psi}(1_{S^{2k+1,2l}}\wedge i):S^{2k+1,2l}\wedge S^{0,0}\rightarrow N$ is in $\pi_{2k+1,2l}N$, which we assumed to be zero. Thus, $\overline{\psi}=0$ and it follows that also $\psi=\partial^\ast\overline{\psi}$, which is the action of $x$ on $M/x$, is zero, as we wanted to show.

Thus, we have shown:

\begin{prop}\label{trivial action 1}
Let $R\in\SH(k)$ be a strictly commutative ring spectrum and $x\in\pi_{k,l}R$. Assume that $\pi_{2k+1,2l}(R/x)=0$. Then $x$ acts trivially on $R/x$, i.e., the map $\psi$ from above is zero. The same holds if $x\in\pi_k R$ for a strictly commutative ring spectrum $R\in\SH$ such that $\pi_{2k+1}(R/x)=0$.
\end{prop}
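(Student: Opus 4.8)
\textbf{Proof plan for Proposition \ref{trivial action 1}.}

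The statement has already been essentially proved in the discussion preceding it (``Case 1''), so the proof is mostly a matter of assembling the pieces that were laid out. The plan is as follows. First I would recall the exact triangle in $\RMod$
\[
S^{k,l}\wedge M\overset{\phi}\longrightarrow M\overset{\eta}\longrightarrow N\overset{\partial}\longrightarrow S^{k+1,l}\wedge M,
\]
with $M=R$, $N=R/x$, and $\phi$ the action of $x$ on $M$; this is a triangle of $R$-modules because $\phi$ is an $R$-module map by Lemma \ref{module map}. Applying $[-,N]_{\RMod}$ and using that $\psi=\nu_N\circ(x\wedge 1_N)$ satisfies $\eta^\ast\psi=\eta\circ\phi=0$ (computed via the $R$-linearity of $\eta$), I conclude from the long exact sequence that $\psi$ factors as $\psi=\partial^\ast\overline{\psi}$ for some $R$-module map $\overline{\psi}:S^{2k+1,2l}\wedge R\to N$.

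The heart of the argument is then the chain of identities already written out: using $1_R=m\circ(1_R\wedge i)$, the description of the $R$-module structure on $S^{2k+1,2l}\wedge R$ as $\nu_{S^{2k+1,2l}\wedge R}=(1_{S^{2k+1,2l}}\wedge m)$ precomposed with a twist, commutativity of $R$, and finally the $R$-linearity of $\overline{\psi}$, one rewrites $\overline{\psi}$ as $\nu_N$ applied to $1_R\wedge\bigl(\overline{\psi}\circ(1_{S^{2k+1,2l}}\wedge i)\bigr)$ composed with a twist. Since $\overline{\psi}\circ(1_{S^{2k+1,2l}}\wedge i)$ is an element of $\pi_{2k+1,2l}N=\pi_{2k+1,2l}(R/x)$, which vanishes by hypothesis, we get $\overline{\psi}=0$, hence $\psi=\partial^\ast\overline{\psi}=0$. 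This is precisely the claim that $x$ acts trivially on $R/x$. The topological statement is proved verbatim, replacing bidegrees by single degrees throughout.

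There is no genuine obstacle here; the only point requiring care is bookkeeping of the various $R$-module structures (on $S^{k,l}\wedge M$, on $S^{2k+1,2l}\wedge R$, on $N$) and checking that every map in sight — in particular $\phi$, $\eta$, $\psi$, and $\overline{\psi}$ — is $R$-linear, so that the long exact sequence in $[-,N]_{\RMod}$ and the factorizations take place in $\RMod$ rather than merely in $\SH(k)$. The $R$-linearity of $\phi$ and $\psi$ is supplied by Lemma \ref{module map}, that of $\eta$ and $\partial$ comes from the triangle living in $\RMod$, and that of $\overline{\psi}$ follows because $\partial^\ast$ is computed in $\RMod$ and $\psi$ is $R$-linear. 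Once this is in place, the displayed computation goes through unchanged, and the proof is complete.
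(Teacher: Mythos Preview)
Your proposal is correct and matches the paper's approach exactly: the proposition is stated immediately after the ``Case 1'' computation with the phrase ``Thus, we have shown,'' so the proof is precisely the argument you summarize --- factor $\psi$ as $\partial^\ast\overline{\psi}$ via the long exact sequence in $\RMod$, then use the unit $i$ and $R$-linearity to reduce $\overline{\psi}$ to an element of $\pi_{2k+1,2l}(R/x)=0$.
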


Note that part of the above argument can be formulated more generally:

\begin{lemma}\label{ring to module}
Let $R$ be a (homotopy) ring spectrum, $M$ a left $R$-module, and $\pi_{k,l}M=0$. Then any $R$-module map $\psi:S^{k,l}\wedge R\rightarrow M$ is homotopically trivial.
\end{lemma}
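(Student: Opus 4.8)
\textbf{Proof proposal for Lemma \ref{ring to module}.}

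The plan is to mimic the argument used in Case~1 of the discussion preceding Proposition \ref{trivial action 1}, but stated abstractly for an arbitrary $R$-module $M$ with $\pi_{k,l}M=0$. First I would recall that, since $R$ is a (homotopy) ring spectrum with unit $i\colon S^{0,0}\to R$ and multiplication $m\colon R\wedge R\to R$, the unit condition gives $1_R = m\circ(1_R\wedge i)$. Tensoring with $S^{k,l}$ on the left yields $1_{S^{k,l}\wedge R} = (1_{S^{k,l}}\wedge m)\circ(1_{S^{k,l}\wedge R}\wedge i)$, and, using the description of the $R$-module structure $\nu_{S^{k,l}\wedge R}$ on $S^{k,l}\wedge R$ as $(1_{S^{k,l}}\wedge\nu_R)\circ(\tau\wedge 1_R)$ with $\nu_R=m$, one rewrites $1_{S^{k,l}}\wedge m = \nu_{S^{k,l}\wedge R}\circ(\tau\wedge 1_R)$.

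Next I would feed an arbitrary $R$-module map $\psi\colon S^{k,l}\wedge R\to M$ through this identity:
\[
\psi = \psi\circ\nu_{S^{k,l}\wedge R}\circ(\tau\wedge 1_R)\circ(1_{S^{k,l}\wedge R}\wedge i)
     = \psi\circ\nu_{S^{k,l}\wedge R}\circ(1_R\wedge 1_{S^{k,l}}\wedge i)\circ(\tau\wedge 1_R),
\]
where the second equality just reorders the smash factors. Because $\psi$ is a map of $R$-modules, $\psi\circ\nu_{S^{k,l}\wedge R} = \nu_M\circ(1_R\wedge\psi)$, so
\[
\psi = \nu_M\circ\bigl(1_R\wedge(\psi\circ(1_{S^{k,l}}\wedge i))\bigr)\circ(\tau\wedge 1_R).
\]
The inner map $\psi\circ(1_{S^{k,l}}\wedge i)\colon S^{k,l}\wedge S^{0,0}\to M$ is an element of $\pi_{k,l}M$, which vanishes by hypothesis. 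Hence this composite is null, and therefore $\psi$ itself is null in $\RMod$, which is the claim.

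I do not expect any serious obstacle here; the only points needing a little care are bookkeeping of the symmetry isomorphisms $\tau$ and checking that the module-map naturality square for $\psi$ is exactly the one used (this is the same kind of diagram chase as in Lemma \ref{module map}). One should also note that the statement is purely formal in the triangulated category of $R$-modules and makes no use of the field $k$, characteristic, or any coefficient computation, so it holds verbatim both in $\SH(k)$ and in $\SH$; in the latter one simply drops the second grading. This is why the earlier special cases $M=R$ and, via the exact triangle, the computation of the action of $x$ on $R/x$ follow once the appropriate homotopy group is known to vanish.
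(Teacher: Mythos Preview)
Your proposal is correct and follows essentially the same argument as the paper's proof: factor $\psi$ through the unit $i$ using $1_R=m\circ(1_R\wedge i)$, invoke the $R$-module property of $\psi$ to rewrite it as $\nu_M\circ(1_R\wedge(\psi\circ(1_{S^{k,l}}\wedge i)))$ up to a twist, and conclude from $\pi_{k,l}M=0$. The paper's version is terser and suppresses the symmetry isomorphisms $\tau$ that you track explicitly, but the content is identical.
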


\begin{proof}
Let $i:S^{0,0}\rightarrow R$ be the unit of $R$. It satisfies $1_R=m(1_R\wedge i)$. Thus, 
\[\psi=\psi\circ m(1_R\wedge i)=\nu_M(1_R\wedge\psi)(1_R\wedge i)=\nu_M(1_R\wedge \psi i),\]
with $\psi i\in\pi_{k,l}M=0$. It follows $\psi=\nu_M(1_R\wedge 0)=0$.
\end{proof}

Now we pass on to case 2.\\

{\bf Case 2.} $M=R/y$.\\

Let $y:S^{k',l'}\rightarrow R$ and let $\phi'=m(y\wedge 1_R):S^{k',l'}\wedge R\rightarrow R\wedge R\rightarrow R$ be the action of $y$ on $R$.
We have an exact triangle in $\RMod$,
\[S^{k',l'}\wedge R\overset{\phi'}\rightarrow R\overset{\eta'}\rightarrow M\overset{\partial'}\rightarrow S^{k'+1,l'}\wedge R,\]
and, again, an exact sequence
\[\rightarrow [S^{2k+k'+2,2l+l'}\wedge R,N]_{\RMod}\overset{\partial'^\ast}\rightarrow [S^{2k+1,2l}\wedge M,N]_{\RMod}\overset{\eta'^\ast}\rightarrow [S^{2k+1,2l}\wedge R,N]_{\RMod}\rightarrow \cdots\]
We consider 
\[\eta'^\ast\overline{\psi}=\overline{\psi}\circ(1_{S^{2k+1,2l}}\wedge \eta'):S^{2k+1,2l}\wedge R\rightarrow S^{2k+1,2l}\wedge M\rightarrow N.\]
Let $i:S^{0,0}\rightarrow R$ be the unit of $R$, as before. Since $\eta':R\rightarrow M$ is a map of $R$-modules (using Lemma \ref{module map}), $\eta'\circ m=\nu_M(1_R\wedge \eta')$,
and, hence,
\[\nu_M(1_R\wedge \eta')(1_R\wedge i)=\eta'\circ m(1_R\wedge i)=\eta'.\]
Thus,
\[\overline{\psi}(1_{S^{2k+1,2l}}\wedge \eta')=\overline{\psi}(1_{S^{2k+1,2l}}\wedge \nu_M(1_R\wedge \eta')(1_R\wedge i))\]
\[=\overline{\psi}(1_{S^{2k+1,2l}}\wedge \nu_M(1_R\wedge \eta'i)).\]
Since $\overline{\psi}$ is a map of $R$-modules, $\overline{\psi}\circ\nu_{S^{2k+1,2l}\wedge M}=\nu_N\circ(1_R\wedge \overline{\psi})$, where, by definition, $\nu_{S^{2k+1,2l}\wedge M}=(1_{S^{2k+1,2l}}\wedge \nu_M)(\tau\wedge 1_M)$. Hence,
$\overline{\psi}(1_{S^{2k+1,2l}}\wedge\nu_M)=\nu_N(1_R\wedge \overline{\psi})(\tau\wedge 1_M)$, and, therefore,
\[\eta'^\ast\overline{\psi}=\nu_N(1_R\wedge\overline{\psi})(\tau\wedge 1_M)(1_{S^{2k+1,2l}}\wedge 1_R\wedge \eta'i)\]
\[=\nu_N(1_R\wedge\overline{\psi})(1_R\wedge 1_{S^{2k+1,2l}}\wedge \eta'i)(\tau\wedge 1_{S^{0,0}})\]
\[=\nu_N(1_R\wedge(\overline{\psi}(1_{S^{2k+1,2l}}\wedge \eta'i)))(\tau\wedge 1_{S^{0,0}}).\]
Now, $\overline{\psi}(1_{S^{2k+1,2l}}\wedge \eta'i):S^{2k+1,2l}\wedge S^{0,0}\rightarrow N$ lies in $\pi_{2k+1,2l}N$, which we assumed to be zero. Hence, $\eta'^\ast\overline{\psi}=0$. By the long exact sequence from above, it follows that $\overline{\psi}=\partial'^\ast\overline{\overline{\psi}}$ for some $R$-module map $\overline{\overline{\psi}}:S^{2k+k'+2,2l+l'}\wedge R\rightarrow N$. Thus, $\psi=\partial^\ast\overline{\psi}=\partial^\ast\partial'^\ast\overline{\overline{\psi}}$. 

Consider the following commutative diagram. The map $\psi$ is the precomposition of $\overline{\overline{\psi}}$ with the diagonal of the righthand square.
\[\xymatrix{\ar @{} [dr] |{}
S^{2k,2l}\wedge M \ar[r]^{\phi} \ar[dr]\ar[d]_{\partial'} & S^{k,l}\wedge M \ar@{-->}[d]_{\zeta} \ar[r]^{\eta} & S^{k,l}\wedge N \ar@{-->}[dl]_{\xi}\ar[dr] \ar[r]^{\partial}\ar[d]^{\partial} & S^{2k+1,2l}\wedge M\ar[d]^{\partial'}\\
S^{2k+k'+1,2l+l'}\wedge R \ar[r]^{\phi'} & S^{2k+1,2l}\wedge R \ar[r]^{\eta'} & S^{2k+1,2l}\wedge M \ar[r]^{\partial'} & S^{2k+k'+2,2l+l'}\wedge R.}\]
Since both rows are exact triangles, we can fill in a map $\zeta:S^{k,l}\wedge M\rightarrow S^{2k+1,2l}\wedge R$. We have $\phi'\circ\partial'=0$, as both of these are maps in the lower triangle. Thus, the diagonal in the first square is zero and the map $\zeta$ lifts to a map $\xi:S^{k,l}\wedge N\rightarrow S^{2k+1,2l}\wedge R$. It follows that $\partial'\circ\partial=\partial'\circ\eta'\circ\xi=0$, and, hence, $\psi=\overline{\overline{\psi}}\circ\partial'\circ\partial=0$.

We have proven:

\begin{prop}\label{trivial action 2}
Let $R\in\SH(k)$ be a commutative ring spectrum, $x\in\pi_{k,l}R$ and $y\in\pi_{k',l'}R$. Assume that $\pi_{2k+1,2l}(R/(y,x))=0$. Then $x$ acts trivially on $R/(y,x)$. 

The same holds if $x\in\pi_k R$ and $y\in\pi_{k'}R$ for a commutative ring spectrum $R\in\SH$ such that $\pi_{2k+1}(R/(y,x))=0$.
\end{prop}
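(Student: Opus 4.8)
The argument is the one already assembled above under ``Case 2''; the plan is to record it as the proof. Write $M=R/y$ for the cofiber of the action map $\phi'=m\circ(y\wedge 1_R)\colon S^{k',l'}\wedge R\to R$ and $N=M/x$ for the cofiber of $\phi=\nu_M\circ(x\wedge 1_M)\colon S^{k,l}\wedge M\to M$. By Lemma \ref{module map} both $\phi$ and $\phi'$ are maps of $R$-modules, so we have exact triangles
\[S^{k',l'}\wedge R\xrightarrow{\phi'}R\xrightarrow{\eta'}M\xrightarrow{\partial'}S^{k'+1,l'}\wedge R,\qquad S^{k,l}\wedge M\xrightarrow{\phi}M\xrightarrow{\eta}N\xrightarrow{\partial}S^{k+1,l}\wedge M\]
in the triangulated category $\RMod$. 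The action of $x$ on $N$ is the $R$-module map $\psi=\nu_N\circ(x\wedge 1_N)$, and the goal is $\psi=0$; the topological statement follows from the identical argument after deleting the second grading throughout.

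The first step is to apply $[-,N]_{\RMod}$ to the second triangle. Since $\eta$ is a module map, $\eta^{\ast}\psi=\psi\circ(1\wedge\eta)=\eta\circ\nu_M\circ(x\wedge 1_M)=\eta\circ\phi=0$, so exactness produces an $R$-module map $\overline{\psi}\colon S^{2k+1,2l}\wedge M\to N$ with $\psi=\partial^{\ast}\overline{\psi}$. Next, applying $[S^{2k+1,2l}\wedge-,N]_{\RMod}$ to the first triangle, the point is that $(\eta')^{\ast}\overline{\psi}=\overline{\psi}\circ(1\wedge\eta')=0$: writing $\eta'=\nu_M\circ(1_R\wedge\eta')\circ(1_R\wedge i)$ for the unit $i\colon S^{0,0}\to R$ and using that $\overline{\psi}$ commutes with the module structures, one rewrites this composite as $\nu_N\circ\bigl(1_R\wedge(\overline{\psi}\circ(1\wedge\eta'\circ i))\bigr)\circ(\tau\wedge 1)$, and the inner map $\overline{\psi}\circ(1\wedge\eta'\circ i)$ lies in $\pi_{2k+1,2l}N$, which vanishes by hypothesis; this is precisely Lemma \ref{ring to module}. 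Exactness then yields an $R$-module map $\overline{\overline{\psi}}\colon S^{2k+k'+2,2l+l'}\wedge R\to N$ with $\overline{\psi}=(\partial')^{\ast}\overline{\overline{\psi}}$, so $\psi=\overline{\overline{\psi}}\circ\partial'\circ\partial$ for the relevant (suspended) connecting maps $\partial$ and $\partial'$.

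It remains to check that $\partial'\circ\partial=0$, which then forces $\psi=0$. For this I would stack the two suspended exact triangles into one commutative ladder and use the triangulated axioms to fill in a map $\zeta\colon S^{k,l}\wedge M\to S^{2k+1,2l}\wedge R$; since $\phi'\circ\partial'=0$ (consecutive maps of the bottom triangle), the composite through $\zeta$ vanishes, forcing $\zeta\circ\phi=0$, so $\zeta$ factors through $\eta$ as $\zeta=\xi\circ\eta$ with $\xi\colon S^{k,l}\wedge N\to S^{2k+1,2l}\wedge R$; commutativity of the ladder then gives $\partial=\eta'\circ\xi$, whence $\partial'\circ\partial=\partial'\circ\eta'\circ\xi=0$. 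Taking $y=0$ also recovers Proposition \ref{trivial action 1}. The two module-map manipulations of the middle step are routine; the step I expect to need the most care is the final diagram chase, which must be carried out in $\RMod$ rather than merely in $\SH(k)$ and which requires keeping track of the several suspension shifts so that the lift $\xi$ exists and the ladder genuinely commutes.
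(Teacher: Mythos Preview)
Your proposal is exactly the paper's ``Case 2'' argument: you reduce $\psi$ first through $\partial$ to $\overline\psi$, then through $\partial'$ to $\overline{\overline\psi}$ using the vanishing hypothesis via Lemma~\ref{ring to module}, and finish with the same ladder-of-triangles chase (fill in $\zeta$, use $\phi'\circ\partial'=0$ to factor $\zeta=\xi\circ\eta$, and conclude $\partial'\circ\partial=\partial'\circ\eta'\circ\xi=0$). Your caution that the final chase must be carried out carefully in $\RMod$ with the correct suspensions is well placed---that is precisely the delicate point in the paper's diagram as well.
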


This result can be applied to the action of $v_i$ on $AP(n)$ for $0\leq i<n$, at least for $k=\C$.

\begin{cor}\label{trivial action}
Let $k=\C$ and $n\geq 1$. Then $v_i$ acts trivially on $AP(n)$ for any $0\leq i < n$ and $v_i$ acts trivially on $Ak(n)$ for any $i\neq n$. 
\end{cor}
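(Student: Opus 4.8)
The plan is to reduce the statement to Propositions \ref{trivial action 1} and \ref{trivial action 2}, applied with $R=MGL_{(p)}$, which is strictly commutative. The structural input I would use is that, writing $ABP=MGL_{(p)}/(a_j:j\ne p^m-1)$, both $AP(n)$ and $Ak(n)$ are, in the homotopy category of $MGL_{(p)}$-modules, relative smash products over $MGL_{(p)}$ of the cofibres $MGL_{(p)}/x$ taken over their defining relations $x$: for a central element $x\in(MGL_{(p)})_{\ast\ast}$ one has $M/x\simeq M\wedge_{MGL_{(p)}}MGL_{(p)}/x$ for every $MGL_{(p)}$-module $M$, so quotients by sequences of central elements are iterated relative smash products that may be regrouped freely, and the action of a central $x$ on such a smash product is supported on any one tensor factor. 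Since $p=v_0$ is always among the defining relations of $AP(n)$ and of $Ak(n)$, I would, for $i\ge 1$, isolate the two relations $p$ and $v_i$ into a single factor $MGL_{(p)}/(p,v_i)$, and for $i=0$ use the factor $MGL_{(p)}/p$; it then suffices to show that $v_i$ (resp. $p$) acts trivially on that one factor.

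That reduced statement is exactly the scope of Proposition \ref{trivial action 2} with $R=MGL_{(p)}$, $y=p$, $x=v_i$ (resp. of Proposition \ref{trivial action 1} with $x=p$), whose only hypothesis is the vanishing of one bigraded homotopy group. Here $\deg v_i=(2(p^i-1),p^i-1)$, so in the notation of Proposition \ref{trivial action 2} the first index $2k+1=4(p^i-1)+1$ is odd, and for $i=0$ one needs $\pi_{1,0}(MGL_{(p)}/p)$, again in odd first degree. By Lemma \ref{Ah}, since $(p,v_i)$ (resp. $(p)$) is a regular ideal of $MU_\ast$ containing $p$, one has $(MGL_{(p)}/(p,v_i))_{\ast\ast}\cong(MU_{(p)}/(p,v_i))_\ast[\tau]$ (resp. $(MGL_{(p)}/p)_{\ast\ast}\cong(MU_{(p)}/p)_\ast[\tau]$) with $\deg\tau=(0,1)$; both $(MU_{(p)}/(p,v_i))_\ast$ and $(MU_{(p)}/p)_\ast$ are polynomial $\F_p$-algebras on generators of even topological degree, so these coefficient rings are concentrated in even first degree. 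Hence $\pi_{4(p^i-1)+1,2(p^i-1)}(MGL_{(p)}/(p,v_i))=0$ and $\pi_{1,0}(MGL_{(p)}/p)=0$, and Propositions \ref{trivial action 2} and \ref{trivial action 1} apply.

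Assembling: $v_i$ acts trivially on the chosen factor $MGL_{(p)}/(p,v_i)$ (or $MGL_{(p)}/p$), hence on $AP(n)$ for every $0\le i<n$. The same argument applies to $Ak(n)=ABP/(v_0,\dots,v_{n-1},v_{n+1},v_{n+2},\dots)$ for every $i\ne n$, the only extra remark being that $Ak(n)$ is a homotopy colimit of the finite quotients $ABP/(v_0,\dots,v_{n-1},v_{n+1},\dots,v_N)$; each of these contains $MGL_{(p)}/(p,v_i)$ as a smash factor once $N\ge i$, and the action of $v_i$ commutes with the homotopy colimit, so triviality at every finite stage gives triviality on $Ak(n)$. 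This yields Corollary \ref{trivial action}.

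I expect the one point requiring genuine care to be the structural claim that these quotient spectra decompose as relative smash products $MGL_{(p)}/x_1\wedge_{MGL_{(p)}}\cdots\wedge_{MGL_{(p)}}MGL_{(p)}/x_r$ with each element-action localized to a single tensor factor. This is formal in a symmetric monoidal model of $MGL_{(p)}$-modules (mirroring \cite[Chapter V]{EKMM} and the framework of \cite{MLE}), but it is precisely where one uses that $MGL_{(p)}$ is strictly commutative rather than merely homotopy commutative, so it should be spelled out; the rest is bookkeeping together with the explicit coefficient computation of Lemma \ref{Ah}.
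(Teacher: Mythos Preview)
Your proposal is correct and follows essentially the same approach as the paper: reduce to the single factor $MGL_{(p)}/(p,v_i)$ (or $MGL_{(p)}/p$ when $i=0$) via the smash decomposition of quotients over $MGL_{(p)}$ (the paper cites \cite[Lemma V.1.10]{EKMM} for this), and then apply Propositions~\ref{trivial action 1} and~\ref{trivial action 2} using Lemma~\ref{Ah} to verify the odd-first-degree vanishing hypothesis. Your treatment of $Ak(n)$ via a filtered colimit of finite quotients is a minor variant of the paper's argument, which instead writes $Ak(n)\cong AP(n)\wedge_{MGL_{(p)}}MGL_{(p)}/(p,v_{n+1},v_{n+2},\ldots)$ directly; both are fine.
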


\begin{proof}
First, we consider $MGL_{(p)}/(p,v_i)$ for some $0< i<n$ (thus, $n\geq 2$). Since $MU_\ast$ is concentrated in even degrees and Lemma \ref{Ah} holds for $k=\C$ and quotients of $MGL_{(p)}/p$, we get $\pi_{2k+1,2l}(MGL_{(p)}/(p,v_i))=0$ for any $k,l$. By Proposition \ref{trivial action 2} it follows that $v_0=p$ and $v_i$ act trivially on $MGL_{(p)}/(p,v_i)$. 

By \cite[Lemma V.1.10]{EKMM}, 
\[MGL_{(p)}/(p,v_i)\cong MGL_{(p)}/p\wedge_{MGL_{(p)}} MGL_{(p)}/v_i,\]
and, by \cite[Remark 6.20]{Hoy}, $AP(n)\cong MGL_{(p)}/J$, where $J$ contains $a_i\in MGL_{\ast\ast}$, $i\neq 2p^i-2$, as well as $v_i$, $0\leq i\leq n-1$. From \cite[Lemma V.1.10]{EKMM}, it follows that
\[AP(n)\cong MGL_{(p)}/(p,v_i)\wedge_{MGL_{(p)}} MGL_{(p)}/(J\setminus \{p,v_i\}).\] 
Now $v_i$ acts trivially on $MGL_{(p)}/(p,v_i)$, i.e., the respective map $\phi_i$ on $MGL_{(p)}/(p,v_i)$ is zero. It follows that also the map $\phi_i\wedge_{MGL_{(p)}} 1_{MGL_{(p)}/(J\setminus \{p,v_i\})}$ is zero, meaning that $v_i$ acts trivially on $AP(n)$. Similarly, $p$ acts trivially on $AP(n)$. This proves that all $v_i$, $0\leq i<n$, act trivially on $AP(n)$ if $n\geq 2$.

If $n=1$, one has to replace $MGL_{(p)}/(p,v_i)$ by $MGL_{(p)}/(p)$ in the above argument.

Furthermore,
\[Ak(n)\cong   AP(n)\wedge_{MGL_{(p)}} MGL_{(p)}/(p,v_{n+1},v_{n+2},\cdots) ,\]
so $v_i$ acts trivially on $Ak(n)$, too, for $0\leq i<n$. For $i>n$, the claim follows analogously to the above argument.
\end{proof}

\begin{rk}
Working with modules over $MGL_{(p)}$ has the advantage that the $E_\infty$-structure allows us to use the isomorphism from \cite[Lemma V.1.10]{EKMM}, as well as the results from \cite{SS} (below Lemma \ref{module map}). For $ABP$, we only know of a commutative ring structure in the weak sense (see \cite[Definition 5.3]{Vez}). Note that for $BP$, an $E_4$-structure is constructed in \cite{BasterraMandell}. 

The $ABP$-module structure on $AP(n)$ is the action of $ABP$ on itself in $ABP\wedge_{MGL_{(p)}}MGL_{(p)}/(v_0,\cdots,v_{n-1})\cong AP(n)$ and it is (by its construction) compatible with the $MGL_{(p)}$-action on $AP(n)$.
\end{rk}

Recall that $\nu:BP\wedge P(n)\rightarrow P(n)$ induces a $BP_\ast$-module structure on $P(n)_\ast(X)$ and $P(n)^\ast(X)$ for any $X\in\SH$ and that $P(n)_\ast=BP_\ast/(v_0,\cdots,v_{n-1})$. Therefore, the classical version of the above corollary immediately implies that the $BP_\ast$-module structure on $P(n)_\ast(X)$ and $P(n)^\ast(X)$ induces a $P(n)_\ast$-module structure, as also concluded in \cite[Remark 2.5(a)]{JW}. \\

Our next aim is to show that also for $X\in\SH(\C)$, the $ABP_{\ast\ast}$-module structure on $AP(n)_{\ast\ast}(X)$ and $AP(n)^{\ast\ast}(X)$ induces a structure of $AP(n)_{\ast\ast}$-modules, where the ring structure on $AP(n)_{\ast\ast}$ is defined via the isomorphism $AP(n)_{\ast\ast}\cong \h_{\ast\ast}\otimes_{\F_p} P(n)_\ast$ (Lemma \ref{Ah}). We will show in Lemma \ref{ring iso} that this is the right choice of ring structure on $AP(n)_{\ast\ast}$.\\

Let $R$ be a strictly commutative ring spectrum, let $M=R/y$ satisfy $\pi_{2k'+1,2l'}M=0$ (where $(k',l')$ is the degree of $y$, as in Case 2 above), and let $N=M/x$ satisfy $\pi_{2k+1,2l}N=0$. \\

In the commutative diagram,
\[\xymatrix{\ar @{} [dr] |{}
S^{k',l'}\wedge R\wedge M \ar[d]_{1\wedge\nu_M}\ar[r]^{y\wedge 1\wedge 1} & R\wedge R\wedge M \ar[d]_{1\wedge \nu_M}\ar[r]^{m\wedge 1} & R\wedge M \ar[d]^{\nu_M}\\
S^{k',l'}\wedge M \ar[r]_{y\wedge 1} & R\wedge M \ar[r]_{\nu_M} & M, }\]
the composition $\nu_M(y\wedge 1_M)$ is zero by Proposition \ref{trivial action 1}. Furthermore, $(m\wedge 1_M)\circ (y\wedge 1_R\wedge 1_M)=\phi'\wedge 1_M$, where $\phi'$ is, as before, the map whose cofiber is $M$. Thus, we have
\[\xymatrix{\ar @{} [dr] |{}
S^{k',l'}\wedge R\wedge M \ar[drr]_{0}\ar[rr]^{\phi'\wedge 1} && R\wedge M \ar[d]_{\nu_M}\ar[r]^{\eta'\wedge 1} & M\wedge M \ar@{-->}[dl]^{\mu_M} \\
 && M & ,}\]
and there exists a map $\mu_{M}:M\wedge M\rightarrow M$ in the homotopy category $\RMod$ such that $\mu_M\circ (\eta'\wedge 1_M)=\nu_M$.

Next, we define a map $\nu_{M,N}:M\wedge N\rightarrow N$ by $\nu_{M,N}=\mu_M\wedge_R 1_{R/x}$, using $N\cong M\wedge_R R/x$ \cite[Lemma V.1.10]{EKMM}. It satisfies $\nu_{M,N}(\eta'\wedge 1_N)=\nu_N$ (by applying $-\wedge_R R/x$ to the analogous equation for $\mu_M$) and $\nu_{M,N}(1\wedge \eta)=\eta\circ\mu_M$ (because $\eta:M\rightarrow N$ is the canonical map $M\wedge_R R\rightarrow M\wedge_R R/x $).

In the commutative diagram (where the right square commutes because $\nu_{M,N}$ is a map of $R$-modules)
\[\xymatrix{\ar @{} [dr] |{}
S^{k,l}\wedge M\wedge N \ar[d]_{1\wedge\nu_{M,N}}\ar[rr]^{x\wedge 1\wedge 1} && R\wedge M\wedge N \ar[d]_{1\wedge \nu_{M,N}}\ar[rr]^{\nu_M\wedge 1} && M\wedge N \ar[d]^{\nu_{M,N}}\\
S^{k,l}\wedge N \ar[rr]_{x\wedge 1} && R\wedge N \ar[rr]_{\nu_N} && N, }\]
the lower composition is the action of $x$ on $N$, which is trivial by Proposition \ref{trivial action 2}. Thus, 
\[\nu_{M,N}(\phi\wedge 1_N)=\nu_{M,N}(\nu_M\wedge 1_N)(x\wedge 1_M\wedge 1_N)=0.\]
Hence, there exists a map $\mu_N:N\wedge N\rightarrow N$ in $\RMod$ making the following diagram commutative.
\[\xymatrix{\ar @{} [dr] |{}
S^{k,l}\wedge M\wedge N \ar[drr]_{0}\ar[rr]^{\phi\wedge 1} && M\wedge N \ar[d]_{\nu_{M,N}}\ar[r]^{\eta\wedge 1} & N\wedge N \ar@{-->}[dl]^{\mu_N} \\
 && N & .}\]

In particular, this applies to $N=MGL_{(p)}/(p,x)$ as in Corollary \ref{trivial action}, yielding an $MGL_{(p)}$-module map 
\[\mu_x:MGL_{(p)}/(p,x)\wedge MGL_{(p)}/(p,x)\rightarrow MGL_{(p)}/(p,x).\] 

\begin{lemma}\label{p,x}
$AP(n)$ is isomorphic as an $MGL_{(p)}$-module to the $\wedge_{MGL_{(p)}}$-product of all $MGL_{(p)}/(p,x)$, $x\in J$,
where $J$ is as in the proof of Corollary \ref{trivial action}.
\end{lemma}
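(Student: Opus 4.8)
The plan is to obtain Lemma \ref{p,x} as a formal consequence of the motivic Hopkins--Morel presentation of $AP(n)$ together with the smash-product bookkeeping for modules over a structured ring spectrum, in the style of \cite[Chapter V]{EKMM}; this decomposition is exactly what will let one assemble the products $\mu_x$ on the individual factors into a product on $AP(n)$. The input is the equivalence $AP(n)\cong MGL_{(p)}/J$ of \cite[Remark 6.20]{Hoy}, already used in the proof of Corollary \ref{trivial action}: here $J$ is the ideal generated by the regular sequence consisting of $v_0=p$, $v_1,\dots,v_{n-1}$ and the $a_i$ with $i\neq p^j-1$, all of which may be taken in $\pi_{2\ast,\ast}MGL_{(p)}$ (they are images of polynomial generators of the Lazard ring), so the one-generator quotients $MGL_{(p)}/x$ are defined as $MGL_{(p)}$-modules in the sense recalled before Lemma \ref{module map}. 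Since $MGL$ is $E_\infty$ by \cite[Theorem 14.2]{Hu} and $p$-localisation preserves this, $MGL_{(p)}$ is equivalent to a strictly commutative ring spectrum, and the homotopy category of $MGL_{(p)}$-modules is a symmetric monoidal triangulated category (via \cite[Theorem 4.1]{SS} applied to the model structure of \cite{J}); this legitimates the manipulations below.

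First I would record the motivic version of \cite[Lemma V.1.10]{EKMM}: for $x,y\in\pi_{\ast\ast}MGL_{(p)}$ there is an $MGL_{(p)}$-module equivalence $MGL_{(p)}/(x,y)\cong MGL_{(p)}/x\wedge_{MGL_{(p)}}MGL_{(p)}/y$, with the same Koszul-type proof as in \cite{EKMM}, needing only the structured multiplication. Iterating this over a finite generating set of $J$, and passing to the homotopy colimit for the infinitely many $a_i$ (harmless, since $\wedge_{MGL_{(p)}}$ commutes with homotopy colimits and $AP(n)$ is cellular by Remark \ref{AMU module}(1)), presents $AP(n)\cong MGL_{(p)}/J$ as the $\wedge_{MGL_{(p)}}$-product of the spectra $MGL_{(p)}/x$ over the generators $x$ of $J$. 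Because $v_0=p$ is one of these generators, and using the symmetry of $\wedge_{MGL_{(p)}}$, I would then absorb a copy of $MGL_{(p)}/p$ into each remaining factor by once more invoking \cite[Lemma V.1.10]{EKMM} in the form $MGL_{(p)}/p\wedge_{MGL_{(p)}}MGL_{(p)}/x\cong MGL_{(p)}/(p,x)$, arriving at the description of $AP(n)$ as a $\wedge_{MGL_{(p)}}$-product of the modules $MGL_{(p)}/(p,x)$, $x\in J$. Since every equivalence produced by \cite[Lemma V.1.10]{EKMM} is a map of $MGL_{(p)}$-modules, the final identification is of $MGL_{(p)}$-modules, which is what is asserted.

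I do not anticipate a real obstacle, since the two nontrivial ingredients --- the $E_\infty$-structure on $MGL_{(p)}$ and the monoidal triangulated structure on its module category --- have already been set up and used in this section. The only points needing care are: (i) verifying that the defining ideal of $AP(n)$ admits a generating set inside $\pi_{2\ast,\ast}MGL_{(p)}$, so each $MGL_{(p)}/x$ is an $MGL_{(p)}$-module to which Lemma \ref{module map} applies, and that regularity of the sequence makes the iterated quotient independent, up to $MGL_{(p)}$-module equivalence, of the order of the generators; and (ii) the bookkeeping of the single factor $MGL_{(p)}/p$, which has to be shared coherently among the factors $MGL_{(p)}/(p,x)$ rather than naively duplicated once per generator, so that the reorganisation in the second paragraph produces exactly $MGL_{(p)}/J$. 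Neither is deep; the substance, exactly as in \cite{Nassau} and \cite[Chapter V]{EKMM}, is the single structured identity $MGL_{(p)}/(x,y)\cong MGL_{(p)}/x\wedge_{MGL_{(p)}}MGL_{(p)}/y$.
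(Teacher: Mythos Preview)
Your overall strategy matches the paper's: both rest on the Koszul identity of \cite[Lemma V.1.10]{EKMM} together with $AP(n)\cong MGL_{(p)}/J$ from \cite[Remark 6.20]{Hoy}. The difference is that you skip precisely the step the paper isolates as the content of the lemma. In your point (ii) you acknowledge that the single factor $MGL_{(p)}/p$ must be ``shared coherently among the factors $MGL_{(p)}/(p,x)$ rather than naively duplicated once per generator'', and then dismiss this as not deep. But unwinding $\bigwedge_{x\in J}MGL_{(p)}/(p,x)$ via \cite[Lemma V.1.10]{EKMM} produces $MGL_{(p)}/J$ smashed with one extra copy of $MGL_{(p)}/p$ for each $x\in J$, and nothing in your argument explains why those extra copies disappear; the Koszul identity alone cannot do this. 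The paper's proof supplies exactly this missing step: it first argues, via Proposition~\ref{trivial action 1}, that $p$ acts trivially on $MGL_{(p)}/p$ and hence that $MGL_{(p)}/(p,p)\cong MGL_{(p)}/p$, which then gives $MGL_{(p)}/(p,x)\wedge_{MGL_{(p)}}MGL_{(p)}/(p,y)\cong MGL_{(p)}/(p,x)\wedge_{MGL_{(p)}}MGL_{(p)}/y$, so the redundant $/p$'s can be stripped one at a time until only the product of the $MGL_{(p)}/x$ remains.

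You should be warned, though, that the paper's key identity is itself delicate as written: if $p$ acts as the zero self-map on $MGL_{(p)}/p$, the cofiber of that zero map is $MGL_{(p)}/p\vee\Sigma^{1,0}MGL_{(p)}/p$, not $MGL_{(p)}/p$. So even once you locate the missing step, filling it honestly may require more care than either your proposal or the paper's argument provides. In any case, the substance of the lemma is not the Koszul identity you emphasise at the end, but the idempotence of quotienting by $p$, and your proposal contains no mechanism for that.
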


\begin{proof}
By Proposition \ref{trivial action 1}, $p$ acts trivially on $MGL_{(p)}/p$, proving $MGL_{(p)}/(p,p)=MGL_{(p)}/p$. By \cite[Lemma V.1.10]{EKMM}, it follows that 
\[MGL_{(p)}/(p,x)\wedge_{MGL_{(p)}}MGL_{(p)}/(p,y)\cong MGL_{(p)}/(p,x)\wedge_{MGL_{(p)}}MGL_{(p)}/y\] 
for any $x,y\in J$. With $p=v_0\in J$, this implies that the $\wedge_{MGL_{(p)}}$-product of all $MGL_{(p)}/(p,x)$ is isomorphic to the $\wedge_{MGL_{(p)}}$-product of all $MGL_{(p)}/x$, which is the quotient $MGL_{(p)}/J\cong AP(n)$ (as in Corollary \ref{trivial action}).
\end{proof}

We can, therefore, define a map of $MGL_{(p)}$-modules,
\[\mu_{AP(n)}:AP(n)\wedge AP(n)\rightarrow AP(n)\]
by applying the maps $\mu_x$ on each factor $MGL_{(p)}/(p,x)$. 

\begin{lemma}
If, in the above setting, $\pi_{k'+1,l'}M=\pi_{2k'+2,2l'}M=\pi_{3k'+3,3l'}M=0$, then $\mu_M$ is homotopy associative.

If, furthermore, $\pi_{k+1,l}N=\pi_{2k+2,2l}N=\pi_{3k+3,3l}N=0$, then $\mu_N$ is also homotopy associative.
\end{lemma}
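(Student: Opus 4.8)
The plan is to imitate the classical $EKMM$-style argument for associativity of the multiplication on a quotient ring spectrum $R/x$, carried over to the motivic setting and adapted to the double-quotient construction $R/(y,x)$ that we are forced to use because we only understand coefficient rings after killing $p$. The point is that $\mu_M$ was constructed as a lift against an exact triangle, hence it is only determined up to an indeterminacy living in a $\pi_{\ast\ast}$-group of $M$ (or $N$), and the obstruction to associativity — the difference $\mu_M(\mu_M\wedge 1)-\mu_M(1\wedge \mu_M):M\wedge M\wedge M\to M$ as a map of $R$-modules — is shown to vanish by checking that it factors through a sequence of such obstruction groups which are zero by the stated hypotheses $\pi_{k'+1,l'}M=\pi_{2k'+2,2l'}M=\pi_{3k'+3,3l'}M=0$. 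Concretely, I would first record that $M\cong R/y$ is built from the exact triangle $S^{k',l'}\wedge R\xrightarrow{\phi'}R\xrightarrow{\eta'}M$, so that $M\wedge M\wedge M$ has a filtration (the smash of three copies of this triangle) whose associated graded pieces are wedges of suspensions of $R$; any $R$-module map out of $M^{\wedge 3}$ is then controlled, via the resulting long exact sequences in $[M^{\wedge 3},M]_{\RMod}$, by maps out of these wedges of suspensions of $R$, which by Lemma \ref{ring to module} are detected by elements of $\pi_{\ast\ast}M$ in the three relevant degrees.

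The key steps, in order, are: (1) Set up the smash-power filtration of $M^{\wedge 3}$ coming from the defining triangle of $M=R/y$, and identify the subquotients as suspensions of $R$, $R\wedge R\cong$ (wedge of suspensions of $R$ since $R$ is the unit), etc.; track the degrees, which is where $k'+1,l'$, $2k'+2,2l'$ and $3k'+3,3l'$ come from. (2) Show the associator $a=\mu_M(\mu_M\wedge 1)-\mu_M(1\wedge\mu_M)$ restricts to zero on the bottom filtration stage $R\wedge R\wedge R\cong R$ (where both composites agree with $\nu_M$ by the defining property $\mu_M(\eta'\wedge 1_M)=\nu_M$ and the module axioms), so $a$ factors through the first quotient of the filtration; then inductively push $a$ up the filtration, each time using Lemma \ref{ring to module} and the vanishing of the appropriate homotopy group of $M$ to kill the obstruction, until $a$ is seen to factor through the top piece and then vanish entirely. (3) For the statement about $\mu_N$: recall $N\cong M\wedge_R R/x$ and $\mu_N$ was obtained from $\mu_{M,N}$ (itself $\mu_M\wedge_R 1_{R/x}$) by a further lift against the triangle defining $N$ from $M$; repeat the same filtration-and-obstruction argument, now with the triangle $S^{k,l}\wedge M\xrightarrow{\phi}M\xrightarrow{\eta}N$, using the vanishing of $\pi_{k+1,l}N,\pi_{2k+2,2l}N,\pi_{3k+3,3l}N$, together with the already-established associativity of $\mu_M$ (which feeds in through $\mu_{M,N}$). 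I would also note that all of this is formal in the triangulated category $\RMod$, which is available since $R=MGL_{(p)}$ is strictly commutative and \cite{SS} gives the model structure on modules.

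The main obstacle I anticipate is bookkeeping rather than a conceptual difficulty: making precise exactly which $[-,M]_{\RMod}$ (or $[-,N]_{\RMod}$) groups the associator lands in at each stage of the filtration of $M^{\wedge 3}$, and verifying that each is isomorphic (via Lemma \ref{ring to module} and the unit axiom) to a single $\pi_{\ast\ast}M$ in one of the three listed degrees — the combinatorics of "smash of three copies of a two-cell complex" gives pieces in degrees that are sums of subsets of $\{(k',l')\}$ shifted by the number of cofiber suspensions, and one has to check the hypotheses cover all of them. A secondary subtlety is that $\mu_M$ and hence $\mu_N$ are only well-defined up to the indeterminacy in their respective lifts, so one should check that associativity holds for some choice (or that the hypotheses force it for every choice); the cleanest route is to observe that the relevant indeterminacy groups, again by Lemma \ref{ring to module}, are subquotients of the same vanishing $\pi_{\ast\ast}M$'s, so the choice is in fact unique in the homotopy category and the question is well-posed. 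I would defer the degree-chasing to a short explicit computation and otherwise present the argument as the standard obstruction-theoretic induction.
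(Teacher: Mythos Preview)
Your proposal is correct and takes essentially the same approach as the paper: restrict the associator $\delta=\mu_M(\mu_M\wedge 1)-\mu_M(1\wedge\mu_M)$ along $(\eta')^{\wedge 3}$ to see it vanishes on $R^{\wedge 3}$ (via $\mu_M\circ(\eta')^{\wedge 2}=\eta'\circ m$ and associativity of $m$), then peel off one $\eta'$ at a time, each time factoring through a $\partial'$ and killing the resulting map from a suspended $R^{\wedge 3}$ using Lemma~\ref{ring to module} and the vanishing of $\pi_{j(k'+1),jl'}M$ for $j=1,2,3$. The paper carries this out explicitly rather than packaging it as a filtration of $M^{\wedge 3}$, but the content is identical; for $\mu_N$ it likewise runs the same argument with the triangle $S^{k,l}\wedge M\to M\to N$, after checking $\mu_N\circ(\eta)^{\wedge 2}=\eta\circ\mu_M$, which holds by construction of $\nu_{M,N}$ and $\mu_N$.
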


\begin{proof}
Let $\mu=\mu_M$ and $\nu=\nu_N$.
We have to show that 
\[\delta=\mu(\mu\wedge 1_M-1_M\wedge\mu):M\wedge M\wedge M\rightarrow M\] is zero.
Let $\delta'=\delta(\eta'\wedge 1_M\wedge 1_M): R\wedge M\wedge M\rightarrow M$, $\delta''=\delta'(1_R\wedge \eta'\wedge 1_M):R\wedge R\wedge M\rightarrow M$ and $\delta'''=\delta''(1_R\wedge 1_R\wedge \eta')=\delta\circ (\eta')^{\wedge 3}:R\wedge R\wedge R\rightarrow M$.
Consider
\[\xymatrix{\ar @{} [dr] |{}
R\wedge R \ar[dd]_{m} \ar[dr]_{1\wedge\eta'} \ar[rr]^{\eta'\wedge\eta'} && M\wedge M \ar[dd]^{\mu} \\
 & R\wedge M \ar[ur]^{\eta'\wedge 1}\ar[dr]_{\nu} & \\
R \ar[rr]_{\eta'} && M.}\]
The top triangle obviously commutes, the right triangle commutes (up to homotopy) by the definition of $\mu$, and the large triangle commutes because $\eta'$ is an $R$-module map. Thus, $\mu\circ (\eta')^{\wedge 2}=\eta'\circ m$, and it follows 
\[\mu(\mu\wedge 1-1\wedge \mu)(\eta')^{\wedge 3}=\mu(\eta' m\wedge \eta'-\eta'\wedge\eta' m)\]
\[=\mu\circ(\eta')^{\wedge 2}(m\wedge 1-1\wedge m)=\eta' m(m\wedge 1-1\wedge m),\]
which vanishes by the associativity of $m$. Thus, $\delta'''=\delta''(1\wedge 1\wedge \eta')=0$. 

This implies that $\delta''$ factors through an $R$-module map $\zeta:S^{k'+1,l'}\wedge R\wedge R\wedge R\rightarrow M$, as in the following diagram:
\[\xymatrix{\ar @{} [dr] |{}
R\wedge R\wedge R \ar[r]^{1\wedge 1\wedge \eta'}\ar[dr]_{\delta'''=0} & R\wedge R\wedge M \ar[rr]^{1\wedge 1\wedge \partial'} \ar[d]_{\delta''} && S^{k'+1,l'}\wedge R\wedge R\wedge R \ar@{-->}[dll]^{\zeta} \\
 & M &&. }\]
 Now, $R\wedge R\wedge R$ is a ring spectrum and $\zeta$ can be considered as a map of $R\wedge R\wedge R$-modules. By Lemma \ref{ring to module} and the assumption on $\pi_{k'+1,l'}M$, $\zeta$ must be trivial. Thus, $\delta''=0$. Again, this implies $\delta'=\zeta'(1_R\wedge \partial'\wedge 1_M)$ for some $\zeta':S^{k'+1,l'}\wedge R\wedge R\wedge M\rightarrow M$. The $R\wedge R\wedge R$-module map $\zeta'(1_{S^{k'+1,l'}}\wedge 1_R\wedge 1_R\wedge \eta')$ is a degree $(k'+1,l')$-map from the ring spectrum $R\wedge R\wedge R$ to $M$, and therefore vanishes by Lemma \ref{ring to module}. It follows that $\zeta'=\zeta''(1_{S^{k'+1,l'}}\wedge 1_R\wedge 1_R\wedge \partial' )$ for some map $\zeta'':S^{2k'+2,2l'}\wedge R\wedge R\wedge R\rightarrow M$. 
\[\xymatrix{\ar @{} [dr] |{}
S^{k'+1,l'}\wedge R\wedge R\wedge R \ar[r]^{1\wedge \eta'}\ar[dr]_{0} & S^{k'+1,l'}\wedge R\wedge R\wedge M \ar[r]^{1\wedge \partial'} \ar[d]_{\zeta'} & S^{2k'+2,2l'}\wedge R\wedge R\wedge R \ar@{-->}[dl]^{\zeta''} \\
 & M &. }\] 
By the second assumption on $\pi_{\ast\ast}M$, $\zeta''=0$. It follows that $\delta'=\zeta'(1\wedge\partial'\wedge 1)=\zeta''(1\wedge\partial'\wedge\partial')=0$. That is, $\delta(\eta'\wedge 1_M\wedge 1_M)=\delta'=0$, which implies $\delta=\zeta'''(\partial'\wedge 1_M\wedge 1_M)$ for some $\zeta''':S^{k'+1,l'}\wedge R\wedge M\wedge M\rightarrow M$.
 
Now, $\zeta'''(1\wedge\eta'\wedge 1)(1\wedge 1\wedge\eta')$ is a map from $R\wedge R\wedge R$ to $M$ of degree $(k'+1,l')$, and therefore trivial. Thus, $\zeta'''(1\wedge\eta'\wedge 1)=\zeta^{(4)}(1\wedge 1\wedge\partial')$ with $\zeta^{(4)}:S^{2k'+2,2l'}\wedge R\wedge R\wedge R\rightarrow M$, which is also trivial. It follows that $\zeta'''=\zeta^{(5)}(1\wedge\partial'\wedge 1)$ for some $\zeta^{(5)}:S^{2k'+2,2l'}\wedge R\wedge R\wedge M\rightarrow M$, and $\delta=\zeta'''(\partial'\wedge 1\wedge 1)=\zeta^{(5)}(\partial'\wedge\partial'\wedge 1)$. The map $\zeta^{(5)}(1\wedge 1\wedge\eta):S^{2k'+2,2l'}\wedge R\wedge R\wedge R\rightarrow M$ is again zero, which implies $\zeta^{(5)}=\zeta^{(6)}(1\wedge 1\wedge\partial')$ for some $\zeta^{(6)}:S^{3k'+3,3l'}\wedge R\wedge R\wedge R\rightarrow M$. By the third condition on $\pi_{\ast\ast}M$, this $\zeta^{(6)}$ is zero. Finally, we have  
\[\delta=\zeta^{(6)}(\partial'\wedge\partial'\wedge\partial')=0.\]

The same line of proof can be used to derive the associativity of $\mu_N$ from that of $\mu_M$. It only needs to be checked that
\[\xymatrix{\ar @{} [dr] |{}
M\wedge M \ar[dd]_{\mu_M} \ar[dr]_{1\wedge\eta} \ar[rr]^{\eta\wedge\eta} && N\wedge N \ar[dd]^{\mu_N} \\
 & M\wedge N \ar[ur]^{\eta\wedge 1}\ar[dr]_{\nu_{M,N}} & \\
M \ar[rr]_{\eta} && N}\]
is commutative, but this was part of the definitions of $\nu_{M,N}$ and $\mu_N$. 
\end{proof}

\begin{prop}\label{associativity}
If $p>2$, then $\mu_{AP(n)}$ is homotopy associative.
\end{prop}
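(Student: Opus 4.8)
The plan is to reduce the associativity of $\mu_{AP(n)}$ to that of the individual factor multiplications $\mu_x$ via Lemma \ref{p,x}. By that lemma, $AP(n)$ is, as an $MGL_{(p)}$-module, the $\wedge_{MGL_{(p)}}$-product of the spectra $MGL_{(p)}/(p,x)$ for $x$ in the index set $J$ from the proof of Corollary \ref{trivial action}, and $\mu_{AP(n)}$ is by construction the $\wedge_{MGL_{(p)}}$-product of the maps $\mu_x$. A choice of homotopy witnessing associativity of $\mu_x$ on each factor can be $\wedge_{MGL_{(p)}}$-multiplied, together with the symmetry isomorphisms of $MGL_{(p)}$ (which is $E_\infty$, hence strictly commutative), to produce a homotopy witnessing associativity of $\mu_{AP(n)}$. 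So it suffices to show that each $\mu_x$, $x\in J$, is homotopy associative.

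To do this I would apply the lemma preceding this proposition to each $\mu_x$. Recall that $\mu_x$ comes from the general construction with $R=MGL_{(p)}$, $y=p$ (so $M=MGL_{(p)}/p$) and the element $x$ (so $N=MGL_{(p)}/(p,x)$, and $\mu_x=\mu_N$). That lemma yields homotopy associativity of both $\mu_M$ and $\mu_N$ provided $\pi_{1,0}M=\pi_{2,0}M=\pi_{3,0}M=0$ and $\pi_{k+1,l}N=\pi_{2k+2,2l}N=\pi_{3k+3,3l}N=0$, where $(k,l)=\deg(x)$. Every generator in $J$ has even first degree: $p=v_0$ lies in bidegree $(0,0)$, $v_i=a_{p^i-1}$ in bidegree $(2(p^i-1),p^i-1)$, and the remaining generators $a_j$ in bidegree $(2j,j)$. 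Hence $k$ is even, so $k+1$ and $3k+3$ are odd, while $2k+2=2(k+1)$ is twice an odd integer.

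Finally I would check the required vanishings using Lemma \ref{Ah}. Since $(p)\subseteq(p)$ and $(p)\subseteq(p,x)$, that lemma gives $M_{\ast\ast}\cong (MU_{(p)}/p)_\ast\otimes_{\F_p}\F_p[\tau]$ and $N_{\ast\ast}\cong (MU_{(p)}/(p,x))_\ast\otimes_{\F_p}\F_p[\tau]$, with the $MU$-type coefficient rings placed in bidegrees $(2m,m)$ and $\tau$ in bidegree $(0,1)$; as $MU_\ast$ is concentrated in even degrees, both $M_{a,b}$ and $N_{a,b}$ vanish unless $a=2m$ with the degree-$m$ part of the corresponding $MU$-quotient nonzero (and $b\le m$). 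For the bidegrees $(k+1,l)$ and $(3k+3,3l)$ the first coordinate is odd, so those groups are $0$; for $(2k+2,2l)$ the first coordinate is $2(k+1)$ with $k+1$ odd, so a nonzero class would require a nonzero element in the odd degree $k+1$ of an $MU$-quotient, which is impossible. The case $k=0$ likewise kills $\pi_{1,0}M$, $\pi_{2,0}M$, $\pi_{3,0}M$. Thus each $\mu_x$, and hence $\mu_{AP(n)}$, is homotopy associative.

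I expect the main obstacle to be the structural reduction rather than the degree count: the preceding lemma is formulated only for the double quotient $(R/y)/x$, so it cannot simply be iterated down the long chain of generators defining $AP(n)$, and one really has to pass through the smash decomposition of Lemma \ref{p,x} and argue that homotopy associativity is inherited by $\wedge_{MGL_{(p)}}$-products. The hypothesis $p>2$ enters this section, as in the topological treatments \cite{W-products}, \cite{Nassau}, to have $AP(n)$ available as a genuinely commutative ring spectrum; the degree bookkeeping sketched above for the associativity statement itself does not use it essentially.
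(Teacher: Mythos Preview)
Your degree argument has a genuine error that breaks the proof. You claim that a nonzero class in $\pi_{2(k+1),2l}N$ with $N=MGL_{(p)}/(p,x)$ would require a nonzero element ``in the odd degree $k+1$ of an $MU$-quotient''. But Lemma~\ref{Ah} places an element of classical degree $s$ in $(MU_{(p)}/I)_\ast$ into motivic bidegree $(s,s/2)$ (and $\tau$ has homological bidegree $(0,-1)$). So $\pi_{2(k+1),2l}N$ is fed by $(MU_{(p)}/(p,x))_{2(k+1)}$, not by anything in odd classical degree; since $MU_\ast$ is a polynomial ring on generators in all positive even degrees, this group is typically nonzero. Already for $M=MGL_{(p)}/p$ one has $\pi_{2,0}M\cong(MU/p)_2=\F_p\{a_1\}\neq 0$, so the hypotheses of the preceding lemma fail for both $M$ and $N$. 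The paper in fact flags this: ``$\pi_{2k+2,2l}N\neq 0$ for $N=MGL_{(p)}/(p,x)$''.

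The fix is the EKMM trick you allude to but do not carry out: one does not prove each $\mu_x$ is associative on its own, but rather shows that the associativity diagram for $\mu_x$ commutes \emph{after} applying $-\wedge_{MGL_{(p)}}1_A$, where $A$ is a factor already known to carry an associative product. Starting from $A=ABP$ and adding the $v_i$ one at a time, the relevant vanishing conditions become $\pi_{2k+2,2l}(ABP/(p,x))=0$, i.e.\ conditions on $BP_\ast$-quotients rather than $MU_\ast$-quotients. For $p>2$ every $|v_j|=2(p^j-1)$ is divisible by $4$, so $BP_\ast$ lives in degrees $\equiv 0\pmod 4$, while $2k+2\equiv 2\pmod 4$; hence the group vanishes. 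This is exactly where $p>2$ enters, contrary to your final paragraph: for $p=2$ one has $|v_1|=2$ and the argument fails. So the structural reduction via Lemma~\ref{p,x} is the right idea, but it must be combined with the inductive ``associativity relative to $A$'' step; simply smashing together homotopy-associativity witnesses for the individual $\mu_x$ is not available, because those witnesses do not exist.
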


\begin{proof}
Note that this does not follow immediately from the above lemma, as $\pi_{2k+2,2l}N\neq 0$ for $N=MGL_{(p)}/(p,x)$. However, we can use a trick from \cite[Theorem V.3.1]{EKMM}, where the topological analogue of this statement is proven. 

Let $N=R/(x,y)$ and $A=R/(J-\{x,y\})$ for some set $J$ of elements in $\pi_{\ast\ast}R$, and assume that we already know that $A$ is equipped with a (homotopy) associative map $\mu_A:A\wedge A\rightarrow A$ as above. The product on $R/J\cong A/(x,y)\cong N\wedge_R A$ is given by 
\[(N\wedge_R A)\wedge (N\wedge_R A)\overset{\tau}\longrightarrow (N\wedge N)\wedge_R (A\wedge A)\overset{1\wedge_R \mu_A}\longrightarrow (N\wedge N)\wedge_R A\overset{\mu_N\wedge_R 1}\longrightarrow N\wedge_R A.\]
Therefore, to prove associativity for $\mu_{N\wedge_R A}$, it suffices to prove that the associativity diagram for $\mu_N$ commutes after applying $-\wedge_R 1_A$ to it. Applying $-\wedge_R 1_A$ to all diagrams appearing in the above lemma yields the following result: If $\pi_{i,j}(M\wedge_R A)=0$ for $(i,j)\in\{(k'+1,l'),(2k'+2,2l'),(3k'+3,3l')\}$ and $\pi_{i,j}(N\wedge_R A)=0$ for $(i,j)\in\{(k+1,l),(2k+2,2l),(3k+3,3l)\}$, then $\mu_{N\wedge_R A}$ is associative.

Now, let $R=MGL_{(p)}$, $A=ABP$, $M=MGL_{(p)}/p$ and $N=MGL_{(p)}/(p,x)$. From \cite[Definition 5.3]{Vez}, we know that $\mu_{ABP}$ is associative. The assumptions on the homotopy groups are satisfied by Lemma \ref{Ah}, by which $\pi_{\ast\ast}(ABP/p)\cong \h_{\ast\ast}\otimes \pi_\ast(BP/p)$ and $\pi_{\ast\ast}(ABP/(p,x))\cong \h_{\ast\ast}\otimes \pi_\ast(BP/(p,x))$. Note that these homotopy groups vanish in degrees $(k+1,l)$ and $(3k+3,3l)$ for any $p$ because $\pi_\ast BP$ is concentrated in even degrees and $k$ is even. However, for $\pi_{(2k+2,2l)}$ to vanish, we need to assume that $p$ is odd.

This proves that $\mu_{ABP/(p,x)}$ is associative. Inductively, we can apply this argument to $A=ABP/J'$ for some $(p)\subset J'\subset J$, where $J$ is as in the proof of Corollary \ref{trivial action}, using $ABP/(J'\cup\{y\})=MGL_{(p)}/(p,y)\wedge_{MGL_{(p)}}ABP/J'$ (compare Lemma \ref{p,x}).
\end{proof}

Recall that, for $k=\C$ and $n>0$, $AP(n)_{\ast\ast}\cong \h_{\ast\ast}\otimes_{\F_p}P(n)_\ast $ (Lemma \ref{Ah}), which is a ring. Hence, we can speak of $AP(n)_{\ast\ast}$-modules. Note that $AP(0)_{\ast\ast}$ is a ring, anyway, since $AP(0)=ABP$ is a ring spectrum.

\begin{lemma}\label{ring iso}
Let $k=\C$. On coefficients, the map
\[\mu_{AP(n)}:AP(n)\wedge AP(n)\rightarrow AP(n)\]
induces the multiplication on $AP(n)_{\ast\ast}$ given by $AP(n)_{\ast\ast}\cong \h_{\ast\ast}\otimes_{\F_p}P(n)_\ast$.
\end{lemma}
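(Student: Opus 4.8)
The plan is to compare the product $\cdot_\mu$ on $AP(n)_{\ast\ast}$ induced by $\mu_{AP(n)}$ with the product $\cdot$ transported from $\h_{\ast\ast}\otimes_{\F_p}P(n)_\ast$ by the isomorphism of Lemma \ref{Ah}, where the latter is just the tensor product of the ring structures on $\h_{\ast\ast}=\h_{\ast\ast}(\Spec\C,\F_p)\cong\F_p[\tau]$ and on $P(n)_\ast\cong\F_p[v_n,v_{n+1},\dots]$. For $p>2$ the map $\mu_{AP(n)}$ is homotopy associative (Proposition \ref{associativity}); by the same degree‑counting, carried out with the single vanishing condition governing commutativity of $\mu_M$ for $M=MGL_{(p)}/(p,x)$ (which holds for odd $p$, just as in the proof of Proposition \ref{associativity}), it is also homotopy commutative, and it is unital because $\mu_M\circ(\eta'\wedge 1_M)=\nu_M$ in the construction, so the image of $1\in\pi_{0,0}(S^{0,0})$ is a two‑sided unit. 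Hence $\cdot_\mu$ makes $AP(n)_{\ast\ast}$ a commutative unital ring, and it suffices to check $\cdot_\mu=\cdot$ on the ring generators $\tau\in AP(n)_{0,1}$ and $v_i\in AP(n)_{2(p^i-1),p^i-1}$ ($i\geq n$), since by Lemma \ref{Ah} these generate.

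For the $v_i$: they lie in the image of $MGL_{(p),\ast\ast}\to AP(n)_{\ast\ast}$ (indeed of $\BP_{\ast\ast}\to AP(n)_{\ast\ast}$), and since $\mu_{AP(n)}$ is a map of $MGL_{(p)}$‑modules it is $MGL_{(p),\ast\ast}$‑bilinear, so $v_i\cdot_\mu b$ equals the $MGL_{(p),\ast\ast}$‑module action of $v_i$ on $b$; the latter agrees with $\cdot$ because the motivic Atiyah–Hirzebruch spectral sequence of Proposition \ref{AHSS} for $AP(n)$ is a module over that for $MGL_{(p)}$ (Remark \ref{remark AHSS}), so the isomorphism of Lemma \ref{Ah} is $MGL_{(p),\ast\ast}$‑linear. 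To control the subring generated by the $v_i$ globally, I would use the map of Hopf algebroids of Lemma \ref{hopf alg}: $BP_\ast\to\BP_{\ast\ast}$ is a ring homomorphism carrying $v_i^{\Top}$ to $v_i$, and composing with the canonical quotient $q_\ast\colon\BP_{\ast\ast}\to AP(n)_{\ast\ast}$ gives a ring homomorphism $P(n)_\ast=BP_\ast/(v_0,\dots,v_{n-1})\to(AP(n)_{\ast\ast},\cdot_\mu)$ — the ring‑map property holding because $q\colon\BP\to AP(n)$ intertwines the genuine product $\mu_{\BP}$ (the base case of the construction) with $\mu_{AP(n)}$, which follows from the identity $\mu_M(\eta'\wedge\eta')=\eta'\circ m$ read off the recursive formula for $\mu$ — and this homomorphism realizes the summand $1\otimes P(n)_\ast$ of Lemma \ref{Ah}.

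For $\tau$: applying Lemma \ref{Ah} to $MGL_{(p)}/p=MGL_{(p)}/(v_0)$, which is of the required form with $(p)\subseteq(p)$ and coefficients $\h_{\ast\ast}\otimes MU_\ast/p$, gives $(MGL_{(p)}/p)_{0,1}\cong\F_p$, generated by a class mapping to $\tau$ in $AP(n)_{\ast\ast}$; as $AP(n)$ is a module over $MGL_{(p)}/p$ (being $MGL_{(p)}/p\wedge_{MGL_{(p)}}(\text{the remaining quotient})$) and $\mu_{AP(n)}$ is compatible with this module structure, the same module‑spectral‑sequence argument as for the $v_i$, now over the AHSS of $MGL_{(p)}/p$, yields $\tau\cdot_\mu b=\tau\cdot b$ for all $b$; together with generation and commutativity this gives $\cdot_\mu=\cdot$. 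The step I expect to be the main obstacle is exactly this last compatibility — pinning down that $\mu_{AP(n)}$ respects the $MGL_{(p)}/p$‑module structure well enough to compute the action of the "weight" generator $\tau$ — together with the parallel‑to‑Proposition \ref{associativity} verification of homotopy‑commutativity. An alternative that sidesteps these is to invoke multiplicativity of the slice filtration, so that $\mu_{AP(n)}$ refines to a self‑pairing of the AHSS of Proposition \ref{AHSS}; its $E_2$‑pairing is then visibly the tensor of the cup product on $\h^{\ast\ast}(\Spec\C,\F_p)$ with the coefficient product on $P(n)_\ast$, and since by the proof of Lemma \ref{Ah} the spectral sequence is concentrated in a single filtration degree in each bidegree, there are no multiplicative extension problems and one reads off $\cdot_\mu=\cdot$ directly.
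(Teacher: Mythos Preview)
Your ``alternative'' at the end is precisely the paper's proof: invoke multiplicativity of the slice/motivic Atiyah--Hirzebruch spectral sequence (the paper cites Pelaez, \cite[Theorem 3.6.16]{Pelaez}), use that for $X=\Spec\C$ the $E_2$-page is concentrated in a single column so there are no multiplicative extensions, and read off the product. One refinement the paper makes explicit and you elide in that final paragraph: the $E_2$-product is the cup product on $\h_{\ast\ast}$ tensored with whatever $R_\C(\mu_{AP(n)})$ induces on $P(n)_\ast$, so one still has to check that this latter product is the standard one. The paper does this by observing that $R_\C$ carries the entire construction of $\mu_{AP(n)}$ to the analogous topological construction, so the $BP$-action on $P(n)$ factors through $R_\C(\mu_{AP(n)})$; this is exactly your identity $\mu_M(\eta'\wedge\eta')=\eta'\circ m$ (applied after realization), which you isolated in your first approach.

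So your proposal is correct, with the slice-multiplicativity route being the paper's own. Your first approach---checking on generators $\tau$ and $v_i$ via compatibility with various module structures, plus an ad hoc commutativity argument---is more laborious and, as written, restricts to $p>2$, whereas the slice argument needs no parity hypothesis on $p$ and no separate commutativity or associativity input. The payoff of your first approach is that it makes the $v_i$-compatibility (equivalently, the identification of the $P(n)_\ast$-product) completely transparent without invoking realization; but once you have the slice multiplicativity in hand, that is the cleaner route.
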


\begin{proof}
By \cite[Theorem 3.6.16]{Pelaez}, the motivic Atiyah Hirzebruch spectral sequence in Lemma \ref{Ah} is multiplicative, yielding an isomorphism of rings between $AP(n)_{\ast\ast}$ with multiplication induced from $\mu_{AP(n)}$ and $E_2=\h_{\ast\ast}\otimes_{\F_p}P(n)_\ast$ with multiplication induced from the ring structure of $\h_{\ast\ast}$ and from $R_\C(\mu_{AP(n)}):P(n)\wedge P(n)\rightarrow P(n)$. It therefore suffices to show that $R_\C(\mu_{AP(n)})$ induces the ring structure on $P(n)_\ast$. Now, $R_\C$ carries all the above diagrams to the analogous topological diagrams and, therefore, $BP\wedge P(n)\rightarrow P(n)$ (inducing the action of $BP_\ast$ on $P(n)_\ast$) factors through $R_\C(\mu_{AP(n)})$, which, hence, induces the induced action of $P(n)_\ast$ on $P(n)_\ast$. 
\end{proof}

\begin{cor}\label{AP(n) module}
In $ \SH(\C)$, the action $\nu_n:ABP\wedge AP(n)\rightarrow AP(n)$ factors through a map $\mu_{AP(n)}:AP(n)\wedge AP(n)\rightarrow AP(n)$. If $p>2$, the induced action of $ABP_{\ast\ast}$ on $AP(n)_{\ast\ast}(X)$ and $AP(n)^{\ast\ast}(X)$ gives $AP(n)_{\ast\ast}(X)$ and $AP(n)^{\ast\ast}(X)$ the structure of left $AP(n)_{\ast\ast}$-modules for any $n\geq 0$ and $X\in\SH(\C)$.
\end{cor}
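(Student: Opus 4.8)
\textbf{Proof plan for Corollary \ref{AP(n) module}.}

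The plan is to assemble the corollary from the pieces that have just been established. The factorization of $\nu_n$ through a map $\mu_{AP(n)}\colon AP(n)\wedge AP(n)\to AP(n)$ is exactly the content of the diagram constructed immediately before Lemma \ref{p,x}: applying the construction of $\mu_N$ to each factor $MGL_{(p)}/(p,x)$, $x\in J$, and smashing over $MGL_{(p)}$ (using Lemma \ref{p,x} to identify $AP(n)$ with the corresponding $\wedge_{MGL_{(p)}}$-product) produces $\mu_{AP(n)}$, and the defining commuting triangle $\mu_N\circ(\eta'\wedge 1)=\nu_N$ gives, after smashing over $MGL_{(p)}$ with the remaining quotient, precisely the factorization $\nu_n=\mu_{AP(n)}\circ(\eta\wedge 1_{AP(n)})$, where $\eta\colon ABP\to AP(n)$ is the canonical quotient map. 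So the first part of the corollary requires only a short recollection of these diagrams; no new computation is needed.

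For the second part, I would argue as follows. For any $X\in\SH(\C)$, the $ABP$-module structure on $AP(n)\wedge X$ makes $\pi_{\ast\ast}(AP(n)\wedge X)=AP(n)_{\ast\ast}(X)$ and $\pi_{\ast\ast}F(X,AP(n))=AP(n)^{\ast\ast}(X)$ into $ABP_{\ast\ast}$-modules in the usual way, via the left unit of $ABP$ acting through the module action map. The factorization $\nu_n=\mu_{AP(n)}\circ(\eta\wedge 1)$ together with the homotopy associativity of $\mu_{AP(n)}$ (Proposition \ref{associativity}, which needs $p>2$) shows that this $ABP_{\ast\ast}$-action descends: the $ABP_{\ast\ast}$-action on $AP(n)_{\ast\ast}(X)$ is obtained by precomposing the $\pi_{\ast\ast}(\mu_{AP(n)})$-action of $AP(n)_{\ast\ast}$ on $AP(n)_{\ast\ast}(X)$ with the ring map $ABP_{\ast\ast}=\pi_{\ast\ast}(ABP)\to\pi_{\ast\ast}(AP(n))=AP(n)_{\ast\ast}$ induced by $\eta$ (here we use Lemma \ref{ring iso}, which identifies the multiplication induced by $\mu_{AP(n)}$ on $AP(n)_{\ast\ast}$ with the ring structure $\h_{\ast\ast}\otimes_{\F_p}P(n)_\ast$ coming from Lemma \ref{Ah}, so that ``$AP(n)_{\ast\ast}$-module'' is unambiguous). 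The associativity of $\mu_{AP(n)}$ is what guarantees that $\pi_{\ast\ast}(\mu_{AP(n)})$ really equips $AP(n)_{\ast\ast}(X)$ with an $AP(n)_{\ast\ast}$-module structure (associativity and unitality of the scalar action), and the compatibility of the $\mu_{AP(n)}$-action with the original $\nu_n$-action (the factorization diagram) is what identifies the restriction of scalars along $\eta$ with the given $ABP_{\ast\ast}$-action. The cohomological case $AP(n)^{\ast\ast}(X)=\pi_{\ast\ast}F(X,AP(n))$ is handled identically, using that $F(X,-)$ turns the $AP(n)$-module map $\mu_{AP(n)}$ into a pairing $AP(n)_{\ast\ast}\otimes AP(n)^{\ast\ast}(X)\to AP(n)^{\ast\ast}(X)$ which is associative because $\mu_{AP(n)}$ is.

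The main obstacle is really located upstream, in Proposition \ref{associativity}: the passage from ``$\mu_{AP(n)}$ exists'' to ``$\mu_{AP(n)}$ is homotopy associative'' is the only place the hypothesis $p>2$ enters, via the vanishing of $\pi_{2k+2,2l}$ of the relevant quotients (which fails at $p=2$ since $k$ is even but $2k+2$ need not avoid the odd-degree part after doubling). Granting Proposition \ref{associativity} and Lemma \ref{ring iso}, the remaining work is purely formal: unwinding definitions of module structures on homotopy groups of smash products and function spectra, and checking that the two $ABP_{\ast\ast}$-actions agree by chasing the factorization diagram. I expect this bookkeeping to be routine and would present it compactly, emphasizing the one diagram $\nu_n=\mu_{AP(n)}\circ(\eta\wedge 1_{AP(n)})$ and the associativity input.
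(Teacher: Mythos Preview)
Your proposal is correct and follows essentially the same route as the paper: the factorization $\nu_n=\mu_{AP(n)}\circ(\eta\wedge 1)$ is read off from the construction of $\mu_N$ and Lemma~\ref{p,x}, unitality comes from this factorization together with the unitality of $\nu_n$, and associativity is Proposition~\ref{associativity} (which is where $p>2$ enters). The paper's proof is just a condensed version of what you wrote; the only point it makes slightly more explicit is that left unitality of $\mu_{AP(n)}$ follows immediately from $\nu_M=\mu_M\circ(\eta'\wedge 1)$ and the unitality of the $R$-action $\nu_M$.
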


\begin{proof}
We have to check that the action of $AP(n)_{\ast\ast}$ on $AP(n)_{\ast\ast}(X)$ and $AP(n)^{\ast\ast}(X)$ is unital and associative. This is equivalent to $\mu_{AP(n)}$ being homotopy left unital and associative. Unitality follows from the maps $\nu_M:R\wedge M\rightarrow M$ being unital, since, by definition of $\mu_M$, $\nu_M=\mu_M\circ (\eta'\wedge 1):R\wedge M\rightarrow M\wedge M\rightarrow M$. Associativity is proven in Proposition \ref{associativity} for $p>2$.
\end{proof}

\section{Bousfield classes of $AK(n)$ and $AB(n)$}\label{Section Wurgler}

Recall $AB(n)=v_n^{-1}AP(n)$.

We want to use methods of \cite{JW} and \cite{W} to prove that 
\[AK(n)_{\ast\ast}X=0 \Leftrightarrow AB(n)_{\ast\ast}X=0\]
for $X\in\SH(\C)$. For this, we need the following two results, which hold in $\SH(k)$ for any $k\subseteq\C$ and are analogous to \cite[Formula (2.8)]{W}. The reader may skip the first proposition, as it is a special case of the second one. The proof of the first one is maybe more illustrative.

Let $\mu_n:ABP^{\ast\ast}(-)\rightarrow AP(n)^{\ast\ast}(-)$ be induced from 
\[ABP\overset{1\wedge i}\rightarrow ABP\wedge AP(n)\overset{\nu_n}\rightarrow AP(n),\]
where $i:S\rightarrow AP(n)$ is induced from the unit map $S\rightarrow ABP$ and $\nu_n$ is the structure map of the $ABP$-module $AP(n)$. For $s^E\in ABP^{\ast\ast}ABP$ as in Lemma \ref{ABPABP}(2), we have $\mu_n(s^E)\in AP(n)^{\ast\ast}(ABP)$.

\begin{prop}\label{W}
For any $n\geq 0$, the map 
\[h_n:AP(n)^{\ast\ast}[[\mu_n(s^E)]]\rightarrow AP(n)^{\ast\ast}ABP,\]
given by $h_n(\underset{E}\sum x_E\cdot\mu_n(s^E))=\underset{E}\sum \nu_n(s^E\wedge x_E)$, is an isomorphism of $ABP^{\ast\ast}$-modules. If $k=\C$ and $p>2$, it is an isomorphism of $AP(n)^{\ast\ast}$-modules by Corollary \ref{AP(n) module}.
\end{prop}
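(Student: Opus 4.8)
\textbf{Proof plan for Proposition \ref{W}.}

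The plan is to mimic the topological computation of W\"urgler (\cite[Formula (2.8)]{W}), transplanting it to $\SH(k)$ via the flat Hopf algebroid structure established in Lemma \ref{hopf alg} and the module-theoretic descriptions of $ABP_{\ast\ast}ABP$ and $ABP^{\ast\ast}ABP$ from Lemma \ref{ABPABP}. First I would record what $AP(n)^{\ast\ast}ABP$ is as an $ABP^{\ast\ast}$-module: using that $ABP_{\ast\ast}ABP$ is free, hence projective, over $ABP_{\ast\ast}$ (Lemma \ref{ABPABP}) together with the $ABP$-version of the universal coefficient isomorphism $AP(n)^{\ast\ast}ABP\cong \Hom_{ABP_{\ast\ast}}(ABP_{\ast\ast}ABP, AP(n)_{\ast\ast})$ (as in \cite[Proposition 9.7]{MLE}, exactly as used to prove Lemma \ref{ABPABP}(2)), one gets that $AP(n)^{\ast\ast}ABP$ is the completion $AP(n)^{\ast\ast}[[\sigma^E]]$ of the free module on the dual basis $\{\sigma^E\}$ of $\{t^E\}$, where $\sigma^E$ is the image of $s^E$ under $\mu_n$. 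Concretely $\sigma^E=\mu_n(s^E)$, so the target of $h_n$ is literally a free-completed module on the symbols $\mu_n(s^E)$.

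Next I would check that $h_n$ is well defined and $ABP^{\ast\ast}$-linear: the map $(x,s^E)\mapsto \nu_n(s^E\wedge x)$ is built from the $ABP$-module structure map $\nu_n$ and the composition pairing on function spectra, and $ABP^{\ast\ast}$-linearity is immediate because $\nu_n$ is an $ABP$-module map. Then comes the core step: showing $h_n$ is bijective. The strategy is to observe that $h_n$ is, up to the identifications above, the $ABP_{\ast\ast}$-linear dual of an isomorphism already visible on homology. Indeed, from Lemma \ref{ABPABP}(1) the left and right $ABP_{\ast\ast}$-module structures on $ABP_{\ast\ast}ABP$ are related by the conjugation $c$, and after smashing with $AP(n)$ the map $m_\ast$ (the analog of the map discussed just before Definition \ref{s_E}, shown there to be an isomorphism for all $X$ by \cite[Lemma 5.1(i)]{MLE}) identifies $AP(n)_{\ast\ast}(ABP)$ with $AP(n)_{\ast\ast}\otimes_{ABP_{\ast\ast}} ABP_{\ast\ast}ABP$, which is $AP(n)_{\ast\ast}$-free on $\{c(t^E)\}$. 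The map $h_n$ is precisely the $ABP_{\ast\ast}$-dual of this free-basis identification, composed with the duality between $\{t^E\}$ and $\{s^E\}$; hence it is an isomorphism onto the completion. I would phrase this by reducing to the sphere via cellular induction: the formula for $h_n$ is compatible with cofiber sequences (both sides turn a cofiber sequence $X\to Y\to Z$ into long exact sequences, using that $ABP_{\ast\ast}ABP$ is free over the field $\F_p$ in each fixed topological degree, as in the argument preceding Definition \ref{s_E}), and on $S^{0,0}$ it is the tautological identification of a free-completed module with its double dual. Finally, the $AP(n)^{\ast\ast}$-linearity when $k=\C$, $p>2$ is a direct invocation of Corollary \ref{AP(n) module}, which upgrades the $ABP_{\ast\ast}$-module structures on both sides to $AP(n)_{\ast\ast}$-module structures through which $\mu_n$ and $\nu_n$ are visibly linear.

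The main obstacle I anticipate is bookkeeping the completion correctly: $ABP^{\ast\ast}ABP$ is a completed (infinite-sum) object, not the plain free module, so the isomorphism on the sphere must be stated as $AP(n)^{\ast\ast}[[\sigma^E]]\cong \Hom_{ABP_{\ast\ast}}(ABP_{\ast\ast}ABP, AP(n)_{\ast\ast})$, and one must verify that the cellular-induction / five-lemma argument interacts well with taking completions (equivalently, with the $\varprojlim$ over skeleta of $ABP$). This is exactly the point handled in \cite[Lemma 5.12]{JY} and \cite[Proposition 9.7]{MLE} in the relevant degrees, so I would cite those for the convergence/completion statement rather than re-proving it, and spend the written proof on identifying $h_n$ with the relevant duality map. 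The sign/grading conventions for the Adams-graded $ABP^{\ast\ast}$ (the extra motivic weight on $t^E$, $s^E$ from Lemma \ref{ABPABP}) need to be carried along but introduce no new difficulty.
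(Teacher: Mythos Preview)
Your approach is correct but differs from the paper's proof of this specific proposition. The paper argues by induction on $n$: the base case $AP(0)=ABP$ is Lemma \ref{ABPABP}(2), and the inductive step uses the cofiber sequence $AP(n)\xrightarrow{v_n}AP(n)\to AP(n+1)$, which induces long exact sequences on both source and target of $h_n$; after checking that the three relevant squares (involving $v_n^\ast$, $\lambda_n^\ast$, $\delta^\ast$) commute because $v_n$, $\lambda_n$, $\delta$ are $ABP$-module maps, the five lemma gives $h_{n+1}$ an isomorphism. Your route---collapsing a universal coefficient spectral sequence using freeness of $ABP_{\ast\ast}ABP$ over $ABP_{\ast\ast}$---is exactly what the paper does in the very next result, Proposition \ref{W2}, which generalises Proposition \ref{W} to arbitrary cellular $ABP$-modules $h$. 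So you have essentially reproduced the paper's second proof rather than its first; what the inductive proof buys is an explicit, diagrammatic verification that the specific map $h_n$ (and not just some abstract isomorphism) is the one realised by the UCT identification, which is the point you flag as needing care.

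Two small corrections. First, the reference \cite[Proposition 9.7]{MLE} is about Landweber exact theories and does not apply to $AP(n)$ for $n\ge 1$; the correct input is the universal coefficient spectral sequence of \cite[Proposition 7.7]{DI} with $E=ABP$, $M=ABP\wedge ABP$, $N=AP(n)$, which collapses because $ABP_{\ast\ast}ABP$ is free over $ABP_{\ast\ast}$---this is precisely the argument in Proposition \ref{W2}. Second, your ``cellular induction over $X\to Y\to Z$'' paragraph is misplaced: the statement concerns $AP(n)^{\ast\ast}(ABP)$ for fixed $ABP$, so there is no varying finite $X$ to induct over; the relevant induction is either on $n$ (the paper's proof) or absorbed entirely into the spectral-sequence collapse (your proof). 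You can simply delete that paragraph.
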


\begin{proof}
We proceed by induction. For $AP(0)=ABP$, $\mu_0$ is the identity and the claim holds by Lemma \ref{ABPABP}(2). Now, assume $h_n$ is an isomorphism for some $n\geq 0$. Consider the following diagram, consisting of two exact sequences induced by $AP(n)\overset{v_n}\rightarrow AP(n)\rightarrow AP(n+1)$.
\begin{tiny}
\[\xymatrix{\ar @{} [dr] |{}
 AP(n)^{\ast\ast}ABP  \ar[r]^{v_n^{\ast}} & AP(n)^{\ast\ast}ABP \ar[r]^{\lambda_n^{\ast}} & AP(n+1)^{\ast\ast}ABP \ar[r]^{\delta^{\ast}} & AP(n)^{\ast\ast}ABP  \ar[r]^{v_n^{\ast}} & AP(n)^{\ast\ast}ABP  \\
 AP(n)^{\ast\ast}[[\mu_n s^E]] \ar[u]_{h_n}\ar[r] &  AP(n)^{\ast\ast}[[\mu_n s^E]] \ar[u]_{h_n}\ar[r] &  AP(n+1)^{\ast\ast}[[\mu_{n+1} s^E]] \ar[r]\ar[u]^{h_{n+1}} & AP(n)^{\ast\ast}[[\mu_n s^E]] \ar[u]_{h_n}\ar[r] &  AP(n)^{\ast\ast}[[\mu_n s^E]] \ar[u]_{h_n} }\]
\end{tiny}
The lower sequence is the exact sequence 
\[\cdots\rightarrow AP(n)^{\ast\ast}\rightarrow AP(n)^{\ast\ast}\rightarrow AP(n+1)^{\ast\ast}\rightarrow AP(n)^{\ast\ast}\rightarrow AP(n)^{\ast\ast}\cdots,\]
tensored over $\F_p$ with $\F_p[[s^E]]$. We show that the diagram commutes.

In the first square, the upper composition takes $x\cdot \mu_n(s^E)\in AP(n)^{\ast\ast}[[\mu_n s^E]]$ to 
\[v_n^\ast(h_n(x\mu_n s^E)): ABP\overset{s^E\wedge x}\longrightarrow ABP\wedge AP(n)\overset{\nu_n}\longrightarrow AP(n)\overset{v_n}\longrightarrow AP(n),\]
and the lower composition takes the same element to 
\[h_n((v_n\cdot x)\cdot \mu_n(s^E)):ABP\overset{s^E\wedge x}\longrightarrow ABP\wedge AP(n)\overset{1\wedge v_n}\longrightarrow ABP\wedge AP(n)\overset{\nu_n}\longrightarrow AP(n).\]
Therefore, the commutativity of the first square is equivalent to the commutativity of the square
\[\xymatrix{\ar @{} [dr] |{}
ABP\wedge AP(n) \ar[r]^{1\wedge v_n} \ar[d]_{\nu_n} & ABP\wedge AP(n) \ar[d]^{\nu_n} \\
AP(n) \ar[r]_{v_n} & AP(n).}\]
This square commutes because $v_n$ is a map of $ABP$-modules (compare Lemma \ref{module map}).

In the second square, the upper composition takes $x\cdot\mu_n(s^E)\in AP(n)^{\ast\ast}[[\mu_n s^E]]$ to
\[(\lambda^\ast\circ h_n)(x\mu_n s^E):ABP\overset{s^E\wedge x}\longrightarrow ABP\wedge AP(n)\overset{\nu_n}\longrightarrow AP(n)\overset{\lambda_n}\longrightarrow AP(n+1)\]
and the lower composition takes $x\cdot \mu_n(s^E)$ to 
\[h_{n+1}(\lambda_n x\cdot\mu_n s^E):ABP\overset{s^E\wedge x}\longrightarrow ABP\wedge AP(n)\overset{1\wedge \lambda_n}\longrightarrow ABP\wedge AP(n+1)\overset{\nu_{n+1}}\longrightarrow AP(n+1).\]
Thus, the commutativity of the second square is equivalent to the commutativity of
\[\xymatrix{\ar @{} [dr] |{}
ABP\wedge AP(n) \ar[r]^{1\wedge \lambda_n} \ar[d]_{\nu_n} & ABP\wedge AP(n+1) \ar[d]^{\nu_{n+1}} \\
AP(n) \ar[r]_{\lambda_n} & AP(n+1),}\]
which holds because $\lambda_n$ is, by definition, a map in the category of $ABP$-modules (see Definition \ref{AK(n)}).

In the third square, the upper composition takes $x\cdot \mu_{n+1}(s^E)\in AP(n+1)^{\ast\ast}[[\mu_{n+1}s^E]]$ to $(\delta\circ h_{n+1})(x\mu_{n+1}s^E)=\delta(\nu_{n+1}(s^E\wedge x))$ and the lower composition takes $x\cdot \mu_{n+1}(s^E)$ to $\nu_n(s^E\wedge \delta(x))$. Thus, the commutativity of the third square follows from the commutativity of 
\[\xymatrix{\ar @{} [dr] |{}
ABP\wedge AP(n+1) \ar[r]^{1\wedge \delta} \ar[d]_{\nu_{n+1}} & ABP\wedge AP(n) \ar[d]^{\nu_{n}} \\
AP(n+1) \ar[r]_{\delta} & AP(n).}\]
Finally, the five lemma implies that $h_{n+1}$ is an isomorphism.
\end{proof}

The above proposition holds more generally:

\begin{prop}\label{W2}
Let $h\in\SH(k)^{cell}$ be any cellular $ABP$-module spectrum. Then 
\[h^{\ast\ast}ABP\cong h^{\ast\ast}[[s^E]]\]
as $ABP^{\ast\ast}$-modules. In particular, this also holds for $h=Ak(n)$.
\end{prop}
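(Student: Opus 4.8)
The statement I would prove is that for any cellular $ABP$-module spectrum $h\in\SH(k)^{cell}$, $h^{\ast\ast}ABP\cong h^{\ast\ast}[[s^E]]$ as $ABP^{\ast\ast}$-modules. The key structural input is Lemma \ref{ABPABP}, which already gives the case $h=ABP$: $ABP^{\ast\ast}ABP\cong ABP^{\ast\ast}[[s^E]]$, with the $s^E$ the dual basis elements to the $t^E$. The plan is to reduce the general case to this one by a cellular-induction argument on $h$, exactly parallel to the induction used in the proof of Proposition \ref{W} but now filtering by the cells of $h$ rather than by the tower $AP(0)\to AP(1)\to\cdots$.

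First I would make precise the map. Let $\mu:ABP^{\ast\ast}(-)\to h^{\ast\ast}(-)$ be induced from $ABP\xrightarrow{1\wedge i}ABP\wedge h\xrightarrow{\nu}h$, where $i:S\to h$ comes from the unit of $ABP$ and $\nu$ is the $ABP$-module structure map of $h$. Then $\mu(s^E)\in h^{\ast\ast}(ABP)$, and one defines $h_h: h^{\ast\ast}[[\mu(s^E)]]\to h^{\ast\ast}ABP$ by $\sum_E x_E\cdot\mu(s^E)\mapsto \sum_E \nu(s^E\wedge x_E)$; this is a map of $ABP^{\ast\ast}$-modules by exactly the naturality argument in Proposition \ref{W}, using that $\nu$ is an $ABP$-module map (Lemma \ref{module map}). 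Since $h$ is cellular, write $h=\colim h^{(j)}$ where each $h^{(j)}$ is built from $h^{(j-1)}$ by attaching a cell, i.e.\ there is a cofiber sequence $S^{p_j,q_j}\to h^{(j-1)}\to h^{(j)}$. Applying $h^{\ast\ast}(ABP\wedge-)$... more precisely applying $[-\wedge ABP, h]$ type arguments, each such cofiber sequence yields a long exact sequence on $h^{(j)\,\ast\ast}(ABP)$, and correspondingly on $h^{(j)\,\ast\ast}\otimes_{\F_p}\F_p[[s^E]]$ (using that $\F_p[[s^E]]$ is $\F_p$-flat so tensoring preserves exactness, just as in Proposition \ref{W} where one tensors the $AP(n)$-exact sequence with $\F_p[[s^E]]$). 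The maps $h_h$ at each stage fit into a map of long exact sequences commuting with the module structure maps — commutativity of each square reduces, as in Proposition \ref{W}, to the statement that $\nu$ is a map of $ABP$-modules. The five lemma then propagates the isomorphism from $h^{(j-1)}$ to $h^{(j)}$, starting from the base case $h^{(0)}=$ wedge of spheres, where $h^{(0)\,\ast\ast}ABP$ is a (completed) free module on the $s^E$ directly by Lemma \ref{ABPABP}(2). Finally one passes to the colimit; here one must be slightly careful that $h^{\ast\ast}ABP=\lim_j h^{(j)\,\ast\ast}ABP$ up to the usual $\lim^1$ issues, but since we are taking cohomology of the \emph{fixed} finite-type spectrum $ABP$ against the tower $h^{(j)}$, and each $s^E$ lives in a fixed bidegree with only finitely many $E$ in each bidegree, the completion is exactly $h^{\ast\ast}[[s^E]]$ — this is the same finite-type bookkeeping that appears in Lemma \ref{ABPABP} and Corollary \ref{AdRes}. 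The case $h=Ak(n)$ is then immediate since $Ak(n)$ is cellular by Remark \ref{AMU module}(1) and is an $ABP$-module by Remark \ref{AMU module}(3).

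The main obstacle I expect is the convergence/completion step: making sure that the colimit over cells of $h$ really produces $h^{\ast\ast}[[s^E]]$ and not some larger or smaller module, i.e.\ controlling $\lim^1$ and the interaction between the completion in the target $h^{\ast\ast}ABP$ and the completion in $h^{\ast\ast}[[s^E]]$. This is handled by observing that $ABP$ has motivically finite type (each bidegree is finitely generated over $\h_{\ast\ast}$, and the $t^E$, hence the dual $s^E$, are indexed so that only finitely many lie in any given bidegree — cf.\ Lemma \ref{ABPABP}(1) and the finite-type discussion around Corollary \ref{AdRes}), so the relevant inverse systems are Mittag-Leffler and $\lim^1$ vanishes; the identification of the completed tensor product with $h^{\ast\ast}[[s^E]]$ is then formal. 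The rest of the argument is a routine cellular induction plus five lemma, entirely parallel to the proof of Proposition \ref{W}, and I would present it at that level of detail.
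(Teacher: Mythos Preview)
Your cellular-induction approach is different from the paper's, and as written it has a real gap. The comparison map you call $h_h$ is built from the $ABP$-module structure map $\nu:ABP\wedge h\to h$; but the skeleta $h^{(j)}$ in a cellular filtration of $h$ in $\SH(k)^{cell}$ (via cofiber sequences $S^{p_j,q_j}\to h^{(j-1)}\to h^{(j)}$) are \emph{not} $ABP$-modules, so there is no map $h_{h^{(j)}}$ to put into the ladder and hence no five-lemma step. The analogy with Proposition~\ref{W} fails precisely here: in that tower every $AP(n)$ is an $ABP$-module and the connecting maps are $ABP$-module maps, which is exactly what makes those squares commute. You could try to repair this by filtering $h$ by free $ABP$-module cells $\Sigma^{p,q}ABP$ instead of spheres, but then you owe an argument that a spectrum cellular in $\SH(k)$ which happens to be an $ABP$-module is also cellular in $ABP$-modules, and you still face a genuine colimit problem: $[ABP,-]$ does not commute with filtered homotopy colimits (since $ABP$ is not compact), so passing from the $h^{(j)}$ to $h=\colim h^{(j)}$ is not the Mittag--Leffler bookkeeping you describe.

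The paper sidesteps all of this by applying the universal coefficient spectral sequence (Proposition~\ref{ucss}) with $E=ABP$, $M=ABP\wedge ABP$, $N=h$. Since $ABP_{\ast\ast}ABP\cong ABP_{\ast\ast}\{t^E\}$ is free over $ABP_{\ast\ast}$ (Lemma~\ref{ABPABP}(1)), all higher $\Ext$ groups vanish, the spectral sequence collapses at $E_2$, and one reads off
\[
h^{\ast\ast}ABP\;\cong\;\Hom_{ABP_{\ast\ast}}\bigl(ABP_{\ast\ast}\{t^E\},\,h_{\ast\ast}\bigr)\;\cong\;h^{\ast\ast}[[s^E]]
\]
in one step, with no induction and no colimit issues.
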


\begin{proof}
We apply the universal coefficient spectral sequence from \cite[Proposition 7.7]{DI} (see also Proposition \ref{ucss}) to $E=ABP$ (which is a ring spectrum by \cite[Definition 5.3]{Vez} and is cellular by Remark \ref{AMU module}(1)), $M=ABP\wedge ABP$ and $N=h$.
\[\Ext_{ABP_{\ast\ast}}^{\ast\ast\ast}(ABP_{\ast\ast}ABP, h_{\ast\ast})\Rightarrow \pi_{\ast\ast}F_{ABP}(ABP\wedge ABP,h),\]
converging conditionally to $\pi_{\ast\ast}F_{ABP}(ABP\wedge ABP,h)\cong h^{\ast\ast}(ABP)$. As $ABP_{\ast\ast}ABP\cong ABP_{\ast\ast}\{t^E\}$ is free over $ABP_{\ast\ast}$ (Lemma \ref{ABPABP}(1)), the higher Ext-groups vanish and the sequence collapses to 
\[h^{\ast\ast}ABP\cong \Hom_{ABP_{\ast\ast}}(ABP_{\ast\ast}\{t^E\}, h_{\ast\ast}),\]
which is isomorphic to $h^{\ast\ast}[[s^E]]$, as in Lemma \ref{ABPABP}(2). 
\end{proof}

W\"urgler constructs operations 
\[s_Q^E:MUQ^i(-)\rightarrow MUQ^{i+|E|}(-)\] 
for regular sequences $Q$ \cite[Theorem 5.1]{W}. For $MUQ=P(n)$, these operations specify a choice of the operations $(r_E)_n:P(n)^\ast(-)\rightarrow P(n)^{\ast+|E|}(-)$ considered in \cite[Section 4]{JW}. %by $(r_E)_n=s_{I_n}^E$. 
They are needed in the proof of the isomorphism $B(n)_\ast(X)\cong K(n)_\ast(X)\otimes \F_p[v_{n+1},v_{n+2},\cdots]$ in \cite[Proposition 4.14]{JW}. We will now state motivic analogues of some of W\"urgler's lemmas for $MUQ=P(n)$.\\

In $\SH(k)^{cell}$, let $AP(n)[{\bf t}]$ represent the cohomology theory 
\[AP(n)^{\ast\ast}(-)[{\bf t}]=ABP^{\ast\ast}[t_1,t_2,\cdots]\otimes_{ABP^{\ast\ast}}AP(n)^{\ast\ast}(-)\] 
with $\deg(t_i)=-(2(p^i-1),p^i-1)$. 

\begin{rk}
W\"urgler defines $h^\ast(-)[{\bf t}]=h^\ast(-)\otimes_{h^\ast}h^\ast[t_1,t_2,\cdots]$, where the $t_i$ take any even degree, i.e., $\deg(t_i)=-2i$ in \cite[Section 5]{W}. He also considers operations $s^E$ of any degree $\sum_i 2ie_i$. However, Johnson and Wilson \cite[Sections 1 and 4]{JW} are only interested in operations of degree $|E|=\sum_i 2(p^i -1)e_i$. We will see that it suffices to restrict to those degrees. Most of W\"urgler's results we refer to in the following are formulated in a much greater generality and do not depend on any degrees of particular elements. The only point where the degrees of the $t_i$ are important is in \cite[Theorem 5.1]{W} and we will comment on that below Theorem \ref{5.1}.
\end{rk}

\begin{lemma}
For $X\in\SH(k)^{fin}$,
\[(ABP\wedge AP(n))^{\ast\ast}(X)\cong AP(n)^{\ast\ast}(X)[{\bf t}].\]
As any cohomology theory on $\SH(k)^{fin}$ extends uniquely to $\SH(k)^{cell}$ \cite[Lemma 4.10]{MLE}, it follows that we can take 
\[AP(n)[{\bf t}]=ABP\wedge AP(n).\]
\end{lemma}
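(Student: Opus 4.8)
The plan is to compute $(ABP\wedge AP(n))^{\ast\ast}(X)$ by pairing a K\"unneth-type argument for $ABP\wedge AP(n)$ with the module-theoretic description of $ABP^{\ast\ast}ABP$. First I would record that $ABP\wedge AP(n)$ is a cellular spectrum (it is built from $MGL$ by cofibers and colimits, cf.\ Remark~\ref{AMU module}(1)) and an $ABP$-module, so that both $(ABP\wedge AP(n))^{\ast\ast}(-)$ and $AP(n)^{\ast\ast}(-)[{\bf t}]$ are cohomology theories on $\SH(k)^{fin}$ in the sense needed. The key observation is that for $X$ a finite cell spectrum the structure map of the $ABP$-module $ABP\wedge AP(n)$ together with self-duality of finite cell spectra (Proposition~\ref{prop dualizable}) gives
\[
(ABP\wedge AP(n))^{\ast\ast}(X)\cong (AP(n))^{\ast\ast}(ABP\wedge X)\cong \pi_{\ast\ast}F_{AP(n)}\bigl(AP(n)\wedge ABP\wedge X,\,AP(n)\bigr),
\]
or more directly, via the universal coefficient spectral sequence of Proposition~\ref{ucss}(2) with $E=ABP$, $M=ABP\wedge ABP\wedge D X$, $N=AP(n)$, that $(ABP\wedge AP(n))^{\ast\ast}(X)\cong \Hom_{ABP_{\ast\ast}}\bigl(ABP_{\ast\ast}(ABP)\otimes_{ABP_{\ast\ast}}ABP_{\ast\ast}(X),\,AP(n)_{\ast\ast}\bigr)$, using freeness of $ABP_{\ast\ast}ABP$ over $ABP_{\ast\ast}$ (Lemma~\ref{ABPABP}(1)) to collapse the spectral sequence exactly as in the proof of Proposition~\ref{W2}.

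Next I would identify the right-hand side. Since $ABP_{\ast\ast}ABP\cong ABP_{\ast\ast}\{t^E\}$ is a free $ABP_{\ast\ast}$-module on the monomials $t^E$ (with $\deg t^E$ as in Lemma~\ref{ABPABP}(1), and $|E|=\sum_i e_i(2p^i-2)$ as in Definition~\ref{exp seq}), the $\Hom$ above becomes a product over $E$ of copies of $AP(n)^{\ast\ast}(X)$, which is precisely the completed polynomial description $ABP^{\ast\ast}[t_1,t_2,\dots]\otimes_{ABP^{\ast\ast}}AP(n)^{\ast\ast}(X)$ once one matches $t_i$ with the dual generators $s^{e_i}$ of degree $-(2(p^i-1),p^i-1)$; for $X$ finite only finitely many $t^E$ contribute in each bidegree so no completion issue arises and the tensor product is honest. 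Concretely I would set up the comparison map $AP(n)^{\ast\ast}(X)[{\bf t}]\to (ABP\wedge AP(n))^{\ast\ast}(X)$ sending $x\otimes t^E$ to the composite of $x$ with the operation $\mu_n(s^E)$ (as in Proposition~\ref{W}), check it is a map of cohomology theories on $\SH(k)^{fin}$, and verify it is an isomorphism on spheres $S^{p,q}$ using Lemma~\ref{ABPABP}(1)--(2); cellular induction via the five lemma (exactly the argument already used in Proposition~\ref{W} and Proposition~\ref{W2}) then gives the isomorphism for all finite cell spectra. Finally, by \cite[Lemma 4.10]{MLE} a cohomology theory on $\SH(k)^{fin}$ extends uniquely to $\SH(k)^{cell}$, so the two cohomology theories $(ABP\wedge AP(n))^{\ast\ast}(-)$ and $AP(n)^{\ast\ast}(-)[{\bf t}]$, agreeing on $\SH(k)^{fin}$, agree on $\SH(k)^{cell}$; hence we may take $AP(n)[{\bf t}]=ABP\wedge AP(n)$ as the representing spectrum.

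I expect the main obstacle to be bookkeeping rather than anything deep: one must be careful that the grading conventions (Adams grading in the first index, the exponent sequences $E$, and the assignment $t_i\leftrightarrow s^{e_i}$) line up so that the comparison map genuinely lands where claimed, and that the universal coefficient spectral sequence is being applied to a cellular module spectrum so that it collapses — which is why restricting to $X\in\SH(k)^{fin}$ (equivalently, using self-duality and finiteness to avoid $\lim^1$ terms) is essential before invoking the extension to $\SH(k)^{cell}$. No new input beyond Lemma~\ref{ABPABP}, Proposition~\ref{ucss}, Proposition~\ref{prop dualizable}, and \cite[Lemma 4.10]{MLE} should be needed; the argument is a direct adaptation of W\"urgler's and of the proof of Proposition~\ref{W2}.
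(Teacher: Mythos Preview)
There is a genuine gap in your main approach. The identification
\[
(ABP\wedge AP(n))^{\ast\ast}(X)\;\cong\; AP(n)^{\ast\ast}(ABP\wedge X)
\]
is false, already for $X=S^0$: by Proposition~\ref{W2} the right-hand side is the \emph{power-series} object $AP(n)^{\ast\ast}[[s^E]]$, whereas the lemma you are proving asserts that the left-hand side is the \emph{polynomial} object $AP(n)^{\ast\ast}[{\bf t}]$. These differ (for fixed total degree there are infinitely many pairs $(a_E,s^E)$ with $a_E\in AP(n)^{\ast\ast}$ contributing), so no form of duality or self-duality of $X$ will produce this identification. Consequently your UCSS computation with $E=ABP$, $M=ABP\wedge ABP\wedge DX$, $N=AP(n)$ converges to $AP(n)^{\ast\ast}(ABP\wedge DX)$, not to $(ABP\wedge AP(n))^{\ast\ast}(X)$; the $\Hom$ over the free module $ABP_{\ast\ast}\{t^E\}$ then gives a product (power series), not the tensor product you want. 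Your remark that ``for $X$ finite only finitely many $t^E$ contribute'' does not repair this: the infinitude comes from the coefficient ring $AP(n)^{\ast\ast}$, not from $X$.

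The paper's argument avoids this by working on the homology side throughout. It sets up the comparison map
\[
(ABP\wedge ABP)^{\ast\ast}\otimes_{ABP^{\ast\ast}}AP(n)^{\ast\ast}(X)\longrightarrow (ABP\wedge AP(n))^{\ast\ast}(X)
\]
induced by the $ABP$-module structure $\nu_n:ABP\wedge AP(n)\to AP(n)$, and identifies the left factor with $ABP^{\ast\ast}[{\bf t}]$ via $(ABP\wedge ABP)^{\ast\ast}\cong ABP_{-\ast,-\ast}ABP\cong ABP_{-\ast,-\ast}[{\bf t}]$ from Lemma~\ref{ABPABP}(1) (homology, polynomial), \emph{not} via Lemma~\ref{ABPABP}(2) (cohomology, power series). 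The base case $X=S^0$ is then handled by \emph{induction on $n$} using the cofiber sequence $AP(n)\to AP(n)\to AP(n+1)$ and the five lemma; only afterwards does cellular induction on $X$ enter. Your ``concrete'' alternative is closer to this, but your proposed map $x\otimes t^E\mapsto x\circ\mu_n(s^E)$ lands in $AP(n)^{\ast\ast}(X)$ rather than in $(ABP\wedge AP(n))^{\ast\ast}(X)$, and you omit the induction on $n$ needed to establish the isomorphism on spheres (Lemma~\ref{ABPABP} alone only gives the case $n=0$).
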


\begin{proof}
We consider the map 
\[(\BP\wedge \BP)^{\ast\ast}\otimes_{\BP^{\ast\ast}}AP(n)^{\ast\ast}(X)\rightarrow (ABP\wedge AP(n))^{\ast\ast}(X),\]
induced by the $\BP$-module structure on $AP(n)$. First, we see that 
\[(\BP\wedge \BP)^{\ast\ast}\cong (\BP\wedge \BP)_{-\ast,-\ast}\cong \BP_{-\ast,-\ast}[{\bf t}]\cong \BP^{\ast\ast}[{\bf t}]\] 
by Lemma \ref{ABPABP}(1). Note that the $t_i$ appearing here are dual to the $t_i$ in Lemma \ref{ABPABP}(1), hence $\deg(t_i)=-(2(p^i-1),p^i-1)$. To prove the claim, it suffices to show that the above map is an isomorphism.

For $n=0$ and $X=S^0$, this is clear. Induction on $n$ shows 
\[(\BP\wedge \BP)^{\ast\ast}\otimes_{\BP^{\ast\ast}}AP(n)^{\ast\ast}\cong (\BP\wedge AP(n))^{\ast\ast}\] 
for any $n$, since $AP(n)\rightarrow AP(n)\rightarrow AP(n+1)$ induces long exact sequences on both sides and the diagram commutes because $AP(n)\rightarrow AP(n)\rightarrow AP(n+1)$ are maps of $ABP$-modules. As also any cofiber sequence $X\rightarrow Y\rightarrow Z$ induces long exact sequences on both sides, cellular induction proves the claim for any finite spectrum $X$.
\end{proof}

We state an analogue of \cite[Lemma 3.14]{W}.

\begin{lemma}\label{3.14.2}
Let $h\in\SH(k)^{cell}$, $k\subseteq\C$, be an $ABP$-module. The multiplication $ABP\wedge h\rightarrow h$ induces an isomorphism of $ABP^{\ast\ast}$-modules,
\[ ABP^{\ast\ast}ABP\otimes_{ABP^{\ast\ast}}h^{\ast\ast}AP(n)\cong h^{\ast\ast}(ABP\wedge AP(n)).\]
\end{lemma}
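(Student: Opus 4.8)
The plan is to imitate the proof of the preceding lemma and of Proposition \ref{W}: construct the natural transformation directly from the module multiplication, and then prove it is an isomorphism by induction on $n$, using the defining exact triangles of the $AP(n)$ together with the fact that $ABP_{\ast\ast}ABP$ is free, hence flat, over $ABP_{\ast\ast}$.

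\textbf{The map and its exactness properties.} An element of $ABP^{\ast\ast}ABP$ is represented by a map $ABP\to\Sigma^{\ast\ast}ABP$ and an element of $h^{\ast\ast}AP(n)$ by a map $AP(n)\to\Sigma^{\ast\ast}h$; smashing these and postcomposing with the module multiplication $ABP\wedge h\to h$ yields a map $ABP\wedge AP(n)\to\Sigma^{\ast\ast}h$, i.e. a class in $h^{\ast\ast}(ABP\wedge AP(n))$. First I would check this assignment is additive and $ABP^{\ast\ast}$-bilinear, so that it induces an $ABP^{\ast\ast}$-linear map
\[ABP^{\ast\ast}ABP\otimes_{ABP^{\ast\ast}}h^{\ast\ast}AP(n)\longrightarrow h^{\ast\ast}(ABP\wedge AP(n)),\]
and that it is natural for $ABP$-module maps in the second variable --- the relevant commuting squares are exactly the ones verified in the proof of Proposition \ref{W}. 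I would also record that the functor $Y\mapsto ABP^{\ast\ast}ABP\otimes_{ABP^{\ast\ast}}h^{\ast\ast}Y$ carries an exact triangle of $ABP$-modules to a long exact sequence: by Lemma \ref{ABPABP}(1) the module $ABP_{\ast\ast}ABP\cong ABP_{\ast\ast}\{t^E\}$ is free over $ABP_{\ast\ast}$, so tensoring the long exact $h^{\ast\ast}$-sequence of the triangle over $\F_p$ with $\F_p\{t^E\}$ preserves exactness, exactly as in the proof of the preceding lemma.

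\textbf{Induction on $n$.} For $n=0$ we have $AP(0)=ABP$ and the statement becomes $ABP^{\ast\ast}ABP\otimes_{ABP^{\ast\ast}}h^{\ast\ast}ABP\cong h^{\ast\ast}(ABP\wedge ABP)$. Here I would run the universal coefficient spectral sequence of \cite[Proposition 7.7]{DI} for $ABP\wedge ABP$ as in the proof of Proposition \ref{W2}, using that $ABP_{\ast\ast}(ABP\wedge ABP)\cong ABP_{\ast\ast}ABP\otimes_{ABP_{\ast\ast}}ABP_{\ast\ast}ABP$ is free over $ABP_{\ast\ast}$ (the isomorphism $m_\ast$ together with Lemma \ref{ABPABP}(1)), so that the spectral sequence collapses and identifies both sides with the power series module over $h^{\ast\ast}$ in the two families of generators dual to the $t^E$. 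For the inductive step, recall from Definition \ref{AK(n)} the exact triangle of $ABP$-module spectra
\[\Sigma^{2(p^n-1),\,p^n-1}AP(n)\xrightarrow{\;v_n\;}AP(n)\longrightarrow AP(n+1).\]
Applying the two functors $h^{\ast\ast}(ABP\wedge-)$ and $ABP^{\ast\ast}ABP\otimes_{ABP^{\ast\ast}}h^{\ast\ast}(-)$ to this triangle produces two long exact sequences, the transformation above gives a map between them, and since it is an isomorphism on every occurrence of $AP(n)$ (in all internal bidegrees) by the inductive hypothesis, the five lemma yields the isomorphism for $AP(n+1)$.

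\textbf{Main obstacle.} The one genuinely delicate point is the base case $n=0$: $ABP^{\ast\ast}ABP$ and $h^{\ast\ast}ABP$ are honest completions (power series in the $s^E$), so I must check that $ABP^{\ast\ast}ABP\otimes_{ABP^{\ast\ast}}h^{\ast\ast}ABP$ really computes $h^{\ast\ast}(ABP\wedge ABP)$ and not some further-completed object. This works because $ABP_{\ast\ast}ABP$ is of motivically finite type over $ABP_{\ast\ast}$: by Lemma \ref{ABPABP}(1) the generator $t^E$ has bidegree $\bigl(\sum_i e_i(2p^i-2),\sum_i e_i(p^i-1)\bigr)$, so only finitely many $E$ contribute in any fixed bidegree, and hence all the completions in sight are taken with respect to a degreewise-finite filtration and the ordinary and completed tensor products agree bidegree by bidegree. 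Once this bookkeeping is settled, the rest of the argument is formal.
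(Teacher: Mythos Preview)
Your proposal is correct and follows the paper's proof essentially verbatim: the base case $n=0$ via the collapse of the universal coefficient spectral sequence from \cite[Proposition 7.7]{DI} (using freeness of $ABP_{\ast\ast}(ABP\wedge ABP)\cong ABP_{\ast\ast}ABP\otimes_{ABP_{\ast\ast}}ABP_{\ast\ast}ABP$), and the inductive step via the five lemma applied to the cofiber sequence $AP(n)\to AP(n)\to AP(n+1)$ of $ABP$-modules. Your explicit treatment of the completion subtlety in the base case is in fact more careful than the paper, which simply asserts $h^{\ast\ast}[[s_1^E s_2^F]]\cong ABP^{\ast\ast}[[s_1^E]]\otimes_{ABP^{\ast\ast}}h^{\ast\ast}[[s_2^F]]$ without further comment.
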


\begin{proof}
We apply the spectral sequence from \cite[Proposition 7.7]{DI} to $E=ABP$, $M=ABP\wedge ABP\wedge ABP$ and $N=h$:
\[\Ext_{ABP_{\ast\ast}}^{\ast\ast\ast}(ABP_{\ast\ast}(ABP\wedge ABP),h_{\ast\ast})\Rightarrow \pi_{\ast\ast}F_{ABP}(ABP\wedge ABP\wedge ABP,h),\]
converging conditionally to $ h^{\ast\ast}(ABP\wedge ABP)$.
By \cite[Lemma 5.1(i)]{MLE}, 
\[ABP_{\ast\ast}(ABP\wedge ABP)\cong ABP_{\ast\ast}ABP\otimes_{ABP_{\ast\ast}}ABP_{\ast\ast}ABP,\]
which is free over $ABP_{\ast\ast}$. Thus, the spectral sequence collapses and
\[h^{\ast\ast}(ABP\wedge ABP)\cong \Hom_{ABP_{\ast\ast}}(ABP_{\ast\ast}ABP\otimes_{ABP_{\ast\ast}}ABP_{\ast\ast}ABP, h_{\ast\ast}).\]
Now, $ABP_{\ast\ast}ABP\otimes_{ABP_{\ast\ast}}ABP_{\ast\ast}ABP\cong ABP_{\ast\ast}\{t_1^E t_2^F\}$, and, therefore, 
\[\Hom_{ABP_{\ast\ast}}(ABP_{\ast\ast}ABP\otimes_{ABP_{\ast\ast}}ABP_{\ast\ast}ABP, h_{\ast\ast})\cong h^{\ast\ast}[[s_1^E s_2^F]]\]
\[\cong ABP^{\ast\ast}[[s_1^E]]\otimes_{ABP_{\ast\ast}}h^{\ast\ast}[[s_2^F]]\cong ABP^{\ast\ast}ABP\otimes_{ABP_{\ast\ast}}h^{\ast\ast}ABP,\]
using Proposition \ref{W2}.
This proves the claim for $n=0$.

Assume we have shown that $ABP\wedge h\rightarrow h$ induces an isomorphism
\[ ABP^{\ast\ast}ABP\otimes_{ABP^{\ast\ast}}h^{\ast\ast}AP(n)\cong h^{\ast\ast}(ABP\wedge AP(n))\]
for some $n\geq 0$. The cofiber sequence $AP(n)\rightarrow AP(n)\rightarrow AP(n+1)$ induces long exact sequences on both sides of this isomorphism, which form a commutative diagram because the maps $AP(n)\rightarrow AP(n)\rightarrow AP(n+1)$ are maps of $ABP$-modules. Thus, the five lemma implies the claim for $n+1$.
\end{proof}

Combining the above isomorphism with the one from Proposition \ref{W2}, we get:

\begin{lemma}\label{3.14}
Let $k=\C$ and $p>2$.
Let $X=AP(n)$ or $X=AP(n)[{\bf t}]$.
The action of $ABP$ on $AP(m)$, $\BP\wedge AP(m)\rightarrow AP(m)$, induces an isomorphism of $AP(m)^{\ast\ast}$-modules
\[AP(m)^{\ast\ast}(\BP)\otimes_{AP(m)^{\ast\ast}}AP(m)^{\ast\ast}(X)\overset\cong\rightarrow AP(m)^{\ast\ast}(\BP\wedge X).\]
\end{lemma}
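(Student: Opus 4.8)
\textbf{Proof plan for Lemma \ref{3.14}.}

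The plan is to deduce the statement from the two preceding isomorphisms by a bootstrapping argument, exactly paralleling how W\"urgler derives his Lemma 3.14 from 3.14's preliminary versions. First I would record that the case $X = AP(n)$ is precisely what is needed, and that the case $X = AP(n)[\mathbf{t}] = ABP \wedge AP(n)$ (using the identification from the lemma above that $AP(n)[\mathbf{t}] = ABP\wedge AP(n)$) will follow by the same kind of argument once the first case is known, since smashing with $ABP$ only introduces a polynomial variable. So the core is to show that
\[
AP(m)^{\ast\ast}(ABP)\otimes_{AP(m)^{\ast\ast}} AP(m)^{\ast\ast}(AP(n)) \overset{\cong}{\longrightarrow} AP(m)^{\ast\ast}(ABP\wedge AP(n))
\]
is an isomorphism of $AP(m)^{\ast\ast}$-modules. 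Here the module structures make sense precisely because $p>2$ and $k=\C$, via Corollary \ref{AP(n) module}: both $AP(m)^{\ast\ast}(-)$-groups appearing are genuine $AP(m)^{\ast\ast}$-modules, and the map respects this structure since it is induced by the $ABP$-module map $ABP\wedge AP(m)\to AP(m)$, which by the factorization through $\mu_{AP(m)}$ is compatible with the $AP(m)$-action.

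The key steps, in order: (1) Apply Lemma \ref{3.14.2} with $h = AP(m)$ to get $ABP^{\ast\ast}ABP \otimes_{ABP^{\ast\ast}} AP(m)^{\ast\ast}(AP(n)) \cong AP(m)^{\ast\ast}(ABP\wedge AP(n))$ as $ABP^{\ast\ast}$-modules. (2) Apply Proposition \ref{W2} with $h = AP(m)$ to get $AP(m)^{\ast\ast}(ABP) \cong AP(m)^{\ast\ast}[[s^E]]$, and combine this with the structure of $ABP^{\ast\ast}ABP \cong ABP^{\ast\ast}[[s^E]]$ from Lemma \ref{ABPABP}(2). (3) Observe that under these identifications the left-hand side $AP(m)^{\ast\ast}(ABP)\otimes_{AP(m)^{\ast\ast}} AP(m)^{\ast\ast}(AP(n))$ becomes $AP(m)^{\ast\ast}[[s^E]] \otimes_{AP(m)^{\ast\ast}} AP(m)^{\ast\ast}(AP(n))$, which is canonically $ABP^{\ast\ast}[[s^E]] \otimes_{ABP^{\ast\ast}} AP(m)^{\ast\ast}(AP(n))$ (since the extra scalars in $AP(m)^{\ast\ast}$ over $ABP^{\ast\ast}$ already act on $AP(m)^{\ast\ast}(AP(n))$), i.e. exactly the left-hand side of the isomorphism in step (1). (4) Check that the composite identification agrees with the map induced by $ABP\wedge AP(m)\to AP(m)$; this is a diagram-chase on unit maps and the definition of the $s^E$ as duals of the $t^E$, entirely analogous to the proof of Proposition \ref{W}. (5) Finally handle $X = AP(n)[\mathbf{t}]$ by either repeating the argument with $AP(n)$ replaced by $ABP\wedge AP(n)$ throughout, using that $AP(m)^{\ast\ast}(ABP\wedge ABP \wedge AP(n))$ can be computed the same way (Lemma \ref{3.14.2} applies since $ABP\wedge ABP \wedge ABP$ still has homology free over $ABP_{\ast\ast}$ by \cite[Lemma 5.1(i)]{MLE}), or by an inductive argument on the cellular structure of $AP(n)[\mathbf{t}]$.

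The main obstacle I expect is step (4)/(3): making the change-of-rings from $\otimes_{ABP^{\ast\ast}}$ to $\otimes_{AP(m)^{\ast\ast}}$ precise and verifying that the resulting isomorphism is exactly the map in the statement rather than merely some abstract isomorphism of the two sides. One has to be careful that the $AP(m)^{\ast\ast}$-module structure coming from Corollary \ref{AP(n) module} on $AP(m)^{\ast\ast}(ABP \wedge AP(n))$ matches the one transported through Lemma \ref{3.14.2}'s $ABP^{\ast\ast}$-linear isomorphism; this requires knowing that the map $\mu_{AP(m)}$ (which is only a map in the homotopy category of $MGL_{(p)}$-modules, homotopy associative for $p>2$ by Proposition \ref{associativity}) is compatible with all the structure maps $\nu_n$, $\lambda_n$, $\delta$ used in the cellular inductions. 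Since all these compatibilities were already established (or are immediate from the constructions in Definition \ref{AK(n)} and the discussion after Corollary \ref{trivial action}), the argument should go through, but writing it out carefully is where the real work lies. The freeness of $ABP_{\ast\ast}ABP$ over $ABP_{\ast\ast}$ (Lemma \ref{ABPABP}(1)) and the completeness issues with the $[[s^E]]$ (infinite products vs. sums) also need a word, though W\"urgler's treatment handles exactly this point.
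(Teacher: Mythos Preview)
Your proposal is correct and follows essentially the same route as the paper: combine Lemma~\ref{3.14.2} (with $h=AP(m)$) with the identification $AP(m)^{\ast\ast}(ABP)\cong AP(m)^{\ast\ast}\otimes_{ABP^{\ast\ast}}ABP^{\ast\ast}ABP$ from Proposition~\ref{W}/\ref{W2} to convert the $\otimes_{ABP^{\ast\ast}}$ into a $\otimes_{AP(m)^{\ast\ast}}$, then invoke Corollary~\ref{AP(n) module} to upgrade the module structure; for $X=AP(n)[\mathbf{t}]$ the paper also reruns the proof of Lemma~\ref{3.14.2} with $M=ABP^{\wedge 4}$, exactly your first option in step~(5). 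The compatibility worries you flag in step~(4) are handled in the paper by a single appeal to Corollary~\ref{AP(n) module}, so you are being more cautious than necessary, but there is no genuine divergence in strategy.
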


\begin{proof}
From Proposition \ref{W}, we know that 
\[AP(m)^{\ast\ast}(ABP)\cong AP(m)^{\ast\ast}\otimes_{ABP^{\ast\ast}}ABP^{\ast\ast}ABP\] as $AP(m)^{\ast\ast}$-modules. With $h=AP(m)$, the above lemma immediately implies 
\[AP(m)^{\ast\ast}(\BP)\otimes_{AP(m)^{\ast\ast}}AP(m)^{\ast\ast}(AP(n))\overset\cong\rightarrow AP(m)^{\ast\ast}(\BP\wedge AP(n))\]
as $ABP^{\ast\ast}$-modules and, by Corollary \ref{AP(n) module}, also as $AP(m)^{\ast\ast}$-modules.

For $X=AP(n)[{\bf t}]=ABP\wedge AP(n)$, we set $M=ABP^{\wedge 4}$ in the proof of the previous lemma. Using $\pi_{\ast\ast}ABP^{\wedge 4}\cong ABP_{\ast\ast}ABP\otimes_{ABP_{\ast\ast}}\pi_{\ast\ast}ABP^{\wedge 3}$ \cite[Proposition 5.1(i)]{MLE}, the proof proceeds with exactly the same arguments as the proof of Lemma \ref{3.14.2}.
\end{proof}

As a consequence of Corollary \ref{trivial action}, we show the following (see \cite[Lemma 2.8(b)]{JW}):

\begin{cor}\label{ses}
Let $k=\C$ and $p>2$. The cofibration $S^{2p^n-2,p^n-1}\wedge AP(n)\overset{v_n}\rightarrow AP(n)\rightarrow AP(n+1)$ induces short exact sequences
\[0\rightarrow Ak(j)^{\ast\ast}(S^{2p^n-1,p^n-1}\wedge AP(n))\rightarrow Ak(j)^{\ast\ast}(AP(n+1))\rightarrow Ak(j)^{\ast\ast}(AP(n))\rightarrow 0\]
for every $j>n$.
\end{cor}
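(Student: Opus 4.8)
The statement is the analogue of \cite[Lemma 2.8(b)]{JW} for the motivic connective Morava $K$-theories, and I would imitate that argument once the right motivic input is in place. The cofiber sequence $S^{2p^n-2,p^n-1}\wedge AP(n)\overset{v_n}\rightarrow AP(n)\rightarrow AP(n+1)$ induces, after smashing with $Ak(j)$ and taking homotopy, a long exact sequence
\[
\cdots\rightarrow Ak(j)^{\ast\ast}(S^{2p^n-2,p^n-1}\wedge AP(n))\overset{v_n^\ast}\rightarrow Ak(j)^{\ast\ast}(AP(n))\rightarrow Ak(j)^{\ast\ast}(AP(n+1))\rightarrow\cdots,
\]
so the claimed short exactness is equivalent to the statement that the map $v_n^\ast$ on $Ak(j)^{\ast\ast}(AP(n))$ is zero for $j>n$. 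This in turn follows if we know that $v_n$ acts trivially on $Ak(j)$ when $j\neq n$ — which is exactly the content of Corollary \ref{trivial action} (valid here since $k=\C$). Indeed, the map $v_n^\ast$ on $Ak(j)^{\ast\ast}(AP(n))=\pi_{-\ast,-\ast}F(AP(n),Ak(j))$ is induced by $(v_n\wedge 1)\colon S^{2p^n-2,p^n-1}\wedge AP(n)\to AP(n)$, but since $v_n$ is an $ABP$-module map (Lemma \ref{module map}, cf. the discussion around Definition \ref{s_E}) this agrees with the map induced by the action of $v_n$ on the target $Ak(j)$; and that action is null by Corollary \ref{trivial action}.

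First I would make the reduction precise: write out the long exact sequence above and observe that exactness forces the sequence to break into the short exact sequences claimed as soon as $v_n^\ast=0$ on $Ak(j)^{\ast\ast}(AP(n))$ (the outgoing $v_n^\ast$) for $j>n$. Next I would justify the identification of the two maps "$v_n$ on the source" and "$v_n$ on the target": because $AP(n)$ is an $ABP$-module and $Ak(j)$ is an $ABP$-module, and $AP(n+1)$ fits in the $ABP$-module cofiber sequence of Definition \ref{AK(n)}, the connecting maps are all $ABP$-module maps, and multiplication by $v_n\in ABP_{2p^n-2,p^n-1}$ on any $ABP$-module $M$ can be computed either by precomposing with $v_n\wedge 1_M$ or by postcomposing with the action map $v_n$ on $M$; these coincide by Lemma \ref{module map} (commutativity of the relevant square). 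Applying this with $M=Ak(j)$ and invoking $v_n$-triviality on $Ak(j)$ (Corollary \ref{trivial action}, which covers $v_i$ acting trivially on $Ak(n)$ for all $i\neq n$, hence $v_n$ on $Ak(j)$ for $j\neq n$) gives $v_n^\ast=0$, and the short exact sequences follow.

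The main obstacle is simply checking that the hypotheses of Corollary \ref{trivial action} genuinely apply in the form needed — that is, that the triviality of the action of $v_n$ on $Ak(j)$ for $j\neq n$ really does translate into triviality of the induced map $v_n^\ast$ on $Ak(j)^{\ast\ast}(-)$. This is formal: $Ak(j)^{\ast\ast}(Y)$ is a functor of $Y$ and the map in question is $F(1_Y, \text{(}v_n\text{ on }Ak(j)\text{)})$ after the identification above, so a null map on $Ak(j)$ yields a null map on $Ak(j)^{\ast\ast}(Y)$ for every $Y$, in particular $Y=AP(n)$. There is no essential difficulty here beyond carefully tracking the module structures; the real work was already done in Section \ref{section action}. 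I would therefore keep the proof short, essentially citing Corollary \ref{trivial action} and the long exact sequence.
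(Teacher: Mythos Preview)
Your reduction to showing that $v_n^\ast=0$ on $Ak(j)^{\ast\ast}(AP(n))$ is correct, but the key identification you make---that precomposition by the $v_n$-action on $AP(n)$ agrees with postcomposition by the $v_n$-action on $Ak(j)$---is not justified. An element of $Ak(j)^{\ast\ast}(AP(n))=[AP(n),\Sigma^{\ast\ast}Ak(j)]_{\SH(\C)}$ is an arbitrary map in $\SH(\C)$, not an $ABP$-module map; for such an $f$ there is no reason that $f\circ(\text{$v_n$ on }AP(n))$ and $(\text{$v_n$ on }Ak(j))\circ\Sigma f$ coincide. Lemma~\ref{module map} only says that the action map $\phi$ is itself an $ABP$-module map; it does not say that the two $ABP_{\ast\ast}$-module structures on $[M,N]$ (via source and via target) agree. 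Indeed they generally do not: the left and right units $ABP_{\ast\ast}\to ABP_{\ast\ast}ABP$ differ on $v_n$, which is precisely the Hopf algebroid phenomenon at play.

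The paper closes this gap by factoring $v_n^\ast$ through the isomorphism $Ak(j)^{\ast\ast}(ABP\wedge AP(n))\cong ABP^{\ast\ast}ABP\otimes_{ABP^{\ast\ast}}Ak(j)^{\ast\ast}(AP(n))$ (Lemma~\ref{3.14.2} with $h=Ak(j)$), under which precomposition by $v_n$ on the source becomes $\sum s^E\otimes x_E\mapsto \sum s^E(v_n)\otimes x_E$. Corollary~\ref{modulo} then gives $s^E(v_n)\in I_{n+1}$ for every $E$, and it is the action of these elements on the \emph{target} $Ak(j)$ that vanishes by Corollary~\ref{trivial action}. So Corollary~\ref{trivial action} is indeed the decisive input, but one first needs this translation of ``$v_n$ on the source'' into ``elements of $I_{n+1}$ on the target'' before it can be applied.
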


\begin{proof}
We have to show that, in the $Ak(j)^{\ast\ast}$-long exact sequence, the map induced by $v_n$ is zero. This map is defined as the composition of the two left arrows in the following commutative diagram:
\[\xymatrix{\ar @{} [dr] |{}
Ak(j)^{\ast\ast}(S^{2p^n-2,p^n-1}\wedge AP(n)) & \BP^{\ast\ast}(S^{2p^n-2,p^n-1})\otimes_{\BP^{\ast\ast}}Ak(j)^{\ast\ast}(AP(n)) \ar[l]_{\cong\hspace{30pt}} \\
Ak(j)^{\ast\ast}(\BP\wedge AP(n))  \ar[u]^{Ak(j)^{\ast\ast}(v_n\wedge 1)}& \BP^{\ast\ast}(\BP)\otimes_{\BP^{\ast\ast}}Ak(j)^{\ast\ast}(AP(n)) \ar[u]^{\BP^{\ast\ast}(v_n)\otimes 1}\ar[l]_{\cong\hspace{20pt}}\\
Ak(j)^{\ast\ast}(AP(n))\ar[u]^{Ak(j)^{\ast\ast}(\nu_n)}}\]
The horizontal maps are induced by the $ABP$-module structure on $Ak(j)$. The lower horizontal map is an isomorphism by the previous lemma. The upper isomorphism is proven similarly, setting $M=ABP\wedge ABP$ in the above proof.
We show that the right hand map is zero. Let $\sum s^E\otimes x_E$ be an element of $ABP^{\ast\ast}(ABP)\otimes_{ABP^{\ast\ast}}Ak(j)^{\ast\ast}(AP(n))$. It maps to $\sum s^E(v_n)\otimes x_E$. In Corollary \ref{modulo}, we set $k=1$, $s=0$ and $m=n$, to see that for all $|E|\geq 0$, either $s^E(v_n) \equiv v_n$ mod $I_n$ or $s^E(v_n) \equiv 0$ mod $I_n$. Thus, $s^E(v_n)\in I_{n+1}=(v_0,\cdots,v_n)$. By Corollary \ref{trivial action}, $I_{n+1}$ acts trivially on $Ak(j)^{\ast\ast}(AP(n))$ for $j>n$, hence, the right hand map is zero.

It follows that the left map factors through zero, proving the claim.
\end{proof}

The isomorphism from Lemma \ref{3.14} is now used to define an $AP(m)^{\ast\ast}(ABP)$-comodule structure on $AP(m)^{\ast\ast}(AP(n))$ by
\[AP(m)^{\ast\ast}AP(n)\overset{\nu_n^\ast}\rightarrow AP(m)^{\ast\ast}(ABP\wedge AP(n))\overset{\cong}\leftarrow AP(m)^{\ast\ast}\BP\otimes_{AP(m)^{\ast\ast}}AP(m)^{\ast\ast}AP(n),\]
where $\nu_n$ is the $ABP$-module structure map on $AP(n)$. Note that, for $k\neq\C$ or $p=2$, we do not know if these groups are $AP(m)^{\ast\ast}$-modules (see Corollary \ref{AP(n) module}). In this case, we might only get an $ABP^{\ast\ast}ABP$-comodule structure on $AP(m)^{\ast\ast}AP(n)$.

\begin{lemma}\label{mu se vn}
Let $k=\C$, $m> n\geq 0$ and $E$ be an exponent sequence. Let $s^E:ABP\rightarrow ABP$ be as in Lemma \ref{ABPABP} and $\mu_m:ABP\rightarrow AP(m)$ be as defined in the beginning of this section. Then 
\[\mu_m\circ s^E\circ v_n: S\rightarrow ABP\rightarrow ABP\rightarrow AP(m)\]
is the zero map.
\end{lemma}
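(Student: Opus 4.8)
The plan is to reduce the claim to the structure of $\mu_m(s^E)\in AP(m)^{\ast\ast}(ABP)$ and the triviality of the $v_i$-action on $AP(m)$ for $i<m$ (Corollary \ref{trivial action}), exactly in the spirit of the topological statement underlying \cite[Lemma 2.8(b)]{JW}. First I would observe that $s^E\circ v_n$, as a map $S\to ABP\to ABP$, is determined by its effect on $ABP^{\ast\ast}$-coefficients: it is the element $s^E(v_n)\in ABP_{\ast\ast}$ (here I am using the identification of $\pi_{\ast\ast}ABP$ with $ABP^{\ast\ast}$ in the appropriate degree, and Lemma \ref{ABPABP}(1) to identify $ABP_{\ast\ast}ABP$ as a free $ABP_{\ast\ast}$-module). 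Then $\mu_m\circ s^E\circ v_n$ corresponds to the image of $s^E(v_n)$ under the map on coefficients induced by $\mu_m$, i.e.\ under $ABP_{\ast\ast}\to AP(m)_{\ast\ast}$ coming from the unit $ABP\to AP(m)$.

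\textbf{Key steps.} The argument would proceed as follows. (1) By Corollary \ref{modulo}, applied with $k=1$, $s=0$ and $m$ replaced by $n$: for every exponent sequence $E$, either $s^E(v_n)\equiv v_n \bmod I_n$ or $s^E(v_n)\equiv 0 \bmod I_n$ in $ABP_{\ast\ast}$ (via the comparison with the topological $s^E_{\Top}$ supplied by Lemma \ref{hopf alg} and \cite[Lemma 2.1]{JY}). In either case $s^E(v_n)\in I_{n+1}=(v_0,\dots,v_n)\subseteq ABP_{\ast\ast}$. (2) Since $m>n$, we have $I_{n+1}\subseteq I_m=(v_0,\dots,v_{m-1})$, and by Corollary \ref{trivial action} (using $k=\C$) each $v_i$ with $i<m$ acts trivially on $AP(m)$; hence the image of $I_m$, and in particular of $s^E(v_n)$, under $ABP_{\ast\ast}\to AP(m)_{\ast\ast}$ is zero. (3) Therefore $\mu_m\circ s^E\circ v_n$ induces the zero map on coefficients. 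Because the source is a sphere $S$, a map $S\to AP(m)$ is precisely an element of the homotopy group $\pi_{\ast\ast}AP(m)=AP(m)_{\ast\ast}$ in the relevant bidegree, so inducing zero on coefficients means the map itself is zero. This gives the conclusion.

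\textbf{Main obstacle.} The one point that needs care is step (2): the passage from ``$v_i$ acts trivially on the $ABP$-module $AP(m)$'' to ``the class $s^E(v_n)\in ABP_{\ast\ast}$ maps to $0$ in $AP(m)_{\ast\ast}$''. This is the same mechanism already used in the proof of Corollary \ref{ses}, where one argues that $I_{n+1}$ acts trivially on $Ak(j)^{\ast\ast}(AP(n))$ for $j>n$; here I would invoke that $\mu_m(s^E(v_n))$ is the action of the coefficient $s^E(v_n)$ on the unit class of $AP(m)$, and trivial action of $I_m$ (Corollary \ref{trivial action}) forces this to vanish. A small bookkeeping subtlety is to make sure the bidegrees match up so that the ``map out of a sphere = coefficient element'' identification is literally applicable; this is routine given the explicit bidegree of $s^E$ from Lemma \ref{ABPABP} and of $v_n$. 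No further input beyond Corollaries \ref{modulo} and \ref{trivial action} should be required.
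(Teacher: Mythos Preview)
Your argument is correct, but it takes a different route from the paper's own proof. The paper argues by direct comparison with topology: it applies $R_\C$ to the composite $\mu_m\circ s^E\circ v_n$, observes that topologically $s^E_{\Top}(v_n^{\Top})\in I_{n+1}^{\Top}$ by the invariant prime ideal theorem and hence dies under $BP_\ast\to P(m)_\ast$, and then uses the explicit description $AP(m)_{\ast\ast}\cong P(m)_\ast[\tau]$ (Lemma \ref{Ah}) together with the observation that a homogeneous element realising to zero must already be zero. Your approach instead stays inside the motivic coefficient rings: you pull the containment $s^E(v_n)\in I_{n+1}$ back to $ABP_{\ast\ast}$ via Corollary \ref{modulo}, and then kill it in $AP(m)_{\ast\ast}$ using the trivial-action result of Corollary \ref{trivial action}, exactly as in the proof of Corollary \ref{ses}. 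Both arguments require $k=\C$ (yours through Corollary \ref{trivial action}, the paper's through Lemma \ref{Ah}); the paper's version is a bit more self-contained, while yours reuses machinery already established and avoids appealing to the explicit form of $AP(m)_{\ast\ast}$ at this point. The only small thing worth making explicit in your write-up is why $\mu_m$ kills the entire ideal $I_m$ and not just the generators: since $\mu_m:ABP\to AP(m)$ is a map of $ABP$-modules, the induced map on $\pi_{\ast\ast}$ is $ABP_{\ast\ast}$-linear, so $\mu_m(av_i)=a\cdot(v_i\cdot 1_{AP(m)})=0$ for $i<m$.
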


\begin{proof}
The realisation functor $R_\C$ takes the composition $\mu_m\circ s^E\circ v_n \in AP(m)_{\ast\ast}$ to $\mu^{\Top}_m\circ s_{\Top}^E\circ v^{\Top}_n \in P(m)_\ast$. By the $BP$-version of \cite[Lemma 2.2]{W}, $\mu_m^{\Top}: BP\rightarrow P(m)$ is the canonical projection. By the invariant prime ideal theorem, $I_{n+1}^{\Top}=(v^{\Top}_0,v^{\Top}_1,\cdots,v^{\Top}_n)\subset BP_\ast$
is invariant under the action of $s^E_{\Top}\in BP^\ast BP$, hence, $s_{\Top}^E(v^{\Top}_n)\in I_{n+1}^{\Top}$. Thus, $\mu^{\Top}_m(s_{\Top}^E\circ v^{\Top}_n) \in P(m)_\ast$ lies in the image of $I_{n+1}$ under the projection $BP_\ast\rightarrow P(m)_\ast=BP_\ast/(v^{\Top}_0,\cdots,v_{m-1}^{\Top})$, which is zero, since $m>n$. This implies $R_\C(\mu_m\circ s^E\circ v_n)=0\in P(m)_\ast$.

For $k=\C$, $AP(m)_{\ast\ast}\cong P(m)_\ast[\tau]$ by Lemma \ref{Ah}, and any homogeneous element in $AP(m)_{\ast\ast}$ that realises to zero in $P(m)_\ast$ already has to be zero in $AP(m)_{\ast\ast}$. Hence, $\mu_m\circ s^E\circ v_n=0$ in $AP(m)_{\ast\ast}$.
\end{proof}

Now we present an analogue of a special case of \cite[Proposition 4.12]{W}.

\begin{lemma}\label{4.12}
Let $k=\C$, $p>2$ and $m\geq n$. As $AP(m)^{\ast\ast}\BP$-comodules,
\[AP(m)^{\ast\ast}AP(n)\cong AP(m)^{\ast\ast}\BP\otimes_{AP(n)^{\ast\ast}}\Lambda_{AP(n)^{\ast\ast}}[[\beta_0,\cdots,\beta_{n-1}]]\]
with $\deg(\beta_i)=(2p^i-1, p^i-1)$.
\end{lemma}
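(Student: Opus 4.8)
The plan is to mimic W\"urgler's inductive argument \cite[Proposition 4.12]{W}, carrying out the induction on $n$ with $m\geq n$ fixed. The base case $n=0$ is Proposition \ref{W} (or Proposition \ref{W2}): since $AP(0)=ABP$ and $\Lambda_{AP(0)^{\ast\ast}}[[\;]]=AP(0)^{\ast\ast}$, the claimed isomorphism $AP(m)^{\ast\ast}AP(0)\cong AP(m)^{\ast\ast}ABP$ is trivially true, and it is a map of $AP(m)^{\ast\ast}ABP$-comodules by the definition of the comodule structure via the $ABP$-action on $AP(m)$. For the inductive step, suppose the isomorphism holds for some $n$ with $m\geq n$, and assume further $m\geq n+1$ (if $m=n$ the statement for $n+1$ is not asserted). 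The essential tool is the short exact sequence of Corollary \ref{ses}: the cofibration $S^{2p^n-2,p^n-1}\wedge AP(n)\overset{v_n}\to AP(n)\to AP(n+1)$ induces a short exact sequence
\[0\to AP(m)^{\ast\ast}(S^{2p^n-1,p^n-1}\wedge AP(n))\to AP(m)^{\ast\ast}(AP(n+1))\to AP(m)^{\ast\ast}(AP(n))\to 0,\]
valid because $m>n$ forces the $v_n$-induced map to vanish (here we use Corollary \ref{trivial action}, that $I_{n+1}$ acts trivially on $AP(m)$, exactly as in the proof of Corollary \ref{ses}, which was stated there for $Ak(j)$ but works verbatim for $AP(m)$ with $m>n$ since $AP(m)$ is also an $MGL_{(p)}/(p,\dots)$-quotient on which $v_0,\dots,v_n$ act trivially).

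\textbf{Key steps.} First I would record that, by the inductive hypothesis, the left and right outer terms of the short exact sequence are
\[AP(m)^{\ast\ast}\BP\otimes_{AP(n)^{\ast\ast}}\Lambda_{AP(n)^{\ast\ast}}[[\beta_0,\dots,\beta_{n-1}]]\]
(the right term) and its suspension by $(2p^n-1,p^n-1)$ tensored up appropriately (the left term), after converting $AP(n)^{\ast\ast}$-coefficients to $AP(n+1)^{\ast\ast}$-coefficients; one must check compatibility of coefficient rings, which follows from Corollary \ref{AP(n) module} (for $p>2$) giving genuine $AP(n+1)^{\ast\ast}$-module structures. Second, I would choose a class $\beta_n\in AP(m)^{\ast\ast}(AP(n+1))$ of degree $(2p^n-1,p^n-1)$ mapping to the generator of the left-hand (suspension) term, exactly as W\"urgler does: $\beta_n$ is the image under the boundary-splitting of the fundamental class, and its existence uses that the sequence is short exact, hence the surjection onto $AP(m)^{\ast\ast}AP(n)$ admits a lift of the generator and $\beta_n$ provides a section of the injection. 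Third, I would verify that $\beta_n^2=0$ so that adjoining $\beta_n$ really gives an exterior generator: this is a degree/parity argument using $\deg(\beta_n)=(2p^n-1,p^n-1)$ with odd first coordinate, together with the fact that $AP(m)^{\ast\ast}\cong P(m)_\ast[\tau]$ (Lemma \ref{Ah}) is concentrated in even first-degree, so $\beta_n^2$ lands in an odd first-degree group that is forced to vanish — this is where $p>2$ and $k=\C$ are genuinely used, as in the proof of Proposition \ref{associativity}. Fourth, assembling these, the short exact sequence yields $AP(m)^{\ast\ast}AP(n+1)\cong AP(m)^{\ast\ast}AP(n)\otimes_{AP(n)^{\ast\ast}}\Lambda_{AP(n)^{\ast\ast}}[[\beta_n]]$ as $AP(m)^{\ast\ast}$-modules, which by the inductive hypothesis equals $AP(m)^{\ast\ast}\BP\otimes_{AP(n+1)^{\ast\ast}}\Lambda_{AP(n+1)^{\ast\ast}}[[\beta_0,\dots,\beta_n]]$ after the coefficient change of rings.

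\textbf{Comodule structure and the main obstacle.} The remaining and most delicate point is that this isomorphism respects the $AP(m)^{\ast\ast}\BP$-comodule structures, i.e., that the $\beta_i$ are $\emph{primitive}$ (or have the expected coaction). This is where Lemma \ref{mu se vn} enters: it shows $\mu_m\circ s^E\circ v_n=0$ in $AP(m)_{\ast\ast}$ for $m>n$, which is precisely the computation needed to conclude that the coaction on $\beta_n$ introduces no unwanted terms involving the $s^E$ — the potential correction terms are all of the form $\mu_m(s^E(v_n))$, which vanish. I expect the bookkeeping of the comodule structure across the short exact sequence to be the main obstacle: one must check that the connecting map and the two outer comodule structures are compatible, that the chosen $\beta_n$ can be taken primitive (not merely that some lift exists), and that the tensor-product decomposition is one of comodules and not just modules. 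All of this parallels \cite{W} closely, but the motivic bigrading and the need to invoke Corollary \ref{AP(n) module} (hence $p>2$) at each coefficient-ring manipulation means the argument must be reassembled rather than quoted. Once the comodule compatibility is in place, the induction closes and the lemma follows.
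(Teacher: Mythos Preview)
Your proposal is correct and follows the same inductive scheme as the paper, but you have interchanged the roles of two ingredients. In the paper's proof, Lemma~\ref{mu se vn} is not used to check primitivity of $\beta_n$ afterwards; it is used up front, together with Proposition~\ref{W} and Lemma~\ref{3.14}, to prove directly that $v_n^\ast\colon AP(m)^{\ast\ast}AP(n)\to AP(m)^{\ast\ast}AP(n)$ vanishes (one writes an arbitrary $x\in AP(m)^{\ast\ast}ABP$ as $\sum_E\lambda_E\,\mu_m s^E$, computes $\phi^\ast(x)=\sum_E\lambda_E\,\mu_m(s^E(v_n))=0$, and then transports this through the tensor isomorphism of Lemma~\ref{3.14}). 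The resulting long exact sequence then splits into short exact sequences \emph{of $AP(m)^{\ast\ast}ABP$-comodules} automatically, since the maps in the cofiber triangle are $ABP$-module maps; no separate primitivity argument for $\beta_n$ is required. Your route via an $AP(m)$-analogue of Corollary~\ref{ses} is equally valid (the same vanishing of $I_{n+1}$ on $AP(m)$ for $m>n$ applies), and the $\beta_n^2=0$ discussion is harmless but unnecessary here: the exterior algebra in the statement records only the rank-two additive splitting with the correct degree shift, not a genuine product on $AP(m)^{\ast\ast}AP(n+1)$.
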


\begin{proof}
For $n=0$, the statement is trivial.
Assume the proposition holds for some pair $(m,n)$ with $m>n$. We show that it also holds for $(m,n+1)$.

Consider $AP(n)\overset{v_n}\rightarrow AP(n)$. We show that the induced map
\[v_n^\ast:AP(m)^{\ast\ast}AP(n)\rightarrow AP(m)^{\ast\ast}AP(n)\] 
is trivial. This works similar to \cite[Lemma 3.15]{W}. Let $\phi:S\rightarrow ABP$ represent $v_n\in ABP_{\ast\ast}$ and let $\phi^\ast: AP(m)^{\ast\ast}ABP\rightarrow AP(m)^{\ast\ast}$ be the map induced on $AP(m)^{\ast\ast}(-)$. For $x\in AP(m)^{\ast\ast}\BP=AP(m)^{\ast\ast}[[\mu_m s^E]]$ (see Proposition \ref{W}), we have $x=\sum_E \lambda_E \mu_m s^E$ and
\[\phi^\ast(x)=\phi^\ast(\sum_E \lambda_E \mu_m s^E)=\sum_E \lambda_E\mu_m (s^E(v_n)),\]
because $\phi^\ast$ is precomposition with $v_n\in ABP_{\ast\ast}$. By Lemma \ref{mu se vn}, $\mu_m (s^E(v_n))=0$ in $ AP(m)^{\ast\ast}$, hence, $\phi^\ast=0$.
Now consider the following commutative square:
\[\xymatrix{\ar @{} [dr] |{}
AP(m)^{\ast\ast}(\BP\wedge AP(n))  \ar[r]^{(\phi\wedge 1)^\ast} & AP(m)^{\ast\ast}(AP(n))  \\
AP(m)^{\ast\ast}\BP\otimes_{AP(m)^{\ast\ast}}AP(m)^{\ast\ast}AP(n) \ar[u]^{\cong}\ar[r]_{\phi^\ast\otimes 1} & AP(m)^{\ast\ast}\otimes_{AP(m)^{\ast\ast}}AP(m)^{\ast\ast}AP(n)\ar[u]_{\cong}
}\]
The left map is an isomorphism by Lemma \ref{3.14}. Since $\phi^{\ast}=0$, it follows that $(\phi\wedge 1)^\ast=0$. Since $v_n^\ast$ is, by definition, the precomposition of $(\phi\wedge 1)^\ast$ with the map $AP(m)^{\ast\ast}AP(n)\rightarrow AP(m)^{\ast\ast}(\BP\wedge AP(n))$, $v_n^\ast=0$, as claimed.

It follows that the long exact $AP(m)^{\ast\ast}(-)$-sequence induced by 
\[S^{2p^n-2,p^n-1}\wedge AP(n)\overset{v_n}\rightarrow AP(n)\rightarrow AP(n+1)\] 
splits into short exact sequences of $AP(m)^{\ast\ast}ABP$-comodules
\[0\rightarrow AP(m)^{\ast\ast}(S^{2p^n-1,p^n-1}\wedge AP(n))\rightarrow AP(m)^{\ast\ast}AP(n+1)\rightarrow AP(m)^{\ast\ast}AP(n)\rightarrow 0.\]
Analogously to \cite[Proposition 4.12]{W}, it follows inductively that 
\[AP(m)^{\ast\ast}AP(n+1)\cong AP(m)^{\ast\ast}AP(n)\otimes_{AP(m)^{\ast\ast}} \Lambda_{AP(m)^{\ast\ast}}(\beta_n)\] 
and the degree of $\beta_n$ is determined by the degrees appearing in the exact sequence.
\end{proof}

If $M$ is an $AP(n)^{\ast\ast}(\BP)[{\bf t}]$-comodule with structure map $\psi$, an element $a\in M$ is called primitive if $\psi(a)=1\otimes a$. Similarly to \cite[Lemma 4.13]{W}, the following holds:

\begin{lemma}\label{4.13}
Let $k=\C$ and $p>2$. Let $g:AP(n)\rightarrow AP(n)[{\bf t}]$ be a map of spectra. Then $g$ is a map of $\BP$-module spectra if and only if it is a primitive element of the $AP(n)^{\ast\ast}(\BP)[{\bf t}]$-comodule $AP(n)^{\ast\ast}(AP(n))[{\bf t}]$.
\end{lemma}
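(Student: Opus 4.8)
\textbf{Proof plan for Lemma \ref{4.13}.}

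The plan is to follow W\"urgler's argument \cite[Lemma 4.13]{W} closely, translating each step into the motivic/bigraded setting using the tools assembled in this section. First I recall the relevant structures: by Lemma \ref{4.12} (using $k=\C$, $p>2$, $m=n$) we have an explicit description of $AP(n)^{\ast\ast}AP(n)$ as an $AP(n)^{\ast\ast}\BP$-comodule, and the comodule structure on $AP(n)^{\ast\ast}(AP(n))[{\bf t}] = AP(n)^{\ast\ast}(ABP\wedge AP(n))$ is obtained from the one on $AP(n)^{\ast\ast}AP(n)$ by base change along $AP(n)^{\ast\ast}\to AP(n)^{\ast\ast}[{\bf t}]$, using the isomorphism of Lemma \ref{3.14} with $X=AP(n)$ (and $X=AP(n)[{\bf t}]$ to iterate). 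A map of spectra $g:AP(n)\to AP(n)[{\bf t}]$ represents an element $[g]\in (AP(n)[{\bf t}])^{\ast\ast}(AP(n)) = AP(n)^{\ast\ast}(AP(n))[{\bf t}]$. The $\BP$-module structure map on $AP(n)$ is $\nu_n:\BP\wedge AP(n)\to AP(n)$; by construction (beginning of Section \ref{Section Wurgler}) the comodule coaction $\psi$ on $AP(n)^{\ast\ast}(AP(n))$ is the composite $\nu_n^\ast$ followed by the inverse of the Lemma \ref{3.14} isomorphism, and similarly after smashing with $ABP$ on the target side.

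The core of the argument is an unwinding of definitions. The element $[g]$ is primitive, i.e.\ $\psi([g]) = 1\otimes [g]$, precisely when the two composites $AP(n)\xrightarrow{g} AP(n)[{\bf t}] \to ABP\wedge AP(n)[{\bf t}]$ — one factoring through the coaction, the other being $1\otimes g$ inserted via the unit of $ABP$ — agree in $(ABP\wedge AP(n)[{\bf t}])^{\ast\ast}(AP(n))$. Chasing through the identification of Lemma \ref{3.14}, this equality of classes translates exactly into the statement that $g$ commutes with the $\BP$-action, i.e.\ $g\circ\nu_n = (1_{\BP}\wedge g)\circ(\text{action on }AP(n)[{\bf t}])$, which is the definition of $g$ being a map of $\BP$-module spectra. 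The one subtlety to be careful about, just as in \cite[Lemma 4.13]{W}, is that representing a $\BP$-module map by an honest morphism of spectra (rather than a class in cohomology) requires knowing that the relevant $\lim^1$-term vanishes, so that $[AP(n), ABP\wedge AP(n)[{\bf t}]]$ detects the module-map condition; here this follows because $AP(n)$ is a cellular spectrum of motivically finite type and the target is an $ABP$-module whose coefficients $AP(n)^{\ast\ast}\cong P(n)_\ast[\tau]$ are known (Lemma \ref{Ah}), so the completed tensor descriptions of Lemma \ref{3.14.2} and Proposition \ref{W2} apply and the higher $\Ext$/$\lim^1$ obstructions vanish by freeness over $AP(n)_{\ast\ast}$.

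The main obstacle I anticipate is not the comodule bookkeeping but precisely this convergence/$\lim^1$ point: one must verify that the map-of-spectra level and the cohomology-comodule level carry the same information, which in W\"urgler's topological setting is handled by finiteness hypotheses that must be re-established here from cellularity and the motivically-finite-type condition (Corollary \ref{AdRes}, \cite[Lemma 4.10]{MLE} on extending cohomology theories from $\SH(k)^{fin}$ to $\SH(k)^{cell}$). Once this is in place the proof is a diagram chase: write down the defining square for ``$g$ is a $\BP$-module map'', apply $AP(n)^{\ast\ast}(-)$, insert the Lemma \ref{3.14} isomorphisms on both the source $ABP\wedge AP(n)$ and the iterated version $ABP\wedge AP(n)[{\bf t}]$, and observe that the resulting equality of classes is verbatim the primitivity condition $\psi([g])=1\otimes[g]$. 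I would keep $k=\C$ and $p>2$ throughout so that Corollary \ref{AP(n) module}, Lemma \ref{3.14} and Lemma \ref{4.12} are all available; this is the same range of hypotheses under which the preceding lemmas of this section were proved, so no new restriction is introduced.
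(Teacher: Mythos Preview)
Your proposal is correct and takes essentially the same approach as the paper: invoke Lemma \ref{3.14} for both $X=AP(n)$ and $X=AP(n)[{\bf t}]$ and then rerun W\"urgler's diagram chase from \cite[Lemma 4.13]{W}. The paper's proof is in fact just the one-line remark that Lemma \ref{4.13} follows from Lemma \ref{3.14} exactly as \cite[Lemma 4.13]{W} follows from \cite[Lemma 3.14]{W}; your write-up simply unpacks that remark, and your invocation of Lemma \ref{4.12} is harmless but unnecessary here (it only enters later, in Proposition \ref{4.17}).
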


\begin{proof}
This follows from Lemma \ref{3.14} in the same manner as \cite[Lemma 4.13]{W} follows from \cite[Lemma 3.14]{W} for $X=P(n)$ and $X=P(n)[{\bf t}]$.
\end{proof}

Now we state a result which is directly used for the construction of the operation we are aiming at. This corresponds to \cite[Theorem 4.17]{W}. 

\begin{prop}\label{4.17}
Let $k=\C$ and $p>2$. There is a degree-preserving group isomorphism 
\[\Hom^{\ast\ast}_{\BP}\left(AP(n),AP(n)[{\bf t}]\right)\cong \Lambda_{AP(n)^{\ast\ast}[{\bf t}]}[[\beta_0,\beta_1,\cdots]],\]
where the left hand side is the bigraded abelian group of maps of $\BP$-module spectra and the right hand side is an exterior algebra over $AP(n)^{\ast\ast}[{\bf t}]$.
\end{prop}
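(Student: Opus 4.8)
The plan is to follow W\"urgler's argument for \cite[Theorem 4.17]{W}, transplanting it to the motivic setting with the help of the structural results established so far. The key point is that the bigraded group $\Hom^{\ast\ast}_{\BP}(AP(n),AP(n)[{\bf t}])$ can be identified, via the adjunction for the $\BP$-module function spectrum, with the primitive elements of a suitable comodule; the comodule in question is $AP(n)^{\ast\ast}(AP(n))[{\bf t}]$ viewed as an $AP(n)^{\ast\ast}(\BP)[{\bf t}]$-comodule. This identification is precisely Lemma \ref{4.13}. So the task reduces to computing the primitives of this comodule.

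First I would compute the comodule $AP(n)^{\ast\ast}(AP(n))[{\bf t}]$ itself. By Lemma \ref{4.12} (with $m=n$), we have $AP(n)^{\ast\ast}AP(n)\cong AP(n)^{\ast\ast}\BP\otimes_{AP(n)^{\ast\ast}}\Lambda_{AP(n)^{\ast\ast}}[[\beta_0,\dots,\beta_{n-1}]]$ as $AP(n)^{\ast\ast}\BP$-comodules, where $\deg(\beta_i)=(2p^i-1,p^i-1)$. Tensoring with $AP(n)^{\ast\ast}[{\bf t}]$ over $AP(n)^{\ast\ast}$ and using the base-changed comodule structure, one gets that $AP(n)^{\ast\ast}(AP(n))[{\bf t}]$ is, as a comodule over $AP(n)^{\ast\ast}(\BP)[{\bf t}]$, a cofree comodule on the exterior generators. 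Here I would invoke Proposition \ref{W} (and its $AP(n)^{\ast\ast}$-module refinement for $p>2$, Corollary \ref{AP(n) module}) to know that $AP(n)^{\ast\ast}\BP\cong AP(n)^{\ast\ast}[[\mu_n(s^E)]]$ as $AP(n)^{\ast\ast}$-modules; this is the analog of the fact that $P(n)^\ast P(n)$ is free over $P(n)^\ast[{\bf t}]$ on the $\beta$'s in the topological picture. The $\beta_i$ generating the exterior part of $AP(n)^{\ast\ast}AP(n)$ give rise, after the base change, to the exterior generators on the right hand side of the claimed isomorphism. Taking primitives of a cofree comodule picks out exactly the coefficient ring of the exterior part, giving $\Lambda_{AP(n)^{\ast\ast}[{\bf t}]}[[\beta_0,\beta_1,\dots]]$; note the passage from finitely many $\beta_i$ (in $AP(n)^{\ast\ast}AP(n)$) to infinitely many on the right is because the $[{\bf t}]$-part itself contributes the higher $\beta_i$'s, exactly as in \cite[Sections 4-5]{W}.

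The degree bookkeeping is routine given $\deg(t_i)=-(2(p^i-1),p^i-1)$ and $\deg(\beta_i)=(2p^i-1,p^i-1)$, and the map is degree-preserving by construction of all the identifications involved. The main obstacle, as in several earlier results of this chapter, is making sure the comodule-theoretic computation genuinely goes through motivically: one needs the flatness of $AP(n)^{\ast\ast}\BP$ over $AP(n)^{\ast\ast}$ (which follows from Lemma \ref{ABPABP}(1) and the freeness statements), the compatibility of the coaction with the $AP(n)^{\ast\ast}$-module structure (which is where $p>2$ and $k=\C$ enter, through Corollary \ref{AP(n) module} and Lemma \ref{3.14}), and the short exact sequences of comodules from Corollary \ref{ses} and the proof of Lemma \ref{4.12}, which feed the inductive construction of the $\beta_i$. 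Once these ingredients are in place, the extraction of primitives from a cofree comodule is formal, and the isomorphism follows exactly as W\"urgler's does; I would simply cite \cite[Theorem 4.17]{W} for the structure of the argument and point to Lemmas \ref{4.12}, \ref{4.13} and Proposition \ref{W} for the motivic inputs.
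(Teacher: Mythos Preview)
Your proposal is essentially the same as the paper's proof: both use Lemma \ref{4.13} to identify $\Hom^{\ast\ast}_{\BP}(AP(n),AP(n)[{\bf t}])$ with the primitives of $AP(n)^{\ast\ast}(AP(n))[{\bf t}]$, then use Lemma \ref{4.12} (with $m=n$) to write this comodule as $AP(n)^{\ast\ast}(\BP)[{\bf t}]\otimes_{AP(n)^{\ast\ast}[{\bf t}]}\Lambda_{AP(n)^{\ast\ast}[{\bf t}]}[[\beta_i]]$, and finally extract primitives by observing that the exterior factor is invariant and that the primitives of $AP(n)^{\ast\ast}(\BP)[{\bf t}]$ are $AP(n)^{\ast\ast}[{\bf t}]$. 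The paper additionally cites \cite[Lemma 4.16]{W} to justify that taking primitives commutes with the inverse limit implicit in the completion $[[\cdots]]$, a point you gloss over when you say ``taking primitives of a cofree comodule picks out exactly the coefficient ring''; you should make this explicit rather than treat it as formal.
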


\begin{proof}
W\"urgler derives this from \cite[Proposition 4.12]{W} and \cite[Lemma 4.13]{W} using that inverse limits of primitive elements are primitive \cite[Lemma 4.16]{W}. The same line of proof proves this proposition using Proposition \ref{4.12} and Lemma \ref{4.13}. Denoting the set of primitive elements in $M$ by $\Pr\{M\}$, the proof can be summarised by the following sequence of isomorphisms:
\[\Hom_{\BP}^{\ast\ast}(AP(n),AP(n)[{\bf t}])\cong \Pr\left\{AP(n)^{\ast\ast}(AP(n))[{\bf t}]\right\}\]
\[\cong \Pr\left\{AP(n)^{\ast\ast}(\BP)[{\bf t}]\otimes_{AP(n)^{\ast\ast}[{\bf t}]}\Lambda_{AP(n)^{\ast\ast}[{\bf t}]}[[\beta_0,\beta_1,\cdots]]\right\}\]
\[\cong \Pr\left\{AP(n)^{\ast\ast}(\BP)[{\bf t}]\right\}\otimes_{AP(n)^{\ast\ast}[{\bf t}]}\Lambda_{AP(n)^{\ast\ast}[{\bf t}]}[[\beta_0,\beta_1,\cdots]]\]
\[\cong AP(n)^{\ast\ast}[{\bf t}]\otimes_{AP(n)^{\ast\ast}[{\bf t}]}\Lambda_{AP(n)^{\ast\ast}[{\bf t}]}[[\beta_0,\beta_1,\cdots]]\cong \Lambda_{AP(n)^{\ast\ast}[{\bf t}]}[[\beta_0,\beta_1,\cdots]].\]
\end{proof}

This completes the preparation W\"urgler needs for \cite[Theorem 5.1]{W}. We state our version of this theorem, now using the notation from \cite{JW}.

\begin{theorem}\label{5.1}
Let $p>2$. In $\SH(\C)^{cell}$, there exists a family ${(r_E)_n}$, $n\geq 0$, of natural stable operations
\[(r_E)_n:AP(n)^{\ast\ast}(-)\rightarrow AP(n)^{\ast\ast}(-)\]
of degree $(|E|,|E|/2)$, such that 
\[(r_E)_n(ux)=\sum_{F+G=E}s_F(u)(r_G)_n(x)\] 
for $u\in \BP^{\ast\ast}(X)$ and $x\in AP(n)^{\ast\ast}(X)$ (see Definition \ref{s_E} for the definition of $s_F$) and such that 
\[\xymatrix{\ar @{} [dr] |{}
\BP^{\ast\ast}(-)\ar[r]^{s_E}\ar[d] & \BP^{\ast\ast}(-) \ar[d]\\
AP(n)^{\ast\ast}(-)\ar[r]_{(r_E)_n} & AP(n)^{\ast\ast}(-)}\]
commutes.
\end{theorem}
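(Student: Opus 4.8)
The plan is to follow Würgler's construction in \cite[Section 5]{W} step by step, using the motivic results established above as replacements for the corresponding topological facts. The key input is Proposition \ref{4.17}, which identifies the bigraded group of $ABP$-module maps $AP(n)\to AP(n)[\mathbf{t}]$ with the exterior algebra $\Lambda_{AP(n)^{\ast\ast}[\mathbf{t}]}[[\beta_0,\beta_1,\cdots]]$. First I would fix, for each multi-index $E$, the element of $AP(n)^{\ast\ast}(AP(n))[\mathbf{t}]\cong (ABP\wedge AP(n))^{\ast\ast}(AP(n))$ that Würgler singles out: inside $ABP^{\ast\ast}ABP\cong ABP^{\ast\ast}[[s^F]]$ (Lemma \ref{ABPABP}(2)) there is the universal element $\sum_F t^F\otimes s^F$, and pushing it through $\mu_n$ and the module structure produces a canonical $ABP$-module map whose ``$t^E$-component'' we name $(r_E)_n:AP(n)\to AP(n)$ — more precisely one extracts $(r_E)_n$ as the coefficient of the monomial $t^E$ under the decomposition of Proposition \ref{4.17}, exactly as in \cite[Theorem 5.1]{W}. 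By construction $(r_E)_n$ is a natural stable operation $AP(n)^{\ast\ast}(-)\to AP(n)^{\ast\ast}(-)$, and its degree is read off from the degree of $t^E$, namely $\deg(t_i)=(2(p^i-1),p^i-1)$ forces the degree shift $(|E|,|E|/2)$.

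Next I would verify the two stated identities. For the compatibility square with $s_E$, the point is that $\mu_n:ABP^{\ast\ast}(-)\to AP(n)^{\ast\ast}(-)$ is a morphism of cohomology theories induced by the $ABP$-module map $ABP\to AP(n)$, and the elementary operations $s_E$ on $ABP^{\ast\ast}(-)$ are defined (Definition \ref{s_E}) via the coaction $\psi$; applying $\mu_n$ to the defining formula $\psi_X(x)=\sum_E c(t^E)\otimes s_E(x)$ and comparing with the analogous formula for $(r_E)_n$, which lives over $AP(n)^{\ast\ast}\otimes_{ABP^{\ast\ast}}ABP^{\ast\ast}ABP$ via Lemma \ref{3.14}, gives the commutativity. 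For the Cartan-type formula $(r_E)_n(ux)=\sum_{F+G=E}s_F(u)(r_G)_n(x)$ with $u\in ABP^{\ast\ast}(X)$, $x\in AP(n)^{\ast\ast}(X)$, this is the multiplicativity of the universal element $\sum_F t^F\otimes s^F$ transported through the module structure: since $ABP^{\ast\ast}(X)$ acts on $AP(n)^{\ast\ast}(X)$ through $\mu_n$ and the coproduct on the $s^F$ is the ``additive'' one (as $t^F t^G = t^{F+G}$ on exponent sequences, cf. the proof of Lemma \ref{Cartan}), expanding $(r_E)_n(ux)$ via the comodule structure of Lemma \ref{3.14} and collecting terms indexed by $E=F+G$ yields the claim. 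Here one uses crucially that, for $p>2$, $AP(n)^{\ast\ast}(X)$ is genuinely an $AP(n)^{\ast\ast}$-module (Corollary \ref{AP(n) module}), so all the tensor products over $AP(n)^{\ast\ast}$ appearing in Lemma \ref{3.14} and Proposition \ref{4.17} make sense.

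The main obstacle I anticipate is not any single calculation but rather the bookkeeping needed to make the passage ``coefficient of $t^E$ in Proposition \ref{4.17}'' precise and then to check that this extraction is compatible with both the Cartan formula and the $s_E$-square simultaneously. In the topological setting Würgler phrases everything in terms of $MUQ^\ast(-)[\mathbf t]$ and stable operations of arbitrary even degree, and then Johnson--Wilson restrict to the degrees $|E|$; in the motivic setting we have only the degrees $(|E|,|E|/2)$ available, and one must check that the $t_i$ of bidegree $(2(p^i-1),p^i-1)$ suffice — this is exactly the remark before Lemma \ref{3.14}, and it is the place where the motivic grading could in principle cause trouble but does not, since the universal element $\sum_F t^F\otimes s^F$ only involves exponent sequences $F$ with $\deg(s^F)=\deg(t^F)$ of this restricted bidegree. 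Once this is in hand, the remaining verifications are formal manipulations with comodule structure maps, parallel to \cite[Theorem 5.1]{W} and \cite[Section 4]{JW}, and I would present them by citing those sources for the combinatorial core while spelling out the motivic inputs (Lemma \ref{ABPABP}, Proposition \ref{W2}, Lemma \ref{3.14}, Corollary \ref{AP(n) module}, Proposition \ref{4.17}) that replace their topological analogues.
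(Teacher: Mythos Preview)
Your proposal is correct and follows essentially the same route as the paper, which also tracks W\"urgler: bundle the $s_E$ into a single total operation $s_t:ABP^{\ast\ast}(-)\to ABP^{\ast\ast}(-)[\mathbf t]$, use Proposition~\ref{4.17} to fill in the dashed arrow $s_{t,n}:AP(n)^{\ast\ast}(-)\to AP(n)^{\ast\ast}(-)[\mathbf t]$ making the square commute, and read off $(r_E)_n$ as the coefficient of $t^E$ in $s_{t,n}$. The paper's derivation of the Cartan formula is a touch more direct than your comodule argument: since Proposition~\ref{4.17} produces $s_{t,n}$ as an $ABP$-module map, one has $s_{t,n}(ux)=s_t(u)\,s_{t,n}(x)$ immediately, and expanding both sides in powers of $t$ gives the formula without a separate appeal to Lemma~\ref{3.14}.
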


\begin{proof}
Let $s_t:\BP^{\ast\ast}(-)\rightarrow \BP^{\ast\ast}(-)[{\bf t}]$ be given by $s_t(x)=\sum_E{s_E(x)\otimes t^E}$. 
As in \cite[Theorem 5.1]{W}, the square
\[\xymatrix{\ar @{} [dr] |{}
\BP^{\ast\ast}(-)\ar[r]^{s_t}\ar[d] & \BP^{\ast\ast}(-)[{\bf t}] \ar[d]\\
AP(n)^{\ast\ast}(-)\ar@{-->}[r]^{s_{t,n}} & AP(n)^{\ast\ast}(-)[{\bf t}]}\]
can be completed with the help of Proposition \ref{4.17} and we define $(r_E)_n$ by $s_{t,n}(x)=\sum_E{(r_E)_n(x)\otimes t^E}$.

Since Proposition \ref{4.17} gives us a map of $\BP$-module spectra, the operation $s_{t,n}$ satisfies $s_{t,n}(ux)=s_t(u)s_{t,n}(x)$. It follows that \[\sum_E (r_E)_n(ux)\otimes t^E=\underset{F+G=E}\sum s_F(u)(r_G)_n(x)\otimes t^F t^G\] 
and, hence, $(r_E)_n(ux)=\sum_{F+G=E}s_F(u)(r_G)_n(x)$, which proves the first property claimed. The second property holds because the above commutative square has to commute on the level of each $t^E$.
\end{proof}

\begin{rk}
Originally (see e.g. \cite[Formula (2.4)]{QuElem}), one first defines the operation $s_t:BP^{\ast}(-)\rightarrow BP^\ast(-)[t_1,t_2,\cdots]$, $\deg(t_i)=-2i$, and then uses it to construct operations $s_E$ of degree $\sum_i 2ie_i$. The $s_t$ used in the above proof contains only those summands $s_E \otimes t^E$ with $|E|=\sum_i 2(p^i-1)$, because we work with $ABP$-modules instead of $MGL$-modules. Hence, the $s_{t,n}$ also consists of less summands than the corresponding operation in \cite[Theorem 5.1]{W}, but this suffices to define exactly those $(r_E)_n$ with the degrees needed in \cite[Section 4]{JW}.
\end{rk}

In \cite[Proposition 4.14]{JW}, Johnson and Wilson show that, for any $X\in\SH^{fin}$, there is a natural isomorphism 
\[B(n)_\ast(X)\cong K(n)_\ast(X)\otimes \F_p[v_{n+1},v_{n+2},\cdots]\]
for $n<2p-2$. Johnson and Wilson needed the condition on $n$ because they were only able to canonically define the operations $(r_E)_n$ in this range. As stated in \cite[Remark 6.19]{W}, the condition becomes redundant if W\"urgler's operations are used.

With the above operations at hand, we can proceed exactly as in \cite[Section 4]{JW}:

\begin{defn}\label{a,b}
Let $n$ be fixed and let $\mathcal E$ be the set of all exponent sequences of the form $E=(0,\cdots,0,e_{n+1},e_{n+2},\cdots)$. Recall the definition of $|E|$ from Definition \ref{exp seq}. For $E\in\mathcal E$, let $q=|E|=2(p^n-1)b+a$ with $0\leq a<2(p^n-1)$. With $\sigma^n E=(p^n e_{n+1},p^n e_{n+2},\cdots)$ and $c=b-(e_{n+1}+e_{n+2}+\cdots)$, it follows $|\sigma^n E|=c2(p^n-1)+a$, exactly as in \cite{JW}. Note that $q$ and, hence, $a$ are even. We define $\bar s_E\in Ak(n)^{a,a/2}(AP(n))$ by
\[\bar s_E:AP(n)\overset{(r_{\sigma^n E})_n}\longrightarrow \Sigma^{c2(p^n-1)+a, c(p^n-1)+a/2}AP(n)\overset{v_n^c}\rightarrow \Sigma^{a,a/2}AP(n)\overset{\lambda_n}\rightarrow \Sigma^{a,a/2}Ak(n),\]  
where $\lambda_n$ is the quotient map from $AP(n)$ to $Ak(n)=AP(n)/(v_{n+1},\cdots)$.

Furthermore, we assume that the set $\{E\in\mathcal E\;|\; |E|=q\}$ is ordered as $\{E_1,\cdots,E_v\}$, where $v$ is the $\F_p$-dimension of $(\F_p[v_{n+1},v_{n+2},\cdots])_q$, and we denote $\bar s_{E_u}$ by $s_u$.
\end{defn}

\begin{lemma}\label{tau}
Let $k=\C$, $p>2$ and $\bar s_E:AP(n)_{\ast\ast}\rightarrow Ak(n)_{\ast\ast}$ be induced by the above map. Assume $n>0$ and let $\tau\in AP(n)_{\ast\ast}$ be as in Lemma \ref{Ah}. Then $\bar s_E(\tau)= 0$ for $E\neq 0$ and $\bar s_0(\tau)=\tau$. 
If, furthermore, $x\in AP(n)_{\ast\ast}X$ with $X\in\SH(\C)^{cell}$, then 
$\bar s_E(x\tau )=\bar s_E(x)\tau$.
\end{lemma}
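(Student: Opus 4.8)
\textbf{Proof proposal for Lemma \ref{tau}.}

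The plan is to reduce everything to the already-established multiplicative properties of the operations $(r_E)_n$ from Theorem \ref{5.1} together with the explicit description $AP(n)_{\ast\ast}\cong P(n)_\ast[\tau]$ over $k=\C$ from Lemma \ref{Ah}, in which $\tau$ has bidegree $(0,1)$. First I would recall that $\bar s_E$ is, by Definition \ref{a,b}, the composite $\lambda_n\circ v_n^c\circ (r_{\sigma^n E})_n$, so it suffices to understand what each factor does to $\tau$ and to a general element $x\tau$. Since $\tau\in\pi_{0,1}(S)$ lifts to an element of the base ring — more precisely, $\tau$ is the image under the unit $S\to ABP$, equivalently lies in $\BP^{\ast\ast}(X)$ for any $X$ — the Cartan-type formula $(r_E)_n(ux)=\sum_{F+G=E}s_F(u)(r_G)_n(x)$ of Theorem \ref{5.1} applies with $u=\tau$. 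The key input is then that $s_F(\tau)=0$ for $F\neq 0$ and $s_0(\tau)=\tau$: this is because $\tau$, having the form of a unit-image element in low weight, is a primitive for the $\BP_{\ast\ast}\BP$-coaction (compare Definition \ref{s_E}), or alternatively because $R_\C(\tau)=1\in\pi_0(S)$ is annihilated by all the positive-degree topological operations $s^E_{\Top}$ and $AP(n)_{\ast\ast}\cong P(n)_\ast[\tau]$ has no room for a nonzero target of $s_E$ on $\tau$ for dimensional reasons (the element $s_E(\tau)$ would sit in bidegree $(|E|,|E|/2 + 1)$, and multiplicativity forces it to be $\tau$ times a class in $P(n)_{|E|}$ that realises to $s^E_{\Top}(1)=0$).

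With that in hand, the first assertion $\bar s_0(\tau)=\tau$ follows because $\sigma^n 0 = 0$, so $c=0$, and $\bar s_0 = \lambda_n\circ (r_0)_n$ with $(r_0)_n = \operatorname{id}$ by the normalisation in Theorem \ref{5.1}; since $\lambda_n$ is the identity on the polynomial generator $\tau$ (it kills only $v_{n+1},v_{n+2},\dots$, and $\tau$ is a free polynomial generator over $\F_p$, hence survives), we get $\bar s_0(\tau)=\tau$. For $E\neq 0$, apply the Cartan formula to $(r_{\sigma^n E})_n(\tau) = (r_{\sigma^n E})_n(\tau\cdot 1) = \sum_{F+G=\sigma^n E} s_F(\tau)(r_G)_n(1)$; only $F=0$ contributes, giving $s_0(\tau)(r_{\sigma^n E})_n(1) = \tau\cdot (r_{\sigma^n E})_n(1)$, and $(r_{\sigma^n E})_n(1)\in AP(n)_{\ast\ast}$ lands in the ideal generated by $v_0,\dots,v_{n-1}$ and $v_{n+1},v_{n+2},\dots$ — the same computation as in the topological case (\cite[Section 4]{JW}), transported via $R_\C$ and Lemma \ref{Ah} exactly as in Lemma \ref{mu se vn} and Corollary \ref{ses}. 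Post-composing with $v_n^c$ and then with $\lambda_n$ kills the $v_{n+1},\dots$ part, and the point is that for $E\neq 0$ the resulting class is $v_n$-torsion in $Ak(n)_{\ast\ast}$ or lands in $I_n$; tracking degrees as in Definition \ref{a,b} (where $a$ is even and $0\le a<2(p^n-1)$) shows the surviving term must be zero, giving $\bar s_E(\tau)=0$.

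For the last statement, $\bar s_E(x\tau) = \bar s_E(x)\tau$, I would again invoke the Cartan formula of Theorem \ref{5.1}, now treating $\tau$ as the element of $\BP^{\ast\ast}(X)$ (via the ring map $\BP^{\ast\ast}\to \BP^{\ast\ast}(X)$) and $x\in AP(n)^{\ast\ast}(X)$, so that $(r_{\sigma^n E})_n(\tau x) = \sum_{F+G=\sigma^n E} s_F(\tau)(r_G)_n(x) = s_0(\tau)(r_{\sigma^n E})_n(x) = \tau\,(r_{\sigma^n E})_n(x)$, using $s_F(\tau)=0$ for $F\neq 0$ once more. Since multiplication by $\tau$ is central and commutes with both $v_n^c$ (which is $\BP_{\ast\ast}$-linear, cf. Lemma \ref{module map}) and $\lambda_n$ (a map of $ABP$-modules), we can pull the $\tau$ out through the whole composite defining $\bar s_E$, yielding $\bar s_E(x\tau) = \bar s_E(x)\tau$. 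The main obstacle I anticipate is making the vanishing $s_F(\tau)=0$ for $F\neq 0$ rigorous in the motivic setting: the cleanest route is probably to observe that $\tau$ is in the image of the unit $S^{0,0}\to ABP$ after reindexing — i.e.\ it comes from $H\Z/p_{\ast\ast}(\Spec\C)$ and is detected by the Hopf algebroid map $(BP_\ast,BP_\ast BP)\to(ABP_{\ast\ast},ABP_{\ast\ast}ABP)$ of Lemma \ref{hopf alg} sitting over the topological primitive $1\in BP_0$ — so that Corollary \ref{modulo} with $n=m$, $k=0$ formally forces $s_F(\tau)$ into $I_1 = (p)$, hence zero after the relevant reductions; combined with the degree bookkeeping of Lemma \ref{Ah} this pins down $s_F(\tau)$ exactly.
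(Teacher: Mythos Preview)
There is a genuine gap. Your argument hinges on applying the Cartan formula of Theorem \ref{5.1} with $u=\tau$, but that formula requires $u\in\BP^{\ast\ast}(X)$, and $\tau$ is \emph{not} an element of $\BP_{\ast\ast}$ (let alone of $\pi_{\ast\ast}(S)$, as you assert). Over $\C$ one has $\h^{0,1}(\Spec\C,\Z_{(p)})=0$ (because $\Z(1)\simeq\G_m[-1]$), and the slice spectral sequence then gives $\BP_{0,-1}=0$; the class $\tau$ only appears after reducing mod $p$, i.e.\ in $AP(n)_{\ast\ast}$ for $n\geq 1$ via Lemma \ref{Ah}. Consequently the expression $\sum_{F+G=\sigma^nE}s_F(\tau)(r_G)_n(1)$ is not defined, and your closing suggestion to invoke Corollary \ref{modulo} (which concerns $s_E(v_n^{kp^s})$ inside $\BP_{\ast\ast}$) cannot produce information about $\tau$. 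The further step, where $(r_{\sigma^nE})_n(1)$ is said to land in the ideal $(v_0,\dots,v_{n-1},v_{n+1},\dots)$ and then gets killed by $\lambda_n\circ v_n^c$, is also unsubstantiated (and unnecessary). Finally, the claim $(r_0)_n=\id$ is not part of Theorem \ref{5.1}.

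The paper avoids all of this by working directly with $(r_F)_n(\tau)\in AP(n)_{\ast\ast}$ and using realisation. Under $R_\C$ one has $\tau\mapsto 1$ and $(r_F)_n\mapsto(r_F^{\Top})_n$; since $1\in BP_\ast$ is primitive for the coaction, $s_F^{\Top}(1)=0$ for $F\neq 0$, and the commutative square of Theorem \ref{5.1} transports this to $(r_F^{\Top})_n(1)=0$ (and $(r_0^{\Top})_n(1)=1$). Then the bidegree of $(r_F)_n$ together with $AP(n)_{\ast\ast}\cong P(n)_\ast[\tau]$ forces $(r_F)_n(\tau)$ to be the unique preimage of its realisation in that bidegree, namely $0$ for $F\neq 0$ and $\tau$ for $F=0$. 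This is precisely the ``dimensional reasons'' argument you sketch as your alternative, but applied to $(r_F)_n$ on $AP(n)_{\ast\ast}$ rather than to the nonexistent $s_F(\tau)$ in $\BP_{\ast\ast}$; once you make that switch, your alternative becomes the paper's proof. For the final statement $\bar s_E(x\tau)=\bar s_E(x)\tau$, the paper, like you, appeals to a Cartan formula---but only \emph{after} the values $(r_F)_n(\tau)$ have been established directly.
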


\begin{proof}
Since $\bar s_E(\tau)=(\lambda_n\circ v_n^c\circ (r_{\sigma^n E})_n) (\tau)$ with $\lambda_n(\tau)=\tau$, and $c=0$ for $E=0$, the first claim is equivalent to $(r_F)_n(\tau)= 0$ for $F\neq 0$ and $(r_0)_n(\tau)=\tau$. 
The realisation functor $R_\C$ maps $\tau$ to $1\in P(n)^0$ and it takes $(r_F)_n:AP(n)^{\ast\ast}\rightarrow AP(n)^{\ast\ast}$ to $(r_F^{\Top})_n:P(n)^\ast\rightarrow P(n)^\ast$, which, by \cite[Theorem 5.1]{W}, is compatible with $s_F^{\Top}$ in the sense that the following diagram commutes:
\[\xymatrix{\ar @{} [dr] |{}
BP^\ast \ar[r]^{s_F^{\Top}} \ar[d] & BP^\ast \ar[d] \\
P(n)^\ast \ar[r]^{(r_F^{\Top})_n} & P(n)^\ast. }\] 
The map $s_F^{\Top}$ is defined via the coaction map
\[\psi^{\Top}:BP_\ast\overset{(1\wedge i)_\ast}\longrightarrow BP_\ast BP \overset{m_\ast^{-1}}\longrightarrow BP_\ast BP\otimes_{BP_\ast} BP_\ast,\]
which clearly takes $1$ to $1\otimes 1$. Therefore, in the formula 
\[\psi^{\Top}(1)=\underset{F}\sum c(t^F_{\Top})\otimes s_F^{\Top}(1),\]
all $s_F^{\Top}(1)$ have to be $0$, except for $s_0^{\Top}(1)$, which is $1$. It follows that also $(r_F^{\Top})_n(1)=0$ for $F\neq 0$ and $(r_0^{\Top})_n(1)=1$. Hence, $R_\C((r_F)_n(\tau))$ is $0$ for $F\neq 0$ and it is $1$ for $F=0$.

The preimage of $x\in P(n)^{\ast}$ under $R_\C:AP(n)^{\ast\ast}\rightarrow P(n)^\ast$ is 
\[\left\{\underset{k\geq 0}\sum x_k\cdot \tau^{k}\,|\,\sum_{k\geq 0} x_k=x\right\}.\] 
Since $(r_F)_n$ is a map of degree $(|F|,|F|/2)$, the only possible preimage of $R_\C((r_F)_n(\tau))$ is $(r_F)_n(\tau)=0$ for $F\neq 0$ and $(r_0)_n(\tau)=\tau$.

The claim on $\bar s_E( x\tau)$ now follows from the Cartan formula, Lemma \ref{Cartan}.
\end{proof}

The following is a motivic analogue to \cite[Proposition 4.14]{JW}.

\begin{theorem}\label{ABAK}
For any $X\in\SH(\C)^{fin}$ and any $n> 0$, $p>2$, there is a natural isomorphism
\[AB(n)_{\ast\ast}(X)\rightarrow AK(n)_{\ast\ast}(X)\otimes\F_p[v_{n+1},v_{n+2},\cdots].\]
Furthermore, for any $X\in\SH(\C)$,
$AK(n)_{\ast\ast}(X)=0$ if and only if $AB(n)_{\ast\ast}(X)=0$.
\end{theorem}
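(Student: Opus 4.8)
The strategy is to follow closely the topological argument of Johnson--Wilson \cite[Proposition 4.14]{JW}, using the operations $(r_E)_n$ constructed in Theorem \ref{5.1} and the structural results of Section \ref{Section Wurgler}. First I would set up the comparison map. The spectrum $AB(n)=v_n^{-1}AP(n)$ receives a map from $Ak(n)$, and inverting $v_n$ gives $AK(n)=v_n^{-1}Ak(n)$; dually, $AB(n)$ maps to $AK(n)$ via the quotient $AP(n)\to Ak(n)$ localised at $v_n$. The elements $s_u=\bar s_{E_u}\in Ak(n)^{a,a/2}(AP(n))$ of Definition \ref{a,b} assemble, after inverting $v_n$, into a map $AB(n)\to \bigvee_{E\in\mathcal E}\Sigma^{(|E|,|E|/2)}AK(n)$, which on homotopy with $X$ induces a natural transformation $AB(n)_{\ast\ast}(X)\to AK(n)_{\ast\ast}(X)\otimes\F_p[v_{n+1},v_{n+2},\ldots]$, the target being graded so that $v_{n+i}$ sits in degree $(2(p^{n+i}-1),p^{n+i}-1)$. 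I would then check this is the claimed isomorphism.

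The verification that it is an isomorphism proceeds by the usual two-step reduction: first for $X=S^{0,0}$, then by cellular induction / five lemma for general finite cell $X$. For $X=S^{0,0}$ one needs $AB(n)_{\ast\ast}\cong AK(n)_{\ast\ast}\otimes\F_p[v_{n+1},v_{n+2},\ldots]$. Here I would invoke Lemma \ref{Ah}, which gives $AB(n)_{\ast\ast}\cong B(n)_\ast[\tau]$ and $AK(n)_{\ast\ast}\cong K(n)_\ast[\tau]$, so the statement reduces to the topological isomorphism $B(n)_\ast\cong K(n)_\ast\otimes\F_p[v_{n+1},v_{n+2},\ldots]$ from \cite[Proposition 4.14]{JW}, tensored with $\F_p[\tau]$; Lemma \ref{tau} guarantees that the operations $\bar s_E$ are $\F_p[\tau]$-linear (they commute with multiplication by $\tau$ and fix $\tau$), so the splitting on coefficients is compatible with the $\tau$-action. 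The naturality and the behaviour under cofiber sequences follow because all the maps involved are maps of spectra, so applying $[-,\,\cdot\,]$ to a cofiber sequence $X\to Y\to Z$ produces long exact sequences and a ladder to which the five lemma applies; the Cartan formula of Theorem \ref{5.1} and the comodule structure of Lemma \ref{4.12} are what make the inductive step go through, exactly as in \cite{JW}.

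For the second assertion, about arbitrary $X\in\SH(\C)$, I would argue as follows. One inclusion of Bousfield classes is immediate: $AK(n)$ is a module spectrum over $AB(n)$ (it is obtained from $AB(n)$ by further quotienting and localising, and $v_n$ is already invertible in $AB(n)$), so by Lemma \ref{bousfield}(2), $\langle AK(n)\rangle\le\langle AB(n)\rangle$, i.e. $AB(n)_{\ast\ast}(X)=0\Rightarrow AK(n)_{\ast\ast}(X)=0$. For the converse I would use that $AB(n)=v_n^{-1}AP(n)$ and that by Lemma \ref{bousfield2}(3) together with Lemma \ref{bousfield}(3), the Bousfield class of $AP(n)$ decomposes; more directly, the first part of the theorem, extended from finite cell spectra to all of $\SH(\C)^{cell}$ by the unique-extension property \cite[Lemma 4.10]{MLE}, shows that on coefficients (and hence, smashing with $X$) the theory $AB(n)\wedge X$ is built out of shifted copies of $AK(n)\wedge X$: concretely $AB(n)_{\ast\ast}(X)\cong AK(n)_{\ast\ast}(X)\otimes\F_p[v_{n+1},v_{n+2},\ldots]$ naturally, so one vanishes iff the other does. (Alternatively, one smashes $X$ with the wedge decomposition $AB(n)\simeq\bigvee_{E\in\mathcal E}\Sigma^{(|E|,|E|/2)}AK(n)$ coming from the $s_u$, which holds in $\SH(\C)$ for $p>2$ and $n>0$.)

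\textbf{Main obstacle.} The delicate point is the coefficient-level splitting for $X=S^{0,0}$: one must be sure that W\"urgler's and Johnson--Wilson's argument, which in the topological setting uses the precise structure of $BP_\ast$, $BP_\ast BP$ and the invariant prime ideal theorem, transfers verbatim once the operations $(r_E)_n$ and the comodule isomorphism of Lemma \ref{4.12} are in place. All the needed motivic inputs --- the operations (Theorem \ref{5.1}), the coefficient computation $AP(n)_{\ast\ast}\cong P(n)_\ast[\tau]$ (Lemma \ref{Ah}), the $\tau$-linearity of $\bar s_E$ (Lemma \ref{tau}), the comodule structure (Lemma \ref{4.12}), and the $AP(n)_{\ast\ast}$-module structures on $AP(n)$-(co)homology (Corollary \ref{AP(n) module}) --- have been assembled precisely so that this transfer works; the only real work is to check that $v_n$ acts invertibly on the right-hand side in the appropriate sense and that the $E_2$-degrees in Definition \ref{a,b} match the $v_{n+i}$-grading, and that inverting $v_n$ commutes with the (infinite) wedge, which follows since $v_n$ acts by an isomorphism on each summand compatibly. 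I expect the write-up to be short once this bookkeeping is done, as everything essential has been proved in the preceding sections.
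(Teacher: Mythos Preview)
Your plan for the first assertion (finite $X$) matches the paper's proof essentially step for step: construct $\hat\Lambda$ from the $\bar s_E$, use Corollary \ref{modulo} and the Cartan formula to see it is $\F_p[v_n]$-linear (so it localises to $\Lambda$ on $AB(n)_{\ast\ast}$), verify the sphere case via Lemma \ref{Ah} and Lemma \ref{tau}, and then run cellular induction with the five lemma. That part is fine.

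For the second assertion your outline has two small but genuine gaps. First, the citation \cite[Lemma 4.10]{MLE} is about extending \emph{cohomology} theories from $\SH(k)^{fin}$ to $\SH(k)^{cell}$; here you need to extend a natural isomorphism of \emph{homology} theories, and the paper does this directly by writing any cellular $X$ as a filtered colimit of finite cell spectra and using that $\pi_{\ast\ast}$ commutes with filtered colimits \cite[Proposition 9.3]{DI}. Second, and more importantly, you stop at $\SH(\C)^{cell}$: to get the statement for arbitrary $X\in\SH(\C)$ the paper invokes cellular approximation \cite[Proposition 7.3]{DI}, which supplies a map $X'\to X$ with $X'$ cellular inducing an isomorphism on $\pi_{\ast\ast}$, hence on $E_{\ast\ast}$ for any $E$. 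You should add this step. Your ``alternative'' spectrum-level wedge splitting $AB(n)\simeq\bigvee_E\Sigma^{(|E|,|E|/2)}AK(n)$ is not established anywhere in the preceding sections (the $\bar s_E$ give maps in one direction only, and $AK(n)_{\ast\ast}\cong\F_p[\tau,v_n^{\pm 1}]$ is not a graded field because $\tau$ is not invertible), so you should not rely on it; the colimit-plus-cellular-approximation route is both what the paper does and what actually works without further input.
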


\begin{proof}
For a given exponent sequence $E$, consider the composition
\[AP(n)\overset{\overline{s}_E}\rightarrow \Sigma^{a,a/2}Ak(n)\rightarrow \Sigma^{a,a/2}AK(n)\overset{v_n^{-b}}\rightarrow \Sigma^{q,q/2}AK(n),\]
where $a=a_E$ and $b=b_E$ are as in Definition \ref{a,b}. These induce a natural homomorphism
\[\hat\Lambda: AP(n)_{\ast\ast}(X)\rightarrow AK(n)_{\ast\ast}(X)\otimes \F_p[v_{n+1},v_{n+2},\cdots]\]
\[\textup{by }\; \hat\Lambda(y)=\sum_{E\in\mathcal E}v_n^{-b_E}\bar{s}_E(y)\otimes v^E.\]
By Corollary \ref{modulo} and the Cartan formula, $r_E(yv_n)\equiv r_E(y)v_n \textup{ mod } I_n$. As $I_n=(v_0,\cdots,v_{n-1})$ acts trivially on $AP(n)$ and $Ak(n)$ (Proposition \ref{trivial action}), it follows that $\bar{s}_E:AP(n)_{\ast\ast}(X)\rightarrow Ak(n)_{\ast\ast}(X)$ is an $\F_p[v_n]$-homomorphism. Therefore, $\hat\Lambda$ can be extended to $AB(n)_{\ast\ast}(X)=v_n^{-1}AP(n)_{\ast\ast}X$. This yields a map
\[\Lambda:AB(n)_{\ast\ast}(X)\rightarrow AK(n)_{\ast\ast}(X)\otimes \F_p[v_{n+1},v_{n+2},\cdots].\]
We show that $\Lambda$ is an isomorphism for $X=S^0$, and, thus, for all $X=S^{p,q}$. By Lemma \ref{Ah}, we are considering a map $B(n)_{\ast}[\tau]\rightarrow K(n)_{\ast}[\tau]\otimes \F_p[v_{n+1},v_{n+2},\cdots]$ which, under $R_\C$, realises to the isomorphism $B(n)_{\ast}\overset{\cong}\rightarrow K(n)_{\ast}\otimes \F_p[v_{n+1},v_{n+2},\cdots]$ in \cite[Proposition 4.14]{JW}. Thus, it suffices to calculate $\Lambda(\tau)$, or, equivalently, $\hat\Lambda(\tau)=\sum_{E\in\mathcal E}v_n^{-b_E}\bar{s}_E(\tau)\otimes v^E$. By the previous lemma, this is equal to $v_n^{-b_0}\bar{s}_0(\tau)\otimes v^0=\tau$. Thus, $\Lambda$ is an isomorphism if $X$ is a sphere.

Cellular induction via the five lemma shows that $\Lambda$ is an isomorphism for all $X\in\SH(\C)^{fin}$.

By Definition \ref{fin}(3), any cell spectrum is the colimit of a diagram of finite cell spectra. As in \cite[Theorem 2.1(a)]{RavLoc}, it follows that, for any $X\in\SH(\C)^{cell}$, $AB(n)_{\ast\ast}X=0$ if and only if $AK(n)_{\ast\ast}X=0$. Here, we use that $\pi_{\ast\ast}(-)$ commutes with filtered colimits by \cite[Proposition 9.3]{DI}. 

Furthermore, by \cite[Proposition 7.3]{DI}, any $X\in\SH(\C)$ has a cellular approximation $X'\in\SH(\C)^{cell}$, $f:X'\rightarrow X^{fib}$ ($X^{fib}$ a fibrant replacement of $X$), such that $\pi_{\ast\ast}(f):\pi_{\ast\ast}X'\overset\cong\rightarrow\pi_{\ast\ast}X$. Hence, also $\pi_{\ast\ast}(1_E\wedge f):E_{\ast\ast}X'\rightarrow E_{\ast\ast}X$ is an isomorphism for any $E\in\SH(\C)$. In particular, 
\[ AB(n)_{\ast\ast}X=0\Leftrightarrow AB(n)_{\ast\ast}X'=0\Leftrightarrow AK(n)_{\ast\ast}X'=0\Leftrightarrow AK(n)_{\ast\ast}X=0.\]
\end{proof}

\begin{rk}
In addition to the above theorem, I also worked on a motivic version of \cite[Theorem 4.8]{JW}, which I was almost able to prove, except for one detail which is necessary for part (c), where \cite[Lemma 4.10]{JW} enters the argument. Large parts of the proof of this lemma can be translated motivically, using motivic cellular approximation as introduced in \cite[Proposition 7.3]{DI} and the motivic Hurewicz theorem from \cite[Theorem 5.57]{MorelA1algtop}. The only problematic point is part (e) in the proof of \cite[Lemma 4.10]{JW}, where a spectrum $Y^k$ with finitely generated homology is approximated by a finite spectrum $F$. A possible construction of $F\rightarrow Y^k$ is described in \cite[Proposition 4C.1]{Hatcher}. A similar construction can be performed in the motivic setting. However, it is not clear that the resulting spectrum $F$ is finite. This would be the case if $(H\Z_(p)){\ast\ast}$ is a coherent (or Noetherian) ring, because then, $\h_{\ast\ast}(X,\Z_{(p)})$ is finitely generated over $(H\Z_(p)){\ast\ast}$ for finite cell spectra $X$, making a finite induction step possible.
\end{rk}

\begin{cor}\label{AK=AB}
For $p>2$ and $n>0$,
\[\langle AK(n)\rangle =\langle AB(n)\rangle \textup{ in } \SH(\C).\]
\end{cor}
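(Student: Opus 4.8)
The plan is to read off Corollary~\ref{AK=AB} directly from Theorem~\ref{ABAK} together with Remark~\ref{802}. Recall that a Bousfield class $\langle E\rangle$ is, by Definition~\ref{Bousfield class}, the class of all $X\in\mathcal T$ with $E_{\ast\ast}X\neq 0$; so to prove $\langle AK(n)\rangle=\langle AB(n)\rangle$ in $\SH(\C)$ it suffices to show, for every $X\in\SH(\C)$, that $AK(n)_{\ast\ast}X\neq 0$ if and only if $AB(n)_{\ast\ast}X\neq 0$, equivalently that $AK(n)_{\ast\ast}X=0$ iff $AB(n)_{\ast\ast}X=0$.

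First I would invoke the last sentence of Theorem~\ref{ABAK}: for any $X\in\SH(\C)$ and $n>0$, $p>2$, one has $AK(n)_{\ast\ast}X=0$ if and only if $AB(n)_{\ast\ast}X=0$. This is exactly the equivalence of the vanishing conditions, so it immediately gives the equality of the two Bousfield classes. Concretely, the class of $X$ with $AK(n)_{\ast\ast}X\neq 0$ coincides with the class of $X$ with $AB(n)_{\ast\ast}X\neq 0$, which is the assertion $\langle AK(n)\rangle=\langle AB(n)\rangle$.

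There is essentially no obstacle: the corollary is a cosmetic restatement of the homotopy-group statement already proved in Theorem~\ref{ABAK}. The only point worth spelling out is that the equivalence in Theorem~\ref{ABAK} is stated for \emph{all} $X\in\SH(\C)$ (not merely finite or cellular ones), since Bousfield classes in $\SH(\C)$ range over all of $\SH(\C)$; but this is precisely the content of the last paragraph of the proof of Theorem~\ref{ABAK}, where the passage from finite cell spectra to arbitrary spectra is carried out via motivic cellular approximation (\cite[Proposition 7.3]{DI}) and the fact that $\pi_{\ast\ast}$ commutes with filtered colimits (\cite[Proposition 9.3]{DI}).

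So the proof is a single line: by Theorem~\ref{ABAK}, $AK(n)_{\ast\ast}X=0$ iff $AB(n)_{\ast\ast}X=0$ for every $X\in\SH(\C)$ (using $p>2$, $n>0$), and by the definition of Bousfield classes this is equivalent to $\langle AK(n)\rangle=\langle AB(n)\rangle$.

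\begin{proof}
By Theorem~\ref{ABAK}, for $p>2$, $n>0$ and any $X\in\SH(\C)$, we have $AK(n)_{\ast\ast}(X)=0$ if and only if $AB(n)_{\ast\ast}(X)=0$. By Definition~\ref{Bousfield class}, $\langle AK(n)\rangle$ is the class of all $X\in\SH(\C)$ with $AK(n)_{\ast\ast}(X)\neq 0$ and $\langle AB(n)\rangle$ is the class of all $X$ with $AB(n)_{\ast\ast}(X)\neq 0$; the equivalence just quoted shows these two classes coincide. Hence $\langle AK(n)\rangle=\langle AB(n)\rangle$ in $\SH(\C)$.
\end{proof}
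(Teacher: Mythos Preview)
Your proposal is correct and matches the paper's approach exactly: the corollary is stated in the paper without its own proof because it is an immediate restatement of the last assertion of Theorem~\ref{ABAK} in the language of Bousfield classes, which is precisely what you wrote.
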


This also holds for $n=0$, in which case we do not need to assume $k=\C$ or $p>2$.

\begin{prop}\label{AK=AB2}
For any $k\subseteq \C$ and any prime $p$,
\[\langle AK(0)\rangle =\langle AB(0)\rangle \textup{ in } \SH(k).\]
\end{prop}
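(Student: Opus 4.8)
The plan is to deduce the equality from the two inclusions $\langle AK(0)\rangle\le\langle AB(0)\rangle$ and $\langle AB(0)\rangle\le\langle AK(0)\rangle$, each obtained from Lemma \ref{bousfield}(2), which holds in the tensor triangulated category $\SH(k)$. First recall the relevant identifications. By Definition \ref{AK(n)}, $AP(0)=ABP$ and $Ak(0)=ABP/(v_1,v_2,\dots)$, so $AB(0)=v_0^{-1}ABP$ and $AK(0)=v_0^{-1}Ak(0)$. Since $v_0=p$ sits in bidegree $(0,0)$ and $ABP$ is $p$-local, inverting $v_0$ is the same as inverting $p$, so $AB(0)=ABP_{\Q}$, the rationalisation of $ABP$. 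As in the proof of Theorem \ref{zero}, the (integral) Hopkins--Morel theorem of \cite{Hoy} gives $ABP/(v_1,v_2,\dots)\simeq (H\Z)_{(p)}$, whence $AK(0)\simeq p^{-1}(H\Z)_{(p)}=H\Q$.

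For the first inclusion: by Remark \ref{AMU module} the ring structure of $ABP$ makes $Ak(0)$ an $ABP$-module, and a telescopic localisation of a homotopy ring spectrum is again a homotopy ring spectrum, so $AB(0)=v_0^{-1}ABP$ is a ring spectrum and $AK(0)=v_0^{-1}Ak(0)$ is an $AB(0)$-module. Lemma \ref{bousfield}(2) then yields $\langle AK(0)\rangle\le\langle AB(0)\rangle$ in $\SH(k)$.

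For the second inclusion it suffices, again by Lemma \ref{bousfield}(2), to exhibit $AB(0)$ as a module over the ring spectrum $AK(0)\simeq H\Q$. Here one uses that $AB(0)=ABP_{\Q}$ is an oriented rational spectrum: $ABP$ is an $MGL_{(p)}$-module (Remark \ref{AMU module}), and $MGL\wedge\eta\cong 0$ by the lemma of Chapter \ref{nilpotence}, so $\eta$ acts as zero on $ABP$ and hence on $ABP_{\Q}$. Consequently $ABP_{\Q}$ is acyclic for $\eta$-inversion, so it lies in the summand $\SH(k)_{\Q}^{+}$ of the rationalised motivic stable homotopy category; as $k$ has characteristic zero, $\SH(k)_{\Q}^{+}\simeq\mathrm{DM}(k,\Q)$ is the category of modules over the rational motivic cohomology spectrum $H\Q$. (Alternatively one may quote the rational splitting $MGL_{\Q}\simeq\bigvee_{i\ge 0}\Sigma^{2i,i}H\Q$, which makes $MGL_{\Q}$ an $H\Q$-algebra and hence $ABP_{\Q}$ an $H\Q$-module by restriction of scalars.) Thus $AB(0)$ underlies an $AK(0)$-module, so $\langle AB(0)\rangle\le\langle AK(0)\rangle$, and combining the two inclusions proves $\langle AK(0)\rangle=\langle AB(0)\rangle$ in $\SH(k)$.

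The routine parts are the identifications of $AB(0)$ and $AK(0)$, the various module structures, and the two applications of Lemma \ref{bousfield}(2). The one genuinely nonformal input, and the main obstacle, is the second inclusion: the fact that $ABP_{\Q}$ carries the structure of an $H\Q$-module. This is where one must use the structure of the rationalised motivic stable category (the $\eta$-torsion part being $\mathrm{DM}(k,\Q)$, equivalently the rational splitting of algebraic cobordism) rather than argue purely formally; note that the analogous statements for $n\ge 1$ needed the much heavier W\"urgler-type machinery of Section \ref{Section Wurgler}, and it is precisely the triviality of $\eta$ on oriented spectra that collapses the $n=0$ case and removes the hypotheses $k=\C$ and $p>2$.
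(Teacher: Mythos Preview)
Your proof is correct but takes a genuinely different route from the paper for the hard direction $\langle AB(0)\rangle\le\langle AK(0)\rangle$. Both arguments share the identifications $AK(0)\simeq H\Q$ and $AB(0)=ABP_\Q$, and both dispatch the easy inequality $\langle AK(0)\rangle\le\langle AB(0)\rangle$ formally. For the converse, the paper argues pointwise on finite cell spectra: given $X\in\SH(k)^{fin}$ with $H\Q\wedge X\simeq 0$, it invokes a rationalised form of \cite[Lemma~7.10]{Hoy} (using connectivity of $MGL_\Q$ and the identification $(MGL_\Q)_{\le 0}\simeq (H\Q)_{\le 0}$) to conclude $(MGL_\Q)^{\ast\ast}X=0$, then dualises via Proposition~\ref{C_E} to obtain $(MGL_\Q)_{\ast\ast}X=0$, passes to the quotient $ABP_\Q$ by Lemma~\ref{bousfield}(1), and finally extends to arbitrary $X\in\SH(k)$ by cellular approximation. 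You instead exhibit $ABP_\Q$ as an $H\Q$-module once and for all, using Morel's rational splitting $\SH(k)_\Q=\SH(k)_\Q^+\times\SH(k)_\Q^-$ together with the identification $S_\Q^+\simeq H\Q$ (Cisinski--D\'eglise), so that Lemma~\ref{bousfield}(2) yields the inequality directly on all of $\SH(k)$. Your approach is more structural and avoids both the duality step and the finite-then-extend passage, but it imports substantial external input (the Morel decomposition and the equivalence $\SH(k)_\Q^+\simeq\mathrm{DM}(k,\Q)$, or alternatively the rational splitting of $MGL_\Q$) not developed or cited in the thesis; the paper's argument stays entirely within the slice-filtration and connectivity results of \cite{Hoy} already in use elsewhere in the chapter.
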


\begin{proof}
By definition, $AB(0)=p^{-1}ABP$ and 
\[AK(0)=p^{-1}ABP/(v_1,v_2,\cdots)=p^{-1}MGL_{(p)}/(a_1,a_2,\cdots)=MGL_\Q/(a_1,a_2,\cdots).\] 
By the main result in \cite{Hoy}, $MGL/(a_1,a_2,\cdots)\cong H\Z$, which implies 
\[p^{-1}MGL_{(p)}/(a_1,a_2,\cdots)\cong p^{-1}H\Z_{(p)}= H\Q.\] 
Hence, $AK(0)\cong H\Q$.
From Lemma \ref{bousfield}(1), we already know 
\[\langle AK(0)\rangle \leq \langle AB(0)\rangle.\]
To prove 
\[\langle AK(0)\rangle \geq \langle AB(0)\rangle,\]
let $X\in\SH(k)^{fin}$ satisfy $AK(0)_{\ast\ast}(X)=0$. We have to show $AB(0)_{\ast\ast}(X)=0$.
We use \cite[Lemma 7.10]{Hoy}, in which we rationalise $H\Z$ and $MGL$ and set $X=MGL_\Q$. Here, rationalising a spectrum $E$ means forming the homotopy colimit $E_\Q=p^{-1}E_{(p)}$. The lemma then states the following. 

If $F\in\SH(k)$ satisfies $H\Q\wedge F= 0$, then $[F,MGL_\Q]=0$.

When rationalising the proof of \cite[Lemma 7.10]{Hoy}, one uses that $\kappa_0(MGL_\Q)\cong (MGL_\Q)_{\leq 0}$ and $\kappa_0(H\Q)\cong H\Q_{\leq 0}$, which hold because $MGL_\Q$ and $H\Q$ are connective (which follows from the connectivity of $MGL$ \cite[Corollary 3.9]{Hoy} and of $H\Z$ \cite[Lemma 7.3]{Hoy}). Furthermore, one uses that 
\[(MGL_\Q)_{\leq 0}\cong (MGL_{\leq 0})_\Q\cong (H\Z_{\leq 0})_\Q\cong H\Q_{\leq 0},\]
where the first and third isomorphisms follow from the fact that $(-)_{\leq d}$ preserves filtered homotopy colimits \cite[Lemma 2.1]{Hoy} and the middle isomorphism is by \cite[Lemma 7.5]{Hoy}.

We apply this version of \cite[Lemma 7.10]{Hoy} to see that $(MGL_\Q)^{\ast\ast}X=0$ for $X$ as above, which is equivalent to $(MGL_\Q)_{\ast\ast}X=0$, since $X$ is finite (Proposition \ref{C_E}). Using Lemma \ref{bousfield}(1) again, it follows that $(p^{-1}ABP)_{\ast\ast}(X)=0$, as we wanted to show. 

As in the previous proof, the equivalence 
\[AK(0)_{\ast\ast}X=0 \Leftrightarrow AB(0)_{\ast\ast}X=0\]
passes from finite spectra to cellular spectra and then to arbitrary spectra $X\in \SH(k)$.
\end{proof}

As a corollary of the above results, we can prove the following analogue of \cite[Theorem 2.1(i)]{RavLoc}.

\begin{cor}
In $\SH(\C)$,
\[\langle AK(n)\rangle \wedge \langle AK(m)\rangle =0 \]
for any $m\neq n$.
\end{cor}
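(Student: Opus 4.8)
The plan is to deduce the vanishing statement $\langle AK(n)\rangle\wedge\langle AK(m)\rangle=0$ for $m\neq n$ from the results already assembled in this chapter, mimicking the topological argument of \cite[Theorem 2.1(i)]{RavLoc}. Without loss of generality I would assume $m<n$. By Corollary \ref{AK=AB} (together with Proposition \ref{AK=AB2} to cover $n=0$ or $m=0$; note we are in $\SH(\C)$ with $p>2$ so Corollary \ref{AK=AB} applies for $n,m>0$), it suffices to prove $\langle AB(n)\rangle\wedge\langle AB(m)\rangle=0$, i.e.\ that $AB(n)\wedge AB(m)\cong 0$. So the reduction is: replace the harder-to-understand $AK$'s by the $AB$'s, which are localisations of the quotients $AP(n)$ and for which the Bousfield-class machinery of Section \ref{Section Wurgler} and Lemma \ref{bousfield2} is available.

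The key steps, in order, are as follows. First, recall from Lemma \ref{bousfield2}(2) that $\langle AE(n)\rangle\wedge\langle AP(n+1)\rangle=\langle 0\rangle$; since $AB(n)=v_n^{-1}AP(n)$ is a module spectrum over $v_n^{-1}ABP$, and more importantly $\langle AB(n)\rangle\leq\langle AE(n)\rangle$ by Lemma \ref{bousfield2}(1) composed with Lemma \ref{bousfield2}(3) (which gives $\langle AP(n)\rangle=\langle AB(n)\rangle\vee\langle AP(n+1)\rangle$, hence $\langle AB(n)\rangle\leq\langle AP(n)\rangle\leq\langle AP(m+1)\rangle$ for $m<n$ by iterating (3)), I would establish the chain of inequalities $\langle AB(n)\rangle\leq\langle AP(m+1)\rangle$. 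Second, again by Lemma \ref{bousfield2}(2), $\langle AE(m)\rangle\wedge\langle AP(m+1)\rangle=\langle 0\rangle$, and by Lemma \ref{bousfield2}(1), $\langle AB(m)\rangle\leq\langle AE(m)\rangle$. Combining: $\langle AB(m)\rangle\wedge\langle AB(n)\rangle\leq\langle AE(m)\rangle\wedge\langle AP(m+1)\rangle=\langle 0\rangle$. Third, translate back through $\langle AK(m)\rangle=\langle AB(m)\rangle$, $\langle AK(n)\rangle=\langle AB(n)\rangle$ to conclude $\langle AK(n)\rangle\wedge\langle AK(m)\rangle=0$ in $\SH(\C)$.

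The main obstacle I anticipate is bookkeeping the direction of the Bousfield-class inequalities and making sure the iteration $\langle AP(n)\rangle\leq\langle AP(m+1)\rangle$ for $n>m$ is correctly justified: Lemma \ref{bousfield2}(3) reads $\langle AP(k)\rangle=\langle AB(k)\rangle\vee\langle AP(k+1)\rangle$, which immediately gives $\langle AP(k+1)\rangle\leq\langle AP(k)\rangle$, hence by induction $\langle AP(n)\rangle\leq\langle AP(m+1)\rangle$ whenever $n\geq m+1$. One then needs $\langle AB(n)\rangle\leq\langle AP(n)\rangle$, which is also immediate from the same formula. A secondary point of care is that Lemma \ref{bousfield2} is stated in $\SH(k)$ for $k\subseteq\C$ with no restriction on $p$, so those inputs are unconditional; only the identification $\langle AK(j)\rangle=\langle AB(j)\rangle$ for $j>0$ requires $k=\C$ and $p>2$ (Corollary \ref{AK=AB}), while the $j=0$ case is Proposition \ref{AK=AB2}. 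Finally, for the genuinely strong form ``$\langle AK(n)\rangle\wedge\langle AK(m)\rangle=0$'' as opposed to the class-theoretic ``$=\langle 0\rangle$'', I would simply note these coincide by the conventions fixed in Definition \ref{Bousfield class} and Remark \ref{802}.

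\medskip
\noindent\emph{Write-up.} By symmetry assume $m<n$. For $j\geq 1$ Corollary \ref{AK=AB} gives $\langle AK(j)\rangle=\langle AB(j)\rangle$ in $\SH(\C)$ (using $p>2$), and for $j=0$ the same holds by Proposition \ref{AK=AB2}; so it suffices to show $\langle AB(n)\rangle\wedge\langle AB(m)\rangle=\langle 0\rangle$. By Lemma \ref{bousfield2}(3), $\langle AP(k)\rangle=\langle AB(k)\rangle\vee\langle AP(k+1)\rangle$ for every $k\geq 0$; hence $\langle AB(k)\rangle\leq\langle AP(k)\rangle$ and $\langle AP(k+1)\rangle\leq\langle AP(k)\rangle$. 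Iterating the latter, $\langle AP(n)\rangle\leq\langle AP(m+1)\rangle$ since $n\geq m+1$, and therefore
\[
\langle AB(n)\rangle\ \leq\ \langle AP(n)\rangle\ \leq\ \langle AP(m+1)\rangle .
\]
On the other hand, Lemma \ref{bousfield2}(1) gives $\langle AB(m)\rangle\leq\langle AE(m)\rangle$, and Lemma \ref{bousfield2}(2) gives $\langle AE(m)\rangle\wedge\langle AP(m+1)\rangle=\langle 0\rangle$. Combining these,
\[
\langle AB(m)\rangle\wedge\langle AB(n)\rangle\ \leq\ \langle AE(m)\rangle\wedge\langle AP(m+1)\rangle\ =\ \langle 0\rangle ,
\]
so $\langle AB(m)\rangle\wedge\langle AB(n)\rangle=\langle 0\rangle$. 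Translating back through the identifications of Bousfield classes above, $\langle AK(n)\rangle\wedge\langle AK(m)\rangle=0$ in $\SH(\C)$.
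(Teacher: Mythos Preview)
Your proof is correct and follows essentially the same route as the paper: both arguments combine Lemma \ref{bousfield2}(1)--(3) with the identification $\langle AK(j)\rangle=\langle AB(j)\rangle$ to sandwich one factor below $\langle AE(m)\rangle$ and the other below $\langle AP(m+1)\rangle$, then invoke $\langle AE(m)\rangle\wedge\langle AP(m+1)\rangle=\langle 0\rangle$. The only cosmetic difference is that the paper (taking the larger index as $m$) applies $\langle AK(n)\rangle\leq\langle AE(n)\rangle$ directly from Lemma \ref{bousfield2}(1) and so needs the $AK=AB$ identification only once (for the larger index, avoiding Proposition \ref{AK=AB2}), whereas you translate both classes to $AB$'s first.
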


\begin{proof}
Assume $m>n$. By (1) and (2) of Lemma \ref{bousfield2}, 
\[\langle AK(n)\rangle \wedge \langle AP(n+1)\rangle \leq \langle AE(n)\rangle \wedge\langle AP(n+1)\rangle =\langle 0\rangle.\]
Furthermore, by Lemma \ref{bousfield2}(3) and the above result, 
\[\langle AP(m)\rangle=\langle AB(m)\rangle\vee\langle AP(m+1)\rangle=\langle AK(m)\rangle\vee\langle AP(m+1)\rangle.\]
This implies $\langle AK(m)\rangle\leq \langle AP(m)\rangle$.
Since $m>n$, $\langle AP(m)\rangle\leq\langle AP(n+1)\rangle$ by Lemma \ref{bousfield}(1). Hence,
\[\langle AK(n)\rangle\wedge \langle AK(m)\rangle \leq \langle AK(n)\rangle \wedge \langle AP(n+1)\rangle=\langle 0\rangle.\] 
\end{proof}

\section{Decomposition of $\langle AE(n)\rangle$}

Recall from Definition \ref{AK(n)} that
\[AE(n)=v_n^{-1} ABP/(v_{n+1},v_{n+2},\cdots).\]
With the above preparations, we are ready to prove an analogue of the decomposition of Bousfield classes given in \cite[Theorem 2.1(d)]{RavLoc}. This answers a special case of \cite[Question 2.17]{HornLoc}.

\begin{theorem}\label{decomposition}
For $p>2$,
\[\langle AE(n)\rangle =\underset{0\leq i\leq n}\bigvee\langle AK(i)\rangle  \textup{ in } \SH(\C).\]
\end{theorem}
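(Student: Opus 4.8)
The strategy mirrors Ravenel's topological argument for $\langle E(n)\rangle = \bigvee_{i\le n}\langle K(i)\rangle$, see \cite[Theorem 2.1(d)]{RavLoc}, but using the motivic results assembled in this chapter. First I would establish the inequality $\langle AE(n)\rangle \geq \bigvee_{i\le n}\langle AK(i)\rangle$. This direction is the easy half: by Lemma \ref{bousfield2}(1) we have $\langle AE(n)\rangle \geq \langle AK(n)\rangle$, and by Corollary \ref{bousfieldE} we have $\langle AE(n)\rangle \geq \langle AE(i)\rangle$ for all $i\le n$, so applying Lemma \ref{bousfield2}(1) again at each level $i$ gives $\langle AE(n)\rangle \geq \langle AK(i)\rangle$ for every $i\le n$. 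Taking the join yields $\langle AE(n)\rangle \geq \bigvee_{0\le i\le n}\langle AK(i)\rangle$.

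The substantive direction is $\langle AE(n)\rangle \leq \bigvee_{i\le n}\langle AK(i)\rangle$. I would argue by induction on $n$. For the base case $n=0$ we have $AE(0) = v_0^{-1}ABP/(v_1,v_2,\cdots) = p^{-1}ABP/(v_1,v_2,\cdots)$, which up to Bousfield class is $AK(0)=AB(0)$ by Proposition \ref{AK=AB2} (equivalently $H\mathbb{Q}$), giving equality at $n=0$. For the inductive step, the key input is Lemma \ref{bousfield2}(3), $\langle AP(n)\rangle = \langle AB(n)\rangle \vee \langle AP(n+1)\rangle$, together with Corollary \ref{AK=AB}, $\langle AK(n)\rangle = \langle AB(n)\rangle$ for $p>2$, $n>0$, and Proposition \ref{AK=AB2} for $n=0$. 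I would also use the analogue of the topological identity $\langle AE(n)\rangle \vee \langle AP(n+1)\rangle = \langle v_n^{-1}ABP\rangle$: indeed $AE(n) = v_n^{-1}ABP/(v_{n+1},v_{n+2},\cdots)$ arises from $v_n^{-1}ABP$ by successively coning off $v_{n+1},v_{n+2},\dots$, and Lemma \ref{bousfield}(3) applied iteratively (using that $v_n^{-1}ABP/(v_{n+1},\dots,v_{n+k})$ has the colimit-under-$v_{n+k+1}$ contractible, since $v_{n+j}$ is already a unit multiple of zero after inverting... ) expresses $\langle v_n^{-1}ABP\rangle$ as a join of $\langle AE(n)\rangle$ with the Bousfield classes of the localized $v_{n+j}^{-1}$-telescopes, which are $v_{n+j}^{-1}$-periodic and absorbed into higher $AE$-classes. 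The cleanest route is: $\langle v_n^{-1}ABP\rangle = \langle AE(n)\rangle \vee \langle v_{n+1}^{-1}v_n^{-1}ABP\rangle$, and the second term is $\langle 0 \rangle$ by an argument parallel to Lemma \ref{bousfield2}(2) (where it was shown $\langle v_n^{-1}ABP\rangle \wedge \langle AP(n+1)\rangle = \langle 0\rangle$), hence $\langle v_n^{-1}ABP\rangle = \langle AE(n)\rangle$. Then from $\langle ABP\rangle = \langle AB(0)\rangle \vee \langle AP(1)\rangle = \cdots = \bigvee_{i\le n}\langle AB(i)\rangle \vee \langle AP(n+1)\rangle$ (iterating Lemma \ref{bousfield2}(3)), inverting $v_n$ kills $\langle AB(i)\rangle$ for $i<n$ (these are $v_i$-periodic and $v_n$ acts nilpotently on them, so $v_n^{-1}(-) \cong 0$) and kills $\langle AP(n+1)\rangle$ (again by the Lemma \ref{bousfield2}(2)-type vanishing), leaving $\langle v_n^{-1}ABP\rangle = \langle v_n^{-1}AB(n)\rangle = \langle AB(n)\rangle = \langle AK(n)\rangle$. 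Combining, $\langle AE(n)\rangle = \langle AK(n)\rangle$ — wait, this would be too strong; the correct bookkeeping is that inverting $v_n$ on $\langle AB(i)\rangle$ for $i<n$ does not annihilate it but rather shifts it, so one obtains $\langle AE(n)\rangle = \bigvee_{i\le n}\langle AK(i)\rangle$ after carefully tracking which telescopes survive.

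\textbf{Main obstacle.} The delicate point, and where I expect the real work to lie, is justifying the vanishing and shifting statements for Bousfield classes after inverting $v_n$ — i.e. the precise motivic analogue of Ravenel's manipulations in \cite[Theorem 2.1(d)]{RavLoc}. Specifically one needs that $v_j$ acts nilpotently (after inverting $v_n$, for $j\neq n$) on the relevant $ABP$-module spectra so that the telescopes collapse, and that $v_n^{-1}AB(i) \simeq v_n^{-1}AE(i)$-type identities hold in Bousfield class. These rely on the $v_n$-torsion results of Section \ref{$v_n$-Torsion} (Theorem \ref{torsion}, Corollary \ref{bousfieldE}), on the module structure results of Section \ref{section action} (Corollary \ref{AP(n) module}, needing $p>2$), and on the coefficient computation $AP(n)_{\ast\ast}\cong P(n)_\ast[\tau]$ over $\mathbb{C}$ (Lemma \ref{Ah}) so that one can compare with the topological statement via $R_\mathbb{C}$. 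The cleanest presentation would reduce the entire inductive step to Corollary \ref{AK=AB}, Proposition \ref{AK=AB2}, Lemma \ref{bousfield2}, and Corollary \ref{bousfieldE}, quoting the structure of the argument from \cite[Theorem 2.1(d)]{RavLoc} and verifying that each step's hypotheses are available motivically over $\mathbb{C}$ for $p>2$; the restrictions $k=\mathbb{C}$ and $p>2$ enter exactly through Lemma \ref{Ah} and Corollary \ref{AP(n) module} respectively.
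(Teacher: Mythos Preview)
Your $\geq$ direction is exactly what the paper does. For the $\leq$ direction, you correctly iterate Lemma \ref{bousfield2}(3) together with Corollary \ref{AK=AB} (and Proposition \ref{AK=AB2} for $i=0$) to obtain
\[
\langle ABP\rangle \;=\; \langle AK(0)\rangle \vee \langle AK(1)\rangle \vee \cdots \vee \langle AK(n)\rangle \vee \langle AP(n+1)\rangle,
\]
but then you take a wrong turn. There is no well-defined operation ``invert $v_n$'' on Bousfield classes that distributes over joins in the way you are trying to use it; your own parenthetical (``wait, this would be too strong'') is correct, and the subsequent attempt to salvage the argument by deciding which telescopes ``survive'' does not lead anywhere. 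The detour through $\langle v_n^{-1}ABP\rangle$ and an induction on $n$ is unnecessary.

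The paper's route is a one-line Bousfield-lattice manipulation that you are missing: since $AE(n)$ is an $ABP$-module spectrum, Lemma \ref{bousfield}(2) gives $\langle AE(n)\rangle \leq \langle ABP\rangle$. Combining this with the decomposition above and Lemma \ref{bousfield2}(2), which gives $\langle AE(n)\rangle \wedge \langle AP(n+1)\rangle = \langle 0\rangle$, one obtains
\[
\langle AE(n)\rangle \;=\; \langle AE(n)\rangle \wedge \langle ABP\rangle \;=\; \bigvee_{i\le n}\bigl(\langle AE(n)\rangle \wedge \langle AK(i)\rangle\bigr) \;\leq\; \bigvee_{i\le n}\langle AK(i)\rangle.
\]
No induction, no localisation of Bousfield classes. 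Your ``Main obstacle'' paragraph already lists precisely the inputs needed (Corollary \ref{AK=AB}, Proposition \ref{AK=AB2}, Lemma \ref{bousfield2}, Corollary \ref{bousfieldE}); the only thing you overlooked is Lemma \ref{bousfield}(2), which replaces the whole telescopic bookkeeping you were attempting.
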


\begin{proof}
These are the same arguments as for \cite[Theorem 2.1(d)]{RavLoc}:
By Lemma \ref{bousfield2}(3) and Corollary \ref{AK=AB}, $\langle AP(n)\rangle=\langle AB(n)\rangle \vee \langle AP(n+1)\rangle=\langle AK(n)\rangle\vee\langle AP(n+1)\rangle$. Since $AP(0)=\BP$, it follows inductively: 
\[\langle \BP\rangle=\langle AK(0)\rangle\vee\langle AK(1)\rangle\vee\cdots\vee \langle AK(n)\rangle\vee\langle AP(n+1)\rangle.\]
Since $AE(n)$ is an $\BP$-module spectrum, $\langle AE(n)\rangle\leq \langle \BP\rangle$ by Lemma \ref{bousfield}(2).
By Lemma \ref{bousfield2}(2), $\langle AE(n)\rangle\wedge\langle AP(n+1)\rangle=\langle 0\rangle$. It follows that 
\[\langle AE(n)\rangle\leq\langle AK(0)\rangle\vee\langle AK(1)\rangle\vee\cdots\langle AK(n)\rangle.\]
By Corollary \ref{bousfieldE} and Lemma \ref{bousfield2}(1), $\langle AE(n)\rangle\geq\langle AE(i)\rangle\geq\langle AK(i)\rangle$ for $i\leq n$ and hence also $\langle AE(n)\rangle\geq\underset{i\leq n}\bigvee\langle AK(i)\rangle$.
\end{proof}

\begin{rk}
The restriction $p>2$ originates from Section \ref{section action}, where it was needed to prove homotopy associativity for the map $\mu_{AP(n)}:AP(n)\wedge AP(n)\rightarrow AP(n)$. There might be a different way to show that the $ABP_{\ast\ast}$-action on $AP(n)_{\ast\ast}(X)$ induces an $AP(n)_{\ast\ast}$-action, in which case the condition $p>2$ could be removed in the previous section and in the theorem. 
\end{rk}

\section{$AK(n)$ and $AK(n+1)$}\label{section n+1}

\begin{lemma}\label{coherent}
Let $p$ be any fixed prime.
For $n\geq 1$ and $X\in\SH(\C)^{fin}$, $AP(n)_{\ast\ast}(X)$ is an $AP(1)_{\ast\ast}AP(1)$-comodule and a coherent module over $AP(1)_{\ast\ast}$. 
\end{lemma}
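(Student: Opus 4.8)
The statement has two parts: the comodule structure and the coherence. The comodule structure is the easier one, and I would handle it first. The map $\psi_X:AP(1)_{\ast\ast}(X)\to AP(1)_{\ast\ast}AP(1)\otimes_{AP(1)_{\ast\ast}}AP(1)_{\ast\ast}(X)$ comes from the $ABP$-module (hence $AP(1)$-module, using that the multiplication $\mu_{AP(1)}$ exists over $\C$ by Corollary~\ref{AP(n) module}) structure exactly as in Definition~\ref{s_E}, combined with the isomorphism $m_\ast$ of Lemma~\ref{3.14.2} applied to $h=AP(1)$; since $AP(1)_{\ast\ast}AP(1)$ is flat (indeed free) over $AP(1)_{\ast\ast}$ when $k=\C$ by Lemma~\ref{4.12}, the usual Hopf-algebroid comodule axioms follow formally. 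For $AP(n)_{\ast\ast}(X)$ with $n\ge 1$ one then observes that $AP(n)$ is an $AP(1)$-module (it is a further quotient of $AP(1)$ by the regular sequence $v_1,\dots,v_{n-1}$, which act trivially by Corollary~\ref{trivial action}), so $AP(n)_{\ast\ast}(X)=AP(1)_{\ast\ast}(AP(n)\wedge X)$ inherits an $AP(1)_{\ast\ast}AP(1)$-comodule structure, naturally in $X$.

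\textbf{Coherence.} For the coherence statement I would work over the coefficient ring $AP(1)_{\ast\ast}\cong P(1)_\ast[\tau]=\F_p[v_1,v_2,\dots][\tau]$ (Lemma~\ref{Ah}), and use the Atiyah--Hirzebruch spectral sequence of Proposition~\ref{AHSS} for $AP(n)^{\ast\ast}(X)$ (equivalently $AP(n)_{\ast\ast}(X)$, which are exchangeable by Proposition~\ref{co-homo} since $X$ is finite and $AP(n)$ cellular). The $E_2$-page is $\h^{\ast\ast}(X,P(n)_\ast)\cong \h^{\ast\ast}(X)\otimes_{\F_p}P(n)_\ast$, which over $\C$ is $\h^{\ast\ast}(X)\otimes_{\F_p}\F_p[v_n,v_{n+1},\dots]$ with $\h^{\ast\ast}(X)$ finitely generated over $\F_p[\tau]$ (here one uses that $X$ is a finite cell spectrum, so its motivic cohomology is finitely generated over $\h^{\ast\ast}(\Spec\C)=\F_p[\tau]$). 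Thus the $E_2$-page is a finitely generated module over the polynomial ring $\F_p[\tau][v_n,v_{n+1},\dots]$ in countably many variables; such a ring is coherent (a polynomial ring in any set of variables over a Noetherian ring is coherent), and finitely generated modules over a coherent ring are coherent. The spectral sequence being strongly convergent with only finitely many possibly-nonzero differentials on any given bidegree, the abutment $AP(n)_{\ast\ast}(X)$ is a subquotient built from finitely many extensions of finitely generated subquotients of the $E_2$-page over $AP(1)_{\ast\ast}$; since coherent modules over a coherent ring are closed under kernels, cokernels, and extensions, $AP(n)_{\ast\ast}(X)$ is coherent over $AP(1)_{\ast\ast}$.

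\textbf{Main obstacle.} The delicate point is making the last step rigorous: one needs the filtration on $AP(n)_{\ast\ast}(X)$ coming from the AHSS to have finite length in each bidegree and to consist of $AP(1)_{\ast\ast}$-submodules, so that coherence can be propagated through the extensions. The filtration is by the slice/Postnikov tower and is $AP(1)_{\ast\ast}$-equivariant because the whole tower consists of $MGL_{(p)}$-module (hence $AP(1)$-module) spectra; finiteness of the filtration in each bidegree follows because $\h^{\ast\ast}(X)$ is bounded (both gradings are bounded since $X$ is a finite cell spectrum) so only finitely many slices contribute to any fixed total bidegree. I would also need to double-check that the reindexing $v_i=a_{p^i-1}$ does not spoil the identification $AP(1)_{\ast\ast}\cong\F_p[\tau][v_1,v_2,\dots]$ of Lemma~\ref{Ah}, and that ``coherent'' is being used in the sense of finitely presented with finitely generated syzygies, which is the sense in which the closure properties above hold; both are routine but worth stating explicitly. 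Finally, one should remark that although $AP(n)_{\ast\ast}$ need not be finitely generated over $\F_p$, the polynomial ring $AP(1)_{\ast\ast}$ is still coherent, which is exactly what lets the argument go through despite the infinitely many generators.
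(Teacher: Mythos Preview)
Your outline has the right shape but contains two concrete problems, and the paper's argument is considerably simpler on both fronts.

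\textbf{Comodule structure.} Your citations do not deliver what you need. Lemma~\ref{4.12} computes $AP(m)^{\ast\ast}AP(n)$ (cohomology, not homology) and, like Corollary~\ref{AP(n) module}, is proved only for $p>2$; the present lemma is stated for \emph{all} primes. The paper avoids this by observing that $AP(1)=ABP/p$ carries a ring structure directly, and then computes $AP(1)_{\ast\ast}AP(1)$ by running the slice spectral sequence (Lemma~\ref{Ah}) for the quotient $AP(1)\wedge AP(1)$ of the Landweber exact spectrum $ABP\wedge ABP$: it collapses for degree reasons to give $AP(1)_{\ast\ast}AP(1)\cong P(1)_\ast P(1)[\tau]$, which is free over $AP(1)_{\ast\ast}$ by the known structure of $P(1)_\ast P(1)$. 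Freeness makes the universal coefficient spectral sequence collapse, producing the isomorphism $AP(n)_{\ast\ast}(AP(1)\wedge X)\cong AP(1)_{\ast\ast}AP(1)\otimes_{AP(1)_{\ast\ast}}AP(n)_{\ast\ast}X$ that defines the coaction. No appeal to the $p>2$ results of Section~\ref{section action} or~\ref{Section Wurgler} is needed.

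\textbf{Coherence.} Your claim ``finitely generated modules over a coherent ring are coherent'' is false; you need \emph{finitely presented}. Your $E_2$-term is in fact finitely presented over $AP(1)_{\ast\ast}$ (since $\h^{\ast\ast}(X)$ is finitely presented over the Noetherian ring $\F_p[\tau]$ and $AP(n)_{\ast\ast}$ is finitely presented over $AP(1)_{\ast\ast}$), so the argument can be repaired. But the paper's route is much shorter and bypasses the AHSS entirely: $AP(1)_{\ast\ast}\cong\F_p[\tau,v_1,v_2,\dots]$ is a coherent ring; $AP(n)_{\ast\ast}=AP(1)_{\ast\ast}/(v_1,\dots,v_{n-1})$ is finitely presented, hence a coherent $AP(1)_{\ast\ast}$-module; and ordinary cellular induction on $X$, using the two-out-of-three property for coherence in long exact sequences (\cite[Proposition~1.2]{CS}), finishes the proof. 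This avoids all the filtration bookkeeping and the check of $AP(1)_{\ast\ast}$-linearity of differentials that your approach would require.
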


\begin{proof}
Note that $AP(1)=ABP/p$ is a ring spectrum because $p:ABP\rightarrow ABP$ is a map of ring spectra and the category of ring spectra is cocomplete. The coaction on $AP(n)_{\ast\ast}(X)$ is defined via
\[AP(n)_{\ast\ast}X\rightarrow AP(n)_{\ast\ast}(AP(1)\wedge X)\leftarrow AP(1)_{\ast\ast}AP(1)\otimes_{AP(1)_{\ast\ast}}AP(n)_{\ast\ast}X,\] 
where the left map is induced by the unit of $AP(1)$ and we need to show that the right map (induced by $AP(1)\wedge AP(n)\rightarrow AP(n)$, compare Corollary \ref{AP(n) module}) is an isomorphism. To prove this isomorphism, it suffices to show that, in the spectral sequence from \cite[Proposition 7.7]{DI},
\[\Tor^{AP(1)_{\ast\ast}}(AP(1)_{\ast\ast}AP(1),AP(n)_{\ast\ast}X)\Rightarrow AP(n)_{\ast\ast}(AP(1)\wedge X),\]
$AP(1)_{\ast\ast}AP(1)$ is free over $AP(1)_{\ast\ast}$, so that the spectral sequence collapses immediately. 
We want to apply Lemma \ref{Ah} to $AP(1)\wedge AP(1)$.
Recall that the slice spectral sequence considered in Lemma \ref{Ah} converges for quotients of Landweber exact spectra by \cite[Theorem 8.12 and Example 8.13]{Hoy}. Now, $ABP\wedge ABP$ is a product of Landweber exact spectra and is therefore Landweber exact, see e.g. \cite[Remark 9.2]{MLE}, and $AP(1)\wedge AP(1)$ is a quotient of $ABP\wedge ABP$. Hence, the slice spectral sequence converges strongly, and it collapses for the same degree reasons as in Lemma \ref{Ah}. Thus, $AP(1)_{\ast\ast}AP(1)\cong P(1)_\ast P(1)[\tau]$. By \cite[Section 1]{JY}, this is isomorphic to $P(1)_\ast[\tau, z^{E,A}]$ for certain $z^{E,A}$. In particular, it is free over $AP(1)_{\ast\ast}\cong P(1)_\ast[\tau]$, as we wanted to show.

Now we show that $AP(n)_{\ast\ast}(X)$ is coherent over $AP(1)_{\ast\ast}$. Recall that
$P(1)_\ast\cong \F_p[v_1^{\Top},v_{2}^{\Top},\cdots]$ is a coherent ring (see \cite[Section 1]{CS}). The same holds for $AP(1)_{\ast\ast}\cong \F_p[\tau, v_1,v_{1},\cdots]$. By \cite[Proposition 1.2]{CS}, coherence of modules satisfies the two out of three property for exact triangles of graded modules (i.e., long exact sequences). It follows that $AP(n)_{\ast\ast}$ is a coherent $AP(1)_{\ast\ast}$-module, and
cellular induction implies that $AP(n)_{\ast\ast}(X)$ is a coherent $AP(1)_{\ast\ast}$-module, too.
\end{proof}

Setting $n=1$ and $X=S^0$, the above lemma tells us that $AP(1)_{\ast\ast}$ is a coherent $AP(1)_{\ast\ast}AP(1)$-comodule, where coherent means coherent as an $AP(1)_{\ast\ast}$-module.

\begin{lemma}{\bf (Invariant prime ideals)}

For $k=\C$ and $p$ any prime, the invariant prime ideals of $AP(1)_{\ast\ast}$ (that is, prime ideals which are also sub-comodules) are given by $I_m=(v_1,\cdots,v_{m-1})$ and $\overline{I}_m=(\tau,v_1,\cdots,v_{m-1})$.
\end{lemma}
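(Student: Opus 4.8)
The strategy is to reduce the motivic statement to its topological counterpart, which is the classification of invariant prime ideals in $P(1)_\ast = BP_\ast/(p)$. First I would recall from the proof of Lemma~\ref{coherent} that, for $k=\C$, one has a ring isomorphism $AP(1)_{\ast\ast}\cong P(1)_\ast[\tau]$ (Lemma~\ref{Ah}), and that the Hopf algebroid $(AP(1)_{\ast\ast},AP(1)_{\ast\ast}AP(1))$ is obtained from $(P(1)_\ast, P(1)_\ast P(1))$ by base change along $P(1)_\ast\hookrightarrow P(1)_\ast[\tau]=AP(1)_{\ast\ast}$; this uses Lemma~\ref{hopf alg} (the map of Hopf algebroids $(BP_\ast,BP_\ast BP)\to(ABP_{\ast\ast},ABP_{\ast\ast}ABP)$) together with the computation $AP(1)_{\ast\ast}AP(1)\cong P(1)_\ast P(1)[\tau]$ from the proof of Lemma~\ref{coherent}. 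In particular every comodule structure, and hence the notion of sub-comodule, is compatible with the map $P(1)_\ast\to AP(1)_{\ast\ast}$.

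Next I would verify that the listed ideals are indeed invariant. The ideal $I_m=(v_1,\dots,v_{m-1})AP(1)_{\ast\ast}$ is the extension of the topological invariant prime ideal $I_m^{\Top}=(v_1^{\Top},\dots,v_{m-1}^{\Top})\subseteq P(1)_\ast$ along the (flat) base change $P(1)_\ast\to P(1)_\ast[\tau]$, so it is a subcomodule; the argument that it is prime is that $AP(1)_{\ast\ast}/I_m\cong P(n')_\ast[\tau]$ (with $n'=m$ if $m>1$) is a polynomial ring over $\F_p$, hence an integral domain. For $\overline I_m=(\tau,v_1,\dots,v_{m-1})$ one notes that $\tau$ is primitive (this follows from $R_\C(\tau)=1$ and the fact that $\psi(\tau)$, being a homogeneous element of the appropriate bidegree in $AP(1)_{\ast\ast}AP(1)$, is forced to be $1\otimes\tau$, exactly as in Lemma~\ref{tau}), so the ideal generated by $\tau$ is a subcomodule, and $\overline I_m/(\tau)$ corresponds under $AP(1)_{\ast\ast}/(\tau)\cong P(1)_\ast$ to $I_m^{\Top}$; primality again follows since $AP(1)_{\ast\ast}/\overline I_m\cong P(n')_\ast$ is a domain.

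For the converse, let $\mathfrak p\subseteq AP(1)_{\ast\ast}$ be an invariant prime ideal. I would split into two cases according to whether $\tau\in\mathfrak p$ or not. If $\tau\in\mathfrak p$, then $\mathfrak p$ contains $(\tau)$ and $\mathfrak p/(\tau)$ is an invariant prime ideal of $AP(1)_{\ast\ast}/(\tau)\cong P(1)_\ast$, which by the topological invariant prime ideal theorem (Landweber, cf.\ the discussion around \cite{JY} and \cite{RavLoc}) must be one of the $I_m^{\Top}$; pulling back gives $\mathfrak p=\overline I_m$. If $\tau\notin\mathfrak p$, I would argue that $\mathfrak p$ is the extension of an invariant prime ideal of $P(1)_\ast$: since $AP(1)_{\ast\ast}=P(1)_\ast[\tau]$ and $\mathfrak p$ is invariant under the coaction which is $\tau$-linearly extended from the topological one, a graded invariant ideal not containing $\tau$ is generated by $\mathfrak p\cap P(1)_\ast$ over $P(1)_\ast[\tau]$, and $\mathfrak p\cap P(1)_\ast$ is an invariant prime of $P(1)_\ast$, hence some $I_m^{\Top}$, giving $\mathfrak p=I_m$.

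\textbf{Main obstacle.} The delicate point is the case $\tau\notin\mathfrak p$: one must show that a graded invariant prime ideal of $P(1)_\ast[\tau]$ which does not contain $\tau$ is already extended from $P(1)_\ast$. This is really a statement about invariant ideals in a polynomial extension of a Hopf algebroid by a primitive element of nonzero second weight; I expect the cleanest route is to observe that $\tau$ is a \emph{central primitive non-zero-divisor} of weight $(0,1)$, so that in the localization $AP(1)_{\ast\ast}[\tau^{-1}]$ every homogeneous element can be normalized to weight zero, reducing invariant ideals there to invariant ideals of $P(1)_\ast[\tau^{\pm1}]$ and thence of $P(1)_\ast$; one then checks that $\mathfrak p=(\mathfrak p[\tau^{-1}])\cap AP(1)_{\ast\ast}$ is the extension of $\mathfrak p\cap P(1)_\ast$. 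Making this bidegree bookkeeping precise, and confirming that the topological invariant prime ideal theorem applies verbatim to the graded ring $P(1)_\ast$ (it does, this is classical), is the only real work.
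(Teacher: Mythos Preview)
Your proposal is correct and close in spirit to the paper's proof, but the two diverge in how the converse (``these are \emph{all} the invariant primes'') is handled.

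The paper does not split into cases on~$\tau$. Instead it runs the classical Landweber argument directly: it observes that the only primitive elements in $AP(1)_{\ast\ast}/\overline I_m\cong P(1)_\ast/I_m^{\Top}$ are multiples of powers of~$v_m$, and from this the usual inductive step (any nonzero invariant ideal contains a nonzero primitive, hence contains $v_m$ or $\tau$) forces any invariant prime to be one of the listed ideals. This is a one-line reduction to Landweber's Proposition~2.11 in \cite{LandweberAnni}. Your case split on $\tau\in\mathfrak p$ versus $\tau\notin\mathfrak p$ achieves the same reduction to the topological invariant prime ideal theorem, just organized differently; it has the virtue of making the role of~$\tau$ explicit.

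One comment on your ``main obstacle'': it is not actually an obstacle, and the localization argument you sketch is unnecessary. The bigrading does all the work. Since $\deg v_i=(2(p^i-1),p^i-1)$ lies on the line $p=2q$ while $\deg\tau=(0,1)$, every bihomogeneous element of $AP(1)_{\ast\ast}\cong P(1)_\ast[\tau]$ is \emph{uniquely} of the form $\tau^j x$ with $x\in P(1)_\ast$ bihomogeneous and $j\geq 0$. If $\tau^j x\in\mathfrak p$ and $\tau\notin\mathfrak p$, primality gives $x\in\mathfrak p$, so $\mathfrak p$ is generated by $\mathfrak p\cap P(1)_\ast$, which is then an invariant prime of $P(1)_\ast$ by restriction of the coaction. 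No inversion of~$\tau$ is needed.
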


\begin{proof}
We have $AP(1)_{\ast\ast}\cong P(1)_\ast[\tau]$ and $AP(1)_{\ast\ast}AP(1)\cong P(1)_\ast P(1)[\tau]$, as in the proof of the previous lemma. Under the functor $R_\C$, the coaction
\[AP(1)_{\ast\ast}\rightarrow AP(1)_{\ast\ast}AP(1)\otimes_{AP(1)_{\ast\ast}}AP(1)_{\ast\ast}\]
realises to
\[P(1)_\ast\rightarrow P(1)_\ast P(1)\otimes_{P(1)_\ast}P(1)_\ast\]
(similarly as in the proof of Corollary \ref{modulo}). By the classical invariant prime ideal theorem (see \cite[Theorem 2.7]{LandweberAnni} or compare \cite[Theorem 1.16]{JY}), the invariant prime ideals of $P(1)_\ast$ are given by $I_m^{\Top}=(v_1^{\Top},\cdots,v_{m-1}^{\Top})$. The isomorphism \[AP(1)_{\ast\ast}AP(1)\otimes_{AP(1)_{\ast\ast}}AP(1)_{\ast\ast}\cong P(1)_\ast P(1)\otimes_{P(1)_\ast}AP(1)_{\ast\ast}\] 
implies that $I_m$ is an invariant prime ideal in $AP(1)_{\ast\ast}$. By $AP(1)_{\ast\ast}\cong P(1)_\ast[\tau]$, it follows also that $\overline{I}_m$ is an invariant prime ideal of $AP(1)_{\ast\ast}$, too.

It remains to show that there cannot be any further invariant prime ideals. As in \cite{LandweberAnni}, this follows from the fact that the only primitive elements in $AP(1)_{\ast\ast}/\overline{I}_m\cong P(1)_\ast/I_m^{\Top}$ are multiples of powers of $v_m$ (compare \cite[Proposition 2.11]{LandweberAnni}).
\end{proof}

\begin{cor}\label{Landweber}{\bf (Motivic Landweber filtration theorem)}

Let $p$ be any prime and $n\geq 1$. For $X\in\SH(\C)^{fin}$, $AP(n)_{\ast\ast}(X)$ can be filtered by $AP(1)_{\ast\ast}$-modules
\[AP(n)_{\ast\ast}(X)=M_0\supset\cdots\supset M_k=0\]
such that $M_i/M_{i+1}\cong  AP(1)_{\ast\ast}/I_m$ or $AP(1)_{\ast\ast}/\overline{I}_m$ for some $I_m$ and $\overline{I}_m$ ($m\geq n$) as above.
\end{cor}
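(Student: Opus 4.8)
The plan is to mimic the proof of the classical Landweber filtration theorem (see \cite{LandweberAnni}), working over the Hopf algebroid $(AP(1)_{\ast\ast}, AP(1)_{\ast\ast}AP(1))$ instead of $(BP_\ast, BP_\ast BP)$. By Lemma \ref{coherent}, for $X\in\SH(\C)^{fin}$ and $n\geq 1$ the comodule $M_0 = AP(n)_{\ast\ast}(X)$ is a coherent $AP(1)_{\ast\ast}$-module and an $AP(1)_{\ast\ast}AP(1)$-comodule; and by the invariant prime ideal lemma just proven, the invariant prime ideals of $AP(1)_{\ast\ast}$ are exactly the $I_m$ and $\overline{I}_m$ for $m\geq 1$. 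These two facts are precisely the input the abstract Landweber-style argument needs.

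First I would recall the general lemma (Landweber's "easy" lemma, \cite[Lemma 3.3]{LandweberAnni} or its analogue in \cite[Section 1]{JY}): in a coherent comodule $M\neq 0$ over a Hopf algebroid whose base ring is coherent, the set of associated primes consists of invariant primes, and one may pick a nonzero element $x\in M$ whose annihilator ideal $\ann(x)$ is an invariant prime ideal $\mathfrak{p}$; then the sub-comodule generated by $x$ is isomorphic to a shift of $AP(1)_{\ast\ast}/\mathfrak{p}$. Here the annihilator is automatically invariant because $x$ can be chosen primitive modulo lower filtration, exactly as in the classical case. Granting this, one sets $M_1$ to be (a shift of) this cyclic sub-comodule's complement-free quotient step: more precisely one takes $M_1 = M_0 / (AP(1)_{\ast\ast}\cdot x)$ — wait, rather one builds the filtration by letting the first quotient $M_0/M_1$ be the cyclic sub-comodule $\cong AP(1)_{\ast\ast}/\mathfrak{p}$ sitting at the bottom, i.e. $M_1 = \ker(M_0 \to AP(1)_{\ast\ast}/\mathfrak{p})$ is not quite right either; the correct bookkeeping is the standard one: $M_1 \subset M_0$ is chosen so that $M_0/M_1$ is the chosen cyclic invariant sub-comodule, and one iterates on $M_1$.

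The key steps in order: (1) verify $\mathfrak{p} = \ann(x)$ lies among $\{I_m,\overline{I}_m : m\geq 1\}$ using the invariant prime ideal lemma; (2) observe that since $M_0 = AP(n)_{\ast\ast}(X)$ is annihilated by $v_0,\dots,v_{n-1}$ — because $v_i$ acts trivially on $AP(n)$ for $i<n$ by Corollary \ref{trivial action}, which for $k=\C$ gives $I_n \subseteq \ann(M_0)$ — every associated prime must contain $I_n$, forcing $m\geq n$ as claimed in the statement; (3) run the induction: each step strips off one cyclic invariant quotient, and coherence is preserved at each step by \cite[Proposition 1.2]{CS} (two-out-of-three for coherence in short exact sequences), so $M_1$ is again a coherent $AP(1)_{\ast\ast}AP(1)$-comodule and the argument repeats; (4) argue the process terminates after finitely many steps — this is where coherence and a finiteness/Noetherian-type bound on the "size" of $M_0$ is used, exactly as Landweber does, using that $AP(1)_{\ast\ast}(X)$, being built from $\h_{\ast\ast}(\Spec\C)\otimes$(finite topological data), is finitely generated as a module over a coherent ring and each quotient step strictly decreases an appropriate invariant.

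The main obstacle I anticipate is step (4), the termination of the filtration: in the classical setting one uses that $BP_\ast BP$-comodules which are finitely presented over the Noetherian-after-completion base have finite Landweber filtrations, and the termination is delicate because $P(1)_\ast$ is merely coherent, not Noetherian. Here $AP(1)_{\ast\ast}\cong \F_p[\tau, v_1, v_2,\dots]$ is likewise only coherent, so one cannot invoke Noetherian descending chains directly; instead one must adapt Conner–Smith's argument \cite{CS} for the coherent case, tracking a bound coming from the finite topological spectrum $X$ — concretely, applying $R_\C$ reduces the bound to the classical finite Landweber filtration of $P(n)_\ast(R_\C X)$, and one lifts this bound through the isomorphism $AP(n)_{\ast\ast} \cong P(n)_\ast[\tau]$. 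Checking that the $\tau$-direction does not introduce infinitely many new filtration steps (it contributes only the extra quotients $AP(1)_{\ast\ast}/\overline{I}_m$, which are controlled) is the technical heart, and I would handle it by a dimension count in each internal bidegree, exactly as in the proof of Lemma \ref{Ah} where the $\tau$-grading was seen to be harmless.
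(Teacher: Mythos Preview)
Your approach is essentially the same as the paper's: use Lemma \ref{coherent} for coherence and the comodule structure, then apply a Landweber-type argument together with the classification of invariant primes. The paper is much more economical, though: rather than re-deriving the filtration mechanism, it simply invokes the abstract Landweber filtration theorem \cite[Theorem 3.3]{LandweberFiltr} (cf.\ \cite[Theorem 1.16]{JY}), which already applies to coherent comodules over a suitable Hopf algebroid and yields a finite filtration with quotients of the form $AP(1)_{\ast\ast}/I$ for invariant primes $I$; the previous lemma on invariant primes then identifies these $I$ as $I_m$ or $\overline{I}_m$. In particular, your main worry in step (4) about termination in the merely-coherent setting is precisely what that cited theorem handles, so there is no need for a separate bidegree count or comparison via $R_\C$. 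The observation in your step (2) that $m\geq n$ because $I_n$ annihilates $AP(n)_{\ast\ast}(X)$ (Corollary \ref{trivial action}) is correct and is left implicit in the paper. Your momentary confusion about whether $M_0/M_1$ is the sub-comodule or the quotient is only a bookkeeping slip and does not affect the argument.
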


\begin{proof}
By Lemma \ref{coherent}, $AP(n)_{\ast\ast}(X)$ is a coherent $AP(1)_{\ast\ast}AP(1)$-comodule. Landweber's filtration theorem \cite[Theorem 3.3]{LandweberFiltr} (see also \cite[Theorem 1.16]{JY}) implies that $AP(n)_{\ast\ast}(X)$ has a filtration 
\[AP(n)_{\ast\ast}(X)=M_0\supset\cdots\supset M_k=0\]
such that $M_i/M_{i+1}\cong AP(1)_{\ast\ast}/I$ for some $I$ which is invariant under the comodule action. Thus, the claim follows from the previous lemma.
\end{proof}

The following is a motivic version of one statement in \cite[Theorem 2.11]{RavLoc}. In the proof, we use ideas from Ravenel's proof.

\begin{theorem}\label{AK(n+1)}
Let $p>2$.
If $X\in\SH(\C)^{fin}$ satisfies $AK(n+1)_{\ast\ast}(X)=0$, then also $AK(n)_{\ast\ast}(X)=0$. That is, $\langle AK(n+1)\rangle\geq\langle AK(n)\rangle$ in $\SH(\C)^{fin}$.
\end{theorem}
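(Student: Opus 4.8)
The strategy is to reduce the statement to the theorems already established in this chapter, following the shape of Ravenel's argument for \cite[Theorem 2.11]{RavLoc}. First I would dispose of the small cases: for $n=0$ the claim says $AK(1)_{\ast\ast}X=0$ implies $AK(0)_{\ast\ast}X=0$, which is contained in the more general statement via $AE$; more importantly, by Corollary \ref{AK=AB} and Proposition \ref{AK=AB2} we may replace every $AK(i)$ by $AB(i)=v_i^{-1}AP(i)$ in Bousfield-class statements, so it suffices to prove $AB(n+1)_{\ast\ast}X=0\Rightarrow AB(n)_{\ast\ast}X=0$. Equivalently, using Theorem \ref{ABAK} (the splitting $AB(n)_{\ast\ast}X\cong AK(n)_{\ast\ast}X\otimes\F_p[v_{n+1},\dots]$), the task is to show that $AP(n)_{\ast\ast}X$ being $v_{n+1}$-torsion modulo $(v_{n+1},\dots)$ forces it to be $v_n$-torsion modulo $(v_{n+1},\dots)$ — but this is exactly the kind of statement governed by the Landweber filtration.

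The key step is to apply the motivic Landweber filtration theorem, Corollary \ref{Landweber}: for $X\in\SH(\C)^{fin}$ and $n\geq 1$, the $AP(1)_{\ast\ast}$-module $AP(n)_{\ast\ast}X$ admits a finite filtration whose subquotients are $AP(1)_{\ast\ast}/I_m$ or $AP(1)_{\ast\ast}/\overline I_m$ with $m\geq n$. I would then analyze how $AK(n)$ and $AK(n+1)$ (or rather $AB(n)$, $AB(n+1)$) see each subquotient. Concretely, $AB(n)_{\ast\ast}$ applied to $AP(1)_{\ast\ast}/I_m$-type pieces is computed by inverting $v_n$ after killing $v_0,\dots,v_{n-1}$: a subquotient with $m=n$ survives $v_n$-localization but gets killed by $v_{n+1}$-localization only if its $v_n$-multiplication is already an iso, whereas a subquotient with $m=n+1$ is $v_n$-torsion (so invisible to $AB(n)$) but not $v_{n+1}$-torsion. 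Running the long exact sequences of the filtration, $AK(n+1)_{\ast\ast}X=0$ forces every subquotient to have $m>n+1$, hence in particular $m>n$, hence $AK(n)_{\ast\ast}X=0$. This is the same bookkeeping as in the topological proof, now legitimate motivically because over $\C$ we know $AP(j)_{\ast\ast}\cong P(j)_\ast[\tau]$ (Lemma \ref{Ah}) and the invariant prime ideals of $AP(1)_{\ast\ast}$ are exactly the $I_m,\overline I_m$ (the invariant prime ideal lemma above).

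I expect the main obstacle to be the careful computation of $AB(n)_{\ast\ast}(-)$ on the Landweber subquotients $AP(1)_{\ast\ast}/I_m$ and $AP(1)_{\ast\ast}/\overline I_m$ — i.e. making precise the claim that such a subquotient contributes to $\langle AB(n)\rangle$ exactly when $m\le n$ and to $\langle AB(n+1)\rangle$ exactly when $m\le n+1$. One must be slightly careful with the extra variable $\tau$: the $\overline I_m$ subquotients are $P(1)_\ast/I_m^{\Top}$ (i.e. $\tau=0$) and the $I_m$ subquotients are $P(1)_\ast/I_m^{\Top}[\tau]$, and in both cases $v_n$ acts as a non-zero-divisor on the quotient when $m\le n$ while $v_n$ acts nilpotently (in fact as zero) when $m>n$, by the invariant prime ideal structure; this is where Corollary \ref{modulo} / the behaviour of the $v_i$ on $AP(m)$ (Corollary \ref{trivial action}) feeds in. Once these subquotient computations are pinned down, the passage from finite spectra to cell spectra and then to arbitrary $X\in\SH(\C)$ is handled exactly as at the end of the proof of Theorem \ref{ABAK}, using that $\pi_{\ast\ast}$ commutes with filtered colimits \cite[Proposition 9.3]{DI} and cellular approximation \cite[Proposition 7.3]{DI}, which upgrades the conclusion to $\langle AK(n+1)\rangle\geq\langle AK(n)\rangle$ on all of $\SH(\C)$ and in particular yields uniqueness of the motivic type of any $X\in\SH(\C)^{fin}_{(p)}$.
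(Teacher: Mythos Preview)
Your reduction to the $AB$-theories via Corollary \ref{AK=AB} and your instinct to invoke the motivic Landweber filtration (Corollary \ref{Landweber}) are both correct and match the paper. But the central step of your sketch is not well-defined. You propose to ``analyze how $AB(n)$ and $AB(n+1)$ see each subquotient'' of the Landweber filtration of $AP(n)_{\ast\ast}X$ and then ``run the long exact sequences of the filtration'' to conclude that $AK(n+1)_{\ast\ast}X=0$ forces every subquotient to have $m>n+1$. The problem is that $AK(n+1)_{\ast\ast}(-)$ and $AB(n+1)_{\ast\ast}(-)$ are functors on \emph{spectra}, not on $AP(1)_{\ast\ast}$-modules; the Landweber filtration lives purely on the module level, so you cannot apply these functors to the subquotients $AP(1)_{\ast\ast}/I_m$. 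You can certainly apply $v_n$-localisation to the filtration of $AP(n)_{\ast\ast}X$ (localisation is exact), and this does show $AB(n)_{\ast\ast}X=0$ iff all subquotients have $m\geq n+1$. But $AB(n+1)_{\ast\ast}X=v_{n+1}^{-1}AP(n+1)_{\ast\ast}X$ is computed from a \emph{different} spectrum, and there is no module-level operation on $AP(n)_{\ast\ast}X$ that produces it. Your own subquotient analysis shows the difficulty: on $AP(1)_{\ast\ast}/I_n$ both $v_n$ and $v_{n+1}$ are non-zero-divisors, so such a subquotient is not killed by $v_{n+1}$-localisation, and your implication ``$AK(n+1)_{\ast\ast}X=0\Rightarrow$ all $m>n+1$'' has no mechanism behind it.

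The paper's proof supplies exactly the missing bridge. It introduces the auxiliary exact functor $E_{\ast\ast}(X)=AE(n+1)_{\ast\ast}\otimes_{ABP_{\ast\ast}}AP(n)_{\ast\ast}(X)$, which \emph{is} built from $AP(n)_{\ast\ast}X$ by a module-level operation, and then constructs an injective pairing
\[
AK(n+1)_{\ast\ast}\otimes_{E_{\ast\ast}}E_{\ast\ast}X\hookrightarrow AK(n+1)_{\ast\ast}X.
\]
The Landweber filtration of $AP(n)_{\ast\ast}X$ is used precisely to verify the Tor-vanishing $\Tor_i^{AP(1)_{\ast\ast}}(AP(1)_{\ast\ast}/I,AK(n+1)_{\ast\ast})=0$ for $i>1$ and $I\in\{I_m,\overline I_m\}$, which guarantees this injectivity via the universal coefficient spectral sequence. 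From $AK(n+1)_{\ast\ast}X=0$ one then gets $AK(n+1)_{\ast\ast}\otimes_{E_{\ast\ast}}E_{\ast\ast}X=0$, hence $v_n^{-1}E_{\ast\ast}X=AE(n+1)_{\ast\ast}\otimes_{ABP_{\ast\ast}}AB(n)_{\ast\ast}X=0$, and finally Theorem \ref{ABAK} together with Corollary \ref{trivial action} forces $AK(n)_{\ast\ast}X=0$. The case $n=0$ is handled separately via $AE(1)$ and Theorem \ref{torsion}. In short, the Landweber filtration enters as a \emph{technical input for Tor-vanishing}, not as a direct structural decomposition of the target; your sketch needs the pairing/Tor step to become a proof.
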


\begin{proof}
Assume $n>0$. The case $n=0$ will be considered at the end of the proof.
Let $E_{\ast\ast}(-)$ be defined by 
\[E_{\ast\ast}X=AE(n+1)_{\ast\ast}\otimes_{\BP_{\ast\ast}}AP(n)_{\ast\ast}(X).\] 
As in \cite[Lemma 3.5]{JY}, the above Landweber filtration theorem and the fact that $AE(n+1)$ is Landweber exact (see \cite[Theorem 8.7]{MLE}) yield 
\[\Tor_1^{AP(n)_{\ast\ast}}(AE(n+1)_{\ast\ast}\otimes_{ABP_{\ast\ast}}AP(n)_{\ast\ast},AP(n)_{\ast\ast}/I)=0\]
for all invariant prime ideals $I\subseteq AP(n)_{\ast\ast}$ as above. (Alternatively, this can be derived from the topological analogue and Lemma \ref{Ah}, using \cite[Theorem 8.7]{MLE}.) As in \cite[Theorem 2.11]{RavLoc}, this implies that $E_{\ast\ast}(-)$ is an exact functor. 

In analogy to \cite{RavLoc}, we show that there is an injective pairing \[AK(n+1)_{\ast\ast}\otimes_{E_{\ast\ast}}E_{\ast\ast}X\hookrightarrow AK(n+1)_{\ast\ast}X.\] 
To construct this pairing, note that the $ABP$-action on $AK(n+1)$ factors through a map $AP(n)\wedge AK(n+1)\rightarrow AK(n+1)$ by methods from Section \ref{section action}, since $v_i$, $i< n+1$, acts trivially on $AK(n+1)$ by Corollary \ref{trivial action}. This induces a map 
\[AK(n+1)_{\ast\ast}\otimes_{ABP_{\ast\ast}}AP(n)_{\ast\ast}X\rightarrow AK(n+1)_{\ast\ast}X.\]
As in \cite[Theorem 2.11]{RavLoc}, this map factors through a pairing 
\[AK(n+1)_{\ast\ast}\otimes_{E_{\ast\ast}}E_{\ast\ast}X\rightarrow AK(n+1)_{\ast\ast}X,\]
the reason being again that the relevant elements act trivially. Such a pairing induces a universal coefficient spectral sequence, whose motivic version is constructed in \cite[Propositions 7.7 and 7.10]{DI},
\[\Tor_i^{E_{\ast\ast}}(E_{\ast\ast}(X),AK(n+1)_{\ast\ast})\Rightarrow AK(n+1)_{\ast\ast}(X).\]
As in \cite{RavLoc}, to prove the injectivity of the pairing, it suffices to prove the vanishing of
\[\Tor_i^{ABP_{\ast\ast}}(AP(n)_{\ast\ast}(X),AK(n+1)_{\ast\ast})\]
for $i>1$.  Since $n\geq 1$, $p$ acts trivially on both of these modules, and we can replace $ABP$ by $AP(1)=ABP/p$. Hence, we have to show the vanishing of
\[\Tor_i^{AP(1)_{\ast\ast}}(AP(n)_{\ast\ast}(X),AK(n+1)_{\ast\ast})\]
and, by Corollary \ref{Landweber}, the question reduces to the vanishing of
\[\Tor_i^{AP(1)_{\ast\ast}}(AP(1)_{\ast\ast}/I,AK(n+1)_{\ast\ast})\]
for $i>1$ and $I=I_m$ or $\overline{I}_m$, $m\geq n$. By Lemma \ref{Ah}, this equals
\[\Tor_i^{P(1)_{\ast}[\tau]}(P(1)_{\ast}/{I}_m[\tau],K(n+1)_{\ast}[\tau])\; \textup{ or}\]
\[\Tor_i^{P(1)_{\ast}[\tau]}(P(1)_{\ast}/{I}_m,K(n+1)_{\ast}[\tau]), \; \textup{ respectively.}\]
A projective resolution of $K(n+1)_\ast$ over $P(1)_\ast$ yields a projective resolution of $K(n+1)_\ast[\tau]$ over $P(1)_\ast[\tau]$ by applying $-\otimes\F_p[\tau]$. It follows that both of the above torsion terms vanish if 
\[ \Tor_i^{P(1)_{\ast}}(P(1)_{\ast}/{I}_m,K(n+1)_{\ast})=0.\]
For $i>1$, this follows from 
\[ \Tor_i^{BP_{\ast}}(BP_{\ast}/{I}_m,K(n+1)_{\ast})=0,\]
as in the proof of \cite[Theorem 2.11]{RavLoc}. This proves the injectivity of the above pairing.

Now, assume that $AK(n+1)_{\ast\ast}X=0$. The injectivity of the pairing implies that $AK(n+1)_{\ast\ast}\otimes_{E_{\ast\ast}}E_{\ast\ast}X=0$. 
Recall $AK(n+1)_{\ast\ast}\cong\h_{\ast\ast}[v_{n+1}^{\pm 1}]$ and note that 
\[E_{\ast\ast}\cong AE(n+1)_{\ast\ast}\otimes_{ABP_{\ast\ast}}AP(n)_{\ast\ast}\cong E(n+1)_\ast\otimes_{BP_{\ast}}P(n)_\ast[\tau]\cong\h_{\ast\ast}[v_n,v_{n+1}^{\pm 1}]\] 
by \cite[Theorem 8.7]{MLE} and Lemma \ref{Ah}. Therefore, $AK(n+1)_{\ast\ast}\otimes_{E_{\ast\ast}}E_{\ast\ast}X=0$ implies $v_n^{-1}E_{\ast\ast}X=0$. Now,
\[0=v_n^{-1}E_{\ast\ast}X=AE(n+1)_{\ast\ast}\otimes_{\BP_{\ast\ast}}AB(n)_{\ast\ast}X,\]
by the definitions of $E_{\ast\ast}(-)$ and of $AB(n)$.
By Theorem \ref{ABAK}, since $X$ is finite, this is equal to 
\[AE(n+1)_{\ast\ast}\otimes_{\BP_{\ast\ast}}AK(n)_{\ast\ast}(X)\otimes\F_p[v_{n+1},v_{n+2},\cdots].\]
By \cite[Theorem 8.7]{MLE}, $AE(n+1)_{\ast\ast}\cong E(n+1)_\ast\otimes_{BP_\ast} ABP_{\ast\ast}$. It follows
\[0=\Z_{(p)}[v_1,\cdots,v_n,v_{n+1}^{\pm 1}]\otimes_{\Z_{(p)}[v_1,\cdots]}AK(n)_{\ast\ast}X\otimes\F_p[v_{n+1},\cdots].\]
As $v_m$ acts trivially on $AK(n)_{\ast\ast}X$ for all $m\neq n$ by Corollary \ref{trivial action}, this implies that $AK(n)_{\ast\ast}X=0$.\\

It remains to show that $AK(1)_{\ast\ast}X=0$ implies $AK(0)_{\ast\ast}X=0$. Recall the definitions $AE(1)=v_1^{-1}ABP/(v_2,\cdots)$ and $AK(1)=v_1^{-1}ABP/(p,v_2,\cdots)=AE(1)/p$. Since multiplication by $p$ commutes with $\pi_{\ast\ast}(-)$, it follows that $AK(1)_{\ast\ast}X=0$ implies $p^{-1}AE(1)_{\ast\ast}X=0$. 
(Note that $p^{-1}(AE(1)_{\ast\ast}X)\cong (p^{-1}AE(1))_{\ast\ast}X$ since $\pi_{\ast\ast}(-)$ commutes with filtered colimits by \cite[Proposition 9.3]{DI}.) The following argument is closely related to Corollary \ref{bousfieldE}.
Since $p^{-1}AE(1)_{\ast\ast}X=0$, $p^{-1}(ABP/(v_2,\cdots))_{\ast\ast}X$ is $v_1$-torsion. By Theorem \ref{torsion}, this implies that $p^{-1}(ABP/(v_2,\cdots))_{\ast\ast}X$ is $p$-torsion. But this can only be the case if $p^{-1}(ABP/(v_2,\cdots))_{\ast\ast}X=0$. It follows that 
\[AE(0)_{\ast\ast}X=p^{-1}(ABP/(v_1,v_2,\cdots))_{\ast\ast}X=0.\] 
Now, by Theorem \ref{decomposition}, $\langle AE(0)\rangle = \langle AK(0)\rangle$. Hence, $AK(0)_{\ast\ast}X=0$.
\end{proof}

In terms of thick ideals in $\SH(\C)_{(p)}^{fin}$, $p>2$, we have proven that the motivic Morava K-theory spectra $AK(n)$ indeed describe a descending chain of thick ideals, similarly to the chain of thick subcategories in $\SH_{(p)}^{fin}$. The inclusions are proper by Chapter \ref{motivic type n}.

\begin{cor}
For $p>2$,
\[\SH(\C)_{(p)}^{fin}\supsetneq \mathcal C_{AK(0)}\supsetneq \mathcal C_{AK(1)}\supsetneq\cdots.\]
\end{cor}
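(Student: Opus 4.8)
The plan is to combine two facts established earlier in the excerpt. First, Theorem~\ref{AK(n+1)} says that for $p>2$ and $X\in\SH(\C)^{fin}$, the vanishing $AK(n+1)_{\ast\ast}(X)=0$ implies $AK(n)_{\ast\ast}(X)=0$; applying the $p$-localised version of this (the localisation functor $\SH(\C)^{fin}\to\SH(\C)^{fin}_{(p)}$ commutes with $AK(n)_{\ast\ast}(-)$ since $AK(n)$ is $p$-local by construction, see Definition~\ref{AK(n)}) gives the chain of \emph{inclusions}
\[
\mathcal C_{AK(0)}\supseteq\mathcal C_{AK(1)}\supseteq\cdots\supseteq\mathcal C_{AK(n)}\supseteq\cdots
\]
inside $\SH(\C)^{fin}_{(p)}$, where $\mathcal C_{AK(n)}=\{X\mid AK(n)_{\ast\ast}(X)=0\}$ by Proposition~\ref{C_E}. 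One also needs $\SH(\C)^{fin}_{(p)}\supseteq\mathcal C_{AK(0)}$, which is immediate since $\mathcal C_{AK(0)}$ is by definition a full subcategory; it is a thick ideal by Lemma~\ref{Lemma C_E}.

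Second, the inclusions are \emph{proper}. For the step $\SH(\C)^{fin}_{(p)}\supsetneq\mathcal C_{AK(0)}$ it suffices to exhibit one finite cell spectrum with nonvanishing $AK(0)_{\ast\ast}$; the unit $S^0_{(p)}$ works, since $AK(0)\cong H\Q$ (as shown in the proof of Proposition~\ref{AK=AB2}) and $H\Q_{\ast\ast}(S^0)\neq 0$. For the step $\mathcal C_{AK(n)}\supsetneq\mathcal C_{AK(n+1)}$, I would invoke Chapter~\ref{motivic type n}: for each $n+1$ there is a spectrum of motivic type $n+1$ in $\SH(\C)^{fin}_{(p)}$ — for instance the constant type-$(n+1)$ spectrum $c(X_{n+1})$ of Theorem~\ref{AKcX}, or the spectrum $\X_{n+1}$ of Theorem~\ref{zero}. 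Such a spectrum $Y$ satisfies $AK(n)_{\ast\ast}(Y)=0$ but $AK(n+1)_{\ast\ast}(Y)\neq 0$, hence $Y\in\mathcal C_{AK(n)}\setminus\mathcal C_{AK(n+1)}$, witnessing the strict inclusion. (One should be a little careful that the motivic type-$(n+1)$ spectra considered there in fact satisfy $AK(s)_{\ast\ast}=0$ for \emph{all} $s\le n$, not merely $s=n$; this is remarked explicitly at the start of Chapter~\ref{motivic type n} and follows from the constructions there, but now it is also a formal consequence of the just-established chain of inclusions.)

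Assembling these, the chain $\SH(\C)^{fin}_{(p)}\supsetneq\mathcal C_{AK(0)}\supsetneq\mathcal C_{AK(1)}\supsetneq\cdots$ is exactly the assertion of the corollary. Each term is a thick ideal by Lemma~\ref{Lemma C_E}, and $R_\C$ being a tensor-triangulated functor is not even needed here — the whole argument takes place inside $\SH(\C)^{fin}_{(p)}$. I do not anticipate a genuine obstacle: all the hard work (the vanishing theorem and the existence of motivic type-$n$ spectra) is already done in the cited chapters, and this corollary is purely a matter of packaging. The one point deserving a sentence of care is the compatibility of Theorem~\ref{AK(n+1)}, stated for $\SH(\C)^{fin}$, with the $p$-local category $\SH(\C)^{fin}_{(p)}$; since $p$-localisation commutes with $AK(n)_{\ast\ast}(-)$ (because $AK(n)$ is already $p$-local and $\pi_{\ast\ast}$ commutes with the relevant filtered colimit, cf.\ \cite[Proposition 9.3]{DI}), the implication transfers verbatim.
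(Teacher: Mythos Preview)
Your proposal is correct and follows essentially the same approach as the paper: the chain of inclusions comes from Theorem~\ref{AK(n+1)}, and the strictness of each inclusion is witnessed by the motivic type-$n$ spectra of Chapter~\ref{motivic type n} (in particular $c(X_{n+1})$ via Theorem~\ref{AKcX}). Your added remarks on $p$-localisation compatibility and the explicit witness $S^0_{(p)}$ for the first strict inclusion are correct elaborations of details the paper leaves implicit.
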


To sum up, we have identified three sequences of thick ideals which may or may not be equal:
\[\xymatrix@1{
\SH(\C)_{(p)}^{fin}\ar @{} [d] |-*[@]{{\veq}} \ar @{} [r] |-*[@]{\supset} &  R^{-1}(\mathcal C_{1})\ar @{} [d] |-*[@]{{\vsupseteq}} \ar @{} [r] |-*[@]{\supset} &  R^{-1}(\mathcal C_{2})\ar @{}[d] |-*[@]{{\vsupseteq}} \ar @{} [r] |-*[@]{\supset}  &\cdots\\
\SH(\C)_{(p)}^{fin}\ar @{} [d] |-*[@]{{\veq}} \ar @{} [r] |-*[@]{\supset} & \mathcal C_{AK(0)}\ar @{} [d] |-*[@]{{\vsupseteq}} 
\ar @{} [r] |-*[@]{\supset} 
&  \mathcal C_{AK(1)}\ar @{}[d] |-*[@]{{\vsupseteq}} \ar @{} [r] |-*[@]{\supset}  &\cdots\\
\SH(\C)_{(p)}^{fin} \ar @{} [r] |-*[@]{\supset} &  \thickid(c\mathcal C_{1}) \ar @{} [r] |-*[@]{\supset} & \thickid(c\mathcal C_{2}) \ar @{} [r] |-*[@]{\supset} &\cdots.}\]

\printbibliography

\vspace{50pt}

\emph{Ruth Joachimi}

\emph{Fachbereich Mathematik und Naturwissenschaften} 

\emph{Bergische Universit\"at Wuppertal }

\emph{42119 Wuppertal }

\emph{Germany}

ruth.joachimi@d-fine.de

\end{document}